\documentclass[
oneside, openright, titlepage, numbers=noenddot, headinclude, 
lineheaders footinclude=true, cleardoublepage=empty, 
BCOR=5mm, paper=a4, fontsize=12pt
]{scrbook}

\usepackage[parts, eulerchapternumbers, beramono]{classicthesis}
\usepackage[brazil, portuguese, english]{babel}
\usepackage[utf8]{inputenc}
\usepackage[T1]{fontenc}
\usepackage{
  amsmath, amssymb, amsthm, cancel, graphicx, hyperref, wrapfig,
  imakeidx, listings, mathtools, microtype, multicol, physics, tikz-cd,
  verbatim, xcolor, pdfpages, breqn, amsfonts, blindtext, epigraph
}
\usepackage[most]{tcolorbox}
\usetikzlibrary{positioning}

\tcbuselibrary{listings}
\selectlanguage{english}
\addtolength{\headsep}{0.6cm}
\setlength{\textheight}{22.7cm}
\setlength{\textwidth}{16.2cm}
\setlength{\oddsidemargin}{0cm}
\setlength{\evensidemargin}{0cm}

\linespread{1.3}

\newcommand{\nc}{\newcommand}
\nc{\dmo}{\DeclareMathOperator}
\nc{\rnc}{\renewcommand}

\dmo{\alt}{Alt}
\dmo{\aut}{Aut}
\dmo{\arr}{Ar}
\dmo{\euc}{Euc}
\dmo{\gl}{GL}
\dmo{\Gr}{Gr}
\dmo{\hess}{Hess}
\dmo{\hhom}{Hom}
\dmo{\hor}{Hor}
\dmo{\Id}{Id}
\dmo{\im}{Im}
\dmo{\Inn}{Inn}
\dmo{\obj}{Ob}
\dmo{\orb}{Orb}
\dmo{\pt}{PT}
\dmo{\re}{Re}
\dmo{\riem}{Riem}
\dmo{\sll}{SL}
\dmo{\spec}{Spec}
\dmo{\specm}{Specm}
\dmo{\St}{St}
\dmo{\stab}{Stab}
\dmo{\syl}{Syl}
\dmo{\sym}{Sym}
\dmo{\aux}{aux}
\dmo{\charr}{char}
\dmo{\coker}{coker}
\dmo{\gradd}{grad}
\dmo{\id}{id}
\dmo{\lift}{lift}
\dmo{\mdc}{mdc}
\dmo{\mmc}{mmc}
\dmo{\Spann}{span}
\dmo{\Pl}{Pl}
\dmo{\proj}{proj}
\dmo{\sen}{sen}
\dmo{\sign}{sign}
\dmo{\supp}{supp}
\dmo{\vecc}{vec}
\dmo{\unvec}{unvec}
\dmo{\hstack}{hstack}
\dmo{\vstack}{vstack}

\rnc{\cp}{\bc\bp}
\rnc{\grad}{\gradd}
\rnc{\hom}{\hhom}
\rnc{\l}{\ell}
\rnc{\vec}{\vecc}
\nc{\0}{\{0\}}
\nc{\ceq}{\coloneqq}
\nc{\eps}{\varepsilon}
\nc{\eqc}{\eqqcolon}
\nc{\hra}{\hookrightarrow}
\nc{\lra}{\longrightarrow}
\nc{\mb}{\mathbf}
\nc{\mc}{\mathcal}
\nc{\mf}{\mathfrak}
\nc{\ob}{\overbrace}
\nc{\ol}{\overline}
\nc{\ola}{\overleftarrow}
\nc{\ora}{\overrightarrow}
\nc{\os}{\overset}
\nc{\rp}{\br\bp}
\nc{\ssm}{\smallsetminus}
\nc{\tbf}{\textbf}
\nc{\tit}{\textit}
\nc{\ttc}{\textcolor}
\nc{\ttt}{\texttt}
\nc{\ub}{\underbrace}
\nc{\ul}{\underline}
\nc{\us}{\underset}
\nc{\what}{\widehat}
\nc{\wtil}{\widetilde}
\nc{\x}{\times}
\nc{\intl}{\int\limits}
\nc{\wten}{\what{\otimes}}
\nc{\NA}{N_{\alpha}}
\nc{\NB}{N_{\beta}}
\nc{\NC}{N_{\gamma}}

\nc{\oll}[1]{\ol{\ol{#1}}}
\nc{\e}[2]{\exp_{#1} (#2)}
\nc{\inn}[2]{\left\langle #1,#2 \right\rangle}
\nc{\nab}[2]{\nabla_{#1} #2}
\nc{\pars}[1]{\left( #1 \right)}
\nc{\ten}[2]{#1 \what{\otimes} \ldots \what{\otimes} #2}
\nc{\tenw}[2]{#1 \otimes \ldots \otimes #2}
\nc{\wed}[2]{#1 \wedge \ldots \wedge #2}
\nc{\Span}[1]{\Spann\qty{#1}}
\nc{\gr}[2]{\text{Gr}(#1, #2)}
\nc{\st}[2]{\text{St}(#1, #2)}
\nc{\mat}[2]{\text{Mat}(#1 \x #2)}

\nc{\bc}{\mb{C}}
\nc{\bbf}{\mb{F}}
\nc{\bk}{\mb{K}}
\nc{\bn}{\mb{N}}
\nc{\bo}{\mb{O}}
\nc{\bp}{\mb{P}}
\nc{\bq}{\mb{Q}}
\nc{\br}{\mb{R}}
\nc{\bs}{\mb{S}}
\nc{\bz}{\mb{Z}}

\nc{\ie}{i.e.}
\nc{\tiff}{if, and only if, }
\nc{\eg}{e.g.}

\newtcblisting{algbox}[2]{
  title=#1,
  label={#2},
  colback=white,
  colframe=black!100,
  left=6mm,
  size=fbox,
  listing only,
  listing options={language=Python,
    basicstyle=\scriptsize,
    mathescape=true,
    numbers=left,
    showstringspaces=false,
    keywordstyle=\bfseries,
    morekeywords={solve, precompute},
    numberstyle=\tiny\color{red!75!black}
  }
}



\theoremstyle{definition}
\numberwithin{equation}{section}
\newtheorem{thm}{Theorem}[chapter]
\newtheorem{lem}[thm]{Lemma}
\newtheorem{prop}[thm]{Proposition}
\newtheorem{cor}[thm]{Corollary}
\newtheorem{defi}[thm]{Definition}
\newtheorem{obs}[thm]{Observation}
\newtheorem{exm}[thm]{Example}
\newtheorem{algo}[thm]{Algorithm}

\makeindex[columns=3, title=Index]

\begin{document}
\title{\huge \scshape \bfseries Riemannian Optimization and the Hartree--Fock Method}
\author{Caio Oliveira da Silva}
\date{\today}
\maketitle

I dedicate this thesis to my grandmother (\emph{in memoriam}), to my father and to Yuri.

\chapter*{Acknowledgements}

First of all, I'd like to thank Yuri for being the best supervisor I've ever seen and for being an amazing and inspiring human being.
Also my friends that are still by my side, for making this weird thing called life way more enjoyable.
And last, but not least, I thank my father and my grandmother for supporting me throughout my academic journey without questioning my dubious choices.

\chapter*{}
\vfill
\begin{flushright}
I was actually chasing a special sort of buzz, a special moment that comes sometimes. One teacher called these moments “mathematical experiences.” What I didn’t know then was that a mathematical experience was aesthetic in nature, an epiphany in Joyce’s original sense. These moments appeared in proof-completions, or maybe algorithms. Or like a gorgeously simple solution to a problem you suddenly see after filling half a notebook with gnarly attempted solutions. It was really an experience of what I think Yeats called “the click of a well-made box.” Something like that. The word I always think of it as is “click.” \\
— David Foster Wallace
\end{flushright}


\chapter*{Abstract}

In the present work we studied a subfield of Applied Mathematics called Riemannian Optimization.
The main goal of this subfield is to generalize algorithms, theorems and tools from Mathematical Optimization to the case in which the optimization problem is defined on a Riemannian manifold.

As a case study, we implemented some of the main algorithms described in the literature (Gradient Descent, Newton--Raphson and Conjugate Gradient) to solve an optimization problem known as Hartree--Fock.
This method is extremely important in the field of Computational Quantum Chemistry and it is a good case study because it is a problem somewhat hard to solve and, as a consequence of this, it requires many tools from Riemannian Optimization.
Besides, it is also a good example to see how these algorithms perform in practice.

\vspace{.5cm}
\textbf{Keywords}: Riemannian Optimization, Hartree--Fock Method, Mathematical Optimization, Quantum Chemistry, Riemannian Geometry.


\tableofcontents

\mainmatter


\chapter{Introduction} \label{intro}

Riemannian Optimization is a field that aims to study optimization problems given by
\begin{equation}
  f:M \to \br,
\end{equation}
being $M$ a Riemannian manifold.
(In Chapter \ref{geometry} we will define precisely what a manifold is and provide some examples, but for now the reader that is not familiar with this concept can assume $M$ is a sphere.)
From a theoretical perspective, one of the reasons why studying this field is important is that it generalizes unconstrained optimization to a class of spaces called Riemannian manifolds.
So, that means we can study and understand a wider number of problems because the Euclidean space is just one example of Riemannian manifold.
Another important reason is that this field can also be thought of as an alternative to constrained optimization as long as the constraints of the problem form a manifold.
A good example of this is, again, the sphere.
If one has an optimization problem given by
\begin{equation}
  \text{minimize} \ \ f:\br^n \to \br \ \ \text{subject to} \ \ \braket{x} = 1,
\end{equation}
then the constraint is actually a manifold and Riemannian Optimization provides mathematical tools to study this problem from a theoretical perspective (such as convergence guarantees) and algorithms to tackle it computationally.
The field is also important from a practical perspective because it increases the tool bag we can use to model, study and solve real-world problems.
That said, in the present work we will study a problem from Quantum Chemistry, but in \cite{boumal2022, edelman1998} the reader can find other applications ranging from Chemistry and Physics to Computer Vision and Signal Processing.
Now let us motivate our problem.

In the late 1950s and early 1960s a medication called \emph{Thalidomide} was marketed and prescribed to treat sleeping trouble and morning sickness in pregnant women.
However, when they started to take the drug, it passed across the placenta and harmed the developing fetus, leading to thousands of infants dying in the time of birth and many others surviving with debilitating malformations.
The cause of this unexpected effect was that Thalidomide existed inside the body in two different \emph{molecular geometries} (\ie, the three-dimensional arrangement of the atoms that constitute a molecule) and, while one of the geometries helped the pregnant women, the other harmed the fetus \cite{vargesson2015}.
This is an extreme but very illustrative example of why computing and understanding molecular geometry is important.
Now, to compute the many possible geometries of a molecule and the energy necessary to change from one geometry to the other chemists created the concept of \emph{Potential Energy Surface} (PES), which is a function that maps the geometries of a molecule to the energy that the molecule has in that geometry.
This function is important because the physically stable geometries are its minima and this is where the method we are going to study enters the picture.
Simplifying things a little bit, the function $F(\qty{\vb{R}_i}, \qty{\phi_j(\vb{r}_j)})$ that describes a PES depends on two sets of parameters: the arrangement of the atoms in space, which we denote by $\vb{R}_i$ (each $\vb{R}_i \in \br^3$ determines the position of the nucleus of the $i$-th atom after we fix an order for the atoms, for example), and the wave functions that describes the electrons orbiting these atoms, which we denote by $\phi_j(\vb{r}_j)$ (again, $\vb{r}_j \in \br^3$, but electrons are never fixed, this is why we need to consider a wave function instead of its ``position'').
The optimization problem we are interested in is the \emph{energy minimization} part, \ie, we want to find the wave functions that minimizes the energy once the nuclei are fixed.
This should not be confused with the problem of minimizing the whole function $F$, which is usually called \emph{geometry optimization} in the literature.
In mathematical terms, we are interested in \emph{currying} the function $F$ and optimizing just $F(\qty{\vb{R}_i}, \cdot)$ for a fixed geometry $\qty{\vb{R}_i}$.\footnote{This is also known as a \emph{parametric dependence} on the nuclei coordinates.}

There are many ways to solve this problem of energy minimization and the \emph{Hartree--Fock Method} (HF) is one of them.
We discuss this method in-depth in Chapter \ref{qm}, but for now let us just describe its importance and say that in this work we define HF as the optimization problem of energy minimization with the constraint that the wave function should be a \emph{Slater determinant}.\footnote{Observe that the word \emph{method} is a little misleading if we adopt this definition of Hartree--Fock because in this work method is usually meant as a synonym of \emph{algorithm}. However, since in the literature Hartree--Fock is always referred as a method, we will also adopt this convention.}
So, as already mentioned, there are multiple ways of modelling the problem of energy minimization and Hartree--Fock is the simplest method that takes into account wave functions, the Schrödinger equation and the antisymmetry of electrons.
These methods are usually called \emph{ab initio} and they are the most accurate methods we have in Computational Quantum Chemistry nowadays, although the most expensive ones.
It is important to mention that Hartree--Fock is not the most accurate method for today's standards, but many of the state-of-the-art methods aims to improve the Slater determinant obtained by HF (which is called \emph{Hartree--Fock wave function}).
These methods are called \emph{post-Hartree--Fock} and the fact that they use HF as a starting point shows why robust implementations of algorithms that solves HF are still highly desirable.
And this is where Riemannian Optimization enters the picture.

The Hartree--Fock Method, as the reader may have guessed, can be seen as a Riemannian optimization problem because the set of all Slater determinants is a manifold known as \emph{Grassmann manifold} or, for short, the \emph{Grassmannian}.
Consequently, we can use tools from Riemannian Optimization to tackle this problem and in the present work we implemented three Riemannian algorithms to solve HF: Gradient Descent (GD), Newton--Raphson (NR) and Conjugate Gradient (CG).
The main reason for this choice of algorithms is that they were all described in the seminal papers \cite{edelman1998, smith2014}, but the field has many other algorithms that can be used to solve HF and other problems, see \cite{boumal2014}.
Now, one of the important aspects of using Riemannian Optimization to obtain a robust implementation for HF is that many algorithms with different strategies and complexities can be described using a common language.
This is good because we can, for example, use algorithms such as Gradient Descent or Conjugate Gradient when the starting point is too far (which usually means that the gradient of the cost function is big) and then switch to Newton--Raphson when the algorithm seems to be near convergence (\ie, the gradient is small and the Hessian is positive-definite).
In practice, combining CG with NR resulted in the best algorithm in terms of molecules converged (see Section \ref{results}), but it should be pointed out that we did not analyze this and other heuristics systematically.

All the Riemannian methods previously described were implemented using the programming language Python and the reader can see it in the GitHub repository \cite{aotograssmann}.
We also implemented two other methods to compare with the Riemannian algorithms: a version of Newton--Raphson using Lagrange multipliers that can be found in any textbook on constrained optimization, and the Self-Consistent Field (SCF) method.
Among all algorithms implemented, the best one (without combining with others) in terms of molecules converged was CG, which converged for 93.2\% of the molecules in the dataset.
The second best algorithm was SCF, which converged for 91.2\% of the molecules.
Another interesting comparison we made was with respect to both Newton--Raphson methods.
The Riemannian Newton--Raphson converged for 83.8\% of the molecules while the regular Newton--Raphson with Lagrange multipliers converged only for 53.4\% of the molecules.
This is a very big improvement and it shows a real gain in using Riemannian algorithms, at least for Hartree--Fock.
With that, hopefully we convinced the reader why studying Hartre--Fock and Riemannian Optimization is worthwhile.

\begin{wrapfigure}[13]{R}{0.6\textwidth}
  \vspace{-1cm}
  \begin{tikzpicture}[
    squarednode/.style={rectangle, thick, draw=black, align=center},
    sregulararrow/.style={->, >=stealth, shorten >= 0.1cm, shorten <= 0.1cm},
    dregulararrow/.style={->, >=stealth, shorten >= 0.6cm, shorten <= 0.6cm}
    ]
    \node[squarednode] (topalg)                             {{\small Topology} \\[-2.5mm] {\scriptsize Appendix \ref{top}}};
    \node[squarednode] (linalg)     [right=2.6cm of topalg]   {{\small Linear Algebra} \\[-2.5mm] {\scriptsize Appendix \ref{linalg}}};
    \node[squarednode] (riemgeo)    [below=1.1cm of topalg]   {{\small Riemannian Geometry} \\[-2.5mm] {\scriptsize Chapter \ref{geometry}}};
    \node[squarednode] (qm)         [below=1.1cm of linalg]   {{\small Quantum Mechanics} \\[-2.5mm] {\scriptsize Chapter \ref{qm}}};
    \node[squarednode] (riemopt)    [below=1.1cm of qm]       {{\small Riemannian Optimization} \\[-2.5mm] {\scriptsize Chapter \ref{optimization}}};
    \node[squarednode] (matrices)   [below=1.1cm of riemgeo]  {{\small Matrix identities} \\[-2.5mm] {\scriptsize Appendix \ref{matrix_ids}}};
    \node[squarednode] (lagrange)   [below=1.1cm of matrices] {{\small Lagrange multipliers} \\[-2.5mm] {\scriptsize Appendix \ref{lagrange_multipliers}}};
    \node[squarednode] (conclusion) [below=1.1cm of riemopt]  {{\small Conclusion} \\[-2.5mm] {\scriptsize Chapter \ref{conclusion}}};

    \draw[sregulararrow] (topalg.south)   -- (riemgeo.north);
    \draw[sregulararrow] (matrices.east)  -- (riemopt.west);
    \draw[sregulararrow] (linalg.south)   -- (qm.north);
    \draw[dregulararrow] (topalg.south)   -- (qm.north);
    \draw[dregulararrow] (linalg.south)   -- (riemgeo.north);
    \draw[dregulararrow] (riemopt.south)  -- (lagrange.north);
    \draw[sregulararrow] (matrices.south) -- (lagrange.north);
    \draw[sregulararrow] (qm.south)       -- (riemopt.north);
    \draw[sregulararrow] (riemopt.south)  -- (conclusion.north);
    \draw[dregulararrow] (riemgeo.south)  -- (riemopt.north);
  \end{tikzpicture}
    \caption{Graph illustrating chapters dependencies.}
\end{wrapfigure}
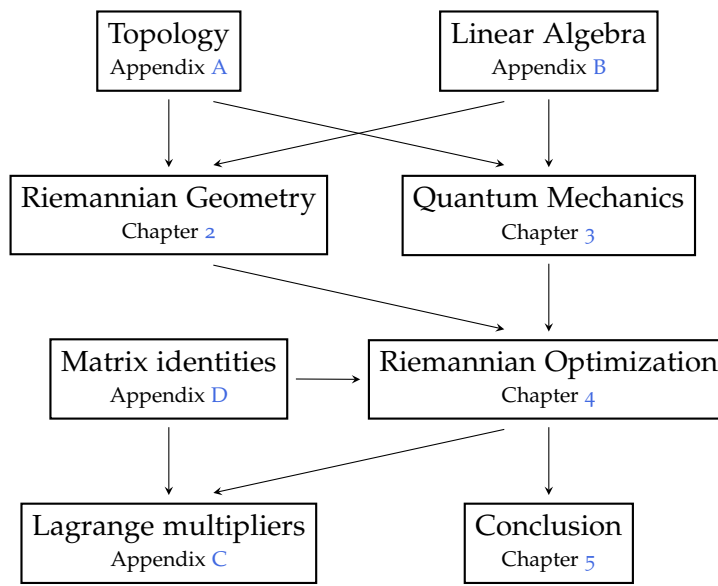
Now let us talk about the structure of this work.
It is organized as follows: in Chapter \ref{geometry} we define all the mathematical objects necessary to implement the three Riemannian algorithms mentioned above and we also compute everything we need to implement these algorithms for the Grassmannian, which is the manifold we are mainly interested in this work.
In Chapter \ref{qm} we present the axioms and objects of Quantum Mechanics we need to understand the Hartree--Fock Method.
We also motivate this method from a physical and computational perspective.
Chapter \ref{optimization} is where we define and write pseudocodes for the Riemannian algorithms already mentioned.
The pseudocodes were implemented for arbitrary cost functions defined in the product of two Grassmannians\footnote{We need two Grassmannians to take into account both spins of electrons, see Definition \ref{hf}.}, but we also filled the blank spaces with the cost function of the Hartree--Fock Method (\ie, the energy of a molecule) in Section \ref{hf_ch3}.
To conclude Chapter \ref{optimization}, we present the results obtained and analyze them.
And last, but not least, in Chapter \ref{conclusion} we provide a conclusion for the present work and point towards what is next.
Now, the appendices.
The Topology and Linear Algebra appendices (\ref{top} and \ref{linalg}) are quite dry and they were written to set the set the language and contain the definitions and theorems we need in the main chapters.
In the Lagrange multipliers appendix (\ref{lagrange_multipliers}) there is a pseudocode for the Newton--Raphson Method using Lagrange multipliers because we wanted to compare it with the Riemannian Newton--Raphson.
The result obtained by this algorithm is also in Section \ref{results}.
To conclude, the Matrix identities appendix \ref{matrix_ids} provides some matrix identities and computations that are used in Chapter \ref{optimization} and in the Lagrange multipliers appendix.

To conclude, a few remarks about the content of this work and how to actually read it.
One of the goals of this text is to be as self-contained as possible because different people with very distinct backgrounds will probably read it.
With that said, this work was written in such a way that the only prerequisite necessary to read it is Multivariable Calculus.
However, to be fair, the presentation of the fields involved is far from comprehensive, otherwise this text would have thousands of pages.
Also, knowing that fields such as Riemannian Geometry and Quantum Mechanics are not easy to grasp if the reader is encountering them for the first time in this work, the main chapters were written to be read independently and in a top down approach, which means it is possible to read the text starting from any chapter.
It is important to mention, though, that to achieve the independence of Chapter \ref{optimization} we had to define and write some objects twice.
This was a deliberate choice.


\chapter{Riemannian Geometry} \label{geometry}

\epigraph{Young man, in mathematics you don't understand things. You just get used to them.}{John von Neumann}

\section{Introduction}

One of the goals of this chapter is to define and compute all the mathematical objects necessary to implement the Riemannian algorithms we will see in Chapter \ref{optimization}.
Since the content of this chapter is, to a great extent, easily found in the literature, we opted for leaving most of the lemmas, propositions and theorems that are not directly related to the present work without proof.
Another goal of the present chapter is to show how Riemannian objects can be computed using the Grassmannian and the Stiefel manifold as examples because by doing this we hopefully provide a guide to the reader interested in working with other manifolds.
For a mathematical perspective of the subject containing all the proofs omitted, the references are \cite{lee2012} for the Differential Topology section and \cite{lee2018} for the Riemannian Geometry section.
However, the main reference used was \cite{boumal2022}.

\section{Differential Topology}

The reader that is not familiar with Topology should read Appendix \ref{top} first because we will use many definitions and results from Topology throughout this whole chapter.
Now, since all the fields involved in this chapter are quite advanced, a brief explanation of what they are all about is required.
Roughly speaking, the goal of Topology is to abstract the concept of space and study continuous deformations of these spaces.
However, sometimes the space has more structure than just a topology and this allows us to use tools from Linear Algebra and Calculus to study them.
That said, while pure mathematicians are interested in using these tools to study different kinds of deformations (smooth deformations in Differential Topology and isometries in Riemannian Geometry), in our case we are interested in these fields because they provide powerful tools to compute stuff and, as this whole work will show, computing objects such as gradients, Hessians, geodesics etc is quite useful to implement optimization algorithms, for example.
So, in essence the goal of Differential Topology and Riemannian Geometry is to provide a framework in which we can do Calculus and Linear Algebra on spaces that are a little more exotic than the Euclidean space and it is this framework that we will study in this chapter, with a focus on two ``exotic'' spaces called Stiefel manifold and Grassmannian.

\begin{defi} \label{topological_manifold}
  A \emph{$d$-dimensional topological manifold}\index{manifold!topological} is a topological space that is second-countable, Hausdorff and locally Euclidean of dimension $d$.
\end{defi}

\begin{defi} \label{chart}
  Given a $d$-dimensional topological manifold $X$, a \emph{chart of $X$}\index{chart} is a pair $(U, \phi)$, being $U \subset X$ an open set and $\phi:U \to V$ a homeomorphism between $U$ and an open subset $V$ of $\br^d$.
\end{defi}

\begin{obs}
  We can replace the open set $V$ in the definition above by $\br^d$ itself or by an open ball $B(x, r) \subset \br^d$, but it is usually easier to find a homeomorphism with an arbitrary open set $V$.
\end{obs}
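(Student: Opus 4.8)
The plan is to read this observation as the assertion that the three possible codomain requirements --- an arbitrary open set $V \subset \br^d$, all of $\br^d$, or an open ball $B(x,r) \subset \br^d$ --- yield the same notion of topological manifold, in the sense that $X$ admits a chart of one kind around each point \tiff it admits a chart of each other kind around each point. Two of the implications are immediate: since $\br^d$ and any open ball are themselves open subsets of $\br^d$, a chart of the second or third type is already a chart in the sense of Definition \ref{chart}. So the real content lies in upgrading a chart onto an arbitrary open set into a chart onto a ball, and a chart onto a ball into a chart onto $\br^d$. The one conceptual point I would flag at the outset is that one cannot in general keep the same domain $U$: the codomain $V = \phi(U)$ need not be a ball or all of $\br^d$, so we will have to shrink $U$ around the point of interest.

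First I would handle the reduction to a ball. Starting from a chart $(U, \phi)$ with $\phi:U \to V$ and a point $p \in U$, set $x \ceq \phi(p) \in V$. Since $V$ is open in $\br^d$, there is $r > 0$ with $B(x,r) \subset V$; put $U' \ceq \phi^{-1}(B(x,r))$, which is open in $X$ because $\phi$ is continuous and which contains $p$. The restriction $\phi|_{U'}: U' \to B(x,r)$ is then a bijection whose inverse is the restriction of the continuous $\phi^{-1}$, hence a homeomorphism onto the open ball $B(x,r)$. This produces a chart of the third type around $p$.

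Next I would pass from a ball to $\br^d$ by composing with a fixed homeomorphism $h: B(x,r) \to \br^d$. After the affine change of coordinates $y \mapsto (y-x)/r$ it suffices to treat the unit ball $B(0,1)$, and there I would use the explicit map
\begin{equation}
  h(y) = \frac{y}{1 - \norm{y}}, \qquad h^{-1}(z) = \frac{z}{1 + \norm{z}}.
\end{equation}
Composing $h$ (after the affine rescaling) with the ball-chart $\phi|_{U'}$ then yields a chart $U' \to \br^d$ of the second type.

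The bulk of the remaining work --- and the only step with any friction --- is verifying that $h$ really is a homeomorphism: one checks that $h$ and $h^{-1}$ are mutually inverse, via the short computation $\norm{h(y)} = \norm{y}/(1-\norm{y})$ and back-substitution, and that both maps are continuous, which is clear since on their respective domains they are ratios of continuous functions with non-vanishing denominators. I do not expect any genuine obstacle here; the main things to state carefully are the shrinking of $U$ noted above and the standard fact that the restriction of a homeomorphism to an open subset is again a homeomorphism onto its image, which is itself open.
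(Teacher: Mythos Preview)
Your argument is correct and complete: shrinking the domain to land in a ball, then composing with the radial homeomorphism $y \mapsto y/(1-\norm{y})$ to reach all of $\br^d$, is exactly the standard route, and you have flagged the only subtlety (that the domain $U$ must shrink).

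The paper, however, gives no proof at all: this is stated as a bare observation with no justification, in keeping with the chapter's policy of omitting proofs of standard facts and deferring to \cite{lee2012}. So there is no approach to compare against; you have simply supplied what the paper left out.
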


\begin{defi}
  Let $X_1, \ldots, X_m, Y_1, \ldots, Y_n$ be arbitrary sets and $f:X_1 \x \ldots \x X_m \to Y_1 \x \ldots \x Y_n$ be a function.
  The \emph{components of $f$} are the composites $\pi_i \circ f$, being $\pi_i:Y_1 \x \ldots \x Y_n \to Y_i$ the canonical projection $\pi_i(y_1, \ldots, y_n) = y_i$.
\end{defi}

\begin{defi} \label{smooth_euclidean}
  Let $U \subset \br^d$ be an open subset and $f:U \to \br^m$ be an arbitrary function.
  We say that $f$ is \emph{smooth (in the Euclidean sense)} if all the partial derivatives of order $k$ computed at $x$ of all the component functions exist for every $k \in \bn$ and every $x \in U$.
\end{defi}

\begin{defi}
  Two charts $(U_1, \phi_1)$ and $(U_2, \phi_2)$ of $X$ are said to be \emph{compatible}\index{chart!compatible} if $U_1 \cap U_2 = \emptyset$ or $\phi_1 \circ \phi_2^{-1}:\phi_2(U_1 \cap U_2) \to \phi_1(U_1 \cap U_2)$ is a diffeomorphism (\ie, a smooth bijection whose inverse is also smooth).
\end{defi}

\begin{defi}
  A set $\mc{A} = \qty{(U_i, \phi_i) : i \in I}$ of charts of $X$ is said to be an \emph{atlas for $X$}\index{atlas} if $X = \bigcup_{i \in I} U_i$ and the charts are all compatible with each other.
\end{defi}

\begin{defi}
  Two atlases $\mc{A}_1$ and $\mc{A}_2$ are said to be \emph{compatible}\index{atlas!compatible} if $\mc{A}_1 \cup \mc{A}_2$ is again an atlas.
\end{defi}

\begin{defi}
  An atlas $\mc{A}$ is said to be \emph{maximal}\index{atlas!maximal} if any other atlas that is compatible with $\mc{A}$ is contained in $\mc{A}$.
\end{defi}

\begin{defi}
  A \emph{($d$-dimensional) smooth manifold}\index{manifold!smooth}\index{smooth!manifold} (also called \emph{differentiable manifold}) is a pair $(X, \mc{A})$, being $X$ a ($d$-dimensional) topological manifold and $\mc{A}$ a maximal atlas.
\end{defi}

\begin{obs}
  When the atlas is not important, we will denote a smooth manifold just by $X$.
\end{obs}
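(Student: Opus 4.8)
The final statement is a notational convention rather than a proposition, so strictly speaking there is nothing to prove: it only records that a smooth manifold $(X, \mc{A})$ will frequently be written $X$. The plan is therefore not to derive a result but to make explicit what legitimises the abbreviation and to flag the single subtlety it conceals.

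First I would note that the suppressed datum is genuinely lost. A fixed $d$-dimensional topological manifold $X$ can carry several maximal atlases that are pairwise incompatible — distinct smooth structures — so the assignment $(X, \mc{A}) \mapsto X$ is not injective, and $\mc{A}$ cannot in general be reconstructed from $X$ alone. Even at the most elementary level this already occurs: on $\br$ the identity chart and the chart $x \mapsto x^{3}$ are not compatible (their transition $y \mapsto y^{1/3}$ fails to be smooth at $0$), so they generate different maximal atlases. Hence I would stress that the notation is a deliberate abuse, valid exactly under the stated hypothesis that \emph{the atlas is not important}. The content one must check is thus local to each later invocation: whenever we write $X$ for a smooth manifold and assert a property of it, we would need that property to be independent of which atlas compatible with $\mc{A}$ is used to witness it.

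The only place where this will require any care is the sequence of definitions that follows — smooth maps, tangent spaces, and ultimately the Riemannian data built on them. The plan is to phrase each of these so that it depends only on the maximal atlas $\mc{A}$ attached to $X$; once that is arranged, dropping $\mc{A}$ from the notation is automatically harmless. The main obstacle is therefore conceptual rather than computational: recognising that no claim is being made, and ensuring the convention is never invoked in a setting where inequivalent smooth structures on the same underlying space are simultaneously in play.
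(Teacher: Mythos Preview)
Your assessment is correct: this is a notational convention, not a proposition, and the paper offers no proof or justification beyond simply stating it. Your additional commentary on why the abbreviation is a genuine abuse (inequivalent smooth structures on the same topological manifold) goes beyond what the paper does, but it is accurate and appropriate context; there is nothing to compare since the paper treats the observation as self-evident.
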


\begin{obs}
  From now on all manifolds are assumed to be smooth.
\end{obs}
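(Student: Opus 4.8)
The plan is to recognize at the outset that this Observation is a \emph{convention}, not a proposition: it fixes terminology rather than asserting a fact, and consequently it carries no logical content that could be proved or disproved. The author is simply declaring that, from here on, the bare word ``manifold'' abbreviates ``smooth manifold'' in the sense just defined. So the honest ``proof'' is a one-line acknowledgement that we are entitled to adopt such a shorthand --- precisely as one writes ``group'' for ``group together with its fixed operation'' or ``space'' for ``topological space'' once the ambient category has been fixed. There are no hypotheses, no derivation, and nothing to verify.

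If one insists on locating a genuinely checkable assertion hidden nearby, the only candidates are the tacit claims that the convention is \emph{non-vacuous} and \emph{unambiguous}: that smooth manifolds exist at all, so that restricting the word does not collapse the theory to the empty case, and that replacing a longer phrase by a shorter one cannot generate a clash of meanings. The first point is settled immediately by the chapter itself, which constructs the Stiefel manifold and the Grassmannian as explicit smooth manifolds; the second is automatic, since ``smooth manifold'' is a single well-defined notion (a topological manifold paired with a maximal atlas), and abbreviating it introduces no competing referent. Both of these, however, are separate statements, not the Observation, which merely stipulates.

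Accordingly, I expect no mathematical obstacle here whatsoever. The only real pitfall is expository: the temptation to manufacture a theorem where the author intends a stipulation. The complete and correct response is therefore to note that the Observation is definitional, to record that it is harmless because smooth manifolds exist in abundance and the abbreviation is unambiguous, and to proceed.
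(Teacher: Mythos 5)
Your reading is exactly right: the Observation is a terminological convention, not a proposition, and the paper accordingly offers no proof — it simply stipulates the abbreviation and moves on. Your one-line acknowledgement (plus the optional remarks on non-vacuity and unambiguity) matches the paper's treatment, so there is nothing further to check.
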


\begin{obs}
  It is always possible to extend an atlas $\mc{A}$ to a unique maximal atlas $\ol{\mc{A}}$, just let $\ol{\mc{A}}$ be the set of all charts of $X$ that are compatible with all charts in $\mc{A}$.
\end{obs}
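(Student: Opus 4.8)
The plan is to verify the claim in four movements: that $\ol{\mc{A}}$ contains $\mc{A}$, that $\ol{\mc{A}}$ is itself an atlas, that it is maximal, and finally that it is the only maximal atlas containing $\mc{A}$. The first point is immediate: since $\mc{A}$ is an atlas its charts are pairwise compatible, so every chart of $\mc{A}$ is compatible with all charts of $\mc{A}$ and therefore belongs to $\ol{\mc{A}}$ by definition. Consequently the domains of the charts in $\ol{\mc{A}}$ already cover $X$, so to see that $\ol{\mc{A}}$ is an atlas it only remains to check that any two of its charts are compatible.

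This last check is where the real work lies, because compatibility of charts is not a transitive relation and cannot be deduced formally from the definition of $\ol{\mc{A}}$ alone. I would argue locally. Take charts $(U_1, \phi_1), (U_2, \phi_2) \in \ol{\mc{A}}$ with $U_1 \cap U_2 \neq \emptyset$ and fix a point $p \in U_1 \cap U_2$. Because $\mc{A}$ covers $X$, there is a chart $(W, \psi) \in \mc{A}$ with $p \in W$, and both $(U_1, \phi_1)$ and $(U_2, \phi_2)$ are compatible with $(W, \psi)$ by definition of $\ol{\mc{A}}$. On the image $\phi_2(U_1 \cap U_2 \cap W)$, which is an open neighbourhood of $\phi_2(p)$, I would factor the transition map as
$$\phi_1 \circ \phi_2^{-1} = (\phi_1 \circ \psi^{-1}) \circ (\psi \circ \phi_2^{-1}),$$
where each factor is smooth precisely because $(U_1, \phi_1)$ and $(U_2, \phi_2)$ are compatible with $(W, \psi)$. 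Hence $\phi_1 \circ \phi_2^{-1}$ is smooth near $\phi_2(p)$; since $p$ was an arbitrary point of $U_1 \cap U_2$ and smoothness is a local property (Definition \ref{smooth_euclidean}), the transition map is smooth on all of $\phi_2(U_1 \cap U_2)$. The same reasoning with the roles of the two charts reversed shows the inverse is smooth, so the transition is a diffeomorphism and the charts are compatible. This establishes that $\ol{\mc{A}}$ is an atlas.

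For maximality, I would take any atlas $\mc{B}$ compatible with $\ol{\mc{A}}$, so that $\ol{\mc{A}} \cup \mc{B}$ is an atlas, and show $\mc{B} \subseteq \ol{\mc{A}}$. Indeed, every chart of $\mc{B}$ is then compatible with every chart of $\ol{\mc{A}}$, in particular with every chart of $\mc{A} \subseteq \ol{\mc{A}}$, and so lies in $\ol{\mc{A}}$ by definition. Finally, for uniqueness, suppose $\mc{M}$ is any maximal atlas with $\mc{A} \subseteq \mc{M}$. Each chart of $\mc{M}$ is compatible with all charts of $\mc{M}$, hence with all charts of $\mc{A}$, so $\mc{M} \subseteq \ol{\mc{A}}$; but then $\mc{M} \cup \ol{\mc{A}} = \ol{\mc{A}}$ is an atlas, witnessing that $\ol{\mc{A}}$ is compatible with $\mc{M}$, and maximality of $\mc{M}$ forces $\ol{\mc{A}} \subseteq \mc{M}$. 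The two inclusions give $\mc{M} = \ol{\mc{A}}$. The only genuinely non-formal step is the local smoothness patching in the second paragraph; everything else is just unwinding the definitions of atlas, compatibility and maximality.
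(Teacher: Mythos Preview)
Your proof is correct and is the standard argument for this fact. The paper does not actually supply a proof: the statement is recorded as an observation with only the construction of $\ol{\mc{A}}$ given as a hint, so there is nothing to compare against beyond that. Your write-up fills in exactly the details one would expect, and the local factoring through a chart of $\mc{A}$ is indeed the essential step.
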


This observation is very useful because we can now provide an infinite amount of examples of manifolds quite easily:
\begin{exm}[Open sets]
  Any open subset $U$ of the Euclidean space $\br^d$ with the subspace topology and the atlas $\qty{(U, \id_U)}$ is a smooth manifold.
  Consequently, $\br^d$ is a manifold and open balls are manifolds.
\end{exm}

\begin{exm}[Vector spaces]
  Finite-dimensional vector spaces $X$ over $\br$ and $\bc$ (with the topology defined in Example \ref{vector_space_topology}) are manifolds,\footnote{It is possible to generalize manifolds to cover the infinite-dimensional case, but we will not do so in the present work.} just consider the chart $\qty{(X, T)}$, being $T:X \to \br^d$ an isomorphism.
  When the underlying field of the vector space is $\bc$, we interpret $\bc^d$ as $\br^{2d}$.
  It is important to mention that any other isomorphism $S:X \to \br^d$ is compatible with $T$ because $T \circ S^{-1}$ is obviously a bijection and, since it can be represented by an invertible matrix, it is also a smooth function (its components are polynomials) and its inverse is smooth by the Inverse Function Theorem.
\end{exm}

\begin{exm}[Symmetric matrices]
  A particular example of vector space that will be useful later is $\sym(N) \ceq \qty{M \in \mat{N}{N} : \text{$M$ is symmetric, \ie, $M = M^{\top}$}}$.
  It is also important to mention that the topology on $\sym(N)$ defined in Example \ref{vector_space_topology} is the same as the subspace topology because this allows us to see $\sym(N)$ as an embedded manifold inside $\mat{N}{N}$ (we will define what embedded means in a moment, just keep that in mind).
\end{exm}

\begin{defi}
  A function between manifolds $f:X \to Y$ is said to be \emph{smooth at $x \in X$}\index{smooth!function}\index{function!smooth} if there exists charts $(U_X, \phi_X)$ and $(U_Y, \phi_Y)$ of $X$ and $Y$, respectively, such that $x \in U_X$, $f(U_X) \subset U_Y$ and $\phi_Y \circ f \circ \phi_X^{-1}:\phi_X(U_X) \to \phi_Y(U_Y)$ is smooth in the Euclidean sense.
  If $f$ is smooth at every $x \in X$, then we say $f$ is a \emph{smooth function}.
\end{defi}

\begin{obs}
  We will denote the set of all smooth functions $f:X \to \br$ by $C^{\infty}(X)$.
\end{obs}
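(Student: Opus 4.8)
The statement is a notational convention rather than a mathematical claim, so strictly speaking there is nothing to prove: the symbol $C^{\infty}(X)$ is simply being introduced as shorthand for a set whose membership criterion was already fixed by the preceding definition of a smooth function between manifolds. Since $\br$ was shown to be a smooth manifold in the earlier example on vector spaces (or on open sets), the phrase ``smooth function $f:X \to \br$'' already carries a precise meaning, and $C^{\infty}(X)$ is just the collection of all such $f$. My plan, therefore, is not to prove the observation itself but to sketch the two natural consistency facts a reader would expect to accompany this notation.

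First I would verify that the notion is \emph{well-defined}, in the sense that it does not depend on the auxiliary choices hidden in the definition of smoothness. Concretely, the definition of ``smooth at $x$'' asserts the existence of charts $(U_X, \phi_X)$ and $(U_Y, \phi_Y)$ with the relevant coordinate representation smooth in the Euclidean sense; I would argue, using the compatibility of charts within the maximal atlas, that if this holds for one pair of charts around $x$ then it holds for every such pair. The key point is that passing from one chart to another composes the coordinate representation with the transition map $\phi_1 \circ \phi_2^{-1}$, which is a diffeomorphism, and the composite of Euclidean-smooth maps is Euclidean-smooth by the chain rule. This makes membership in $C^{\infty}(X)$ an intrinsic property of $f$.

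Second, I would establish the structural fact that $C^{\infty}(X)$ is a commutative $\br$-algebra under pointwise addition, scalar multiplication, and multiplication. The plan is to reduce each closure property to the Euclidean case: if $f, g \in C^{\infty}(X)$ and $x \in X$, choose a single chart $(U, \phi)$ around $x$ (so that $\pars{f+g} \circ \phi^{-1}$ equals $f \circ \phi^{-1} + g \circ \phi^{-1}$, and similarly for products and scalar multiples), and then invoke the elementary fact that sums, products, and scalar multiples of Euclidean-smooth functions are again Euclidean-smooth. The constant function $1$ supplies the multiplicative identity.

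The main obstacle is essentially bookkeeping rather than conceptual difficulty: one must be careful that the charts used for $f$ and for $g$ can be taken to have a common domain around $x$ (which follows by intersecting domains and restricting, since a restriction of a chart is again a chart in the maximal atlas), so that the pointwise operations genuinely correspond to the same Euclidean operations after composing with $\phi^{-1}$. Once this alignment of charts is handled, every step reduces to standard multivariable calculus, and no deeper machinery from the later chapters is required.
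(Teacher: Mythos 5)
Your proposal is correct and consistent with the paper, which offers no proof at all here because the observation merely introduces the notation $C^{\infty}(X)$ for a set already made precise by the preceding definition of smooth functions and the example showing $\br$ is a manifold. Your two supplementary consistency checks (chart-independence of smoothness via compatibility of charts in the maximal atlas, and the pointwise $\br$-algebra structure) are sound but strictly optional additions beyond what the paper asserts.
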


Now let us define the tangent space to a manifold at a fixed point.
Given $X$ and $x \in X$, let
\begin{equation}
  C_x \ceq \qty{\gamma:(-\eps, \eps) \to X : \text{$\eps > 0$, $\gamma(0) = x$, $\gamma$ is smooth}}.
\end{equation}
Consider the following equivalence relation in $C_x$: given a chart $(U, \phi)$ and $\gamma_1, \gamma_2 \in C_x$, then $\gamma_1 \sim_{\phi} \gamma_2$ \tiff $(\phi \circ \gamma_1)'(0) = (\phi \circ \gamma_2)'(0)$.
This equivalence relation does not depend on the choice of chart, \ie, for any other chart $(\ol{U}, \ol{\phi})$, we have $\gamma_1 \sim_{\phi} \gamma_2$ \tiff $\gamma_1 \sim_{\ol{\phi}} \gamma_2$.
With that said, we will denote $\sim_{\phi}$ just as $\sim$ and we can define the tangent space:

\begin{defi}
  Given a smooth manifold $X$ and $x \in X$, the \emph{tangent space (to $X$ at $x$)}\index{tangent!space} is the quotient $C_x/\sim$.
  We will denote this quotient by $T_xM$ and call its elements \emph{tangent vectors}\index{tangent!vector}.
\end{defi}

\begin{obs}
  For any chart $(U, \phi)$ containing $x \in X$ it is possible to define the following bijection:
  \begin{align}
    \begin{split}
      \theta_x^{\phi}:T_xM & \to \br^d \\
      [\gamma] & \mapsto (\phi \circ \gamma)'(0).
    \end{split}
  \end{align}
  Then, we induce a vector space structure on $T_xM$ in the following way:
  \begin{equation}
    r[\gamma_1] + s[\gamma_2]
    \ceq (\theta_x^{\phi})^{-1}\qty(r\theta_x^{\phi}([\gamma_1])
    + s\theta_x^{\phi}([\gamma_2])).
  \end{equation}
  This structure does not depend on the choice of chart.
\end{obs}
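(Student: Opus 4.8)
The plan is to establish the three assertions in order: that $\theta_x^{\phi}$ is a well-defined bijection, that the displayed formula is just the transport of the $\br^d$-structure through it, and finally that the resulting structure is chart-independent, which is the crux. First I would check well-definedness and injectivity, both of which are immediate from the definition of $\sim$. Since (by the chart-independence of the relation, already granted in the text) $\gamma_1 \sim \gamma_2$ holds precisely when $(\phi\circ\gamma_1)'(0) = (\phi\circ\gamma_2)'(0)$, the value $(\phi\circ\gamma)'(0)$ depends only on the class $[\gamma]$, and two classes with equal image coincide. For surjectivity I would exhibit a preimage of an arbitrary $v \in \br^d$ explicitly: writing $p \ceq \phi(x)$ and picking $\eps > 0$ small enough that $p + tv \in V$ for $|t| < \eps$, the curve $\gamma(t) \ceq \phi^{-1}(p + tv)$ lies in $C_x$, since $\gamma(0) = x$ and $\phi\circ\gamma$ is the affine (hence smooth) map $t \mapsto p + tv$, and it satisfies $\theta_x^{\phi}([\gamma]) = v$.

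Since $\theta_x^{\phi}$ is now a bijection onto the vector space $\br^d$, the second claim is the general fact that one may transport operations along a bijection into a vector space. The displayed formula defines $r[\gamma_1] + s[\gamma_2]$ as the unique element whose $\theta_x^{\phi}$-image equals $r\,\theta_x^{\phi}([\gamma_1]) + s\,\theta_x^{\phi}([\gamma_2])$; all vector space axioms then hold automatically because they hold in $\br^d$, and by construction $\theta_x^{\phi}$ becomes a linear isomorphism.

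The main point, and the step I expect to be the real work, is chart independence. I would compare $\theta_x^{\phi}$ with the map $\theta_x^{\ol\phi}$ coming from a second chart $(\ol U, \ol\phi)$. On the overlap one has $\ol\phi\circ\gamma = (\ol\phi\circ\phi^{-1})\circ(\phi\circ\gamma)$, so the chain rule yields
\begin{equation}
  \theta_x^{\ol\phi}([\gamma]) = D(\ol\phi\circ\phi^{-1})_{\phi(x)}\, \theta_x^{\phi}([\gamma]),
\end{equation}
that is, $\theta_x^{\ol\phi} = L\circ\theta_x^{\phi}$ with $L \ceq D(\ol\phi\circ\phi^{-1})_{\phi(x)}$. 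Because the two charts are compatible, $\ol\phi\circ\phi^{-1}$ is a diffeomorphism, so $L$ is a linear isomorphism of $\br^d$. Substituting $(\theta_x^{\ol\phi})^{-1} = (\theta_x^{\phi})^{-1}\circ L^{-1}$ into the defining formula for $\ol\phi$ and using that $L$ commutes with the linear combination in $\br^d$, the factors $L$ and $L^{-1}$ cancel, so the operation built from $\ol\phi$ coincides with the one built from $\phi$. The only ingredient beyond routine bookkeeping is the linearity of $L$ together with chart compatibility; everything else is the chain rule and the cancellation of a linear isomorphism against its inverse.
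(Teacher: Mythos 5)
Your proof is correct and complete: the bijectivity of $\theta_x^{\phi}$ (via the explicit preimage $\gamma(t) = \phi^{-1}(\phi(x)+tv)$), the transport of structure, and the chart-independence via $\theta_x^{\ol\phi} = D(\ol\phi\circ\phi^{-1})_{\phi(x)}\circ\theta_x^{\phi}$ with the linear isomorphism cancelling against its inverse are all sound. The paper states this observation without proof, so there is nothing to compare against; your argument is the standard one from the references the chapter cites.
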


\begin{exm}
  If $X$ is a finite-dimensional vector space, then $T_xX \cong X$ for every $x \in X$.
  The idea behind the proof of this example is that we can replace (equivalence classes of) curves passing through $x$ by the derivative $\gamma'(0)$ because the linear structure of $X$ allows us to define the derivative $\gamma'$ using limits as in Calculus.
\end{exm}

\begin{thm}[Chain Rule] \label{chain_rule}
  If $f:X \to Y$ and $g:Y \to Z$ are smooth functions, then
  \begin{equation}
    T_x(g \circ f) = T_{f(x)}g \circ T_xf.
  \end{equation}
\end{thm}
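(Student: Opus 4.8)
The plan is to unwind everything to the curve definition of the tangent space. Recall that the differential $T_x f : T_x X \to T_{f(x)} Y$ acts on tangent vectors by $T_x f([\gamma]) = [f \circ \gamma]$: if $\gamma \in C_x$, then $f \circ \gamma$ is a smooth curve in $Y$ with $(f \circ \gamma)(0) = f(x)$, so $[f \circ \gamma]$ is a genuine element of $T_{f(x)} Y$. Before starting I would record that $g \circ f$ is itself smooth (a composite of smooth maps is smooth, directly from the definition of smoothness at a point), so that $T_x(g \circ f)$ is defined in the first place.

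First I would check that the differential is well-defined on equivalence classes, since this is the only place any real content enters. Using the chart bijection $\theta^{\phi}_x$ from the observation following the definition of the tangent space, one sees that $[\gamma_1] = [\gamma_2]$ forces $(\psi \circ f \circ \gamma_1)'(0) = (\psi \circ f \circ \gamma_2)'(0)$ for a chart $(V, \psi)$ around $f(x)$; this is exactly the Euclidean chain rule applied to the coordinate representation $\psi \circ f \circ \phi^{-1}$, which is smooth because $f$ is. Hence $[f \circ \gamma_1] = [f \circ \gamma_2]$, so $T_x f$ is well-defined and linear.

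With well-definedness in hand the theorem is a one-line computation. I would take an arbitrary tangent vector $[\gamma] \in T_x X$ and evaluate both sides:
\begin{equation}
  T_x(g \circ f)([\gamma]) = [(g \circ f) \circ \gamma] = [g \circ (f \circ \gamma)] = T_{f(x)} g([f \circ \gamma]) = (T_{f(x)} g \circ T_x f)([\gamma]),
\end{equation}
where the middle equality is simply associativity of function composition. Since $[\gamma]$ was arbitrary, the two linear maps agree on all of $T_x X$, which is the claim.

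The hard part, such as it is, lies entirely in the well-definedness and chart-independence step; the chain rule itself is purely formal once the differential is phrased in terms of curves. If instead one had defined the differential through the chart bijections $\theta^{\phi}_x$ — so that $T_x f$ is represented by the Jacobian of $\psi \circ f \circ \phi^{-1}$ at $\phi(x)$ — then the argument would reduce to the Euclidean chain rule written as a product of Jacobians, $J_{\psi \circ (g \circ f) \circ \phi^{-1}} = J_{\chi \circ g \circ \psi^{-1}} \cdot J_{\psi \circ f \circ \phi^{-1}}$, together with the chart-independence already established for the tangent space; I expect that version to be slightly more tedious but equivalent.
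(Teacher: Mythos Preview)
Your proof is correct, but there is nothing to compare it against: the paper states the Chain Rule without proof, in keeping with its announced policy of omitting proofs for standard results ``easily found in the literature.'' Your curve-based argument is exactly the one the paper's definition of $T_xf$ (which, incidentally, appears \emph{after} the Chain Rule in the text) is designed to make trivial, and your remarks on well-definedness fill in the only substantive gap.
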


\begin{defi}
  Given a smooth function $f:X \to Y$ and $x \in X$, the \emph{directional derivative (of $f$ at $x$)} is the linear transformation defined by
  \begin{align}
    \begin{split}
      T_xf:T_pX & \to T_{f(x)}Y \\
      [\gamma] & \mapsto [f \circ \gamma].
    \end{split}
  \end{align}
\end{defi}

\begin{exm}
  When $X$ and $Y$ are vector spaces, the directional derivative can be computed as we learn in Calculus:
  \begin{equation}
    T_xf(v) = \lim_{t \to 0} \frac{f(x + tv) - f(x)}{t}.
  \end{equation}
  Observe that this only makes sense because $T_xX \cong X$ and $T_{f(x)}Y \cong Y$.
\end{exm}

\begin{defi}
  Given a manifold $X$, its \emph{tangent bundle}\index{tangent!bundle} is the set
  \begin{equation}
    TX \ceq \bigcup_{x \in X} \qty{x} \x T_xX = \qty{(x, [\gamma]) : x \in X, \ [\gamma] \in T_xX}.
  \end{equation}
  The tangent bundle comes with a natural function $\pi:TX \to X$, which we call \emph{projection}, that is defined by $\pi(x, [\gamma]) = x$.
\end{defi}

\begin{obs}
  The tangent bundle $TX$ is actually a $2d$-dimensional manifold, being $d = \dim{X}$.
  Its charts are constructed as follows: given a chart $(U, \phi)$ of $X$, the charts of $TX$ are $(\pi^{-1}(U), \wtil{\phi})$, being $\pi^{-1}(U)$ the preimage of $U$ and $\wtil{\phi}(x, [\gamma]) \ceq (\phi(x), \theta_x^{\phi}([\gamma]))$.
\end{obs}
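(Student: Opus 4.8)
The plan is to exhibit the collection $\mc{A}_{TX} \ceq \qty{(\pi^{-1}(U), \wtil{\phi}) : (U, \phi) \text{ a chart of } X}$ as a $2d$-dimensional smooth atlas, where $TX$ is given the topology induced by declaring each $\wtil{\phi}$ a homeomorphism onto its image (the well-definedness of this topology will come for free from the compatibility computation below). First I would check that each $\wtil{\phi}$ is a bijection onto an open subset of $\br^{2d}$: writing $V \ceq \phi(U)$, since $\phi:U \to V$ is a bijection and every $\theta_x^{\phi}:T_xX \to \br^d$ is a bijection, the map $\wtil{\phi}:\pi^{-1}(U) \to V \x \br^d$ is a bijection onto the open set $V \x \br^d \subset \br^d \x \br^d \cong \br^{2d}$. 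Because the domains $U$ cover $X$, their preimages $\pi^{-1}(U)$ cover $TX$, so $\mc{A}_{TX}$ covers $TX$.

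The crux is the compatibility of two overlapping charts, and this is the step I expect to be the main obstacle. Given charts $(U_1, \phi_1)$ and $(U_2, \phi_2)$ with $U_1 \cap U_2 \neq \emptyset$, set $\psi \ceq \phi_1 \circ \phi_2^{-1}$, a diffeomorphism between open subsets of $\br^d$ by compatibility in $X$. For $(x, [\gamma]) \in \pi^{-1}(U_1 \cap U_2)$, write $a \ceq \phi_2(x)$ and $w \ceq \theta_x^{\phi_2}([\gamma]) = (\phi_2 \circ \gamma)'(0)$. The key identity is the Euclidean chain rule applied to $\phi_1 \circ \gamma = \psi \circ (\phi_2 \circ \gamma)$:
\begin{equation}
  \theta_x^{\phi_1}([\gamma]) = (\phi_1 \circ \gamma)'(0) = D\psi(a)\,(\phi_2 \circ \gamma)'(0) = D\psi(a)\,w,
\end{equation}
where $D\psi(a)$ is the Jacobian of $\psi$ at $a$. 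Hence the transition map reads
\begin{equation}
  \wtil{\phi_1} \circ \wtil{\phi_2}^{-1}(a, w) = \pars{\psi(a),\, D\psi(a)\,w}.
\end{equation}
Everything hinges on correctly recognizing that $\theta_x^{\phi_1} \circ (\theta_x^{\phi_2})^{-1}$ is exactly multiplication by the Jacobian, which is precisely what the chain rule delivers.

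Smoothness is then routine: $\psi$ is smooth, the assignment $a \mapsto D\psi(a)$ is smooth because its entries are partial derivatives of the smooth map $\psi$ (which exist to all orders), and $(a, w) \mapsto D\psi(a)\,w$ is smooth since each component is a finite sum of products of smooth functions. Interchanging the two charts shows $\wtil{\phi_2} \circ \wtil{\phi_1}^{-1}$ is likewise smooth, so the transition map is a diffeomorphism and the charts are compatible; extending $\mc{A}_{TX}$ to its unique maximal atlas yields the smooth structure. It remains to confirm the topological conditions. Locally Euclidean of dimension $2d$ is immediate, as each $\wtil{\phi}$ is a homeomorphism onto an open subset of $\br^{2d}$. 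For second-countability I would take a countable subcover $\qty{U_i}$ of $X$ and note that each $\pi^{-1}(U_i) \cong V_i \x \br^d$ is second-countable, whence so is the countable union $TX$. For the Hausdorff property, given two distinct points of $TX$: if they project to distinct points of $X$, I separate those in $X$ and pull the separating opens back by the continuous projection $\pi$; if they project to the same point, they lie in a common domain $\pi^{-1}(U)$ and are separated there via the homeomorphism $\wtil{\phi}$ into $\br^{2d}$.
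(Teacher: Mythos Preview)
Your argument is correct and is the standard proof: the key identity $\wtil{\phi_1} \circ \wtil{\phi_2}^{-1}(a, w) = (\psi(a), D\psi(a)\,w)$ via the Euclidean chain rule is exactly what makes the charts compatible, and your treatment of the topological conditions (inducing the topology from the charts, then checking Hausdorff and second-countability separately) is sound. The paper itself does not prove this observation---it states the chart construction and leaves the verification to the literature---so there is no approach to compare against; what you have written is precisely the textbook argument one would find in the references the paper cites.
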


\begin{defi}
  A \emph{vector field on $X$}\index{vector field} is a smooth function $V:X \to TX$ such that $\pi \circ V = \id_X$.
  The set of all vector fields on $X$ will be denoted by $\mf{X}(X)$.
\end{defi}

\begin{obs}
  Most of the time we will use $V(x)$ or $Vx$ to represent just the tangent vector instead of the pair $(x, V(x))$.
\end{obs}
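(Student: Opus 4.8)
The statement is a notational convention rather than a genuine mathematical claim, so strictly speaking there is nothing to prove; what can be done is to verify that the abbreviation is well-defined and that it discards no information. The plan is simply to unpack the definition of a vector field and to observe that the defining condition $\pi \circ V = \id_X$ pins down the first coordinate of $V(x)$ completely.

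First I would recall that, by the definition of the tangent bundle, any value $V(x)$ lies in $TX = \bigcup_{y \in X} \{ y \} \x T_y X$, hence is a pair $(y, w)$ with $y \in X$ and $w \in T_y X$. Applying $\pi$ and using $\pi \circ V = \id_X$ gives $\pi(V(x)) = x$; since $\pi(y, w) = y$ by definition of the projection, this forces $y = x$. Therefore $V(x) = (x, w)$ for a unique $w \in T_x X$, so the base point is never anything other than the argument $x$ itself and is, in this sense, redundant.

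Next I would note that the assignment $x \mapsto w$ is single-valued (because $V$ is a function), so denoting this tangent vector $w$ again by $V(x)$ or $Vx$ creates no ambiguity, and conversely the full pair is recovered as $(x, V(x))$, so nothing is lost. There is no real obstacle here; the only point requiring a little care is that under this convention $V(x)$ denotes an element of the fibre $T_x X$ rather than of the total space $TX$, a distinction that is harmless precisely because the base point $x$ is always available from the context in which $V(x)$ appears.
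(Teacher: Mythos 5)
Your verification is correct and matches the implicit content of the paper, which states this observation as a notational convention without proof: the condition $\pi \circ V = \id_X$ in the definition of a vector field forces the base point of $V(x)$ to be $x$ itself, so recording only the tangent-vector component loses nothing. Nothing further is needed.
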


\begin{obs} \label{vector_fields_module}
  $\mf{X}(X)$ is actually a vector space over $\br$ and a module (a vector space over a ring instead of a field) over $C^{\infty}(X)$.
  That means we can multiply $V \in \mf{X}(X)$ by a function $f:X \to \br$ by doing $(fV)(x) \ceq f(x)V(x)$ because $V(x) \in T_xX$, $f(x) \in \br$ and $T_xX$ is a vector space over $\br$.
\end{obs}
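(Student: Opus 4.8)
The plan is to equip $\mf{X}(X)$ with the obvious pointwise operations and then verify, in order, that (i) these operations produce genuine vector fields, (ii) the resulting structure satisfies the vector space axioms over $\br$, and (iii) it satisfies the module axioms over the ring $C^{\infty}(X)$. Concretely, for $V, W \in \mf{X}(X)$, $r \in \br$ and $f \in C^{\infty}(X)$ I would set $(V+W)(x) \ceq V(x) + W(x)$, $(rV)(x) \ceq r V(x)$ and $(fV)(x) \ceq f(x) V(x)$, where each right-hand side uses the $\br$-vector space structure already available on the fiber $T_xX$.

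First I would dispatch the projection condition. Since $V(x), W(x) \in T_xX$ and $T_xX$ is closed under its own addition and scalar multiplication, each of $V(x)+W(x)$, $rV(x)$ and $f(x)V(x)$ again lies in $T_xX$; hence $\pi \circ (V+W) = \pi \circ (rV) = \pi \circ (fV) = \id_X$ holds automatically, so the only substantive requirement left for these maps to be vector fields is smoothness.

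The smoothness check is the heart of the argument, and I would carry it out in a chart. Fixing a chart $(U, \phi)$ of $X$, recall the induced chart $(\pi^{-1}(U), \wtil{\phi})$ on $TX$ from the earlier observation, with $\wtil{\phi}(x, [\gamma]) = (\phi(x), \theta_x^{\phi}([\gamma]))$. Writing the principal parts $v(x) \ceq \theta_x^{\phi}(V(x))$ and $w(x) \ceq \theta_x^{\phi}(W(x))$, smoothness of $V$ and $W$ is exactly smoothness (in the Euclidean sense) of $v \circ \phi^{-1}$ and $w \circ \phi^{-1}$. The key fact is that each $\theta_x^{\phi}$ is \emph{linear}, being the very map through which the vector space structure on $T_xX$ was defined; consequently the principal part of $V+W$ is $v+w$, that of $rV$ is $rv$, and that of $fV$ is $(f \circ \phi^{-1}) \cdot (v \circ \phi^{-1})$, a smooth scalar function times a smooth vector-valued one. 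Sums of smooth maps, scalar multiples of smooth maps, and products of a smooth scalar function with a smooth $\br^d$-valued map are again smooth in the Euclidean sense, so each operation yields a smooth map $X \to TX$ and therefore a vector field.

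Finally, the algebraic axioms require no manifold theory. Since the operations are defined pointwise and each fiber $T_xX$ is an $\br$-vector space, the vector space axioms for $\mf{X}(X)$ follow by evaluating both sides at an arbitrary $x$ and appealing to the corresponding axiom in $T_xX$; the zero element is the zero section $x \mapsto 0 \in T_xX$ (smooth, since its principal part is constant), and additive inverses are given by $(-1)V$. The module axioms — associativity $(fg)V = f(gV)$, distributivity $f(V+W) = fV + fW$ and $(f+g)V = fV + gV$, and the unit law $1 \cdot V = V$ — reduce in the same way to identities in $T_xX$ combined with the ring operations of $C^{\infty}(X)$. I expect the only real obstacle to be the smoothness step, specifically making clean use of the linearity of $\theta_x^{\phi}$ to transport the Euclidean smoothness of the principal parts through the three operations; everything else is bookkeeping.
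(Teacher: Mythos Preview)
Your proof is correct and complete. However, the paper does not actually prove this observation at all: it merely states it as a remark, with the only justification being the parenthetical explanation that $(fV)(x) \ceq f(x)V(x)$ is sensible because $T_xX$ is an $\br$-vector space. You have supplied a genuine proof where the paper simply asserts a standard fact, and in particular your smoothness verification via the linearity of $\theta_x^{\phi}$ and the induced chart $\wtil{\phi}$ on $TX$ is exactly the content one would need to justify the claim rigorously.
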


\begin{defi}
  A smooth function $f:X \to Y$ is called an \emph{immersion}\index{immersion} if the directional derivative $T_xf:T_xX \to T_{f(x)}Y$ is injective for every $x \in X$.
\end{defi}

\begin{defi}
  A smooth function $f:X \to Y$ is called a \emph{submersion}\index{submersion} if the directional derivative $T_xf:T_xX \to T_{f(x)}Y$ is surjective for every $x \in X$.
\end{defi}

\begin{exm}
  The canonical inclusion $i:\sym(N) \to \mat{N}{N}$ is an immersion because, since it is a linear transformation, we have $T_Mi = i$ for every $M \in \sym(N)$, which is obviously an injective function.
\end{exm}

\begin{defi}
  An immersion $f:X \to Y$ is called an \emph{embedding (of $X$ into $Y$)}\index{embedding} if $f$ is also a topological embedding, \ie, $X$ and $f(X)$ are homeomorphic when we endow $f(X)$ with the subspace topology of $Y$.
\end{defi}

\begin{defi}
  An \emph{embedded manifold}\index{manifold!embedded} is a subset $S \subset X$ such that $S$ with the subspace topology and a maximal atlas for this topology makes the inclusion $i:S \to X$ an embedding.
\end{defi}

\begin{exm}
  $\sym(N)$ is embedded in $\mat{N}{N}$.
\end{exm}

\begin{obs} \label{embedded_inside_embedded}
  If $S$ is embedded in $X$ and $S'$ is embedded in $S$, then $S'$ is embedded in $X$.
\end{obs}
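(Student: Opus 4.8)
The plan is to exploit the fact that the inclusion $i:S' \hra X$ factors through the two inclusions already assumed to be embeddings. Writing $i_1:S \hra X$ and $i_2:S' \hra S$ for these inclusions, we have the obvious identity of set maps
\begin{equation}
  i = i_1 \circ i_2.
\end{equation}
The whole argument then reduces to showing that the three properties characterizing an embedding (smoothness, being an immersion, and being a topological embedding) are each stable under composition.

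Before doing that, I would first settle the only genuinely subtle point: making sure the manifold structure on $S'$ is unambiguous. By hypothesis $S'$ carries the subspace topology inherited from $S$, which in turn carries the subspace topology inherited from $X$. The standard transitivity of the subspace topology guarantees that this coincides with the subspace topology $S'$ inherits directly from $X$, so the topology (and hence the maximal atlas) that makes $i_2$ an embedding is exactly the one we must use to test whether $i$ is an embedding. There is therefore no conflict between regarding $S'$ as an embedded submanifold of $S$ and regarding it as a candidate embedded submanifold of $X$.

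With that settled, the three verifications are routine. Smoothness of $i$ follows from smoothness of $i_1$ and $i_2$, since the composite of smooth maps is smooth. To see that $i$ is an immersion, I would apply the Chain Rule (Theorem \ref{chain_rule}) to obtain, for each $s' \in S'$,
\begin{equation}
  T_{s'}i = T_{i_2(s')}i_1 \circ T_{s'}i_2,
\end{equation}
and observe that a composite of injective linear maps is injective, so $T_{s'}i$ is injective. Finally, $i$ is a topological embedding because $i_2$ is a homeomorphism onto its image in $S$ and $i_1$ is a homeomorphism onto its image in $X$; composing them, and again invoking transitivity of the subspace topology, shows that $i$ is a homeomorphism of $S'$ onto $i(S') = S' \subset X$ endowed with the subspace topology.

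The main obstacle is conceptual rather than computational: it is recognizing that the statement is really the assertion that embeddings compose, and that the only step requiring care is the coherence of the topology and smooth structure on $S'$ when it is viewed inside $S$ versus inside $X$. Once the transitivity of the subspace topology is invoked, everything else reduces to the functoriality of the tangent map together with elementary facts about composing smooth and continuous maps.
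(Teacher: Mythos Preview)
Your argument is correct and is the standard way to establish this fact: factor the inclusion as $i = i_1 \circ i_2$ and check that smoothness, the immersion property, and the topological-embedding property are each preserved under composition, with the transitivity of the subspace topology handling the coherence of the two ways of topologizing $S'$.

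The paper, however, does not prove this observation at all; it is simply stated and left unproved, in line with the chapter's declared policy of omitting proofs of standard results not directly tied to the main development. So there is no ``paper's own proof'' to compare against---your write-up supplies exactly the routine verification the paper chose to skip.
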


\begin{prop} \label{tangent_space_embedding}
  If $X$ is embedded in $\br^d$ (or any other vector space, as a matter of fact), then we can describe the tangent space more concretely as
  \begin{equation}
    T_xX = \qty{\gamma'(0) \in \br^d : \text{$\gamma:(-\eps, \eps) \to X$ is smooth and $\gamma(0) = x$}}.
  \end{equation}
  This is only possible because $T_x\br^d \cong \br^d$, though, since the definition of $\gamma'(0)$ is given by $\lim_{t \to 0} \frac{\gamma(t) - \gamma(0)}{t}$, but subtracting $\gamma(t)$ and $\gamma(0)$ does not necessarily make sense if we consider just $X$.
\end{prop}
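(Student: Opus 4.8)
The plan is to realize the concrete set on the right-hand side as the image of the abstract tangent space $T_xX$ under the directional derivative of the inclusion, and to argue that this directional derivative is injective, so that it furnishes a linear isomorphism between $T_xX$ and that image. First I would invoke the hypothesis that $X$ is embedded in $\br^d$: by definition the inclusion $i:X \to \br^d$ is an embedding, hence in particular an immersion, so the directional derivative $T_xi:T_xX \to T_{i(x)}\br^d$ is injective. Composing with the canonical identification $T_{i(x)}\br^d \cong \br^d$ (induced by the global identity chart, under which a class $[\delta]$ corresponds to $\delta'(0)$), this realizes $T_xX$ as a linear subspace of $\br^d$, namely the image of $T_xi$.

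Next I would compute this image explicitly. By the definition of the directional derivative we have $T_xi([\gamma]) = [i\circ\gamma]$, and under the identification $T_{i(x)}\br^d \cong \br^d$ this class corresponds to $(i\circ\gamma)'(0)$. Since $i$ is the inclusion, $i\circ\gamma$ is just the curve $\gamma$ viewed in $\br^d$, so $(i\circ\gamma)'(0) = \gamma'(0)$. Thus $T_xi([\gamma])$ is identified with $\gamma'(0)$ for every smooth curve $\gamma$ in $X$ with $\gamma(0)=x$.

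Then I would check that the image of $T_xi$ is exactly the set on the right-hand side. By the very definition of $T_xX$ as the quotient $C_x/\sim$, every element of $T_xX$ is the class $[\gamma]$ of some smooth curve $\gamma:(-\eps,\eps)\to X$ with $\gamma(0)=x$, and conversely every such curve determines an element of $T_xX$. Combining this with the previous step, the image of $T_xi$ is precisely $\{\gamma'(0) : \gamma:(-\eps,\eps)\to X \text{ smooth}, \gamma(0)=x\}$, which is the claimed description. Since $T_xi$ is injective onto this image, it is a linear isomorphism, and this is the sense in which the equality $T_xX = \{\ldots\}$ should be read.

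The only real subtlety — and what I expect to be the crux — is injectivity: a priori two curves sharing the same velocity $\gamma'(0)$ in $\br^d$ need not be equivalent, because the equivalence relation defining $T_xX$ is phrased through charts of $X$, not through the ambient identity chart of $\br^d$. It is exactly the immersion property of the embedding $i$ that bridges this gap, guaranteeing that $\gamma_1'(0) = \gamma_2'(0)$ forces $[\gamma_1] = [\gamma_2]$. Everything else is a matter of unwinding the definitions of the tangent space, of the directional derivative, and of the identification $T_{i(x)}\br^d \cong \br^d$.
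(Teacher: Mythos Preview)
Your argument is correct and is the standard way to justify this identification: use that the inclusion $i:X\to\br^d$ is an immersion, so $T_xi$ is injective; compose with the canonical identification $T_{i(x)}\br^d\cong\br^d$ sending $[\delta]\mapsto\delta'(0)$; and observe that under this map $[\gamma]\mapsto(i\circ\gamma)'(0)=\gamma'(0)$, so the image is exactly the set of velocities of curves in $X$ through $x$. Your remark that injectivity of $T_xi$ is the only nontrivial point is also exactly right.

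As for the comparison: the paper does not actually prove this proposition. At the start of the chapter the author explicitly says that most results not directly tied to the later computations are left without proof and referred to \cite{lee2012}, and this is one of them --- the statement is given and then the text moves on to the next proposition. So there is nothing to compare against; your proof simply fills in what the paper omits, and it does so correctly.
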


\begin{prop} \label{smooth_restriction}
  Let $f:X \to Y$ be a smooth function and $S \subset X$ be an embedded manifold.
  Then the restriction $f|_S:S \to Y$ is also smooth.
\end{prop}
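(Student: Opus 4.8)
The plan is to realize the restriction as a composition of two smooth maps and then invoke the stability of smoothness under composition. Write $i:S \to X$ for the inclusion map; since $f|_S = f \circ i$, it suffices to prove that $i$ is smooth and that composing it with the smooth map $f$ preserves smoothness. This is the natural strategy because the only new data in the statement, beyond the smoothness of $f$, is the embedded structure of $S$, and that structure is designed precisely to make the inclusion behave well.

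First I would check that the inclusion $i:S \to X$ is smooth. By hypothesis $S$ is an \emph{embedded manifold}, so by definition $S$ carries the subspace topology together with a maximal atlas making $i$ an \emph{embedding}. An embedding is in particular an immersion, and an immersion is by definition a smooth function; hence $i$ is smooth as a map $S \to X$.

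It remains to conclude that $f \circ i$ is smooth, which at the chart level is a purely Euclidean statement. Fixing $p \in S$ and choosing charts $(U_S, \phi_S)$ around $p$, $(U_X, \phi_X)$ around $i(p)$ and $(U_Y, \phi_Y)$ around $f(i(p))$ (shrinking their domains so that $i(U_S) \subset U_X$ and $f(U_X) \subset U_Y$, which is possible since all charts in a maximal atlas are mutually compatible), the local representative of $f|_S$ factors as
\begin{equation}
  \phi_Y \circ (f \circ i) \circ \phi_S^{-1}
  = \pars{\phi_Y \circ f \circ \phi_X^{-1}} \circ \pars{\phi_X \circ i \circ \phi_S^{-1}}.
\end{equation}
The two factors are smooth in the Euclidean sense by the smoothness of $f$ at $i(p)$ and of $i$ at $p$, respectively, and a composite of Euclidean-smooth maps is again Euclidean-smooth by the classical chain rule. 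Hence $f|_S$ is smooth at $p$, and since $p$ was arbitrary, $f|_S$ is smooth.

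I expect no genuine obstacle here: the entire content of the proposition is absorbed into the definitional fact that an embedded manifold comes equipped with a smooth inclusion, after which the result is the routine preservation of smoothness under composition (the same principle that underlies the Chain Rule, Theorem \ref{chain_rule}). The only mild bookkeeping is the shrinking of chart domains so that the compositions above are everywhere defined, and this is handled by chart compatibility.
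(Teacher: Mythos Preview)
The paper does not supply a proof of this proposition; it is one of the results the author explicitly leaves unproved, referring the reader to \cite{lee2012}. Your argument is the standard one and is correct: the definition of embedded manifold guarantees the inclusion $i:S\to X$ is smooth, and then $f|_S=f\circ i$ is smooth as a composite of smooth maps. The chart-level verification you give is fine, though once you know both $i$ and $f$ are smooth you could simply cite that compositions of smooth maps are smooth (which is implicit in the Chain Rule, Theorem~\ref{chain_rule}); the only small wording issue is that the ability to shrink chart domains so that $i(U_S)\subset U_X$ and $f(U_X)\subset U_Y$ follows from continuity of $i$ and $f$ together with maximality of the atlases, rather than from ``chart compatibility'' per se, but this does not affect the validity of the argument.
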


\begin{prop} \label{smooth_codomain}
  Let $X$ be a smooth manifold and $S \subset X$ be an embedded manifold.
  Then every smooth function $f:Y \to X$ whose image is contained in $S$ is also smooth as a function from $Y$ to $S$.
\end{prop}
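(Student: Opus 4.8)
The plan is to exploit that smoothness is a local (pointwise) condition, so it suffices to show that the corestriction $\tilde{f} \colon Y \to S$ (the same map $f$ now regarded as landing in $S$) is smooth at each $y \in Y$. First I would fix such a $y$, write $x \ceq f(y) \in S$, and set $k \ceq \dim S$ and $d \ceq \dim X$. To verify smoothness at $y$ I must exhibit a chart of $Y$ around $y$ and a chart of $S$ around $x$ whose associated coordinate representation of $\tilde{f}$ is smooth in the Euclidean sense.

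The crucial ingredient is that an embedded manifold admits \emph{slice charts}: around $x$ there is a chart $(U, \phi)$ of $X$ with $\phi(U \cap S) = \phi(U) \cap (\br^k \x \0)$. Granting this, let $\pi \colon \br^d \to \br^k$ be the projection onto the first $k$ coordinates; then $(U \cap S, \pi \circ \phi|_{U \cap S})$ is a chart of $S$ around $x$, and since an embedded manifold carries a unique smooth structure inducing the subspace topology and making the inclusion an embedding, this chart belongs to the smooth structure of $S$. This is precisely the step where the hypothesis that $i \colon S \to X$ is a topological embedding, and not merely an immersion, gets used: it guarantees that the slice $U \cap S$ is an honest neighbourhood of $x$ in $S$ rather than an accumulation of several sheets of an immersed image (which is why the statement fails for general immersed submanifolds).

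With the slice chart in hand the computation is immediate. I would choose any chart $(V, \psi)$ of $Y$ around $y$ and, by continuity of $f$, shrink $V$ so that $f(V) \subset U$; since $f(Y) \subset S$, this forces $f(V) \subset U \cap S$. The coordinate representation of $\tilde{f}$ with respect to $\psi$ and $\pi \circ \phi|_{U \cap S}$ is then
\begin{equation}
  (\pi \circ \phi|_{U \cap S}) \circ \tilde{f} \circ \psi^{-1} = \pi \circ (\phi \circ f \circ \psi^{-1}),
\end{equation}
which is smooth because $\phi \circ f \circ \psi^{-1}$ is smooth (as $f$ is smooth by hypothesis) and $\pi$ is linear, hence smooth. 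The main obstacle is therefore entirely concentrated in the slice-chart claim, which is not among the results stated earlier: proving it requires the local normal form for immersions together with the topological-embedding hypothesis, and this is the one place where real work, beyond a routine composition argument, is needed.
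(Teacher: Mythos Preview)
Your argument is correct and is the standard slice-chart proof of this result (as found, e.g., in Lee's \emph{Introduction to Smooth Manifolds}, which the paper cites as the reference for omitted proofs in this chapter). Note, however, that the paper itself does \emph{not} prove this proposition: it is one of the results explicitly left without proof, so there is no paper argument to compare against. Your identification of the slice-chart lemma as the only nontrivial ingredient, and of the topological-embedding hypothesis as what makes it work (distinguishing embedded from merely immersed submanifolds), is exactly right.
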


\begin{obs}
  These propositions are extremely useful to prove that functions from and to embedded manifolds are smooth.
  Keep them in mind throughout this whole chapter.
\end{obs}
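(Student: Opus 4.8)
The final statement is an \emph{observation} — a methodological remark rather than a mathematical assertion — so strictly speaking there is no formally quantified claim to establish. The sentence ``these propositions are extremely useful'' is a pedagogical signal, and ``keep them in mind'' is an instruction to the reader; neither admits a proof in the logical sense. Accordingly, the only honest content of a ``proof'' here is to exhibit \emph{why} the usefulness claim holds in practice, i.e. to make explicit the recipe that Propositions \ref{smooth_restriction} and \ref{smooth_codomain} jointly provide and which the remainder of the chapter will silently invoke.

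The plan, then, is to spell out that recipe. First I would note that the two propositions act on opposite ends of a map: Proposition \ref{smooth_restriction} preserves smoothness when the \emph{domain} is replaced by an embedded submanifold, while Proposition \ref{smooth_codomain} preserves smoothness when the \emph{codomain} is shrunk to an embedded submanifold containing the image. Next I would combine them into the single statement that actually gets used: if $S \subset X$ and $S' \subset X'$ are embedded manifolds and $F:X \to X'$ is smooth with $F(S) \subset S'$, then $F|_S:S \to S'$ is smooth. This follows by first restricting the domain (Proposition \ref{smooth_restriction}) to obtain a smooth map $S \to X'$, and then corestricting the codomain (Proposition \ref{smooth_codomain}) so that it lands in $S'$.

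The payoff I would emphasise is exactly the one the observation is pointing at: because the Stiefel manifold and the Grassmannian are later realised as embedded submanifolds of matrix spaces, verifying smoothness of a map between them reduces to exhibiting an ambient smooth extension, at which point ``smooth'' is merely the elementary Euclidean notion of Definition \ref{smooth_euclidean} and no chart computation is required. The main ``obstacle'' is therefore interpretive rather than deductive: there is nothing genuinely to prove, and the entire work lies in recognising that the composite of the two preceding propositions is precisely the chart-free smoothness test that the chapter will rely on repeatedly.
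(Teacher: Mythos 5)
Your reading is correct: the statement is an unproved pedagogical remark, and the paper offers no argument for it beyond the surrounding text, so there is nothing formal to verify. Your explication — restrict the domain via Proposition \ref{smooth_restriction}, corestrict the codomain via Proposition \ref{smooth_codomain}, yielding the chart-free smoothness test later applied to the Stiefel manifold and related examples — matches exactly how the paper uses these propositions throughout the chapter.
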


\begin{defi}
  Given a smooth function $f:X \to Y$ and $x \in X$, we say that $x$ is a \emph{regular point of $f$}\index{regular!point} if $T_xf$ is surjective.
\end{defi}

\begin{defi} \label{regular_value}
  Given a smooth function $f:X \to Y$ and $y \in Y$, we say that $y$ is a \emph{regular value of $f$}\index{regular!value} if every $x \in f^{-1}(y)$ is a regular point of $f$.
\end{defi}

\begin{defi}
  Let $f:X \to Y$ be a smooth function and $S \subset X$ be an embedded manifold such that $S = f^{-1}(y)$ for some regular value $y \in Y$.
  Then we say that $f$ is a \emph{defining function for $S$}\index{defining function}.
\end{defi}

\begin{prop} \label{tangent_embedded}
  Let $X$ be a manifold and $S \subset X$ be an embedded manifold.
  If $f:X \to Y$ is a defining function for $S$, then $T_xS = \ker{T_xf}$ for every $x \in S$.
\end{prop}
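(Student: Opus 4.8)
The plan is to prove the two inclusions $T_xS \subseteq \ker T_xf$ and $\ker T_xf \subseteq T_xS$ separately: the first follows almost immediately from the definition of a defining function, while the second will require a dimension count. Throughout I silently identify $T_xS$ with its image under the injective linear map $T_xi:T_xS \to T_xX$, where $i:S \hra X$ is the inclusion. This is legitimate precisely because $S$ is embedded, hence $i$ is in particular an immersion, so $T_xi$ is injective and realizes $T_xS$ as a linear subspace of $T_xX$. It is this identification that makes the asserted equality $T_xS = \ker T_xf$ meaningful, since both sides must then be read as subspaces of $T_xX$.

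For the inclusion $T_xS \subseteq \ker T_xf$ I would argue directly with curves. Because $f$ is a defining function we have $S = f^{-1}(y)$, so $f$ is constantly equal to $y$ on $S$. Take any $v = [\gamma] \in T_xS$ represented by a smooth curve $\gamma:(-\eps,\eps) \to S$ with $\gamma(0) = x$. Viewing $\gamma$ as a curve into $X$ through $i$, the composite $f \circ i \circ \gamma$ is the constant curve $y$, so its class vanishes and hence $T_xf(v) = [f \circ i \circ \gamma] = 0$. Thus every $v \in T_xS$ lies in $\ker T_xf$. (Equivalently, one applies the Chain Rule, Theorem \ref{chain_rule}, to the constant map $f \circ i$ to obtain $T_xf \circ T_xi = 0$, which says exactly that $\im T_xi \subseteq \ker T_xf$.)

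For the reverse inclusion I would finish by comparing dimensions, using that both spaces are finite-dimensional and one is contained in the other. Since $y$ is a regular value and $x \in f^{-1}(y) = S$, the point $x$ is regular, i.e. $T_xf:T_xX \to T_yY$ is surjective; the rank--nullity theorem then gives $\dim \ker T_xf = \dim X - \dim Y$. On the other hand, the Regular Value Theorem (the standard submersion/level-set theorem, see \cite{lee2012}) guarantees that the level set $f^{-1}(y)$ is an embedded submanifold of dimension $\dim X - \dim Y$, so $\dim T_xS = \dim S = \dim X - \dim Y$ as well. A subspace contained in another subspace of the same finite dimension must coincide with it, whence $T_xS = \ker T_xf$.

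I expect the main obstacle to be precisely this reverse inclusion, and more specifically the step $\dim S = \dim X - \dim Y$: the easy inclusion is purely formal, but to know that $T_xS$ is \emph{large enough} to exhaust all of $\ker T_xf$ I must invoke the Regular Value Theorem to pin down the dimension of $S$. An alternative that avoids quoting this dimension would be to pass to local coordinates adapted to the submersion $f$ (the constant-rank normal form), in which $f$ becomes a linear projection and $S$ becomes a coordinate slice, so that $T_xS = \ker T_xf$ can be read off directly; but since this essentially re-proves the Regular Value Theorem, citing the latter is the more economical route.
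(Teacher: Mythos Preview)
Your argument is correct and is the standard one. Note, however, that the paper does not actually supply a proof of this proposition: as announced at the start of the chapter, most auxiliary results are stated without proof with reference to \cite{lee2012,lee2018}. So there is nothing in the paper to compare your approach against beyond observing that your proof is exactly the textbook argument the paper is implicitly deferring to. One small alignment remark: what you cite as ``the Regular Value Theorem'' is precisely what the paper states immediately afterward as the Preimage Theorem (Theorem~\ref{preimage_theorem}), so within this text you may invoke that theorem directly for the dimension count $\dim S = \dim X - \dim Y$.
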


\begin{thm}[Preimage Theorem] \label{preimage_theorem}
  Given a smooth function $f:X \to Y$ and a regular value $y \in Y$, its preimage $f^{-1}(y)$ is an embedded manifold of dimension $\dim{X} - \dim{Y}$.
\end{thm}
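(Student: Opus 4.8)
The plan is to reduce the statement to the Euclidean setting through charts and then straighten out the level set using the Implicit Function Theorem, which itself follows from the Inverse Function Theorem. Write $S \ceq f^{-1}(y)$, $d \ceq \dim X$ and $k \ceq \dim Y$. The goal is to produce, around each point of $S$, a chart exhibiting $S$ as locally Euclidean of dimension $d - k$, and then to argue that the subspace topology together with this atlas makes the inclusion $i:S \to X$ an embedding.

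First I would dispose of the trivial case $S = \emptyset$ (then $S$ is vacuously an embedded manifold) and otherwise fix $p \in S$, choosing a chart $(U, \phi)$ of $X$ with $p \in U$ and a chart $(V, \psi)$ of $Y$ with $y \in V$ and $f(U) \subset V$. Setting $F \ceq \psi \circ f \circ \phi^{-1}$, the Chain Rule (Theorem \ref{chain_rule}) combined with the hypothesis that $y$ is a regular value shows that the Jacobian of $F$ at $\phi(p)$ is surjective, \ie $F$ is a submersion at $\phi(p)$ in the Euclidean sense. Thus the whole problem becomes local and Euclidean: I must understand the level set $F^{-1}(\psi(y))$ near $\phi(p)$.

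The heart of the argument is a local normal form. After permuting coordinates I may assume the last $k \x k$ block of the Jacobian of $F$ at $\phi(p)$ is invertible. Splitting the domain as $\br^{d} = \br^{d-k} \x \br^{k}$ and writing $\phi(p) = (a_0, b_0)$, I would apply the Inverse Function Theorem to the auxiliary map $(a, b) \mapsto (a, F(a, b))$, whose Jacobian at $\phi(p)$ is invertible by construction. This yields the Implicit Function Theorem: there is a smooth map $g$, defined on a neighborhood of $a_0$, such that near $\phi(p)$ the equation $F(a, b) = \psi(y)$ is equivalent to $b = g(a)$. Hence $\phi(S \cap U)$ is the graph of $g$, and $a \mapsto \phi^{-1}(a, g(a))$ is a homeomorphism from an open subset of $\br^{d-k}$ onto $S \cap U$. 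This is exactly the chart I wanted, so $S$ is locally Euclidean of dimension $d - k$.

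To finish I would collect the topological and smooth structure. Because $S$ carries the subspace topology, it is automatically Hausdorff and second-countable, so $S$ is a topological manifold of dimension $d - k$. Compatibility of two graph charts is routine, since each transition map is a composition of smooth pieces (coordinate projections, the functions $g$, and the transition maps of $X$ itself) and is therefore a diffeomorphism. With this atlas the inclusion $i:S \to X$ reads in coordinates as $a \mapsto (a, g(a))$, which is a smooth immersion that is simultaneously a homeomorphism onto its image; hence $i$ is an embedding and $S$ is an embedded manifold. As a consistency check, $f$ is now a defining function for $S$, so Proposition \ref{tangent_embedded} recovers $T_p S = \ker T_p f$, whose dimension is indeed $d - k$. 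I expect the local normal form to be the main obstacle: setting up the auxiliary map correctly and verifying the hypotheses of the Inverse Function Theorem is where the real content lies, while the remaining verifications are bookkeeping with charts.
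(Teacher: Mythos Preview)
The paper does not actually prove this theorem: it is stated without proof, in line with the chapter's announced policy of omitting proofs that are ``easily found in the literature'' and referring the reader to \cite{lee2012}. Your argument is the standard and correct one (localize via charts, apply the Inverse/Implicit Function Theorem to straighten the level set into a graph, then read off the embedded-submanifold structure), so there is nothing to compare against and nothing to correct.
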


\begin{obs}
  The previous theorem is also known as \emph{Regular Level Set Theorem}.
\end{obs}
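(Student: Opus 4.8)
This observation records a piece of standard terminology: it asserts that the Preimage Theorem (Theorem \ref{preimage_theorem}) also goes by the name \emph{Regular Level Set Theorem}. As worded, this is a naming convention rather than a mathematical proposition, so there is no logical content to establish by a proof. The plan, therefore, is not to derive anything but to explain why the alternative name is apt and to indicate what, if anything, one would check.

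First I would unpack the two pieces of the alternative name against the hypotheses of the theorem. A \emph{level set} of a function $f : X \to Y$ is by definition a fibre $f^{-1}(y)$, \ie, the locus where $f$ attains the constant ``level'' value $y$; this matches exactly the set $f^{-1}(y)$ appearing in Theorem \ref{preimage_theorem}. The adjective \emph{regular} then refers to the hypothesis that $y$ is a regular value of $f$ in the sense of Definition \ref{regular_value}. Putting the two together, the set whose manifold structure the theorem describes is precisely a regular level set, which is exactly what the alternative name advertises.

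The only thing one could verify is that this terminology agrees with the standard references, for instance \cite{lee2012, boumal2022}, where the same result appears under the Regular Level Set name. Since no inequality, construction, or limit is involved, there is no calculation to carry out and no genuine obstacle to overcome; the ``hard part'' is simply recognizing that the statement is definitional and that the two names denote one and the same theorem.
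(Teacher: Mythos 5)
You are right: the observation is purely terminological, and the paper offers no proof because there is nothing to prove --- it simply records that the Preimage Theorem of Theorem \ref{preimage_theorem} is also called the Regular Level Set Theorem. Your unpacking of ``regular'' (Definition \ref{regular_value}) and ``level set'' ($f^{-1}(y)$) matches the paper's intent exactly, so your treatment is correct and essentially identical to the paper's.
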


This is one of the most useful and powerful theorems to show certain subsets of $\br^d$ or $\mat{d}{N}$ are manifolds.
Let us provide an example that is very important to the present work and which is actually three examples at the same time.
\begin{exm}[Stiefel manifold] \label{stiefel_manifold}
  Given a symmetric and positive-definite matrix $S \in \mat{d}{d}$, consider the following set:
  \begin{equation}
    \st{N}{d} \ceq \qty{C \in \mat{d}{N} : C^{\top}SC = \Id_N}.
  \end{equation}
  Observe that when $N = d$ and $S = \Id_N$ , we have $\st{N}{N} = O(N)$, being $O(N)$ the orthogonal group.
  And when $S = \Id_d$ and $N = 1$, then $\st{1}{d} = \bs^{d-1}$, being $\bs^{d-1}$ the $(d-1)$-dimensional sphere.
  The set $\st{N}{d}$ is called \emph{(generalized) Stiefel manifold}\index{Stiefel manifold} and now let us prove that it is a manifold using the previous theorem.

  Consider the following function (which is smooth because of Proposition \ref{smooth_codomain})
  \begin{align}
    \begin{split}
      f:\mat{d}{N} & \to \sym(N) \\
      C & \mapsto C^{\top}SC - \Id_N.
    \end{split}
  \end{align}
  Observe that $\st{N}{d} = f^{-1}(0_N)$.
  Now, if we prove that $0_N$ is a regular value of $f$, we have the desired result.
  Let $C \in \st{N}{d}$.
  Then, since these manifolds are vector spaces, the directional derivative is computed in the traditional way:
  \begin{align}
    \begin{split}
      T_Cf(\mu) & = \lim_{t \to 0} \frac{f(C + t\mu) - f(C)}{t} \\
              & = \lim_{t \to 0} \frac{(C + t\mu)^{\top}S(C + t\mu) - C^{\top}SC}{t} \\
              & = \lim_{t \to 0} C^{\top}S\mu + \mu^{\top}SC + t\mu^{\top}S\mu \\
              & = C^{\top}S\mu + \mu^{\top}SC.
    \end{split}
  \end{align}
  Now we need to show that this map is surjective.
  Given $A \in \sym(N)$, if we let $\mu = \frac{1}{2}CA$, then
  \begin{equation}
    T_Cf(\mu)
    = \frac{1}{2}C^{\top}SCA + \frac{1}{2}A^{\top}C^{\top}SC
    = \frac{1}{2}A + \frac{1}{2}A^{\top}
    = A
  \end{equation}
  because $A = A^{\top}$ and $C^{\top}SC = \Id_N$.
  The result follows.
  To conclude, let us compute its tangent space.
  Since $f$ is a defining function for the Stiefel manifold, Proposition \ref{tangent_embedded} tells us that
  \begin{equation}
    T_C\st{N}{d}
    = \ker{T_Cf}
    = \qty{\mu \in \mat{d}{N} : C^{\top}S\mu + \mu^{\top}SC = 0_N}.
  \end{equation}
\end{exm}

\begin{prop} \label{stiefel_compact}
  The Stiefel manifold $\st{N}{d}$ is compact.
\end{prop}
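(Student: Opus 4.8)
The plan is to identify $\mat{d}{N}$ with the Euclidean space $\br^{dN}$ and invoke the Heine--Borel theorem, so that it suffices to show $\st{N}{d}$ is both closed and bounded as a subset of $\br^{dN}$.

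For closedness I would reuse the defining function $f:\mat{d}{N} \to \sym(N)$, $f(C) = C^{\top}SC - \Id_N$, introduced in Example \ref{stiefel_manifold}. Each entry of $f(C)$ is a polynomial in the entries of $C$, so $f$ is continuous, and $\st{N}{d} = f^{-1}(0_N)$ is the preimage of the closed singleton $\qty{0_N} \subset \sym(N)$. Hence $\st{N}{d}$ is closed.

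The main work is the boundedness step, and this is precisely where the positive-definiteness of $S$ is essential (without it the set need not be bounded, so this hypothesis cannot be dropped). Let $\lambda > 0$ be the smallest eigenvalue of $S$, which is strictly positive because $S$ is symmetric and positive-definite; then $v^{\top}Sv \geq \lambda \|v\|^2$ for every $v \in \br^d$. Writing $c_1, \ldots, c_N$ for the columns of $C \in \st{N}{d}$, the diagonal entries of the constraint $C^{\top}SC = \Id_N$ read $c_i^{\top}Sc_i = 1$ for each $i$. Combining these yields $\lambda \|c_i\|^2 \leq c_i^{\top}Sc_i = 1$, so $\|c_i\|^2 \leq 1/\lambda$. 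Summing over $i$, the Frobenius norm satisfies $\|C\|^2 = \sum_{i=1}^N \|c_i\|^2 \leq N/\lambda$, a bound independent of $C$. Thus $\st{N}{d}$ lies inside a ball of radius $\sqrt{N/\lambda}$, hence is bounded, and by Heine--Borel it is compact.

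The only genuine obstacle is the boundedness argument; closedness is immediate from continuity. The key idea there is to read off the diagonal of the matrix equation to control each column individually, using the uniform lower bound on the quadratic form $v \mapsto v^{\top}Sv$ supplied by the least eigenvalue of $S$.
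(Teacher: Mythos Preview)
Your proof is correct and follows essentially the same strategy as the paper: Heine--Borel via closedness (preimage of a point under the continuous defining function) and boundedness (extracted from the constraint $C^{\top}SC = \Id_N$). The only cosmetic difference is in the boundedness step: the paper works with the $S$-weighted norm $\norm{C}^2 = \tr(C^{\top}SC)$, which gives $\norm{C}^2 = N$ in one line (invoking equivalence of norms to justify Heine--Borel), whereas you use the standard Frobenius norm and control it through the least eigenvalue of $S$---a slightly longer but more explicit route to the same conclusion.
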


\begin{proof}
  According to the Heine--Borel Theorem (\ref{heine_borel}), we need to prove that $\st{N}{d}$ is a closed and bounded subset of $\mat{d}{N}$.
  Now, we will consider in $\mat{d}{N}$ the topology induced by the inner product $\braket{C_1}{C_2} \ceq \tr(C_1^{\top}SC_2)$, but, since all norms in $\mat{d}{N}$ are equivalent (for a proof, see \cite{kconradnorm}), the topology induced is the same and Heine--Borel still holds in this case.
  With that said, since the defining function $f$ for the Stiefel manifold is continuous and since $\st{N}{d} = f^{-1}(0_N)$, we already have that the Stiefel is closed.
  To show that it is bounded, observe that, if $C \in \st{N}{d}$, then $\norm{C}^2 = \tr(C^{\top}SC) = \tr(\Id_N) = N$.
  Consequently, the Stiefel manifold is contained in the ball $B(0, \sqrt{N} + \eps)$ for any $\eps > 0$, which means it is bounded.
  The result follows.
\end{proof}

\begin{exm}
  Given $k$ manifolds $X_1, \ldots, X_k$, we can define the \emph{product manifold (of $X_1, \ldots, X_k$)}\index{manifold!product} as the set $X_1 \x \ldots \x X_k$ with the product topology (see Definition \ref{product_topology}) and with the charts constructed as follows: given charts $(U_i, \phi_i)$ of $X_i$, a chart of $X_1 \x \ldots \x X_k$ is defined by $(U_1 \x \ldots \x U_k, \phi_1 \x \ldots \x \phi_k)$, being $(\phi_1 \x \ldots \x \phi_k)(x_1, \ldots, x_k) \ceq (\phi_1(x_1), \ldots, \phi_k(x_k))$.
\end{exm}

\begin{obs}
  From now on when we consider the product of two or more manifolds, the smooth structure we will consider is always the one described above.
\end{obs}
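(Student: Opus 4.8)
The statement is a notational convention rather than a mathematical assertion, so strictly speaking there is nothing to prove: it merely fixes, once and for all, which smooth structure we attach to a product of manifolds. My plan, therefore, is not to produce a deduction but to record why the convention is well-posed, so that it can later be invoked without ambiguity.

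First I would recall that the previous example already exhibited an explicit atlas on $X_1 \x \ldots \x X_k$, namely the one whose charts are the products $(U_1 \x \ldots \x U_k, \phi_1 \x \ldots \x \phi_k)$ of charts of the factors. What the present observation does is simply \emph{single out} this particular atlas as the one defining the smooth structure on the product. The only thing worth checking is that this choice is unambiguous, that is, that it determines a single smooth manifold structure and not several competing ones.

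Second, I would appeal to the earlier observation that any atlas $\mc{A}$ extends to a \emph{unique} maximal atlas $\ol{\mc{A}}$. Applying this fact to the product atlas yields exactly one maximal atlas, and since a smooth manifold is by definition a topological manifold together with a maximal atlas, the product smooth structure is thereby pinned down uniquely. Hence the phrase ``the one described above'' refers to a well-defined object and the convention is internally consistent.

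The main (and indeed only) subtlety here is purely semantic: one must read ``the smooth structure described above'' as the unique maximal atlas \emph{generated by} the product charts, not as the finite product atlas itself, which is in general not maximal. Once this identification is made explicit, there is no remaining obstacle, and the observation may be adopted exactly as stated.
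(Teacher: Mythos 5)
Your reading is exactly right: the observation is a convention, not a theorem, and the paper offers no proof beyond the preceding example. Your justification of well-posedness via the paper's earlier observation that every atlas extends to a unique maximal atlas is precisely the (implicit) reasoning the paper relies on, so your proposal matches the paper's treatment.
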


\begin{prop} \label{tangent_space_product}
  If $X_1, \ldots, X_k$ are manifolds, then
  \begin{equation}
    T_{(x_1, \ldots, x_k)}(X_1 \x \ldots \x X_k) \cong T_{x_1}X_1 \x \ldots \x T_{x_k}X_k
  \end{equation}
  because the function $[\gamma] \mapsto ([\pi_1 \circ \gamma], \ldots, [\pi_k \circ \gamma])$, being $\pi_i$ the canonical projection, is an isomorphism.
\end{prop}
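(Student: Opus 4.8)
The plan is to show that the proposed map is a linear isomorphism by reducing everything to one computation in a product chart. Write $X \ceq X_1 \x \ldots \x X_k$ and $x \ceq (x_1, \ldots, x_k)$, and let $\Phi:T_xX \to T_{x_1}X_1 \x \ldots \x T_{x_k}X_k$ be given by $\Phi([\gamma]) \ceq ([\pi_1 \circ \gamma], \ldots, [\pi_k \circ \gamma])$. Fix charts $(U_i, \phi_i)$ with $x_i \in U_i$ and form the product chart $(U, \phi)$, where $U \ceq U_1 \x \ldots \x U_k$ and $\phi \ceq \phi_1 \x \ldots \x \phi_k$. Writing $d_i \ceq \dim X_i$ and identifying $\br^{d_1 + \ldots + d_k}$ with $\br^{d_1} \x \ldots \x \br^{d_k}$, the key observation is that for any smooth curve $\gamma$ in $X$ through $x$ one has
\begin{equation}
  (\phi \circ \gamma)'(0) = \pars{(\phi_1 \circ \pi_1 \circ \gamma)'(0), \ldots, (\phi_k \circ \pi_k \circ \gamma)'(0)},
\end{equation}
which follows by differentiating $\phi \circ \gamma$ componentwise at $0$, since the $i$-th component of $\phi \circ \gamma$ is exactly $\phi_i \circ \pi_i \circ \gamma$.

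From this identity almost everything follows at once. Two curves satisfy $\gamma \sim \wtil{\gamma}$ in $X$ exactly when $(\phi \circ \gamma)'(0) = (\phi \circ \wtil{\gamma})'(0)$, and by the displayed equality this holds \tiff $(\phi_i \circ \pi_i \circ \gamma)'(0) = (\phi_i \circ \pi_i \circ \wtil{\gamma})'(0)$ for every $i$, that is, \tiff $\pi_i \circ \gamma \sim \pi_i \circ \wtil{\gamma}$ in $X_i$ for every $i$. The forward direction shows that $\Phi$ is well defined (independent of the chosen representative), and the converse direction shows that $\Phi$ is injective. Moreover, the identity is precisely the statement that $\theta_x^{\phi} = \pars{\theta_{x_1}^{\phi_1} \x \ldots \x \theta_{x_k}^{\phi_k}} \circ \Phi$ under the chosen identification of Euclidean spaces; since every $\theta$ appearing here is, by construction of the induced vector space structure, a linear isomorphism, this exhibits $\Phi$ as a composition of linear isomorphisms. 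In particular $\Phi$ is linear, and it only remains to record surjectivity explicitly.

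For surjectivity I would lift a tuple to a product curve. Given $([\gamma_1], \ldots, [\gamma_k])$ with representatives $\gamma_i:(-\eps_i, \eps_i) \to X_i$ and $\gamma_i(0) = x_i$, set $\eps \ceq \min_i \eps_i$ and define $\gamma:(-\eps, \eps) \to X$ by $\gamma(t) \ceq (\gamma_1(t), \ldots, \gamma_k(t))$. This $\gamma$ is smooth because $\phi \circ \gamma = (\phi_1 \circ \gamma_1, \ldots, \phi_k \circ \gamma_k)$ is smooth in the Euclidean sense, and $\gamma(0) = x$, so $[\gamma] \in T_xX$; since $\pi_i \circ \gamma = \gamma_i$ we obtain $\Phi([\gamma]) = ([\gamma_1], \ldots, [\gamma_k])$. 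The only point requiring any care throughout is the bookkeeping of the identification $\br^{d_1 + \ldots + d_k} \cong \br^{d_1} \x \ldots \x \br^{d_k}$ and the fact that the product chart's derivative genuinely splits as the tuple of the factors' derivatives; once this componentwise differentiation is pinned down, well-definedness, injectivity, linearity and surjectivity are all immediate consequences, so I do not expect any substantive obstacle.
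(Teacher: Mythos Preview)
Your proof is correct. The paper does not actually give a proof of this proposition: it states the result and the candidate isomorphism $[\gamma] \mapsto ([\pi_1 \circ \gamma], \ldots, [\pi_k \circ \gamma])$, and leaves the verification to the reader, in keeping with its stated policy of omitting proofs of standard results not directly tied to the main line of development. Your argument via a product chart, the identity $\theta_x^{\phi} = (\theta_{x_1}^{\phi_1} \x \ldots \x \theta_{x_k}^{\phi_k}) \circ \Phi$, and the explicit product-curve preimage is exactly the natural way to fill in the details within the paper's curve-class definition of the tangent space, and all steps are sound.
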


\begin{defi}
  A smooth manifold $G$ that is also a group (see Definition \ref{group}) is called a \emph{Lie group}\index{Lie group} if the group operation $m:G \x G \to G$ and the inverse $(\cdot)^{-1}:G \to G$ are smooth functions.
\end{defi}

\begin{exm}
  The orthogonal group $O(N)$ is a Lie group because the product of matrices and the inverse of a matrix are smooth functions since the component functions are polynomials (for the inverse recall Cramer's rule).
  Observe that we are using Propositions \ref{smooth_restriction} and \ref{smooth_codomain} because $O(N)$ is an embedded manifold.
\end{exm}

\begin{defi}
  An \emph{action}\index{group!action}\index{action} of a group $G$ on a set $X$ is a map $\theta:X \x G \to X$ such that $\theta(x, e) = x$ and $\theta(\theta(x, g), h) = \theta(x, gh)$ for every $x \in X$ and $g, h \in G$.
  Recall that $e \in G$ represents the identity of the group.
\end{defi}

\begin{obs}
  The definition above is actually called a \emph{right action}, but we will call it just action because we will not use left actions in this work.
\end{obs}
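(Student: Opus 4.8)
The statement is terminological rather than mathematical, so the ``proof'' I have in mind is really a justification of the naming convention together with a verification that nothing is lost by working only with actions as defined. The plan is to make the placement of the group element explicit and then exhibit the standard correspondence between the two flavors of action.

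First I would introduce multiplicative notation by writing $x \cdot g \ceq \theta(x, g)$. Under this notation the two axioms of the definition read $x \cdot e = x$ and $(x \cdot g) \cdot h = x \cdot (gh)$. The key observation is that the group element is written on the \emph{right} of the point and that successive elements compose in the natural left-to-right order $gh$; this is precisely the bookkeeping that the name \emph{right action} is meant to record. For contrast, a \emph{left action} is a map $G \x X \to X$, written $(g, x) \mapsto g \cdot x$, satisfying $e \cdot x = x$ and $(gh) \cdot x = g \cdot (h \cdot x)$, so that the group element sits on the left and the associativity axiom reverses the roles of the two factors.

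To justify that restricting attention to actions as defined costs no generality, I would exhibit an explicit bijection between right actions and left actions of $G$ on $X$. Given a right action $\theta$, define $\theta'(g, x) \ceq \theta(x, g^{-1})$. Checking the left-action axioms is routine: the identity axiom follows from $e^{-1} = e$, and associativity follows from
\begin{equation}
  \theta'(g, \theta'(h, x)) = \theta(\theta(x, h^{-1}), g^{-1}) = \theta(x, h^{-1}g^{-1}) = \theta(x, (gh)^{-1}) = \theta'(gh, x).
\end{equation}
The inverse construction sends a left action back to a right action by the same formula, so the two notions carry the same information.

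The only place any care is required is the step $h^{-1}g^{-1} = (gh)^{-1}$: it is exactly the order-reversal of inversion in a group that converts the left-action composition order back into the right-action one. This is the \textbf{main obstacle} only in the sense that it is the single nonformal ingredient; everything else is bookkeeping. Once this is in place, the observation is fully justified: the definition given is the right-action convention, and since every left action arises from a right action by precomposing with inversion, working with actions as defined is no restriction for the purposes of this text.
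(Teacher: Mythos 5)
Your proposal is correct, but it argues more than the paper does: the observation is an unproved terminological remark, and the paper's implicit justification is purely pragmatic — the only action that ever appears is $O(N)$ acting on $\st{N}{d}$ by right matrix multiplication $(C, M) \mapsto CM$ (Example \ref{action_stiefel}), which is naturally a right action, so calling it simply ``action'' creates no ambiguity. You instead establish the stronger and more structural fact that nothing is lost even in principle, via the standard bijection $\theta'(g, x) \ceq \theta(x, g^{-1})$ between right and left actions; your verification is correct, and you rightly isolate the order-reversal $h^{-1}g^{-1} = (gh)^{-1}$ as the one nontrivial ingredient making the composition axiom come out right. What your route buys is a genuine equivalence of the two notions (so a reader can transport any left-action statement from the literature into this framework); what the paper's route buys is brevity — it needs only the weaker claim that left actions never occur in the text, which requires no argument at all. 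One small caveat: your closing sentence says every left action ``arises from a right action by precomposing with inversion,'' which is the right idea, but strictly the same formula defines mutually inverse maps in both directions, and it is worth noting that for abelian groups (not the case here, since $O(N)$ is nonabelian for $N \geq 2$) the two notions literally coincide without any inversion.
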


\begin{exm} \label{action_stiefel}
  Let $X = \st{N}{d}$ and $G = O(N)$.
  The function
  \begin{align}
    \begin{split}
      \theta:\st{N}{d} \x O(N) & \to \st{N}{d} \\
      (C, M) & \mapsto CM
    \end{split}
  \end{align}
  defines an action of $O(N)$ on $\st{N}{d}$.
\end{exm}

\begin{defi}
  Given a set $X$, $x \in X$ and an action $\theta:X \x G \to X$, we define the \emph{stabilizer of $x$}\index{stabilizer} to be the set $\stab(x) \ceq \qty{g \in G : \theta(x, g) = x}$.
\end{defi}

\begin{defi}
  A group action is called \emph{free}\index{action!free} if $\stab(x) = \qty{e}$ for every $x \in X$.
\end{defi}

\begin{exm}
  The action defined in Example \ref{action_stiefel} is free.
  Indeed, $CM = C$ \tiff $C^{\top}SCM = C^{\top}SC$.
  Therefore, since $C^{\top}SC = \Id_N$, we have $M = \Id_N$ and the result follows.
\end{exm}

\begin{defi}
  Given a set $X$, $x \in X$ and an action $\theta:X \x G \to X$, we define the \emph{orbit of $x$}\index{orbit} to be the set $\orb(x) \ceq \qty{\theta(x, g) : g \in G}$.
\end{defi}

\begin{defi}
  If we have a group $G$ acting on $X$, it is possible to define an equivalence relation on $X$ as follows: $x \sim y$ \tiff $y \in \orb(x)$.
  The quotient $X/\sim$ will be denoted by $X/G$ and we will call it \emph{orbit space}\index{space!orbit}.
\end{defi}

\begin{defi}
  Given a manifold $X$ and a Lie group $G$ acting on $X$, we say that the action $\theta$ is \emph{smooth}\index{action!smooth} if the function $\theta:X \x G \to X$ is smooth.
\end{defi}

\begin{exm}
  The action of Example \ref{action_stiefel} is smooth because multiplying matrices is smooth (again, recall that the manifolds are embedded).
\end{exm}

\begin{defi}
  Given a manifold $X$ and a Lie group $G$ acting on $X$, we say that the action $\theta$ is \emph{proper}\index{action!proper} if the function $\id_X \x \theta:X \x G \to X \x X$ given by $(\id_X \x \theta)(p, g) = (p, \theta(p, g))$ is proper.
\end{defi}

\begin{prop}
  Every continuous action by a compact Lie group on a manifold is proper.
\end{prop}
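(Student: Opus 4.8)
The plan is to unwind the definition of a proper action and reduce the statement to the elementary fact that a closed subset of a compact space is compact. Write $\Phi \ceq \id_X \x \theta : X \x G \to X \x X$, so that what must be shown is that $\Phi$ is a proper map, \ie that $\Phi^{-1}(K)$ is compact for every compact $K \subset X \x X$. I would fix such a $K$ and let $\pi_1 : X \x X \to X$ denote the projection onto the first factor.

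First I would establish a containment. If $(p, g) \in \Phi^{-1}(K)$, then $(p, \theta(p, g)) = \Phi(p, g) \in K$, so in particular $p \in \pi_1(K)$; hence $\Phi^{-1}(K) \subseteq \pi_1(K) \x G$. Since $\pi_1$ is continuous and $K$ is compact, $\pi_1(K)$ is compact, and $G$ is compact by hypothesis, so the product $\pi_1(K) \x G$ is compact.

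Next I would argue that $\Phi^{-1}(K)$ is closed. Because $X$ is a manifold it is Hausdorff, so $X \x X$ is Hausdorff and the compact set $K$ is therefore closed; continuity of $\Phi$ then forces $\Phi^{-1}(K)$ to be closed in $X \x G$. Combining this with the containment above, $\Phi^{-1}(K) = \Phi^{-1}(K) \cap (\pi_1(K) \x G)$ is a closed subset of the compact space $\pi_1(K) \x G$, and is thus compact. This is exactly the conclusion we want.

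The main point to be careful about is not a genuine obstacle but the topological bookkeeping: one must use Hausdorffness (available since $X$ is a manifold) to pass from compactness of $K$ to its being closed, and one must note that closedness of $\Phi^{-1}(K)$ in the ambient $X \x G$ restricts to closedness in the subspace $\pi_1(K) \x G$. Notably, neither the Lie group structure nor any smoothness of $\theta$ enters the argument; only continuity of the action and compactness of $G$ are used, which is precisely why the hypothesis reads ``continuous action by a compact group.''
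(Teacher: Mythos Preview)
Your argument is correct and is essentially the standard proof (as in Lee, \emph{Introduction to Smooth Manifolds}, which the paper cites for omitted proofs). The paper itself does not supply a proof of this proposition; it is one of the results left unproved with a reference to the literature, so there is nothing to compare against beyond noting that your write-up matches the expected textbook argument.
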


\begin{cor}
  Since we already showed that the Stiefel manifold is compact and that the orthogonal group is just a particular case of a Stiefel manifold, the previous proposition shows that the action defined in \ref{action_stiefel} is proper.
\end{cor}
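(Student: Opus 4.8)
The plan is to verify that the hypotheses of the previous proposition are met and then invoke it directly; the corollary is a pure specialization, so no new argument is really needed, only the identification of the right objects.

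First I would establish that the acting group $O(N)$ is compact. The crucial observation was already recorded in Example \ref{stiefel_manifold}: taking $d = N$ and $S = \Id_N$ yields $\st{N}{N} = O(N)$, so the orthogonal group is literally an instance of the (generalized) Stiefel manifold. Since Proposition \ref{stiefel_compact} shows that \emph{every} Stiefel manifold $\st{N}{d}$ is compact, specializing it gives that $O(N) = \st{N}{N}$ is compact. Combining this with the earlier example, in which $O(N)$ was shown to be a Lie group (the product and inverse of matrices being smooth), we conclude that $O(N)$ is a \emph{compact Lie group}, which is exactly the hypothesis on the group in the previous proposition.

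Next I would check the hypothesis on the action itself. The previous proposition requires only a \emph{continuous} action, and the action $\theta(C, M) = CM$ of Example \ref{action_stiefel} was shown in a subsequent example to be smooth; smoothness implies continuity, so this hypothesis holds as well. With both conditions in place — a compact Lie group $O(N)$ acting continuously on the manifold $\st{N}{d}$ — the previous proposition applies verbatim and yields that $\theta$ is proper, as claimed.

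I do not expect any genuine obstacle here: all the work was done upstream, and the only subtlety worth flagging explicitly is the self-referential identification $O(N) = \st{N}{N}$, which lets the compactness result (stated for arbitrary $\st{N}{d}$) be fed back in to control the group that acts on those same manifolds. Once that identification is made, the corollary is immediate.
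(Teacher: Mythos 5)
Your proposal is correct and follows exactly the paper's own reasoning: identify $O(N)$ as the special case $\st{N}{N}$ of the Stiefel manifold (with $S = \Id_N$), invoke Proposition \ref{stiefel_compact} for compactness, and apply the preceding proposition on continuous actions of compact Lie groups. You merely make explicit the two hypotheses (Lie group structure and continuity via smoothness) that the paper leaves implicit, which is a faithful elaboration rather than a different route.
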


\begin{thm}[Quotient Manifold Theorem] \label{qmt}
  Suppose $G$ is a Lie group acting smoothly, freely and properly on a smooth manifold $X$.
  Then the orbit space $X/G$ admits a unique smooth structure such that the canonical projection $\pi:X \to X/G$ is a submersion.
  Also, $\dim{X/G} = \dim{X} - \dim{G}$.
\end{thm}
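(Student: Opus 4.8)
The plan is to follow the classical slice-chart strategy; write $n = \dim X$ and $k = \dim G$, so that the target dimension is $n - k$. First I would establish the topological prerequisites for $X/G$ to be a topological manifold. The projection $\pi$ is an \emph{open} map: for open $U \subset X$ the saturation $\pi^{-1}(\pi(U)) = \bigcup_{g \in G} \theta(U \x \qty{g})$ is a union of open sets, since each $\theta(\cdot, g)$ is a homeomorphism of $X$, so $\pi(U)$ is open. Openness of $\pi$ immediately transfers second-countability from $X$ to $X/G$. For the Hausdorff property I would use properness: the orbit equivalence relation is exactly the image of $\id_X \x \theta$, which is proper and hence closed as a map into $X \x X$; since $\pi$ is an open quotient, the quotient is Hausdorff precisely when this relation is closed.

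Next comes the geometric core. Fixing $x_0 \in X$, freeness makes the orbit map $g \mapsto \theta(x_0, g)$ an injective immersion, and properness upgrades it to an embedding, so $\orb(x_0)$ is an embedded $k$-dimensional submanifold. I would then choose a \emph{slice}: an $(n-k)$-dimensional embedded submanifold $Y \ni x_0$ whose tangent space at $x_0$ is complementary to $T_{x_0}\orb(x_0)$ inside $T_{x_0}X$, obtained from a slice chart for the orbit. The decisive step is to show that the restricted action map
\[
  \Phi: Y \x G \to X, \qquad \Phi(y, g) = \theta(y, g),
\]
is a diffeomorphism onto a saturated open neighborhood of the orbit. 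Its differential at $(x_0, e)$ is an isomorphism by the transversality built into the choice of $Y$ (splitting the domain tangent space via Proposition \ref{tangent_space_product}), so $\Phi$ is a local diffeomorphism there; after shrinking $Y$, properness and freeness force $\Phi$ to be globally injective, which is exactly the slice theorem.

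With the slice in hand, $\pi|_Y : Y \to X/G$ is a homeomorphism onto the open set $\pi(\Phi(Y \x G))$, and I would declare the pairs $(\pi(\Phi(Y \x G)), (\pi|_Y)^{-1})$ to be charts. Two such charts are smoothly compatible because, in the product coordinates supplied by $\Phi$, the transition map is obtained by composing with the smooth group operation; this simultaneously shows that $\pi$ reads locally as the projection $Y \x G \to Y$, hence is a submersion, and it pins down $\dim(X/G) = n - k$. Uniqueness then follows formally: if two smooth structures both make $\pi$ a submersion, then locally $\pi$ admits smooth sections, so the identity map $X/G \to X/G$ between the two structures is smooth in both directions and therefore a diffeomorphism.

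The hard part will be the slice theorem of the second paragraph --- specifically, using properness to guarantee that $\Phi$ is injective, and a homeomorphism onto its image, after shrinking $Y$, rather than merely a local diffeomorphism. This is the only place where properness does essential work beyond the Hausdorff argument, and it is exactly what prevents distinct slice points from accidentally lying on the same orbit.
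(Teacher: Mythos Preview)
The paper does not actually prove this theorem: in keeping with the disclaimer at the start of Chapter~\ref{geometry}, the Quotient Manifold Theorem is stated and then used as a black box, with the reader referred to \cite{lee2012} for a proof. So there is no ``paper's own proof'' to compare against.

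That said, your outline is the standard slice-chart argument and is essentially correct. A couple of places deserve a bit more care when you flesh it out. First, the sentence ``freeness makes the orbit map $g \mapsto \theta(x_0,g)$ an injective immersion'' compresses two distinct steps: freeness gives injectivity of the map, but injectivity of its \emph{differential} at $e$ (and hence everywhere, by equivariance) is a separate fact --- the usual route is to observe that the orbit map has constant rank and then invoke the global rank theorem to conclude that an injective constant-rank map is an immersion. Second, the claim that ``after shrinking $Y$, properness and freeness force $\Phi$ to be globally injective'' is, as you yourself flag, the crux; the clean argument is by contradiction, producing sequences $y_n, y_n' \to x_0$ in $Y$ and $g_n \in G$ with $\theta(y_n, g_n) = y_n'$, so that $(y_n, \theta(y_n, g_n))$ stays in a compact set of $X \times X$, and then properness forces $(y_n, g_n)$ to subconverge --- the limit $g$ must fix $x_0$, hence $g = e$ by freeness, contradicting local injectivity of $\Phi$ near $(x_0, e)$. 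Once you have the slice, the rest (charts, compatibility, submersion, dimension, uniqueness via local sections) goes through exactly as you describe.
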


\begin{obs}
  We are considering the quotient topology (Example \ref{quotient_topology}) on $X/G$.
\end{obs}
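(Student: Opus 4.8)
The plan is to read this observation not as a theorem with hidden content but as a consistency requirement on the statement of the Quotient Manifold Theorem (\ref{qmt}): it fixes the underlying topology of the set $X/G$ to be the quotient topology of Example \ref{quotient_topology}, and the task is to check that this is the only topology for which the phrase ``$\pi$ is a submersion'' can even be sensible, and that it agrees with the manifold topology induced by the smooth structure the theorem produces. I would begin by recalling the defining property: a set $U \subseteq X/G$ is declared open precisely when $\pi^{-1}(U)$ is open in $X$, so the quotient topology is the finest topology on $X/G$ for which $\pi:X \to X/G$ is continuous.

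Next I would argue that any topology $\tau$ on $X/G$ carrying a smooth structure with $\pi$ a submersion must coincide with the quotient topology $\tau_q$. On one hand, a submersion is in particular smooth, hence continuous, so every $\tau$-open set has open preimage; by the defining property this forces $\tau \subseteq \tau_q$. On the other hand, submersions are open maps (this follows from the local normal form for submersions, in turn a consequence of the Inverse Function Theorem). Since $\pi$ is surjective, any $V \in \tau_q$ satisfies $V = \pi(\pi^{-1}(V))$ with $\pi^{-1}(V)$ open in $X$, and openness of $\pi$ then gives $V \in \tau$, so $\tau_q \subseteq \tau$. Combining the two inclusions yields $\tau = \tau_q$, which is exactly the content of the observation: the quotient topology is not merely a convenient choice but the unique topology relative to which the theorem's conclusion can be stated.

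The genuinely substantive point — and the step I expect to be the real obstacle — is not contained in the observation itself but in verifying that this quotient topology actually makes $X/G$ a topological manifold in the sense of Definition \ref{topological_manifold}, that is, Hausdorff, second-countable and locally Euclidean. That verification lives inside the proof of Theorem \ref{qmt}, where the hypotheses of freeness and properness do the work: properness is what delivers the Hausdorff property of the quotient, second-countability descends from $X$, and the local charts are built from slices transverse to the orbits. For the observation as worded, however, nothing beyond the two-inclusion argument above is required, since its only claim is the identification of the topology.
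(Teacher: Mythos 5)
Your reading of the observation is correct: in the paper it carries no proof at all, being a convention-setting remark attached to the Quotient Manifold Theorem (\ref{qmt}) that simply fixes the topology on $X/G$ to be the one of Example \ref{quotient_topology}. What you add is a genuine justification that the paper does not give, namely that this choice is forced: your two-inclusion argument is sound. Continuity of the (necessarily smooth) submersion $\pi$ gives $\tau \subseteq \tau_q$ because $\tau_q$ is by construction the finest topology making $\pi$ continuous, and the local normal form for submersions makes $\pi$ an open map onto $(X/G, \tau)$, which together with surjectivity and $V = \pi(\pi^{-1}(V))$ yields $\tau_q \subseteq \tau$. This is essentially the uniqueness-of-topology half of the Quotient Manifold Theorem as treated in \cite{lee2012}, and it dovetails with the lemma the paper states immediately afterwards (that $\pi:X \to X/G$ is an open map), which your argument in effect re-derives in greater generality. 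What your route buys is the insight that the observation is not an arbitrary bookkeeping choice but the only coherent one; what the paper's terser route buys is economy, deferring all substance to the cited literature. You are also right to locate the real work --- Hausdorffness via properness, second countability descending from $X$, and local Euclidean charts via slices --- inside the omitted proof of Theorem \ref{qmt} rather than inside the observation itself; no gap there affects your claim.
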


\begin{exm}[Grassmannian] \label{grassmannian}
  According to the previous theorem (and from what we have showed previously), the quotient $\st{N}{d} / O(N)$ is a manifold and we will call it \emph{(generalized) Grassmannian}\index{grassmannian} or \emph{(generalized) Grassmann manifold}\index{Grassmann manifold}.
  The generalized part is because in the definition of Stiefel manifold we are considering matrices $C \in \mat{d}{N}$ such that $C^{\top}SC = \Id_N$, being $S \in \mat{d}{d}$ a symmetric and positive-definite matrix.

  The above definition is the one we will use in Chapter \ref{optimization}, but the most popular definition of Grassmannian and the one we will use in Chapter \ref{qm} is the following: let $X$ be a $d$-dimensional vector space.
  Then, given $0 \leq N \leq d$, the Grassmannian can also be defined as $\gr{N}{X} \ceq \qty{L \subset X : \text{$L$ is a subspace of $X$ and $\dim{L} = N$}}$.
  Now, it is worth mentioning how we can move from one definition to the other.
  If we start with an equivalence class $[C] \in \st{N}{d}/O(N)$, then to obtain a subspace of dimension $N$ we just need to consider the subspace spanned by the columns of $C$.
  On the other hand, if we start with a subspace $L \subset X$, then we need first to choose a basis $B = \qty{v_1, \ldots, v_d}$ for $X$ such that $S = \qty(\braket{v_i}{v_j})_{i,j}$.
  Secondly, we choose an orthonormal basis for $L$: $B_L = \qty{\l_1, \ldots, \l_N}$.
  Then, we write the vectors of $B_L$ as linear combinations of the vectors in $B$: $\l_j = c_{1j}v_1 + \ldots + c_{dj}v_d$.
  And, to conclude, we consider the equivalence class $[C]$, being $C = \qty(c_{ij})_{i,j}$.
  Observe that this matrix $C$ satisfies $C^{\top}SC = \Id_N$ because we assumed $B_L$ to be orthonormal, even if the basis $B$ is not (which gives rise to the matrix $S$).
\end{exm}

\begin{exm}[Projective space] \label{projective}
  The manifold $\gr{1}{d}$ or, more generally, $\gr{1}{X}$, is also known as \emph{projective space}\index{projective space} and it is often denoted by $\bp{X}$.
\end{exm}

\begin{obs} \label{abstract_concrete}
  We will usually say that the Grassmannian defined as a quotient of the Stiefel manifold is the \emph{concrete Grassmannian}\index{concrete Grassmannian}\index{Grassmannian!concrete} and the definition as a set of subspaces is the \emph{abstract Grassmannian}\index{abstract Grassmannian}\index{Grassmannian!abstract}.
\end{obs}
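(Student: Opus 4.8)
This statement is a terminological convention rather than a mathematical assertion, so there is nothing to prove in the usual sense. The observation simply assigns names: it declares that the quotient construction $\st{N}{d}/O(N)$ of Example \ref{grassmannian} will henceforth be called the \emph{concrete Grassmannian}, while the set-of-subspaces description $\gr{N}{X}$ will be called the \emph{abstract Grassmannian}. Both objects were already exhibited as manifolds earlier (the concrete one via the Quotient Manifold Theorem \ref{qmt}, the abstract one identified with it in Example \ref{grassmannian}), so the sole role of this observation is to fix vocabulary that disambiguates the two viewpoints in the later chapters.

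The plan, then, is to record the convention and verify that it is internally consistent with the preceding material, which requires no deductive argument. First I would check that the two names do not clash with notation already introduced, confirming that $\gr{N}{X}$ is reserved for the subspace description and that the quotient class $[C] \in \st{N}{d}/O(N)$ is used unambiguously for the concrete side. Second, I would make sure the convention is genuinely usable, i.e.\ that in every later context where the word ``Grassmannian'' appears, the reader can tell from the surrounding discussion (or from an explicit qualifier ``concrete''/``abstract'') which of the two constructions is meant. These are bookkeeping checks on the exposition, not lemmas.

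The only genuine theorem anywhere nearby is the justification that both constructions deserve the single word \emph{Grassmannian} at all, which rests on the correspondence between a class $[C]$ and the column span of $C$ sketched in Example \ref{grassmannian}. That identification, however, is the content of that example; it is presupposed here, not claimed, since this observation merely names the two endpoints of a correspondence already established. Consequently the expected ``obstacle'' is editorial rather than mathematical: ensuring the terminology is introduced early enough and applied consistently so that readers of the independent chapters promised in the Introduction are never left guessing which Grassmannian is under discussion.
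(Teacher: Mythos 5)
Your reading is exactly right: the paper offers no proof for this observation because it is a naming convention, and the mathematical substance (the correspondence between $[C] \in \st{N}{d}/O(N)$ and the column span of $C$) is already established in Example \ref{grassmannian}, which the observation presupposes rather than asserts. Nothing further is required.
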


\begin{obs}
  Observe that the abstract definition of the Grassmannian also works for the case in which $X$ is infinite-dimensional (always assuming $N \in \bn$), but it is not possible to establish the correspondence with the quotient of the Stiefel manifold anymore, at least not as we defined the Stiefel (there is also an abstract version of this manifold, see \cite{grossi2012}).
  It is important to mention this because in Quantum Mechanics (Chapter \ref{qm}) we will usually deal with the projective space of infinite-dimensional vector spaces and the definition we are considering there is the abstract one.
  However, we will not use the (infinite-dimensional) manifold structure of these spaces.
\end{obs}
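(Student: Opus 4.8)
The statement is really an observation that bundles together two essentially independent claims, and my plan is to justify each separately while being explicit about what \emph{``works''} means in each case.

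First I would treat the claim that the abstract definition $\gr{N}{X} \ceq \qty{L \subset X : \dim{L} = N}$ extends to infinite-dimensional $X$. The plan here is to note that this is a purely set-theoretic definition: it only asks that $X$ be a vector space over the relevant field and that $N$ be a natural number, and the notion of an \emph{$N$-dimensional subspace} presupposes nothing about $\dim{X}$ being finite. I would then check that the set is nonempty, which is the only thing one could worry about: since $X$ is infinite-dimensional, for any finite $N$ one can pick $N$ linearly independent vectors greedily (at each step finitely many chosen vectors cannot span $X$, so a new independent vector exists), and their span lies in $\gr{N}{X}$. The parenthetical hypothesis $N \in \bn$ is exactly what keeps this clean, since it avoids having to compare different infinite dimensions.

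Next I would treat the claim that the correspondence with the Stiefel quotient breaks down. Here the plan is to revisit the construction in Example \ref{grassmannian} and locate precisely where finiteness of the ambient dimension was used. Passing from a subspace $L$ to a class $[C]$ required fixing a basis $B = \qty{v_1, \ldots, v_d}$ of $X$ and then expressing an orthonormal basis $\qty{\l_1, \ldots, \l_N}$ of $L$ in these coordinates, producing a matrix $C = (c_{ij}) \in \mat{d}{N}$; this forces $d = \dim{X}$ to be finite. When $X$ is infinite-dimensional no such finite basis exists, so the coordinate array of $L$ would have infinitely many rows and is not an element of $\mat{d}{N}$ for any finite $d$. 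Dually, every genuine element of $\st{N}{d}$ is a finite matrix whose columns span a subspace of $\br^d$, so the forward map can only reach $N$-dimensional subspaces sitting inside a fixed finite-dimensional space and never exhausts $\gr{N}{X}$. Either direction shows that the bijection of the finite case has no analogue, \emph{as long as we insist on the Stiefel manifold as we defined it}; the caveat pointing to \cite{grossi2012} is the acknowledgement that a coordinate-free, abstract Stiefel manifold would restore a correspondence.

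The hard part will not be any computation — both claims are elementary — but rather stating them precisely enough that they are actually true. In particular I would be careful to assert, for the first claim, only that $\gr{N}{X}$ is a well-defined nonempty set and \emph{not} that it carries a smooth manifold structure of the kind built earlier in this chapter; the closing sentence of the observation deliberately disclaims any use of such a structure in the infinite-dimensional setting. Pinning down this set-versus-manifold distinction, together with isolating the single place (the finiteness of $d$) where the earlier correspondence used finite-dimensionality, is the entire content of the argument.
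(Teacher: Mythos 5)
Your proposal is correct and aligns with the paper's reasoning: the paper states this as an unproved observation, and your two-part justification (the set-theoretic definition of $\gr{N}{X}$ needs no finiteness of $\dim X$ beyond $N \in \bn$, while the concrete correspondence of Example \ref{grassmannian} breaks precisely because it requires a finite basis $\qty{v_1, \ldots, v_d}$ and coordinate matrices in $\mat{d}{N}$) is exactly the tacit argument the text relies on. Your care in claiming only nonemptiness and well-definedness of the set, rather than any smooth structure, also matches the observation's closing disclaimer.
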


\begin{lem}
  Given a manifold $X$ and a Lie group $G$ acting on $X$, the canonical projection $\pi:X \to X/G$ is an open map.
\end{lem}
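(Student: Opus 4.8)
The plan is to work directly from the definition of the quotient topology on $X/G$ (Example \ref{quotient_topology}), according to which a set $V \subset X/G$ is open \tiff its preimage $\pi^{-1}(V)$ is open in $X$. Thus, to prove that $\pi$ is open it suffices to take an arbitrary open set $U \subset X$ and show that the saturation $\pi^{-1}(\pi(U))$ is open in $X$; openness of $\pi(U)$ in $X/G$ then follows immediately from the defining property of the quotient topology.

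First I would describe this saturation explicitly. A point $y \in X$ satisfies $\pi(y) \in \pi(U)$ \tiff $\orb(y) \cap U \neq \emptyset$, that is, \tiff $y = \theta(x, g)$ for some $x \in U$ and $g \in G$. Introducing, for each fixed $g \in G$, the slice map $R_g \ceq \theta(\cdot, g):X \to X$, this identification reads
\begin{equation}
  \pi^{-1}(\pi(U)) = \bigcup_{g \in G} R_g(U).
\end{equation}
Next I would check that every $R_g$ is a homeomorphism. It is continuous, being the restriction of the continuous action $\theta$ to the slice $X \x \qty{g}$; moreover the action axioms give $R_h \circ R_g = R_{gh}$ and $R_e = \id_X$, so that $R_g$ is a bijection whose inverse is the continuous map $R_{g^{-1}}$. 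In particular each $R_g$ is an open map, hence each $R_g(U)$ is open, and the displayed set is open as a union of open sets. By the definition of the quotient topology, $\pi(U)$ is therefore open, as desired.

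The main point requiring care will be purely bookkeeping: one must respect the right-action convention $\theta(\theta(x, g), h) = \theta(x, gh)$ when composing the slice maps, which yields $R_h \circ R_g = R_{gh}$ and hence $R_g \circ R_{g^{-1}} = R_e = \id_X$. It is worth emphasizing that this argument uses none of the hypotheses of freeness, properness, or smoothness of the action: mere continuity of $\theta$ — enough to make each slice map $R_g$ a homeomorphism — is all that is needed for $\pi$ to be open.
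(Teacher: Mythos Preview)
Your proof is correct and is the standard argument for this fact. The paper actually states this lemma without proof (it is one of the many results in Chapter~\ref{geometry} for which the author refers the reader to \cite{lee2012, lee2018}), so there is nothing to compare against; your write-up would serve perfectly well as the missing proof, and your closing remark that only continuity of the action is needed is accurate and worth keeping.
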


\begin{thm}
  If $X$ is a compact manifold and $G$ is a Lie group acting on $X$, then $X/G$ is compact.
\end{thm}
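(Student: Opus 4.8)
The plan is to realize $X/G$ as a continuous image of the compact space $X$, and then invoke the standard fact that compactness is preserved under continuous surjections. The manifold and Lie group hypotheses will turn out to be almost irrelevant: all we really need is a surjective continuous map out of a compact space.

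First I would recall that $X/G$ carries the quotient topology, which by its very definition is the finest topology making the canonical projection $\pi:X \to X/G$ continuous; concretely, a subset $U \subseteq X/G$ is declared open exactly when $\pi^{-1}(U)$ is open in $X$, so $\pi$ is continuous by construction rather than as an extra assumption. Next, $\pi$ is surjective, since every point of $X/G$ is an orbit $\orb(x)$ and hence equals $\pi(x)$ for some $x \in X$; thus $X/G = \pi(X)$.

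Finally I would apply the standard topological result (of the kind collected in the Topology appendix) that the continuous image of a compact space is compact. The argument is routine: given any open cover of $\pi(X)$, pulling each member back along the continuous map $\pi$ produces an open cover of $X$; by compactness of $X$ this admits a finite subcover, and the images of those finitely many sets cover $\pi(X) = X/G$. Applying this to the compact $X$ and the continuous surjection $\pi$ yields that $X/G$ is compact, which is the desired conclusion.

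There is essentially no genuine obstacle in this proof. The only point worth stating with care is that continuity of $\pi$ is built into the quotient topology and need not be verified separately. It is also worth emphasizing what is \emph{not} used: the openness of $\pi$ established in the preceding lemma plays no role (compactness transfers along continuous surjections, not along open maps), and neither the smooth structures nor the group-theoretic properties of the action are needed, so the statement in fact holds for an arbitrary group acting on an arbitrary compact topological space.
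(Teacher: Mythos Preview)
Your proof is correct and follows essentially the same route as the paper: pull an open cover of $X/G$ back along the continuous projection $\pi$, extract a finite subcover by compactness of $X$, and push forward using surjectivity of $\pi$. The only difference is cosmetic: the paper spells out the cover argument by hand and, in passing, invokes the preceding lemma on openness of $\pi$ to say the $U_i$ are open in $X/G$ --- but as you correctly observe, this is unnecessary, since the $U_i$ were already members of the original open cover; your appeal to the general ``continuous image of compact is compact'' proposition (which is in the paper's Topology appendix) is the cleaner packaging.
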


\begin{proof}
  If $\mc{U}$ is a cover of $X/G$, then, since the projection is continuous, $\qty{\pi^{-1}(U) : U \in \mc{U}}$ is a cover of $X$.
  Now, using the compactness of $X$, there exists $U_1, \ldots, U_k \in \mc{U}$ such that $X = \bigcup_{i=1}^k \pi^{-1}(U_i)$.
  However, if we use the previous lemma and the surjectivity of $\pi$, we know that $\pi(\pi^{-1}(U_i)) = U_i$ are open in $X/G$.
  That is it, $X/G = \bigcup_{i=1}^k U_i$ because $\pi$ is surjective.
\end{proof}

\begin{cor} \label{grassmannian_compact}
  The Grassmannian is compact.
\end{cor}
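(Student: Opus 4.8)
The plan is to apply the theorem immediately preceding this corollary, which asserts that whenever a Lie group $G$ acts on a compact manifold $X$, the orbit space $X/G$ is again compact. Since by Example \ref{grassmannian} the (concrete) Grassmannian is precisely the orbit space $\st{N}{d}/O(N)$, it suffices to verify that the two hypotheses of that theorem hold in this situation.

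First I would invoke Proposition \ref{stiefel_compact}, which establishes that the Stiefel manifold $\st{N}{d}$ is compact; this supplies the compact manifold $X$ required by the theorem. Second, I would recall that $O(N)$ is a Lie group (shown in the example following the definition of a Lie group, using that matrix multiplication and matrix inversion are smooth, the latter via Cramer's rule), so it plays the role of $G$, acting on $\st{N}{d}$ through the action of Example \ref{action_stiefel}. It is worth noting that the compactness theorem requires neither freeness nor properness of the action---only that $G$ be a Lie group acting on a compact $X$---so no further conditions need to be checked, even though those stronger properties were established earlier for the purposes of the Quotient Manifold Theorem.

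With these two observations in place, the conclusion follows directly: the orbit space $\st{N}{d}/O(N)$, which is the Grassmannian, is compact. I expect no genuine obstacle here, as all the substantive work---the compactness of the Stiefel manifold and the Lie group structure of $O(N)$---has already been carried out in the preceding results. The corollary is simply the assembly of these facts into the hypotheses of the quotient-compactness theorem, and the only ``step'' is recognizing that this is the right theorem to invoke.
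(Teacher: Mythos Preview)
Your proposal is correct and matches the paper's own proof essentially verbatim: the paper simply invokes the compactness of the Stiefel manifold together with the preceding theorem on quotients of compact manifolds by Lie group actions. Your additional remarks about $O(N)$ being a Lie group and about freeness/properness not being needed are accurate elaborations, but the core argument is identical.
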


\begin{proof}
  Using the fact that the Stiefel manifold is compact and the previous theorem the result follows.
\end{proof}

\begin{cor} \label{product_compact}
  Finite products of the Grassmannian and of the Stiefel manifold are compact.
\end{cor}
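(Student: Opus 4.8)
The plan is to reduce the statement to the two compactness results already established, namely that the Stiefel manifold is compact (Proposition \ref{stiefel_compact}) and that the Grassmannian is compact (Corollary \ref{grassmannian_compact}), combined with the elementary topological fact that a finite product of compact spaces is again compact. Once these are in place, the corollary is immediate, since any finite product of the Grassmannian and of the Stiefel manifold is a finite product of compact spaces.

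First I would observe that each factor in such a product is, by the two cited results, a compact topological space. Next I would recall that the product manifold $X_1 \x \ldots \x X_k$ was endowed \emph{by construction} with the product topology (as in the earlier definition of the product manifold), so the question reduces to a purely topological one: is the product of finitely many compact spaces compact? This is the finite case of Tychonoff's theorem, which does not require the axiom of choice and can be proved by induction on the number of factors. The base step is the statement that $A \x B$ is compact whenever $A$ and $B$ are: one takes a cover by basic open rectangles, uses compactness of $B$ to cover each slice $\qty{a} \x B$ by finitely many of them, forms the associated tube around $a \in A$, and then uses compactness of $A$ to reduce to finitely many tubes, producing a finite subcover of $A \x B$. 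The inductive step then follows by writing $X_1 \x \ldots \x X_k \cong (X_1 \x \ldots \x X_{k-1}) \x X_k$.

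I do not expect any real obstacle here: the two nontrivial ingredients, compactness of each individual factor, are already proved, and the remaining ingredient is entirely standard (indeed it is most likely available in the Topology appendix, in which case it can simply be cited). The only point deserving a moment's care is to confirm that the topology governing compactness on the product manifold is genuinely the product topology, rather than some a priori distinct topology coming from its smooth structure; but since the product manifold was defined to carry the product topology, this identification is automatic and the argument goes through.
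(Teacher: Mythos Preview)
Your proposal is correct and follows exactly the paper's approach: cite the compactness of the Stiefel manifold (Proposition~\ref{stiefel_compact}) and of the Grassmannian (Corollary~\ref{grassmannian_compact}), then invoke the fact that finite products of compact spaces are compact, which is indeed stated in the Topology appendix (Proposition~\ref{product_compact}). Your additional sketch of the tube-lemma argument and the remark about the product topology are fine but unnecessary here, since the appendix result can simply be cited.
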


\begin{proof}
  It follows from the fact that the Grassmannian and the Stiefel manifold are compact together with the fact that finite products of compact spaces are compact (Proposition \ref{product_compact}).
\end{proof}

\begin{defi}
  Given a manifold $X$ and an equivalence relation $\sim$, we say that the quotient $X/\sim$ is a \emph{quotient manifold}\index{manifold!quotient}\index{quotient manifold} if it has a smooth structure such that the canonical projection $\pi:X \to X/\sim$ is a submersion.
\end{defi}

\begin{obs}
  Theorem \ref{qmt} is basically saying that $X/G$ is a quotient manifold.
\end{obs}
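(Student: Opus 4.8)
The plan is to verify that the hypotheses and conclusion of Theorem \ref{qmt} are exactly what the definition of a quotient manifold requires, once we identify the orbit space $X/G$ with a quotient by an equivalence relation. The argument is therefore purely a matter of unwinding definitions, and I expect no genuine obstacle — this is why the statement is recorded as an observation rather than as a theorem.

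First I would recall that a group action $\theta:X \x G \to X$ induces an equivalence relation $\sim$ on $X$, namely $x \sim y$ if, and only if, $y \in \orb(x)$. This is precisely the relation introduced earlier in the chapter, whose quotient $X/\sim$ we agreed to denote by $X/G$. Thus the orbit space $X/G$ is literally a quotient of the manifold $X$ by an equivalence relation, which is exactly the setting in which the notion of quotient manifold was defined.

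Next I would invoke Theorem \ref{qmt} under its three standing hypotheses: that $G$ is a Lie group acting smoothly, freely and properly on the smooth manifold $X$. The theorem then produces a (unique) smooth structure on $X/G$ for which the canonical projection $\pi:X \to X/G$ is a submersion. Comparing this with the definition of a quotient manifold — a quotient $X/\sim$ endowed with a smooth structure making $\pi$ a submersion — one sees that the two statements coincide verbatim. Hence $X/G$ satisfies the definition and is a quotient manifold, which is the content of the observation.

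The only point requiring a moment's care is that the definition of quotient manifold is phrased for an arbitrary equivalence relation $\sim$, whereas Theorem \ref{qmt} speaks of the orbit space $X/G$; the reconciliation is simply the identification $X/\sim = X/G$ made in the first step. Beyond this bookkeeping there is nothing further to establish, so the observation follows immediately.
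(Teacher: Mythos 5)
Your proposal is correct and matches the paper's intent exactly: the observation is a definitional unwinding, identifying the orbit space $X/G$ with the quotient $X/\sim$ under the orbit equivalence relation and noting that the smooth structure and submersion conclusion of Theorem \ref{qmt} are precisely what the definition of a quotient manifold demands. The paper offers no further argument, so there is nothing to add.
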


Now, there is a very useful result that allows us to obtain smooth functions defined on quotient manifolds (and, consequently, on the Grassmannian):
\begin{thm} \label{smooth_quotient}
  Given a quotient manifold $X/\sim$, a function $f:X/\sim \to Y$ is smooth \tiff $f \circ \pi:X \to Y$ is smooth, being $\pi:X \to X/\sim$ the canonical projection.
\end{thm}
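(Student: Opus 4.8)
The plan is to prove the two implications separately, since one is immediate and the other carries all the weight. For the forward direction, suppose $f$ is smooth. By hypothesis $X/\sim$ is a quotient manifold, so the canonical projection $\pi$ is a submersion and in particular a smooth function. Then $f \circ \pi$ is a composite of smooth maps, hence smooth by the Chain Rule (Theorem \ref{chain_rule}). This direction needs nothing beyond the definitions.

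The substance is the reverse implication: assuming $f \circ \pi$ is smooth, I want to conclude that $f$ is smooth. The key tool I would use is that \emph{every submersion admits local smooth sections}. Concretely, fix $q \in X/\sim$ and pick any $x \in \pi^{-1}(q)$. Because $\pi$ is a submersion, there exist an open neighborhood $U \subset X/\sim$ of $q$ and a smooth map $\sigma:U \to X$ with $\pi \circ \sigma = \id_U$ (and, if desired, $\sigma(q) = x$).

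Granting such a section, I can factor $f$ locally. On $U$ we have
\begin{equation}
  f|_U = f \circ \id_U = f \circ (\pi \circ \sigma) = (f \circ \pi) \circ \sigma,
\end{equation}
and the right-hand side is a composition of the smooth maps $f \circ \pi$ (by hypothesis) and $\sigma$, hence smooth by the Chain Rule. Since smoothness of a map between manifolds is checked pointwise --- by the definition in this chapter, $f$ is smooth exactly when it is smooth at every point, which is a local condition verified in charts --- and since $q \in X/\sim$ was arbitrary, it follows that $f$ is smooth on all of $X/\sim$. The result follows.

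The main obstacle is precisely the existence of local sections of a submersion, which is the one nontrivial ingredient and is not among the facts recorded earlier in the chapter. It is a standard consequence of the local normal form for submersions (the rank theorem): in suitable charts a submersion looks like the canonical projection $\br^{d} \to \br^{k}$, whose local sections are given by including a slice, and these are manifestly smooth. In keeping with the chapter's convention of citing foundational differential-topology results without proof, I would invoke this normal form rather than reprove it; once it is in hand, everything reduces to composing smooth maps.
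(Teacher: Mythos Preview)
Your argument is correct and is in fact the standard one: smoothness of $f\circ\pi$ plus the existence of local smooth sections of the submersion $\pi$ gives smoothness of $f$ locally, hence globally. Note, however, that the paper does not supply its own proof of this theorem; in keeping with the chapter's stated convention, it is one of the results left unproved with a reference to \cite{lee2012}. So there is no ``paper's proof'' to compare against --- your write-up simply fills that gap, and the approach you chose is exactly the one found in the cited reference.

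One small quibble: you invoke Theorem~\ref{chain_rule} for the claim that a composite of smooth maps is smooth, but as stated in this chapter the Chain Rule concerns the differential $T_x(g\circ f)$ and presupposes smoothness of the composite rather than asserting it. The fact you actually need --- that the composition of smooth maps between manifolds is smooth --- is elementary (it reduces immediately to the Euclidean case via charts) and is safe to take for granted here, but strictly speaking it is not what Theorem~\ref{chain_rule} says.
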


The next theorem is useful to find the tangent space to a quotient manifold, which is necessary to do Riemannian Geometry:
\begin{thm}
  Given a quotient manifold $X/\sim$ and $x \in X$, the set $\pi^{-1}(\pi(x))$ is called \emph{fiber of $x$} and (1) it is a closed set of $X$, (2) it is an embedded manifold of $X$, (3) its tangent spaces are given by
  \begin{equation}
    T_y\pi^{-1}(\pi(x)) = \ker{T_y\pi},
  \end{equation}
  being $y \in \pi^{-1}(\pi(x))$ and $\pi$ the canonical projection.
  When $y = x$, the tangent space is called \emph{vertical space}\index{vertical space} and it is denoted by $V_x$.
\end{thm}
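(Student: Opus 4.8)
The plan is to dispatch the three assertions in order, leaning throughout on the single structural fact that $\pi$ is a submersion, combined with the Preimage Theorem and the tangent-space-via-defining-function result. First I would settle (1). Since $X/\sim$ is a quotient manifold it carries a smooth structure, hence it is in particular a topological manifold and therefore Hausdorff; in a Hausdorff space every singleton is closed, so $\{\pi(x)\}$ is closed in $X/\sim$. As $\pi$ is smooth and therefore continuous, the preimage $\pi^{-1}(\pi(x)) = \pi^{-1}(\{\pi(x)\})$ is a closed subset of $X$, and the first claim is finished with no further computation.

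For (2) I would use the very definition of quotient manifold, which says $\pi$ is a submersion, i.e.\ $T_z\pi$ is surjective for every $z \in X$. Hence every point of $X$ is a regular point of $\pi$, so \emph{every} value of $\pi$ is a regular value; in particular $\pi(x)$ is a regular value. The Preimage Theorem (Theorem \ref{preimage_theorem}) then applies directly and yields that $\pi^{-1}(\pi(x))$ is an embedded manifold of $X$, of dimension $\dim X - \dim(X/\sim)$.

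Finally, for (3), I would observe that the two facts just established say precisely that $\pi$ is a defining function for the embedded manifold $S \ceq \pi^{-1}(\pi(x))$: indeed $S = \pi^{-1}(\pi(x))$ where $\pi(x)$ is a regular value, which is exactly the condition in the definition of defining function. Invoking Proposition \ref{tangent_embedded} with $f = \pi$ then gives $T_y S = \ker T_y\pi$ for every $y \in S$, which is the asserted formula; the designation $V_x = T_x S$ as the vertical space is mere terminology.

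I expect no substantial obstacle, since all the real work is carried by the Preimage Theorem and Proposition \ref{tangent_embedded}. The only point deserving a little care is making explicit the logical chain \emph{submersion $\Rightarrow$ every point regular $\Rightarrow$ every value regular}, as this is exactly what licenses the application of both results to the value $\pi(x)$; once that observation is in place, the three statements follow immediately.
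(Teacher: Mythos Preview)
Your proposal is correct. The paper itself does not supply a proof for this theorem---it is one of the many results in Chapter~\ref{geometry} that the author explicitly chose to state without proof, deferring to the references \cite{lee2012, lee2018, boumal2022}. Your argument is exactly the one that the paper's internal toolkit invites: the definition of quotient manifold gives $\pi$ as a submersion, the Preimage Theorem (Theorem~\ref{preimage_theorem}) then yields (2), Proposition~\ref{tangent_embedded} yields (3), and the Hausdorff property of $X/\sim$ together with continuity of $\pi$ yields (1). Nothing is missing.
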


\begin{obs} \label{quotient_vertical}
  Now, since $T_x\pi:T_xX \to T_{[x]}(X/\sim)$ is a surjective linear transformation, we can use the Universal Property of the Quotient (Proposition \ref{univ_quotient}) to obtain $T_{[x]}(X/\sim) \cong T_xX/V_x$, which means we can compute the tangent space to the quotient manifold using $X$ and $\pi$.
  This is already useful, but when the manifold is Riemannian we can describe this quotient more concretely.
  So, let us move to Riemannian manifolds.
\end{obs}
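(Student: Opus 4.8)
The plan is to recognize this isomorphism as the First Isomorphism Theorem for vector spaces, repackaged through the universal property. All the ingredients are already in place: $T_x\pi \colon T_xX \to T_{[x]}(X/\sim)$ is a linear transformation, it is surjective because $\pi$ is a submersion (so by definition its directional derivative is onto), and by the previous theorem its kernel is exactly the vertical space, $V_x = \ker T_x\pi$.

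First I would let $q \colon T_xX \to T_xX/V_x$ denote the canonical quotient map. Since $V_x = \ker T_x\pi$, we in particular have $V_x \subseteq \ker T_x\pi$, which is precisely the hypothesis needed to invoke the Universal Property of the Quotient (Proposition \ref{univ_quotient}). This produces a unique linear map $\ol{T_x\pi} \colon T_xX/V_x \to T_{[x]}(X/\sim)$ satisfying $\ol{T_x\pi} \circ q = T_x\pi$.

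It then remains to verify that $\ol{T_x\pi}$ is a linear isomorphism. Surjectivity is immediate: since $T_x\pi = \ol{T_x\pi} \circ q$ is surjective, $\ol{T_x\pi}$ must be surjective too. For injectivity I would take $[v] \in T_xX/V_x$ with $\ol{T_x\pi}([v]) = 0$; unwinding the defining relation gives $T_x\pi(v) = 0$, hence $v \in \ker T_x\pi = V_x$, and therefore $[v] = 0$ in the quotient. Thus $\ol{T_x\pi}$ is a bijective linear map, which establishes $T_{[x]}(X/\sim) \cong T_xX/V_x$.

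There is no genuine obstacle here: the argument is pure linear algebra once the kernel identification $V_x = \ker T_x\pi$ from the preceding theorem and the surjectivity of $T_x\pi$ (from $\pi$ being a submersion) are taken as given. The only subtle point is the well-definedness of $\ol{T_x\pi}$, which the universal property handles exactly because we quotient by a subspace of the kernel; had $V_x$ been strictly smaller than $\ker T_x\pi$, the induced map would still exist but would fail to be injective, so the equality $V_x = \ker T_x\pi$ is what makes the induced map an isomorphism rather than merely a surjection.
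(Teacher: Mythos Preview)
Your proposal is correct and follows exactly the approach the paper intends: the observation itself \emph{is} the paper's argument, and it simply invokes Proposition~\ref{univ_quotient}, whose ``Besides'' clause already packages the First Isomorphism Theorem $V/\ker T \cong \im T$. You have just unpacked that clause explicitly---verifying surjectivity (from $\pi$ being a submersion) and injectivity (from $V_x = \ker T_x\pi$)---so there is nothing to add.
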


\section{Riemannian Geometry}

Again, all manifolds are assumed to be smooth.
Also, since in this section we will use tangent vectors multiple times, specially in quotient manifolds, we will switch the notation from $[\gamma]$ to $v$ to improve readability.

\begin{defi}
  Given a manifold $X$, a \emph{(Riemannian) metric on $X$}\index{Riemannian!metric}\index{metric} is a function $x \mapsto \braket{\cdot}_x$ that chooses an inner product on $T_xX$ smoothly, \ie, $\braket{\cdot}_x:T_xX \x T_xX \to \br$ is an inner product and, given two vector fields $V_1, V_2$ on $X$, the function $X \ni x \mapsto \braket{V_1(x)}{V_2(x)}_x \in \br$ should be smooth.
  A manifold endowed with a Riemannian metric is called a \emph{Riemannian manifold}\index{manifold!Riemannian}\index{Riemannian!manifold} and we will usually denote the Riemannian metric by $g$ or $\braket{\cdot}$.
\end{defi}

\begin{obs}
  We are slightly abusing the notation in the definition above because $V_i(x)$ is actually representing a vector in $T_xX$ instead of a pair $(x, V_i(x))$ in $TX$.
\end{obs}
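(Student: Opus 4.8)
The plan is to make the implicit type-checking explicit by unwinding the definition of a vector field. First I would recall that an element $V \in \mf{X}(X)$ is, by definition, a smooth map $V \colon X \to TX$ subject to $\pi \circ V = \id_X$, and that the tangent bundle is the set of pairs $TX = \qty{(x, [\gamma]) : x \in X, [\gamma] \in T_xX}$ equipped with the projection $\pi(x, [\gamma]) = x$. From the defining constraint $\pi \circ V = \id_X$ it follows that $\pi(V(x)) = x$ for every $x \in X$; since the first coordinate of any element of $TX$ is exactly its image under $\pi$, this forces $V(x)$ to have the form $(x, w_x)$ for a unique tangent vector $w_x \in T_xX$. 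Thus the only piece of data carried by $V(x)$ beyond the basepoint is its second coordinate $w_x$, and it is this $w_x$ that we abbreviate by $V(x)$ or $Vx$, exactly as announced in the earlier observation on vector-field notation.

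Next I would confront the expression $\braket{V_1(x)}{V_2(x)}_x$ appearing in the metric definition with the signature of the metric itself. By definition $\braket{\cdot}_x \colon T_xX \x T_xX \to \br$ is an inner product, so each of its two slots accepts an element of $T_xX$ and nothing else. The literal value $V_i(x) = (x, w_{i,x})$ lives in $\qty{x} \x T_xX \subset TX$, which is not $T_xX$, so substituting the pair directly would be ill-typed and the displayed formula would fail to denote a real number. The only reading under which $\braket{V_1(x)}{V_2(x)}_x \in \br$ is well defined is the one in which each $V_i(x)$ stands for its tangent-vector component $w_{i,x} \in T_xX$; this is precisely the notational identification being flagged.

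I expect no genuine obstacle here beyond the bookkeeping, since the content is a definitional type-match rather than a substantive argument. The single point requiring verification is that $\pi \circ V = \id_X$ pins down the first coordinate of $V(x)$ uniquely, which is immediate from the formula $\pi(x, [\gamma]) = x$. Moreover, the smoothness clause in the metric definition is insensitive to the choice of reading, because $x \mapsto (x, w_x)$ is smooth exactly when $x \mapsto w_x$ is smooth; hence suppressing the basepoint throughout loses no information and the abuse is harmless.
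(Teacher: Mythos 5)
Your unwinding is correct and matches what the paper intends: the observation is a definitional remark (the paper offers no separate argument), and your type-check --- that $\pi \circ V = \id_X$ forces $V(x) = (x, w_x)$ while the slots of $\braket{\cdot}_x$ accept only elements of $T_xX$, so $V_i(x)$ must abbreviate the component $w_{i,x}$ --- is precisely the identification the paper already set up in its earlier remark that $V(x)$ denotes the tangent vector rather than the pair. Your closing point that smoothness of $x \mapsto (x, w_x)$ is equivalent to smoothness of $x \mapsto w_x$ correctly confirms the abuse is harmless.
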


\begin{obs}
  Given the fact that we have an inner product, we can induce a norm in each tangent space in the natural way: $\norm{v}_x = \sqrt{\braket{v}_x}$, being $v \in T_xX$.
\end{obs}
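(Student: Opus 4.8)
The plan is to observe that this is a completely pointwise, purely linear-algebraic assertion: for each fixed $x \in X$ the metric supplies an inner product $\inn{\cdot}{\cdot}_x$ on the real vector space $T_xX$, and the statement is exactly the standard fact that the square root of the associated quadratic form is a norm. So I would fix $x$ once and for all, set $\norm{v}_x \ceq \sqrt{\inn{v}{v}_x}$, and verify the three norm axioms; since nothing in the argument depends on the point, the same verification works simultaneously at every $x$, and one may simply invoke the corresponding result from the Linear Algebra appendix (\ref{linalg}) with $V = T_xX$.

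Two of the axioms are immediate. Non-negativity and definiteness come from the positive-definiteness built into the definition of an inner product: $\inn{v}{v}_x \geq 0$, so the square root is a well-defined non-negative real number, and $\norm{v}_x = 0$ holds precisely when $\inn{v}{v}_x = 0$, \ie\ when $v = 0$. Absolute homogeneity follows from bilinearity: $\norm{\lambda v}_x = \sqrt{\inn{\lambda v}{\lambda v}_x} = \sqrt{\lambda^2 \inn{v}{v}_x} = |\lambda|\,\norm{v}_x$ for every $\lambda \in \br$.

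The only axiom requiring genuine work, and hence the main (if mild) obstacle, is the triangle inequality, which rests on the Cauchy--Schwarz inequality $\inn{v}{w}_x \leq \norm{v}_x \norm{w}_x$. Granting this, I would expand
\begin{equation}
  \norm{v + w}_x^2 = \inn{v + w}{v + w}_x = \norm{v}_x^2 + 2\inn{v}{w}_x + \norm{w}_x^2 \leq \pars{\norm{v}_x + \norm{w}_x}^2,
\end{equation}
and take square roots to obtain $\norm{v + w}_x \leq \norm{v}_x + \norm{w}_x$. The whole difficulty therefore concentrates in Cauchy--Schwarz, itself a standard consequence of positive-definiteness (treat $\inn{v - t w}{v - t w}_x \geq 0$ as a quadratic in $t$ and read off that its discriminant is non-positive), so I would either reproduce that one-line discriminant argument or cite it directly from the Linear Algebra appendix and conclude.
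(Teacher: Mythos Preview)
Your proposal is correct and aligns with the paper's treatment: the paper states this as a bare observation without proof, relying on the standard linear-algebra fact (recorded as a proposition in Appendix~\ref{linalg}) that $\norm{x} \ceq \sqrt{\braket{x}}$ is a norm on any inner-product space, applied pointwise with $V = T_xX$. You go further by actually sketching the verification of the norm axioms and the Cauchy--Schwarz argument, which the paper omits entirely.
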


\begin{obs}
  When the metric does not depend explicitly on $x$, as we will see in the next example, we will drop the subscript and denote $\braket{\cdot}_x$ by $\braket{\cdot}$.
\end{obs}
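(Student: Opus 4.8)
The statement is a purely notational convention rather than a mathematical assertion, so strictly speaking there is no theorem to prove. The only thing that genuinely requires justification is that the phrase ``the metric does not depend explicitly on $x$'' is meaningful and that, when it holds, suppressing the subscript on $\braket{\cdot}_x$ introduces no ambiguity. The plan is therefore to pin down precisely the situation the convention refers to and then to check the single substantive point hidden inside it, namely that the point-independent prescription really does define a Riemannian metric in the sense of the preceding definition, so that writing $\braket{\cdot}$ in place of $\braket{\cdot}_x$ is both well-defined and harmless.

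First I would spell out the two canonical settings in which the convention applies. When $X$ is a finite-dimensional vector space (or an open subset of one), the earlier identification $T_xX \cong X$ holds for every $x$, so a single inner product on $X$ assigns the very same bilinear form to every tangent space; here the lack of dependence on $x$ is literal. The more relevant case for this work is an embedded manifold $X$ sitting inside a fixed ambient vector space carrying a fixed inner product: then $\braket{\cdot}_x$ is simply that ambient inner product restricted to $T_xX$, and although the subspaces $T_xX$ vary with $x$, the defining \emph{formula} does not. The Stiefel manifold already furnishes the prototype, with the point-independent inner product $\braket{C_1}{C_2} = \tr(C_1^{\top}SC_2)$ written, tellingly, without any subscript. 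In both settings ``not depending explicitly on $x$'' means exactly that one fixed bilinear form, evaluated at $x$-dependent arguments, produces $\braket{\cdot}_x$.

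Next I would verify the metric axioms for such a constant prescription. That $\braket{\cdot}_x$ is an inner product on each $T_xX$ is immediate: in the vector-space case it is the fixed inner product transported by the canonical isomorphism, and in the embedded case it is the restriction of an inner product to a subspace, whose positive-definiteness is inherited. The smoothness axiom — that $x \mapsto \braket{V_1(x)}{V_2(x)}$ be smooth for all vector fields $V_1, V_2$ — also follows, since in a fixed basis the Gram matrix of the form is a \emph{constant} matrix, so the assignment is a fixed bilinear expression in the smooth components of $V_1$ and $V_2$. The only mild obstacle is conceptual rather than technical: one must ensure the convention is invoked only when a common fixed (ambient or isomorphic) vector space makes the tangent spaces comparable. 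In a genuinely point-dependent situation, where no such fixed form exists, ``does not depend on $x$'' would be ill-posed and the subscript must be retained; once one restricts to the two settings above, the convention is unambiguous and the verification is routine.
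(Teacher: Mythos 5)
Your reading is correct: the statement is a notational convention with no proof in the paper, and your justification matches the paper's own implicit one, which appears in the very next example — on a vector space $T_xX \cong X$ for every $x$, so one fixed inner product serves all tangent spaces, and the subscript is dropped exactly as you describe. Your additional remarks (the constant Gram matrix giving smoothness, and the embedded case such as the Stiefel manifold with $\braket{C_1}{C_2} = \tr(C_1^{\top}SC_2)$ written without a subscript) are routine verifications the paper omits but uses implicitly, so nothing is missing or in conflict.
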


\begin{exm}[Vector spaces]
  Any vector space endowed with an inner product $(X, \braket{\cdot})$ is a Riemannian manifold because, since $T_xX \cong X$ for every $x \in X$, we can use $\braket{\cdot}$ also as a Riemannian metric.
  Observe that in this case the inner product is the same for the manifold and all its tangent spaces.
  Therefore, we will ignore the subscript.
\end{exm}

\begin{exm}[Euclidean space]
  When $X = \br^d$, we will always consider the inner product given by $\braket{u}{v} = u^{\top}v$.
\end{exm}

\begin{exm}[Matrix space]
  Observe that, if $X = \mat{d}{N}$, then $\braket{M_1}{M_2} = \tr(M_1^{\top}M_2) = \vec(M_1)^{\top}\vec(M_2)$, being vec the columnwise vectorization (see Definition \ref{vectorization}).
  So, the inner product in the matrix space $\mat{d}{N}$ and in the Euclidean space $\br^{dN}$ is actually the same if we adopt this columnwise convention.
  That said, we will mostly use the following inner product when we are dealing with matrices: $\braket{M_1}{M_2} = \tr(M_1^{\top}SM_2)$, being $S \in \mat{d}{d}$ a symmetric and positive-definite matrix.
\end{exm}

\begin{exm}
  If $(X, g^X)$ and $(Y, g^Y)$ are Riemannian manifolds, then the product $X \x Y$ is also a Riemannian manifold and the metric is given by:
  \begin{equation}
    g_{(x,y)}^{X \x Y}((v_x, v_y), (w_x, w_y))
    \ceq g_x^X(v_x, w_x) + g_y^Y(v_y, w_y).
  \end{equation}
\end{exm}

\begin{exm}[Embedded manifolds]
  If $f:X \to Y$ is an immersion and $(Y, g^Y)$ is a Riemannian manifold, we can define a metric on $X$ as follows:
  \begin{equation}
    g_x^X(v, w)
    \ceq g_{f(x)}^Y(T_xf(v), T_xf(w)).
  \end{equation}
  This metric $g^X$ is usually called \emph{pullback metric (of $g^Y$ along $f$)} and it is denoted by $f^*g^Y$.
\end{exm}

\begin{defi}
  If $(X, g^X)$ is a Riemannian manifold and $S \subset X$ is an embedded manifold, then $S$ endowed with the pullback metric along the inclusion is called a \emph{Riemannian submanifold (of $X$)}.
\end{defi}

\begin{exm}[Stiefel manifold]
  We already showed that the Stiefel manifold $\st{N}{d}$ is embedded in $\mat{d}{N}$ and we also computed its tangent spaces in Example \ref{stiefel_manifold}.
  So, if we consider the pullback metric on the Stiefel, we have a more ``exotic'' example of Riemannian manifold.
  That said, since we are working with a generalized Stiefel, we have to fix the inner product a little bit.
  So, to us, the inner product on $\mat{d}{N}$ and $\st{N}{d}$ is given by $\braket{M_1}{M_2} \ceq \tr(M_1^{\top}SM_2)$, being $S$ the symmetric and positive-definite matrix underlying the generalized Stiefel.
  Some people claim that this metric is not the best one to consider in the Stiefel manifold and this makes a big difference when one is implementing Riemannian algorithms.
  However, since we will implement algorithms on the Grassmannian using the Stiefel just as a helper, we will keep using this metric.
  If the reader is more interested in the Stiefel manifold, though, check \cite{edelman1998} to see a discussion about the metric issue and for algorithms.
\end{exm}

In Observation \ref{quotient_vertical} we said that the tangent space to a quotient manifold $X/\sim$ is isomorphic to $T_xX/V_x$, being $V_x = \ker{T_x\pi}$ and $\pi$ the canonical projection.
However, when $X$ is a Riemannian manifold, $T_xX/V_x$ is isomorphic to the orthogonal complement of $V_x$ and this space is easier to work with.\footnote{The isomorphism is obtained by considering the orthogonal projection $\pi:T_xX \to V_x^{\perp}$ and then using the Universal Property of the Quotient (\ref{univ_quotient}).}

\begin{defi}
  The orthogonal complement of $V_x$ is called \emph{horizontal space} and we will denote it by $H_x$.
  It is worth emphasizing that $H_x \cong T_{[x]}(X/\sim)$.
\end{defi}

\begin{obs}
  $H_x \cong H_y$ for every $x, y \in X$ such that $x \sim y$.
\end{obs}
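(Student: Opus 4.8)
The plan is to reduce everything to the isomorphism $H_x \cong T_{[x]}(X/\sim)$ just recorded, exploiting the fact that $x \sim y$ forces $[x]$ and $[y]$ to be the \emph{same} point of the quotient. Indeed, $x \sim y$ means $y \in \orb(x)$, so the canonical projection identifies them, $\pi(x) = \pi(y)$, whence $[x] = [y]$ in $X/\sim$. Consequently the tangent spaces $T_{[x]}(X/\sim)$ and $T_{[y]}(X/\sim)$ are not merely isomorphic but literally equal as vector spaces, and this is the crux that makes the two horizontal spaces comparable.

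First I would turn the isomorphism $H_x \cong T_{[x]}(X/\sim)$ into an explicit linear map. Since $X/\sim$ is a quotient manifold, the projection $\pi$ is a submersion, so $T_x\pi:T_xX \to T_{[x]}(X/\sim)$ is surjective with $\ker T_x\pi = V_x$. Restricting it to $H_x = V_x^{\perp}$ then gives a linear bijection $T_x\pi|_{H_x}:H_x \to T_{[x]}(X/\sim)$: surjectivity is inherited from $T_x\pi$, while injectivity follows from $\ker(T_x\pi|_{H_x}) = H_x \cap V_x = \0$, the orthogonal decomposition $T_xX = V_x \oplus H_x$ being supplied by the Riemannian metric. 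The identical construction at $y$ produces a linear bijection $T_y\pi|_{H_y}:H_y \to T_{[y]}(X/\sim)$.

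Then I would simply compose these two maps. Using $T_{[x]}(X/\sim) = T_{[y]}(X/\sim)$ from the first step, the map
\begin{equation}
  \Phi \ceq \pars{T_y\pi|_{H_y}}^{-1} \circ \pars{T_x\pi|_{H_x}}:H_x \lra H_y
\end{equation}
is a composite of linear isomorphisms and is therefore itself a linear isomorphism, which establishes $H_x \cong H_y$.

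I do not anticipate a real obstacle here, since the claim is essentially a formal consequence of the preceding observation; the only point deserving care is the verification that $T_x\pi$ restricts to a bijection on the horizontal space, which rests squarely on the submersion property of $\pi$ (surjectivity of $T_x\pi$ with kernel $V_x$) together with the orthogonal splitting $T_xX = V_x \oplus H_x$ afforded by the metric.
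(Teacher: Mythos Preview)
Your argument is correct and is precisely the reasoning the paper intends: the observation is stated without proof immediately after the emphasized isomorphism $H_x \cong T_{[x]}(X/\sim)$, so it is meant as the direct consequence you spell out, namely $H_x \cong T_{[x]}(X/\sim) = T_{[y]}(X/\sim) \cong H_y$ whenever $[x]=[y]$. Your explicit verification that $T_x\pi|_{H_x}$ is a linear bijection is exactly the content of that preceding isomorphism, so there is nothing to add.
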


\begin{defi}
  Given a quotient manifold $X/\sim$, $[x] \in X/\sim$ and $v \in T_{[x]}(X/\sim)$, we define the \emph{horizontal lift of $v$} to be the unique vector $\lift_x(v) \in H_x$ such that $T_x\pi(\lift_x(v)) = v$.
  The uniqueness is given by the fact that $H_x$ is isomorphic to $T_{[x]}(X/\sim)$.
  Also, if $V \in \mf{X}(X/\sim)$ is a vector field in the quotient, then the \emph{horizontal lift of $V$}, which we will denote by $\ol{V}$, is the vector field $\ol{V} \in \mf{X}(X)$ such that $\ol{V}(x) = \lift_x(V([x]))$.
\end{defi}

\begin{defi}
  If $(X, \braket{\cdot}^X)$ is a Riemannian manifold, then a quotient manifold $X/\sim$ with the metric $\braket{\cdot}^{X/\sim}$ defined by
  \begin{equation}
    \braket{v}{w}_{[x]}^{X/\sim} \ceq \braket{\lift_x(v)}{\lift_x(w)}_x^X
  \end{equation}
  is called a \emph{Riemannian quotient manifold}\index{quotient manifold!Riemannian}.
  However, this metric is well-defined only if
  \begin{equation} \label{metric_well_defined}
    \braket{\lift_x(v)}{\lift_x(w)}_x^X = \braket{\lift_y(v)}{\lift_y(w)}_y^X
  \end{equation}
  for every $y \in X$ such that $y \sim x$.
\end{defi}

\begin{exm}[Grassmannian] \label{grassmannian_riemannian}
  Changing the notation from $x$ to $C$, let us compute the vertical space $V_C$ first.
  Recall that the fiber $\pi^{-1}(\pi(C))$ is an embedded manifold of $\st{N}{d}$ (Quotient Manifold Theorem \ref{qmt}).
  Therefore, we can compute its tangent space (which is, by definition, $V_C$) in the usual way, \ie, we see $\pi^{-1}(\pi(C))$ as an embedded manifold of $\mat{d}{N}$ (Observation \ref{embedded_inside_embedded}) and then consider the derivative of a curve $\gamma:(-\eps, \eps) \to \pi^{-1}(\pi(C))$ computed at $0$ (Proposition \ref{tangent_space_embedding}).
  Now, since the Grassmannian is defined as $\st{N}{d}/O(N)$ (Example \ref{grassmannian}), it is not hard to see that $\pi^{-1}(\pi(C)) = C \cdot O(N) \ceq \qty{CM : M \in O(N)}$.
  Consequently, the curve $\gamma$ can be written as $\gamma(t) = CM(t)$, being $M(t) \in O(N)$ for every $t \in (-\eps, \eps)$.
  Besides, since we also know that $\gamma(0) = C$, we have $M(0) = \Id_N$.
  So, this tells us that $T_C\pi^{-1}(\pi(C)) \cong C \cdot T_{\Id_N}O(N)$.
  Recalling that $T_{\Id_N}O(N) = \qty{\nu \in \mat{N}{N} : \nu = -\nu^{\top}}$ (this is a particular case of the Stiefel manifold and we already computed the tangent space to the Stiefel manifold at Example \ref{stiefel_manifold}), we have
  \begin{equation}
    V_C = T_C\pi^{-1}(\pi(C)) \cong C \cdot T_{\Id_N}O(N)
    = \qty{C\nu \in \mat{d}{N} : \nu = -\nu^{\top}}
  \end{equation}
  and
  \begin{equation}
    H_C = \qty{\eta \in T_C\st{N}{d} : \text{$\braket{C\nu}{\eta} = \tr(\nu^{\top}C^{\top}S\eta) = 0$ for every $C\nu \in V_C$}}.
  \end{equation}
  However, if $\tr(\nu^{\top}C^{\top}S\eta) = 0$ for every $\nu$ antisymmetric, then $C^{\top}S\eta$ is a symmetric matrix because, if we choose $\nu = E_{ij} - E_{ij}^{\top}$ ($E_{ij}$ is defined in \ref{canonical_matrix}), then $\tr(E_{ij}^{\top}C^{\top}S\eta) - \tr(E_{ij}C^{\top}S\eta) = (C^{\top}S\eta)_{ij} - (C^{\top}S\eta)_{ji} = 0$.
  On the other hand, $\eta \in T_C\st{N}{d}$ means, by definition, that $C^{\top}S\eta$ is antisymmetric.
  Consequently, since the only matrix that is symmetric and antisymmetric at the same time is $0$, we have
  \begin{equation}
    H_C = \qty{\eta \in \mat{d}{N} : C^{\top}S\eta = 0_N}.
  \end{equation}
  
  To conclude we have to verify that Equation \ref{metric_well_defined} is satisfied in the Grassmannian, which is the same as proving that
  \begin{equation} \label{metric_grassmannian_well_defined}
    \braket{\lift_C(\eta_1)}{\lift_C(\eta_2)}_C^{\St}
    = \braket{\lift_{CM}(\eta_1)}{\lift_{CM}(\eta_2)}_{CM}^{\St}
  \end{equation}
  for every $M \in O(N)$ because $[C] = \qty{CM : M \in O(N)}$.
  Now, given $M \in O(N)$ and $\eta \in T_{[C]}\gr{N}{d}$, let $\gamma:(-\eps,\eps) \to \st{N}{d}$ be a curve such that $\gamma(0) = C$ and $\gamma'(0) = \lift_C(\eta)$.
  Consider also the curve $\rho:(-\eps,\eps) \to \st{N}{d}$ given by $\rho(t) = \gamma(t)M$.
  Then we have that $\rho(0) = CM$ and $\rho'(0) = \gamma'(0)M = \lift_C(\eta)M$.
  Now, according to the Chain Rule (Theorem \ref{chain_rule}), we have $(\pi \circ \gamma)'(0) = T_{\gamma(0)}\pi(\gamma'(0)) = T_C\pi(\lift_C(\eta)) = \eta$.
  However, since $\pi \circ \gamma$ and $\pi \circ \rho$ are the same curve on $\gr{N}{d}$, we can conclude that
  \begin{equation}
    \eta = (\pi \circ \gamma)'(0) = (\pi \circ \rho)'(0)
    = T_{\rho(0)}\pi(\rho'(0)) = T_{CM}\pi(\lift_C(\eta)M).
  \end{equation}
  Then, using the uniqueness of the horizontal lift we conclude that
  \begin{equation}
    \lift_{CM}(\eta) = \lift_C(\eta)M.
  \end{equation}
  Finally, using the fact that $\tr(A_1A_2A_3A_4A_5) = \tr(A_5A_1A_2A_3A_4)$ and the identity above, we can show that Equation \ref{metric_grassmannian_well_defined} is valid:
  \begin{align}
    \begin{split}
      \braket{\lift_{CM}(\eta_1)}{\lift_{CM}(\eta_2)}_{CM}^{\St}
      & = \braket{\lift_C(\eta_1)M}{\lift_C(\eta_2)M}_{CM}^{\St} \\
      & = \tr(M^{\top}\lift_C(\eta_1)^{\top}S\lift_C(\eta_2)M) \\
      & = \tr(\lift_C(\eta_1)^{\top}S\lift_C(\eta_2)) \\
      & = \braket{\lift_C(\eta_1)}{\lift_C(\eta_2)}_C^{\St}.
    \end{split}
  \end{align}
  Consequently, the Grassmannian is indeed a Riemannian quotient manifold of the Stiefel.
  That is it, from now on we can ignore the abstract definition of $T_{[C]}\gr{N}{d}$ and work only with $H_C$.
\end{exm}

\begin{defi}
  Given a Riemannian manifold $(X, \braket{\cdot})$ and a smooth function $f:X \to \br$, the \emph{(Riemannian) gradient of $f$}\index{gradient}\index{Riemannian!gradient} is the unique vector field $\grad{f}:X \to TX$ that satisfies
  \begin{equation}
    \braket{v}{\grad{f}(x)}_x = T_xf(v)
  \end{equation}
  for every $v \in T_xX$.
\end{defi}

\begin{exm} \label{gradient_vector_spaces}
  When $X$ is a vector space, we can choose an orthonormal basis $B = \qty{v_1, \ldots, v_d}$ for $X$ and then a concrete description for the gradient is given by the directional derivatives:
  \begin{equation}
    \grad{f}(x) = \pdv{f}{v_1}\qty(x)v_1 + \ldots + \pdv{f}{v_d}\qty(x)v_d,
  \end{equation}
  being
  \begin{equation}
    \pdv{f}{v_i}\qty(x) \ceq \lim_{t \to 0} \frac{f(x + tv_i) - f(x)}{t}.
  \end{equation}
  Observe that when $X = \br^d$ and we choose the canonical basis $\qty{e_1, \ldots, e_d}$, we obtain the gradient from Calculus.
  It is also worth mentioning that if $X = \mat{d}{N}$, then the gradient can also be written as a matrix whose entry $(i,j)$ is $\pdv{f}{v_{ij}}\qty(x)$.
\end{exm}

\begin{exm} \label{gradient_nonorthogonal}
  Now let us see how we can change the gradient from one basis to another because we will need this for the Grassmannian and the Stiefel.
  Suppose we have a vector space $X$ and two basis for it: $B_v = \qty{v_1, \ldots, v_d}$ and $B_w = \qty{w_1, \ldots, w_d}$.
  If we write $w_j = a_{1j}v_1 + \ldots + a_{dj}v_d$ and assume $B_v$ is orthonormal, then $a_{ij} = \braket{v_i}{w_j}$ and we can build the matrix $A = \qty(a_{ij})_{i,j}$.
  Also, if we write $A^{-1} = \qty(a^{ij})_{i,j}$, then $v_j = a^{1j}w_1 + \ldots + a^{dj}w_d$.
  Now, using the linearity of the directional derivative, we have:
  \begin{align}
    \begin{split}
      \grad{f}(x)
      & = \sum_{k=1}^d \pdv{f}{v_k}\qty(x)v_k \\
      & = \sum_{k=1}^d \pdv{f}{\sum_{i=1}^d a^{ik}w_i}\qty(x)
        \cdot \qty(\sum_{j=1}^d a^{jk}w_j) \\
      & = \sum_{i,j,k=1}^d a^{ik}a^{jk}\pdv{f}{w_i}\qty(x)w_j.
    \end{split}
  \end{align}
  However, if we use the following identity
  \begin{equation}
    \braket{w_i}{w_j} = \sum_{k=1}^d \braket{\braket{v_k}{w_i}v_k}{w_j}
    = \sum_{k=1}^d \braket{v_k}{w_i}\braket{v_k}{w_j}
    = \sum_{k=1}^d a_{ki}a_{kj},
  \end{equation}
  then the Gram matrix $G_w \ceq \qty(\braket{w_i}{w_j})_{i,j}$ is equal to $A^{\top}A$.
  Consequently, $G_w^{-1} = (A^{\top}A)^{-1} = A^{-1}A^{-\top}$ and that means the gradient in the basis $B_w$ is given by:
  \begin{equation}
    \grad{f}(x) = G_w^{-1}
    \begin{bmatrix}
      \pdv{f}{w_1}\qty(x) \\ \vdots \\ \pdv{f}{w_d}\qty(x)
    \end{bmatrix}_{B_w},
  \end{equation}
  where the subscript $B_w$ means we are writing the vector in the basis $B_w$.
  Observe that this also works for the case in which $X = \mat{d}{N}$, \ie, we have $G_w^{-1}$ multiplying the matrix $\qty(\pdv{f}{w_{ij}}\qty(x))_{i,j}$.
\end{exm}

\begin{prop} \label{grad_product}
  Given a smooth function $f:X \x Y \to \br$, the Riemannian gradient on this product is given by
  \begin{equation}
    \grad{f}(x, y) = \qty(\grad{f}(\cdot, y)(x), \grad{f}(x, \cdot)(y)).
  \end{equation}
\end{prop}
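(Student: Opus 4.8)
The plan is to appeal directly to the defining property of the Riemannian gradient together with its uniqueness. Writing $f_y \ceq f(\cdot, y):X \to \br$ and $f_x \ceq f(x, \cdot):Y \to \br$ for the two partial functions, I want to show that the pair $\qty(\grad{f_y}(x), \grad{f_x}(y))$ satisfies the characterizing equation $\braket{v}{\grad{f}(x,y)}_{(x,y)} = T_{(x,y)}f(v)$ for the gradient on $X \x Y$; since the gradient is the \emph{unique} vector field with this property, matching it pins down $\grad{f}(x,y)$ exactly.

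First I would use Proposition \ref{tangent_space_product} to identify a tangent vector $v \in T_{(x,y)}(X \x Y)$ with a pair $(v_x, v_y) \in T_xX \x T_yY$. The key step is then to split the directional derivative as $T_{(x,y)}f(v_x, v_y) = T_xf_y(v_x) + T_yf_x(v_y)$. To establish this I would invoke linearity of $T_{(x,y)}f$ to write $(v_x, v_y) = (v_x, 0) + (0, v_y)$ and compute each summand with a well-chosen curve: the vector $(v_x, 0)$ is represented by $t \mapsto (\gamma_X(t), y)$ for a curve $\gamma_X$ with $\gamma_X(0) = x$ and $[\gamma_X] = v_x$ (constant in the second coordinate), so composing with $f$ gives $f_y \circ \gamma_X$, whose derivative at $0$ is precisely $T_xf_y(v_x)$; the symmetric computation with $t \mapsto (x, \gamma_Y(t))$ handles $(0, v_y)$.

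With the derivative split in hand, I would plug the candidate gradient into the product metric. Using the formula $g_{(x,y)}^{X \x Y}((v_x, v_y), (w_x, w_y)) = g_x^X(v_x, w_x) + g_y^Y(v_y, w_y)$, the pairing of $(v_x, v_y)$ with $\qty(\grad{f_y}(x), \grad{f_x}(y))$ becomes $\braket{v_x}{\grad{f_y}(x)}_x + \braket{v_y}{\grad{f_x}(y)}_y$, and by the defining property of each partial gradient this equals $T_xf_y(v_x) + T_yf_x(v_y)$, which is exactly $T_{(x,y)}f(v_x, v_y)$ by the previous paragraph. Since this holds for every tangent vector, the candidate satisfies the characterizing equation, and uniqueness of the gradient forces $\grad{f}(x,y) = \qty(\grad{f_y}(x), \grad{f_x}(y))$, as claimed.

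I expect the only genuine obstacle to be the splitting of the directional derivative, since that is where the product structure is actually used; everything afterward is bookkeeping of definitions. The subtle points are to ensure that the curve representing $(v_x, 0)$ really is constant in the $Y$-factor, so that composing with $f$ collapses to $f_y$, and that the identification of Proposition \ref{tangent_space_product} is compatible with this coordinatewise decomposition. Both follow from smoothness of $f$ and the Chain Rule (Theorem \ref{chain_rule}), so they are routine once stated carefully.
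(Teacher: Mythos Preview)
The paper does not actually prove this proposition; it is one of the results the author explicitly leaves without proof, as announced at the start of the chapter (``we opted for leaving most of the lemmas, propositions and theorems that are not directly related to the present work without proof''). So there is no paper proof to compare against.

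Your argument is correct and is the standard one. The splitting $T_{(x,y)}f(v_x,v_y) = T_xf_y(v_x) + T_yf_x(v_y)$ via the curves $t \mapsto (\gamma_X(t), y)$ and $t \mapsto (x, \gamma_Y(t))$ is exactly the right move, and it is compatible with the isomorphism of Proposition~\ref{tangent_space_product} since that isomorphism is defined by $[\gamma] \mapsto ([\pi_X \circ \gamma], [\pi_Y \circ \gamma])$. Combining this with the product-metric formula and the defining property of each partial gradient gives the characterizing equation, and uniqueness finishes it. Nothing is missing.
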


\begin{prop}
  Let $X$ be a Riemannian submanifold of $\br^d$, $f:X \to \br$ be a smooth function and $\ol{f}:\br^d \to \br$ be an extension of $f$.
  Then, the Riemannian gradient of $f$ is given by:
  \begin{equation}
    \grad{f}(x) = \proj_x^X(\grad{\ol{f}}(x)),
  \end{equation}
  being $\proj_x^X:\br^d \to \br^d$ the orthogonal projection to $T_xX$.
\end{prop}

\begin{obs}
  The idea behind this result is: compute the gradient in the traditional way by considering an extension and then project the result.
  In other words, the intuition is that the Riemannian gradient is the projection of the Euclidean gradient.
  Sometimes, as we will see in Section \ref{hf_ch3}, $\ol{f}$ and $f$ have the same expression.
\end{obs}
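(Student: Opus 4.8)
The plan is to verify that $\proj_x^X(\grad{\ol{f}}(x))$ satisfies the defining property of the Riemannian gradient and then invoke uniqueness. First I would record that, since $X$ is a Riemannian submanifold of $\br^d$, its metric is by definition the pullback of the Euclidean inner product along the inclusion $i:X \hra \br^d$; concretely, identifying $T_xX$ with a subspace of $\br^d$ via Proposition \ref{tangent_space_embedding}, this means $\braket{v}{w}_x = v^{\top}w$ for all $v, w \in T_xX$. This reduces the whole problem to a statement about the ambient Euclidean inner product and the orthogonal projection onto $T_xX$.

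The key preliminary step is to relate the intrinsic derivative $T_xf$ to the Euclidean derivative $T_x\ol{f}$. Since $\ol{f}$ is an extension of $f$, we have $f = \ol{f} \circ i$, so the Chain Rule (Theorem \ref{chain_rule}) gives, for every $v \in T_xX$,
\begin{equation}
  T_xf(v) = T_x(\ol{f} \circ i)(v) = T_x\ol{f}(T_xi(v)) = T_x\ol{f}(v),
\end{equation}
where the last equality uses that $T_xi$ is merely the inclusion of $T_xX$ into $\br^d$. Rewriting the Euclidean derivative through the Euclidean gradient of $\ol{f}$, this becomes $T_xf(v) = \braket{v}{\grad{\ol{f}}(x)} = v^{\top}\grad{\ol{f}}(x)$.

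Next I would exploit the characteristic property of the orthogonal projection: for any $u \in \br^d$ and any $v \in T_xX$, the vector $u - \proj_x^X(u)$ lies in $(T_xX)^{\perp}$, so $v^{\top}(u - \proj_x^X(u)) = 0$, i.e.\ $v^{\top}\proj_x^X(u) = v^{\top}u$. Applying this with $u = \grad{\ol{f}}(x)$ yields, for every $v \in T_xX$,
\begin{equation}
  \braket{v}{\proj_x^X(\grad{\ol{f}}(x))}_x = v^{\top}\proj_x^X(\grad{\ol{f}}(x)) = v^{\top}\grad{\ol{f}}(x) = T_xf(v).
\end{equation}
Since $\proj_x^X(\grad{\ol{f}}(x)) \in T_xX$ by construction, it satisfies exactly the defining equation of $\grad{f}(x)$, and uniqueness of the Riemannian gradient then forces $\grad{f}(x) = \proj_x^X(\grad{\ol{f}}(x))$.

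The only genuinely delicate point I expect is the identification underlying the Chain Rule step: one must be comfortable viewing a tangent vector $v \in T_xX$ simultaneously as an abstract tangent vector and as an element of $\br^d$, and confirm that the intrinsic directional derivative of $f$ coincides with the ambient directional derivative of $\ol{f}$ along those same directions. Everything else is linear algebra about the self-adjoint idempotent $\proj_x^X$, and the compatibility of the submanifold metric with $v^{\top}w$ is immediate from the definition of the pullback metric.
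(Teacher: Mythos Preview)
Your argument is correct and is exactly the standard verification: use the Chain Rule to identify $T_xf(v)$ with the ambient directional derivative of $\ol{f}$, then use self-adjointness of the orthogonal projection together with $v \in T_xX$ to replace $\grad{\ol{f}}(x)$ by its projection inside the inner product, and conclude by uniqueness of the Riemannian gradient. There is nothing missing.

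That said, there is no proof in the paper to compare against. The ``statement'' you were handed is an \emph{Observation}, i.e.\ an informal remark explaining the intuition behind the preceding proposition; it is not itself a claim to be proved. Moreover, the paper explicitly leaves that proposition (and most results in Chapter~\ref{geometry}) unproved, referring instead to \cite{lee2012,lee2018,boumal2022}. So you have supplied a clean proof of a result the paper deliberately stated without one; the approach you took is the natural one and is what those references do as well.
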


\begin{exm}
  When $X = \st{N}{d}$, the orthogonal projection is given by
  \begin{align}
    \begin{split}
      \proj_C^{\St}:\mat{d}{N} & \to \mat{d}{N} \\
      \mu & \mapsto (\Id_d - CC^{\top}S)\mu + \frac{CC^{\top}S\mu - C\mu^{\top}SC}{2}
    \end{split}
  \end{align}
  and the reader can find a full derivation of this projection in \cite[Section 7.3]{boumal2022}.
  Now, computing the gradient is a little trickier because we are working with the generalized Stiefel.
  Basically, in our case the Stiefel is built in the following way: we start with a non-orthogonal basis $B_v = \qty{v_1, \ldots, v_d}$ of $\br^d$ and then we extend this to a basis of $\mat{d}{N}$ by considering $v_{ij} \ceq \qty(0, \ldots, v_i, \ldots, 0)$, being $v_i$ located in the $j$-th entry.
  We also have that $S = \qty(v_i^{\top}v_j)_{i,j}$, but in this case we are considering $v_i$ and $v_j$ as linear combinations of the canonical basis of $\br^d$.
  With all that said, to compute the gradient of a function $f:\st{N}{d} \to \br$, we need to extend it to $\ol{f}:\mat{d}{N} \to \br$ and then recall what we have discussed in Examples \ref{gradient_vector_spaces} and \ref{gradient_nonorthogonal} to obtain the following gradient for a function defined in the Stiefel:
  {\small
    \begin{align}
      \begin{split}
        \grad{f}(C) & = \proj_C^{\St}(\grad{\ol{f}}(C)) \\
                    & = (\Id_d - CC^{\top}S)S^{-1}\grad{\ol{f}}(C)
                      + \frac{CC^{\top}SS^{-1}\grad{\ol{f}}(C)
                      - C\grad{\ol{f}}(C)^{\top}S^{-1}SC}{2} \\
                    & = (\Id_d - CC^{\top}S)S^{-1}\grad{\ol{f}}(C)
                      + \frac{CC^{\top}\grad{\ol{f}}(C)
                      - C\grad{\ol{f}}(C)^{\top}C}{2}.
      \end{split}
    \end{align}
  }%
  So, the intuition is: compute the gradient of $\ol{f}$ as we learn in Calculus.
  Fix it multiplying by $S^{-1}$ on the left to get back to the non-orthogonal basis we started with.
  To conclude, project the result to obtain a tangent vector to the Stiefel.

  If the reader is alert for the details, then using the matrix $S^{-1}$ may seem suspicious because the matrix $S$ is the Gram matrix for $\br^d$ and not for $\mat{d}{N}$, which is what we need.
  However, since the basis we are considering for $\mat{d}{N}$ is built using a basis of $\br^d$ as we explained previously, the Gram matrix for $\mat{d}{N}$ is $S \otimes \Id_N$, being $\otimes$ the Kronecker product (see Definition \ref{kronecker_product}).
  As the reader can notice, the dimension now is correct, but to use $S \otimes \Id_N$ we would need to vectorize the matrices from $\mat{d}{N}$.
  Now, a very fortunate result says that we can keep using just $S$ because, given $C \in \mat{d}{N}$, we have $(S \otimes \Id_N)\vec(C) = SC$ (see Appendix \ref{matrix_ids}).
\end{exm}

\begin{prop}
  Let $X/\sim$ be a Riemannian quotient manifold, $X$ be embedded in $\br^d$ and $f:X/\sim \to \br$ be a smooth function.
  Then, the Riemannian gradient of $f$ is given by:
  \begin{equation}
    \grad{f}([x]) = \proj_x^{\hor}(\grad{\oll{f}}(x)),
  \end{equation}
  being $\oll{f}:\br^d \to \br$ a smooth extension of $\ol{f}:X \to \br$; $\ol{f}$ a lift of $f$, \ie, $\ol{f} = f \circ \pi$; and $\proj_x^{\hor}:\br^d \to \br^d$ the orthogonal projection to $H_x$.
\end{prop}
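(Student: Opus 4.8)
The plan is to verify directly that the proposed vector satisfies the characterizing property of the Riemannian gradient on the quotient, and then invoke uniqueness. Recall that $\grad{f}([x])$ is the unique element of $T_{[x]}(X/\sim)$ with $\braket{v}{\grad{f}([x])}_{[x]}^{X/\sim} = T_{[x]}f(v)$ for every $v \in T_{[x]}(X/\sim)$, and that via the isomorphism $T_x\pi:H_x \to T_{[x]}(X/\sim)$ (whose inverse is $\lift_x$) we identify tangent vectors to the quotient with horizontal vectors. Since the candidate $G \ceq \proj_x^{\hor}(\grad{\oll{f}}(x))$ lies in $H_x$ by construction, it suffices to show that $G$, regarded as the element $T_x\pi(G)$ of $T_{[x]}(X/\sim)$, obeys the defining equation; uniqueness then forces $\grad{f}([x]) = G$.

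So I would fix $v \in T_{[x]}(X/\sim)$ and abbreviate $h \ceq \lift_x(v) \in H_x$. Using the definition of the Riemannian quotient metric together with the fact that $G$ is horizontal (so that $\lift_x(T_x\pi(G)) = G$ by uniqueness of the horizontal lift), one computes
\begin{equation}
  \braket{v}{T_x\pi(G)}_{[x]}^{X/\sim} = \braket{h}{G}_x^X = \braket{h}{\proj_x^{\hor}(\grad{\oll{f}}(x))}_x^X.
\end{equation}
Because $X$ carries the metric pulled back from $\br^d$, on tangent vectors this inner product is just the ambient one; and since $h \in H_x$ while $\proj_x^{\hor}$ is the orthogonal projection onto $H_x$, the component of $\grad{\oll{f}}(x)$ discarded by the projection is orthogonal to $h$. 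Hence $\braket{h}{\proj_x^{\hor}(\grad{\oll{f}}(x))}_x^X = \braket{h}{\grad{\oll{f}}(x)}$, which by the definition of the Euclidean gradient equals $T_x\oll{f}(h)$.

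To finish, I would unwind the two layers of composition. As $\oll{f}$ restricts to $\ol{f}$ on $X$ and $h \in T_xX$, we have $T_x\oll{f}(h) = T_x\ol{f}(h)$; and since $\ol{f} = f \circ \pi$, the Chain Rule (Theorem \ref{chain_rule}) gives $T_x\ol{f}(h) = T_{[x]}f(T_x\pi(h)) = T_{[x]}f(v)$, using $T_x\pi \circ \lift_x = \id$. Chaining these equalities yields $\braket{v}{T_x\pi(G)}_{[x]}^{X/\sim} = T_{[x]}f(v)$ for every $v$, so indeed $\grad{f}([x]) = \proj_x^{\hor}(\grad{\oll{f}}(x))$. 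The main obstacle is not any hard estimate but careful bookkeeping across three spaces: keeping the functions $\oll{f}$, $\ol{f}$ and $f$ straight over the ambient space, the submanifold $X$, and the quotient, and exploiting that \emph{restricting the test vectors to horizontal ones} is exactly what permits the horizontal projection to be dropped inside the inner product.
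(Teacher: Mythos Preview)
Your argument is correct: you verify the defining property of the Riemannian gradient directly, using the quotient metric, the self-adjointness of the horizontal projection, the Euclidean gradient identity, and the Chain Rule, then appeal to uniqueness. Note, however, that the paper does not actually supply a proof of this proposition---it is one of the many results in Chapter~\ref{geometry} that are stated without proof and deferred to the references (in particular \cite{boumal2022}); so there is nothing to compare against beyond observing that your verification is the standard one.
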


\begin{obs}
  Again, the main idea is finding a way to use the regular Euclidean gradient, since this gradient we already know how to compute.
  In this case the easiest way requires three steps, though: lifting the original function, extending the lift and then projecting the Euclidean gradient to the horizontal space because this is the space tangent to the quotient manifold.
\end{obs}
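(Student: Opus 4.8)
The plan is to verify the defining property of the Riemannian gradient on the quotient, namely that $\braket{v}{\grad{f}([x])}_{[x]}^{X/\sim} = T_{[x]}f(v)$ for every $v \in T_{[x]}(X/\sim)$, after transporting everything to the horizontal space $H_x$ by means of the horizontal lift. First I would set $w = \lift_x(v) \in H_x$ and rewrite the left-hand side with the definition of the quotient metric, obtaining $\braket{v}{\grad{f}([x])}_{[x]}^{X/\sim} = \braket{\lift_x(v)}{\lift_x(\grad{f}([x]))}_x^{X}$. This reduces the whole problem to identifying the horizontal vector $\lift_x(\grad{f}([x])) \in H_x$.

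Next I would rewrite the right-hand side using the Chain Rule (Theorem \ref{chain_rule}). Since $\ol{f} = f \circ \pi$, we have $T_x\ol{f} = T_{[x]}f \circ T_x\pi$, and because $T_x\pi(\lift_x(v)) = v$ by definition of the horizontal lift, it follows that $T_{[x]}f(v) = T_x\ol{f}(\lift_x(v)) = T_x\ol{f}(w)$. Then I would bring in the extension: as $\oll{f}$ restricts to $\ol{f}$ on $X$, its Euclidean differential agrees with $T_x\ol{f}$ along tangent directions, so $T_x\ol{f}(w) = T_x\oll{f}(w) = \braket{w}{\grad{\oll{f}}(x)}$, the last equality being the defining property of the Euclidean gradient in $\br^d$.

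Then I would exploit the fact that $X$ carries the pullback (subspace) metric, so that the ambient inner product of $\br^d$ restricts to $\braket{\cdot}_x^X$ on $T_xX$, and that $\proj_x^{\hor}$ is orthogonal with respect to this ambient product. For $w \in H_x$ the component of $\grad{\oll{f}}(x)$ orthogonal to $H_x$ pairs to zero, giving $\braket{w}{\grad{\oll{f}}(x)} = \braket{w}{\proj_x^{\hor}(\grad{\oll{f}}(x))}_x^{X}$. Combining the three steps yields, for $w = \lift_x(v)$ ranging over all of $H_x$, the equality $\braket{w}{\lift_x(\grad{f}([x]))}_x^X = \braket{w}{\proj_x^{\hor}(\grad{\oll{f}}(x))}_x^X$. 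Since both vectors lie in $H_x$ and agree against every element of $H_x$, they coincide, and under the identification $H_x \cong T_{[x]}(X/\sim)$ this is precisely $\grad{f}([x]) = \proj_x^{\hor}(\grad{\oll{f}}(x))$.

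The step I expect to demand the most care is the consistent bookkeeping of inner products and identifications: making sure the quotient metric, the submanifold (pullback) metric, and the ambient inner product on $\br^d$ are used coherently, and that the concluding uniqueness argument genuinely forces equality of the two horizontal vectors rather than merely equality of their images under $T_x\pi$. The claim that the differential of the extension $\oll{f}$ agrees with that of $\ol{f}$ on tangent vectors also deserves an explicit justification, since it is what lets the Euclidean gradient in $\br^d$ enter the computation at all.
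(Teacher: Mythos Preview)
Your argument is correct and complete as a proof of the underlying proposition that $\grad f([x]) = \proj_x^{\hor}(\grad \oll{f}(x))$. Note, however, that the paper does not actually prove this proposition: it is stated without proof, and the observation you were handed is only an informal gloss on the three-step recipe (lift, extend, project). So there is no ``paper's proof'' to compare against; you have supplied what the text omits. The structure you chose --- pulling the quotient-metric identity back to $H_x$ via the horizontal lift, invoking the chain rule on $\ol f = f\circ\pi$, passing to the extension $\oll f$ to reach the ambient Euclidean gradient, and then using self-adjointness of the orthogonal projection to land in $H_x$ --- is the standard one and matches the intuition the observation is trying to convey.

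Your own caveats are well placed. The equality $T_x\ol f(w) = T_x\oll f(w)$ for $w\in T_xX$ is exactly Proposition~\ref{smooth_restriction} together with the chain rule applied to $\ol f = \oll f\circ i$, and the coherence of the three inner products follows from $X$ being a Riemannian submanifold of $\br^d$ (pullback metric) and $X/\!\sim$ being a Riemannian quotient (metric defined through lifts). The final uniqueness step is genuinely about vectors in $H_x$, not their images under $T_x\pi$, so your concern there is already handled by the nondegeneracy of $\braket{\cdot}_x^X$ restricted to $H_x$.
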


\begin{exm} \label{grad_grassmannian}
  For the Grassmannian, the projection to the horizontal space is given by
  \begin{align}
    \begin{split}
      \proj_{[C]}^{\hor}:\mat{d}{N} & \to \mat{d}{N} \\
      \eta & \mapsto (\Id_d - CC^{\top}S)\eta.
    \end{split}
  \end{align}
  Observe that this is obtained by ignoring the antisymmetric part of the projection to the Stiefel because this part maps vectors to $V_C$ and $V_C \cap H_C = \qty{0_{d \x N}}$.
  Consequently,
  \begin{align}
    \grad{f}([C]) = (\Id_d - CC^{\top}S)S^{-1}\grad{\oll{f}}(C).
  \end{align}
\end{exm}

\begin{exm} \label{grad_product_grassmannians}
  Using Proposition \ref{grad_product} and the previous example, we obtain the following equation for the gradient of a function $f:\gr{N_1}{d_1} \x \gr{N_2}{d_2} \to \br$:
  \begin{equation} \label{gradient}
    \grad{f}([C_1], [C_2])
    = \qty(\proj_1S_1^{-1}\grad{\oll{f}}(\cdot, C_2)(C_1), \
    \proj_2S_2^{-1}\grad{\oll{f}}(C_1, \cdot)(C_2)),
\end{equation}
being $\proj_i = \Id_{d_i} - C_iC_i^{\top}S_i$ and $\grad\oll{f}$ the Euclidean gradient of the extension of the lifting.
\end{exm}

\begin{defi} \label{derivation}
  Given a vector field $V$, we define a function called \emph{derivation}\index{derivation} in the following way:
  \begin{align}
    \begin{split}
      DV:C^{\infty}(X) & \to C^{\infty}(X) \\
      f & \mapsto \qty(x \mapsto T_xf(V(x))).
    \end{split}
  \end{align}
  $DV$ is also a linear transformation over $\br$ and it satisfies the following Leibniz rule:
  \begin{equation}
    DV(fg) = fDV(g) + gDV(f).
  \end{equation}
\end{defi}

\begin{defi}
  Given a manifold $X$, an \emph{(affine) connection on $X$}\index{affine connection}\index{connection} is a function
  \begin{align}
    \begin{split}
      \nabla:\mf{X}(X) \x \mf{X}(X) & \to \mf{X}(X) \\
      (V_1, V_2) & \mapsto \nabla_{V_1}V_2
    \end{split}
  \end{align}
  that satisfies the following axioms for every $V_1, V_2, V_3 \in \mf{X}(X)$ and $f,g \in C^{\infty}(X)$:
  \begin{enumerate}

  \item $\nabla_{fV_1 + gV_2}V_3 = f\nabla_{V_1}V_3 + g\nabla_{V_2}V_3$.

  \item $\nabla_{V_1}(fV_2) = f\nabla_{V_1}V_2 + DV_1(f)V_2$.

  \end{enumerate}
  The vector field $\nabla_{V_1}V_2$ is called the \emph{covariant derivative of $V_2$ in the direction $V_1$}\index{covariant derivative}.
\end{defi}

\begin{obs}
  The axioms above are saying that the connection is $C^{\infty}(X)$-linear in the first entry and that it satisfies a generalized Leibniz rule in the second.
\end{obs}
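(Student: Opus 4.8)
The plan is to read off each clause of the observation from the corresponding axiom, treating the assertion that ``the connection is $C^{\infty}(X)$-linear in the first entry'' as the statement that, for each fixed $V_3 \in \mf{X}(X)$, the map $V_1 \mapsto \nabla_{V_1}V_3$ is a homomorphism of $C^{\infty}(X)$-modules (recall from Observation \ref{vector_fields_module} that $\mf{X}(X)$ is a module over $C^{\infty}(X)$). Concretely this means two conditions: additivity, $\nabla_{V_1 + V_2}V_3 = \nabla_{V_1}V_3 + \nabla_{V_2}V_3$, and homogeneity over the ring, $\nabla_{fV_1}V_3 = f\nabla_{V_1}V_3$ for every $f \in C^{\infty}(X)$. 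I would show that Axiom 1 is equivalent to the conjunction of these two.

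For the forward direction (Axiom 1 implies module-linearity) I would specialize the two smooth functions occurring in Axiom 1. Taking $f = g = 1$, the constant function $1 \in C^{\infty}(X)$, collapses the axiom to additivity; taking $g = 0$, so that $gV_2 = 0$ is the zero vector field, leaves $\nabla_{fV_1}V_3 = f\nabla_{V_1}V_3$, which is homogeneity. For the converse, given additivity and homogeneity I would expand $\nabla_{fV_1 + gV_2}V_3$ first by additivity into $\nabla_{fV_1}V_3 + \nabla_{gV_2}V_3$ and then apply homogeneity to each summand to recover $f\nabla_{V_1}V_3 + g\nabla_{V_2}V_3$. This closes the equivalence and justifies the first clause.

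For the second clause I would identify Axiom 2 with a Leibniz rule by comparison with the derivation $DV_1$ of Definition \ref{derivation}, whose defining property reads $DV(fg) = f\,DV(g) + g\,DV(f)$. Axiom 2 has exactly this shape once the product $fV_2$ (a smooth function times a vector field, via the $C^{\infty}(X)$-module structure) is read as the object being differentiated: the term $f\nabla_{V_1}V_2$ is ``$f$ times the derivative of $V_2$'', while $DV_1(f)V_2$ is ``the derivative of the scalar factor $f$ in the direction $V_1$, times $V_2$'', the derivative of that scalar factor being supplied precisely by the derivation $DV_1$. Hence Axiom 2 is a generalized Leibniz rule in the second entry.

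The verification is essentially bookkeeping, so I do not anticipate a genuine obstacle; the single point that needs care is to avoid conflating mere $\br$-linearity (which would only test Axiom 1 against constant scalars) with the stronger $C^{\infty}(X)$-linearity asserted in the observation. Because $f$ and $g$ in Axiom 1 range over all of $C^{\infty}(X)$ rather than only constants, the axiom delivers exactly module-linearity over the ring $C^{\infty}(X)$, which is what the observation claims.
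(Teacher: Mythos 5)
Your proposal is correct and matches the paper exactly: the observation is stated there without proof, as a direct reading of the two axioms, and your unpacking of Axiom 1 into additivity plus $C^{\infty}(X)$-homogeneity (via the specializations $f=g=1$ and $g=0$) and your identification of Axiom 2 with the derivation-style product rule $DV(fg) = f\,DV(g) + g\,DV(f)$ from Definition \ref{derivation} is precisely the intended justification. Your closing caution about not conflating $\br$-linearity with $C^{\infty}(X)$-linearity is also well placed, since that is the only point where the observation could be misread.
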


\begin{prop} \label{independence_connection}
  Given three vector fields $V_1, V_2, V_3 \in \mf{X}(X)$ such that $V_1(x) = V_2(x)$, then $(\nabla_{V_1}V_3)(x) = (\nabla_{V_2}V_3)(x)$.
  In other words, the vector field $\nabla_{V_1}V_3$ computed at $x$ just depends on the vector $V_1(x)$ and not on the whole vector field $V_1$.
\end{prop}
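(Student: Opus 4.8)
The plan is to exploit the $C^{\infty}(X)$-linearity in the first argument, which is exactly the first connection axiom. Setting $W \ceq V_1 - V_2$ and applying that axiom with the scalars $1$ and $-1$ gives $\nab{V_1}{V_3} - \nab{V_2}{V_3} = \nab{W}{V_3}$, so it suffices to prove that $(\nab{W}{V_3})(x) = 0$ under the hypothesis $W(x) = 0$. The whole difficulty is therefore concentrated in showing that a vector field vanishing \emph{at a single point} kills the covariant derivative at that point.

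The core idea is that such a $W$ can be written, \emph{near} $x$, as a $C^{\infty}$-combination of frame fields whose coefficients also vanish at $x$. Concretely, I would fix a chart $(U, \phi)$ around $x$ together with its coordinate vector fields $E_1, \ldots, E_d$ (a local frame on $U$) and write $W = \sum_{i=1}^d a_i E_i$ on $U$, where $a_i \in C^{\infty}(U)$ and, because $W(x) = 0$, each $a_i(x) = 0$.

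The hard part will be that the first axiom is stated only for \emph{global} vector fields and functions, whereas both $E_i$ and $a_i$ live only on $U$. To bridge this gap I would first isolate a locality lemma: if a global vector field $Z$ vanishes on a neighbourhood of $x$, then $(\nab{Z}{V_3})(x) = 0$. This follows by choosing a bump function $\chi \in C^{\infty}(X)$ with $\chi(x) = 1$ and $\supp \chi$ contained in that neighbourhood, so that $\chi Z \equiv 0$ globally; the first axiom then gives $0 = \nab{\chi Z}{V_3} = \chi\,\nab{Z}{V_3}$, and evaluating at $x$ (where $\chi = 1$) yields the claim.

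Armed with locality, I would globalize the local decomposition using a second bump function $\psi$ that equals $1$ on a neighbourhood $V$ of $x$ and is supported in $U$. Setting $\hat a_i \ceq \psi a_i$ and $\hat E_i \ceq \psi E_i$ (each extended by zero off $U$) produces genuine global objects, and $\tilde W \ceq \sum_i \hat a_i \hat E_i$ agrees with $W$ on $V$; hence $W - \tilde W$ vanishes on a neighbourhood of $x$, so the locality lemma gives $(\nab{W}{V_3})(x) = (\nab{\tilde W}{V_3})(x)$. Applying the first axiom to the \emph{global} fields $\hat E_i$ then yields $(\nab{\tilde W}{V_3})(x) = \sum_i \hat a_i(x)\,(\nab{\hat E_i}{V_3})(x)$, which is zero since $\hat a_i(x) = \psi(x) a_i(x) = 0$. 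This closes the argument; the only genuine subtlety is the bump-function bookkeeping needed to turn the local frame statement into one about global vector fields that the axioms can actually see.
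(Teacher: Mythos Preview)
The paper does not actually prove this proposition; it is one of the many standard results in Chapter~\ref{geometry} that the author explicitly chose to state without proof, deferring to \cite{lee2018}. Your argument is correct and is essentially the textbook proof: reduce to $W(x)=0$ by $C^{\infty}(X)$-linearity, establish locality via a bump function, then globalize a local coordinate-frame expansion $W=\sum a_iE_i$ with $a_i(x)=0$ and apply the first axiom once more. The only point worth a second glance is that your $\tilde W=\sum(\psi a_i)(\psi E_i)=\psi^2 W$ on $U$, so it indeed agrees with $W$ on the set $\{\psi=1\}\supset V$, which is exactly what you need for the locality lemma to fire.
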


\begin{exm}
  When $X$ is a vector space, the affine connection is given by the directional derivative of vector fields.
  So, if $V_1, V_2 \in \mf{X}(X)$, we define:
  \begin{equation}
    \nabla_{V_1}V_2(x) \ceq \lim_{t \to 0} \frac{V_2(x + tV_1(x)) - V_2(x)}{t}.
  \end{equation}
  Again, this is only possible because we identify $T_xX \cong X$, otherwise $x + tV_1(x)$ does not make any sense.
\end{exm}

\begin{exm}
  Let $X$, $Y$ be manifolds and $\nabla^X$, $\nabla^Y$ be connections on $X$ and $Y$, respectively.
  Observe that there exists a diffeomorphism between the tangent bundles $T(X \x Y)$ and $TX \x TY$ given by $((x, y), [\gamma]) \mapsto ((x, [\pi_X \circ \gamma]), (y, [\pi_Y \circ \gamma]))$.
  Consequently, we can assume vector fields $V \in \mf{X}(X \x Y)$ are given by $(V_X, V_Y)$, being $V_X:X \x Y \to TX$ and $V_Y:X \x Y \to TY$.
  Having said that, if we fix $(x, y) \in X \x Y$, we define a connection on $X \x Y$ as follows:
  \begin{multline}
    \qty(\nabla_{(V_X, V_Y)}^{X \x Y}(W_X, W_Y))(x, y)
    \ceq \bigg(\nabla_{V_X(x,y)}^XW_X(\cdot, y) + T_yW_X(x, \cdot)(V_Y(x, y)), \\
    T_xW_Y(\cdot, y)(V_X(x, y)) + \nabla_{V_Y(x,y)}^YW_Y(x, \cdot)\bigg).
  \end{multline}
  This is a little cumbersome, so, let us break into pieces.
  First, $W(\cdot, y):X \to TX$ is a vector field on $X$ and we are abusing notation and considering $V_X(x,y)$ as a tangent vector in $T_xX$.
  So, $\nabla_{V_X(x,y)}^X W_X(\cdot, y)$ is the usual covariant derivative on $X$.
  Now, $T_yW_X(x, \cdot)(V_Y(x, y))$ is trickier.
  Observe that $W_X(x, \cdot):Y \to T_xX$ can be seen a smooth function because $T_xX$ is a vector space and, consequently, a manifold.
  But that means we can compute the directional derivative of $W_X(x, \cdot)$, which is $T_yW_X(x, \cdot):T_yY \to T_{W_X(x, y)}T_xX$.
  Using the isomorphism $T_{W_X(x, y)}T_xX \cong T_xX$, we obtain the desired, since now $T_yW_X(x, \cdot)(V_Y(x, y))$ is indeed a vector on $T_xX$.
\end{exm}

\begin{defi}
  Given two vector fields $V_1, V_2 \in \mf{X}(X)$, we define the \emph{Lie bracket of $V_1$ and $V_2$}\index{Lie bracket} as follows:
  \begin{align}
    \begin{split}
      [V_1, V_2]:C^{\infty}(X) & \to C^{\infty}(X) \\
      f & \mapsto DV_1(DV_2(f)) - DV_2(DV_1(f)),
    \end{split}
  \end{align}
  being $DV_i$ the derivation induced by $V_i$.
\end{defi}

\begin{defi}
  Given a Riemannian manifold $(X, \braket{\cdot})$, there exists a unique connection $\nabla$ that satisfies the following two additional axioms for every $V_1, V_2, V_3 \in \mf{X}(X)$:
  \begin{enumerate}

  \item $[V_1, V_2] = D\nabla_{V_1}V_2 - D\nabla_{V_2}V_1$.

  \item $DV_1\qty(\braket{V_2}{V_3}) = \braket{\nabla_{V_1}V_2}{V_3}
    + \braket{V_2}{\nabla_{V_1}V_3}$.

  \end{enumerate}
  This unique connection is called \emph{Levi-Civita connection}\index{connection!Levi-Civita}\index{Levi-Civita connection}.
\end{defi}

\begin{obs}
  In the first axiom the notation $D\nabla_{V_i}V_j$ represents the derivation induced by the field $\nabla_{V_i}V_j$.
  In the second axiom $DV_1\qty(\braket{V_2}{V_3})$ should be interpreted as follows: $DV_1$ is a derivation and $\braket{V_2}{V_3} \in C^{\infty}(X)$ is the function $x \mapsto \braket{V_2(x)}{V_3(x)}_x$.
  The right-hand side is also interpreted as a smooth function: $x \mapsto \braket{\nabla_{V_1(x)}V_2}{V_3(x)}
  + \braket{V_2(x)}{\nabla_{V_1(x)}V_3}$.
\end{obs}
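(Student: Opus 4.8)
The plan is to verify that each of the three interpretations described is type-correct, since the observation is really a matter of unwinding earlier definitions and confirming smoothness; there is essentially no computation, and the work consists in pointing to the right earlier result for each clause.

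First I would treat the first axiom. By the definition of an affine connection, $\nabla$ is a map $\mf{X}(X) \x \mf{X}(X) \to \mf{X}(X)$, so $\nabla_{V_i}V_j$ is already a genuine vector field. Hence Definition \ref{derivation} applies verbatim and produces a derivation $C^{\infty}(X) \to C^{\infty}(X)$, which is exactly what the symbol $D\nabla_{V_i}V_j$ names. No further justification is needed here.

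Next I would treat $\braket{V_2}{V_3}$ in the second axiom. The only substantive point is that $\braket{V_2}{V_3}$ really is an element of $C^{\infty}(X)$, so that the derivation $DV_1:C^{\infty}(X) \to C^{\infty}(X)$ can legitimately be applied to it. But this is precisely the smoothness requirement in the definition of a Riemannian metric: for any vector fields $V_2, V_3$, the function $x \mapsto \braket{V_2(x)}{V_3(x)}_x$ is required to be smooth. Thus $\braket{V_2}{V_3} \in C^{\infty}(X)$ and the interpretation stands.

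Finally I would handle the right-hand side $x \mapsto \braket{\nabla_{V_1(x)}V_2}{V_3(x)} + \braket{V_2(x)}{\nabla_{V_1(x)}V_3}$. The one delicate point is the notation $\nabla_{V_1(x)}V_2$: \emph{a priori} $\nabla$ accepts vector fields, not individual tangent vectors, so placing the single vector $V_1(x)$ in the lower slot must be justified. This is exactly Proposition \ref{independence_connection}, which guarantees that $(\nabla_{V_1}V_2)(x)$ depends only on $V_1(x)$; hence the abbreviated notation is unambiguous and denotes the vector field $\nabla_{V_1}V_2$ evaluated at $x$. Granting this, each summand is the pointwise inner product of two vector fields and is therefore smooth by the same metric-smoothness clause invoked above, and a sum of two smooth functions is smooth, so the right-hand side is a well-defined element of $C^{\infty}(X)$. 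The main---and essentially only---obstacle is this last notational subtlety, and it is resolved entirely by invoking Proposition \ref{independence_connection}.
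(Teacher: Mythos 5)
Your proposal is correct and matches the paper's intent: the observation is an explanatory remark whose justification is exactly the definition-unwinding you perform, with the one genuine subtlety---the meaning of $\nabla_{V_1(x)}V_2$---resolved, as you say, by Proposition \ref{independence_connection}, and the smoothness of $\braket{V_2}{V_3}$ and of the right-hand side following from the smoothness clause in the definition of a Riemannian metric together with the fact that $\nabla_{V_1}V_2, \nabla_{V_1}V_3 \in \mf{X}(X)$. Nothing is missing.
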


\begin{exm}
  The connection described for vector spaces is the Levi-Civita connection.
\end{exm}

\begin{prop}
  If $X$ is a Riemannian submanifold of $\br^d$, then its Levi-Civita connection is given by:
  \begin{equation}
    \qty(\nabla_{V_1}^XV_2)(x) = \proj_x^X\nabla_{V_1(x)}^{\br^d}\ol{V}_2,
  \end{equation}
  being $\ol{V}_2:\br^d \to \br^d$ an extension of $V_2$.
\end{prop}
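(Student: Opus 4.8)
The plan is to define a candidate connection $\wtil{\nabla}$ on $X$ by the right-hand side, namely $(\wtil{\nabla}_{V_1}V_2)(x) \ceq \proj_x^X\nabla^{\br^d}_{V_1(x)}\ol{V}_2$, and then to show it coincides with the Levi-Civita connection by verifying that it satisfies all four defining axioms (the two affine-connection axioms together with the two extra Levi-Civita axioms); uniqueness of the Levi-Civita connection then forces $\wtil{\nabla}=\nabla^X$. Throughout I would take the extensions $\ol{V}_i$ tangent to $X$ along $X$ (so that $\ol{V}_i|_X=V_i$), which is always possible, and I would freely use that the Euclidean connection is just the directional derivative and hence already satisfies all four axioms trivially.

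First I would check that the formula is well-posed. Since $V_1(x)\in T_xX$, the directional derivative $\nabla^{\br^d}_{V_1(x)}\ol{V}_2$ depends only on the values of $\ol{V}_2$ along a curve $\gamma$ with $\gamma(0)=x$ and $\gamma'(0)=V_1(x)$ that I may take inside $X$; along such a curve $\ol{V}_2=V_2$, so the expression is independent of the chosen extension. Smoothness of $\wtil{\nabla}_{V_1}V_2$ follows because $x\mapsto\proj_x^X$ is smooth and the Euclidean directional derivative of smooth fields is smooth. The two affine-connection axioms are then inherited from $\nabla^{\br^d}$ together with the linearity of $\proj_x^X$: the $C^\infty$-linearity in the first slot is immediate, and for the Leibniz rule the only extra ingredient is that $\proj_x^X V_2(x)=V_2(x)$, since $V_2(x)$ is already tangent, which turns the stray $DV_1(f)\,\ol{V}_2(x)$ term into $DV_1(f)\,V_2(x)$.

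Next I would verify the two Levi-Civita axioms. For metric compatibility I would use that the metric on $X$ is the restriction of the Euclidean one, so $\braket{V_2}{V_3}=\braket{\ol{V}_2}{\ol{V}_3}$ on $X$; applying Euclidean compatibility and then decomposing $\nabla^{\br^d}_{V_1(x)}\ol{V}_2$ into its tangential and normal parts, the normal part is orthogonal to $V_3(x)\in T_xX$ and drops out, leaving exactly $\braket{\wtil{\nabla}_{V_1}V_2}{V_3}+\braket{V_2}{\wtil{\nabla}_{V_1}V_3}$. For the symmetry (torsion-free) axiom I would compute $\wtil{\nabla}_{V_1}V_2-\wtil{\nabla}_{V_2}V_1=\proj_x^X\bigl(\nabla^{\br^d}_{V_1}\ol{V}_2-\nabla^{\br^d}_{V_2}\ol{V}_1\bigr)=\proj_x^X[\ol{V}_1,\ol{V}_2]$, using that $\nabla^{\br^d}$ is itself torsion-free.

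The main obstacle is the last identification: I must show that $\proj_x^X[\ol{V}_1,\ol{V}_2]=[V_1,V_2]$ along $X$. This rests on the one genuine submanifold lemma needed in the whole argument — that the Lie bracket of two vector fields tangent to $X$ is again tangent to $X$ and restricts to the intrinsic bracket of their restrictions. Granting this, $[\ol{V}_1,\ol{V}_2]|_X=[V_1,V_2]$ is already tangent, so $\proj_x^X$ fixes it, the symmetry axiom holds, and by uniqueness of the Levi-Civita connection the formula is established.
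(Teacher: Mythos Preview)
The paper does not prove this proposition: it is one of the many results in Chapter~\ref{geometry} that the author explicitly chose to state without proof, deferring to the references \cite{lee2018, boumal2022}. So there is no paper proof to compare against.

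Your argument is correct and is in fact the standard textbook proof (essentially the one in Lee's \emph{Riemannian Manifolds} or do Carmo): define the candidate by the formula, check it is a connection, then verify the two Levi-Civita axioms and invoke uniqueness. The well-definedness step, the Leibniz check via $\proj_x^X V_2(x)=V_2(x)$, and the metric-compatibility argument are all clean. You are also right that the only nontrivial external input is the lemma that extensions of tangent fields have a Lie bracket that is again tangent along $X$ and restricts to the intrinsic bracket; this is standard (it follows, for instance, from the naturality of the Lie bracket under the inclusion $i:X\hookrightarrow\br^d$, since $i$-related fields have $i$-related brackets). Granting that, the torsion-free verification goes through exactly as you wrote.
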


\begin{prop} \label{connection_quotient}
  If $X/\sim$ is a Riemannian quotient manifold of $X$ and $X$ is a Riemannian submanifold of $\br^d$, then the Levi-Civita connection on $X/\sim$ is given by:
  \begin{equation}
    \qty(\nabla_{V_1}^{X/\sim}V_2)([x])
    = \proj_{x}^{\hor}\nabla_{\lift_x(V_1(x))}^{X}\ol{V}_2,
  \end{equation}
  being $\ol{V}_2$ a horizontal lift of $V_2$.
\end{prop}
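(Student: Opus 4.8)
The plan is to exploit the uniqueness of the Levi-Civita connection: since $X/\sim$ is a Riemannian manifold it carries exactly one connection satisfying the two extra axioms (torsion-freeness and metric compatibility), so it suffices to check that the right-hand side of the stated formula defines such a connection. Write $\wtil{\nabla}$ for the candidate, read under the identification $H_x \cong T_{[x]}(X/\sim)$ as
\begin{equation}
  (\wtil{\nabla}_{V_1}V_2)([x]) \ceq T_x\pi\qty(\proj_x^{\hor}\nabla^X_{\ol{V_1}}\ol{V_2}),
\end{equation}
where I have used Proposition \ref{independence_connection} to replace $\nabla^X_{\lift_x(V_1([x]))}\ol{V_2}$ by $\nabla^X_{\ol{V_1}}\ol{V_2}$ evaluated at $x$ (the covariant derivative depends only on the vector $\ol{V_1}(x) = \lift_x(V_1([x]))$). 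The first thing to settle --- and the main obstacle --- is that this really descends to a smooth vector field on the quotient: one must show that the horizontal component $\proj_x^{\hor}\nabla^X_{\ol{V_1}}\ol{V_2}$ is \emph{basic}, i.e. is the horizontal lift of a single well-defined field on $X/\sim$, equivalently that its image under $T_x\pi$ is independent of the chosen representative $x$ of $[x]$. This is the content of O'Neill's lemma for Riemannian submersions, and it is exactly here that the hypothesis of a \emph{Riemannian} quotient (so that $\lift_x$ is an isometry onto $H_x$) is used.

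Granting this, I would verify the two affine-connection axioms. $C^{\infty}$-linearity in the first slot follows because $\lift_x$ is linear and $\nabla^X$ is $C^{\infty}(X)$-linear in its first argument, together with $\ol{fV_1} = (f\circ\pi)\ol{V_1}$. The Leibniz rule in the second slot follows from the Leibniz rule for $\nabla^X$ and the chain-rule identity $D\ol{V_1}(f\circ\pi) = (DV_1 f)\circ\pi$ --- immediate from Theorem \ref{chain_rule}, since $T_x(f\circ\pi)(\ol{V_1}(x)) = T_{[x]}f(V_1([x]))$ --- after noting $\ol{fV_2} = (f\circ\pi)\ol{V_2}$ and that $\proj^{\hor}$ is linear.

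For torsion-freeness I would first show that horizontal lifts project to brackets: applying $D\ol{V_1}(f\circ\pi) = (DV_1 f)\circ\pi$ twice gives $D[\ol{V_1},\ol{V_2}](f\circ\pi) = (D[V_1,V_2]f)\circ\pi$, which says exactly $T_x\pi\qty([\ol{V_1},\ol{V_2}](x)) = [V_1,V_2]([x])$, hence $\proj_x^{\hor}[\ol{V_1},\ol{V_2}] = \ol{[V_1,V_2]}$. Now apply $\proj^{\hor}$ to the torsion-free identity for $\nabla^X$, namely $\nabla^X_{\ol{V_1}}\ol{V_2} - \nabla^X_{\ol{V_2}}\ol{V_1} = [\ol{V_1},\ol{V_2}]$, and push down by $T_x\pi$ to obtain $\wtil{\nabla}_{V_1}V_2 - \wtil{\nabla}_{V_2}V_1 = [V_1,V_2]$, as required.

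Finally, for metric compatibility I would start from the defining property of the quotient metric, $\braket{V_2}{V_3}^{X/\sim}\circ\pi = \braket{\ol{V_2}}{\ol{V_3}}^X$, and differentiate along $\ol{V_1}$ using the chain-rule identity above:
\begin{equation}
  \qty(DV_1\braket{V_2}{V_3}^{X/\sim})\circ\pi = D\ol{V_1}\braket{\ol{V_2}}{\ol{V_3}}^X = \braket{\nabla^X_{\ol{V_1}}\ol{V_2}}{\ol{V_3}}^X + \braket{\ol{V_2}}{\nabla^X_{\ol{V_1}}\ol{V_3}}^X,
\end{equation}
the last step being metric compatibility of $\nabla^X$. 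Since $\ol{V_2}$ and $\ol{V_3}$ are horizontal, the vertical parts of $\nabla^X_{\ol{V_1}}\ol{V_2}$ and $\nabla^X_{\ol{V_1}}\ol{V_3}$ are orthogonal to them, so each inner product is unchanged upon inserting $\proj^{\hor}$; recognizing $\proj^{\hor}\nabla^X_{\ol{V_1}}\ol{V_2} = \ol{\wtil{\nabla}_{V_1}V_2}$ and using that the lift is an isometry, the right-hand side equals $\qty(\braket{\wtil{\nabla}_{V_1}V_2}{V_3}^{X/\sim} + \braket{V_2}{\wtil{\nabla}_{V_1}V_3}^{X/\sim})\circ\pi$. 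Cancelling $\pi$ gives the metric-compatibility axiom, and by uniqueness $\wtil{\nabla}$ is the Levi-Civita connection on $X/\sim$, which is the claim.
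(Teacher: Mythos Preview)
The paper does not prove this proposition; consistent with the policy announced at the start of Chapter~\ref{geometry}, it is stated and then used, with the reader referred to \cite{lee2018} and \cite{boumal2022} for the argument. So there is no in-paper proof to compare against.

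That said, your approach is the standard one (this is essentially O'Neill's theorem for Riemannian submersions): define the candidate via horizontal projection, verify the affine-connection and Levi-Civita axioms, and conclude by uniqueness. Your handling of the Leibniz rule, torsion-freeness (via $\proj^{\hor}[\ol{V_1},\ol{V_2}] = \ol{[V_1,V_2]}$, which follows from $\pi$-relatedness of brackets), and metric compatibility is correct. The one point you explicitly defer --- that $\proj_x^{\hor}\nabla^X_{\ol{V_1}}\ol{V_2}$ is \emph{basic}, i.e.\ is the horizontal lift of a well-defined field on the quotient --- is indeed the crux, and you are right to flag it. In a complete write-up this is most cleanly handled via the Koszul formula: one checks that the Koszul expression for $\braket{\wtil{\nabla}_{V_1}V_2}{V_3}^{X/\sim}$, lifted to $X$, agrees with the Koszul expression for $\braket{\proj^{\hor}\nabla^X_{\ol{V_1}}\ol{V_2}}{\ol{V_3}}^X$, which simultaneously establishes well-definedness and the Levi-Civita property. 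Your axiom-by-axiom verification is equivalent but requires the well-definedness as a separate lemma, which is fine provided you supply it rather than merely grant it.
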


\begin{defi}
  Let $X$ be a Riemannian manifold, $\nabla$ be its Levi-Civita connection and $f:X \to \br$ be a smooth function.
  The \emph{(Riemannian) Hessian of $f$}\index{Riemannian!Hessian}\index{Hessian} is the (linear) operator
  \begin{align}
    \begin{split}
      \hess{f}(x):T_xM & \to T_xM \\
      v & \mapsto \nabla_{v}\grad{f}.
    \end{split}
  \end{align}
  This operator is well-defined because of Proposition \ref{independence_connection}.
\end{defi}

\begin{exm}
  When $X$ is a vector space, the Hessian is just
  \begin{equation}
    \hess{f}(x)(v) = \lim_{t \to 0} \frac{\grad{f}(x + tv) - \grad{f}(x)}{t}.
  \end{equation}
  However, if we choose an orthonormal basis $B = \qty{v_1, \ldots, v_d}$ for $X$, we can represent the Hessian in the usual way as a matrix:
  \begin{equation}
    \hess{f}(x)(v) =
    \begin{bmatrix}
      \pdv{f}{v_1}{v_1}\qty(x) & \ldots & \pdv{f}{v_1}{v_d}\qty(x) \\
      \vdots                   & \ddots & \vdots                   \\
      \pdv{f}{v_d}{v_1}\qty(x) & \ldots & \pdv{f}{v_d}{v_d}\qty(x) \\
    \end{bmatrix}
    \begin{bmatrix}
      c_1 \\ \vdots \\ c_d
    \end{bmatrix},
  \end{equation}
  being $v = c_1v_1 + \ldots + c_dv_d$.
\end{exm}

\begin{prop}
  Let $f:X \x Y \to \br$ be a smooth function defined on the product of two Riemannian manifolds.
  Fixing $(x, y) \in X \x Y$, let $f_1$ be the function $f(\cdot, y)$, $f_2$ be the function $f(x, \cdot)$, and write $\grad{f}(x,y) = (G_X(x, y), G_Y(x, y))$.
  Then, the Riemannian Hessian of $f$ is given by:
  \begin{align}
    \begin{split}
      \hess{f}(x, y)&:T_xX \x T_yY \to T_xX \x T_yY \\
      (v, w) \mapsto \bigg(&\hess{f_1}(x)(v) + T_y(G_X(x, \cdot))(w), \ \\
                    & T_x(G_Y(\cdot, y))(v) + \hess{f_2}(y)(w)\bigg).
    \end{split}
  \end{align}
\end{prop}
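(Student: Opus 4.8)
The plan is to unwind the definition of the Riemannian Hessian, $\hess{f}(x,y)(v,w) = \nabla_{(v,w)}^{X \x Y}\grad{f}$, and then feed into it the two product formulas already available: the expression for the gradient on a product (Proposition \ref{grad_product}) and the formula for the connection on a product manifold established in the example preceding this proposition. Since the covariant derivative $\nabla_{(v,w)}\grad{f}$ evaluated at $(x,y)$ depends only on the tangent vector $(v,w) \in T_{(x,y)}(X \x Y)$ and not on any particular extension to a vector field (Proposition \ref{independence_connection}), I may freely identify $(v,w)$ with the value $(V_X(x,y), V_Y(x,y))$ of an arbitrary extension and apply the product-connection formula verbatim.

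First I would substitute. Writing $\grad{f} = (G_X, G_Y)$ as the vector field $(W_X, W_Y)$ appearing in the product-connection formula and taking the direction field to agree with $(v,w)$ at $(x,y)$, the formula immediately produces
\begin{equation}
  \hess{f}(x,y)(v,w)
  = \qty(\nabla_v^X G_X(\cdot, y) + T_y(G_X(x, \cdot))(w), \ T_x(G_Y(\cdot, y))(v) + \nabla_w^Y G_Y(x, \cdot)).
\end{equation}
The two off-diagonal terms already coincide with the cross terms in the statement, so nothing remains to be done for them. Next I would identify the diagonal terms with the partial Hessians: by Proposition \ref{grad_product} the first component of $\grad{f}$ is $G_X(x', y') = \grad{f(\cdot, y')}(x')$, so for the fixed second coordinate $y$ the vector field $x' \mapsto G_X(x', y)$ on $X$ is exactly $\grad{f_1}$. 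Hence $\nabla_v^X G_X(\cdot, y) = \nabla_v^X \grad{f_1} = \hess{f_1}(x)(v)$ by the very definition of the Hessian on $X$, and symmetrically $\nabla_w^Y G_Y(x, \cdot) = \hess{f_2}(y)(w)$; substituting these gives precisely the claimed expression.

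The step I expect to require the most care is the justification for applying the product-connection formula at all: that example only asserts that the formula defines \emph{a} connection on $X \x Y$, whereas the Hessian is defined through the \emph{Levi-Civita} connection of the product metric. The genuine content of the proof is therefore to verify that the product connection is compatible with the product metric and torsion-free, so that by uniqueness it coincides with the Levi-Civita connection; only then is the substitution above legitimate. A final, routine check is that each output term lands in the correct tangent space, i.e. that the first component lies in $T_xX$ and the second in $T_yY$, which holds because $G_X(\cdot, y)$ and $G_X(x, \cdot)$ take values in $T_xX$ and likewise for $G_Y$.
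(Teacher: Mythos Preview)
Your approach is correct and is exactly the natural one: apply the product-connection formula to the vector field $\grad f = (G_X, G_Y)$ and then recognize the diagonal pieces as $\hess f_1$ and $\hess f_2$ via Proposition~\ref{grad_product}. The paper does not supply a proof of this proposition; it is one of the many results in Chapter~\ref{geometry} that are stated and then used (here, immediately in Example~\ref{hessian_product_grassmannians}), with proofs deferred to the references. So there is nothing to compare against, and your write-up would fill that gap.

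Your own diagnosis of the delicate point is accurate: the example preceding the proposition only \emph{defines} a connection on $X\times Y$ and never asserts that it is the Levi-Civita connection of the product metric. That verification (metric compatibility and torsion-freeness, hence uniqueness) is indeed the substantive step, and the paper leaves it implicit. If you include it, the argument is complete; without it, the substitution of the product-connection formula into $\hess f = \nabla\grad f$ is formally unjustified, though standard.
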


\begin{exm} \label{hessian_product_grassmannians}
  Let us consider the case in which $X = \gr{N_1}{d_1}$ and $Y = \gr{N_2}{d_2}$.
  By definition,
  \begin{equation}
    \hess{f_1}([C_1])(\eta_1) = \nabla_{\eta_1}^{\gr{N_1}{d_1}}\grad{f_1}.
  \end{equation}
  Now, according to Proposition \ref{connection_quotient},
  \begin{equation}
    \nabla_{\eta_1}^{\gr{N_1}{d_1}}\grad{f_1} = \proj_{C_1}^{\hor}\nabla_{\lift_{C_1}(\eta_1)}^{\st{N_1}{d_1}}\grad{\ol{f}_1}.
  \end{equation}
  However,
  \begin{equation}
    \grad{\ol{f}_1}(\cdot)
    = (\proj_{(\cdot)}^{\hor} \circ S_1^{-1}\grad{\oll{f}_1})(\cdot).
  \end{equation}
  So, in the end we have:
  \begin{equation}
    \hess{f_1}([C_1])(\eta_1) = \proj_{C_1}^{\hor}
    \nabla_{\lift_{C_1}(\eta_1)}^{\mat{d_1}{N_1}}\qty(\proj_{(\cdot)}^{\hor} \circ S_1^{-1}\grad{\oll{f}_1}(\cdot)).
  \end{equation}
  Now we can compute this covariant derivative using Calculus because it is defined in the Euclidean space.
  Since $\proj_{C_1}^{\hor} = \Id_{d_1} - C_1C_1^{\top}S_1$ and $S_1^{-1}\grad{\oll{f}_1}$ are matrices, we can basically use the product rule for derivative and the final result is:
  {\small
    \begin{equation}
      \proj_{C_1}^{\hor}
      \nabla_{\lift_{C_1}(\eta_1)}^{\mat{d_1}{N_1}}\qty(\proj_{(\cdot)}^{\hor} \circ S_1^{-1}\grad{\oll{f}_1}(\cdot))
      = \proj_{C_1}^{\hor}\hess{\oll{f}_1}(C_1)\eta_1 - \eta_1C_1^{\top}\grad{\oll{f}_1}(C_1).
    \end{equation}
  }%
  Now let us compute $T_{C_2}\qty(G_{\gr{N_1}{d_1}}(C_1, \cdot))(\eta_2)$.
  Using Proposition \ref{grad_product} and Example \ref{grad_grassmannian}, we have
  \begin{equation}
    G_{\gr{N_1}{d_1}}(C_1, \cdot) = \proj_{C_1}^{\hor}S_1^{-1}\grad{\oll{f}}(C_1, \cdot).
  \end{equation}
  However, since we are computing the directional derivative with respect to $C_2$, in this case we do not need to differentiate the projection.
  Consequently,
  \begin{equation}
    T_{C_2}\qty(G_{\gr{N_1}{d_1}}(C_1, \cdot))(\eta_2)
    = \proj_{C_1}^{\hor}S_1^{-1}\hess{\oll{f}}(C_1, \cdot)(C_2)\eta_2.
  \end{equation}
  So, if we define $\hess_{11}{\oll{f}}(C_1, C_2) \ceq \hess{\oll{f}_1}(C_1)$, $\hess_{12}{\oll{f}}(C_1, C_2) \ceq \hess{\oll{f}}(C_1, \cdot)(C_2)$ and $\proj_i \ceq \proj_{C_i}^{\hor}$, the final result is:
{\footnotesize
  \begin{align} \label{full_hessian}
    \begin{split}
      \hess{f}([C_1], [C_2])
      &:T_{[C_1]}\gr{N_1}{d_1} \x T_{[C_2]}\gr{N_2}{d_2}
        \to T_{[C_1]}\gr{N_1}{d_1} \x T_{[C_2]}\gr{N_2}{d_2} \\
      (\eta_1, \eta_2)
      \mapsto \Big(
      & \proj_1S_1^{-1}\big(\hess_{11}\oll{f}(C_1, C_2)\eta_1
        + \hess_{12}\oll{f}(C_1, C_2)\eta_2\big)
        - \eta_1C_1^{\top}\grad\oll{f}_1(C_1, C_2), \\
      & \proj_2S_2^{-1}\big(\hess_{21}\oll{f}(C_1, C_2)\eta_1 
        + \hess_{22}\oll{f}(C_1, C_2)\eta_2\big)
        - \eta_2C_2^{\top}\grad\oll{f}_2(C_1, C_2)
        \Big),
    \end{split}
  \end{align}
}%
\end{exm}

\begin{defi}
  Given a manifold $X$ and a (smooth) curve $\gamma:(a,b) \to X$, a \emph{(smooth) vector field along $\gamma$}\index{vector field!along a curve} is a smooth function $V:(a,b) \to TX$ such that $V(t) \in T_{\gamma(t)}X$ for every $t \in (a,b)$.
  The set of all vector fields along $\gamma$ will be denoted by $\mf{X}(\gamma)$.
\end{defi}

\begin{obs}
  Again, $\mf{X}(\gamma)$ is actually a $C^{\infty}((a,b))$-module.
\end{obs}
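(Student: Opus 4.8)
The plan is to mimic the argument behind Observation \ref{vector_fields_module}, where $\mf{X}(X)$ was shown to be a $C^{\infty}(X)$-module, and adapt it to vector fields along a curve. Since $\mf{X}(\gamma)$ is by construction a subset of the functions $(a,b) \to TX$, everything reduces to defining the two operations pointwise and then checking (1) that the results again lie in $\mf{X}(\gamma)$ and (2) that the module axioms hold.

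First I would define the operations. Given $V_1, V_2 \in \mf{X}(\gamma)$ and $f \in C^{\infty}((a,b))$, set $(V_1 + V_2)(t) \ceq V_1(t) + V_2(t)$ and $(fV_1)(t) \ceq f(t)V_1(t)$. Both make sense fiberwise: $V_1(t)$ and $V_2(t)$ lie in the same tangent space $T_{\gamma(t)}X$, which is a vector space over $\br$, so their sum is again a vector of $T_{\gamma(t)}X$; likewise $f(t) \in \br$ scales $V_1(t) \in T_{\gamma(t)}X$. Hence both $V_1 + V_2$ and $fV_1$ send $t$ into $T_{\gamma(t)}X$, so they satisfy the fiber condition required of an element of $\mf{X}(\gamma)$.

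The module axioms — associativity and commutativity of addition, existence of the zero field $t \mapsto 0 \in T_{\gamma(t)}X$ and of additive inverses, and the four compatibility axioms between the ring $C^{\infty}((a,b))$ and the addition — all hold because they hold fiberwise: for each fixed $t$ they are just the vector space axioms of $T_{\gamma(t)}X$ together with the ring structure of $C^{\infty}((a,b))$ evaluated at $t$. None of these requires any work beyond reading off the pointwise identities.

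The step I expect to be the only real obstacle is smoothness: I must check that $V_1 + V_2$ and $fV_1$ are smooth as maps $(a,b) \to TX$, not merely set-theoretic sections. For this I would work in a chart $(\pi^{-1}(U), \wtil{\phi})$ of $TX$ of the type described after the definition of the tangent bundle, with $\wtil{\phi}(x, v) = (\phi(x), \theta_x^{\phi}(v))$. On any subinterval where $\gamma(t) \in U$, smoothness of each $V_i$ means that $t \mapsto \theta_{\gamma(t)}^{\phi}(V_i(t))$ is a smooth curve in $\br^d$; since $\theta_{\gamma(t)}^{\phi}$ is linear on each fiber, the coordinate expression of $V_1 + V_2$ is the sum of two smooth $\br^d$-valued functions, and that of $fV_1$ is $t \mapsto f(t)\,\theta_{\gamma(t)}^{\phi}(V_1(t))$, a product of a smooth scalar function with a smooth vector-valued one. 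Both are therefore smooth, and covering $(a,b)$ by such subintervals yields smoothness everywhere. This establishes that $V_1 + V_2, fV_1 \in \mf{X}(\gamma)$ and completes the verification that $\mf{X}(\gamma)$ is a $C^{\infty}((a,b))$-module.
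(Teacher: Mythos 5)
Your argument is correct and is exactly the one the paper intends: the observation is stated without proof, with the ``Again'' pointing back to Observation \ref{vector_fields_module}, and the expected justification is precisely the pointwise definition of the operations together with the fiberwise module axioms and the chart-level smoothness check you carry out. Nothing is missing.
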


\begin{defi}
  Given a Riemannian manifold $X$, a curve $\gamma:(a,b) \to X$ and the Levi-Civita connection $\nabla$, there exists a unique (linear) operator $\frac{\nabla}{\dd{t}}:\mf{X}(\gamma) \to \mf{X}(\gamma)$ such that the following axioms hold for every $V_1 \in \mf{X}(\gamma)$, $V_2 \in \mf{X}(X)$ and $f \in C^{\infty}((a,b))$:
  \begin{enumerate}

  \item $\frac{\nabla}{\dd{t}}(fV_1) = f'V_1 + f\frac{\nabla}{\dd{t}}(V_1)$.

  \item $\qty(\frac{\nabla}{\dd{t}}(V_2 \circ \gamma))(t) = \nabla_{\gamma'(t)}V_2$.

  \end{enumerate}
  The operator $\frac{\nabla}{\dd{t}}$ is called \emph{covariant derivative along $\gamma$}\index{covariant derivative!along a curve}.
\end{defi}

\begin{exm} \label{cov_vec}
  When $X$ is a vector space, we can compute the covariant derivative along a curve as a regular derivative, \ie, given $V \in \mf{X}(\gamma)$, we have
  \begin{equation}
    \frac{\nabla}{\dd{t}}(V)(t) = V'(t) \ceq \lim_{h \to 0} \frac{V(t + h) - V(t)}{h}.
  \end{equation}
\end{exm}

\begin{prop}
  If $X$ is a Riemannian submanifold of $\br^d$, the covariant derivative of $V$ along $\gamma$ is given by:
  \begin{equation}
    \frac{\nabla}{\dd{t}}(V)(t) = \proj_{\gamma(t)}^XV'(t),
  \end{equation}
  being $\proj_{\gamma(t)}^X$ the projection to $T_{\gamma(t)}X$.
\end{prop}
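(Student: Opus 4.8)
The plan is to exploit the \emph{uniqueness} clause in the definition of the covariant derivative $\frac{\nabla}{\dd{t}}$. I would introduce a candidate operator $D:\mf{X}(\gamma) \to \mf{X}(\gamma)$ by $D(V)(t) \ceq \proj_{\gamma(t)}^XV'(t)$, where $V'$ denotes the ordinary (Euclidean) derivative of $V$ regarded as a $\br^d$-valued curve. If I can show that $D$ genuinely lands in $\mf{X}(\gamma)$ and satisfies the two defining axioms of $\frac{\nabla}{\dd{t}}$, then uniqueness forces $D = \frac{\nabla}{\dd{t}}$, which is exactly the assertion.

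First I would verify that $D$ is a well-defined operator on $\mf{X}(\gamma)$: the vector $\proj_{\gamma(t)}^XV'(t)$ lies in $T_{\gamma(t)}X$ by construction of the projection, and smoothness of $t \mapsto D(V)(t)$ follows from the smoothness of $\gamma$, of $V$, and of the base-point dependence of the projection. Linearity over $\br$ is immediate. For the first axiom (the Leibniz rule) I would compute, for $f \in C^{\infty}((a,b))$,
\begin{equation}
  D(fV)(t) = \proj_{\gamma(t)}^X\qty(f'(t)V(t) + f(t)V'(t))
  = f'(t)V(t) + f(t)D(V)(t),
\end{equation}
where the crucial simplification is that $V(t) \in T_{\gamma(t)}X$, so that $\proj_{\gamma(t)}^XV(t) = V(t)$. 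This is precisely the required identity.

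The substance lies in the second axiom. Given $V_2 \in \mf{X}(X)$, I must show $D(V_2 \circ \gamma)(t) = \nabla_{\gamma'(t)}^XV_2$. Writing $\ol{V}_2:\br^d \to \br^d$ for a smooth extension of (the vector part of) $V_2$ and using that $\gamma$ stays inside $X$, the chain rule in $\br^d$ identifies the Euclidean derivative $(V_2 \circ \gamma)'(t) = (\ol{V}_2 \circ \gamma)'(t)$ with the directional derivative $\nabla_{\gamma'(t)}^{\br^d}\ol{V}_2$ (recall that in a vector space the connection \emph{is} the directional derivative, Example \ref{cov_vec}). Applying the projection then yields
\begin{equation}
  D(V_2 \circ \gamma)(t) = \proj_{\gamma(t)}^X\nabla_{\gamma'(t)}^{\br^d}\ol{V}_2
  = \nabla_{\gamma'(t)}^XV_2,
\end{equation}
the last equality being exactly the formula for the Levi-Civita connection of a Riemannian submanifold established above. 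The main obstacle I anticipate is precisely this bookkeeping step: carefully justifying that differentiating $V_2 \circ \gamma$ as a $\br^d$-valued curve reproduces the ambient directional derivative $\nabla_{\gamma'(t)}^{\br^d}\ol{V}_2$, together with the independence of the chosen extension $\ol{V}_2$ (which holds because $\gamma'(t) \in T_{\gamma(t)}X$ only probes the restriction of $\ol{V}_2$ to $X$). Once both axioms are checked, uniqueness of $\frac{\nabla}{\dd{t}}$ closes the argument.
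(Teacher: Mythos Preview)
The paper does not supply a proof of this proposition; it is one of the results the author explicitly chose to state without proof, deferring to the references (in this case \cite{lee2018}). Your argument is the standard one and is correct: exploit the uniqueness of $\frac{\nabla}{\dd{t}}$ by checking that the candidate $D(V)(t) = \proj_{\gamma(t)}^X V'(t)$ satisfies both axioms, with the second axiom reducing to the already-stated formula $\qty(\nabla_{V_1}^XV_2)(x) = \proj_x^X\nabla_{V_1(x)}^{\br^d}\ol{V}_2$ for the submanifold connection. There is nothing to compare against, and your outline would fill the gap the paper leaves.
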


\begin{prop} \label{cov_quotient}
  If $X/\sim$ is a Riemannian quotient manifold of $X$ and $X$ is a Riemannian submanifold of $\br^d$, the covariant derivative of $V$ along $\gamma$ is given by:
  \begin{equation}
    \frac{\nabla}{\dd{t}}(V)(t) = \proj_{\gamma(t)}^{\hor}\ol{V}'(t),
  \end{equation}
  being $\proj_{\gamma(t)}^{\hor}$ the projection to $H_{\gamma(t)}$ and $\ol{V}$ the horizontal lift of $V$ along $\gamma$, \ie, $\ol{V}(t) \ceq \lift_{\gamma(t)}(V(t))$.
\end{prop}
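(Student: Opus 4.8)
The plan is to invoke uniqueness. By definition the covariant derivative along a curve is the \emph{unique} linear operator satisfying the Leibniz rule (axiom 1) together with the compatibility identity $\qty(\frac{\nabla}{\dd{t}}(V_2 \circ c))(t) = \nabla_{c'(t)}V_2$ (axiom 2), where I write $c \ceq \pi \circ \gamma$ for the curve in $X/\sim$ along which $V$ lives and read $\gamma$ as a horizontal lift of $c$ into $X$, so that $\ol{V}(t) = \lift_{\gamma(t)}(V(t)) \in H_{\gamma(t)}$ is well-defined and $\ol{V}'(t)$ is the ordinary $\br^d$-derivative (legitimate since $H_{\gamma(t)} \subset T_{\gamma(t)}X \subset \br^d$). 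It therefore suffices to define the candidate operator
\begin{equation}
  \mc{D}(V)(t) \ceq \proj_{\gamma(t)}^{\hor}\ol{V}'(t)
\end{equation}
and to check that it obeys both axioms; it is manifestly $\br$-linear and lands in the right space because lift, differentiation and orthogonal projection are all linear and preserve smoothness.

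First I would verify the Leibniz rule. For $f \in C^{\infty}((a,b))$, linearity of the horizontal lift gives $\overline{fV} = f\ol{V}$, whence $(\overline{fV})'(t) = f'(t)\ol{V}(t) + f(t)\ol{V}'(t)$. As $\ol{V}(t) \in H_{\gamma(t)}$ is already horizontal, $\proj_{\gamma(t)}^{\hor}\ol{V}(t) = \ol{V}(t)$, which is precisely $V(t)$ under the identification $H_{\gamma(t)} \cong T_{c(t)}(X/\sim)$; projecting the preceding display then gives $\mc{D}(fV)(t) = f'(t)V(t) + f(t)\mc{D}(V)(t)$.

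The heart of the proof is axiom 2, which I would test on a field of the form $V = V_2 \circ c$ with $V_2 \in \mf{X}(X/\sim)$. Since the connection depends only on the vector $c'(t)$ (Proposition \ref{independence_connection}) and $\gamma'(t) = \lift_{\gamma(t)}(c'(t))$ because $\gamma$ is horizontal, Proposition \ref{connection_quotient} yields $\nabla_{c'(t)}^{X/\sim}V_2 = \proj_{\gamma(t)}^{\hor}\nabla_{\gamma'(t)}^{X}\ol{V}_2$, with $\ol{V}_2$ the horizontal lift of $V_2$ to $X$. Applying axiom 2 on $X$ followed by the submanifold formula from the preceding proposition rewrites the inner term as $\nabla_{\gamma'(t)}^{X}\ol{V}_2 = \proj_{\gamma(t)}^{X}(\ol{V}_2 \circ \gamma)'(t)$. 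The decisive simplification is that nested orthogonal projections compose, $\proj_{\gamma(t)}^{\hor} \circ \proj_{\gamma(t)}^{X} = \proj_{\gamma(t)}^{\hor}$, so the two projections collapse into one; moreover $\ol{V}_2 \circ \gamma = \ol{V}$, since $\ol{V}_2(\gamma(t)) = \lift_{\gamma(t)}(V_2(c(t))) = \lift_{\gamma(t)}(V(t)) = \ol{V}(t)$. Combining these gives $\nabla_{c'(t)}^{X/\sim}V_2 = \proj_{\gamma(t)}^{\hor}\ol{V}'(t) = \mc{D}(V)(t)$, which is exactly axiom 2.

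With both axioms established, uniqueness forces $\mc{D} = \frac{\nabla}{\dd{t}}$, proving the stated formula. I expect the compatibility step to be the main obstacle, as it must correctly chain Proposition \ref{connection_quotient}, the submanifold covariant-derivative formula, and the identity $\ol{V}_2 \circ \gamma = \ol{V}$; the cleanest way to keep the bookkeeping honest is to isolate the projection-nesting fact $\proj^{\hor} \circ \proj^{X} = \proj^{\hor}$, proving it directly from the orthogonal splitting $\br^d = H_{\gamma(t)} \oplus V_{\gamma(t)} \oplus (T_{\gamma(t)}X)^{\perp}$.
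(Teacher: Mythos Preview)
The paper does not actually supply a proof of this proposition; like most of the background results in Chapter~\ref{geometry}, it is stated without argument and the reader is referred to \cite{lee2018,boumal2022}. So there is no ``paper's own proof'' to compare against.

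That said, your argument is correct and is essentially the standard one: verify that the candidate operator satisfies the two defining axioms of the induced covariant derivative along a curve and invoke uniqueness. The Leibniz step is routine, and your treatment of axiom~2 is exactly right---chain Proposition~\ref{connection_quotient} with the submanifold formula, collapse the nested projections via $\proj^{\hor}\circ\proj^{X}=\proj^{\hor}$ (which follows immediately from $H_{\gamma(t)}\subset T_{\gamma(t)}X$), and use $\ol{V}_2\circ\gamma=\ol{V}$. One small point worth making explicit: you are tacitly assuming that the curve $c$ in $X/\sim$ admits a \emph{horizontal} lift $\gamma$ to $X$; this is a standard fact for Riemannian submersions (solve the ODE $\gamma'(t)=\lift_{\gamma(t)}(c'(t))$ locally), but since the paper never states it, a one-line remark would tighten the argument.
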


\begin{defi}
  Given a curve $\gamma:(a,b) \to X$, we define the \emph{acceleration of $\gamma$}\index{acceleration} to be $\frac{\nabla}{\dd{t}}(\gamma')$.
\end{defi}

\begin{defi}
  A curve $\gamma:(a,b) \to X$ is said to be a \emph{geodesic}\index{geodesic} if $\frac{\nabla}{\dd{t}}(\gamma')(t) = 0$ for every $t \in (a,b)$.
\end{defi}

\begin{defi}
  A vector field $V$ along $\gamma:(a,b) \to X$ is said to be \emph{parallel}\index{vector field!parallel} if $\frac{\nabla}{dt}(V)(t) = 0$ for every $t \in (a,b)$.
\end{defi}

\begin{thm}
  Given a manifold $X$, a connection $\nabla$ on $X$, a curve $\gamma:(a,b) \to X$, $t_0 \in (a,b)$ and $v \in T_{\gamma(t_0)}X$, there exists a unique parallel vector field $V \in \mf{X}(\gamma)$ such that $V(t_0) = v$.
\end{thm}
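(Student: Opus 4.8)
The plan is to transform the equation $\frac{\nabla}{\dd{t}}(V) = 0$ into a first-order linear system of ordinary differential equations inside a coordinate chart, solve that system by the classical existence-and-uniqueness theorem for linear ODEs, and then patch the resulting local solutions together along all of $\gamma$. The whole argument rests on the two defining axioms of the covariant derivative along a curve, so those are the only structural facts I would need.

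First I would work locally. Fix a chart $(U, \phi)$ of $X$ and, on the set of parameters where $\gamma(t) \in U$, let $E_1, \ldots, E_d$ be the coordinate vector fields of $\phi$, so that $\{E_1(\gamma(t)), \ldots, E_d(\gamma(t))\}$ is a basis of $T_{\gamma(t)}X$ for each such $t$. Writing $V(t) = \sum_j V^j(t) E_j(\gamma(t))$ and $\gamma'(t) = \sum_i \dot\gamma^i(t) E_i(\gamma(t))$, I would expand $\frac{\nabla}{\dd{t}}(V)$ using the Leibniz axiom to differentiate the scalar coefficients $V^j$ and the second axiom to rewrite $\frac{\nabla}{\dd{t}}(E_j \circ \gamma)$ as $\nabla_{\gamma'(t)} E_j$. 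Introducing the Christoffel symbols $\Gamma^k_{ij}$ via $\nabla_{E_i} E_j = \sum_k \Gamma^k_{ij} E_k$ and using the $C^{\infty}(X)$-linearity of $\nabla$ in its first slot, this produces
\begin{equation}
  \frac{\nabla}{\dd{t}}(V)(t)
  = \sum_k \qty( \dot{V}^k(t) + \sum_{i,j} \dot\gamma^i(t) V^j(t) \Gamma^k_{ij}(\gamma(t)) ) E_k(\gamma(t)).
\end{equation}
Hence the parallel condition is equivalent to the linear system $\dot{V}^k(t) = -\sum_{i,j} \dot\gamma^i(t) \Gamma^k_{ij}(\gamma(t)) V^j(t)$. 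Because $\gamma$, $\phi$ and $\nabla$ are smooth, the coefficients are continuous, so the classical theorem for linear ODEs yields a unique solution $(V^1, \ldots, V^d)$ on the whole chart sub-interval, with initial data obtained by expressing $v$ in the basis $\{E_j(\gamma(t_0))\}$. This already settles the theorem whenever $\gamma((a,b))$ lies in one chart.

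The hard part is globalizing, since the image of $\gamma$ need not sit in a single chart. The key technical point is that linear ODEs do not blow up in finite time, so each local solution persists across the entire parameter range on which its chart is valid, and on chart overlaps two representations of a parallel field must coincide by the local uniqueness just established. For existence I would take any compact subinterval $[t_0, t_1]$, cover it by finitely many charts, and concatenate the local solutions, passing the endpoint value of one piece as the initial condition of the next; letting $t_1$ sweep through $(a,b)$ defines $V$ on the whole interval. For global uniqueness I would observe that the set of $t$ at which two parallel fields with the same value at $t_0$ agree is nonempty, open (by the local existence-and-uniqueness near each point) and closed (by continuity), hence equal to the connected interval $(a,b)$. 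Together these give the unique parallel field $V$ with $V(t_0) = v$.
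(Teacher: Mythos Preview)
Your argument is correct and is exactly the standard proof: reduce the parallelism condition to a first-order linear ODE system in local coordinates via the Christoffel symbols, invoke global existence and uniqueness for linear systems, and patch across charts using compactness and a connectedness argument. The paper, however, does not prove this theorem at all; it is one of the many results in Chapter~\ref{geometry} stated without proof and delegated to the references (in this case \cite{lee2018}). So there is no paper-specific approach to compare against, but your write-up matches the textbook proof the paper is implicitly citing.
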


\begin{defi}
  Given a curve $\gamma:(a,b) \to X$ and $t_0, t_1 \in (a,b)$, we define the \emph{parallel transport along $\gamma$}\index{parallel transport} to be the linear transformation defined by:
  \begin{align}
    \begin{split}
      \pt_{t_0 \to t_1}^{\gamma}:T_{\gamma(t_0)}X & \to T_{\gamma(t_1)}X \\
      v & \mapsto V(t_1),
    \end{split}
  \end{align}
  being $V \in \mf{X}(\gamma)$ the unique parallel vector field along $\gamma$ such that $V(t_0) = v$ (which exists by the previous theorem).
\end{defi}

\begin{obs}
  From now on we will assume $[0,1] \subset (a,b)$, $t_0 = 0$ and $t_1 = 1$.
\end{obs}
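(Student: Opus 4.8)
This final statement is an \emph{observation} recording a notational convention: from here on one restricts attention to curves defined on an interval containing $[0,1]$ and evaluates parallel transport between $t_0 = 0$ and $t_1 = 1$. As such it asserts no mathematical fact and requires no proof; one simply adopts it for the remainder of the exposition. The only thing worth articulating is the implicit claim that this restriction costs nothing, and the plan is to justify precisely that the normalization is \emph{without loss of generality}.

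The justification rests on invariance of parallel transport under orientation-preserving affine reparametrization. First I would fix arbitrary $t_0 < t_1$ in $(a,b)$ and introduce $\sigma:[0,1] \to [t_0, t_1]$ given by $\sigma(s) \ceq t_0 + s(t_1 - t_0)$, together with the reparametrized curve $\gamma \circ \sigma$. The crucial step is the reparametrization identity for the covariant derivative along a curve,
\begin{equation}
  \frac{\nabla}{\dd{s}}(V \circ \sigma)(s)
  = \sigma'(s) \qty(\frac{\nabla}{\dd{t}}V)(\sigma(s)),
\end{equation}
which I would derive from the two defining axioms of $\frac{\nabla}{\dd{t}}$ combined with the chain-rule relation $(\gamma \circ \sigma)'(s) = \sigma'(s)\gamma'(\sigma(s))$.

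Since $\sigma'(s) = t_1 - t_0 \neq 0$, this identity shows that $V$ is parallel along $\gamma$ exactly when $V \circ \sigma$ is parallel along $\gamma \circ \sigma$. Appealing to the uniqueness of the parallel vector field with prescribed initial value (the theorem immediately preceding the definition of parallel transport), I would then conclude $\pt_{0 \to 1}^{\gamma \circ \sigma} = \pt_{t_0 \to t_1}^{\gamma}$, so that every parallel-transport map over a general interval and general endpoints coincides with one in the normalized form --- which is exactly what legitimizes the convention being adopted.

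The hard part is the reparametrization identity itself, since everything else is bookkeeping. I expect the cleanest route to be an axiomatic one, in the spirit of the surrounding text: verify that the assignment $W \mapsto \sigma'(s)\qty(\frac{\nabla}{\dd{t}}W)(\sigma(s))$ satisfies the two axioms that uniquely characterize the covariant derivative $\frac{\nabla}{\dd{s}}$ along $\gamma \circ \sigma$, and then invoke that uniqueness. Alternatively one may compute both sides directly in a local chart, which is more explicit but less elegant and obscures why the statement is structural rather than coordinate-dependent.
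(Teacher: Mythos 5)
You read the statement correctly: it is a bare convention, the paper offers no proof of it, and none is required. Your added justification that the normalization is without loss of generality is sound and goes beyond what the paper records, so it is worth comparing. The paper's implicit safety net is different and more pedestrian: in the only place parallel transport is actually used (the Grassmannian, in Example \ref{geodesic_grassmannian} and the Conjugate Gradient algorithm), the manifold is complete by Corollary \ref{grassmannian_complete}, so geodesics are defined on all of $\br$ and the hypothesis $[0,1] \subset (a,b)$ is automatic; moreover the paper silently uses only the special case of your identity in which $\sigma(s) = ts$, namely that transporting along the geodesic with initial velocity $t\mu$ from $0$ to $1$ equals transporting along the geodesic with velocity $\mu$ from $0$ to $t$ (this is exactly the rescaling behind the expression $\pt_{0 \to t}^{\gamma_i}(t\mu_i)$ in Section \ref{conjugate_gradient}). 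Your general affine-reparametrization argument subsumes this and is the structurally right statement. Two small points to tighten in your axiomatic route. First, the two axioms characterize $\frac{\nabla}{\dd{s}}$ as an operator on all of $\mf{X}(\gamma \circ \sigma)$, so to invoke uniqueness you should define your candidate operator on every field along $\gamma \circ \sigma$, not just those of the form $V \circ \sigma$; this is harmless because $\sigma$ is an affine bijection, so every $W \in \mf{X}(\gamma \circ \sigma)$ is $V \circ \sigma$ with $V \ceq W \circ \sigma^{-1}$, and the candidate is $W \mapsto \sigma' \cdot \qty(\frac{\nabla}{\dd{t}}\qty(W \circ \sigma^{-1})) \circ \sigma$. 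Second, verifying the second axiom needs $\nabla_{cv}V_2 = c\,\nabla_{v}V_2$ for scalars $c$, which you should cite from the $C^{\infty}(X)$-linearity of the connection in its first slot together with Proposition \ref{independence_connection}; and strictly speaking $\gamma \circ \sigma$ must live on an open interval containing $[0,1]$, which holds since $t_0, t_1 \in (a,b)$ open lets $\sigma$ extend slightly beyond $[0,1]$ with image in $(a,b)$. With those remarks your argument is complete and correct.
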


\begin{exm}
  Geodesics on vector spaces are straight lines.
  Indeed, according to Example \ref{cov_vec}, $\frac{\nabla}{\dd{t}}\gamma' = \gamma''$.
  Consequently, if we assume $\gamma(0) = x$ and $\gamma'(0) = v$, we have that $\gamma(t) = x + tv$ by the Picard--Lindelöf Theorem (also known as Existence and Uniqueness of Solutions to Ordinary Differential Equations).
  Now, the parallel transport is just the identity function because the vector field obtained in the previous theorem is unique and the constant vector field obviously satisfies $V(0) = v$.
\end{exm}

\begin{exm} \label{geodesic_grassmannian}
  Now let us compute geodesics and the parallel transport for the Grassmannian.
  According to Proposition \ref{cov_quotient}, the covariant derivative of a vector field $W$ along a curve $\gamma$ in the Grassmannian $\gr{N}{d}$ is given by:
  \begin{equation}
    \qty(\frac{\nabla}{\dd{t}}W)(t) = \proj_{\gamma(t)}^{\hor}\ol{W}'(t).
  \end{equation}
  Therefore, this is $0$ (that is, $W$ is parallel) if, and only if, $\ol{W}'(t) \in V_{\gamma(t)}$ (recall that $V_{\gamma(t)} = H_{\gamma(t)}^{\perp}$), which means
  \begin{equation}
    \ol{W}'(t) = \gamma(t)\nu(t)
  \end{equation}
  for $\nu(t)$ antisymmetric.
  However, $\ol{W}(t) \in H_{\gamma(t)}$ for every $t$, which means
  \begin{equation}
    \gamma(t)^{\top}S\ol{W}(t) = 0.
  \end{equation}
  Differentiating this, we obtain
  \begin{equation}
    \gamma'(t)^{\top}S\ol{W}(t) + \gamma(t)^{\top}S\ol{W}'(t) = 0
  \end{equation}
  and, consequently,
  \begin{equation}
    \nu(t) = -\gamma'(t)^{\top}S\ol{W}(t)
  \end{equation}
  because $\gamma(t)^{\top}S\gamma(t) = \Id_N$.
  In other words, $\ol{W}$ is parallel to $\gamma$ \tiff
  \begin{equation}
    \ol{W}'(t) = -\gamma(t)\gamma'(t)^{\top}S\ol{W}(t).
  \end{equation}
  Now, if $\ol{W} = \gamma'$, this amounts to
  \begin{equation}
    \gamma''(t) = -\gamma(t)\gamma'(t)^{\top}S\gamma'(t).
  \end{equation}
  Therefore, assuming that $\gamma$ is a geodesic, the above holds and $\gamma(t)^{\top}S\ol{W}(t) = \gamma(t)^{\top}S\gamma'(t) = 0$ because $\ol{W}(t) \in H_{\gamma(t)}$.
  But that means $\gamma'(t)^{\top}S\ol{W}(t)$ is constant because
  \begin{multline}
    \dv{t}\gamma'(t)^{\top}S\ol{W}(t) = \gamma''(t)^{\top}S\ol{W}(t) + \gamma'(t)^{\top}S\ol{W}'(t)
    = -\gamma'(t)^{\top}S\gamma'(t)\ub{\gamma(t)^{\top}S\ol{W}(t)}_{0} \\
    - \ub{\gamma'(t)^{\top}S\gamma(t)}_{0}\gamma'(t)^{\top}S\ol{W}(t).
  \end{multline}
  Consequently, the ODE we need to solve for the geodesic is
  \begin{equation}
    \gamma''(t) = -\gamma(t)\gamma'(0)^{\top}S\gamma'(0) = -\gamma(t)\kappa,
  \end{equation}
  which is an harmonic oscillator.
  Now, suppose we have an invertible matrix $O$ such that $O^{\top}SO = \Id_d$.
  Then $S = O^{-\top}O^{-1}$.
  So, let $O^{-1}\gamma'(0) = UDV$ be a \emph{thin Singular Value Decomposition}\footnote{Assuming $N \leq d$, which is always the case for us, $U \in \mat{d}{N}$ and $D, V \in \mat{N}{N}$ are matrices such that $D$ is diagonal and $U^{\top}U = V^{\top}V = VV^{\top} = \Id_N$. The matrix $U$ is also known as semi-orthogonal because it is not true that $UU^{\top} = \Id_d$ if $N < d$.} (see \cite[Chapter 2]{golub2013}) and observe that
  \begin{equation}
    \gamma(t) = \qty(\gamma(0)V^{\top}\cos(tD) + OU\sin(tD))V
  \end{equation}
  satisfies the equation above.
  Indeed, using that $V^{\top} = V^{-1}$ and that
  \begin{align}
    \begin{split}
      \gamma'(0)^{\top}S\gamma'(0)
      & = \gamma'(0)^{\top}O^{-\top}O^{-1}\gamma'(0) \\
      & = (O^{-1}\gamma'(0))^{\top}O^{-1}\gamma'(0) \\
      & = (UDV)^{\top}UDV \\
      & = V^{\top}DU^{\top}UDV \\
      & = V^{\top}D^2V,
    \end{split}
  \end{align}
we have
  \begin{align}
    \begin{split}
      \gamma''(t)
      & = \qty(-\gamma(0)V^{\top}\cos(tD)D^2 - OU\sin(tD)D^2)V \\
      & = -\qty(\gamma(0)V^{\top}\cos(tD) + OU\sin(tD))D^2V \\
      & = -\qty(\gamma(0)V^{\top}\cos(tD) + OU\sin(tD))VV^{-1}D^2V \\
      & = -\qty(\gamma(0)V^{\top}\cos(tD) + OU\sin(tD))VV^{\top}D^2V \\
      & = -\gamma(t)\gamma'(0)^{\top}S\gamma'(0).
    \end{split}
  \end{align}
  So, that is it, we obtained a closed formula for the geodesic.

  Now, the ODE we need to solve for the parallel transport along a geodesic is
  \begin{align}
    \begin{split}
      \ol{W}'(t)
      & = -\gamma(t)\gamma'(0)^{\top}S\ol{W}(0) \\
      & = -\qty(\gamma(0)V^{\top}\cos(tD) + OU\sin(tD))V\gamma'(0)^{\top}S\ol{W}(0).
    \end{split}
  \end{align}
  Integrating this, we obtain:
  \begin{multline}
    \ol{W}(t) = \qty(-\gamma(0)V^{\top}\sin(tD)D^{-1}
    + OU\cos(tD)D^{-1})V\gamma'(0)^{\top}S\ol{W}(0) + \kappa \\
    = \qty(-\gamma(0)V^{\top}\sin(tD)D^{-1}
    + OU\cos(tD)D^{-1})V(O^{-1}\gamma'(0))^{\top}O^{-1}\ol{W}(0) + \kappa \\
    = \qty(-\gamma(0)V^{\top}\sin(tD)D^{-1}
    + OU\cos(tD)D^{-1})DU^{\top}O^{-1}\ol{W}(0) + \kappa \\
    = \qty(-\gamma(0)V^{\top}\sin(tD)
    + OU\cos(tD))U^{\top}O^{-1}\ol{W}(0) + \kappa.
  \end{multline}
  But $\ol{W}(0) = \ol{W}(0)$, therefore,
  \begin{multline}
    \ol{W}(0) = \qty(-\gamma(0)V^{\top}\sin(0 \cdot D)
    + OU\cos(0 \cdot D))U^{\top}O^{-1}\ol{W}(0) + \kappa \\
    = OUU^{\top}O^{-1}\ol{W}(0) + \kappa.
  \end{multline}
  Since the solution is unique, if we choose $\kappa = \qty(\Id - OUU^{\top}O^{-1})\ol{W}(0)$, we obtain the result that the parallel transport of a vector $\eta$ along a geodesic is given by:
  \begin{equation}
    P_{0 \to 1}^{\gamma}(\eta)
    = \qty(\qty(-\gamma(0)V^{\top}\sin(tD)
    + OU\cos(tD))U^{\top}O^{-1}
    + \qty(\Id - OUU^{\top}O^{-1}))\eta
  \end{equation}
  When $\eta = \gamma'(0)$, the transport becomes
  \begin{equation}
    P_{0 \to 1}^{\gamma}(\gamma'(0))
    = \qty(-\gamma(0)V^{\top}\sin(tD)
    + OU\cos(tD))DV.
  \end{equation}
\end{exm}

\begin{defi} \label{retraction}
  A \emph{retraction on $X$}\index{retraction} is a smooth function $R:TX \to X$ such that the curve $\rho(t) \ceq R(x, t[\gamma])$ satisfies $[\gamma] = [\rho]$ for every $(x, [\gamma]) \in TX$.
\end{defi}

\begin{thm}
  Given a Riemannian manifold $X$ and $(x, v) \in TX$, there exists a unique \emph{maximal} geodesic $\gamma_v:(a,b) \to X$ such that (1) $0 \in (a,b)$, (2) $\gamma_v(0) = x$ and (3) $\gamma_v'(0) = v$.
  By maximal it is meant that, if $\rho_v:(c,d) \to X$ also satisfies these three properties, then $(c,d) \subset (a,b)$.
\end{thm}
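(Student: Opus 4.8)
The plan is to reduce the statement to the existence and uniqueness theory for ordinary differential equations, precisely the Picard--Lindelöf Theorem already invoked in the excerpt. First I would rewrite the geodesic equation $\frac{\nabla}{\dd{t}}(\gamma')(t) = 0$ in a chart $(U, \phi)$ around $x$. Expanding $\gamma'$ in terms of the coordinate vector fields $\partial_i$ of the chart and using the two axioms of the covariant derivative along a curve together with the connection coefficients $\nabla_{\partial_i}\partial_j = \sum_k \Gamma_{ij}^k \partial_k$, the equation becomes the second-order system
\begin{equation}
  (\gamma^k)''(t) + \sum_{i,j} \Gamma_{ij}^k(\gamma(t))\,(\gamma^i)'(t)\,(\gamma^j)'(t) = 0,
\end{equation}
where the $\gamma^k$ denote the components of $\phi \circ \gamma$ and the $\Gamma_{ij}^k$ are the (smooth) Christoffel symbols. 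Rewriting this system in first-order form on $\phi(U) \x \br^d$ (the coordinate picture of $TU$) produces a vector field that is smooth, hence locally Lipschitz.

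With this reduction in hand, I would apply Picard--Lindelöf to the first-order system with initial condition $(\phi(x), \theta_x^{\phi}(v))$. This yields a unique geodesic $\gamma_v$ on some interval $(-\eps, \eps)$ satisfying $\gamma_v(0) = x$ and $\gamma_v'(0) = v$, and smoothness of the ODE solution gives smoothness of $\gamma_v$. Because the Levi-Civita connection is a coordinate-free object, the geodesic equation is chart-independent, so this local solution does not depend on the chart chosen.

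The next step is uniqueness on overlapping domains. Given two geodesics $\gamma_1:(a_1, b_1) \to X$ and $\gamma_2:(a_2, b_2) \to X$ sharing the three initial conditions, consider
\begin{equation}
  A \ceq \qty{t \in (a_1, b_1) \cap (a_2, b_2) : \gamma_1(t) = \gamma_2(t) \text{ and } \gamma_1'(t) = \gamma_2'(t)}.
\end{equation}
This set contains $0$, is closed by continuity, and is open because at any $t_0 \in A$ the common pair $(\gamma_1(t_0), \gamma_1'(t_0))$ is a valid initial condition for a fresh application of Picard--Lindelöf, forcing the two curves to coincide near $t_0$. Since the overlap interval is connected, $A$ is the whole overlap, so the geodesics agree wherever both are defined. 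Finally, letting $\mc{I}$ be the family of all open intervals containing $0$ that carry a geodesic with the prescribed initial data and setting $(a,b) \ceq \bigcup_{I \in \mc{I}} I$, the overlap-uniqueness just proved lets these geodesics patch into a single well-defined $\gamma_v:(a,b) \to X$; by construction any interval supporting such a geodesic is contained in $(a,b)$, which is exactly maximality.

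The hard part will be the open-and-closed argument, where openness of $A$ is the step that genuinely relies on local uniqueness: one must verify that re-reading $(\gamma_1(t_0), \gamma_1'(t_0))$ as initial data produces the same short-time geodesic for both curves. A secondary point requiring care is the chart-independence of the geodesic equation, since the ODE is written in coordinates but the conclusion must hold invariantly; this is where the fact that the Levi-Civita connection---and hence parallel transport and geodesics---is intrinsically defined does the work.
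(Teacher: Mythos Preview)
The paper does not supply a proof of this theorem: it is one of the background results the author explicitly chose to state without proof, deferring to \cite{lee2018}. Your proposal is correct and is exactly the standard argument one finds in that reference---reduce the geodesic equation to a smooth second-order ODE in a chart, invoke Picard--Lindel\"of for local existence and uniqueness, run the open-and-closed argument on the agreement set to extend uniqueness across overlapping domains, and take the union of all admissible intervals for maximality. There is nothing to compare against in the paper itself, and nothing to correct in your outline.
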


\begin{defi}
  Let
  \begin{equation}
    \mc{O} \ceq \qty{(x, v) \in TX : \text{$\gamma_v$ is defined on an interval containing $[0,1]$}}.
  \end{equation}
  The \emph{(Riemannian) exponential}\index{Riemannian!exponential}\index{Riemannian exponential} is the function $\exp:\mc{O} \to X$ defined by
  \begin{equation}
    \exp(x,tv) = \gamma_v(t).
  \end{equation}
  When $x$ is fixed, we will denote the function $\exp(x, \cdot)$ by $\exp_x$.
\end{defi}

\begin{defi}
  A Riemannian manifold $X$ is said to \emph{complete}\index{complete manifold} if for every $(x,v) \in TX$ we have $\gamma_v$ defined on the whole $\br$.
\end{defi}

\begin{obs}
  If the manifold $X$ is complete, then $\mc{O} = TX$.
\end{obs}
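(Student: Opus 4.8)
The plan is to unwind the two definitions immediately preceding the statement, since the claim is a direct logical consequence of them. By construction, $\mc{O}$ was defined as a set of pairs $(x,v) \in TX$ subject to an additional condition, so the inclusion $\mc{O} \subseteq TX$ holds automatically; only the reverse inclusion $TX \subseteq \mc{O}$ requires an argument.

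For that inclusion I would fix an arbitrary $(x,v) \in TX$ and appeal to the completeness hypothesis: by definition this guarantees that the maximal geodesic $\gamma_v$ is defined on all of $\br$. Since $[0,1] \subset \br$, the geodesic $\gamma_v$ is in particular defined on an interval containing $[0,1]$, which is exactly the defining condition for membership in $\mc{O}$. Hence $(x,v) \in \mc{O}$, and as $(x,v)$ was arbitrary we obtain $TX \subseteq \mc{O}$. Combined with the trivial reverse inclusion, this yields $\mc{O} = TX$.

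I do not anticipate any genuine obstacle here: the only content is the elementary remark that the whole real line contains the unit interval, so I would expect the author to settle this in essentially one line. The statement is flagged as an observation precisely because it is a definitional unfolding rather than a substantive result.
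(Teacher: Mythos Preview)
Your proposal is correct and matches the paper's treatment: the paper states this as an observation without proof, precisely because it is an immediate definitional unfolding, and your argument is exactly the one-line justification one would give if pressed.
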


\begin{prop}
  The Riemannian exponential is a retraction.
\end{prop}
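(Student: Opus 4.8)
The plan is to verify the two requirements of Definition \ref{retraction} directly: that $\exp$ is smooth and that for each $(x, v) \in TX$ the curve $\rho(t) \ceq \exp(x, tv)$ satisfies $[\rho] = [\gamma]$, where $v = [\gamma]$. The smoothness is the one ingredient I would invoke rather than derive from scratch. In any chart the geodesic condition $\frac{\nabla}{\dd{t}}(\gamma') = 0$ is a second-order ordinary differential equation, and the smooth dependence of its solutions on the initial data makes the map $(x, v, t) \mapsto \gamma_v(t)$ smooth in all of its arguments; this is the standard existence-and-regularity theory for ODEs, which I would cite (see \cite{lee2018}). The same theory guarantees that the set $\mc{O}$ is open and contains the zero section of $TX$, so that $\rho$ is indeed a genuine smooth curve defined on a neighborhood of $t = 0$.

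The heart of the argument is then essentially a one-line computation. By the defining equation of the exponential, $\rho(t) = \exp(x, tv) = \gamma_v(t)$, so $\rho$ is literally the maximal geodesic $\gamma_v$. Since $\gamma_v$ was singled out by the conditions $\gamma_v(0) = x$ and $\gamma_v'(0) = v$, the curve $\rho$ represents the tangent vector $v$; that is, $[\rho] = [\gamma_v] = v = [\gamma]$, which is exactly the velocity-matching condition demanded by Definition \ref{retraction}.

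The only subtlety worth flagging is the domain. Definition \ref{retraction} asks for a function defined on all of $TX$, whereas a priori $\exp$ lives only on $\mc{O}$. I would resolve this by restricting attention to complete manifolds, where $\mc{O} = TX$ by the observation immediately preceding the statement. In particular, every manifold relevant to this work—the Grassmannian, the Stiefel manifold, and their finite products—is compact (Proposition \ref{stiefel_compact} and Corollaries \ref{grassmannian_compact} and \ref{product_compact}) and hence complete, so $\exp$ is globally defined and is a bona fide retraction. If one prefers not to assume completeness, the computation above still exhibits $\exp$ as a retraction on the open neighborhood $\mc{O}$ of the zero section, which is all that is ever actually used in the algorithms. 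The ``hard part'', such as it is, is therefore not the velocity identity (which is immediate once $\rho = \gamma_v$ is recognized) but rather the bookkeeping about where $\exp$ is defined.
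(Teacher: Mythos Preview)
Your proof is correct, but there is nothing to compare it against: the paper states this proposition without proof, in keeping with its declared policy of omitting proofs that are ``easily found in the literature.'' Your argument supplies exactly what the paper leaves out. The verification that $\rho = \gamma_v$ and hence $[\rho] = v$ is immediate from the definition $\exp(x, tv) = \gamma_v(t)$, and your handling of the domain issue---invoking completeness so that $\mc{O} = TX$, which holds for every manifold actually used in the paper by Corollary \ref{grassmannian_complete}---is the right way to reconcile the definition of retraction with the a priori partial domain of $\exp$. Citing \cite{lee2018} for the smooth dependence of geodesics on initial data is also appropriate and matches the paper's own referencing conventions.
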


\begin{exm} \label{exp_euclidean}
  If $X$ is a vector space, the Riemannian exponential is given by
  \begin{equation}
    \exp(x,tv) = \exp_x(v) = x + tv.
  \end{equation}
\end{exm}

\begin{exm} \label{exp_grassmannian}
  In the case of the Grassmannian $\gr{N}{d}$, the exponential is given by
  \begin{equation}
    \exp([C], t\eta) = \exp_{[C]}(t\eta) = \qty(CV^{\top}\cos(tD) + OU\sin(tD))V,
  \end{equation}
  being $O^{-1}\eta = UDV$ a thin Singular Value Decomposition.
  See Example \ref{geodesic_grassmannian} for the details.
\end{exm}

\begin{prop} \label{exp_product}
  The Riemannian exponential in the product of manifolds $X \x Y$ is given by
  \begin{equation}
    \exp^{X \x Y}((x, y), t(v, w)) = (\exp^X(x, tv), \exp^Y(y, tw)).
  \end{equation}
\end{prop}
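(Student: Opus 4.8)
The plan is to reduce the statement to the behaviour of geodesics, since by definition $\exp^{X \x Y}((x,y), t(v,w)) = \gamma_{(v,w)}(t)$, where $\gamma_{(v,w)}$ is the unique maximal geodesic on $X \x Y$ with $\gamma_{(v,w)}(0) = (x,y)$ and $\gamma_{(v,w)}'(0) = (v,w)$ (identifying $T_{(x,y)}(X \x Y) \cong T_xX \x T_yY$ via Proposition \ref{tangent_space_product}). So it suffices to show that the product curve $\gamma(t) \ceq (\gamma_v^X(t), \gamma_w^Y(t))$ --- formed from the maximal geodesics $\gamma_v^X$ on $X$ and $\gamma_w^Y$ on $Y$ --- is itself a geodesic on $X \x Y$ through $(x,y)$ with velocity $(v,w)$. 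Once this is established, the uniqueness of maximal geodesics immediately yields $\gamma_{(v,w)} = \gamma$, and evaluating gives $\exp^{X \x Y}((x,y), t(v,w)) = (\gamma_v^X(t), \gamma_w^Y(t)) = (\exp^X(x,tv), \exp^Y(y,tw))$, as desired.

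The key step is a lemma asserting that the covariant derivative along a product curve splits componentwise: if $V = (V_X, V_Y)$ is a vector field along $\gamma = (\gamma^X, \gamma^Y)$ (using $T_{\gamma(t)}(X \x Y) \cong T_{\gamma^X(t)}X \x T_{\gamma^Y(t)}Y$), then
\begin{equation}
  \frac{\nabla^{X \x Y}}{\dd{t}}(V)
  = \qty(\frac{\nabla^X}{\dd{t}}(V_X), \frac{\nabla^Y}{\dd{t}}(V_Y)).
\end{equation}
I would prove this by invoking the uniqueness of the covariant derivative along a curve: it is enough to check that the right-hand side, viewed as an operator on $\mf{X}(\gamma)$, satisfies the two defining axioms. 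The Leibniz axiom is immediate, because multiplication by $f \in C^{\infty}((a,b))$ and the derivative $f'$ act diagonally on the two components, so the operator inherits the product rule from $\frac{\nabla^X}{\dd{t}}$ and $\frac{\nabla^Y}{\dd{t}}$ separately. Applying the lemma to $V = \gamma'$ then shows that the acceleration of $\gamma$ is $(\frac{\nabla^X}{\dd{t}}(\gamma^X)', \frac{\nabla^Y}{\dd{t}}(\gamma^Y)')$, which vanishes exactly when both $\gamma^X$ and $\gamma^Y$ are geodesics --- precisely the situation for $\gamma = (\gamma_v^X, \gamma_w^Y)$. In the concrete setting most relevant to this work, namely products of Riemannian submanifolds or quotients of a Euclidean space, the lemma is in fact transparent, since both the ambient derivative $V'(t)$ and the orthogonal projection onto the tangent (or horizontal) space split as a direct sum over the two factors.

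The hard part is verifying the second axiom of the lemma in the general case, the one comparing $\frac{\nabla}{\dd{t}}(V_2 \circ \gamma)$ with $\nabla^{X \x Y}_{\gamma'(t)}V_2$ for a genuine vector field $V_2 = (W_X, W_Y) \in \mf{X}(X \x Y)$. Here the first component of $V_2 \circ \gamma$ is the two-parameter expression $t \mapsto W_X(\gamma^X(t), \gamma^Y(t))$, whose covariant derivative along $\gamma^X$ must be shown to equal the first slot of the product connection, namely $\nabla^X_{(\gamma^X)'(t)}W_X(\cdot, \gamma^Y(t)) + T_{\gamma^Y(t)}W_X(\gamma^X(t), \cdot)((\gamma^Y)'(t))$. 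The plan is to freeze each variable in turn: the first term arises from differentiating along $X$ with the $Y$-argument held fixed (the second axiom applied on $X$), while the second arises from the variation in the $Y$-argument, which keeps the value inside the fixed vector space $T_{\gamma^X(t)}X$ and so contributes only an ordinary directional derivative. Matching these two contributions against the product connection formula is the technical crux, and it is exactly where the cross terms of that formula are accounted for. Finally, since a product curve is a geodesic if, and only if, both factors are, the maximal interval of $\gamma_{(v,w)}$ is the intersection of the maximal intervals of $\gamma_v^X$ and $\gamma_w^Y$; hence the two sides share a common domain and the exponential formula holds throughout.
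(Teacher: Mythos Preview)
The paper states this proposition without proof --- it is one of the many standard results in Chapter~\ref{geometry} that the author explicitly leaves to the references (\cite{lee2018}, \cite{boumal2022}). So there is no argument in the paper to compare your proposal against.

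Your approach is the standard one and is correct: reduce to the fact that a curve in $X \x Y$ is a geodesic precisely when both components are, and obtain this from the componentwise splitting of the induced covariant derivative $\frac{\nabla^{X \x Y}}{\dd t}$. Your identification of the technical crux --- verifying the second axiom against the cross terms in the product Levi--Civita connection --- is accurate, and your ``freeze each variable in turn'' plan is exactly how one makes that rigorous (typically by passing to product coordinates, where the Christoffel symbols of $g^{X \x Y}$ are block-diagonal and the mixed symbols vanish, so the geodesic ODE decouples). Your closing observation about the domain of the product exponential being the intersection of the factor domains is also correct and worth keeping, since it is needed for the statement to be well-posed when the manifolds are not complete.
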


To conclude, let us state without proof that the Grassmannian is a complete manifold.
This fact is important because one of the hypothesis of the theorems that assert the algorithms we will implement converge is that the manifold should be complete.

\begin{thm}[Hopf--Rinow]
  A compact and connected Riemannian manifold is complete.
\end{thm}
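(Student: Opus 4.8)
The plan is to prove geodesic completeness directly, showing that no maximal geodesic can escape the manifold in finite parameter time. The only hypothesis I will really use is compactness; connectedness plays no role for this particular implication, so I will simply carry it along harmlessly. The whole argument rests on producing a single positive number $\delta$ that lower-bounds the existence time of \emph{every} unit-speed geodesic, no matter where it starts or in which direction it points.

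First I would record two elementary facts. Since the Levi-Civita connection is metric-compatible, along any geodesic $\gamma$ we have $\dv{t}\norm{\gamma'(t)}^2 = 2\braket{\frac{\nabla}{\dd{t}}\gamma'(t)}{\gamma'(t)} = 0$, so geodesics have constant speed. Second, geodesics are homogeneous under rescaling of the initial velocity: $\gamma_{\lambda v}(t) = \gamma_v(\lambda t)$ for $\lambda > 0$, which follows from the uniqueness theorem for maximal geodesics because $t \mapsto \gamma_v(\lambda t)$ is again a geodesic with initial velocity $\lambda v$. Together these reduce the problem to unit-speed geodesics: if every unit vector generates a geodesic defined on all of $\br$, then so does every vector.

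The heart of the proof is the uniform existence time. The geodesic equation is a smooth second-order ODE, equivalently a smooth first-order system on the tangent bundle $TX$, so the existence-and-uniqueness theorem together with continuous dependence on initial conditions gives, for each $(x, v)$ in the unit tangent bundle $SX \ceq \qty{(x, v) \in TX : \norm{v}_x = 1}$, a neighborhood $\mc{U}_{(x,v)}$ in $TX$ and a time $\eps_{(x,v)} > 0$ such that every geodesic with initial data in $\mc{U}_{(x,v)}$ is defined on $(-\eps_{(x,v)}, \eps_{(x,v)})$. Because $X$ is compact and the metric is smooth, $SX$ is a closed sphere bundle over a compact base and hence is itself compact; extracting a finite subcover of $\qty{\mc{U}_{(x,v)}}$ and taking $\delta$ to be the minimum of the finitely many $\eps_{(x,v)}$ yields a $\delta > 0$ with the property that \emph{every} unit-speed geodesic, started at any point, is defined at least on $(-\delta, \delta)$.

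Finally I would argue by contradiction. Let $\gamma_v$ be a maximal geodesic on $(a, b)$; if $v = 0$ it is constant and defined on all of $\br$, so assume its constant speed is $c = \norm{v} > 0$ and suppose $b < \infty$. The unit-speed reparametrization $\sigma(s) \ceq \gamma_v(s/c)$ is a geodesic on $(ca, cb)$, and by the uniform bound the geodesic issuing from any $\sigma(s_0)$ with velocity $\sigma'(s_0) \in SX$ survives for time $\delta$. Choosing $s_0$ with $cb - s_0 < \delta$ and invoking uniqueness to glue, $\sigma$ extends past $cb$, hence $\gamma_v$ extends past $b$, contradicting maximality; the left endpoint is handled symmetrically. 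Thus every maximal geodesic is defined on all of $\br$, which is exactly completeness. I expect the one genuinely delicate point to be the uniform existence time: it is precisely here that compactness enters, through the compactness of the sphere bundle $SX$ and the continuous dependence of geodesics on their initial data.
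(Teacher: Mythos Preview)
Your argument is correct and is the standard proof that a compact Riemannian manifold is geodesically complete: uniform existence time on the compact unit sphere bundle, followed by the gluing/extension argument. Your observation that connectedness is irrelevant for this implication is also right.

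However, there is nothing to compare against: the paper does not prove this statement. It is stated explicitly without proof, with the remark that it is a corollary of the full Hopf--Rinow theorem and a pointer to \cite[Theorem 6.19]{lee2018}. So your write-up supplies an argument the paper deliberately omits. If anything, your approach is more direct than invoking the full Hopf--Rinow (metric completeness $\Leftrightarrow$ geodesic completeness $\Leftrightarrow$ closed bounded sets compact, etc.): you go straight to geodesic completeness via the compactness of $SX$, which is exactly the implication the paper needs.
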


\begin{obs}
  This is actually a corollary of Hopf--Rinow that is more suited to our purposes.
  If the reader is interested in the ``original'' theorem, see \cite[Theorem 6.19]{lee2018}.
\end{obs}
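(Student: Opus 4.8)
The plan is to prove geodesic completeness directly from compactness, rather than routing through the full metric version of Hopf--Rinow. The strategy is to show that a maximal geodesic cannot have a finite endpoint, and the engine behind this is a \emph{uniform} lower bound on the existence time of unit-speed geodesics, which is precisely where compactness will enter.

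First I would record two elementary facts that let me reduce everything to unit-speed geodesics. (1) Geodesics have constant speed: if $\gamma$ is a geodesic, then the metric-compatibility axiom of the Levi-Civita connection (applied along $\gamma$ via the covariant derivative $\frac{\nabla}{\dd{t}}$) gives
\begin{equation}
  \dv{t}\braket{\gamma'(t)}{\gamma'(t)} = 2\braket{\tfrac{\nabla}{\dd{t}}\gamma'(t)}{\gamma'(t)} = 0,
\end{equation}
so $\norm{\gamma'(t)}$ is constant and equals $\norm{v}$ whenever $\gamma'(0) = v$. (2) Geodesics scale: the defining ODE is homogeneous of degree two in the velocity, so $\gamma_{cv}(t) = \gamma_v(ct)$; in particular a geodesic with initial speed $\norm{v}$ is defined on a time interval that is $1/\norm{v}$ times as long as that of the associated unit-speed geodesic.

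Next I would establish the key uniform existence estimate: there exists $\eps > 0$ such that for every $x \in X$ and every unit vector $u \in T_xX$ the geodesic $\gamma_u$ is defined at least on $(-\eps, \eps)$. Locally this follows from Picard--Lindelöf applied to the geodesic ODE, together with smooth dependence of solutions on initial conditions, which yields around each pair $(x, u)$ a neighborhood and an existence time. To make the time uniform I would use compactness of the unit tangent bundle $UX \ceq \qty{(x, u) \in TX : \norm{u} = 1}$, a sphere bundle over the compact manifold $X$ and hence itself compact: cover $UX$ by finitely many of these neighborhoods and take $\eps$ to be the minimum of the finitely many existence times.

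Finally I would run the extension argument. Suppose some maximal geodesic $\gamma_v:(a,b) \to X$ had $b < \infty$; write $s \ceq \norm{v}$ (the case $v = 0$ giving a constant geodesic is trivial). Pick $t_0 \in (a,b)$ with $b - t_0 < \eps/s$, set $q \ceq \gamma_v(t_0)$ and $w \ceq \gamma_v'(t_0)$, so $\norm{w} = s$ by constant speed. By the uniform estimate and the scaling fact, the geodesic through $q$ with velocity $w$ is defined on an interval of length at least $\eps/s > b - t_0$; by uniqueness of geodesics it agrees with $\gamma_v$ where both are defined, hence extends $\gamma_v$ strictly beyond $b$, contradicting maximality. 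The symmetric argument on the left forces $a = -\infty$, so $\gamma_v$ is defined on all of $\br$ and $X$ is complete. I expect the uniform existence estimate to be the main obstacle, since it is the only step that genuinely uses compactness and requires combining the local ODE theory with a covering argument on $UX$; connectedness, by contrast, plays no role in geodesic completeness itself and is needed only for the sharper distance-realizing conclusions of the full Hopf--Rinow theorem.
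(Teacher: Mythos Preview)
Your argument is correct, but note that the paper does not actually prove this statement at all: the ``Observation'' is purely a pointer, saying that the stated version (compact and connected $\Rightarrow$ complete) is a corollary of the full Hopf--Rinow theorem and referring the reader to \cite[Theorem 6.19]{lee2018} for details. There is therefore no in-paper proof to compare against.

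That said, your route is genuinely different from the one implied by the citation. The standard Hopf--Rinow argument would go via the metric structure: compactness gives metric completeness (Cauchy sequences converge), and Hopf--Rinow then equates metric completeness with geodesic completeness. You instead bypass the metric side entirely and prove geodesic completeness directly, using compactness of the unit tangent bundle $UX$ to extract a \emph{uniform} Picard--Lindel\"of existence time and then running a standard extension-past-a-finite-endpoint contradiction. This is more elementary in that it never touches the distance function or the equivalence of the various completeness notions; it also makes transparent your final remark that connectedness is irrelevant to geodesic completeness and only enters Hopf--Rinow for the minimizing-geodesic conclusions. One small point worth tightening: the assertion that $UX$ is compact deserves a line of justification (it is a closed subset of $TX$ that is, via local trivializations, a fiber bundle with compact fiber $S^{d-1}$ over the compact base $X$), since the paper's toolkit does not include this fact explicitly.
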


\begin{obs}
  Connectedness is a property of topological spaces that says we cannot break the space into two or more disjoint pieces (if the space is connected, of course).
  A geometric example of a space that is \emph{not} connected is two disjoint open balls in the Euclidean plane, such as $B((-2,0), 1) \cup B((2,0), 1)$.
  The Grassmannian is connected, but we do not know any elementary proof of this fact and we refer the interested reader to \cite[Section 2.4]{piccione2009} for a proof.
\end{obs}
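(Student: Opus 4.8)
The plan is to exhibit the Grassmannian as the continuous image of a connected space and then invoke the elementary topological fact that continuous images of connected spaces are connected (see Appendix \ref{top}). Concretely, I would work with the concrete Grassmannian $\gr{N}{d} = \st{N}{d}/O(N)$ from Example \ref{grassmannian} and note that the canonical projection $\pi \colon \st{N}{d} \to \gr{N}{d}$ is continuous and surjective (Theorem \ref{qmt}). Hence it suffices to show that the Stiefel manifold $\st{N}{d}$ is connected whenever $0 < N < d$; the degenerate cases $N = 0$ and $N = d$ give a single point (the zero subspace and the whole space, respectively), which is trivially connected, so these are handled separately at the very end.

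To deal with the connectedness of $\st{N}{d}$, first I would reduce to the ordinary Stiefel manifold. Writing $S = O^{-\top}O^{-1}$ for some invertible $O$ with $O^{\top}SO = \Id_d$ (as in Example \ref{geodesic_grassmannian}), the linear isomorphism $C \mapsto O^{-1}C$ is a homeomorphism carrying $\st{N}{d}$ onto $\qty{\wtil{C} \in \mat{d}{N} : \wtil{C}^{\top}\wtil{C} = \Id_N}$, so I may assume $S = \Id_d$. Next I would show that $SO(d)$ acts transitively on this ordinary Stiefel manifold when $N < d$: given two orthonormal $N$-frames, Gram--Schmidt extends each to an orthonormal basis of $\br^d$, and the orthogonal map matching the two bases sends one frame to the other; if its determinant is $-1$, flipping the sign of one extension vector lying outside the target frame (such a vector exists precisely because $N < d$) repairs the determinant without altering the first $N$ frame vectors. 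Fixing a base point $C_0$, the orbit map $g \mapsto gC_0$ is then a continuous surjection $SO(d) \to \st{N}{d}$, so connectedness of the Stiefel will follow from that of $SO(d)$.

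The main obstacle is therefore the connectedness of $SO(d)$, and this is where the genuine work lies. The plan is to show that every $g \in SO(d)$ can be joined to $\Id_d$ by a path inside $SO(d)$. Using the real normal form for orthogonal matrices, $g$ is conjugate by some fixed $Q \in O(d)$ to a block-diagonal matrix built from $2 \times 2$ rotation blocks together with possible $\pm 1$ diagonal entries; since $\det{g} = +1$, the $-1$ entries occur in even number and can be absorbed into rotation blocks of angle $\pi$, leaving no isolated $-1$. Scaling every rotation angle $\theta_k$ to $s\theta_k$ for $s \in [0,1]$ produces a path from the normal form to the identity, and conjugating the whole path back by $Q$ keeps it inside $SO(d)$ (orthogonality and unit determinant are preserved) while connecting $g$ to $\Id_d$. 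This proves $SO(d)$ is path-connected, hence connected.

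Assembling the pieces: $SO(d)$ is connected, its continuous image $\st{N}{d}$ is connected for $0 < N < d$, and the continuous image $\pi(\st{N}{d}) = \gr{N}{d}$ is again connected; combined with the trivial endpoint cases this yields connectedness of the Grassmannian in all dimensions. I expect the normal-form step to be the most technical, since it is the only place where a real structural fact about orthogonal matrices—rather than a formal transport of connectedness along continuous surjections—is required.
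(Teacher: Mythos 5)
Your argument is correct, and it is worth noting that the paper does not actually prove this statement at all: the observation explicitly says the authors know no elementary proof and defers entirely to Piccione--Tausk. What you supply is the standard homogeneous-space argument, and every step checks out. The reduction of the generalized Stiefel manifold to the ordinary one via $C \mapsto O^{-1}C$ is valid, since $S = O^{-\top}O^{-1}$ gives $(O^{-1}C)^{\top}(O^{-1}C) = C^{\top}SC = \Id_N$; the transitivity of $SO(d)$ on orthonormal $N$-frames with the determinant repaired by a sign flip on a spare basis vector is exactly right, and you correctly quarantine the degenerate cases $N \in \qty{0, d}$ at the level of the Grassmannian rather than the Stiefel --- an important subtlety, since $\st{d}{d} = O(d)$ is disconnected and your sign-flip needs $N < d$. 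The one genuinely structural input is the real normal form for orthogonal matrices giving path-connectedness of $SO(d)$, which is presumably the ingredient the authors regarded as non-elementary; if you want to avoid it, a common alternative is induction on $d$ using the transitive action of $SO(d)$ on the sphere $\bs^{d-1}$ with stabilizer $SO(d-1)$. Two minor repairs: the fact that continuous images of connected spaces are connected does \emph{not} appear in Appendix \ref{top} (that appendix covers only compactness-type results), so you would need to state or cite it independently; and you should remark, even if only in passing, that path-connectedness implies connectedness, since your $SO(d)$ argument produces paths.
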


\begin{cor} \label{grassmannian_complete}
  The Grassmannian and the product of Grassmannians is complete.
\end{cor}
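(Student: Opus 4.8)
The plan is to apply the version of Hopf--Rinow just stated, which asserts that a compact and connected Riemannian manifold is complete. Since both the Grassmannian and any finite product of Grassmannians are Riemannian manifolds (the latter with the product metric described earlier), it suffices to verify that they are compact and connected; completeness then follows immediately.

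First I would dispatch the single Grassmannian. Compactness is precisely Corollary \ref{grassmannian_compact}, and connectedness is the content of the observation preceding this corollary, where it is taken from \cite[Section 2.4]{piccione2009}. With both hypotheses of Hopf--Rinow in hand, I conclude that the Grassmannian is complete.

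For a finite product of Grassmannians, compactness is supplied by Corollary \ref{product_compact}. Connectedness of the product then follows from the connectedness of each factor together with the standard fact that a finite product of connected topological spaces is connected. A second application of Hopf--Rinow gives completeness of the product, which finishes the argument.

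The proof is essentially an assembly of previously established results, so there is no substantial obstacle to overcome. The only ingredient not proved internally is the connectedness of the Grassmannian, which we borrow from \cite{piccione2009}; everything else -- compactness, the Riemannian structure on products, and connectedness of products of connected spaces -- has already been set up, so no genuinely new work is required.
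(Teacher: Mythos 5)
Your proof is correct and follows essentially the same route as the paper: invoke the stated corollary of Hopf--Rinow, using compactness from Corollaries \ref{grassmannian_compact} and \ref{product_compact} and the connectedness of the Grassmannian cited from \cite{piccione2009}. The only addition is your explicit remark that finite products of connected spaces are connected, which the paper leaves implicit.
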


\begin{proof}
  It follows from Hopf--Rinow together with the fact that the Grassmannian and its finite products are connected and compact spaces (Corollary \ref{product_compact}).
\end{proof}


\chapter{Quantum Mechanics} \label{qm}

\epigraph{Shut up and calculate.}{David Mermin}

\section{Introduction}

The goal of this chapter is to introduce Quantum Mechanics to the reader that may not be familiar with this field and then explain in a detailed manner how the Hartree--Fock Method can be thought of as a Riemannian optimization problem.
We will not pursue a very rigorous approach to Quantum Mechanics (at least from a mathematician's perspective) because most of the difficult problems of the field can be ignored in our case since we will end up working with finite-dimensional vector spaces.
This chapter required many references and they are usually credited along the text, but for the Theory section (\ref{qm_theory}) the main references are \cite{tannoudji2020_1, tannoudji2020_2, hall2013, kostrikin1997, takhtajan2008} and for the Practice section (\ref{qm_practice}) the main reference is \cite{szabo1996}.
It should be noted, though, that the Practice section is one of the contributions of this work because we do not know a reference that works out the details of the connection between the Grassmannian and Slater determinants.

\subsection{History}

In high school we learn Newton's laws of motion and how these laws model the behavior of macroscopic objects.
However, there are some limits in which Newton's laws fail, such as when the objects are moving with speed close to the speed of light, when the objects are too massive, and when objects are very small.
In the first case, the theory that explains how objects behave is called Special Relativity, in the second case General Relativity, and in the last case Quantum Mechanics.
So, as already said, in the present work we will explore Quantum Mechanics and it is interesting to start with a brief overview of the history of this field in order to later understand the motivation for some of the mathematical tools used.
If the reader is interested in the details of the history, see \cite{mehra1982}.

Arguably, the history starts with Max Planck in 1900 when he tried to explain black body radiation, that is, the energy emitted by an idealized object that absorbs every incident light, independent of the wavelength of the light.\footnote{Simplifying things a little bit, he was trying to describe the change of color of an object when you heat it. First, the object has its natural color; then, when you heat it, it starts to emit an orange-reddish color; in the end, if you increase the temperature enough, it will emit a white-blueish color. So, the question was: what is the formula that describes this change of color?}
This was a very important topic back then because some of the most important and accurate scientific theories were closely related to this phenomenon, such as electromagnetism and thermodynamics.
So, Planck was trying to obtain a formula for the energy emitted by a black body and the story of how he achieved this is quite interesting: at first, he did not believe in the existence of atoms and he had discovered a formula for black body radiation which turned out to be wrong after some experimental results came out.
Then, analyzing these results, he realized he had to come up with a new formula and, to obtain this new formula, he studied the work of L. E. Boltzmann, which assumed the existence of atoms.
As it turned out, Planck obtained a new formula using Boltzmann's work, which is known as \emph{Planck's radiation law}, and this formula explained the black body radiation and agreed with the experimental results.
So, after this achievement, Planck started to believe in the atomic theory.
Having said that, though, the physical interpretation behind his formula was not completely clear to him:
\begin{quote}
  I also knew the formula that expresses the energy distribution in the normal spectrum. A theoretical interpretation therefore had to be found at any cost, no matter how high. It was clear to me that classical physics could offer no solution to this problem, and would have meant that all energy would eventually transfer from matter to radiation. (...) This approach was opened to me by maintaining the two laws of thermodynamics. The two laws, it seems to me, must be upheld under all circumstances. For the rest, I was ready to sacrifice every one of my previous convictions about physical laws. (...) [One] finds that the continuous loss of energy into radiation can be prevented by assuming that energy is forced at the outset to remain together in certain quanta. This was purely a formal assumption and I really did not give it much thought except that no matter what the cost, I must bring about a positive result. (Planck to Wood, 7 October 1931)\footnote{This letter is in the Archive for the History of Quantum Physics, Microfilm 66, 5 (ref. 58). A translation of the entire letter can be found in \cite{hermann1971}.}
\end{quote}
So, he knew the mathematical aspects and the experimental results behind his formula, but not entirely the physical aspects.
Actually, to this day it is open to debate the extent to which he knew a good physical interpretation and the consequences of his work while he was doing it (not in retrospect), see \cite{gearhart2002}.
Knowing the exact explanation behind his formula or not, Planck introduced a key component while deriving it: the discretization of the energy, that is, in his formula the energy exchanged between the black body and the incident or emitted light could not have any value, it could only be multiple of a constant that is now called \emph{Planck's constant}.
This discretized quantity absorbed or emitted he called \emph{elementary quantum of action} and the process of discretization was later called \emph{quantization}.
However, it was not until the work of Einstein in 1905 that people fully realized the implications of quantization.
Actually, even Einstein did not fully understand Planck's work, as he says in \cite[On the Theory of Light Production and Light Absorption]{einstein1989}:
\begin{quote}
  At that time it seemed to me that in a certain respect Planck's theory of radiation constituted a counterpart to my work. New considerations (...) showed me, however, that the theoretical foundation on which Mr. Planck's radiation theory is based differs from the one that would emerge from Maxwell's theory and the theory of electrons, precisely because Planck's theory makes implicit use of the aforementioned hypothesis of light quanta.
\end{quote}
So, in 1905 Einstein introduced new ideas of quantization that paved the way of quantum theory because (1) experiments proved that his hypothesis of quantization were correct, and (2) it explained better than the other proposals phenomena such as photoluminescence, the photoelectric effect and the ionization of gases by ultraviolet light.
What Einstein did, specifically? He quantized light itself, not just the energy exchanged, as did Planck.
To explain the photoelectric effect,\footnote{The photoelectric effect is the observation that electrons eject from a metal plate when light hits the plate and that the energy of the ejected electrons is not proportional to the intensity of the incident light, as Classical Mechanics explained, but to the frequency.} for example, he says in \cite[On a Heuristic Point of View Concerning the Production and Transformation of Light]{einstein1989}:
\begin{quote}
  According to the conception that the exciting light consists of energy quanta of energy $(R/N)\beta\nu$, the production of cathode rays [\ie, streams of electrons] by light can be conceived in the following way. The body's surface layer is penetrated by energy quanta whose energy is converted at least partially to kinetic energy of electrons. The simplest possibility is that a light quantum transfers its entire energy to a single electron; we will assume that this can occur. However, we will not exclude the possibility that the electrons absorb only a part of the energy of the light quanta.
\end{quote}
So, what he called \emph{light quantum} later became known as \emph{photon} and this paper made people realize that this new viewpoint that considers light as made of particles was completely different from the physical theories known back then.

To conclude this brief historical overview, we have to talk about \emph{wave-particle duality}.
Before Einstein, some experiments showed that light behaved as waves and Maxwell's equations explained this behavior quite well.
However, other experiments required Einstein's proposal that light is made of particles and this is one of the reasons why Einstein's idea of quantizing light was revolutionary (Einstein was awarded the Nobel Prize for this idea): the theories describing particles and waves were completely different.
As Einstein says in \cite{einstein1938}:
\begin{quote}
  It seems as though we must use sometimes the one theory and sometimes the other, while at times we may use either. We are faced with a new kind of difficulty. We have two contradictory pictures of reality; separately neither of them fully explains the phenomena of light, but together they do.
\end{quote}
Now, despite the awkwardness of this contradictory behavior, Einstein came up with some formulas to describe it and in 1924 de Broglie went even further to formulate the \emph{de Broglie hypothesis}, which asserts that not just light, but actually all matter behave as a wave and as a particle at the same time.
This aspect of nature, after being verified experimentally by Millikan \cite{millikan1914} in the case of photons and by the \emph{Davisson--Germer experiment} in the case of electrons, became known as the wave-particle duality and it is this duality that motivates a very important mathematical object behind Quantum Mechanics.

\section{Theory} \label{qm_theory}

\subsection{Axioms}

For our purposes, all we need to understand Quantum Mechanics is the Dirac--von Neumann axioms:
\begin{description} \label{axioms}

\item[First Axiom.] To every quantum system there is a complex Hilbert space associated, which we will usually denote by $H$ or $(H, \braket{\cdot}{\cdot})$ to emphasize the inner product.
  The projective space of $H$, which we will denote by $\bp{H}$, is called the \emph{state space}\index{space!state} of the system.

\item[Second Axiom.] The \emph{composite}\index{system!composite} of two quantum systems is described by the completion of the tensor product of the respective Hilbert spaces.
  In other words, if we have two quantum systems with associated Hilbert spaces $\qty(H_A, \braket{\cdot}{\cdot}_A)$ and $\qty(H_B, \braket{\cdot}{\cdot}_B)$, then the space associated to the composite system is $H_A \wten H_B$ with the inner product given by the multiplication of the inner products, \ie, $\braket{\phi_A \otimes \phi_B}{\psi_A \otimes \psi_B}_{AB} \ceq \braket{\phi_A}{\psi_A}_A\braket{\phi_B}{\psi_B}_B$.

\item[Third Axiom.] The \emph{states}\index{state} of a quantum system (associated to the Hilbert space) $H$ are the elements of the projective space of $H$.
  We will denote states by \emph{kets}, \ie, given $\psi \in H$, the state associated to $\psi$ is $\ket{\psi} \in \bp{H}$.
  In some contexts they are also called \emph{pure states}.

\item[Fourth Axiom.] The \emph{observables}\index{observable} of a quantum system $H$ are the self-adjoint operators of $H$.
  We will denote observables using capital letters with circumflex on top such as $\what{O}$.

\item[Fifth Axiom.] Given a quantum system $H$, a state $\ket{\psi}$ and an observable $\what{O}$, the \emph{expected value}\index{expected value} or \emph{expectation value} of this observable in the state $\ket{\psi}$ is given by the number $\frac{\mel{\psi}{\what{O}}{\psi}}{\braket{\psi}{\psi}}$.

\end{description}

\begin{obs}
  Before we move forward to explain the axioms above and give some examples, it should be said that we will not provide the most rigorous or broad formulation of Quantum Mechanics in this chapter.
  If the reader is interested in such a formulation, some good references are \cite{hall2013, takhtajan2008}.
  So, that means we will not dive in the technical aspects of non-separable Hilbert spaces, unbounded operators and the measure-theoretical description of how to compute the value of an observable in a certain state because in practice we will only work with finite-dimensional spaces, the time-independent Schrödinger equation and pure states.
  That means the axioms listed above are enough for our purposes.
  It is also worth mentioning that the word \emph{postulate} is more commonly used in the literature, but here we will use \emph{axiom} instead.
\end{obs}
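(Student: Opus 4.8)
This statement is not a mathematical proposition but a remark delimiting the scope of the treatment and fixing terminology, so there is nothing to prove in the usual sense. The plan is instead to justify its one substantive assertion --- that the five axioms listed above suffice for our purposes --- and to observe that the terminological choice needs no argument at all, ``axiom'' and ``postulate'' being adopted here as synonyms.

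The forward-looking justification would proceed by showing that each \emph{omitted} technicality becomes vacuous once attention is restricted to finite-dimensional Hilbert spaces, the time-independent Schr\"odinger equation, and pure states. First I would recall that a finite-dimensional complex Hilbert space is automatically separable, since it possesses a finite orthonormal basis; this disposes of the non-separability caveat at once. Next I would invoke the finite-dimensional spectral theorem: every self-adjoint operator $\what{O}$ on such a space is bounded and decomposes as a \emph{finite} sum $\what{O} = \sum_i \lambda_i P_i$ of real eigenvalues times orthogonal eigenprojections. This is precisely the point where the measure-theoretic apparatus required in infinite dimensions --- projection-valued measures and the associated functional calculus --- collapses to a finite sum, so that the Fifth Axiom's expectation value $\frac{\mel{\psi}{\what{O}}{\psi}}{\braket{\psi}{\psi}}$ is well-defined and finite for every nonzero $\psi$, with no domain or convergence subtleties to address.

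It would then remain to argue that confining ourselves to pure states and to the time-independent equation is harmless for this application. For pure states the point is immediate: the Hartree--Fock problem to be formulated later in this chapter is a variational minimization of an energy functional over individual state vectors (Slater determinants), so density matrices and mixed states never enter. For the time-independent equation, I would note that energy minimization at fixed nuclei is a stationary problem, so no temporal evolution --- and hence no unbounded generator --- is needed. The only genuinely delicate step, and the one I expect to be the main obstacle, is making precise the phrase ``enough for our purposes'': strictly speaking this can be certified only \emph{a posteriori}, once the Hartree--Fock functional has been written down and seen to depend solely on finite-dimensional data. The present observation is therefore best read as a promissory note to the reader rather than as a self-contained claim, and that is exactly why it appears as a remark and not as a proposition.
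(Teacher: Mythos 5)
You correctly identify that this is a scoping remark rather than a mathematical proposition, which matches the paper exactly: the paper offers no proof, and its later development (the restriction to a finite-dimensional subspace $W \subset L^2(\mathbb{R}^3)$ in the Practice section, the variational minimization over Slater determinants, and Theorem on the existence of a Hartree--Fock minimum) is precisely the \emph{a posteriori} vindication you describe. Your supporting sketch --- finite-dimensional spaces are separable, the finite-dimensional spectral theorem makes all operators bounded and collapses the measure-theoretic functional calculus to a finite sum, and pure states with the time-independent equation suffice for a stationary energy-minimization problem --- is sound and consistent with the paper's stated reasons for the simplification.
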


Now let us explain the axioms.
By \emph{quantum system}, one usually means a collection of molecules, atoms, subatomic particles, antiparticles, quasiparticles, spin etc.
\emph{Composite systems} are also examples of quantum systems and we can see molecules as a composite system of atoms, for example.
A good rule of thumb for what can be considered a quantum system is: if the object is small, then some quantum effects should be taken into account in the interactions of this object with other objects.
A rationale for this is that in real life we can only observe an object by letting it interact with other objects and these interactions always disturbs the object being studied.
So, if the disturbance caused by observations can be neglected, the object is considered big, otherwise it is small.
This phenomenological assumption was used by Dirac in \cite{dirac1958} to explain what are the objects of study of Quantum Mechanics.
In practice it is hard to tell when something is small or big and experimental results should be the ultimate guide, which means that if quantum effects are not being taken into account in the description of a phenomenon and the results obtained are not good, then quantum considerations may help.
Moving forward, by \emph{state} we roughly mean the most accurate knowledge we have of how a quantum system was prepared.
Preparation here has the same meaning used in experiments, specially because quantum systems are so susceptible to changes that a preparation of the system is usually required to measure a property of it.
To conclude, we theoretically compute properties of a system using \emph{observables} and the last axiom.
Essentially, we associate a quantity we want to measure with a self-adjoint operator and then use the last axiom to obtain the value represented by the state of the quantum system with respect to the observable we want to measure.
Examples of observables are the momentum, the position and the energy of an object.
We will define them later.

\begin{obs}
  The last axiom explains why a state is an element of the projective space instead of being an element of the Hilbert space itself.
  Indeed, if we have an observable $\what{O}$ and a system in the state $\ket{\psi}$ or $\ket{\lambda\psi}$, being $\lambda \in \bc^\x$, then both states have the same expected value:
  \begin{equation}
    \frac{
      \mel{\lambda\psi}{\what{O}}{\lambda\psi}
    }{
      \braket{\lambda\psi}{\lambda\psi}
    }
    = \frac{
      \ol{\lambda}{\lambda}\mel{\psi}{\what{O}}{\psi}
    }{
      \ol{\lambda}{\lambda}\braket{\psi}{\psi}
    }
    = \frac{\mel{\psi}{\what{O}}{\psi}}{\braket{\psi}{\psi}}.
  \end{equation}
\end{obs}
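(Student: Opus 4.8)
The plan is to reduce everything to the sesquilinearity of the inner product on the Hilbert space $H$. Recall that in the bra-ket convention fixed by the First Axiom the inner product $\braket{\cdot}{\cdot}$ is antilinear in the first argument (the bra) and linear in the second (the ket), so rescaling the state vector $\psi$ by $\lambda \in \bc^\x$ pulls a factor $\ol{\lambda}$ out of every bra and a factor $\lambda$ out of every ket. The whole claim is then just the statement that the factors contributed by the bra and the ket combine into the common scalar $\ol{\lambda}\lambda = |\lambda|^2$ in both numerator and denominator, and hence cancel.

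First I would expand the denominator: antilinearity in the first slot and linearity in the second give $\braket{\lambda\psi}{\lambda\psi} = \ol{\lambda}\lambda\braket{\psi}{\psi}$, so the normalization picks up exactly $|\lambda|^2$. Next I would expand the numerator. Since the observable $\what{O}$ is in particular a linear operator, $\what{O}(\lambda\psi) = \lambda\what{O}\psi$, so the ket contributes a factor $\lambda$; the bra $\bra{\lambda\psi}$ then contributes a factor $\ol{\lambda}$, yielding $\mel{\lambda\psi}{\what{O}}{\lambda\psi} = \ol{\lambda}\lambda\mel{\psi}{\what{O}}{\psi}$. These two computations are precisely what produces the middle expression in the displayed equation.

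Finally I would cancel the common factor $\ol{\lambda}\lambda$, which is legitimate exactly because $\lambda \in \bc^\x$ forces $|\lambda|^2 \neq 0$, arriving at the rightmost expression. The conclusion to emphasize is conceptual rather than computational: the expected value depends only on the ray $\qty{\lambda\psi : \lambda \in \bc^\x}$ determined by $\psi$, i.e.\ only on the corresponding point of $\bp{H}$, which is what justifies taking the state space to be the projective space in the Third Axiom.

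I do not expect any genuine obstacle here; the argument is essentially bookkeeping. The one point requiring care is the placement of the complex conjugation, namely which slot of the inner product is antilinear, since swapping the convention would move $\ol{\lambda}$ from the bra to the ket without changing the final result. Once the convention is fixed, the cancellation is immediate, and the same computation works verbatim for any nonzero scalar and any linear operator, self-adjointness of $\what{O}$ playing no role in this particular invariance.
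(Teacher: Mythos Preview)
Your argument is correct and matches the paper's exactly: the paper's entire proof is the single displayed chain of equalities, and you have simply spelled out the sesquilinearity that justifies each step. There is nothing to add or correct.
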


\begin{obs} \label{obs_normalized}
  We will usually ignore the equivalence class notation of a state and just write it as a ket because in practice we only consider normalized states, \ie, the representative of the class is a normalized vector.
  This consideration gives us a natural representative of the class up to a constant $e^{i\theta}$, $\theta \in [0,2\pi)$.
  This constant is called \emph{phase factor} and it does not influence the computation of the expected value, as we just showed.
\end{obs}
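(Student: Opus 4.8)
The statement bundles together two claims, and the plan is to treat them separately. The first, and the only one demanding a fresh argument, is that once we restrict attention to \emph{normalized} vectors, a state (that is, a point of $\bp{H}$) is represented uniquely up to a unimodular factor $e^{i\theta}$. The second, that such a factor leaves the expected value unchanged, is already contained in the preceding observation, so I would simply specialize that computation rather than redo it.

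For the first claim, I would begin by recalling the equivalence relation defining $\bp{H}$: two nonzero vectors determine the same state if, and only if, one is a nonzero scalar multiple of the other. Fix a state and suppose $\phi_1$ and $\phi_2$ are both normalized representatives of it, \ie, $\norm{\phi_1} = \norm{\phi_2} = 1$. Since they lie in the same class, there is $\lambda \in \bc^\x$ with $\phi_2 = \lambda\phi_1$. Taking norms and using the homogeneity of the norm gives
\begin{equation}
  1 = \norm{\phi_2} = \norm{\lambda\phi_1} = |\lambda|\,\norm{\phi_1} = |\lambda|,
\end{equation}
so $|\lambda| = 1$ and therefore $\lambda = e^{i\theta}$ for a unique $\theta \in [0, 2\pi)$. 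This shows that normalization pins the representative down exactly up to the phase factor, as asserted. I would also remark that normalized representatives always exist: any nonzero $\psi$ yields one via $\psi/\norm{\psi}$, and elements of $\bp{H}$ arise from nonzero vectors by definition, so the normalized class is nonempty.

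For the second claim I would invoke the computation in the preceding observation with the choice $\lambda = e^{i\theta}$; since the inner product is conjugate-linear in its first slot, the factor appears as $\ol{\lambda}\lambda = \ol{e^{i\theta}}e^{i\theta} = 1$ and cancels identically between numerator and denominator of $\frac{\mel{\psi}{\what{O}}{\psi}}{\braket{\psi}{\psi}}$, leaving the expected value unchanged. There is no genuine obstacle here: the argument is a routine verification. The only points demanding any care are the (harmless) existence of normalized representatives and the fact that it is the conjugate $\ol{\lambda}$, not $\lambda$, that multiplies the bra — which is precisely what makes the modulus-one factor cancel.
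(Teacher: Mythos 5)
Your proposal is correct and follows essentially the same route as the paper: the invariance of the expected value is exactly the computation in the preceding observation specialized to $\lambda = e^{i\theta}$, which is what the paper's ``as we just showed'' refers to. Your explicit verification that two normalized representatives differ by a unimodular scalar (via $1 = \norm{\lambda\phi_1} = \abs{\lambda}$) merely spells out a step the paper leaves implicit, and does so correctly.
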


Now let us give a concrete description of the objects defined in the axioms.
In the historical subsection, we said that every object in the universe behaves as a particle and as a wave at the same time.
In mathematical terms, that means we can describe every object in the universe using the following Hilbert space:
\begin{equation}
  L^2(\br^n) \ceq \qty{\psi:\br^n \to \bc : \intl_{\br^n} \abs{\psi(\vb{r})}^2 \dd\vb{r} < \infty}.
\end{equation}
This space encodes the wave-particle duality of an object in the following way: the wave behavior comes from the fact that the time evolution of a state $\ket{\psi(\vb{r}, t)}$ representing the object is described by the Schrödinger equation, which is an equation whose solutions have wave characteristics, and the particle behavior is given by the fact that the probability of finding the object described by the state $\ket{\psi(\vb{r})}$ in some region of space $U \subset \br^3$ is $\int_U \abs{\psi(\vb{r})}^2 \dd\vb{r}$.
In other words, objects evolve in time as waves and at the same time can be localized (probabilistically) in space as a particle.
This is a very broad formulation, which means that in practice this may not be the exact space in which the observables act on and we will see a counterexample in a moment, but it is a representative formulation, at least for our purposes, because this space is the role model we will always keep in mind in the present work.

There is another important Hilbert space used in practice, which is $\bc^m$.
This is the space that describes the spin of objects and when one wants to describe an object and its spin at the same time, the Hilbert space that should be considered is $L^2(\br^n) \otimes \bc^m$, \ie, it is a composite system.
So, now that we know the two main state spaces we will use, let us talk about wave functions and observables in order to compute things.

In Observation \ref{obs_normalized} we said we will consider only normalized vectors to be representatives of a state and there is a good reason for that, as we will see.
First recall that a state $\ket{\psi}$ is an abstract element of $\bp{H}$.
However, in practice we want a quantitative description of the state and we can obtain this description in the following way: suppose we are interested in describing the state $\ket{\psi}$ with respect to some observables $\what{O}_1, \ldots, \what{O}_n$.
Suppose also that there exists a common orthonormal eigenbasis $\qty{\psi_i}_{i \in I}$ for these observables, \ie, $\what{O}_k\psi_i = \lambda_{k,i}\psi_i$ for every $i \in I$ and $k \in \qty{1, \ldots, n}$.
Then notice that we can write any representative of the state $\ket{\psi}$ as a linear combination given by: $\psi = \sum_{i \in I} \wtil{\psi}(i)\psi_i$.
However, since the basis is assumed to be orthonormal, we have $\wtil{\psi}(i) = \braket{\psi_i}{\psi}$ and from this observation we obtain the following definition:
\begin{defi} \label{wave_function}
  The function $\wtil{\psi}:I \to \bc$ defined by $\wtil{\psi}(i) \ceq \braket{\psi_i}{\psi}$ is called \emph{wave function (of the state $\ket{\psi}$ with respect to the observables $\what{O}_1, \ldots, \what{O}_n$)}\index{wave function}.
  In other words, the wave function is a choice of coordinates for a state with respect to some observables that allows us to move quantitatively from $\bp{H}$ to $H$.
\end{defi}
There is also a probabilistic interpretation behind the wave function, which is the following: the probability of measuring the value $\lambda_{k,i}$ when we compute the expected value of the state $\ket{\psi}$ with respect to the observable $\what{O}_k$ is $\abs{\wtil{\psi}(i)}^2$.
Now, since, the sum of the probabilities should be $1$ and since
\begin{equation}
  \braket{\psi}{\psi}
  = \sum_{i, j \in I} \ol{\wtil{\psi}(i)}\wtil{\psi}(j)\ub{\braket{\psi_i}{\psi_j}}_{\delta_{ij}}
  = \sum_{i \in I} \abs{\wtil{\psi}(i)}^2,
\end{equation}
we have a motivation to consider only normalized states.
We will also give a physical motivation in a moment.
This probabilistic interpretation comes from experiments, \ie, it was observed that if we have a system in the state $\ket{\psi}$ and we want to measure some property of it represented by an observable $\what{O}$, then the only values obtained by our measuring apparatus are the eigenvalues of $\what{O}$ (which are real values since $\what{O}$ is self-adjoint).
So, immediately after the measurement, the initial state $\ket{\psi}$ of the system will change to one of the eigenvectors of $\what{O}$, say $\ket{\psi_i}$, and this eigenvector is associated to the the eigenvalue measured by the apparatus, say $\lambda_i$.
This phenomenon is called \emph{wave function collapse} and observe that it is compatible with the last axiom because, if we interpret the expected value as the weighted average of the possible outcomes with weights given by their probabilities, then:
\begin{equation}
  \mel{\psi}{\what{O}}{\psi}
  = \sum_{i, j \in I} \ol{\psi(i)}\psi(j)\mel{\psi_i}{\what{O}}{\psi_j}
  = \sum_{i, j \in I}
  \ol{\psi(i)}\psi(j)\lambda_j\ub{\braket{\psi_i}{\psi_j}}_{\delta_{ij}}
  = \sum_{i \in I} \ub{\abs{\psi(i)}^2}_{\text{prob.}}\ob{\lambda_i}^{\text{out.}}.
\end{equation}
Another interpretation of the expected value is that if we always prepare our system in the same (normalized) state $\ket{\psi}$ and always measure the same property of it described by the observable $\what{O}$, then the expected value after a large number of measurements is $\mel{\psi}{\what{O}}{\psi}$.
However, and this is very important to the theory, in each measurement we will only obtain as a result one of the eigenvalues of $\what{O}$.
The mathematical formulation that explains this phenomenon of measuring only discrete values is one of the main achievements of Quantum Mechanics and, as an example, the quantization of the energy can be explained by the fact that we only measure the discrete spectrum of the observable associated to the energy.

\begin{exm}
  The goal of this example is to give an interpretation for the correspondence between the representative $\psi \in L^2(\br^n)$ of a state $\ket{\psi}$ and its wave function with respect to the position observables.
  This example will be somewhat technical and the reader can skip it, but keep in mind that from now on wave functions will always be considered with respect to the position observables.
  In other words, we will always assume that the function $\psi(\vb{r})$ encodes the information about an object, for example an electron, with respect to the position $\vb{r} \in \br^n$ of this object in space.
  Also, note that this gives us a physical interpretation for considering only normalized vectors, since now $\braket{\psi}{\psi}$ represents the probability of finding the object somewhere in space, which should be $1$.
  The last disclaimer is that we will not prove the claims made here, the goal is just to explain how wave functions are usually interpreted.
  The reader interested in the formal aspects of this example and the next can take a look at \cite{madrid2001, hall2013}.

  The \emph{$i$-th position operator} $\what{Q}_i$ is defined as $\psi(\vb{r}) \mapsto r_i\psi(\vb{r})$.
  However, there are two problems with this operator: (1) it is not defined in the whole $L^2(\br^n)$ because the function $r_i\psi(\vb{r})$ may not be in $L^2(\br^n)$; and (2) it does not have eigenvectors in $L^2(\br^n)$.
  The first problem is usually ignored in practice because it is possible to find functions that are in the domain of the operator, as we will see in a moment.
  However, observe that this is an example of what we said earlier: the observable acts in a (dense) subspace of $L^2(\br^n)$ and not on the whole space.
  It is also worth noticing that, from a theoretical perspective, for every $\psi \in L^2(\br^n)$ the function $\vb{r} \mapsto \abs{\psi(\vb{r})}^2$ is a well-defined probability density for the position of the object, and what the previous comment highlights is that the position of a state described by a wave function that is not on the domain of $\what{Q}_i$ cannot be measured in practice.
  Another important observation is that the domain of $\what{Q}_i$ is dense, which means the transpose and the adjoint of this operator exists.
  We will need the transpose for the second problem, but it is the existence of the adjoint that assures us $\what{Q}_i$ is a true observable.
  Now let us deal with the second problem.
  Observe that, if $\what{Q}\psi(\vb{r}) \ceq r_i\psi(\vb{r}) = \lambda\psi(\vb{r})$, then $\psi(\vb{r})(r_i - \lambda) = 0$ for every $\vb{r} \in \br^n$.
  But this only happens in $L^2(\br^n)$ if $\psi \equiv 0$ because $r_i$ varies with $\vb{r}$ and $\lambda$ is fixed (remember that $L^2(\br^n)$ is actually a quotient and the characteristic functions that are $0$ everywhere except when $r_i = \lambda$ are equivalent to $0$ because subspaces with dimension less than $n$ have measure $0$).
  So, since the $0$ function is never an eigenvector by definition and even if it was, it would not have any physical meaning, we discard this solution.
  Now, to overcome the problem of $\what{Q}_i$ not having eigenvectors, we use the fact that $L^2(\br^n)$ is (anti-)isometric\footnote{The map is antilinear, not linear.} to its dual via $\psi \mapsto \bra{\psi}$ and that the Schwartz space\footnote{We do not provide a definition for this space in this work, but, intuitively, it is a subspace of $L^2(\br^n)$ made of functions whose derivatives are rapidly decreasing. Its dual is called space of tempered distributions.} is contained in the domain of $\what{Q}_i$.
  By doing this, we can consider the transpose $\what{Q}_i^t$ of the position operator acting in the space of tempered distributions and in this space we have Dirac delta functions $\delta_{\vb{r}}$, \ie, the linear functional defined by $\phi \mapsto \phi(\vb{r})$.
  Now, the transpose $\what{Q}_i^t$ acts on tempered distributions in the following way: $\bra{\psi} \mapsto r_i\bra{\psi}$, being this a product in the distributional sense, which is defined by: $\qty(r_i\bra{\psi})(\phi(\vb{r})) \ceq \bra{\psi}(r_i\phi(\vb{r})) \ceq \braket{\psi}{r_i\phi(\vb{r})}$.
  Now observe that Dirac delta functions are eigenvectors of the transpose $\what{Q}_i^t$.
  Indeed, for any $\lambda \in \br^n$ and $\psi \in L^2(\br^n)$, we have
  \begin{equation}
    \qty(\what{Q}_i^t\delta_{\lambda})(\psi(\vb{r}))
    = \qty(r_i\delta_{\lambda})(\psi(\vb{r}))
    = \delta_{\lambda}(r_i\psi(\vb{r}))
    = \lambda_i\psi(\lambda)
    = \qty(\lambda_i\delta_{\lambda})(\psi(\vb{r})),
  \end{equation}
  and this proves that $\delta_{\lambda}$ is an eigenvector with eigenvalue $\lambda_i$.
  We can now interpret wave functions with respect to the position in the following way: $\wtil{\psi}(\vb{r}) \ceq \delta_{\vb{r}}(\psi) = \psi(\vb{r})$.
  In other words, the wave function $\wtil{\psi}$ coincides with the representative of the state $\ket{\psi}$.
  Actually, in the literature it is common to denote a linear functional $\bra{\psi}$ applied to a function $\phi$ as $\braket{\psi}{\phi}$ instead of $\psi(\phi)$, and this allows us to write the wave function as it was in Definition \ref{wave_function}: $\psi(\vb{r}) = \braket{\delta_{\vb{r}}}{\psi}$.
  This notation is a little misleading, since there is no natural inner product in the space of tempered distributions, but the reader will probably find wave functions written like this in the literature.
  Actually, most of the time $\delta_{\vb{r}}$ is denoted by $\bra{\vb{r}}$ and then $\psi(\vb{r})$ becomes $\braket{\vb{r}}{\psi}$.
  To conclude this example, observe that, despite the fact that the eigenvectors of the position operator are not in $L^2(\br^n)$, the states are (or at least they are in a dense subspace) and we can keep $L^2(\br^n)$ in mind instead of tempered distributions.
\end{exm}

\begin{exm}
  This example is here just for the sake of completeness and to provide more examples to the reader not used to Quantum Mechanics.
  It is not an important example for the present work, though.
  Once again, the rigorous formulation can be found in \cite{madrid2001, hall2013}.

  The \emph{momentum operator} for an object moving in $\br^n$ is defined as
  \begin{equation}
    \what{P}\psi = -i\hbar\grad{\psi} = -i\hbar\sum_{i=1}^n \pdv{\psi}{x_i},
  \end{equation}
  being $\hbar = \frac{h}{2\pi}$ the \emph{reduced Planck's constant} and $h$ the \emph{Planck's constant}.
  Observe that we have the same problem as before: the domain is not $L^2(\br^n)$ because a function whose squared norm is integrable is not necessarily differentiable and the eigenvectors of this operator are not in $L^2(\br^n)$, as we will see.
  In practice, the problems are handled in the same way as the previous example.
  Now observe that, given a vector $\vb{p} \in \br^n$, the following functions, which are usually called \emph{plane waves}, are eigenvectors of $\what{P}$:
  \begin{equation}
    \psi(\vb{r}) = e^{2\pi{i}(p_1r_1 + \ldots + p_nr_n)} = e^{2\pi{i}\vb{p} \cdot \vb{r}}.
  \end{equation}
  Indeed,
  \begin{equation}
    \what{P}\psi(\vb{r})
    = -i\hbar\sum_{i=1}^n \pdv{\psi(\vb{r})}{x_i}
    = -i\hbar\sum_{i=1}^n \pdv{e^{2\pi{i}\vb{p} \cdot \vb{r}}}{x_i}
    = 2\pi\hbar\sum_{i=1}^n p_ie^{2\pi{i}\vb{p} \cdot \vb{r}}
    = \sum_{i=1}^n hp_i\psi(\vb{r}),
  \end{equation}
  which means the eigenvalue is $h(p_1 + \ldots + p_n)$.
  Now, the wave function of the state $\ket{\psi}$ with respect to the momentum operator is the Fourier transform of $\psi$:
  \begin{equation}
    \wtil{\psi}(\vb{p}) \ceq \braket{e^{2\pi{i}\vb{p} \cdot \vb{r}}}{\psi}
    = \intl_{\br^n} \ol{e^{2\pi{i}\vb{p} \cdot \vb{r}}}\psi(\vb{r})\dd\vb{r}
    = \what{\psi}(\vb{p}).
  \end{equation}
  To conclude, Plancherel Theorem guarantees that $\what{\psi} \in L^2(\br^n)$, which means that, despite the fact that plane waves are not in $L^2(\br^n)$, the Fourier transform of a wave function $\psi$ is and we can once again keep $L^2(\br^n)$ as our role model of Hilbert space.
\end{exm}

Now let us finally move to the main example of this work.

\subsection{Molecules (Electronic Structure)} \label{molecules}

As we already know, the Hilbert space associated to molecules should be $L^2(\br^n)$, but we will refine this description because, ultimately, the Hilbert space depends on the physical phenomena one is interested in.
So, for example, if one wishes to see molecules as an object by itself located somewhere in space, the Hilbert space that should be considered is $L^2(\br^3)$.
However, sometimes the molecule is subject to constraints, such as being in an oscillatory motion, trapped in a box or for some reason it is contained in a plane, and then one should consider $L^2(\br)$ or $L^2(\br^2)$ instead.
For our purposes, though, we need another kind of refinement: we will see molecules as being composed of atoms and atoms as being composed of electrons and nuclei.
Now a question may arise: should not we consider the nuclei itself as a composite system made of elementary particles, \ie, particles that are not made of other particles?
So, for example, atoms are composed of electrons, protons and neutrons, but then protons and neutrons are composed of quarks and so on and so forth.
The answer is affirmative, this is the most accurate representation we can have of nature, but, philosophical questions apart, we want to compute and predict phenomena that happens in real life and in order to do that simplifications should be made if the theoretical results obtained agrees with the experimental ones.
Consequently, to us, a good description of a molecule with $M$ nuclei and $N$ electrons is $\what{\bigotimes}_{i=1}^{M + N} L^2(\br^3) \cong L^2(\br^{3(M + N)})$ (recall the second axiom in \ref{axioms}).
With that said, the working scientist is not that interested in the Hilbert space describing a composite system, what matters the most in real life are the interactions between particles and how one can describe and compute them using observables.
Now, remember, we want to compute the energy of a molecule and then find its minimum.
How can we do this?
A standard approach is quantizing the energy observable used in Classical Mechanics, \ie, transforming this classical observable into a self-adjoint operator.\footnote{In Classical Mechanics observables are real-valued smooth functions defined on the phase space $T^*M$ of a configuration space $M$, being $M$ a smooth manifold and $T^*M$ its cotangent bundle.}
In Classical Mechanics, the energy of a system is usually identified with the Hamiltonian of the system, which, to our purposes, can be defined as the sum of the kinetic and potential energies of the system (there are exceptions to this definition of energy but we will not consider them here).
So, for a composite system made of $n$ interacting particles, the formula for the energy/Hamiltonian is
\begin{equation}
  \mc{H}(\vb{r}_1(t), \ldots, \vb{r}_n(t)) \ceq \sum_{i=1}^n
  \frac{1}{2}m_i\abs{\dot{\vb{r}}_i(t)}^2 + V(\vb{r}_1(t), \ldots, \vb{r}_n(t)),
\end{equation}
being $\vb{r}_i(t)$ and $\frac{1}{2}m_i\abs{\dot{\vb{r}}_i(t)}^2$ the trajectory and the kinetic energy of the $i$-th particle, while $V(\vb{r}_1(t), \ldots, \vb{r}_n(t))$ is the potential energy of the system.
The example of potential energy we will consider is given by Coulomb's law:
\begin{equation}
  V(\vb{r}_1(t), \ldots, \vb{r}_n(t)) \ceq \sum_{1 \leq i < j \leq n}
  \frac{q_iq_j}{4\pi\eps_0}\frac{
    \what{\vb{r}}_{ij}
  }{
    \abs{\vb{r}_i(t) - \vb{r}_j(t)}^2
  },
\end{equation}
being $\what{\vb{r}}_{ij} \ceq \frac{{\vb{r}_i(t) - \vb{r}_j(t)}}{\abs{\vb{r}_i(t) - \vb{r}_j(t)}}$, $\eps_0$ the \emph{vacuum permittivity} and $q_i$ the charge of the $i$-th particle.
Now enters quantization.
The above Hamiltonian becomes the following operator in the quantum realm:
\begin{multline} \label{hamiltonian}
  \what{H}\psi \ceq
  - \sum_{i=1}^N \frac{\hbar^2}{2m_e}\nabla_i^2\psi
  - \sum_{j=1}^M \frac{\hbar^2}{2m_j}\nabla_j^2\psi
  - \sum_{i=1}^N \sum_{j=1}^M
  \frac{Z_jq_e^2}{4\pi\eps_0}\frac{\psi}{\abs{\vb{r}_i - \vb{R}_j}} \\
  + \sum_{1 \leq i < k \leq N}
  \frac{q_e^2}{4\pi\eps_0}\frac{\psi}{\abs{\vb{r}_i - \vb{r}_k}}
  + \sum_{1 \leq j < k \leq M}
  \frac{Z_jZ_kq_e^2}{4\pi\eps_0}\frac{\psi}{\abs{\vb{R}_j - \vb{R}_k}}.
\end{multline}
Let us explain the details: $\hbar$ is, again, the \emph{reduced Planck's constant} and its value is $\frac{h}{2\pi}$, being $h$ the \emph{Planck's constant}.
The mass of the $j$-th nuclei is $m_j$ and $m_e$ is the mass of electron.
The $\nabla^2$ symbol represents the \emph{Laplacian} of a function and in this case it is given by
\begin{equation}
  \nabla_i^2\psi
  \ceq \pdv[2]{\re\psi}{(\vb{r}_i)_x}
  + i\pdv[2]{\im\psi}{(\vb{r}_i)_x}
  + \pdv[2]{\re\psi}{(\vb{r}_i)_y}
  + i\pdv[2]{\im\psi}{(\vb{r}_i)_y}
  + \pdv[2]{\re\psi}{(\vb{r}_i)_z}
  + i\pdv[2]{\im\psi}{(\vb{r}_i)_z}
\end{equation}
and
\begin{equation}
  \nabla_j^2\psi
  \ceq \pdv[2]{\re\psi}{(\vb{R}_j)_x}
  + i\pdv[2]{\im\psi}{(\vb{R}_j)_x}
  + \pdv[2]{\re\psi}{(\vb{R}_j)_y}
  + i\pdv[2]{\im\psi}{(\vb{R}_j)_y}
  + \pdv[2]{\re\psi}{(\vb{R}_j)_z}
  + i\pdv[2]{\im\psi}{(\vb{R}_j)_z}.
\end{equation}
Recall that $\vb{r}_i, \vb{R}_j \in \br^3$ and that $\psi$ is a complex-valued function.
Finally, $\eps_0$ is again the vacuum permittivity, $Z_j$ is the atomic number of the $j$-th nuclei, which means $Z_jq_e$ is its charge, and $-q_e$ is the charge of the electron.
Physically, this operator is capturing the fact that the energy of a molecule is given by the sum of the kinetic energy of the nuclei, the kinetic energy of the electrons, the Coulomb attraction between electrons and nuclei, the Coulomb repulsion between nuclei, and the Coulomb repulsion between electrons.
Notice how similar to the classical Hamiltonian it is, but in this case the Hamiltonian is time-independent, also known as stationary.
And how do we know this Hamiltonian is correct? Being pragmatic, because when one uses it to compute the energy of molecules, the results agrees with experiments.

OK, now that we know the Hamiltonian is the observable that computes the energy and recalling that we want to find the state with minimum energy, which is called \emph{ground state}, how can we proceed?
Using the last axiom, we have two ways of finding the ground state: looking for a state $\ket{\psi_0} \in \bp L^2(\br^{3(M + N)})$ such that
\begin{equation}
  \mel{\psi_0}{\what{H}}{\psi_0} \leq \mel{\psi}{\what{H}}{\psi}
\end{equation}
for every other state $\ket{\psi} \in \bp L^2(\br^{3(M + N)})$, or solving the equation
\begin{equation} \label{schrodinger}
  \what{H}\psi = E\psi
\end{equation}
and choosing an eigenvector associated to the lowest eigenvalue among the solutions.
The equation above is the famous \emph{time-independent Schrödinger equation}\index{Schrödinger equation} and we call its eigenvalues \emph{energy levels} and its eigenvectors \emph{stationary states}.
Also, the states that are not the ground state are called \emph{excited states}.
The time-dependent version that is responsible for the wave behavior of molecules as time evolves is
\begin{equation}
  i\hbar\pdv{t}\psi(t) = \what{H}\psi(t).
\end{equation}
Observe that the $1$-dimensional wave equation
\begin{equation}
  \pdv[2]{t}\psi(x, t) = \lambda^2\pdv[2]{x}\psi(x,t)
\end{equation}
is very similar to the time-dependent Schrödinger equation for a particle in a box moving in only one dimension and without potential energy:
\begin{equation}
  i\hbar\pdv{t}\psi(x, t) = -\frac{\hbar^2}{2m}\pdv[2]{x}\psi(x, t).
\end{equation}
However, as already said, we will not consider the time-dependent Schrödinger equation in the present work, so let us get back to the time-independent version.

\begin{obs}
  Notice that we are considering second-order partial derivatives in the Hamiltonian, which, again, means that the Hilbert space we should consider is not exactly $L^2(\br^{3(M + N)})$ but a subspace of it.
  Another relevant question is whether these equations have a solution or not, \ie, is there a ground state?
  In our case, as we will see in Section \ref{qm_practice}, a solution exist, so we do not have to worry.
  However, if the reader is interested in the mathematical aspects of this problem, in \cite[Chapter 3]{takhtajan2008} the author deals with it in a very detailed manner.
\end{obs}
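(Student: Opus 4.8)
The plan is to prove existence of a ground state in the setting this work actually adopts --- the finite-dimensional one of Section~\ref{qm_practice} --- and to explain why the full $L^2(\br^{3(M+N)})$ problem is instead deferred to \cite{takhtajan2008}. Once a finite basis set is fixed, the Hamiltonian $\what{H}$ of Equation~\ref{hamiltonian} is represented by a self-adjoint operator (a Hermitian matrix) on a finite-dimensional complex Hilbert space $H$, and by the Fifth Axiom a ground state is exactly a state $\ket{\psi_0}$ minimising the Rayleigh quotient $\frac{\mel{\psi}{\what{H}}{\psi}}{\braket{\psi}{\psi}}$ over $\bp{H}$.

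First I would use the scaling invariance of this quotient, noted in the observation right after the Fifth Axiom, to reduce the problem to minimising $\psi \mapsto \mel{\psi}{\what{H}}{\psi}$ over the normalised vectors $\braket{\psi}{\psi} = 1$. That set is the unit sphere of a finite-dimensional space, hence compact, and the quadratic form $\psi \mapsto \mel{\psi}{\what{H}}{\psi}$ is continuous, so by the extreme value theorem the minimum is attained at some normalised $\psi_0$; the corresponding state $\ket{\psi_0}$ is then a ground state. Equivalently, and closer to the eigenvalue formulation of Equation~\ref{schrodinger}, I would invoke the spectral theorem: $\what{H}$ being self-adjoint on a finite-dimensional space is diagonalisable with real spectrum, so it has a least eigenvalue $E_0$, and any associated eigenvector solves $\what{H}\psi = E_0\psi$ and realises the minimum. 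Either route makes existence automatic in finite dimensions, which is precisely the reassurance the statement gives.

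The real obstacle lies entirely in the untruncated $L^2(\br^{3(M+N)})$ problem, where $\what{H}$ is unbounded and only densely defined: its spectrum now has a continuous (essential) part, the unit sphere is no longer compact, and a bare infimum of the Rayleigh quotient need not be attained. Establishing a genuine ground state then requires showing that the bottom of the spectrum is an isolated eigenvalue lying strictly below the essential spectrum, which is delicate spectral theory of HVZ--Zhislin type. It is exactly this difficulty that the statement sidesteps by restricting to the finite-dimensional model and by referring the interested reader to \cite[Chapter 3]{takhtajan2008}.
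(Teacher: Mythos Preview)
Your proposal is correct, and the underlying mechanism --- continuity of the energy on a compact domain, hence attainment of the minimum (Theorem~\ref{compact_maxmin}) --- is exactly what the paper uses. The difference is in \emph{which} compact domain. You minimise the Rayleigh quotient over the unit sphere of the full finite-dimensional space $V = \bigwedge^N(W \otimes V_{1/2})$, i.e.\ you establish existence for the Full~CI problem on $\bp V$. The paper's forward reference is to Theorem~\ref{hf_welldefined}, which instead proves existence for the \emph{Hartree--Fock} problem: the energy is shown to be a degree-four polynomial in the coefficients $c_{i\mu}^{\gamma}$, hence continuous, and the minimum is taken over the compact manifold $\gr{\NA}{W_\alpha} \times \gr{\NB}{W_\beta}$ (compactness coming from Proposition~\ref{stiefel_compact} and Corollary~\ref{product_compact}). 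Your route is more elementary --- sphere compactness is immediate, and the spectral-theorem variant even gives the minimiser as an eigenvector --- while the paper's route is tailored to the constrained problem actually solved in Chapter~\ref{optimization}. Your discussion of why the untruncated $L^2$ problem requires genuine spectral theory is accurate and matches the spirit of the deferral to \cite{takhtajan2008}.
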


The question now is: how do we solve the (time-independent) Schrödinger equation?
Let us start with Equation \ref{schrodinger}.
Writing the Hamiltonian defined in \ref{hamiltonian} in atomic units, \ie, making every constant equal to $1$, the Schrödinger equation becomes
{\small\begin{equation}
  \qty(
    - \sum_{i=1}^N \frac{\nabla_i^2}{2}
    - \sum_{j=1}^M \frac{\nabla_j^2}{2m_j}
    - \sum_{i=1}^N \sum_{j=1}^M \frac{Z_j}{\abs{\vb{r}_i - \vb{R}_j}}
    + \sum_{1 \leq i < k \leq N} \frac{1}{\abs{\vb{r}_i - \vb{r}_k}}
    + \sum_{1 \leq j < k \leq M} \frac{Z_jZ_k}{\abs{\vb{R}_j - \vb{R}_k}}
  )\psi = E\psi.
\end{equation}}%
This is a partial differential equation in $3(M+N)$ variables and, to this day, we only know an analytical solution for the hydrogen atom, which indicates that this equation is pretty hard to solve.\footnote{There are analytical solutions to other systems such as the quantum harmonic oscillator, quantum pendulum, free particles etc, but not for atoms and molecules in general.}
A rigorous solution for the hydrogen atom can be found in \cite[Chapter 3]{takhtajan2008} and the standard solution can be found in \cite[Chapter 7]{mcquarrie2008}.
Let us then consider an approximation to make things easier.
Since the nuclei of molecules are way heavier than the electrons, an approximation that is usually quite good in practice is considering the nuclei of the atoms fixed, which means they do not have kinetic energy and the Coulomb repulsion between the nuclei is constant.
Now, notice that, if we have an operator $T$, then summing it with a multiple of the identity does not change its eigenvectors and the eigenvalues are just the sum of the eigenvalues of $T$ with the constant.
In other words:
\begin{equation}
  Tv = \lambda{v} \; \; \text{\tiff} \; (T + c\Id)v = (\lambda + c)v.
\end{equation}
Using this fact, we can ignore the constant given by nuclei repulsion when we consider the nuclei fixed because we can compute it separately and then sum to the solution we find for the rest of the Hamiltonian.
Considering this approximation, which is called \emph{Born--Oppenheimer approximation}\index{Born--Oppenheimer approximation}, the equation we have to solve now is:
\begin{equation} \label{bo_hamiltonian}
  \what{H}_{el}\psi
  \ceq \qty(
    - \sum_{i=1}^N \frac{\nabla_i^2}{2}
    - \sum_{i=1}^N \sum_{j=1}^M \frac{Z_j}{\abs{\vb{r}_i - \vb{R}_j}}
    + \sum_{1 \leq i < k \leq N} \frac{1}{\abs{\vb{r}_i - \vb{r}_k}}
  )\psi = E\psi.
\end{equation}
The Hamiltonian we are considering now is usually called \emph{electronic Hamiltonian}\index{hamiltonian!electronic}.
This equation is still pretty hard to solve and the only analytical solution known to it other than the hydrogen atom is H$_2^+$, \ie, two hydrogen bonded, but with just one electron orbiting the nuclei.
By the way, notice that in this approximation we reduced the number of variables in the equation to $3N$ variables since $\vb{R}_j$ is fixed.
However, since there is not a general method to solve this equation analytically other than the hydrogen atom or H$_2^+$, and since our goal is not to come up with such a method, we will consider the computational approach and try to solve this equation numerically.
We explain how this can be done in Section \ref{qm_practice}, but let us first rule out a final theoretical consideration.
If we have a wave function $\psi$ such that $\what{H}\psi = \lambda\psi$, then its real part $\re\psi$ also satisfies $\what{H}\qty(\re\psi) = \lambda\qty(\re\psi)$.
Indeed, since $\re\psi = \frac{\psi + \ol{\psi}}{2}$, then $\what{H}\qty(\re\psi) = \frac{\what{H}\psi + \what{H}\ol{\psi}}{2}$.
However, using the explicit Hamiltonian \ref{hamiltonian} and using the fact that
\begin{equation}
  \pdv[2]{\ol{\psi}}{x_i} = \pdv[2]{x_i}\qty(\re\psi - i\im\psi)
  = \pdv[2]{\re\psi}{x_i} - i\pdv[2]{\im\psi}{x_i}
  = \ol{\pdv[2]{\psi}{x_i}},
\end{equation}
we obtain
\begin{equation}
  \what{H}\ol{\psi} = \ol{\what{H}\psi} = \ol{\lambda\psi}
\end{equation}
and this implies
\begin{equation} \label{from_complex_to_real}
  \what{H}\qty(\re\psi)
  = \frac{\what{H}\psi + \what{H}\ol{\psi}}{2}
  = \lambda\frac{\psi + \ol{\psi}}{2}
  = \lambda\qty(\re\psi),
\end{equation}
as desired.
Observe that in the second step we also used the fact that observables are self-adjoint and therefore have real eigenvalues, \ie, $\lambda = \ol{\lambda}$.
The above computation shows us that in our search for the ground state we need to consider only real-valued wave functions since if a complex-valued wave function is a solution to the Schrödinger equation, then its real part is also a solution.
It is also worth mentioning that, if $\psi = \im\psi$, then $\re\psi \equiv 0$, which we do not want.
However, in this case we can consider $i\psi$ as our solution.
That is it, the computation above also works for $\what{H}_{el}$ and we will use this fact in Section \ref{qm_practice} and consider only real-valued wave functions in our search for the ground state.

\subsection{Spin} \label{spin}

Now let us talk about spin because it plays a fundamental role in Quantum Mechanics and in our work.
There are three common approaches to present and understand spin: using representation theory, Pauli's equation or Dirac's equation.
However, all these approaches requires advanced mathematical tools that will not be important to us and would lead us too far from our goal.
Therefore, here we will only highlight the basics of the theory behind spin and focus on what we need.
If the reader is interested in understanding more of the physics behind spin, see \cite[Chapter~6]{tannoudji2020_1} and \cite[Chapter~9]{tannoudji2020_2}; and if the reader is interested in the mathematics behind spin, see \cite[Chapter~17]{hall2013} and \cite[Chapter~4]{takhtajan2008}.

From a historical perspective, spin was discovered in the \emph{Stern--Gerlach experiment}, an experiment in which Stern and Gerlach shot silver atoms in an inhomogeneous magnetic field because the inhomogeneity of the field deflected the atoms before they hit the detector screen.
From a Classical Mechanics perspective, a random and continuous distribution of the atoms in the detector was expected, but in this experiment the atoms deflected only up or down, which was interpreted as the atoms having an intrinsic and quantized angular momentum.
Pauli explained this experiment by quantizing the classical Hamiltonian for a charged particle interacting with an electromagnetic field and he obtained what is now known as \emph{Pauli's equation}.
This equation requires the wave function to have two components to explain the up and down phenomenon observed experimentally.
Mathematically, that means $\psi \in L^2(\br^3, \bc^2) \ceq \qty{\psi:\br^3 \to \bc^2 : \intl_{\br^3} \abs{\psi(\vb{r})}^2 \dd\vb{r} < \infty}$ or $\psi \in L^2(\br^3) \otimes \bc^2$, since $L^2(\br^3) \otimes \bc^2 \cong L^2(\br^3, \bc^2)$.
Another confirmation of spin came indirectly from \emph{Dirac's equation}.
Dirac was trying to explain the spectral lines of the hydrogen at low temperature, \ie, which frequencies of light are absorbed or emitted by the hydrogen atom at low temperature, see \cite{shrum1924}, and, in order to explain this, he quantized some equations from Special Relativity.
His equation is a generalization of Pauli's equation, which can be thought of as a non-relativistic limit of Dirac's equation, and this equation required the existence of four components in the wave function, two accounting for spin and two that now we know accounts for antiparticles.\footnote{A couple of years after Dirac came up with his equation, Carl Anderson discovered experimentally the existence of \emph{positron}, the antiparticle of electron, see \cite{anderson1933}.}
So, from a physical perspective, spin is the intrinsic angular momentum of a particle and, simplifying things a little, the Hilbert space that describes the spin of a particle is $V_s$, being $s \in \qty{0, \frac{1}{2}, 1, \frac{3}{2}, \ldots}$ and $V_s$ a $(2s + 1)$-dimensional complex vector space carrying an irreducible representation of $SU(2)$.
There are a couple of different but equivalent ways of describing $V_s$ and the irreducible representation explicitly, see \cite[Chapter~4]{takhtajan2008}, and in the next section we will see one of them for $V_{\frac{1}{2}}$, which is the case we are interested in.
Now, an important classification that comes from spin is the following:

\begin{description}

\item[Sixth Axiom.] Every elementary particle has spin and, if the value $s$ that defines the spin is a natural number, the particle is called a \emph{boson}\index{boson}, otherwise it is called a \emph{fermion}\index{fermion}.
  Consequently, the state space describing a particle with spin $s$ is $L^2(\br^3) \what{\otimes} V_s$.

\end{description}

\begin{exm}[Fermions]
  Electron, proton and neutron are examples of fermions.
  More generally, leptons and quarks are fermions with spin $\frac{1}{2}$.
\end{exm}

\begin{exm}[Bosons]
  Photon, gluon and $W$ and $Z$ bosons are bosons with spin $1$ and the Higgs boson has spin $0$.
\end{exm}

To conclude the discussion about spin, let us talk about systems with identical particles because we are interested in molecules, which, potentially, have a big number of electrons to be considered.
Experimental evidence and the \emph{Spin-Statistics Theorem} indicates that composite systems made up of identical particles obeys either the \emph{Bose--Einstein statistics}, if the particle is a boson, or the \emph{Fermi--Dirac statistics}, if the particle is a fermion.
A proof of this theorem can be found in \cite[Chapter~4]{streater2000}, but what this theorem tells us is that we do not need to consider the whole space $\what{\bigotimes}^N L^2(\br^3) \wten V_s$ to describe the composite system with $N$ particles, but only its symmetric subspace $\sym^N\qty(L^2(\br^3) \wten V_s)$, if the particle is a boson, or the antisymmetric subspace $\bigwedge^N \qty(L^2(\br^3) \wten V_s)$, if the particle is a fermion.
These (sub)spaces were defined in Section \ref{multilinear_algebra} and they are usually called symmetric and exterior power, respectively.
For the particular case of electrons, this is also known as the \emph{Pauli exclusion principle} and physically it means two electrons cannot occupy the same state.
In mathematical notation, if we have $\psi \otimes \gamma \in L^2(\br^3) \wten V_{\frac{1}{2}}$, then $\qty(\psi(\vb{r}_1) \otimes \gamma) \otimes \qty(\psi(\vb{r}_2) \otimes \gamma) \notin \qty(L^2(\br^3) \wten V_{\frac{1}{2}}) \wedge \qty(L^2(\br^3) \wten V_{\frac{1}{2}})$.
Now we have all the tools we need to consider the Hartree--Fock Method.

\section{Practice (Hartree--Fock)} \label{qm_practice}

Recall that we want to solve the Schrödinger equation \ref{bo_hamiltonian} numerically and that there are (at least) two approaches to tackle this equation: (1) seeing it as an eigenvalue problem
\begin{equation}
  \what{H}_{el}\Psi = E\Psi;
\end{equation}
and (2) searching $\Psi_0 \in \bigwedge^N \qty(L^2(\br^3) \otimes V_{\frac{1}{2}})$ (we will also consider the spin of electrons) such that
\begin{equation}
  \mel{\Psi_0}{\what{H}_{el}}{\Psi_0} \leq \mel{\Psi}{\what{H}_{el}}{\Psi}
\end{equation}
for every other $\Psi \in \bigwedge^N \qty(L^2(\br^3) \otimes V_{\frac{1}{2}})$.
As it is, both perspectives are not suited to be handled computationally and the first restriction we will consider is choosing a finite-dimensional subspace $W$ of $L^2(\br^3)$ with dimension $d$.
So, that means $\dim(W \otimes V_{\frac{1}{2}}) = 2d$ and $\dim{V} = \binom{2d}{N}$, being $V \ceq \bigwedge^N \qty(W \otimes V_{\frac{1}{2}})$.
However, if we have a molecule such as benzene with $42$ electrons and a subspace $W$ of dimension $114$, then $\dim{V} = \binom{228}{42} = 13673073388289473024722279629504446017541199759$.
That means if we want to store a wave function in $V$ written as a linear combination with respect to the basis set of size $114$ using the number format \emph{double} to represent the coefficients, we would need $99484702428813565527687603240370270$ terabytes of memory.
This is infeasible in real life and, since the first approach to tackle the Schrödinger equation as an eigenvalue problem would require storing a $\dim{V} \x \dim{V}$ matrix in order to find its eigenvectors, we have to discard it.\footnote{For a small number of electrons and a small dimension $d$ this is feasible, of course, but we will not pursue this approach anyway. If the reader is interested in it, see \cite[Chapter 11]{helgaker2000}.}
So, let us consider the second approach.
The reader may think that we will face the same problem since we are still dealing with $V$, but now we can restrict our search to subsets of $V$, which was not possible in the first approach because in order to build the matrix representation of the Hamiltonian we would need to consider the whole space $V$.
Now, there are many possible ways of restricting the search space and each way leads to a different optimization method that has its own advantages and disadvantages from a computational and theoretical perspective.
The most accurate result, of course, is obtained by considering the whole $V$ and the solution we would obtain in this search is usually called \emph{Full Configuration Interaction} or just \emph{Full CI wave function}.
Other popular methods are the \emph{Configuration Interaction Method}, the \emph{Coupled Cluster Method}, the \emph{M{\o}ller--Plesset Method} etc.
All of these methods are called \emph{post-Hartree--Fock methods} because they use the solution obtained in the \emph{Hartree--Fock Method} as a starting point.
We will define what we mean by Hartree--Fock Method in a minute, but it is worth to mention that despite the fact that the Hartree--Fock Method is not the most accurate method used in Computational Quantum Chemistry these days, it is still a foundational method because, as already mentioned, the other state-of-the-art methods uses it as a first step.
So, having a robust implementation of Hartree--Fock and understanding it deeply is essential to Computational Quantum Chemistry.
Let us now see the mathematical aspects of the method.

First, a concrete description for $V_{\frac{1}{2}}$ is given by
\begin{equation}
  V_{\frac{1}{2}} \ceq \qty{\gamma:\qty{-1/2, \ 1/2} \to \bc}.
\end{equation}
Then, we can describe spin $\alpha$ (up) and $\beta$ (down) using the functions defined by $\alpha(1/2) = \beta(-1/2) = 1$ and $\alpha(-1/2) = \beta(1/2) = 0$.
Now, we will represent the finite-dimensional space of \emph{atomic orbitals} (\ie, wave functions for only one electron) by
\begin{equation}
  W \ceq \Span{w_1^{\alpha}, \ldots, w_{d_{\alpha}}^{\alpha},
    w_1^{\beta}, \ldots, w_{d_{\beta}}^{\beta}},
\end{equation}
being $w_i^{\gamma} \ceq \psi_i^{\gamma} \otimes \gamma$, $\psi_i^{\gamma} \in L^2(\br^3)$ and $\gamma \in \qty{\alpha, \beta}$.
In the literature $\psi_i^{\gamma}$ is also called \emph{spatial orbital} and $w_i^{\gamma}$ is also called \emph{spin-orbital}.
Besides, the symbol $\otimes$ is usually ignored, so, $\psi_i^{\gamma} \otimes \gamma$ is written as $\psi_i^{\gamma}\gamma$.
With that said, if we are interested in a system with $N$ electrons represented by $V \ceq \bigwedge^N W$, we can define $W_{\gamma} \ceq \Span{w_1^{\gamma}, \ldots, w_{d_{\gamma}}^{\gamma}}$ and then it is possible to see $\gr{\NA}{W_{\alpha}} \x \gr{\NB}{W_{\beta}}$, being $\NA$ the number of electrons with spin $\alpha$ and $\NB$ the number of electrons with spin $\beta$, as a submanifold of $\bp{V}$ by considering the function known as \emph{Plücker embedding}\index{Plücker embedding}:
\begin{align} \label{plucker_embedding}
  \begin{split}
    \Pl:\gr{\NA}{W_{\alpha}} \x \gr{\NB}{W_{\beta}}
    & \to \bp{V} \\
    \qty(\Span{v_1^{\alpha}, \ldots, v_{N_\alpha}^{\alpha}}, \; \Span{v_1^{\beta}, \ldots, v_{N_\beta}^{\beta}})
    & \mapsto \ket{\wed{v_1^{\alpha}}{v_{\NA}^{\alpha}}
      \wedge \wed{v_1^{\beta}}{v_{\NB}^{\beta}}}.
  \end{split}
\end{align}
And now we can define Hartree--Fock precisely:

\begin{defi} \label{hf}
  The \emph{Hartree--Fock Method}\index{method!Hartree--Fock} is the optimization problem of minimizing the expected value of the Hamiltonian (\ref{hamiltonian} or \ref{bo_hamiltonian}) of a system with $N$ electrons given the constraint that the solution is in the manifold $\gr{\NA}{W_{\alpha}} \x \gr{\NB}{W_{\beta}}$.
  In other words, we want to minimize the function $f \ceq E \circ \Pl:\gr{\NA}{W_{\alpha}} \x \gr{\NB}{W_{\beta}} \to \br$, being $E:\bp{V} \to \br$ the energy of a system, \ie, $E(\ket{\Psi}) = \mel{\Psi}{\what{H}_{el}}{\Psi}$.
\end{defi}

It may seem that this is not a natural constraint, but let us recapitulate what we know so far to see why it actually is: first, if we want to solve the Schrödinger equation computationally, we have to work with a finite-dimensional subspace of $L^2(\br^3)$.
Moreover, since spin is really important because it explains many phenomena, we have to actually consider $L^2(\br^3) \otimes V_{\frac{1}{2}}$ as our state space.
Now, if we have the orbitals $\psi_1, \ldots, \psi_N \in L^2(\br^3)$, then a natural guess for the (spatial part of the) wave function of a system with $N$ electrons is $\psi(\vb{r}_1, \ldots, \vb{r}_N) \ceq \psi_1(\vb{r}_1) \cdot \ldots \cdot \psi_N(\vb{r}_N) \in L^2(\br^{3N})$.
Notice that in this wave function it is as if we have one orbital for each electron, with $\psi_i(\vb{r}_i)$ describing the $i$-th electron.
However, and that is another reason why spin is important, the above product does not take into account the Pauli exclusion principle.
To fix this problem, we have to consider an antisymmetric product and this leads us to the definition of a very special and important type of wave function:

\begin{defi}
  The wave functions $\wed{v_1}{v_N} \in \bigwedge^N \qty(L^2(\br^3) \wten V_{\frac{1}{2}})$ and $\wed{\psi_1}{\psi_N} \in \bigwedge^N L^2(\br^3)$ are called \emph{Slater determinants}\index{Slater determinant}.
\end{defi}

So, as we saw, Slater determinants are the most basic wave functions we have that take all the quantum mechanical aspects of electrons into account and, therefore, the Hartree--Fock Method can be considered the most basic quantum mechanical method.
Some of the post-Hartree--Fock methods mentioned previously search for solutions that are linear combinations of Slater determinants, for example.

\begin{obs}
  In the Hartree--Fock Method we consider only Slater determinants, but not every Slater determinant is allowed, since elements with ``mixed spin'', such as $\frac{1}{\sqrt{2}}\qty(\psi \otimes (\alpha + \beta))$, do not make sense from a physics perspective.
\end{obs}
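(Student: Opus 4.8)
The plan is to read the observation as a precise requirement rather than a vague appeal to physics: ``making sense'' should mean that the numbers $\NA$ and $\NB$ of spin-$\alpha$ and spin-$\beta$ electrons are well defined, and this is exactly what forces the pure-spin structure underlying $\gr{\NA}{W_{\alpha}} \x \gr{\NB}{W_{\beta}}$. So I would reformulate the claim as: within single Slater determinants, insisting on a definite spin count excludes mixed-spin orbitals.

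First I would recall the orthogonal decomposition $W = W_{\alpha} \oplus W_{\beta}$, where $W_{\gamma} = \Span{w_1^{\gamma}, \ldots, w_{d_{\gamma}}^{\gamma}}$ consists of spin-orbitals $\psi_i^{\gamma} \otimes \gamma$ of pure spin, and observe that a general element of $W$ has the form $\psi_{\alpha} \otimes \alpha + \psi_{\beta} \otimes \beta$, which genuinely has ``mixed spin'' whenever both spatial parts are nonzero (the displayed $\frac{1}{\sqrt{2}}\qty(\psi \otimes (\alpha + \beta))$ being the case $\psi_{\alpha} = \psi_{\beta} = \psi$). Thus mixed-spin orbitals do live in $W$, and the point of the observation is that the Hartree--Fock ansatz deliberately excludes them.

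The key step is to introduce the total spin-projection operator $S_z$ acting on $\bigwedge^N \qty(W \otimes V_{\frac{1}{2}})$ and to record two facts. On the one hand, the electronic Hamiltonian $\what{H}_{el}$ of \ref{bo_hamiltonian} involves only the spatial coordinates $\vb{r}_i$ and is independent of spin, so it commutes with $S_z$; hence $\what{H}_{el}$ is block-diagonal across the eigenspaces of $S_z$, and a ground state of definite $S_z$ always exists. On the other hand, a single Slater determinant $\wed{v_1}{v_N}$ is an eigenvector of $S_z$ \tiff each factor $v_i$ has pure spin, in which case the eigenvalue is $\frac{1}{2}(\NA - \NB)$; a determinant containing a mixed-spin factor $\psi \otimes (\alpha + \beta) = \psi \otimes \alpha + \psi \otimes \beta$ expands, by multilinearity of $\wedge$, into a sum of determinants lying in different $S_z$-sectors, and is therefore not a spin eigenstate. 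I would verify this last expansion explicitly, since it is the crux of the argument.

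Putting these together completes the justification: fixing the data $\NA$ and $\NB$ amounts to fixing the $S_z$-sector, and within the space of single Slater determinants the $S_z$-eigenstates are exactly the pure-spin ones, that is, the image of $\gr{\NA}{W_{\alpha}} \x \gr{\NB}{W_{\beta}}$ under the Plücker embedding \ref{plucker_embedding}; because $S_z$ commutes with $\what{H}_{el}$, this restriction does not discard the ground state of the chosen spin sector. The main obstacle here is conceptual rather than computational: one must decide what ``makes sense physically'' means, and the cleanest reading is the well-definedness of $\NA$ and $\NB$ together with the commutation $[\what{H}_{el}, S_z] = 0$. Once that choice is made, the remaining content is the short multilinearity computation above.
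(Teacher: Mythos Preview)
The paper does not prove this statement at all: it is presented as a bare \emph{Observation}, with the only justification being the phrase ``do not make sense from a physics perspective.'' There is no argument, no operator introduced, no computation. So there is nothing to match against.

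Your proposal therefore goes well beyond the paper. The route you take --- introducing the total spin-projection $S_z$, noting that $\what{H}_{el}$ commutes with it because the Hamiltonian is spin-independent, and arguing that fixing $(\NA,\NB)$ amounts to fixing an $S_z$-sector --- is the standard and correct physical justification, and it makes the observation precise rather than leaving it as an appeal to intuition. This is a genuine improvement over the text.

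One small imprecision: the claim that $\wed{v_1}{v_N}$ is an $S_z$-eigenvector \emph{iff each} $v_i$ has pure spin is not literally true as stated. For instance, with $v_1=\psi_1\alpha+\psi_2\beta$ and $v_2=\psi_1\alpha-\psi_2\beta$ one gets $v_1\wedge v_2=-2(\psi_1\alpha)\wedge(\psi_2\beta)$, which is an $S_z$-eigenstate even though neither factor is pure. The correct statement is at the level of the \emph{subspace} $\Span{v_1,\ldots,v_N}$: the Slater determinant is an $S_z$-eigenstate iff this subspace splits as a direct sum of an $\alpha$-part and a $\beta$-part, and since the determinant depends only on the subspace, one may then always choose pure-spin representatives. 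This is exactly what places the determinant in the image of the Pl\"ucker map from $\gr{\NA}{W_{\alpha}}\times\gr{\NB}{W_{\beta}}$, so your conclusion stands once the iff is phrased at the subspace level.
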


\begin{defi} \label{restricted_hf}
  When a molecule has an even number of electrons and we impose the restrictions that $\NA = \NB$ and that the Slater determinants for both spins are exactly the same, we have what is called \emph{Restricted Hartree--Fock Method} (RHF).
  If we do not impose any restriction, the method is sometimes called \emph{Unrestricted Hartree--Fock} (UHF).
\end{defi}

\begin{obs}
  In the RHF, the manifold we look for solutions is $\gr{N/2}{W}$, with $W \ceq \Span{\psi_1, \ldots, \psi_d}$.
  In other words, we only look for Slater determinants given by $\wed{\phi_1}{\phi_{\frac{N}{2}}}$, being $\phi_i = c_{1i}\psi_1 + \ldots + c_{di}\psi_d$.
\end{obs}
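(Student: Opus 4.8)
The plan is to show that the two restrictions defining RHF — namely $\NA = \NB$ and the requirement that both spin-factors be built from a single family of spatial orbitals — collapse the product $\gr{\NA}{W_\alpha} \x \gr{\NB}{W_\beta}$ of the unrestricted method down to the single Grassmannian $\gr{N/2}{W}$, with $W \ceq \Span{\psi_1, \ldots, \psi_d}$ the space of spatial orbitals. Concretely, I would exhibit an explicit diagonal map from $\gr{N/2}{W}$ into the product whose image is exactly the set of Slater determinants permitted by the constraints, and then read off the claimed form $\wed{\phi_1}{\phi_{\frac{N}{2}}}$ directly from the Plücker embedding \ref{plucker_embedding}.

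First I would record the bookkeeping: since the molecule has an even number $N$ of electrons and $\NA = \NB$, we have $\NA = \NB = N/2$. In the restricted setting the spatial orbitals are shared between spins, so $W_\alpha = \Span{\psi_1 \otimes \alpha, \ldots, \psi_d \otimes \alpha}$ and $W_\beta = \Span{\psi_1 \otimes \beta, \ldots, \psi_d \otimes \beta}$, and the maps $\psi \mapsto \psi \otimes \alpha$ and $\psi \mapsto \psi \otimes \beta$ are linear isomorphisms of $W$ onto $W_\alpha$ and $W_\beta$. Any $L \subset W$ with $\dim L = N/2$ then produces $L \otimes \alpha \in \gr{N/2}{W_\alpha}$ and $L \otimes \beta \in \gr{N/2}{W_\beta}$, and $L \mapsto (L \otimes \alpha, L \otimes \beta)$ is injective because these tensoring maps are isomorphisms. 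The constraint ``same spatial orbitals for both spins'' is precisely the statement that the admissible pairs are exactly those of this diagonal form, so the map is a bijection between $\gr{N/2}{W}$ and the constrained subset of the product; being assembled from linear isomorphisms, it is smooth with smooth inverse, which is what makes $\gr{N/2}{W}$ the genuine search manifold.

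To obtain the Slater-determinant form, I would take $L = \Span{\phi_1, \ldots, \phi_{\frac{N}{2}}}$ with $\phi_i = c_{1i}\psi_1 + \ldots + c_{di}\psi_d$ and push it through the Plücker embedding, giving
\begin{equation}
  \ket{\wed{(\phi_1 \otimes \alpha)}{(\phi_{\frac{N}{2}} \otimes \alpha)} \wedge \wed{(\phi_1 \otimes \beta)}{(\phi_{\frac{N}{2}} \otimes \beta)}},
\end{equation}
that is, the single Slater determinant in which each spatial orbital $\phi_i$ is occupied once with spin $\alpha$ and once with spin $\beta$. This is exactly the abbreviation $\wed{\phi_1}{\phi_{\frac{N}{2}}}$ appearing in the statement, with the spins left implicit.

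The main obstacle is interpretive rather than computational: the phrase ``the Slater determinants for both spins are exactly the same'' cannot be read literally, since the $\alpha$- and $\beta$-spin-orbitals lie in the distinct summands $W \otimes \alpha$ and $W \otimes \beta$ and hence can never coincide as vectors of the exterior algebra. The honest reading, which must be fixed before anything else, is that the constraints force the $\alpha$- and $\beta$-subspaces to be the images $L \otimes \alpha$ and $L \otimes \beta$ of one common spatial subspace $L \subset W$. Once this identification is pinned down, the remaining verifications are routine linear algebra, and the diagonal map above supplies the desired identification with $\gr{N/2}{W}$.
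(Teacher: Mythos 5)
The paper offers no proof of this statement: it is an unproved remark immediately following Definition \ref{restricted_hf}, so there is no argument of the author's to compare yours against. Your proposal is a correct and complete way to make the remark precise. The bookkeeping $\NA = \NB = N/2$, the diagonal map $L \mapsto (L \otimes \alpha, L \otimes \beta)$ realizing $\gr{N/2}{W}$ as the constrained subset of $\gr{\NA}{W_\alpha} \x \gr{\NB}{W_\beta}$, and the identification of its image under the Plücker embedding \ref{plucker_embedding} all sit consistently with the paper's definitions, and your observation that ``the Slater determinants for both spins are exactly the same'' must be read as ``built from one common spatial subspace $L \subset W$'' (equivalently, one common coefficient matrix $C$) is exactly the right disambiguation. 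The only wrinkle worth flagging: the paper's own definition of Slater determinants admits both the spin-orbital version in $\bigwedge^N\qty(L^2(\br^3) \wten V_{\frac{1}{2}})$ and the purely spatial version in $\bigwedge^{N/2} L^2(\br^3)$, so the expression $\wed{\phi_1}{\phi_{\frac{N}{2}}}$ in the remark may be intended literally as the latter rather than as your abbreviation of the doubly-occupied spin determinant; the two readings are equivalent for the purpose of identifying the search manifold with $\gr{N/2}{W}$, so nothing in your argument is affected.
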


\begin{obs}
  The dimension of $\gr{\NA}{W_{\alpha}} \x \gr{\NB}{W_{\beta}}$ is $\NA(d_{\alpha} - \NA) + \NB(d_{\beta} - \NB)$.
  So, if we go back to the benzene example assuming, again, that $\NA = \NB = 21$ and $d_{\alpha} = d_{\beta} = 114$, the dimension of the space in which we are looking for solutions is $3.906$ and that means we can store a Slater determinant using only $31$ kilobytes of memory.
\end{obs}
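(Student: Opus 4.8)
The statement to establish is the dimension formula
\begin{equation*}
  \dim\qty(\gr{\NA}{W_{\alpha}} \x \gr{\NB}{W_{\beta}}) = \NA(d_{\alpha} - \NA) + \NB(d_{\beta} - \NB),
\end{equation*}
where $d_{\gamma} = \dim W_{\gamma}$, so that $\gr{\NA}{W_{\alpha}} = \gr{\NA}{d_{\alpha}}$ and similarly for $\beta$. The plan is to reduce everything to the dimension of a single Grassmannian and then use additivity of dimension under products. Indeed, by Proposition \ref{tangent_space_product} the tangent space of a product manifold is the product of the tangent spaces, and since the dimension of a manifold equals the dimension of any of its tangent spaces, dimension is additive over products. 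Hence it suffices to prove the single-factor identity $\dim \gr{N}{d} = N(d - N)$ and then sum the two contributions.

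To compute $\dim \gr{N}{d}$ I would use the description of the concrete Grassmannian from Example \ref{grassmannian} as the quotient $\st{N}{d}/O(N)$, together with the Quotient Manifold Theorem \ref{qmt}, which gives
\begin{equation*}
  \dim \gr{N}{d} = \dim \st{N}{d} - \dim O(N).
\end{equation*}
For the first term, Example \ref{stiefel_manifold} realizes the Stiefel manifold as $f^{-1}(0_N)$ for the defining function $f:\mat{d}{N} \to \sym(N)$, with $0_N$ already shown to be a regular value; the Preimage Theorem \ref{preimage_theorem} then yields
\begin{equation*}
  \dim \st{N}{d} = \dim \mat{d}{N} - \dim \sym(N) = dN - \frac{N(N+1)}{2},
\end{equation*}
using that a symmetric $N \x N$ matrix has $\frac{N(N+1)}{2}$ independent entries. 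For the second term I would observe, as noted in Example \ref{stiefel_manifold}, that $O(N)$ is itself the Stiefel manifold $\st{N}{N}$ with $S = \Id_N$, so the same count gives $\dim O(N) = N^2 - \frac{N(N+1)}{2} = \frac{N(N-1)}{2}$.

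Substituting these into the quotient formula produces
\begin{equation*}
  \dim \gr{N}{d} = dN - \frac{N(N+1)}{2} - \frac{N(N-1)}{2} = dN - N^2 = N(d - N),
\end{equation*}
and applying this to each factor and adding gives the claimed formula. There is no genuine obstacle here, since every ingredient is already in place: the only content is combining three dimension counts and the arithmetic cancellation $\frac{N(N+1)}{2} + \frac{N(N-1)}{2} = N^2$. The one point I would state explicitly is that passing from the single-Grassmannian count to the two-term sum relies precisely on Proposition \ref{tangent_space_product}, i.e. on additivity of dimension over products.
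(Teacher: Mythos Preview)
Your argument is correct. The paper does not actually give a proof of this observation; the dimension formula is simply stated as a fact, with the benzene numerics added for illustration. Your derivation is the natural one given the tools the paper has already developed: the Preimage Theorem~\ref{preimage_theorem} for $\dim\st{N}{d}$, the Quotient Manifold Theorem~\ref{qmt} for $\dim\gr{N}{d} = \dim\st{N}{d} - \dim O(N)$, and Proposition~\ref{tangent_space_product} for additivity over products. There is nothing to compare against and nothing missing in your write-up.
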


With all that said we hopefully convinced the reader that, if we want to consider two fundamental restrictions in our search for the solution of the Schrödinger equation, one computational and the other physical, we naturally obtain the Grassmannian as our search space.
This space has a structure that makes it into a Riemannian manifold that we already explored throughout Chapter \ref{geometry}, and in Chapter \ref{optimization} we will (1) see how to implement optimization algorithms on this manifold, and (2) provide pseudocodes for the (Unrestricted) Hartree--Fock.

\begin{obs}
  As we said in the beginning of this section, we are considering only real-valued orbitals in this work.
  However, since observables are self-adjoint, the expected value is always a real-valued function and this implies we do not need to see the Grassmannian (or any other manifold, as a matter of fact) as a complex manifold since only its real manifold structure will play a role.
  That means it is possible to extend the formulas obtained in the rest of this section and in the next chapter to the complex case if the reader think it is worth.
\end{obs}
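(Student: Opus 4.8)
The statement is an observation bundling three linked claims, so the plan is to justify each in turn. The first claim, that the cost function is genuinely real-valued, I would settle by a one-line computation. For any state $\ket{\Psi}$, conjugate-symmetry of the inner product together with the self-adjointness of $\what{H}_{el}$ (the Fourth Axiom) gives
\begin{equation}
  \ol{E(\ket{\Psi})}
  = \ol{\braket{\Psi}{\what{H}_{el}\Psi}}
  = \braket{\what{H}_{el}\Psi}{\Psi}
  = \braket{\Psi}{\what{H}_{el}\Psi}
  = E(\ket{\Psi}),
\end{equation}
so $E$, and hence $f = E \circ \Pl$, takes values in $\br \subset \bc$. This is the same computation that made the expected value well-defined on the projective space, now read as a statement about the \emph{codomain} of the objective rather than about its invariance under rescaling.

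For the second claim -- that only the real manifold structure matters -- I would argue as follows. Seen as a complex manifold, $\gr{N}{W}$ (with $W$ complex of dimension $d$) has complex dimension $N(d - N)$; equivalently it is a real manifold of real dimension $2N(d - N)$ carrying an almost complex structure $J$, namely multiplication by $i$ on tangent vectors. Every object the algorithms of Chapter \ref{optimization} require -- gradient, Hessian, geodesics, parallel transport -- was built in Chapter \ref{geometry} purely from the real tangent bundle and a real-valued inner product, and none of those constructions ever invokes $J$. The only genuinely complex-analytic notion, holomorphy, is moreover useless here: a holomorphic real-valued function on a connected complex manifold is constant by the maximum modulus principle, so a non-constant energy is never holomorphic and the complex structure simply cannot be exploited. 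Hence only the real structure plays a role.

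For the third claim I would realify. Identifying $W \cong \bc^d$ with $\br^{2d}$ and the complex Grassmannian with its underlying real manifold, the defining function, the horizontal spaces, the projections and every formula of Chapter \ref{geometry} -- for instance Examples \ref{grad_grassmannian} and \ref{geodesic_grassmannian} -- transcribe verbatim once the Hermitian transpose replaces the transpose and the real part of the Hermitian inner product serves as the Riemannian metric. The only point that needs checking is that the objective stays real, and that is precisely the computation of the first step, which used self-adjointness alone.

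The hardest part is the second claim: pinning down the precise sense in which ``only the real structure plays a role,'' since the first and third claims are essentially the same short computation. The clean resolution is the maximum-principle remark above, which rules out any use of the complex-analytic machinery and confirms that the real Riemannian framework already developed in Chapter \ref{geometry} is exactly the one we need.
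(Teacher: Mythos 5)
Your proposal is correct, and its core coincides with the paper's own (one-line) justification: the observation rests entirely on the computation in your first step — conjugate symmetry of the inner product plus self-adjointness of $\what{H}_{el}$ forces $E$, and hence $f = E \circ \Pl$, to be real-valued — which the text asserts without further elaboration. What you add goes genuinely beyond the paper: the remark that a non-constant real-valued objective can never be holomorphic (by the open-mapping/maximum-modulus argument), so the almost complex structure $J$ is provably unusable and only the underlying real Riemannian data of Chapter \ref{geometry} can enter the algorithms; and the explicit realification recipe for the complex extension. Both additions are sound, and the first gives the vague phrase ``only its real manifold structure will play a role'' a precise meaning the paper never supplies. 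One point worth stating explicitly in your third step: in the complex setting the Stiefel constraint becomes $C^{\dagger}SC = \Id_N$ and the quotient is taken by the unitary group $U(N)$ rather than $O(N)$, with vertical space $\qty{C\nu : \nu^{\dagger} = -\nu}$ and metric $\re\tr(\eta_1^{\dagger}S\eta_2)$; your prescription ``Hermitian transpose replaces transpose'' implies all of this but does not say it, and it is the one place where the formulas change structurally (a different structure group, and hence a different vertical/horizontal splitting) rather than transcribing verbatim.
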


To conclude this section, let us prove that the Hartree--Fock Method has a solution.
First, it is worth mentioning something important: the basis $\qty{w_1^{\alpha}, \ldots, w_{d_{\alpha}}^{\alpha}, w_1^{\beta}, \ldots, w_{d_{\beta}}^{\beta}}$ that span $W$ is not necessarily orthonormal, but we will impose that the vectors
\begin{equation} \label{lin_comb_slater}
  v_i^{\gamma} = \qty(c_{1i}^{\gamma}\psi_1^{\gamma} + \ldots + c_{{d_\gamma}i}^{\gamma}\psi_{d_\gamma}^{\gamma})\gamma,
\end{equation}
being $\gamma \in \qty{\alpha, \beta}$, should be.
Consequently, if we represent the Slater determinant $\wed{v_1^{\gamma}}{v_{N_{\gamma}}^{\gamma}}$ as the matrix
\begin{equation}
  C^{\gamma} =
  \begin{bmatrix}
    c_{11}^{\gamma} & \ldots & c_{1N_{\gamma}}^{\gamma} \\
    \vdots          & \ddots & \vdots                   \\
    c_{d_{\gamma}1}^{\gamma} & \ldots & c_{d_{\gamma}N_{\gamma}}^{\gamma} \\
  \end{bmatrix},
\end{equation}
this matrix satisfies $\qty(C^{\gamma})^{\top}SC^{\gamma} = \Id_{N_{\gamma}}$, being $S_{\gamma} = \qty(\braket{\psi_i^{\gamma}}{\psi_j^{\gamma}})_{i,j}$ the overlap matrix.
Now, observe that $\abs{\wed{v_1^{\gamma}}{v_{N_{\gamma}}^{\gamma}}} = 1$ because
\begin{align*}
  \abs{\wed{v_1^{\gamma}}{v_{N_{\gamma}}^{\gamma}}}^2
  & = \braket{
    \wed{v_1^{\gamma}}{v_{N_{\gamma}}^{\gamma}}
    }{
    \wed{v_1^{\gamma}}{v_{N_{\gamma}}^{\gamma}}
    } \\
  & = \det(\braket{v_i^{\gamma}}{v_j^{\gamma}})_{i,j} \\
  & = \det(\Id_{N_{\gamma}}) \\
  & = 1.
\end{align*}
We also have $\braket{v_i^{\alpha}}{v_j^{\beta}} \ceq \braket{v_i}{v_j}\braket{\alpha}{\beta} = 0$ because $\braket{\alpha}{\beta} = 0$.
And the last important observation is that the constraint $\qty(C^{\gamma})^{\top}SC^{\gamma} = \Id_{N_{\gamma}}$ says that $C^{\gamma}$ is actually an element of the Stiefel manifold and, if we multiply $C^{\gamma}$ on the right by any matrix $M^{\gamma} \in O(N_{\gamma})$, the orthonormality is also preserved:
\begin{equation}
  \qty(C^{\gamma}M^{\gamma})^{\top}SC^{\gamma}M^{\gamma}
  = \qty(M^{\gamma})^{\top}\qty(C^{\gamma})^{\top}SC^{\gamma}M^{\gamma}
  = \qty(M^{\gamma})^{\top}\Id_{N_{\gamma}}M^{\gamma}
  = \Id_{N_{\gamma}}.
\end{equation}
Therefore, we can actually use the matrix representation of the Grassmannian defined in Example \ref{grassmannian} to represent the orbitals and that is what we will do in Chapter \ref{optimization}.
Now let us write the formula for the energy with the disclaimer that, if the reader is interested, the complete and detailed derivation is in Subsection \ref{energy_derivation}.
Defining the \emph{one-electron operator} as
\begin{equation}
  H(\vb{r})\psi(\vb{r}) \ceq -\frac{1}{2}\nabla^2\psi(\vb{r})
  - \sum_{j=1}^M \frac{Z_j}{\abs{\vb{r} - \vb{R}_j}}\psi(\vb{r}),
\end{equation}
the expression for
\begin{equation}
  \mel{\wed{v_1^{\alpha}}{v_{\NA}^{\alpha}} \wedge \wed{v_1^{\beta}}{v_{\NB}^{\beta}}}
  {\what{H}_{el}}
  {\wed{v_1^{\alpha}}{v_{\NA}^{\alpha}} \wedge \wed{v_1^{\beta}}{v_{\NB}^{\beta}}},
\end{equation}
if we use the coefficients of the matrix $C^{\gamma}$ defined above, is the sum of
\begin{equation}
  \sum_{\mu=1}^{\NA} \sum_{i,j=1}^{d_{\alpha}}
  c_{i\mu}^{\alpha}c_{j\mu}^{\alpha}
  \intl_{\br^3} \psi_i^{\alpha}(\vb{r})H(\vb{r})\psi_j^{\alpha}(\vb{r})
  \dd\vb{r}
  +
  \sum_{\mu=1}^{\NB} \sum_{i,j=1}^{d_{\beta}}
  c_{i\mu}^{\beta}c_{j\mu}^{\beta}
  \intl_{\br^3} \psi_i^{\beta}(\vb{r})H(\vb{r})\psi_j^{\beta}(\vb{r}) \dd\vb{r}
\end{equation}
with
\begin{multline}
  \frac{1}{2} \sum_{\mu,\nu=1}^{\NA} \sum_{i,j,k,l=1}^{d_{\alpha}}
  c_{i\mu}^{\alpha}c_{j\nu}^{\alpha}c_{k\mu}^{\alpha}c_{l\nu}^{\alpha}
  \intl_{\br^6} \psi_i^{\alpha}(\vb{r}_1)\psi_j^{\alpha}(\vb{r}_2)
  \frac{1}{\vb{r}_{12}}
  \qty(\psi_k^{\alpha}(\vb{r}_1)\psi_l^{\alpha}(\vb{r}_2)
  - \psi_l^{\alpha}(\vb{r}_1)\psi_k^{\alpha}(\vb{r}_2))
  \dd\vb{r}_1\dd\vb{r}_2 \\
  + \frac{1}{2} \sum_{\mu,\nu=1}^{\NB} \sum_{i,j,k,l=1}^{d_{\beta}}
  c_{i\mu}^{\beta}c_{j\nu}^{\beta}c_{k\mu}^{\beta}c_{l\nu}^{\beta}
  \intl_{\br^6} \psi_i^{\beta}(\vb{r}_1)\psi_j^{\beta}(\vb{r}_2)
  \frac{1}{\vb{r}_{12}}
  \qty(\psi_k^{\beta}(\vb{r}_1)\psi_l^{\beta}(\vb{r}_2)
  - \psi_l^{\beta}(\vb{r}_1)\psi_k^{\beta}(\vb{r}_2))
  \dd\vb{r}_1\dd\vb{r}_2 \\
  + \sum_{\mu=1}^{\NA} \sum_{\nu=1}^{\NB}
  \sum_{i,k=1}^{d_{\alpha}} \sum_{j,l=1}^{d_{\beta}}
  c_{i\mu}^{\alpha}c_{j\nu}^{\beta}c_{k\mu}^{\alpha}c_{l\nu}^{\beta}
  \intl_{\br^6} \psi_i^{\alpha}(\vb{r}_1)\psi_j^{\beta}(\vb{r}_2)
  \frac{1}{\vb{r}_{12}}
  \psi_k^{\alpha}(\vb{r}_1)\psi_l^{\beta}(\vb{r}_2)
  \dd\vb{r}_1\dd\vb{r}_2.
\end{multline}

\begin{obs}
  One of the considerations that should be taken into account in the choice of a basis is the computation of the above integrals.
\end{obs}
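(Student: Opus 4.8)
Since the final statement is a practical \emph{observation} rather than a deductive proposition, my plan is not to produce a formal proof but to \emph{substantiate} it by a cost analysis of the energy expression just derived. The first step is to separate the two ingredients of that expression. The optimization variables enter only through the coefficients $c_{i\mu}^{\gamma}$ of the matrices $C^{\gamma}$, whereas the genuinely ``physical'' content sits in the integrals
\begin{gather*}
  h_{ij}^{\gamma} \ceq \intl_{\br^3} \psi_i^{\gamma}(\vb{r}) H(\vb{r}) \psi_j^{\gamma}(\vb{r}) \dd\vb{r}, \\
  g_{ijkl}^{\gamma\delta} \ceq \intl_{\br^6} \psi_i^{\gamma}(\vb{r}_1)\psi_j^{\delta}(\vb{r}_2)\frac{1}{\vb{r}_{12}}\psi_k^{\gamma}(\vb{r}_1)\psi_l^{\delta}(\vb{r}_2) \dd\vb{r}_1 \dd\vb{r}_2,
\end{gather*}
which depend only on the fixed basis $\qty{\psi_i^{\gamma}}$ and \emph{not} on the point of the Grassmannian being evaluated. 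Once this split is made, the whole energy is a fixed multilinear contraction of these tensors against the $C^{\gamma}$, so the integrals can be \emph{precomputed} a single time and reused at every iterate.

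The second step is to count these tensors and track their growth in the basis size $d = \max\qty{d_{\alpha}, d_{\beta}}$: there are $O(d^2)$ one-electron integrals but $O(d^4)$ two-electron integrals, so the latter dominate both the storage and the up-front work, exactly in the spirit of the benzene estimates given earlier in this section. The third step is to observe that every algorithm of Chapter \ref{optimization} (Gradient Descent, Newton--Raphson, Conjugate Gradient) repeatedly evaluates $f$ together with its gradient and Hessian, and that each such evaluation is again a contraction against the \emph{same} precomputed $g_{ijkl}^{\gamma\delta}$. Hence the feasibility of the entire method hinges on whether these $O(d^4)$ quantities can be produced quickly and accurately, and that is a property of the chosen basis alone — which is precisely the content of the observation.

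The argument then closes by exhibiting the trade-off that keeps this an observation and not a theorem: accuracy pushes $d$ upward, while the $O(d^4)$ integral count pushes it downward, so a ``good'' basis is one whose two-electron integrals admit a closed form or a cheap approximation. The canonical illustration is a basis of Gaussian-type orbitals, for which a product of two Gaussians is again a Gaussian and the Coulomb integral reduces to a single special function; by contrast, Slater-type orbitals reproduce the correct physical cusp but render the four-center integrals analytically intractable. I expect the only real obstacle to be that ``good basis'' is inherently a multi-objective notion, balancing approximation error against integral cost and storage, so the honest endpoint of the analysis is the qualitative statement of the observation, supported by the scaling count above rather than by a single clean inequality.
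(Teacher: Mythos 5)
The paper states this observation without proof, so there is no official argument for you to diverge from; your cost-based substantiation is correct and matches exactly how the paper itself operationalizes the point later on — the integrals $h_{ij}$ and $g_{ijkl}$ are precomputed once, passed as black-box parameters to the \texttt{euclidean\_gradient} and \texttt{euclidean\_hessian} routines of Section \ref{hf_ch3}, and the associated $O(d^4)$ scaling (building the Fock matrix) is identified in Section \ref{results} as the bottleneck of every algorithm. Your Gaussian-versus-Slater illustration goes slightly beyond the text, which only remarks that computing $g$ is not trivial and uses the Gaussian 6-31G basis in practice, but it is accurate and consistent with the paper's intent.
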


And now we can finally prove the following theorem:

\begin{thm} \label{hf_welldefined}
  The Hartree--Fock Method has a minimum.
\end{thm}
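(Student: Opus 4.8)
The plan is to invoke the classical extreme value theorem: a continuous real-valued function on a nonempty compact space attains its minimum. Since the Hartree--Fock Method is precisely the search for a minimizer of $f \ceq E \circ \Pl$ over the product $\gr{\NA}{W_{\alpha}} \x \gr{\NB}{W_{\beta}}$, it suffices to verify two things, namely that this domain is a nonempty compact space and that $f$ is continuous.

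Compactness of the domain is already available: Corollary \ref{product_compact} states that finite products of Grassmannians are compact, and nonemptiness follows from $0 \le \NA \le d_{\alpha}$ and $0 \le \NB \le d_{\beta}$, since subspaces of the prescribed dimensions always exist. Thus the only genuine work is to establish continuity of $f$.

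For continuity I would argue that $f$ is in fact smooth, whence continuity is automatic. The Plücker embedding $\Pl$ is smooth, and the energy $E$ restricted to its image --- which consists of \emph{normalized} Slater determinants, as verified in the computation just above the statement --- is given by the explicit formula written out before the theorem. In the matrix coordinates $C^{\alpha}, C^{\beta}$ this formula is a polynomial in the entries $c_{i\mu}^{\gamma}$, hence smooth at the Stiefel level. Concretely, I would lift $f$ through the canonical projection from the product of the two Stiefel manifolds and observe that the lift is polynomial; Theorem \ref{smooth_quotient} then promotes this to smoothness of $f$ on the Grassmannian quotient. Composing with the smooth projection and the smooth embedding yields that $f$ is smooth, and in particular continuous.

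The main obstacle is bookkeeping rather than any deep difficulty: one must make sure that $E$ is genuinely well defined on $\bp{V}$, \ie{} independent of the chosen representative and of the phase factor. This is exactly why restricting to normalized Slater determinants in the image of $\Pl$ matters --- on that set $\braket{\Psi}{\Psi} = 1$, so $E(\ket{\Psi}) = \mel{\Psi}{\what{H}_{el}}{\Psi}$ carries no denominator and the polynomial description is valid; moreover the sign ambiguity coming from a change of orthonormal basis (an element of $O(\NA) \x O(\NB)$) leaves $\mel{\Psi}{\what{H}_{el}}{\Psi}$ invariant, which is precisely the invariance that makes the lifted function descend to the quotient. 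Once continuity is secured, the extreme value theorem closes the argument at once.
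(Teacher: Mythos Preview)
Your proposal is correct and follows essentially the same route as the paper: compactness of the product of Grassmannians via Corollary~\ref{product_compact}, continuity of $f$ by observing that the energy is a degree-four polynomial in the coefficients $c_{i\mu}^{\gamma}$ (smooth on the Stiefel level, then descended to the quotient via Theorem~\ref{smooth_quotient}), and finally the extreme value theorem (Theorem~\ref{compact_maxmin}). The paper's proof is slightly terser and does not spell out the well-definedness discussion, but the argument is the same.
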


\begin{proof}
  Observe that we want to find a Slater determinant in $\gr{\NA}{W_{\alpha}} \x \gr{\NB}{W_{\beta}}$ that minimizes the energy.
  However, the manifold $\gr{\NA}{W_{\alpha}} \x \gr{\NB}{W_{\beta}}$ is a compact topological space, as we proved in Corollary \ref{product_compact}, and the energy function described above depends only on the coefficients $c_{i\mu}^{\gamma}$ and not on the integrals.
  That means the formula for the energy is a polynomial of degree $4$ and, consequently, it is a smooth and continuous function.\footnote{To obtain the smoothness of the energy we are actually using Proposition \ref{smooth_restriction} to obtain the smoothness for the Stiefel and then Theorem \ref{smooth_quotient} to obtain the result for the Grassmannian.}
  Then, by Theorem \ref{compact_maxmin}, which asserts that continuous functions defined in a compact space has minimum and maximum, the result follows.
\end{proof}

And that is it for the basics of Quantum Mechanics.

\subsection{Computing the Energy} \label{energy_derivation}

Here we will provide a detailed derivation of the formula for the energy of a system with $N$ electrons when computed in the set of Slater determinants.
To obtain this formula we will use many tools from Multilinear Algebra, so, if the reader is not familiar with these tools, we suggest reading Section \ref{multilinear_algebra} first.

Recall that the Hilbert space associated to this system is
\begin{equation}
  \what{\bigotimes}^N \qty(L^2(\br^3) \wten V_{\frac{1}{2}})
  \cong L^2(\br^{3N}) \wten \ten{V_{\frac{1}{2}}}{V_{\frac{1}{2}}},
\end{equation}
being
\begin{equation}
  V_{\frac{1}{2}} = \qty{\gamma:\qty{-1/2, 1/2} \to \bc},
\end{equation}
and that the Hamiltonian we are interested in is
\begin{equation}
  \what{H}_{el} \ceq
  - \sum_{i=1}^N \frac{1}{2}\nabla_i^2
  - \sum_{i=1}^N \sum_{j=1}^M \frac{Z_j}{\vb{r}_{ij}}
  + \sum_{1 \leq i < k \leq N} \frac{1}{\vb{r}_{ik}}.
\end{equation}
Note that we are already considering the Born--Oppenheimer approximation and, to shorten the notation, we will use $\vb{r}_{ij}$ and $\vb{r}_{ik}$ instead of $\abs{\vb{r}_i - \vb{R}_j}$ and $\abs{\vb{r}_i - \vb{r}_k}$, respectively.
Also note that this Hamiltonian acts on $L^2(\br^{3N}) \wten \ten{V_{\frac{1}{2}}}{V_{\frac{1}{2}}}$, but we should understand how it acts on $\what{\bigotimes}^N \qty(L^2(\br^3) \otimes V_{\frac{1}{2}})$, since this is the space we will work with.
Another important observation is that the Hamiltonian $\what{H}_{el}$ is actually $\what{H}_{el} \otimes \tenw{\id}{\id}$, but we will keep writing it as $\what{H}_{el}$.
Now, the natural way of switching between these spaces is considering the linear transformation
\begin{align}
  T:\what{\bigotimes}^N \qty(L^2(\br^3) \wten V_{\frac{1}{2}}) & \to L^2\qty(\br^{3N}) \wten \ten{V_{\frac{1}{2}}}{V_{\frac{1}{2}}} \\
  \tenw{\psi_1\gamma_1}{\psi_N\gamma_N} & \mapsto \psi_1 \cdot \ldots \cdot \psi_N \otimes \tenw{\gamma_1}{\gamma_N},
\end{align}
being $\gamma_1,\ldots,\gamma_N \in V_{\frac{1}{2}}$, and then working with the composite $\what{H}_{el} \circ T$.
So, let us assume that the Slater determinants are given by $\wed{v_1}{v_N} \ceq \wed{\phi_1\gamma_1}{\phi_N\gamma_N}$, being $\phi_i \in L^2(\br^3)$ and $\gamma_i \in V_{\frac{1}{2}}$, and that $\braket{\phi_i}{\phi_j} = \delta_{ij}$ because that is what we need in practice.
Now let us compute the energy for these Slater determinants, \ie, find a formula for
\begin{equation} \label{raw_energy}
  \mel{T(\wed{\phi_1\gamma_1}{\phi_N\gamma_N})}
  {\what{H}_{el}}
  {T(\wed{\phi_1\gamma_1}{\phi_N\gamma_N})}.
\end{equation}
First, if we use the definition of elementary wedge product, which is
\begin{equation}
  \wed{\phi_1\gamma_1}{\phi_N\gamma_N} = \frac{1}{\sqrt{N!}} \sum_{\sigma \in S_N}
  \sign(\sigma) \tenw{\phi_{\sigma(1)}\gamma_{\sigma(1)}}{\phi_{\sigma(N)}\gamma_{\sigma(N)}},
\end{equation}
and then use the linearity of the inner product, \ref{raw_energy} becomes
{\small
  \begin{equation}
    \frac{1}{N!} \sum_{\sigma,\tau \in S_N} \sign(\sigma \circ \tau)
    \mel{T\qty(\tenw{\phi_{\sigma(1)}\gamma_{\sigma(1)}}
      {\phi_{\sigma(N)}\gamma_{\sigma(N)}})}
    {\what{H}_{el}}
    {T\qty(\tenw{\phi_{\tau(1)}\gamma_{\tau(1)}}
      {\phi_{\tau(N)}\gamma_{\tau(N)}})}.
  \end{equation}
}%
Recall that $\sign(\sigma)\sign(\tau) = \sign(\sigma \circ \tau)$ (Proposition \ref{sign_composition}).
Now, applying $T$:
{\footnotesize\begin{equation}
  \frac{1}{N!} \sum_{\sigma,\tau \in S_N} \sign(\sigma \circ \tau)
  \mel{\phi_{\sigma(1)} \cdot \ldots \cdot \phi_{\sigma(N)}
    \otimes \tenw{\gamma_{\sigma(1)}}{\gamma_{\sigma(N)}}}
  {\what{H}_{el}}
  {\phi_{\tau(1)} \cdot \ldots \cdot \phi_{\tau(N)}
    \otimes \tenw{\gamma_{\tau(1)}}{\gamma_{\tau(N)}}}.
\end{equation}}%
Finally, since the inner product of the tensor product is the product of the inner products, we obtain:
\begin{multline} \label{first_energy}
  \frac{1}{N!} \sum_{\sigma,\tau \in S_N} \sign(\sigma \circ \tau)
  \braket{\gamma_{\sigma(1)}}{\gamma_{\tau(1)}}
  \ldots
  \braket{\gamma_{\sigma(N)}}{\gamma_{\tau(N)}} \\
  \intl_{\br^{3N}}
  \phi_{\sigma(1)}(\vb{r}_1) \ldots \phi_{\sigma(N)}(\vb{r}_N)
  \what{H}_{el}
  \phi_{\tau(1)}(\vb{r}_1) \ldots \phi_{\tau(N)}(\vb{r}_N)
  \dd\vb{r}_1 \ldots \dd\vb{r}_N.
\end{multline}
Now, if we consider the one-electron operator
\begin{equation}
  H(\vb{r})\phi(\vb{r})
  \ceq -\frac{1}{2}\nabla^2\phi(\vb{r})
  - \sum_{j=1}^M \frac{Z_j}{\abs{\vb{r} - \vb{R}_j}}\phi(\vb{r}),
\end{equation}
we can write $\what{H}_{el}$ as
\begin{equation}
  \sum_{i=1}^N H(\vb{r}_i) + \sum_{1 \leq i < k \leq N} \frac{1}{\vb{r}_{ik}}
  = \sum_{i=1}^N H(\vb{r}_i) + \frac{1}{2}\sum_{^{i,k=1}_{k \neq i}}^N \frac{1}{\vb{r}_{ik}},
\end{equation}
being $H(\vb{r}_i)$ actually $\id \otimes \ldots \otimes H(\vb{r}_i) \otimes \ldots \otimes \id$.
Computing the integral above separating $\what{H}_{el}$ in this sum, we first obtain, for the one-electron,
\begin{multline} \label{one_electron}
  \intl_{\br^{3N}}
  \phi_{\sigma(1)}(\vb{r}_1) \ldots \phi_{\sigma(N)}(\vb{r}_N) H(\vb{r}_i)
  \phi_{\tau(1)}(\vb{r}_1) \ldots \phi_{\tau(N)}(\vb{r}_N)
  \dd\vb{r}_1 \ldots \dd\vb{r}_N \\
  =
  \intl_{\br^3} \phi_{\sigma(1)}(\vb{r}_1)\phi_{\tau(1)}(\vb{r}_1)\dd\vb{r}_1
  \cdot \ldots \cdot
  \intl_{\br^3}
  \phi_{\sigma(i)}(\vb{r}_i)H(\vb{r}_i)\phi_{\tau(i)}(\vb{r}_i)\dd\vb{r}_i
  \cdot \ldots \cdot
  \intl_{\br^3} \phi_{\sigma(N)}(\vb{r}_N)\phi_{\tau(N)}(\vb{r}_N)\dd\vb{r}_N \\
  =
  \braket{\phi_{\sigma(1)}}{\phi_{\tau(1)}}
  \cdot \ldots \cdot
  \intl_{\br^3}
  \phi_{\sigma(i)}(\vb{r}_i)H(\vb{r}_i)\phi_{\tau(i)}(\vb{r}_i)\dd\vb{r}_i
  \cdot \ldots \cdot
  \braket{\phi_{\sigma(N)}}{\phi_{\tau(N)}} \\
  = \delta_{\sigma\tau} \intl_{\br^3} \phi_{\sigma(i)}(\vb{r}_i) H(\vb{r}_i)
  \phi_{\tau(i)}(\vb{r}_i) \dd\vb{r}_i,
\end{multline}
since $\braket{\phi_i}{\phi_j} = \delta_{ij}$ by hypothesis, and the other integral is a little more tricky to analyze, but the result is:
\begin{multline} \label{two_electrons}
  \intl_{\br^{3N}}
  \phi_{\sigma(1)}(\vb{r}_1) \ldots \phi_{\sigma(N)}(\vb{r}_N)
  \frac{1}{\vb{r}_{ik}}
  \phi_{\tau(1)}(\vb{r}_1) \ldots \phi_{\tau(N)}(\vb{r}_N)
  \dd\vb{r}_1 \ldots \dd\vb{r}_N \\
  =
  \braket{\phi_{\sigma(1)}}{\phi_{\tau(1)}}
  \cdot \ldots \cdot
  \qty(\intl_{\br^6} \phi_{\sigma(i)}(\vb{r}_i)\phi_{\sigma(k)}(\vb{r}_k)
  \frac{1}{\vb{r}_{ik}}
  \phi_{\tau(i)}(\vb{r}_i)\phi_{\tau(k)}(\vb{r}_k)
  \dd\vb{r}_i\dd\vb{r}_k)
  \cdot \ldots \cdot
  \braket{\phi_{\sigma(N)}}{\phi_{\tau(N)}} \\
  =
  \begin{cases}
    \intl_{\br^6}
    \phi_{\sigma(i)}(\vb{r}_i)\phi_{\sigma(k)}(\vb{r}_k)
    \frac{1}{\vb{r}_{ik}}
    \phi_{\tau(i)}(\vb{r}_i)\phi_{\tau(k)}(\vb{r}_k) \dd\vb{r}_i\dd\vb{r}_k,
    & \text{if $\sigma = \tau$ in $\qty{1, \ldots, N}$} \ssm \qty{i, k}, \\
    0,
    & \text{otherwise}.
  \end{cases}
\end{multline}
Let us now define and analyze
\begin{equation}
  \Xi \ceq \sum_{\sigma \in S_N} \sum_{i=1}^N
  \intl_{\br^3} \phi_{\sigma(i)}(\vb{r}_i)
  H(\vb{r}_i)
  \phi_{\sigma(i)}(\vb{r}_i) \dd\vb{r}_i
\end{equation}
and
\begin{equation} \label{upsilon}
  \Upsilon \ceq \frac{1}{2} \sum_{\sigma \in S_N} \sum_{^{i,k=1}_{k \neq i}}^N
  \intl_{\br^6}
  \phi_{\sigma(i)}(\vb{r}_i)\phi_{\sigma(k)}(\vb{r}_k) \frac{1}{\vb{r}_{ik}}
  \qty(\phi_{\sigma(i)}(\vb{r}_i)\phi_{\sigma(k)}(\vb{r}_k)
  - \braket{\gamma_{\sigma(k)}}{\gamma_{\sigma(i)}}\phi_{\sigma(k)}(\vb{r}_i)\phi_{\sigma(i)}(\vb{r}_k)) \dd\vb{r}_i\dd\vb{r}_k
\end{equation}
because, using \ref{one_electron} and \ref{two_electrons}, we have that
\begin{multline}
  \frac{1}{N!} \sum_{\sigma,\tau \in S_N} \sign(\sigma \circ \tau)
  \braket{\gamma_{\sigma(1)}}{\gamma_{\tau(1)}}
  \ldots
  \braket{\gamma_{\sigma(N)}}{\gamma_{\tau(N)}} \\
  \intl_{\br^{3N}}
  \phi_{\sigma(1)}(\vb{r}_1) \ldots \phi_{\sigma(N)}(\vb{r}_N)
  \what{H}_{el}
  \phi_{\tau(1)}(\vb{r}_1) \ldots \phi_{\tau(N)}(\vb{r}_N)
  \dd\vb{r}_1 \ldots \dd\vb{r}_N
  = \frac{1}{N!}(\Xi + \Upsilon).
\end{multline}
First observe that
\begin{equation}
  \intl_{\br^3} \phi(\vb{r}_i) H(\vb{r}_i) \phi(\vb{r}_i) \dd\vb{r}_i
  =
  \intl_{\br^3} \phi(\vb{r}_j) H(\vb{r}_j) \phi(\vb{r}_j) \dd\vb{r}_j
\end{equation}
for $i,j=1,\ldots,N$.
Consequently,
\begin{align}
  \sum_{\sigma \in S_N}
  \intl_{\br^3} \phi_{\sigma(j)}(\vb{r}_j)H(\vb{r}_j)\phi_{\sigma(j)}(\vb{r}_j) \dd\vb{r}_j
  & = \sum_{\sigma \in S_N}
    \intl_{\br^3} \phi_{\sigma(j)}(\vb{r})H(\vb{r})\phi_{\sigma(j)}(\vb{r}) \dd\vb{r} \\
  & = \sum_{j=1}^N (N-1)! \intl_{\br^3} \phi_j(\vb{r}) H(\vb{r})
    \phi_j(\vb{r}) \dd{\vb{r}}
\end{align}
and, therefore,
\begin{equation}
  \Xi = \sum_{i=1}^N \sum_{j=1}^N (N-1)!
  \intl_{\br^3} \phi_j(\vb{r}) H(\vb{r}) \phi_j(\vb{r}) \dd\vb{r}
  = N! \sum_{i=1}^N \intl_{\br^3} \phi_i(\vb{r}) H(\vb{r}) \phi_i(\vb{r}) \dd\vb{r}.
\end{equation}
Let us now analyze $\Upsilon$.
First observe that
\begin{multline}
  \sum_{\sigma \in S_N} \intl_{\br^6}
  \phi_{\sigma(i)}(\vb{r}_i)\phi_{\sigma(k)}(\vb{r}_k)
  \frac{1}{\vb{r}_{ik}}
  \qty(\phi_{\sigma(i)}(\vb{r}_i)\phi_{\sigma(k)}(\vb{r}_k)
    - \braket{\gamma_{\sigma(k)}}{\gamma_{\sigma(i)}}
    \phi_{\sigma(k)}(\vb{r}_i)\phi_{\sigma(i)}(\vb{r}_k))
  \dd\vb{r}_i\dd\vb{r}_k \\
  = \sum_{l=1}^N \sum_{\sigma \in (S_{\qty{1, \ldots, N} \ssm \qty{l}})}
  \intl_{\br^6}
  \phi_l(\vb{r}_i)\phi_{\sigma(k)}(\vb{r}_k)
  \frac{1}{\vb{r}_{ik}}
  \qty(\phi_l(\vb{r}_i)\phi_{\sigma(k)}(\vb{r}_k)
    - \braket{\gamma_{\sigma(k)}}{\gamma_l}
    \phi_{\sigma(k)}(\vb{r}_i)\phi_l(\vb{r}_k))
  \dd\vb{r}_i\dd\vb{r}_k.
\end{multline}
However, if we fix $l$ and compute the integral summing over $\sigma \in S_{\qty{1, \ldots, N} \ssm \qty{l}}$, we obtain
\begin{equation}
  (N-2)! \sum_{^{j=1}_{j \neq l}}^N \intl_{\br^6} \phi_l(\vb{r}_i)\phi_j(\vb{r}_k)
  \frac{1}{\vb{r}_{ik}} \qty(\phi_l(\vb{r}_i)\phi_j(\vb{r}_k)
    - \braket{\gamma_j}{\gamma_l}\phi_j(\vb{r}_i)\phi_l(\vb{r}_k))
  \dd\vb{r}_i\dd\vb{r}_k.
\end{equation}
If we substitute this in the original integral \ref{upsilon}, we obtain
\begin{equation}
  \frac{(N-2)!}{2} \sum_{i=1}^N \sum_{l=1}^N \sum_{^{k=1}_{k \neq i}}^N
  \sum_{^{j=1}_{j \neq l}}^N \intl_{\br^6} \phi_l(\vb{r}_i)\phi_j(\vb{r}_k)
  \frac{1}{\vb{r}_{ik}} \qty(\phi_l(\vb{r}_i)\phi_j(\vb{r}_k)
  - \braket{\gamma_j}{\gamma_l}\phi_j(\vb{r}_i)\phi_l(\vb{r}_k))
  \dd\vb{r}_i\dd\vb{r}_k.
\end{equation}
Now observe that, if we fix $i$, $l$ and $j$, the integral does not change if we ``ignore'' the variable $\vb{r}_k$ we are integrating over.
So, switching $\vb{r}_k$ for $\vb{r}_j$, the summation over $k$ vanishes and, since this summation appears $N-1$ times, we obtain
\begin{equation}
  \frac{(N-2)!}{2} \sum_{i=1}^N \sum_{l=1}^N \sum_{^{j=1}_{j \neq l}}^N (N-1)
  \intl_{\br^6} \phi_l(\vb{r}_i)\phi_j(\vb{r}_j) \frac{1}{\vb{r}_{ij}}
  \qty(\phi_l(\vb{r}_i)\phi_j(\vb{r}_j)
  - \braket{\gamma_j}{\gamma_l}\phi_j(\vb{r}_i)\phi_l(\vb{r}_j))
  \dd\vb{r}_i\dd\vb{r}_j.
\end{equation}
The same is true for $i$: if we fix the other variables, the integral does not depend on $\vb{r}_i$.
However, this time the integral appears $N$ times and we obtain
\begin{equation}
  \frac{(N-2)!}{2} \sum_{l=1}^N \sum_{^{j=1}_{j \neq l}}^N N(N-1)
  \intl_{\br^6} \phi_l(\vb{r}_l)\phi_j(\vb{r}_j) \frac{1}{\vb{r}_{lj}}
  \qty(\phi_l(\vb{r}_l)\phi_j(\vb{r}_j)
  - \braket{\gamma_j}{\gamma_l}\phi_j(\vb{r}_l)\phi_l(\vb{r}_j))
  \dd\vb{r}_l\dd\vb{r}_j.
\end{equation}
Finally, if we write the variables of integration as $\vb{r}_1$ and $\vb{r}_2$, if we factor out $N(N-1)$, and if we go back to indices $i$ and $j$, we obtain
\begin{equation}
  \Upsilon = \frac{N!}{2} \sum_{i=1}^N \sum_{^{j=1}_{j \neq i}}^N
  \intl_{\br^6} \phi_{i}(\vb{r}_1)\phi_{j}(\vb{r}_2) \frac{1}{\vb{r}_{12}}
  \qty(\phi_{i}(\vb{r}_1)\phi_{j}(\vb{r}_2)
  - \braket{\gamma_j}{\gamma_i}\phi_{j}(\vb{r}_1)\phi_{i}(\vb{r}_2))
  \dd{\vb{r}_1}\dd{\vb{r}_2}.
\end{equation}
Therefore, summing this with the first integral and noticing that we do not need the restriction $i \neq j$ anymore, we obtain the final expression for the energy:
{\small\begin{equation} \label{final_energy}
  \sum_{i=1}^N \intl_{\br^3} \phi_{i}(\vb{r})
  H(\vb{r})
  \phi_{i}(\vb{r})
  \dd\vb{r}
  +
  \frac{1}{2} \sum_{i,j=1}^N \intl_{\br^6}
  \phi_{i}(\vb{r}_1)\phi_{j}(\vb{r}_2)
  \frac{1}{\vb{r}_{12}}
  \qty(\phi_{i}(\vb{r}_1)\phi_{j}(\vb{r}_2)
  - \braket{\gamma_j}{\gamma_i}\phi_{j}(\vb{r}_1)\phi_{i}(\vb{r}_2))
  \dd\vb{r}_1\dd\vb{r}_2.
\end{equation}}%
To conclude, observe that, if we write $\phi_i$ as a linear combination $c_{1j}^{\gamma}\psi_1^{\gamma} + \ldots + c_{nj}^{\gamma}\psi_{d_{\gamma}}^{\gamma}$, being $\gamma \in \qty{\alpha, \beta}$, then we can expand the formula above and obtain the desired expression for the energy:
\begin{multline}
  \sum_{\mu=1}^{\NA} \sum_{i,j=1}^{d_{\alpha}} c_{i\mu}^{\alpha}c_{j\mu}^{\alpha}
  \intl_{\br^3} \psi_i^{\alpha}(\vb{r})H(\vb{r})\psi_j^{\alpha}(\vb{r}) \dd\vb{r}
  +
  \sum_{\mu=1}^{\NB} \sum_{i,j=1}^{d_{\beta}} c_{i\mu}^{\beta}c_{j\mu}^{\beta}
  \intl_{\br^3} \psi_i^{\beta}(\vb{r})H(\vb{r})\psi_j^{\beta}(\vb{r}) \dd\vb{r} \\
  +
  \frac{1}{2} \sum_{\mu,\nu=1}^{\NA} \sum_{i,j,k,l=1}^{d_{\alpha}}
  c_{i\mu}^{\alpha}c_{j\nu}^{\alpha}c_{k\mu}^{\alpha}c_{l\nu}^{\alpha}
  \intl_{\br^6} \psi_i^{\alpha}(\vb{r}_1)\psi_j^{\alpha}(\vb{r}_2)
  \frac{1}{\vb{r}_{12}}
  \qty(\psi_k^{\alpha}(\vb{r}_1)\psi_l^{\alpha}(\vb{r}_2)
  - \psi_l^{\alpha}(\vb{r}_1)\psi_k^{\alpha}(\vb{r}_2))
  \dd\vb{r}_1\dd\vb{r}_2 \\
  + \frac{1}{2} \sum_{\mu,\nu=1}^{\NB} \sum_{i,j,k,l=1}^{d_{\alpha}}
  c_{i\mu}^{\beta}c_{j\nu}^{\beta}c_{k\mu}^{\beta}c_{l\nu}^{\beta}
  \intl_{\br^6} \psi_i^{\beta}(\vb{r}_1)\psi_j^{\beta}(\vb{r}_2)
  \frac{1}{\vb{r}_{12}}
  \qty(\psi_k^{\beta}(\vb{r}_1)\psi_l^{\beta}(\vb{r}_2)
  - \psi_l^{\beta}(\vb{r}_1)\psi_k^{\beta}(\vb{r}_2))
  \dd\vb{r}_1\dd\vb{r}_2 \\
  + \sum_{\mu=1}^{\NA} \sum_{\nu=1}^{\NB}
  \sum_{i,k=1}^{d_{\alpha}} \sum_{j,l=1}^{d_{\beta}}
  c_{i\mu}^{\alpha}c_{j\nu}^{\beta}c_{k\mu}^{\alpha}c_{l\nu}^{\beta}
  \intl_{\br^6} \psi_i^{\alpha}(\vb{r}_1)\psi_j^{\beta}(\vb{r}_2)
  \frac{1}{\vb{r}_{12}}
  \psi_k^{\alpha}(\vb{r}_1)\psi_l^{\beta}(\vb{r}_2)
  \dd\vb{r}_1\dd\vb{r}_2.
\end{multline}
Notice that in the last integral we do not have the term $\braket{\gamma_j}{\gamma_i}\phi_j(\vb{r}_1)\phi_i(\vb{r}_2)$ because the spin is always swapped, \ie, $\braket{\gamma_j}{\gamma_i} = \braket{\alpha}{\beta}$ or $\braket{\gamma_j}{\gamma_i} = \braket{\beta}{\alpha}$, which is $0$ in both cases.


\chapter{Riemannian Optimization} \label{optimization}

\epigraph{Premature optimization is the root of all evil.}{Donald Knuth}

\section{Introduction}

In this chapter we will present Riemannian Optimization to the reader and then implement three algorithms to solve the Hartree--Fock Method: Gradient Descent, Newton--Raphson and Conjugate Gradient.
The results we obtained and a comparison with other methods can be found in the Results section (\ref{results}).
Also, the main references for this chapter are \cite{boumal2022, edelman1998, smith2014}.

\subsection{Riemannian Optimization}

Riemannian Optimization is a subfield of Mathematical Optimization that aims to generalize optimization methods from Euclidean spaces to Riemannian manifolds.
Consequently, in theory Riemannian Optimization allows us to tackle a wider number of problems, but this should be taken with a grain of salt because in practice one usually starts with a traditional constrained optimization problem instead of a function defined in an explicit manifold and, in order to implement a Riemannian algorithm, it is necessary to encode the constraints as manifold embedded in an Euclidean space or as a quotient of an embedded manifold.
This encoding can be hard to obtain, but if one has a problem defined in an Euclidean space and the constraints are given by a smooth function, then the chances that this is actually a problem defined on a Riemannian manifold are very high.
To see why this happens, consider the following problem:
\begin{equation} \label{min_problem}
  \text{minimize $\ol{f}:\br^d \to \br$ subject to $c(x) = 0$, being $c:\br^d \to \br^k$.}
\end{equation}
Then, if $c$ satisfies some hypothesis,\footnote{Since we are considering just smooth manifolds in this work, one of the hypothesis is that $c$ should be smooth and another one is that $0$ should be a regular value so we can use the Preimage Theorem \ref{preimage_theorem}.} $X = c^{-1}(0)$ is a manifold and we obtain the following equivalent optimization problem defined on a manifold:
\begin{equation} \label{riemannian_problem}
  \text{minimize $f:X \to \br$, being $f = \ol{f}|_X$}.
\end{equation}
A simple example is the sphere $X = \bs^{d-1}$, which can be encoded as $c(x) = \braket{x} - 1$, but the reader can check a vast list of manifolds for which there is already an implementation of many Riemannian algorithms at the website of the Manopt package \cite{boumal2014}.
By the way, it is also possible to consider the case in which there is more than one function encoding the constraints, say $\qty{c_1, \ldots, c_m}$, but then one should be more careful because $\bigcap_{i=1}^m c_i^{-1}(0)$ is not necessarily a manifold.\footnote{If each $c_i^{-1}(0)$ is a manifold and they intersect transversally, then the intersection is a manifold (see \cite[Theorem 6.30]{lee2012}), but otherwise it is more difficult to analyze.}
Now, it should be emphasized that in some sense Riemannian Optimization is more powerful than regular constrained optimization because we can consider manifolds which are not (easily) described by constraint functions and, as already mentioned in the Introduction, we can also think of it as a generalization of unconstrained optimization.
And the Grassmannian, at least in the parametrization we are considering in this work, is one example of a manifold that is not described by constraint functions.

Moving forward, in the present work we will implement the three algorithms described in the seminal papers \cite{edelman1998, smith2014}: Gradient Descent, Newton--Raphson and Conjugate Gradient.
The goal of these methods is to solve optimization problems posed as \ref{riemannian_problem} and the main idea behind them is that they find \emph{stationary points} (also called \emph{critical points}) of $f$ (usually called \emph{cost} or \emph{objective function})\index{cost function}\index{objective function} by iteratively finding directions in which the function is minimized (known as \emph{descent directions})\index{descent direction} and then updating the point by moving in that direction.\footnote{From now on we are always going to consider minimization problems but the exact same ideas works for maximization problems if we multiply the cost function by $-1$.}
One may then ask what are the differences between these methods and the only one is in how they find the descent directions.
At the core, many computational methods (not just optimization methods) are obtained by approximating a function with its Taylor expansion because this allows one to move from a continuous problem to a discrete problem.
In the case of these three algorithms the same happens: all of them can be described using a Taylor expansion of the cost function, as we will see in a moment.
Now, what is common in all of them is that they find the descent direction by solving a system of linear equations and we can describe them using the following blueprint:
\begin{algo}[Blueprint] \label{blueprint}
  Given a manifold $X$, a function $f:X \to \br$ and a starting point $x_0 \in X$, the goal of these algorithms is to construct a sequence $\qty{x_k \in X : k \in \bn}$ that converges to a stationary point by following these two steps:
  \begin{enumerate}

  \item Solve $B_kv_k = y_k$, being $B_k$ a matrix and $v_k, y_k$ vectors.

  \item Update the point by moving in the direction $v_k$.

  \end{enumerate}
  At the end, under some hypothesis on $X$, $f$ and $x_0$, the sequence converges to a local minimum, \ie, a point $x^* \in X$ such that $f(x^*) \leq f(x)$ for $x \in U$, being $U$ an open set containing $x^*$.
\end{algo}

\begin{obs}
  Now is a good time to make some disclaimers.
  Using numerical methods to solve real-world problems is very hard and, although theorems asserting the convergence of the three algorithms we are going to study here exist, the goal of the present work is to implement these algorithms and see how they perform in the problem we are interested in.
  That means we will not prove the convergence of these algorithms, but, again, the proofs exist and the interested reader can check them in the paper \cite{smith2014} or in the book \cite{boumal2022}, which has more in-depth discussion.
  The reader may be skeptical about this approach, but the two main hypothesis of the convergence theorems are satisfied in our case: the manifold we are working with is a complete manifold (Corollary \ref{grassmannian_complete}) and the function we want to minimize is smooth (Theorem \ref{hf_welldefined}).
  A third hypothesis, which is the assumption that the starting point is close to a stationary point so the sequence can converge to it, is very unrealistic because (1) in practice we do not know where a stationary point is and we are actually using these algorithms to find one; and (2) the open set that the starting point should be in to converge to the stationary point is obtained indirectly in the proofs of the convergence theorems.
  That means we know the open set exists, but we do not know, in general, an explicit radius that gives us a ball containing the stationary point and contained in the open set.
  Consequently, the existence of this open set does not help in practice at all.
  So, from now on the reader can assume that the manifolds are complete and that the cost functions are smooth or at least $C^2$ (\ie, the second-order partial derivatives exists and are continuous).
\end{obs}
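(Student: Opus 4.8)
The plan is to verify the three hypotheses on which the convergence theorems of \cite{smith2014, boumal2022} rest, taking them one at a time, and then to explain honestly why the third is of no practical use. First I would dispose of completeness, which is immediate: by Corollary \ref{grassmannian_complete} the search space $\gr{\NA}{W_{\alpha}} \x \gr{\NB}{W_{\beta}}$ is a complete Riemannian manifold, being compact (Corollary \ref{product_compact}) and connected, so that Hopf--Rinow applies. Completeness is exactly what is needed here, since it forces $\mc{O} = TX$, \ie{} the Riemannian exponential — and hence the retraction that all three algorithms use to update the iterate — is defined on the whole tangent bundle, so the sequence $\qty{x_k}$ never falls off the manifold.

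Next I would settle the smoothness hypothesis. Here I would reuse the proof of Theorem \ref{hf_welldefined}: once the cost function $f = E \circ \Pl$ is written in the coordinates $c_{i\mu}^{\gamma}$ it is a polynomial of degree $4$, and a polynomial on $\mat{d}{N}$ is $C^{\infty}$; by Proposition \ref{smooth_restriction} together with Theorem \ref{smooth_quotient} this smoothness descends to the product of Grassmannians. This is strictly more than the second hypothesis demands, which only asks for $C^2$ regularity so that the Hessian appearing in the Newton--Raphson and Conjugate Gradient steps is well-defined and continuous.

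The hard part will be the third hypothesis. The convergence statements are purely local: they assert the existence of an open neighbourhood $U$ of a stationary point $x^*$ such that any $x_0 \in U$ yields a sequence converging to $x^*$. My plan here is emphatically \emph{not} to enlarge or compute $U$, but to explain why it cannot serve as a starting criterion. In the proofs this $U$ arises from a contraction / fixed-point argument applied to the update map near $x^*$, manufactured from the continuity of the Hessian and the invertibility of $\hess{f}(x^*)$ rather than from any explicit radius; the argument is existential. Consequently, although $U$ provably exists, no a priori bound on its size is available, and since $x^*$ is itself unknown (it is the very point the algorithm is searching for), the membership $x_0 \in U$ can never be checked in advance.

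I expect this last point to be the main obstacle, but it is an obstacle to practical certification, not to the mathematics. The honest conclusion I would record is therefore threefold: the completeness and (indeed $C^{\infty}$) smoothness hypotheses hold in our setting and may be assumed freely from now on, while the proximity hypothesis, though true, is non-constructive and hence cannot be verified a priori. This justifies treating local convergence as a theoretical guarantee to be tested empirically in Section \ref{results} rather than as a condition we certify before running the methods.
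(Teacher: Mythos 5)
Your verification follows exactly the paper's own line: completeness via Corollary \ref{grassmannian_complete} (compactness plus connectedness feeding into Hopf--Rinow), smoothness via the degree-$4$ polynomial argument of Theorem \ref{hf_welldefined} descending through Proposition \ref{smooth_restriction} and Theorem \ref{smooth_quotient}, and the same diagnosis of the proximity hypothesis as existentially established but non-constructive and hence uncheckable in practice. The extra gloss you add (completeness forcing $\mc{O} = TX$, the fixed-point origin of the neighbourhood $U$) is a correct elaboration rather than a different route, so there is nothing to amend.
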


Getting back to the blueprint, we stated the two steps in a very informal way, so, let us formalize them.
Given $f:X \to \br$ and a retraction $R:TX \to X$, if we define $\what{f} \ceq f \circ R$, then a second-order Taylor expansion of $\what{f}$ around $(x, v) \in TX$ is given by
\begin{equation} \label{taylor}
  \what{f}(x,v)
  \approx f(x) + \braket{\grad{f}(x)}{v}_x + \frac{1}{2}\braket{\hess{f}(x)(v)}{v}_x.
\end{equation}
Now, recall that we want to find a descent direction and, to achieve this, we will try to find a vector $v$ that minimizes the right-hand side, which we will call $m_x$.
So, since
\begin{equation} \label{grad_mx}
  \grad{m_x}(v) = \grad{f}(x) + \hess{f}(x)(v),
\end{equation}
a critical point of $m_x$ should be a solution to
\begin{equation} \label{newton_system}
  \hess{f}(x)(v) = -\grad{f}(x)
\end{equation}
and this is the equation we have to solve in the Newton--Raphson Method.
Observe that in this case $B_k = \hess{f}(x_k)$, $y_k = -\grad{f}(x_k)$ and $x_{k+1} = R(x_k, v_k)$, being $v_k$ the solution to the equation above.
The other methods are variations of this.
In the Gradient Descent, \eg, we replace the Hessian by $B_k = t_k \cdot \Id$, with $t_k > 0$, in which case the system above is trivial and the descent direction is just $-\frac{1}{t_k}\grad{f}(x_k)$.
And in the Conjugate Gradient we solve Equation \ref{newton_system} above using an approximation we will explain in Section \ref{conjugate_gradient}.
It is worth mentioning that $B$ should be positive-definite and self-adjoint because otherwise Equation \ref{grad_mx} does not hold and a minimizer to $m_x$ does not necessarily exist, which already shows a disadvantage of Newton--Raphson, since the Hessian is not necessarily positive-definite when $x_k$ is far from a critical point (in practice we observed that Newton--Raphson diverges when the starting point is far from a critical point).
Another important observation is that, although we described the update using an arbitrary retraction $R$, we will just use the Riemannian exponential, which means the points are updated by moving along a geodesic.
And that is it, these are the key features of Riemannian methods that distinguishes them from regular constrained optimization methods: the descent direction is in the tangent space to the manifold instead of being a direction that may lead the sequence to outside the manifold and updating the point along a geodesic also keeps the sequence in the manifold.
That means both steps respects the (geometry of the) constraints given by the problem.

The following figure illustrates how these algorithms works.
In this figure the manifold is $\gr{1}{3}$, also known as \emph{Real Projective Plane}, and the red curve represents what a geodesic in this manifold looks like.
\begin{center}
  \begin{tikzpicture}
    \draw[semithick] (1, 0) arc (0:180:1);
    \draw[semithick] (0, 0) ellipse (1 and 0.3);
    \draw[semithick] (1, -0.8) arc (0:250:1cm and 0.3cm);
    \draw[semithick] (0.88, -1.2) arc (10:133:0.9cm and 0.3cm);
    \draw[semithick, -] (-0.34, -1.63) to[out=180, in=-50] (-1, -1.39);
    \draw[semithick, -] (-0.34, -1.1) -- (-0.34, -1.63);
    \draw[semithick, -] (1, -0.8) -- (1, -1.5);
    \draw[semithick, -] (-1, -0.8) -- (-1, -1.4);
    \draw[semithick, -] (1, -0.8) to[out=270, in=-4] (-0.35, -1.63);
    \draw[semithick, -] (1, -1.5) to[out=190, in=-5] (-0.35, -1.08);
    \draw (0, 1.3) node {$x_k$};
    \filldraw (0, 1) circle (1pt);
  \end{tikzpicture} \qquad
  \begin{tikzpicture}
    \draw[semithick] (1, 0) arc (0:180:1);
    \draw[semithick] (0, 0) ellipse (1 and 0.3);
    \draw[semithick] (1, -0.8) arc (0:250:1cm and 0.3cm);
    \draw[semithick] (0.88, -1.2) arc (10:133:0.9cm and 0.3cm);
    \draw[semithick, -] (-0.34, -1.1) -- (-0.34, -1.63);
    \draw[semithick, -] (-0.34, -1.63) to[out=180, in=-50] (-1, -1.39);
    \draw[semithick, -] (1, -0.8) -- (1, -1.5);
    \draw[semithick, -] (-1, -0.8) -- (-1, -1.4);
    \draw[semithick, -] (1, -0.8) to[out=270, in=-4] (-0.35, -1.63);
    \draw[semithick, -] (1, -1.5) to[out=190, in=-5] (-0.35, -1.08);
    \draw (0, 1.3) node {$x_k$};
    \filldraw (0, 1) circle (1pt);
    \draw[semithick, -] (-1.1, 0.7) -- (-0.9, 1.1);
    \draw[semithick, -] (-0.9, 1.1) -- (1.1, 1.1);
    \draw[semithick, -] (-1.1, 0.7) -- (0.93, 0.7);
    \draw[semithick, -] (0.93, 0.7) -- (1.1, 1.1);
    \draw[red, ->] (0, 1) -- (0.3, 0.8);
    \draw (0, 0.83) node {\scriptsize $v_k$};
  \end{tikzpicture} \qquad
  \begin{tikzpicture}
    \draw[semithick] (1, 0) arc (0:180:1);
    \draw[semithick] (0, 0) ellipse (1 and 0.3);
    \draw[semithick] (1, -0.8) arc (0:250:1cm and 0.3cm);
    \draw[semithick] (0.88, -1.2) arc (10:133:0.9cm and 0.3cm);
    \draw[semithick, -] (-0.34, -1.63) to[out=180, in=-50] (-1, -1.39);
    \draw[semithick, -] (-0.34, -1.1) -- (-0.34, -1.63);
    \draw[semithick, -] (1, -0.8) -- (1, -1.5);
    \draw[semithick, -] (-1, -0.8) -- (-1, -1.4);
    \draw[semithick, -] (1, -0.8) to[out=270, in=-4] (-0.35, -1.63);
    \draw[semithick, -] (1, -1.5) to[out=190, in=-5] (-0.35, -1.08);
    \draw[semithick, red] (0.8, -0.18) arc (0:80:1cm and 1.2cm);
    \draw (0, 1.3) node {$x_k$};
    \filldraw (0, 1) circle (1pt);
    \draw (1.2, -0.4) node {$x_{k+1}$};
    \filldraw (0.8,  -0.18) circle (1pt);
  \end{tikzpicture}
\end{center}

Some of the motivations for using Riemannian Optimization are given by the fact that respecting the geometry of an optimization problem can lead to the design of more principled algorithms and, as practitioners observed along the years \cite[Chapter 1]{boumal2022}, to better results.
Other important, but less quantitative, motivations are: (1) these algorithms are aesthetically pleasing; (2) they can provide deeper insights about numerical methods that would be hard to obtain otherwise or would be unthinkable in the Euclidean constrained optimization framework.
A good example of the latter in Quantum Chemistry is done in \cite{aoto2022} and it is one of the doors that this work aims to open, \ie, we would like to spark the curiosity of the reader to look for geometry in unexpected fields.
Some other examples of optimization problems defined on Riemannian manifolds can be found in \cite[Chapter 2]{boumal2022}, but it should be mentioned that implementing Riemannian algorithms can be quite tricky because, since the manifolds are usually embedded in an Euclidean space or are quotient of embedded manifolds, the distinction between working in the manifold and in the extrinsic Euclidean space can become very blurred.
That means a good grasp of geometry is essential to implement the algorithms correctly.

\section{Algorithms} \label{algorithms}

The goal now is to be quite explicit on how to implement the three algorithms we are interested, but in the present work we will consider just the case in which the manifold is the product of two Grassmannians because this is the manifold we are going to use in the Hartree--Fock Method.
It should be mentioned, though, that the Manopt package \cite{boumal2014} was a source of inspiration for the design of the pseudocodes we will see and these pseudocodes can be adapted to work with other manifolds.

An advantage of writing the pseudocodes for arbitrary Grassmannians and cost functions instead of considering just the Hartree--Fock Method is that the reader can just plug the partial derivatives of any cost function into the pseudocode to obtain an optimization procedure to their particular case (as long as the cost function is defined in the product of two Grassmannians, of course).
So, for example, the reader interested in the Density Functional Theory framework can extend the pseudocodes presented here to work with any functional as long as the partial derivatives are known and another interesting optimization problem defined on the product of two Grassmannians is the search for the Slater determinant that minimizes the distance to the Full Configuration Interaction wave function.
There are some very interesting physical and chemical interpretations behind this problem, see \cite{aoto2020, ding2020}.

Moving to the practical aspects, recall that we are considering the Grassmannian as a quotient of the Stiefel manifold, \ie, given the manifold
\begin{equation}
  \st{N}{d} \ceq \qty{C \in \mat{d}{N} : C^TSC = \Id_N},
\end{equation}
being $S$ a symmetric and positive-definite matrix, and given the orthogonal group
\begin{equation}
  O(N) \ceq \qty{M \in \mat{N}{N} : M^{\top}M = MM^{\top} = \Id_N},
\end{equation}
the Grassmannian is defined as
\begin{equation}
  \gr{N}{d} \ceq \st{N}{d} / O(N).
\end{equation}
Another important concept we will need is the lifting of functions defined in the Grassmannian: given $f:\gr{N}{d} \to \br$, the lifting of $f$ is a function $\ol{f}:\st{N}{d} \to \br$ such that $\ol{f} = f \circ \pi$, being $\pi:\st{N}{d} \to \gr{N}{d}$ the canonical projection.
We also need to consider extensions of $\ol{f}$ to the Euclidean space $\mat{d}{N}$, \ie, functions $\oll{f}:\mat{d}{N} \to \br$ such that $\oll{f}(C) = \ol{f}(C)$ for every $C \in \st{N}{d}$.
We will use these three different notations because it allows us to keep track of the space we are working with (Grassmannian, Stiefel or the Euclidean space).

\subsection{Gradient Descent}

As already mentioned in page \pageref{newton_system}, the descent direction of the Gradient Descent is obtained by choosing a positive multiple of the identity instead of the Hessian in the Taylor expansion.
Therefore, the explicit algorithm for an arbitrary manifold is:
\begin{algo}[Riemannian Gradient Descent] \label{rgd}
  Given a Riemannian manifold $X$, a cost function $f:X \to \br$ and a starting point $x_0 \in X$, do the following:
  \begin{enumerate}
    
  \item Compute $\grad{f}(x_k)$.

  \item Update the point according to
    $x_{k+1} \ceq \exp_{x_k}(-t_k\grad{f}(x_k))$ for some $t_k > 0$.

  \end{enumerate}
\end{algo}
There are many strategies to choose $t_k$ and, from a theoretical perspective, the best value is the minimum of the function $t \mapsto f(\exp_{x_k}(-t\grad{f}(x_k)))$.
However, it is extremely difficult to find an explicit solution to this optimization problem when $f$ and $\exp$ are complicated functions, as is our case.
Some of the state-of-the-art techniques to obtain $t_k$ in the Euclidean case, such as Adam, Adagrad, Amsgrad etc, are not so straightforward to adapt to the Riemannian framework (see \cite{becigneul2018}) and we decided to avoid them.
Having said that, we adopted the very common and simple strategy of using small constant values for $t_k$ (the values we have used are in the Results section \ref{results}).
So, now that we have an explicit description of the algorithm, let us see how it works for the product of Grassmannians.

Considering $X = \gr{N_1}{d_1} \x \gr{N_2}{d_2}$ and $f:X \to \br$, fixing $([C_1], [C_2]) \in X$, defining $f_1 \ceq f(\cdot, C_2)$ and $f_2 \ceq f(C_1, \cdot)$, we know from Example \ref{grad_product_grassmannians} that
\begin{equation} \label{gradient}
  \grad{f}([C_1], [C_2])
  = \qty(\proj_1S_1^{-1}\grad{\oll{f}_1}(C_1, C_2), \
  \proj_2S_2^{-1}\grad{\oll{f}_2}(C_1, C_2)),
\end{equation}
being $\proj_i = \Id_{d_i} - C_iC_i^{\top}S_i$ and $\grad\oll{f}_i$ the matrix representation of the partial derivatives of $\oll{f}_i$.
And that is it for the gradient, we already have an expression that can be implemented.
The second step of the algorithm is also straightforward to implement, one just need to recall from Example \ref{exp_grassmannian} that
\begin{equation}
  \exp_{[C_i]}(t\eta_i) = \qty(C_iV_i^{\top}\cos(tD_i) + O_iU_i\sin(tD_i))V_i,
\end{equation}
being $O_i$ a matrix such that $O_i^{\top}S_iO_i = \Id_{d_i}$, and $O_i^{-1}\eta_i = U_iD_iV_i$ a thin Singular Value Decomposition (tSVD).
To conclude, we need a stopping criterion because we cannot handle infinite sequences in practice.
Since we want the gradient to be $0$ at the end, one criterion adopted was checking if the norm of the gradient is (close to) $0$, and the other one was checking if the difference of the cost function computed in two consecutive iterations did not change (much).
So, since the norm we are using is the one induced by the inner product, which was defined in Example \ref{grassmannian_riemannian} for the Grassmannian, we have:
\begin{equation}
  \norm{(\eta_1, \eta_2)}_{\Gr}
  = \sqrt{\tr(\eta_1^{\top}S_1\eta_1) + \tr(\eta_2^{\top}S_2\eta_2)}.
\end{equation}
That is it, we can implement the first algorithm:\footnote{This and all the other pseudocodes were written in a syntax very similar to Python.}
\begin{algbox}{Riemannian Gradient Descent (RGD)}{RGD}
  input: $max\_iter, \ step\_size, \ C_i^{(0)}, \ S_i, \ S_i^{-1}, \ O_i, \ O_i^{-1}, \ i=1,2$
  set: $k, \ tol\_grad, \ tol\_val, \ E_{prev}, \ E^{(0)}, \ E_{diff}, \ \riem{G}^{(0)} \ = \ 0, \ 10^{-8}, \ 10^{-10}, \ \infty, \ 0, \ 1, \ 1$
  precompute: $\Id_{d_1}, \ \Id_{d_2}$

  def rip($(\eta_1, \ \eta_2), \ (\mu_1, \ \mu_2)$): # Riemannian inner product
      return $\tr(\eta_1^{\top}S_1\mu_1) + \tr(\eta_2^{\top}S_2\mu_2)$

  def riemannian_norm($\eta_1, \ \eta_2$):
      return sqrt(rip($(\eta_1, \ \eta_2), \ (\eta_1, \ \eta_2)$))

  def geodesic($C, \ \eta, \ O, \ O^{-1}, \ step\_size$):
      $U, D, V$ = tSVD($O^{-1}\eta$)
      return $(CV^{\top}\cos(step\_size \cdot D) + OU\sin(step\_size \cdot D))V$

  def riemannian_gradient($C_1, \ C_2, \ \euc{G}_1, \ \euc{G}_2$):
      return $(\Id_{d_1} - C_1C_1^{\top}S_1)S_1^{-1}\euc{G}_1, \ (\Id_{d_2} - C_2C_2^{\top}S_2)S_2^{-1}\euc{G}_2$
  
  while $k \leq max\_iter$ and $tol\_grad < \riem{G}^{(k)}$ and $tol\_val < E_{diff}$:
      $E^{(k)}$ = f$([C_1^{(k)}], \ [C_2^{(k)}])$
      $\euc{G}_1^{(k)}, \ \euc{G}_2^{(k)}$ = euclidean_gradient($C_1^{(k)}, \ C_2^{(k)}$)
      $\riem{G}_1^{(k)}, \ \riem{G}_2^{(k)}$ = riemannian_gradient($C_1^{(k)}, \ C_2^{(k)}, \ \euc{G}_1^{(k)}, \ \euc{G}_2^{(k)})$
      $C_1^{(k+1)}$ = geodesic($C_1^{(k)}, \ -\riem{G}_1^{(k)}, \ O_1, \ O_1^{-1}, \ step\_size$)
      $C_2^{(k+1)}$ = geodesic($C_2^{(k)}, \ -\riem{G}_2^{(k)}, \ O_2, \ O_2^{-1}, \ step\_size$)
      $\riem{G}^{(k)}$ = riemannian_norm($\riem{G}_1^{(k)}, \ \riem{G}_2^{(k)}$)
      $E_{diff} \ = \ |E_{prev} - E^{(k)}|$; $E_{prev} \ = \ E^{(k)}$; $k \ = \ k + 1$
\end{algbox}

Before we move to Newton--Raphson, let us mention some implementation details: the maximum number of iterations depends a lot on the cost function and we used $1000$ in our implementation.
The tolerances are also very dependent on the accuracy one wants to achieve and we have used some canonical values found throughout the literature.
Now, the reader may have noticed that there are two functions we left in blank: the cost function $f$ and the \verb|euclidean_gradient|.
These are the two functions that depends on the specific problem one is trying to solve and the reader can implement them separately and leave the rest of the pseudocode as it is to obtain a full-fledged optimization algorithm.
It is also worth mentioning that the \verb|euclidean_gradient| is the function that computes the usual partial derivatives of $\oll{f}$, which can be done analytically, using automatic differentiation or using finite differences.
That is it for the RGD.

\subsection{Newton--Raphson} \label{rnr}

We already described how the Newton--Raphson Method works previously (page \pageref{newton_system}), but let us state it precisely in here:
\begin{algo}[Riemannian Newton--Raphson Method]
  Given a Riemannian manifold $X$, a function $f:X \to \br$ and a starting point $x_0 \in X$, do the following:
  \begin{enumerate}
    
  \item Solve $\hess{f}(x_k)(v_k) = -\grad{f}(x_k)$ for $v_k \in T_{x_k}X$.

  \item Update the point according to $x_{k+1} \ceq \exp_{x_k}(v_k)$.

  \end{enumerate}
\end{algo}
Now, since we already know from the Gradient Descent how to compute the gradient and the exponential, what is left is the Hessian.
So, recall that in Example \ref{hessian_product_grassmannians} we obtained the following expression for the Hessian of $f:\gr{N_1}{d_1} \x \gr{N_2}{d_2} \to \br$:
{\footnotesize
  \begin{align} \label{full_hessian}
    \begin{split}
      \hess{f}([C_1], [C_2])
      &:T_{[C_1]}\gr{N_1}{d_1} \x T_{[C_2]}\gr{N_2}{d_2}
        \to T_{[C_1]}\gr{N_1}{d_1} \x T_{[C_2]}\gr{N_2}{d_2} \\
      (\eta_1, \eta_2)
      \mapsto \Big(
      & \proj_1S_1^{-1}\big(\hess_{11}\oll{f}(C_1, C_2)\eta_1
        + \hess_{12}\oll{f}(C_1, C_2)\eta_2\big)
        - \eta_1C_1^{\top}\grad\oll{f}_1(C_1, C_2), \\
      & \proj_2S_2^{-1}\big(\hess_{21}\oll{f}(C_1, C_2)\eta_1 
        + \hess_{22}\oll{f}(C_1, C_2)\eta_2\big)
        - \eta_2C_2^{\top}\grad\oll{f}_2(C_1, C_2)
        \Big),
    \end{split}
  \end{align}
}%
being $\proj_i = \Id_{d_i} - C_iC_i^{\top}S_i$ and $\hess_{ij}\oll{f}$ the matrix with the second-order partial derivatives of $\oll{f}$ with respect to $C_j$ first and then $C_i$.
However, the above expression assumes that the Hessian is in a matrix representation that is hard to use in practice for two reasons: first, since we want to solve a system of equations computationally, we have to use a vector representation of $\eta_i$ because scientific computing packages usually requires equations to be in the format $Ax = b$, being $A$ a matrix and $x$ and $b$ vectors.
The second reason is that it can be very hard to obtain a matrix representation of $\hess_{ij}\oll{f}$ with the correct dimensions because, if we look carefully to the formula above, we have $\eta_j \in \mat{N_j}{d_j}$ while $\hess_{ij}\oll{f}$ should be in $\mat{N_id_i}{N_jd_j}$.
Sometimes it is possible to obtain $\hess_{ij}\oll{f} \in \mat{m}{N_i}$, being $m$ any positive natural number, but, since we could not find such a representation for our particular case and since at the end we will need to vectorize the formula above for the first reason, let us do that.

Assuming that $\hess_{ij}\oll{f} \in \mat{N_id_i}{N_jd_j}$ and $\eta_i \in \br^{d_iN_i}$, the term
\begin{equation}
  \hess_{ii}\oll{f}(C_1, C_2)\eta_i + \hess_{ij}\oll{f}(C_1, C_2)\eta_j
\end{equation}
is already correct but we have to take care of
\begin{equation}
  \eta_iC_i^{\top}\grad\oll{f}_i(C_1, C_2)
\end{equation}
and of the outer projection, since $\proj_iS_i^{-1} \in \mat{d_i}{d_i}$.
So, to state explicitly what we have to do, the goal is to find an expression that makes the linear transformations
\begin{align} \label{matrix_rep_grad}
  \begin{split}
    \mat{d_i}{N_i} & \to \mat{d_i}{N_i} \\
    \eta_i & \mapsto \eta_iC_i^{\top}\grad\oll{f}_i(C_1, C_2)
  \end{split}
\end{align}
and
\begin{align} \label{matrix_rep_proj}
  \begin{split}
    \mat{d_i}{N_i} & \to \mat{d_i}{N_i} \\
    \eta_i & \mapsto \proj_iS_i^{-1}\eta_i
  \end{split}
\end{align}
act on $\eta_i \in \br^{d_iN_i}$ instead of $\eta_i \in \mat{d_i}{N_i}$.
The idea to obtain this expression is to recall from Linear Algebra that the matrix representation of a linear transformation is given by computing it in a vector of a basis and then setting the resulting vector as a column of the matrix.
So, we opted for stating just the final result in here, but the full computation is in Appendix \ref{matrix_ids}.
With that said, \ref{matrix_rep_grad} becomes
\begin{align}
  \begin{split}
    \br^{d_iN_i} & \to \br^{d_iN_i} \\
    \eta_i & \mapsto
             \qty(\grad\oll{f}_i(C_1, C_2)^{\top}C_i \otimes \Id_{d_i})\eta_i
  \end{split}
\end{align}
and \ref{matrix_rep_proj} becomes
\begin{align}
  \begin{split}
    \br^{d_iN_i} & \to \br^{d_iN_i} \\
    \eta_i & \mapsto
             \qty(\Id_{N_i} \otimes (\Id_{d_i}
             - C_iC_i^{\top}S_i)S_i^{-1})\eta_i,
  \end{split}
\end{align}
being $\otimes$ the Kronecker product.
Putting all the pieces together, the (semi)final matrix representation of the Hessian is:
\begin{equation}
  \begin{bsmallmatrix}
    (\Id_{N_1} \otimes \proj_1S_1^{-1})\hess_{11}\oll{f}
    - (\grad\oll{f}_1)^{\top}C_1 \otimes \Id_{d_1}
    & (\Id_{N_1} \otimes \proj_1S_1^{-1})\hess_{12}\oll{f} \\
    (\Id_{N_2} \otimes \proj_2S_2^{-1})\hess_{21}\oll{f}
    & (\Id_{d_2} \otimes \proj_2S_2^{-1})\hess_{22}\oll{f}
      - (\grad\oll{f}_2)^{\top}C_2 \otimes \Id_{d_2}
  \end{bsmallmatrix}.
\end{equation}
Observe that this matrix can be built more efficiently if we start with
\begin{equation} \label{generic_euc_hess}
  \begin{bsmallmatrix}
    \hess_{11}\oll{f} & (\hess_{21}\oll{f})^{\top} \\
    \hess_{21}\oll{f} & \hess_{22}\oll{f}
  \end{bsmallmatrix},
\end{equation}
then multiply by
\begin{equation}
  \begin{bsmallmatrix}
    \Id_{N_1} \otimes \proj_1S_1^{-1} & 0_{N_1 \cdot d_1 \x N_2 \cdot d_2} \\
    0_{N_2 \cdot d_2 \x N_1 \cdot d_1} & \Id_{N_2} \otimes \proj_2S_2^{-1}
  \end{bsmallmatrix}
\end{equation}
on the left and, finally, subtract from this product the following:
\begin{equation}
  \begin{bsmallmatrix}
    (\grad_1\oll{f})^{\top}C_1 \otimes \Id_{d_1} & 0_{N_1 \cdot d_1 \x N_2 \cdot d_2} \\
    0_{N_2 \cdot d_2 \x N_1 \cdot d_1} & (\grad_2\oll{f})^{\top}C_2 \otimes \Id_{d_2}
  \end{bsmallmatrix}.
\end{equation}

Now, there is one last problem we have to address: we have actually built an extension of the Hessian that is defined in the whole $\br^{d_1 \cdot N_1 + d_2 \cdot N_2}$ instead of just in the tangent space of $\gr{N_1}{d_1} \x \gr{N_2}{d_2}$.
However, if we recall (Example \ref{grassmannian_riemannian}) that
\begin{equation} \label{gradient_lagrangian}
  T_{[C_1]}\gr{N_1}{d_1} \x T_{[C_2]}\gr{N_2}{d_2}
  = \qty{(\eta_1, \eta_2) : \eta_i \in \mat{d_i}{N_i}, \ C_i^{\top}S_i\eta_i = 0_{N_i}},
\end{equation}
then notice that this space is the kernel of the linear transformation given by
\begin{align} \label{L}
  \begin{split}
    L:\mat{d_1}{N_1} \x \mat{d_2}{N_2} & \to \mat{N_1}{N_1} \x \mat{N_2}{N_2} \\
    (\eta_1, \eta_2) & \mapsto (C_1^{\top}S_1\eta_1, \ C_2^{\top}S_2\eta_2).
  \end{split}
\end{align}
Consequently, Proposition \ref{equivalence_subspace} tells us that solving the main equation of Newton--Raphson, namely,
\begin{equation}
  \hess{f}([C_1], [C_2])(\eta_1, \eta_2) = -\grad{f}([C_1], [C_2]),
\end{equation}
assuming that $\eta_i \in T_{[C_i]}\gr{N_i}{d_i}$, is equivalent to solving the augmented problem
\begin{equation} \label{aug_nt_step}
  \qty(\hess{f}([C_1], [C_2]) \oplus L)(\eta_1, \eta_2)
  = \qty(-\grad{f}([C_1], [C_2]), \ 0_{N_1}, \ 0_{N_2})
\end{equation}
assuming $\eta_i \in \mat{d_i}{N_i}$.
So, vectorizing \ref{L} (the computation is in Appendix \ref{matrix_ids}), we obtain $\Id_{N_i} \otimes C_i^{\top}S_i$ and the final augmented Hessian is
\begin{equation}
  \begin{bsmallmatrix}
    (\Id_{N_1} \otimes \proj_1S_1^{-1})\hess_{11}\oll{f}
    - (\grad\oll{f}_1)^{\top}C_1 \otimes \Id_{n_1}
    & (\Id_{N_1} \otimes \proj_1S_1^{-1})\hess_{21}\oll{f} \\
    (\Id_{N_2} \otimes \proj_2S_2^{-1})\hess_{12}\oll{f}
    & (\Id_{N_2} \otimes \proj_2S_2^{-1})\hess_{22}\oll{f}
      - (\grad\oll{f}_2)^{\top}C_2 \otimes \Id_{d_2} \\
    \Id_{N_1} \otimes C_1^{\top}S_1 & 0_{N_1N_1 \x N_2d_2} \\
    0_{N_2N_2 \x N_1d_1} & \Id_{N_2} \otimes C_2^{\top}S_2
  \end{bsmallmatrix}.
\end{equation}
Now, since vectorizing the right-hand side of \ref{aug_nt_step} is straightforward, we have our second algorithm:
\begin{algbox}{Riemannian Newton--Raphson (RNR)}{RNR}
  input: $max\_iter, \ \ C_i^{(0)}, \ S_i, \ S_i^{-1}, \ O_i, \ O_i^{-1}, \ i=1,2$
  set: $k, \ tol\_grad, \ tol\_val, \ E_{prev}, \ E^{(0)}, \ E_{diff}, \ \riem{G}^{(0)} \ = \ 0, \ 10^{-8}, \ 10^{-10}, \ \infty, \ 0, \ 1, \ 1$
  precompute: $\Id_{N_1}, \ \Id_{N_2}$

  def augment_gradient($\riem{G}_1, \ \riem{G}_2$):
      return $\vstack\qty(\vec(\riem{G}_1), \ \vec(\riem{G}_2), \ 0_{N_1 \cdot N_1 + N_2 \cdot N_2})$

  def augment_hessian($C_1, \ C_2, \ \riem\hess{f}$):
      $\hor_1, \ \hor_2 \ = \ \begin{bsmallmatrix}
\Id_{N_1} \otimes C_1^{\top}S_1 & & 0_{N_1 \cdot N_1 \x d_2 \cdot N_2}
\end{bsmallmatrix}, \
\begin{bsmallmatrix}
  0_{N_2 \cdot N_2 \x d_1 \cdot N_1} & & \Id_{N_2} \otimes C_2^{\top}S_2
\end{bsmallmatrix}$
      return $\vstack(\riem\hess{f}, \ \hor_1, \ \hor_2)$

  def riemannian_hessian($C_1, \ C_2, \ \euc{G}_1, \ \euc{G}_2, \ \euc\hess{f}$):
      $\riem\hess{f} \ = \
\begin{bsmallmatrix}
\Id_{N_1} \otimes (\Id_{d_1} - C_1C_1^{\top}S_1)S_1^{-1} & 0_{d_1 \cdot N_1 \x d_2 \cdot N_2} \\
0_{d_2 \cdot N_2 \x d_1 \cdot N_1} & \Id_{N_2} \otimes (\Id_{d_2} - C_2C_2^{\top}S_2)S_2^{-1}
\end{bsmallmatrix} \cdot \euc\hess{f}$
      $\riem\hess{f} \ = \ \riem\hess{f} -
\begin{bsmallmatrix}
\euc{G}_1^{\top}C_1 \otimes \Id_{d_1} & 0_{d_1 \cdot N_1 \x d_2 \cdot N_2} \\
0_{d_2 \cdot N_2 \x d_1 \cdot N_1} & \euc{G}_2^{\top}C_2 \otimes \Id_{d_2}
\end{bsmallmatrix}$
      return $\riem\hess{f}$

  while $k \leq max\_iter$ and $tol\_grad < \riem{G}^{(k)}$  and $tol\_val < E_{diff}$:
      $E^{(k)}$ = f($[C_1^{(k)}], \ [C_2^{(k)}]$)
      $\euc{G}_1^{(k)}, \ \euc{G}_2^{(k)}$ = euclidean_gradient($C_1^{(k)}, \ C_2^{(k)}$)
      $\riem{G}_1^{(k)}, \ \riem{G}_2^{(k)}$ = riemannian_gradient($C_1^{(k)}, \ C_2^{(k)}, \ \euc{G}_1^{(k)}, \ \euc{G}_2^{(k)}$)
      $\text{aug}\grad{f}^{(k)}$ = augment_gradient($\riem{G}_1^{(k)}, \ \riem{G}_2^{(k)}$)
      $\euc\hess{f}^{(k)}$ = euclidean_hessian($C_1^{(k)}, \ C_2^{(k)}$)
      $\riem\hess{f}^{(k)}$ = riemannian_hessian($C_1^{(k)}, \ C_2^{(k)}, \ \euc{G}_1^{(k)}, \ \euc{G}_2^{(k)}, \ \euc\hess{f}^{(k)}$)
      $\text{aug}\hess{f}^{(k)}$ = augment_hessian($C_1^{(k)}, \ C_2^{(k)}, \ \riem\hess{f}^{(k)}$)
      solve: $\text{aug}\hess{f}^{(k)} \cdot
      \begin{bsmallmatrix}
        \eta_1 \\ \eta_2
      \end{bsmallmatrix}
      \ = \ -\text{aug}\grad{f}^{(k)}$
      $\eta_1^{(k)}, \ \eta_2^{(k)} \ = \ \unvec(\eta_1, \ N_1 \x d_1), \ \unvec(\eta_2, \ N_2 \x d_2)$
      $C_1^{(k+1)}$ = geodesic($C_1^{(k)}, \ \eta_1^{(k)}, \ O_1, \ O_1^{-1}, \ 1.0$)
      $C_2^{(k+1)}$ = geodesic($C_2^{(k)}, \ \eta_2^{(k)}, \ O_2, \ O_2^{-1}, \ 1.0$)
      $\riem{G}^{(k)}$ = riemannian_norm($\riem{G}_1^{(k)}, \ \riem{G}_2^{(k)}$)
      $E_{diff} \ = \ |E_{prev} - E^{(k)}|$; $E_{prev} \ = \ E^{(k)}$; $k \ = \ k + 1$
\end{algbox}
Some of the functions were defined in RGD (\ref{RGD}) and the \verb|vstack| function was defined in \ref{vstack}.
Observe that the \verb|euclidean_hessian| was not implemented because it also depends on the specific problem one is trying to solve.

\subsection{Conjugate Gradient} \label{conjugate_gradient}

The goal of Conjugate Gradient is to fix the most expensive steps of Newton--Raphson: computing the Hessian and solving $\hess{f}(x_k)(v_k) = -\grad{f}(x_k)$.
We will describe very briefly how this algorithm works, but the reader interested in the details can check \cite{shewchuk1994}.

Given two vectors $u, v$ and a positive-definite (symmetric) matrix $A$, we say that $u$ and $v$ are \emph{conjugate with respect to $A$} if $\braket{u}{Av} = 0$.
It is not hard to see that, if we have a set $\qty{v_1, \ldots, v_d}$ such that $v_i$ and $v_j$ are conjugate for every $i \neq j$, then the vectors of this set are linearly independent.
Consequently, if we want to solve a system $Ax = b$, we can obtain its unique solution in the following way: supposing $x^*$ is the solution, we write $x^* = \alpha_1v_1 + \ldots + \alpha_dv_d$ and then we have $Ax^* = \alpha_1Av_1 + \ldots + \alpha_dAv_d$.
Now, computing $\braket{v_k}{b}$, we obtain $\braket{v_k}{b} = \braket{v_k}{Ax^*} = \alpha_1\braket{v_k}{Av_1} + \ldots + \alpha_d\braket{v_k}{Av_d} = \alpha_k\braket{v_k}{Av_k}$.
Therefore, the solution can be written as
\begin{equation}
  x^* = \frac{\braket{v_1}{b}}{\braket{v_1}{Av_1}}v_1 + \ldots + \frac{\braket{v_d}{b}}{\braket{v_d}{Av_d}}v_d.
\end{equation}
So, if we find a basis of conjugate vectors, we can easily compute the solution of $Ax = b$.
The method known as \emph{Conjugate Gradient} aims to find these conjugate directions using a generalization of the Gram--Schmidt algorithm.
With that said, in our case the system is $\hess{f}(x_k)(v_k) = -\grad{f}(x_k)$ and the generalized Gram--Schmidt uses linear combinations of the gradient of $f$ in each step as its conjugate directions.
It is also worth mentioning that, since the dimensions of the manifold and its tangent spaces (at each point) are the same, we cannot have more conjugate directions than this dimension because the conjugate vectors are linearly independent.
In the linear case where the system $Ax = b$ is fixed we do not have this problem, but when we adapt CG to solve $\hess{f}(x_k)(v_k) = -\grad{f}(x_k)$, we cannot be sure that the algorithm will converge in less iterations than the dimension of the manifold.
For this reason we reset the descent direction to be the gradient each time we reach the dimension of the manifold (first step).
This adaptation of CG to minimizing nonlinear functions $f$ is also known as \emph{Nonlinear Conjugate Gradient} and this is the algorithm we will actually implement next:
\begin{algo}[Riemannian Conjugate Gradient]
  Given a Riemannian manifold $X$, a function $f:X \to \br$ and a starting point $x_0 \in X$, do the following:
  \begin{enumerate}

  \item If $k = n \cdot \dim{X}$ for some $n \in \bn$, set $v_k = -\grad{f}(x_k)$.

  \item Update the point according to
    $x_{k+1} \ceq \exp_{x_k}(t_kv_k)$ for some $t_k > 0$.

  \item Let $\pt_{0 \to 1}^{\gamma}$ be the parallel transport along the geodesic $\gamma$ starting in $x_k$ with direction $v_k$.

  \item Compute $\alpha_k^{\text{PR}} = \frac{\braket{\grad{f}(x_{k+1}) - \pt_{0 \to 1}^{\gamma}(t_k\grad{f}(x_k))}{\grad{f}(x_{k+1})}}{\braket{\grad{f}(x_k)}{\grad{f}(x_k)}}$
    or $\alpha_k^{\text{FR}} = \frac{\braket{\grad{f}(x_{k+1})}{\grad{f}(x_{k+1})}}{\braket{\grad{f}(x_k)}{\grad{f}(x_k)}}$.

  \item Define $v_{k+1} \ceq -\grad{f}(x_{k+1}) + \alpha_k^X\pt_{0 \to 1}^{\gamma}(t_kv_k)$, being $X \in \qty{\text{PR}, \ \text{FR}}$.

  \end{enumerate}
\end{algo}
In the second step we adopted the same strategy used in the Gradient Descent, \ie, we used constant values for $t_k$.
Now, to implement the third step we need to recall (Example \ref{geodesic_grassmannian}) that the parallel transport in $\gr{N_i}{d_i}$ is given by
{\small
  \begin{equation}
    \pt_{0 \to t}^{\gamma_i}(t\mu_i)
    = \qty(\qty(-\gamma_i(0)V_i^{\top}\sin(tD_i)
    + O_iU_i\cos(tD_i))U_i^{\top}O_i^{-1}
    + \Id_{d_i} - O_iU_iU_i^{\top}O_i^{-1})\mu_i.
  \end{equation}
}%
Here $\mu_i, \eta_i \in T_{[C_i]}\gr{N_i}{d_i}$; $\gamma_i$ is a geodesic such that $\gamma_i(0) = C_i$ and $\gamma_i'(0) = \eta_i$; $O_i$ is a matrix such that $O_i^{\top}S_iO_i = \Id_{d_i}$; and $O_i^{-1}\eta_i = U_iD_iV_i$ is a tSVD.
To conclude, the fourth step is computing the coefficients known as \emph{Polak--Ribière} (PR) and \emph{Fletcher--Reeves} (FR), respectively.
Choosing which one is the best is usually done experimentally because it depends on the function one wants to optimize and Fletcher--Reeves performed better for Hartree--Fock.
So, this is the one we will write in the pseudocode, but the reader can implement the other one quite easily.
We have our last algorithm:
\begin{algbox}{Riemannian Conjugate Gradient (RCG)}{RCG}
  input: $max\_iter, \ step\_size, \ \ C_i^{(0)}, \ S_i, \ S_i^{-1}, \ O_i, \ O_i^{-1}, \ i=1,2$
  set: $k, \ dim, \ tol\_grad, \ tol\_val, \ E_{prev}, \ E^{(0)}, \ E_{diff}, \ \riem{G}^{(0)} \ = \ 0, \ N_1(d_1 - N_1) + N_2(d_2 - N_2), \ 10^{-8}, \ 10^{-10}, \ \infty, \ 0, \ 1, \ 1$
  precompute: $\Id_{d_1}, \ \Id_{d_2}$

  def pt($\mu, \ \eta, \ C, \ Id, \ O, \ O^{-1}, \ step\_size$): # parallel transport
      $U, D, V$ = tSVD($O^{-1}\eta$)
      return $\qty(\qty(-CV^{\top}\sin(step\_size \cdot D) + OU\cos(step\_size \cdot D))U^{\top}O^{-1} + \Id - OUU^{\top}O^{-1})\mu$

  while $k \leq max\_iter$ and $tol\_grad < \riem{G}^{(k)}$ and $tol\_val < E_{diff}$:
      if $k \ \% \ dim \ == \ 0$:
          if $k \ == \ 0$:
              $\euc{G}_1^{(k)}, \ \euc{G}_2^{(k)}$ = euclidean_gradient($C_1^{(k)}, \ C_2^{(k)}$)
              $\riem{G}_1^{(k)}, \ \riem{G}_2^{(k)}$ = riemannian_gradient($C_1^{(k)}, \ C_2^{(k)}, \ \euc{G}_1^{(k)}, \ \euc{G}_2^{(k)}$)
              $\eta_1^{(k)}, \ \eta_2^{(k)} \ = \ -\riem{G}_1^{(k)}, \ -\riem{G}_2^{(k)}$
      else:
          $\eta_1^{(k)} \ = \ -\riem{G}_1^{(k)} + \alpha^{(k-1)}$pt($\eta_1^{(k-1)}, \ \eta_1^{(k-1)}, \ C_1^{(k-1)}, \ \Id_{d_1}, \ O_1, \ O_1^{-1}, \ step\_size$)
          $\eta_2^{(k)} \ = \ -\riem{G}_2^{(k)} + \alpha^{(k-1)}$pt($\eta_2^{(k-1)}, \ \eta_2^{(k-1)}, \ C_2^{(k-1)}, \ \Id_{d_2}, \ O_2, \ O_2^{-1}, \ step\_size$)
          $E^{(k)}$ = f($[C_1^{(k)}], \ [C_2^{(k)}]$)
          $C_1^{(k+1)}$ = geodesic($C_1^{(k)}, \ \eta_1^{(k)}, \ O_1, \ O_1^{-1}, \ step\_size$)
          $C_2^{(k+1)}$ = geodesic($C_2^{(k)}, \ \eta_2^{(k)}, \ O_2, \ O_2^{-1}, \ step\_size$)
          $E_{diff} \ = \ |E_{prev} - E^{(k)}|$; $E_{prev} \ = \ E^{(k)}$
          $\euc{G}_1^{(k+1)}, \ \euc{G}_2^{(k+1)}$ = euclidean_gradient($C_1^{(k+1)}, \ C_2^{(k+1)}$)
          $\riem{G}_1^{(k+1)}, \ \riem{G}_2^{(k+1)}$ = riemannian_gradient($C_1^{(k+1)}, \ C_2^{(k+1)}, \ \euc{G}_1^{(k+1)}, \ \euc{G}_2^{(k+1)}$)
          $\riem{G}^{(k+1)}$ = riemannian_norm($\riem{G}_1^{(k)}, \ \riem{G}_2^{(k)}$)
          $\alpha^{(k)}$ = rip($(\riem{G}_1^{(k+1)}, \ \riem{G}_2^{(k+1)}), \ (\riem{G}_1^{(k+1)}, \ \riem{G}_2^{(k+1)})$)
          $\alpha^{(k)}$ /= rip($(\riem{G}_1^{(k)}, \ \riem{G}_2^{(k)}), \ (\riem{G}_1^{(k)}, \ \riem{G}_2^{(k)})$)
          $k \ = \ k + 1$
\end{algbox}
Again, some of the functions were defined in RGD (\ref{RGD}).

\section{Case Study (Hartree--Fock)} \label{hf_ch3}

In Chapter \ref{qm} we motivated and defined the Hartree--Fock Method (see Section \ref{qm_practice}) and at the end we obtained a concrete optimization problem that we can now solve using the algorithms described previously.
So, let us briefly recapitulate what Hartree--Fock (HF) is and then compute the partial derivatives we need to fill in the blank spaces we left in the pseudocodes.
To put it shortly, HF is the optimization problem given by
\begin{equation}
  \text{minimize $f \ceq E \circ \Pl:\gr{\NA}{d_{\alpha}} \x \gr{\NB}{d_{\beta}} \to \br$},
\end{equation}
being $\NA$ and $\NB$ the numbers of spin $\alpha$ and spin $\beta$ electrons in our quantum system (say, a molecule), $d_{\alpha}$ and $d_{\beta}$ the sizes of the basis set for each spin, $\Pl$ the Plücker embedding (see \ref{plucker_embedding}) and $E$ the energy function $\ket{\Psi} \mapsto \mel{\Psi}{\what{H}_{el}}{\Psi}$.
However, while we used the abstract description of the Grassmannian (Observation \ref{abstract_concrete}) in Chapter \ref{qm} because it was easier to describe the method starting with a finite-dimensional subspace $W \ceq \Span{w_1^{\alpha}, \ldots, w_{d_{\alpha}}^{\alpha}, w_1^{\beta}, \ldots, w_{d_{\beta}}^{\beta}} \subset L^2(\br^3) \otimes V_{\frac{1}{2}}$, in this chapter we are using the quotient definition of the Grassmannian.
It is quite easy to move from one to the other, though, just consider the function:
  \begin{equation}
    \begin{bsmallmatrix}
      c_{11}^{\gamma} & \ldots & c_{1N_{\gamma}}^{\gamma} \\
      \vdots & \ddots & \vdots \\
      c_{d_{\gamma}1}^{\gamma} & \ldots & c_{d_{\gamma}N_{\gamma}}^{\gamma}
    \end{bsmallmatrix}
    \mapsto
    \Span{c_{11}^{\gamma}w_1^{\gamma}
      + \ldots
      + c_{d_{\gamma}1}^{\gamma}w_{d_{\gamma}}^{\gamma},
      \ldots,
      c_{1N_{\gamma}}^{\gamma}w_1^{\gamma}
      + \ldots
      + c_{d_{\gamma}}^{\gamma}N_{\gamma}w_{d_{\gamma}}^{\gamma}},
  \end{equation}
being $\gamma \in \qty{\alpha, \beta}$.
So, since now we know how to write the cost function in the quotient representation of the Grassmannian, let us recall its concrete expression: given the one-electron operator $H(\vb{r})\psi(\vb{r}) \ceq -\frac{1}{2}\nabla^2\psi(\vb{r}) - \sum_{j=1}^M \frac{Z_j}{\abs{\vb{r} - \vb{R}_j}}\psi(\vb{r})$, defining $\vb{r}_{12} \ceq \abs{\vb{r}_1 - \vb{r}_2}$ and assuming $w_i^{\gamma} = \psi_i^{\gamma}\gamma \ceq \psi_i^{\gamma} \otimes \gamma$, $f$ can be written as:
\begin{multline}
  \sum_{\mu=1}^{\NA} \sum_{i,j=1}^{d_{\alpha}}
  c_{i\mu}^{\alpha}c_{j\mu}^{\alpha}
  \intl_{\br^3} \psi_i^{\alpha}(\vb{r})H(\vb{r})\psi_j^{\alpha}(\vb{r})
  \dd\vb{r}
  +
  \sum_{\mu=1}^{\NB} \sum_{i,j=1}^{d_{\beta}}
  c_{i\mu}^{\beta}c_{j\mu}^{\beta}
  \intl_{\br^3} \psi_i^{\beta}(\vb{r})H(\vb{r})\psi_j^{\beta}(\vb{r}) \dd\vb{r} \\
  + \frac{1}{2} \sum_{\mu,\nu=1}^{\NA} \sum_{i,j,k,l=1}^{d_{\alpha}}
  c_{i\mu}^{\alpha}c_{j\nu}^{\alpha}c_{k\mu}^{\alpha}c_{l\nu}^{\alpha}
  \intl_{\br^6} \psi_i^{\alpha}(\vb{r}_1)\psi_j^{\alpha}(\vb{r}_2)
  \frac{1}{\vb{r}_{12}}
  \qty(\psi_k^{\alpha}(\vb{r}_1)\psi_l^{\alpha}(\vb{r}_2)
  - \psi_l^{\alpha}(\vb{r}_1)\psi_k^{\alpha}(\vb{r}_2))
  \dd\vb{r}_1\dd\vb{r}_2 \\
  + \frac{1}{2} \sum_{\mu,\nu=1}^{\NB} \sum_{i,j,k,l=1}^{d_{\beta}}
  c_{i\mu}^{\beta}c_{j\nu}^{\beta}c_{k\mu}^{\beta}c_{l\nu}^{\beta}
  \intl_{\br^6} \psi_i^{\beta}(\vb{r}_1)\psi_j^{\beta}(\vb{r}_2)
  \frac{1}{\vb{r}_{12}}
  \qty(\psi_k^{\beta}(\vb{r}_1)\psi_l^{\beta}(\vb{r}_2)
  - \psi_l^{\beta}(\vb{r}_1)\psi_k^{\beta}(\vb{r}_2))
  \dd\vb{r}_1\dd\vb{r}_2 \\
  + \sum_{\mu=1}^{\NA} \sum_{\nu=1}^{\NB}
  \sum_{i,k=1}^{d_{\alpha}}\sum_{j,l=1}^{d_{\beta}}
  c_{i\mu}^{\alpha}c_{j\nu}^{\beta}c_{k\mu}^{\alpha}c_{l\nu}^{\beta}
  \intl_{\br^6} \psi_i^{\alpha}(\vb{r}_1)\psi_j^{\beta}(\vb{r}_2)
  \frac{1}{\vb{r}_{12}}
  \psi_k^{\alpha}(\vb{r}_1)\psi_l^{\beta}(\vb{r}_2)
  \dd\vb{r}_1\dd\vb{r}_2.
\end{multline}
However, since this expression is too cumbersome to work with, let us encapsulate some of these objects into matrices to improve the implementation and the readability.
First, in practice one always consider $d_{\alpha} = d_{\beta}$ and $\psi_i^{\alpha} = \psi_i^{\beta}$.
Therefore, replacing $d_{\gamma}$ by $d$ and $\psi_i^{\gamma}$ by $\psi_i$ and defining
\begin{equation}
  h_{ij} \ceq \intl_{\br^3} \psi_i(\vb{r})H(\vb{r})\psi_j(\vb{r})\dd\vb{r}
\end{equation}
and
\begin{equation}
  g_{ijkl} \ceq \intl_{\br^6} \psi_i(\vb{r}_1)\psi_j(\vb{r}_2)
  \frac{1}{\vb{r}_{12}} \psi_k(\vb{r}_1)\psi_l(\vb{r}_2)
  \dd\vb{r}_1\dd\vb{r}_2,
\end{equation}
the energy becomes
\begin{multline} \label{energy_coeffs}
  \sum_{\mu=1}^{\NA} \sum_{i,j=1}^d
  c_{i\mu}^{\alpha}c_{j\mu}^{\alpha}h_{ij}
  +
  \sum_{\mu=1}^{\NB} \sum_{i,j=1}^d
  c_{i\mu}^{\beta}c_{j\mu}^{\beta}h_{ij} +
  \frac{1}{2} \sum_{\mu,\nu=1}^{\NA} \sum_{i,j,k,l=1}^d
  c_{i\mu}^{\alpha}c_{j\nu}^{\alpha}c_{k\mu}^{\alpha}c_{l\nu}^{\alpha}
  (g_{ijkl} - g_{ijlk}) \\
  + \frac{1}{2} \sum_{\mu,\nu=1}^{\NB} \sum_{i,j,k,l=1}^d
  c_{i\mu}^{\beta}c_{j\nu}^{\beta}c_{k\mu}^{\beta}c_{l\nu}^{\beta}
  (g_{ijkl} - g_{ijlk})
  + \sum_{\mu=1}^{\NA} \sum_{\nu=1}^{\NB} \sum_{i,j,k,l=1}^d
  c_{i\mu}^{\alpha}c_{j\nu}^{\beta}c_{k\mu}^{\alpha}c_{l\nu}^{\beta}
  g_{ijkl}.
\end{multline}
Now, defining what is known as \emph{density matrix}\index{density matrix}:
\begin{equation} \label{density_matrix}
  P^{\gamma} = C^{\gamma}\qty(C^{\gamma})^{\top},
\end{equation}
we can simplify the energy even more to
\begin{equation}
  \sum_{i,j=1}^d \qty(P_{ij}^{\alpha} + P_{ij}^{\beta})h_{ij}
  +
  \frac{1}{2} \sum_{i,j,k,l=1}^d \qty(P_{ik}^{\alpha}P_{jl}^{\alpha} +
  P_{ik}^{\beta}P_{jl}^{\beta})\qty(g_{ijkl} - g_{ijlk})
  + \sum_{i,j,k,l=1}^d P_{ik}^{\alpha}P_{jl}^{\beta}g_{ijkl}.
\end{equation}
And, to conclude, we are going to define the \emph{Fock matrix}\index{Fock matrix}
\begin{equation} \label{fock_matrix}
  F_{ij}^{\gamma} \ceq h_{ij} + \sum_{k,l=1}^d
  \qty(\qty(P_{kl}^{\alpha} + P_{kl}^{\beta})g_{ikjl}
  - P_{kl}^{\gamma}g_{ijkl})
\end{equation}
because this matrix is widely used in the literature and we are using it to show how our equations change (or not) from what the community is used to.
So, with all these matrices defined, the final expression for the energy is
\begin{equation} \label{final_energy}
  f\qty([C^{\alpha}], \ [C^{\beta}])
  = \frac{1}{2}\sum_{i,j=1}^d \qty(\qty(P_{ij}^{\alpha} + P_{ij}^{\beta})h_{ij}
  + P_{ij}^{\alpha}F_{ij}^{\alpha} + P_{ij}^{\beta}F_{ij}^{\beta}).
\end{equation}
Observe that both the density and Fock matrices depend on $C^{\gamma}$, which is usually called \emph{coefficients matrix} or \emph{orbitals' coefficients}.
It is also worth mentioning that we can lift the energy because $\ol{f}$ is invariant under $O(\NA) \x O(\NB)$, that is, $\ol{f}(C^{\alpha}M^{\alpha}, \ C^{\beta}M^{\beta}) = \ol{f}(C^{\alpha}, \ C^{\beta})$ for $(M^{\alpha}, \ M^{\beta}) \in O(\NA) \x O(\NB)$.
Indeed, if we notice that the energy and the Fock matrix can be written only in terms of the density matrix, then the invariance is a direct consequence of the following: $P^{\gamma} = C^{\gamma}(C^{\gamma})^{\top} = C^{\gamma}M^{\gamma}(C^{\gamma}M^{\gamma})^{\top} = C^{\gamma}M^{\gamma}(M^{\gamma})^{\top}(C^{\gamma})^{\top}$ because $M^{\gamma}(M^{\gamma})^{\top} = \Id_{\NC}$ by definition of $M^{\gamma} \in O(\NC)$.

Now, all we have to do is computing the partial derivatives of $\oll{f}$, which, in practice, is also given by Equation \ref{final_energy}.
So, if we go back to Equation \ref{energy_coeffs} and observe that the energy is a fourth-degree polynomial because the integrals are constants, the first-order partial derivatives are straightforward to compute and the result in the direction $x_{pq}^{\alpha}$ (for spin $\beta$ is analogous) is:
\begin{equation} \label{gradient_poly}
  \pdv{\oll{f}}{x_{pq}^{\alpha}}\qty(C^{\alpha}, \ C^{\beta})
  = \sum_{i=1}^d 2c_{iq}^{\alpha}h_{ip}
  + \sum_{i,j,k=1}^d 2c_{iq}^{\alpha}\qty(
  \sum_{^{\mu=1}_{\mu \neq q}}^{\NA}
  c_{j\mu}^{\alpha}c_{k\mu}^{\alpha}(g_{pjik} - g_{pikj})
  + P_{jk}^{\beta}g_{pjik}).
\end{equation}
However, here something interesting happens: when $\mu = q$, the spin $\alpha$ term inside the parenthesis is $0$.
Indeed, if we define $\wtil{\psi}_q \ceq \sum_{i=1}^d c_{iq}\psi_i$, then
\begin{multline}
  \sum_{i,j,k=1}^d 2c_{iq}^{\alpha}c_{jq}^{\alpha}c_{kq}^{\alpha}
  (g_{pjik} - g_{pikj})
  = \sum_{i,j,k=1}^d 2c_{iq}^{\alpha}c_{jq}^{\alpha}c_{kq}^{\alpha}
  (g_{pjik} - g_{pjki}) \\
  = \sum_{i,j,k=1}^d 2c_{iq}^{\alpha}c_{jq}^{\alpha}c_{kq}^{\alpha}
  \qty(\int_{\br^6} \psi_p(\vb{r}_1)\psi_j(\vb{r}_2) \frac{1}{\vb{r}_{12}}
  \qty(\psi_i(\vb{r}_1)\psi_k(\vb{r}_2) - \psi_k(\vb{r}_1)\psi_i(\vb{r}_2))
  \dd\vb{r}_1\dd\vb{r}_2) \\
  = 2\int_{\br^6} \psi_p(\vb{r}_1)\wtil{\psi}_q(\vb{r}_2) \frac{1}{\vb{r}_{12}}
  \qty(\wtil{\psi}_q(\vb{r}_1)\wtil{\psi}_q(\vb{r}_2)
  - \wtil{\psi}_q(\vb{r}_1)\wtil{\psi}_q(\vb{r}_2))
  \dd\vb{r}_1\dd\vb{r}_2 = 0.
\end{multline}
Therefore, ignoring the restriction $\mu \neq q$ in \ref{gradient_poly}, we have
\begin{equation} \label{gradient_fock}
  \pdv{\oll{f}}{x_{pq}^{\alpha}}\qty(C^{\alpha}, \ C^{\beta})
  = 2(F^{\alpha}C^{\alpha})_{pq}.
\end{equation}

Now we need to compute the derivative of \ref{gradient_fock} with respect to $C^{\alpha}$ and $C^{\beta}$.
Let us start with $\beta$ because it is easier.
Ignoring all the terms of \ref{gradient_poly} that does not depend on $\beta$, we obtain
\begin{equation}
  \sum_{i,j,k=1}^d \sum_{\mu=1}^{\NB}
  2c_{iq}^{\alpha}c_{j\mu}^{\beta}c_{k\mu}^{\beta}g_{pjik}.
\end{equation}
However, if $\mu \neq s$, the partial derivative with respect to $x_{rs}^{\beta}$ will be $0$ and that means we only have to deal with
\begin{equation}
  \sum_{i,j,k=1}^d 2c_{iq}^{\alpha}c_{js}^{\beta}c_{ks}^{\beta}g_{pjik}.
\end{equation}
The idea now is to break the summation above into three cases: $j = k = r$; $j = r$, but $k \neq r$; and $k = r$, but $j \neq r$.
In the first case we have
\begin{equation}
  \sum_{i=1}^d 2c_{iq}^{\alpha}c_{rs}^{\beta}c_{rs}^{\beta}g_{prir}
\end{equation}
and the derivative is just
\begin{equation}
  \sum_{i=1}^d 4c_{iq}^{\alpha}c_{rs}^{\beta}g_{prir}.
\end{equation}
In the second case we have
\begin{equation}
  \sum_{^{i,k=1}_{k \neq r}}^d 2c_{iq}^{\alpha}c_{rs}^{\beta}c_{ks}^{\beta}g_{prik}
\end{equation}
and the derivative is
\begin{equation}
  \sum_{^{i,k=1}_{k \neq r}}^d 2c_{iq}^{\alpha}c_{ks}^{\beta}g_{prik}
\end{equation}
because $k \neq r$.
The third case is analogous and the derivative is
\begin{equation}
  \sum_{^{i,j=1}_{j \neq r}}^n 2c_{iq}^{\alpha}c_{js}^{\beta}g_{pjir}.
\end{equation}
Now, if we rename the variable $k$ in the second case to $j$ and sum the three cases using the identity $g_{prij} = g_{pjir}$, we obtain the final result:
\begin{equation} \label{beta_alpha}
  \pdv{\oll{f}}{x_{rs}^{\beta}}{x_{pq}^{\alpha}}\qty(C^{\alpha}, \ C^{\beta})
  = \sum_{i,j=1}^d 4c_{iq}^{\alpha}c_{js}^{\beta}g_{pjir}.
\end{equation}
Now let us compute the partial derivative with respect to $x_{rs}^{\alpha}$.
The idea here is to break the computation into two cases: $s = q$ and $s \neq q$.
In the first case the computation is straightforward and the result is
\begin{align}
  \begin{split}
    \pdv{\oll{f}}{x_{rs}^{\alpha}}{x_{pq}^{\alpha}}
    & = 2h_{rp} + \sum_{i,j=1}^d 2\qty(
      \sum_{^{\mu=1}_{\mu \neq q}}^{\NA}
      c_{i\mu}^{\alpha}c_{j\mu}^{\alpha}(g_{pirj} - g_{prji})
      + P_{ij}^{\beta}g_{pirj}
      ) \\
    & = 2h_{pr} + \sum_{i,j=1}^d 2\qty(
      \sum_{\mu=1}^{\NA}
      P_{ij}^{\alpha}(g_{pirj} - g_{prji})
      + P_{ij}^{\beta}g_{pirj})
      - \sum_{i,j=1}^d 2c_{iq}^{\alpha}c_{jq}^{\alpha}(g_{pirj} - g_{prji}) \\
    & = 2F_{pr}^{\alpha}
      - \sum_{i,j=1}^d 2c_{iq}^{\alpha}c_{jq}^{\alpha}(g_{pirj} - g_{prji}).
  \end{split}
\end{align}
Observe that we used the identity $h_{pr} = h_{rp}$.
And for the case in which $s \neq q$ the result is
\begin{equation}
  \pdv{\oll{f}}{x_{rs}^{\alpha}}{x_{pq}^{\alpha}}
  = \sum_{i,j=1}^d 2c_{iq}^{\alpha}c_{js}^{\alpha}\qty(
  2g_{pjir} - g_{pijr} - g_{pirj}
  ).
\end{equation}
To compute the derivative in this case one can use the same idea we used in the $x_{rs}^{\beta}$ case: first, notice that the terms with the one-electron integral and with spin $\beta$ vanish because $s \neq q$.
Then, we can get rid of the summation over $\mu$ because only the case in which $\mu = s$ interests.
Finally, compute the partial derivative breaking it into three cases: $j = k = r$; $j = r$ and $k \neq r$; $k = r$ and $j \neq r$.
Sum the result using the identity $g_{pjir} = g_{prij}$.

Now let us implement these matrices.
As one can see, $h_{ij}$ and $g_{ijkl}$ are shared between the first and second-order partial derivatives.
Therefore, an option is to store these numbers in a $2$-dimensional matrix $h$ and in a $4$-dimensional matrix $g$ and then pass them as parameters to the \verb|euclidean_gradient| and the \verb|euclidean_hessian| functions.
It is also worth mentioning that we will use these integrals as a black box because it is not trivial to compute them, specially $g$.
So, although the gradient and the Hessian should depend only on $C^{\alpha}$ and $C^{\beta}$, from a computational perspective it is worth adapting them to our case.
With all that said, the \verb|euclidean_gradient| is given by
\begin{algbox}{Euclidean Gradient}{EG}
  input: $C^{\alpha}, \ C^{\beta}, \ h, \ g$

  def build_fock($C^{\alpha}, \ C^{\beta}, \ h, \ g$):
      $F^{\alpha}, \ F^{\beta}, \ P^{\alpha}, \ P^{\beta} \ = \ h, \ h, \ C^{\alpha}(C^{\alpha})^{\top}, \ C^{\beta}(C^{\beta})^{\top}$
      for $i, j, k, l \ = \ 0, \ldots, d-1$:
           $F^{\alpha}[i, j]$ += $(P^{\alpha}[k, l] + P^{\beta}[k, l]) \cdot g[i, k, j, l] - P^{\alpha}[k, l] \cdot g[i, j, k, l]$
           $F^{\beta}[i, j]$ += $(P^{\alpha}[k, l] + P^{\beta}[k, l]) \cdot g[i, k, j, l] - P^{\beta}[k, l] \cdot g[i, j, k, l]$
      return $F^{\alpha}, \ F^{\beta}$

  $F^{\alpha}, \ F^{\beta}$ = build_fock($C^{\alpha}, \ C^{\beta}, \ h, \ g$)
  return $2F^{\alpha}C^{\alpha}, \ 2F^{\beta}C^{\beta}$
\end{algbox}

Now, the Hessian. If we rename $1$ and $2$ to $\alpha$ and $\beta$, respectively, then, according to Equation \ref{generic_euc_hess}, the Hessian can be divided into four blocks:
{\small
  \begin{equation}
    \begin{bmatrix}
      H_{\alpha\alpha} & H_{\beta\alpha}^{\top} \\
      H_{\beta\alpha}  & H_{\beta\beta}
    \end{bmatrix}.
  \end{equation}
}
So, a good strategy is to build these blocks separately.
Let us start with $H_{\beta\alpha}$.
The formula for this was already computed in \ref{beta_alpha}, but we need to parse how this goes into the matrix.
The idea is simple: for every $p$ and $q$ we have a variable $x_{pq}^{\alpha}$ that will be responsible for a column of $H_{\beta\alpha}$.
Then, when we vary $r$ and $s$ (in that order because we are vectorizing matrices columnwise, see \ref{vectorization}), we obtain a column vector of size $d \cdot \NB$.
To illustrate this, suppose $d = \NA = \NB = 2$.
Then, if we represent $\pdv{\oll{f}}{x_{rs}^{\beta}}{x_{pq}^{\alpha}}$ using the number $pqrs$, we have the following matrix:
{\small
  \begin{equation}
    H_{\beta\alpha} =
    \begin{bmatrix}
      0000 & 1000 & 0100 & 1100 \\
      0010 & 1010 & 0110 & 1110 \\
      0001 & 1001 & 0101 & 1101 \\
      0011 & 1011 & 0111 & 1111 \\
    \end{bmatrix}.
  \end{equation}
}
Now, the trick to implement this is to notice that we can convert four indices to two by doing $H_{\beta\alpha}[r + d \cdot s, \ p + d \cdot q]$.
And that is it, we can now implement $H_{\beta\alpha}$ using a bunch of \emph{loops}.
The blocks $H_{\alpha\alpha}$ and $H_{\beta\beta}$ are analogous, we just need to add an \emph{if statement} to check whether $s = q$ or not.
\begin{algbox}{Euclidean Hessian}{EH}
  input: $C^{\alpha}, \ C^{\beta}, \ F^{\alpha}, \ F^{\beta}, \ g$

  def mixed_spin($C^{\alpha}, \ C^{\beta}, \ g$):
      $H_{\beta\alpha}$ = zeros($d \cdot \NB \x d \cdot \NA$)
      for $s \ = \ 0, \ldots, \NB-1$:
          for $r \ = \ 0, \ldots, d-1$:
              for $q \ = \ 0, \ldots, \NA-1$:
                  for $p \ = \ 0, \ldots, d-1$:
                      for $i, j \ = \ 0, \ldots, d-1$:
                          $H_{\beta\alpha}[r + d \cdot s, \ p + d \cdot q]$ += $4 \cdot C^{\alpha}[i, q] \cdot C^{\beta}[j, s] \cdot g[p, j, i, r]$
      return $H_{\beta\alpha}$

  def same_spin($C^{\gamma}, \ F^{\gamma}, \ g, \ \NC$):
      $H_{\gamma\gamma}$ = zeros($d \cdot \NC \x d \cdot \NC$)
      for $s \ = \ 0, \ldots, \NC-1$:
          for $r \ = \ 0, \ldots, d-1$:
              for $q \ = \ 0, \ldots, \NC-1$:
                  for $p \ = \ 0, \ldots, d-1$:
                      if $s \ == \ q$:
                          $H_{\gamma\gamma}[r + d \cdot s, p + d \cdot q] \ = \ 2 \cdot F^{\gamma}[p, r]$
                          for $i, j \ = \ 0, \ldots, d-1$:
                              $H_{\gamma\gamma}[r + d \cdot s, p + d \cdot q]$ -= $2 \cdot C^{\gamma}[i, q] \cdot C^{\gamma}[j, q] \cdot (g[p, i, r, j] - g[p, r, j, i])$
                      else:
                          for $i, j \ = \ 0, \ldots, d-1$:
                              $H_{\gamma\gamma}[r + d \cdot s, p + d \cdot q]$ += $2 \cdot C^{\gamma}[i, q] \cdot C^{\gamma}[j, s] \cdot (2 \cdot g[p, j, i, r] - g[p, i, j, r] - g[p, i, r, j])$
      return $H_{\gamma\gamma}$

  $H_{\alpha\alpha}, \ H_{\beta\beta}$ = same_spin($C^{\alpha}, \ F^{\alpha}, \ g, \ N_{\alpha})$, same_spin($C^{\beta}, \ F^{\beta}, \ g, \ N_{\beta}$)
  $H_{\beta\alpha}$ = mixed_spin($C^{\alpha}, \ C^{\beta}, \ g$)
  return $
  \begin{bsmallmatrix}
    H_{\alpha\alpha} & H_{\beta\alpha}^{\top} \\ H_{\beta\alpha} & H_{\beta\beta}
  \end{bsmallmatrix}
  $
\end{algbox}

One last observation worth mentioning is that it is possible to remove the \emph{if statement} inside the \verb|same_spin| function and this is desirable because \emph{if}'s inside loops are very inefficient when one is dealing with (potentially) big problems.
We will not provide a complete pseudocode for this implementation, but let us show the idea of how to do this.
Basically, we can break $H_{\gamma\gamma}$ into smaller blocks and then build it using these blocks.
Let us show an example.
Assuming $d = 6$ and $\NA = 3$, $H_{\alpha\alpha}$ is a $6 \cdot 3 \x 6 \cdot 3$ matrix.
However, if we, again, represent $\pdv{\oll{f}}{x_{rs}^{\alpha}}{x_{pq}^{\alpha}}$ using the number $pqrs$, this matrix has the following format:
{\small
  \begin{equation}
    H_{\alpha\alpha} =
    \begin{bsmallmatrix}
      0000 & 1000 & 2000 & 3000 & 4000 & 5000 & & \vline & & 0100 & 1100 & 2100 &
                                                                                  3100 & 4100 & 5100 & & \vline & & 0200 & 1200 & 2200 & 3200 & 4200 & 5200 \\
      0010 & 1010 & 2010 & 3010 & 4010 & 5010 & & \vline & & 0110 & 1110 & 2110 &
                                                                                  3110 & 4110 & 5110 & & \vline & & 0210 & 1210 & 2210 & 3210 & 4210 & 5210 \\
      0020 & 1020 & 2020 & 3020 & 4020 & 5020 & & \vline & & 0120 & 1120 & 2120 &
                                                                                  3120 & 4120 & 5120 & & \vline & & 0220 & 1220 & 2220 & 3220 & 4220 & 5220 \\
      0030 & 1030 & 2030 & 3030 & 4030 & 5030 & & \vline & & 0130 & 1130 & 2130 &
                                                                                  3130 & 4130 & 5130 & & \vline & & 0230 & 1230 & 2230 & 3230 & 4230 & 5230 \\
      0040 & 1040 & 2040 & 3040 & 4040 & 5040 & & \vline & & 0140 & 1140 & 2140 &
                                                                                  3140 & 4140 & 5140 & & \vline & & 0240 & 1240 & 2240 & 3240 & 4240 & 5240 \\
      0050 & 1050 & 2050 & 3050 & 4050 & 5050 & & \vline & & 0150 & 1150 & 2150 &
                                                                                  3150 & 4150 & 5150 & & \vline & & 0250 & 1250 & 2250 & 3250 & 4250 & 5250 \\
      \medskip \\ \hline \\ \medskip \\
      0001 & 1001 & 2001 & 3001 & 4001 & 5001 & & \vline & & 0101 & 1101 & 2101 &
                                                                                  3101 & 4101 & 5101 & & \vline & & 0201 & 1201 & 2201 & 3201 & 4201 & 5201 \\
      0011 & 1011 & 2011 & 3011 & 4011 & 5011 & & \vline & & 0111 & 1111 & 2111 &
                                                                                  3111 & 4111 & 5111 & & \vline & & 0211 & 1211 & 2211 & 3211 & 4211 & 5211 \\
      0021 & 1021 & 2021 & 3021 & 4021 & 5021 & & \vline & & 0121 & 1121 & 2121 &
                                                                                  3121 & 4121 & 5121 & & \vline & & 0221 & 1221 & 2221 & 3221 & 4221 & 5221 \\
      0031 & 1031 & 2031 & 3031 & 4031 & 5031 & & \vline & & 0131 & 1131 & 2131 &
                                                                                  3131 & 4131 & 5131 & & \vline & & 0231 & 1231 & 2231 & 3231 & 4231 & 5231 \\
      0041 & 1041 & 2041 & 3041 & 4041 & 5041 & & \vline & & 0141 & 1141 & 2141 &
                                                                                  3141 & 4141 & 5141 & & \vline & & 0241 & 1241 & 2241 & 3241 & 4241 & 5241 \\
      0051 & 1051 & 2051 & 3051 & 4051 & 5051 & & \vline & & 0151 & 1151 & 2151 &
                                                                                  3151 & 4111 & 5151 & & \vline & & 0251 & 1251 & 2251 & 3251 & 4251 & 5251 \\
      \medskip \\ \hline \\ \medskip \\
      0002 & 1002 & 2002 & 3002 & 4002 & 5002 & & \vline & & 0102 & 1102 & 2102 &
                                                                                  3102 & 4102 & 5102 & & \vline & & 0202 & 1202 & 2202 & 3202 & 4202 & 5202 \\
      0012 & 1012 & 2012 & 3012 & 4012 & 5012 & & \vline & & 0112 & 1112 & 2112 &
                                                                                  3112 & 4112 & 5112 & & \vline & & 0212 & 1212 & 2212 & 3212 & 4212 & 5212 \\
      0022 & 1022 & 2022 & 3022 & 4022 & 5022 & & \vline & & 0122 & 1122 & 2122 &
                                                                                  3122 & 4122 & 5122 & & \vline & & 0222 & 1222 & 2222 & 3222 & 4222 & 5222 \\
      0032 & 1032 & 2032 & 3032 & 4032 & 5032 & & \vline & & 0132 & 1132 & 2132 &
                                                                                  3132 & 4132 & 5132 & & \vline & & 0232 & 1232 & 2232 & 3232 & 4232 & 5232 \\
      0042 & 1042 & 2042 & 3042 & 4042 & 5042 & & \vline & & 0142 & 1142 & 2142 &
                                                                                  3142 & 4142 & 5142 & & \vline & & 0242 & 1242 & 2242 & 3242 & 4242 & 5242 \\
      0052 & 1052 & 2052 & 3052 & 4052 & 5052 & & \vline & & 0152 & 1152 & 2152 &
                                                                                  3152 & 4152 & 5152 & & \vline & & 0252 & 1252 & 2252 & 3252 & 4252 & 5252
    \end{bsmallmatrix}.
  \end{equation}
}%
So, observe that the case in which $s = q$ gives us the $d \x d$ blocks in the main diagonal and the other $d \x d$ blocks represents the case in which $s \neq q$.
On top of that, since the Hessian is symmetric, we only need to compute the lower blocks outside the diagonal and fill them in the upper half of the matrix or vice versa.
This implementation is left as an exercise to the reader.

\section{Results} \label{results}

Let us finally see the results of how Riemannian Optimization algorithms perform in a real-world and difficult problem such as Hartree--Fock.
All the algorithms mentioned below were benchmarked in the \emph{G2/97} dataset \cite{curtiss1997}, which contains 148 molecules, and the basis set used was the 6-31G \cite{hehre1972}.
For the starting point we used a sum of atomic electron densities as described in \cite{vanlenthe2006} and the integrals were computed using the \emph{ir/wmme} program.\footnote{Which can be found at \url{https://sites.psu.edu/knizia/software/}}
It is also worth emphasizing that the code of all the algorithms is available in the GitHub repository \cite{aotograssmann} and that these tests were done in two regular laptops (it can take a couple of days to run everything depending on the specifications of the laptop, though).
Now, the convergence was measured in relative terms, \ie, we run many algorithms for the same molecule and the lowest (converged) energy was the reference to check whether the other algorithms converged to the (relative) minimum.
It is important to emphasize two things: first, the algorithms implemented are not global minimization algorithms, which means the minimum can be local.
Second, since we had to augment the (Riemannian) Hessian in order to obtain the right algorithm, it is not clear to us how to do a second partial derivative test to check if the convergence was to an actual minimum.
This is why we measured in relative terms.
The tolerance used to compare the converged energies was $10^{-5}$ and by lowest converged energy we mean that we also used two criteria to check whether the algorithms converged: the norm of the gradient and the norm of the difference of the energy between two consecutive iterations.
The tolerances used in the stopping criteria were $10^{-8}$ and $10^{-10}$, respectively.

Let us start with the results of the Gradient Descent and Conjugate Gradient:
\begin{table}[h]
  \begin{center}
    \begin{tabular}{|c|c|c|c|c|c|}
      \hline
      Step size & Converged         & Avg. n. iter. \\
      \hline
      0.005     & 0.7\%  \ (1/148)  & 535.0         \\
      \hline
      0.01      & 25.0\% \ (37/148) & 822.8         \\
      \hline
      0.02      & 41.2\% \ (61/148) & 562.1         \\
      \hline
      0.03      & 31.1\% \ (46/148) & 425.3         \\
      \hline
      0.04      & 20.9\% \ (31/148) & 304.2         \\
      \hline
    \end{tabular}
    \caption{Riemannian Gradient Descent (RGD).}
  \end{center}
\end{table}
\begin{table}[h]
  \begin{center}
    \begin{tabular}{|c|c|c|c|c|c|}
      \hline
      Conjugacy        & Step size & Converged          & Avg. n. iter. \\
      \hline
      Fletcher--Reeves & 0.005     & 89.2\% \ (132/148) & 185.8         \\
      \hline
      Fletcher--Reeves & 0.0075    & 91.9\% \ (136/148) & 156.0         \\
      \hline
      Fletcher--Reeves & 0.01      & 93.2\% \ (138/148) & 137.6         \\
      \hline
      Fletcher--Reeves & 0.02      & 81.1\% \ (120/148) & 104.0         \\
      \hline
      Fletcher--Reeves & 0.025     & 68.9\% \ (102/148) & 88.8          \\
      \hline
      Polak--Ribière   & 0.05      & 41.2\% \ (61/148)  & 200.9         \\
      \hline
      Polak--Ribière   & 0.06      & 43.2\% \ (64/148)  & 183.8         \\
      \hline
      Polak--Ribière   & 0.07      & 45.3\% \ (67/148)  & 146.8         \\
      \hline
      Polak--Ribière   & 0.08      & 31.8\% \ (47/148)  & 139.0         \\
      \hline
      Polak--Ribière   & 0.09      & 27.7\% \ (41/148)  & 115.5         \\
      \hline
    \end{tabular}
    \caption{Riemannian Conjugate Gradient (RCG).}
  \end{center}
\end{table}

As one can see, we tried many different step sizes for both algorithms and the maximum number of iterations used was 1000 for Gradient Descent (although this number is considered low for many usages of this method) and 300 for Conjugate Gradient.
We noticed that Gradient Descent would probably converge for many other molecules if we increased this limit, but since the computational cost (asymptotically) of Conjugate Gradient is the same, we judged it was not worth increasing this limit because the results presented by RCG were much better, as we can see.
It is also important to say that the average number of iterations was measured only for the cases in which the algorithm converged.
Now, the results of RCG were quite surprising because it was the best (single, without combining with others,) algorithm we tested in terms of molecules converged, but there are some caveats to it that we will discuss after the next table.
\begin{table}[h]
  \begin{center}
    \begin{tabular}{|c|c|c|c|c|c|}
      \hline
      Method              & Converged          & Avg. n. iter. & Complexity  & $\norm{\grad{f}}$ \\
      \hline
      RGD (0.02)          & 41.2\% \ (61/148)  & 562.1         & $O(d^4)$    & $6.94 \x 10^{-5}$ \\
      \hline
      RCG-FR (0.01)       & 93.2\% \ (138/148) & 137.6         & $O(d^4)$    & $3.21 \x 10^{-5}$ \\
      \hline
      RCG-PR (0.07)       & 45.3\% \ (67/148)  & 146.8         & $O(d^4)$    & $2.95 \x 10^{-5}$ \\
      \hline
      RNR                 & 83.8\% \ (124/148) & 5.0           & $O(d^4 + d^3N^3)$ & $6.29 \x 10^{-10}$ \\
      \hline \hline
      NRLM                & 53.4\% \ (79/148)  & 8.5           & $O(d^4 + d^3N^3)$ & $7.04 \x 10^{-10}$ \\
      \hline
      SCF + 2 DIIS        & 91.2\% \ (135/148) & 33.6          & $O(d^4)$    & $7.24 \x 10^{-9}$ \\
      \hline \hline
      SCF + 2 DIIS + RNR  & 91.9\% \ (136/148) & 54.1          & $O(d^6N^6)$ & $6.40 \x 10^{-9}$ \\
      \hline
      RCG-FR (0.01) + RNR & 95.9\% \ (142/148) & 102.1         & $O(d^6N^6)$ & $2.72 \x 10^{-6}$ \\
      \hline
    \end{tabular}
    \caption{\tbf{RGD (0.02)}: Riemannian Gradient Descent using 0.02 as the step size. \tbf{RCG-FR (0.01)}: Riemannian Conjugate Gradient using Fletcher--Reeves and 0.01 as the step size. \tbf{RCG-PR (0.07)}: Riemannian Conjugate Gradient using Polak--Ribière and 0.07 as the step size. \tbf{RNR}: Riemannian Newton--Raphson. \tbf{NRLM}: Newton--Raphson with Lagrange multipliers. \tbf{SCF + 2 DIIS}: Self-Consistent Field using 2 previous steps in the DIIS technique. $d$: size of basis set. $N$: number of electrons. The column $\norm{\grad{f}}$ represents the mean of the gradients of the last iteration of each converged molecule.}
    \label{table_three}
  \end{center}
\end{table}

As one can see, there are two other algorithms besides the ones described in this chapter that we implemented and tested: Newton--Raphson with Lagrange multipliers, which was described in Appendix \ref{lagrange_multipliers}, and the Self-Consistent Field with the DIIS technique, which is probably the most well-established algorithm in the Hartree--Fock literature (see \cite[Chapter 3]{szabo1996} and \cite[Chapter 10]{helgaker2000}).
The NRLM we implemented because we wanted to compare the performance of the regular Euclidean constrained optimization technique against the Riemannian version of the same algorithm in the same problem (recall that in the Introduction we said Riemannian algorithms can be thought of as alternatives to constrained optimization algorithms).
And the SCF was implemented because we wanted to compare the results with a well-established algorithm that the community is familiar with.
Now let us start analyzing this table.

First of all, the best single algorithm in terms of molecules converged was RCG followed by SCF using 2 steps in the DIIS technique.
Since both algorithms have the same complexity, in this regard using Riemannian algorithms shows a real gain.
We can also see that the Riemannian version of the Newton--Raphson Method is way better than the regular Euclidean version, with an improvement of 30\% more molecules converged.
Again, since the computational complexity of both methods is the same, this is also a real advantage of using Riemannian algorithms.
Combining different methods is a very common practice and we tried to combine the best SCF and the best RCG with RNR.\footnote{We tested different numbers of steps in the DIIS and 2 was the best one.}
As we can see, combining SCF and RNR did not present a real gain while combining RCG with RNR was more interesting.
Having said that, since the convergence of RCG was already high, more tests are required to confirm that this is indeed a good combination.

Let us now analyze the SCF and the RCG more carefully because they were the best algorithms and because they have the same complexity.
First of all, the complexity of both algorithms (and also of RGD) is $O(d^4)$ and the bottleneck is constructing the Fock matrix.
So, since the gradient of the energy requires computing this matrix, there is no first-order method that can improve this complexity without approximating the gradient and that means these three algorithms are the best we can obtain in terms of complexity.
Having said that, in practice we noticed that RCG was almost two times faster than SCF,\footnote{RGD is the fastest algorithm, but since it requires many iterations to converge, in the end it is better to choose between RCG and SCF.} but since the current implementation of RCG is not optimal, we did not benchmark this aspect very carefully.
One possible explanation is that RCG requires two SVD decompositions while SCF requires diagonalizing the Fock matrix, what makes RCG a $O(d^4 + 2dN^2)$ algorithm while SCF is $O(d^4 + d^3)$.
Now, since $d$ is usually bigger than $N$, this might explain the observed performance.
However, as we can see in the table, the average number of iterations (until convergence is reached) required by RCG is more than four times the average number of SCF.
Consequently, SCF will probably converge two times faster than RCG.
Again, it is worth emphasizing that the implementation of RCG is not optimal and this difference in time can actually decrease.
So, even RCG being slower than SCF if we consider all iterations, since it converges in more cases, it may be worth using it instead of SCF.

Now we need to talk about the accuracy of these algorithms.
Recall that we used two stopping criteria: the norm of the gradient and the norm of the difference of the energy between two consecutive iterations.
In most cases RCG converged under the second criterion and not under the first.
This could mean that the algorithm did not converge to a minimum, but, since the convergence was measured with respect to the other algorithms, as already explained in the beginning of this section, we can be sure that it did (at least according to our criteria and tolerances).
That said, the tolerance used in the convergence comparison was $10^{-5}$, which may be too big for some purposes.
So, if the reader needs accuracy, the best option is combining RCG with RNR or, since RNR is quite expensive, combining RCG with SCF (we did not test this, though).
Another option is ignoring the criterion based on the difference of energies, but then RCG takes many more iterations to converge.
The following graph illustrates quite well the convergence of the algorithms implemented for the C$_2$H$_4$ molecule (again, the energy error is measured with respect to the lowest converged energy):
\begin{center}
  \includegraphics[scale=0.37]{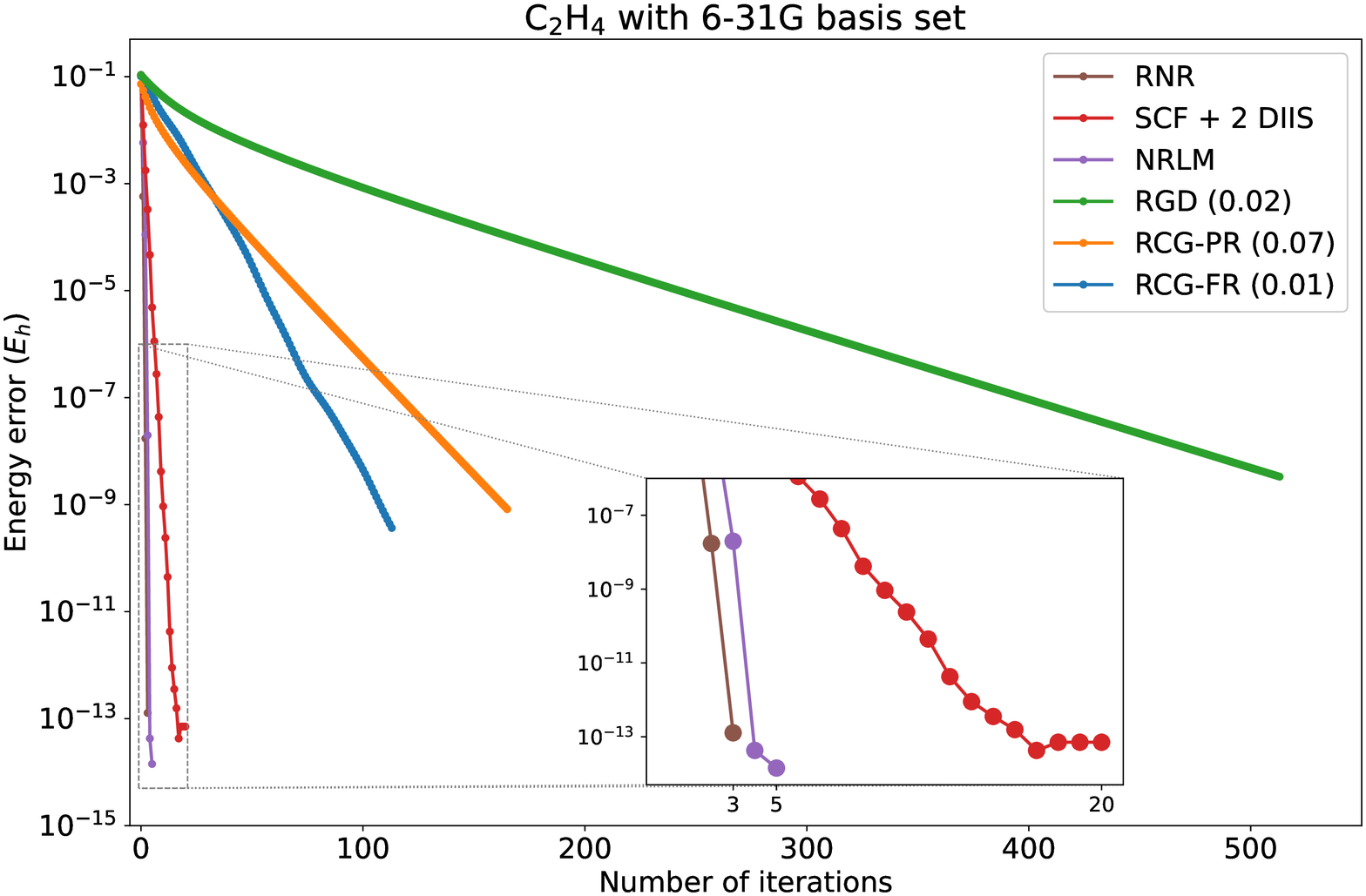}
\end{center}
To conclude, it is worth mentioning that when we combined RCG with RNR, the vast majority of molecules converged really well with $\norm{\grad{f}} \approx 10^{-10}$.
However, there were 10 molecules that converged using only RCG and this is why the mean norm of the gradient is high in Table \ref{table_three}.
That points to the fact that switching from RCG to RNR when the gradient is approximately $10^{-3}$ or $10^{-4}$ may be be a good option.
That is it.


\chapter{Conclusion and Perspectives} \label{conclusion}

As already discussed in the Introduction (\ref{intro}), having robust implementations for Hartree--Fock (HF) is highly desirable because it is a method that underlies many state-of-the-art computations done by chemists, material engineers, physicists etc.
With that said, in this work (Chapter \ref{qm} and Section \ref{hf_ch3}) we showed how HF can be interpreted as a Riemannian optimization problem and, hopefully, we also proved to the reader that the field of Riemannian Optimization can provide new robust implementations for HF.
In Chapter \ref{optimization} we provided pseudocodes for all the algorithms we implemented and in Section \ref{results} we showed that a combination of two Riemannian algorithms (the Conjugate Gradient together with the Newton--Raphson) achieved the best performance in terms of number of molecules converged even when compared to one of the most well-established algorithms in the literature.
There are many interesting aspects to investigate about these Riemannian algorithms that we did not explore, though, such as trying to find theoretical and/or experimental heuristics to improve the average number of iterations until convergence is reached or using other algorithms that are currently known to perform even better than Conjugate Gradient.
Most of the tools and references necessary to do such investigations were provided if the reader is interested, though.

Another goal of this work was to present the field of Riemannian Optimization to the reader because it is an interesting field that can be used not just for Hartree--Fock, but possibly for other \emph{ab initio} methods.
In \cite{aoto2022}, for example, the author shows that both the \emph{Configuration Interaction} and the \emph{Coupled Cluster} methods have an underlying manifold as their constraints, but for the ``Coupled Cluster manifold'' we do not know if there are closed formulas for the objects necessary to implement Riemannian algorithms for it, such as the gradient, Hessian, geodesic etc.
Another place where Riemannian Optimization can be used is in Density Functional Theory methods.
In \cite{headgordon2002} the authors implemented a Riemannian quasi-Newton to the \emph{B3LYP} functional and the results obtained were very interesting.

Now, to the reader interested mainly in Riemannian Optimization, there are many other applications outside (Quantum) Chemistry, see \cite{boumal2022, edelman1998}, but two very active fields in which Riemannian Optimization is gaining traction are Data Science \cite{trendafilov2022} and Machine Learning \cite{weber2021}.
This is promising because these two references shows how Riemannian Optimization can help these fields, but the other way around is also true.
In \cite{becigneul2018}, for example, the authors propose how to translate state-of-the-art adaptive methods used by Machine Learning practitioners to a Riemannian framework.
Having said that, the main interests of people working specifically with Riemannian Optimization these days are (1) improving and developing new algorithms, specially for nonsmooth cost functions \cite[A Collection of Nonsmooth Riemannian Optimization Problems]{hosseini2019}, and (2) understanding the role that the curvature of the manifold play in the complexity of algorithms \cite{criscitiello2022}.

Now let us show some ideas of what can be improved (or extended) in this work:
\begin{enumerate}

\item One thing that can be done is tweaking the algorithms to use cheaper retractions instead of the Riemannian exponential.
  By doing this the points are not updated along a geodesic and this may increase the number of iterations until convergence is reached or even decrease the number of molecules converged, but some retractions are way cheaper to compute than the exponential, so, it is a trade-off.

\item All the algorithms implemented are called \emph{line search} algorithms but there is a different approach we did not explore at all called \emph{trust region} with many other algorithms one can try.
  Some of these algorithms are described in the book \cite{boumal2022}, but many variants can be found in the Manopt package \cite{boumal2014}.

\item Another idea to explore is using different parametrizations of the Grassmannian.
  Some parametrizations were already explored for Hartree--Fock, see \cite{feldman2022, headgordon2002}, but the one we used here (quotient of the Stiefel manifold) and the one described in \cite{lai2020} were not, as far as we know.

\item As already mentioned, finding theoretical or experimental heuristics that shows which combination of algorithms to use or when it is better to switch between algorithms is something worthwhile pursuing if one wants an implementation that is going to be used by the community.

\item For quantum chemists, a couple of ideas are (1) extending the present work to the Density Functional Theory framework; (2) improving the current implementation to explore the symmetries of molecules, for example; and (3)
implementing the restricted version of HF (see Definition \ref{restricted_hf}).

\end{enumerate}

And, to conclude, some general remarks and advices about Riemannian Optimization in general.
This field provides many powerful tools to model and tackle optimization problems, as we already saw, but there are some difficulties one has to overcome to use these tools:
\begin{enumerate}

\item First of all, to use Riemannian algorithms the reader should have an optimization problem defined on a manifold and it may not be easy to spot a manifold if the reader is not familiar with Riemannian Geometry.

\item The manifold can have different parametrizations and choosing the best one usually requires experience with numerical methods.

\item The manifold may not have been explored in the literature, specially from a numerical perspective.

\item In order to implement a full-fledged Riemannian algorithm one needs to have a good grasp of Riemannian Geometry, which is not an easy topic.

\end{enumerate}
Some of these problems were partially addressed in the present work.
In Chapter \ref{geometry}, for example, we stated one of the most powerful tools to ``spot'' manifolds, which is the Preimage Theorem \ref{preimage_theorem}, and we used this theorem to show that the Stiefel manifold is, indeed, a manifold.
The last item is also partially addressed by Chapter \ref{geometry}, since we provided all the tools necessary to implement at least three Riemannian algorithms.
We did not address the second problem, but for the case of the Grassmannian we refer the reader to \cite{lai2020} for a very detailed numerical comparison between different parametrizations.
The third problem is probably the worst because it means one need to find the formulas for itself and this can be quite challenging.
Again, in Chapter \ref{geometry} we provided some theorems and a couple of examples of what one should compute and how the computation can be done, but experience tells that a good grasp of Riemannian Geometry is required to compute everything correctly.
Two references to help with this problem are \cite{absil2008, boumal2022}.
Having said that, it is worth mentioning that one can find a very big list of manifolds for which there are already algorithms implemented at the website of the Manopt package \cite{boumal2014}.
The last advice is that if the reader is interested in using Riemannian Optimization but do not know Riemannian Geometry, then using algorithms from packages such as Manopt is a good option because it has many algorithms already implemented for lots of manifolds.
So, that means one can check which algorithm is the best for their problem without having to spend months or even years implementing them.
On top of that, the source code of this package is very good and it can help with the implementation if the reader is interested in pursuing one.


\appendix

\chapter{Topology} \label{top}

\epigraph{Topology belonged to the stratosphere of human thought. It might conceivably turn out to be of some use in the twenty-fourth century, but for the time being...}{Aleksandr Solzhenitsyn}

The goal of this appendix is to define the topological tools used in this work and to state Proposition \ref{compact_maxmin}, which is the result that guarantees the Hartree--Fock Method has at least one solution.
The proofs were omitted, but they can be found in main references for this appendix, which are \cite{munkres2014, rudin1976, sutherland2009}.

\section{Metric spaces} \label{metric_spaces}

\begin{defi}
  A set $X$ is said to be a \emph{metric space}\index{space!metric} if it is possible to define a function $d:X \x X \to \br$ such that the following axioms are valid for every $x, y, z \in X$:
  \begin{enumerate}
    
  \item $d(x, y) = 0$ \tiff $x = y$.

  \item $d(x, y) = d(y, x)$.

  \item $d(x, z) \leq d(x, y) + d(y, z)$.

  \end{enumerate}
  The function $d$ is called \emph{metric}\index{metric} or \emph{distance}\index{distance}, the second axiom is called \emph{symmetry of the metric} and the third is called \emph{triangle inequality}.
  The elements of $X$ are also usually called \emph{points}\index{point}.
\end{defi}

\begin{obs}
  Note that $d(x, y) > 0$ if $x \neq y$.
  Indeed, using the three axioms above, we obtain the following inequality: $0 = d(x, x) \leq d(x, y) + d(y, x) = 2d(x, y)$.
  Dividing this inequality by $2$, we obtain $0 \leq d(x, y)$.
  Using the first axiom again, the result follows because $d(x, y) = 0$ \tiff $x = y$, which is not the case since we assumed $x \neq y$.
\end{obs}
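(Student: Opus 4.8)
The plan is to obtain non-negativity of $d$ as a consequence of the three axioms and only afterwards to sharpen it to strict positivity. The point worth emphasizing is that, although many presentations simply postulate $d(x,y) \geq 0$ as a fourth axiom, here it must be extracted from the triangle inequality together with symmetry and the reflexivity relation $d(x,x) = 0$.

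First I would specialize the triangle inequality to the three points $x$, $y$ and $z = x$, which gives $d(x,x) \leq d(x,y) + d(y,x)$. Next I would rewrite the left-hand side as $0$ using the first axiom (so $d(x,x) = 0$) and collapse the right-hand side using the symmetry axiom $d(y,x) = d(x,y)$, obtaining $0 \leq 2\,d(x,y)$. Dividing by $2$ yields $d(x,y) \geq 0$ for every pair $x, y \in X$. Finally, under the hypothesis $x \neq y$, I would invoke the contrapositive of the ``only if'' direction of the first axiom: since $d(x,y) = 0$ forces $x = y$, and $x \neq y$ by assumption, we must have $d(x,y) \neq 0$; combining this with $d(x,y) \geq 0$ gives $d(x,y) > 0$, as claimed.

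There is no genuine obstacle here, since the whole argument is a single well-chosen instantiation of the triangle inequality. The only points requiring care are structural. Non-negativity must be established for all pairs \emph{before} strictness is invoked, because ruling out equality via the biconditional in the first axiom only upgrades $d(x,y) \geq 0$ to $d(x,y) > 0$ once the weak inequality is in hand. I would also state explicitly that it is the symmetry axiom that permits merging the two summands $d(x,y)$ and $d(y,x)$ into $2\,d(x,y)$; without symmetry the bound would only read $0 \leq d(x,y) + d(y,x)$, which does not by itself separate the two (a priori distinct) quantities.
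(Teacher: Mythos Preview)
Your argument is correct and follows essentially the same route as the paper: instantiate the triangle inequality with $z=x$, use $d(x,x)=0$ and symmetry to obtain $0\le 2\,d(x,y)$, divide by $2$, and then invoke the first axiom to upgrade to strict inequality when $x\neq y$. Your write-up is in fact more explicit than the paper's about which axiom justifies each step, but the strategy is identical.
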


\begin{exm} \label{discrete_metric}
  Every nonempty set $X$ can be endowed with a metric, just consider the function defined by $d(x, y) = 1$, if $x \neq y$, and $d(x, y) = 0$, if $x = y$.
  This metric is called \emph{discrete metric}\index{metric!discrete}.
\end{exm}

\begin{exm} \label{std_metric}
  $\bq$, $\br$ and $\bc$ are metric spaces with the absolute value as the metric, \ie, $d(x, y) = \abs{x - y}$.
  Remember, the absolute value for $\bq$ and $\br$ is defined as: $\abs{x} = x$, if $x \geq 0$, and $\abs{x} = -x$, if $x < 0$.
  The absolute value for $\bc$ is defined as $\sqrt{z\ol{z}}$, being $\ol{z}$ the complex conjugate of $z$, that is, if $z = x + iy$, then $\ol{z} \ceq z - iy$.
\end{exm}

\begin{exm} \label{metrics_rn}
  $\bq^d$, $\br^d$ and $\bc^d$ are metric spaces and the most common metrics used are:
  \begin{enumerate}

  \item $d((x_1, \ldots, x_d), (y_1, \ldots, y_d)) \ceq \sqrt{(x_1 - y_1)^2 + \ldots + (x_d - y_d)^2}$.
    This metric is usually called \emph{Euclidean metric}\index{metric!Euclidean}.

  \item $d((x_1, \ldots, x_d), (y_1, \ldots, y_d)) \ceq \max\qty{\abs{x_i - y_i} : i = 1, \ldots, d}$.
    This metric is usually called \emph{maximum metric}\index{metric!maximum}.

  \item $d((x_1, \ldots, x_d), (y_1, \ldots, y_d)) \ceq \abs{x_1 - y_1} + \ldots + \abs{x_d - y_d}$.
    This metric is usually called \emph{taxicab metric}\index{metric!taxicab}.

  \end{enumerate}
  All these metrics are equivalent, that is, they induce the same topology (see Proposition \ref{induced_topology} for the definition of induced topology).
  Actually, these metrics are induced by norms (see Example \ref{norms_equivalent}) and every two metrics in a finite-dimensional vector space that are induced by two norms are equivalent to each other (for a proof of this, see \cite{kconradnorm}).
\end{exm}

\begin{defi}
  Given two metric spaces $(X, d_X)$ and $(Y, d_Y)$ such that $X \subset Y$, we say that $X$ is a \emph{subspace of $Y$}\index{metric!subspace} if $d_Y|_{X \x X} = d_X$.
\end{defi}

\begin{exm}
  $\bq^d$ is a subspace of $\br^d$ and $\bc^d$.
\end{exm}

\begin{exm}
  $\br^d$ is a subspace of $\bc^d$.
\end{exm}

\begin{defi}
  Given a nonempty set $X$, a \emph{sequence in $X$}\index{sequence} is a function $f:\bn \to X$.
  The elements $f(n)$ are called \emph{terms of the sequence}\index{sequence!term}.
\end{defi}

\begin{obs}
  Given a sequence $f$, we usually denote the term $f(n)$ as $x_n$ and the sequence as $\qty{x_n}$.
  Note that, using this notation, the function $f$ is implicitly described by the terms $x_n$.
\end{obs}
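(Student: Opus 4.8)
The plan is to treat this as the notational convention it is and to isolate the one assertion with genuine content: that the indexed family of terms $(x_n)_{n \in \bn}$ determines the function $f$ and nothing more. First I would unwind the definition of sequence just given, under which $f \colon \bn \to X$ is nothing but the assignment $n \mapsto f(n)$, equivalently its graph $\qty{(n, f(n)) : n \in \bn} \subset \bn \x X$. Since we have set $x_n \ceq f(n)$, this graph is literally $\qty{(n, x_n) : n \in \bn}$, so passing from $f$ to the list of its terms neither loses nor invents data about $f$.

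To make ``implicitly described'' precise, the key step is to exhibit the mutually inverse assignments
\begin{equation*}
  f \longmapsto (x_n)_{n \in \bn}
  \qquad \text{and} \qquad
  (x_n)_{n \in \bn} \longmapsto \bigl(n \mapsto x_n\bigr)
\end{equation*}
between functions $\bn \to X$ and $\bn$-indexed families of elements of $X$. That these are inverse to one another is immediate from function extensionality: two functions on $\bn$ agreeing at every $n$ are equal, and every family arises as the term list of the function it defines. Hence the notation $\qty{x_n}$ carries exactly as much information as $f$ itself, which is the assertion.

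The only point that actually requires care --- and what I would flag as the ``obstacle'', such as it is --- is the reading of the brace notation $\qty{x_n}$. It must be understood as the indexed family (retaining the index $n$, its ordering, and repetitions), and \emph{not} as the bare image set $\qty{x_n : n \in \bn}$. The latter does not determine $f$: a constant sequence and a sequence taking that same value infinitely often but differing elsewhere can share the same set of terms. So the substantive content of the observation reduces to fixing the convention that $\qty{x_n}$ denotes the family $n \mapsto x_n$; once that convention is adopted, the recovery is exactly the trivial verification above.
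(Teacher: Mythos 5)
Your verification is correct and matches the paper's intent: the observation is a notational convention with no proof in the text, and its only mathematical content is exactly what you isolate, namely that the indexed family $(x_n)_{n \in \bn}$ and the function $f$ carry the same data by extensionality. Your caveat that $\qty{x_n}$ must be read as an indexed family rather than the image set $\qty{x_n : n \in \bn}$ is a genuine and correctly identified subtlety, and it is the right thing to flag.
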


\begin{defi}
  Given a metric space $X$, a sequence $\qty{x_n}$ and an element $x \in X$, we say that the sequence \emph{$\qty{x_n}$ converges to $x$}\index{sequence!convergence} if for every $\eps > 0$ there exists $n_0 \in \bn$ such that $d(x_n, x) < \eps$ for every $n \geq n_0$.
  The element $x$ is called the \emph{limit of the sequence $\qty{x_n}$}\index{sequence!limit}.
\end{defi}

\begin{obs}
  A common notation to represent the convergence of a sequence $\qty{x_n}$ to $x$ is $\qty{x_n} \to x$.
\end{obs}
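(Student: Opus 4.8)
The plan is to recognize that this statement requires no proof at all: it is a purely notational observation, not a mathematical claim. The preceding definition already fixes the meaning of the phrase ``the sequence $\qty{x_n}$ converges to $x$'' via the $\eps$--$n_0$ condition, and the present observation does nothing more than introduce the compact symbol $\qty{x_n} \to x$ as an abbreviation for exactly that phrase. Since an abbreviation carries no logical content beyond the object it abbreviates, there is no proposition whose truth could be verified or refuted.

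Concretely, I would first isolate what the observation asserts, namely the \emph{stipulation} that the string $\qty{x_n} \to x$ shall henceforth be read as synonymous with the defined notion of convergence. Second, I would note that such a stipulation is self-justifying in the sense that it is true by fiat: one simply declares the notation, and from that point onward the two expressions are interchangeable by definition. No appeal to the metric axioms, to uniqueness of limits, or to any earlier result is needed, precisely because no assertion about the space $X$, the metric $d$, or the sequence is being made.

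The only thing resembling an ``obstacle'' here is the temptation to over-read the observation and attach to it a genuine theorem, for instance the uniqueness of the limit $x$, which would indeed require the triangle inequality and the first metric axiom. I would explicitly resist that temptation: uniqueness is a separate statement that the observation neither claims nor presupposes, and conflating the two would be a category error. The correct ``proof,'' therefore, is the remark that the observation is a definitional convention and hence \emph{nothing remains to be shown}.
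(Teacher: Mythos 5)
You are right: the observation is a purely notational convention, and the paper accordingly attaches no proof to it. Your recognition that nothing remains to be shown — and your care to avoid smuggling in a separate claim like uniqueness of limits — matches the paper's treatment exactly.
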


\begin{defi}
  Given a metric space $X$ and a sequence $\qty{x_n}$, the sequence is said to be a \emph{Cauchy sequence}\index{sequence!Cauchy} if for every $\eps > 0$ there exists $n_0 \in \bn$ such that $d(x_n, x_m) < \eps$ for every $n, m \geq n_0$.
\end{defi}

\begin{prop}
  Given a metric space $X$ and a sequence $\qty{x_n}$ in $X$, if the sequence converges to a point $x$, then it is a Cauchy sequence.
\end{prop}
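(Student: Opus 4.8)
The plan is to reduce everything to the triangle inequality, which is the only real content here. The hypothesis gives us control of $d(x_n, x)$ for large $n$, and the triangle inequality lets us bound the distance between two terms $x_n$ and $x_m$ by routing through the common limit $x$. The one point of care is the standard $\eps/2$ trick: since two error terms will appear, I would invoke the definition of convergence with $\eps/2$ in place of $\eps$, so that the two pieces add up to exactly $\eps$.

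Concretely, I would first fix an arbitrary $\eps > 0$. By the definition of convergence of $\qty{x_n}$ to $x$ applied to the positive number $\eps/2$, there exists $n_0 \in \bn$ such that $d(x_n, x) < \eps/2$ for every $n \geq n_0$. I would then take any indices $n, m \geq n_0$ and estimate
\begin{equation}
  d(x_n, x_m) \leq d(x_n, x) + d(x, x_m) < \frac{\eps}{2} + \frac{\eps}{2} = \eps,
\end{equation}
where the first inequality is the triangle inequality (third metric axiom) and the bound on $d(x, x_m)$ uses the symmetry of the metric to rewrite it as $d(x_m, x)$, which is controlled since $m \geq n_0$. Since $n, m \geq n_0$ were arbitrary, this is exactly the statement that $\qty{x_n}$ is a Cauchy sequence, and since $\eps > 0$ was arbitrary the proof is complete.

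There is no genuine obstacle in this argument; it is a direct unwinding of definitions. The only thing I would be attentive to is not to be sloppy about quantifier order, namely that $n_0$ is chosen \emph{before} picking the pair $n, m$, and that the same $n_0$ works simultaneously for both indices. This is automatic because the convergence condition bounds $d(x_k, x)$ for \emph{all} $k \geq n_0$ at once, so no separate choice is needed for $n$ and for $m$.
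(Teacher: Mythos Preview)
Your proof is correct and is exactly the standard $\eps/2$ argument one finds in any reference. The paper itself omits the proof of this proposition (the appendix explicitly defers all proofs to \cite{munkres2014, rudin1976, sutherland2009}), so there is nothing to compare against; your write-up is what those references contain.
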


\begin{exm}\label{incompleteness_of_q}
  The converse of the previous proposition is not true.
  Indeed, assuming the Euler number $e$ is irrational, \ie, $e \in \br \ssm \bq$, we can consider the metric space $\bq$ and the sequence $x_n = \qty(1 + \frac{1}{n})^n$ as our example.
  It is well known (and it is proven in \cite[Chapter~3]{rudin1976}) that the previous sequence converges to $e$ and note that $x_n \in \bq$ for every $n$.
  So, the sequence is a Cauchy sequence, since it converges, it is in $\bq$, but it does not converge to a point in $\bq$.
\end{exm}

The last example motivates the following definition:
\begin{defi} \label{complete_space}
  A metric space $X$ is said to be a \emph{complete space}\index{space!complete} if every Cauchy sequence in $X$ converges to a limit in $X$.
\end{defi}

With this definition, it is possible to define Hilbert spaces, see \ref{hilbert_space}.

\begin{exm}
  $\br^d$ is complete, but $\bq^d$ is not.
\end{exm}

\begin{defi}
  Given a metric space $X$, a point $x \in X$ and a real number $r > 0$, let us define three very important sets:
  \begin{enumerate}

  \item $B(x, r) \ceq \qty{y \in X : d(x, y) < r}$ is called \emph{open ball centered in $x$ of radius $r$}\index{ball!open}.

  \item $B[x, r] \ceq \qty{y \in X : d(x, y) \leq r}$ is called \emph{closed ball centered in $x$ of radius $r$}\index{ball!closed}

  \item $S(x, r) \ceq \qty{y \in X : d(x, y) = r}$ is called \emph{sphere centered in $x$ of radius $r$}\index{sphere}

  \end{enumerate}
\end{defi}

\begin{exm}
  If $X = \br$, then an open (resp. closed) ball is just an open (resp. closed) interval, \ie, $B(x, r) = (x-r, x+r)$ and $B[x, r] = [x-r, x+r]$.
  The sphere in this case consists of two points: $S(x, r) = \qty{x-r, x+r}$.
\end{exm}

\begin{defi} \label{open_metric}
  Given a metric space $X$ and a subset $U \subset X$, we call $U$ an \emph{open set}\index{set!open} if for every $x \in U$ there exists $r > 0$ such that $B(x, r) \subset U$.
  A \emph{closed set}\index{set!closed} is a set $F$ whose complement $X \ssm F$ is open.
\end{defi}

\begin{exm} \label{open_ball_is_open}
  Open balls are open sets.
\end{exm}

\begin{defi}
  Given a metric space $X$ and a subset $S \subset X$, we say that $S$ is \emph{bounded}\index{set!bounded} if there exists $x \in X$ and $r > 0$ such that $S \subset B(x, r)$.
  Sets that are not bounded are called \emph{unbounded}\index{set!unbounded}.
\end{defi}

\begin{obs}
  If $S$ is contained in the open ball $B(x, r)$, then it is also contained in the closed ball $B[x, r]$.
\end{obs}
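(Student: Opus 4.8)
The plan is to reduce the statement to the single elementary inclusion $B(x,r) \subseteq B[x,r]$ and then invoke transitivity of the subset relation; no metric axiom beyond the definitions of the two balls will be needed.

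First I would unwind the definitions. By definition $B(x,r) = \qty{y \in X : d(x,y) < r}$ and $B[x,r] = \qty{y \in X : d(x,y) \leq r}$. The only genuine observation is that the strict inequality entails the non-strict one: if $d(x,y) < r$, then in particular $d(x,y) \leq r$. Hence every point of the open ball is a point of the closed ball, which is exactly the inclusion
\begin{equation}
  B(x,r) \subseteq B[x,r].
\end{equation}

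Next I would combine this with the hypothesis $S \subseteq B(x,r)$. Since the subset relation is transitive, chaining $S \subseteq B(x,r) \subseteq B[x,r]$ gives $S \subseteq B[x,r]$, as claimed. Concretely, one picks an arbitrary $y \in S$, uses $S \subseteq B(x,r)$ to conclude $d(x,y) < r$, and then reads off $d(x,y) \leq r$, so that $y \in B[x,r]$.

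The hard part, honestly, is that there is no hard part: the entire content is the implication "$d(x,y) < r \Rightarrow d(x,y) \leq r$", which holds for real numbers independently of any special structure of the metric. The statement is recorded here only because it will be used silently whenever one passes from a boundedness witness phrased with an open ball to one phrased with a closed ball, so the proof is a one-line containment argument and I would present it as such rather than introducing any machinery.
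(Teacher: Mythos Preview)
Your proposal is correct. The paper states this as an observation without proof, treating it as self-evident; your argument via $B(x,r) \subseteq B[x,r]$ and transitivity is exactly the expected justification.
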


\begin{exm}
  The intervals $[a, b]$, $(a, b)$, $[a, b)$, $(a, b]$ are bounded, just consider $x = 0$ and $r = 1 + \max\qty{\abs{a}, \abs{b}}$ in the previous definition.
  However, the intervals $(-\infty, x]$, $(-\infty, x)$, $(x, +\infty)$, $[x, +\infty)$ are unbounded.
\end{exm}

\begin{defi}
  Consider two metric spaces $(X, d_X)$ and $(Y, d_Y)$.
  Given $x \in X$ and a function $f:X \to Y$, we say that \emph{$f$ is continuous at $x$}\index{function!continuous (metric)} if for every $\eps > 0$ there exists $\delta > 0$ such that, if $x' \in X$ satisfies $d_X(x, x') < \delta$, then $f(x')$ satisfies $d_Y(f(x), f(x')) < \eps$.
  We say that $f$ is \emph{continuous} if it is continuous at every point of its domain.
\end{defi}

\begin{obs} \label{obs_metric_continuous}
  We can rephrase the above definition as: for every $\eps > 0$ there exists $\delta > 0$ such that $f(B(x, \delta)) \subset B(f(x), \eps)$.
\end{obs}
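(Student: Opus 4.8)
The plan is to show that the two formulations are logically equivalent by unwinding the definitions of the image of a set, of set inclusion, and of the open ball. Since no topological machinery is needed at this stage, the entire argument reduces to a chain of equivalent rephrasings, so I would present it as a single biconditional rather than splitting it into two separate implications.

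First I would fix $x \in X$ and $\eps > 0$, and unpack what the inclusion $f(B(x, \delta)) \subset B(f(x), \eps)$ means for a given $\delta > 0$. By the definition of the image of a set, an element belongs to $f(B(x, \delta))$ \tiff it has the form $f(x')$ for some $x' \in B(x, \delta)$. Hence the inclusion $f(B(x, \delta)) \subset B(f(x), \eps)$ holds \tiff $f(x') \in B(f(x), \eps)$ for every $x' \in B(x, \delta)$.

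Next I would translate both membership conditions back into the metric using the definition of the open ball (Definition \ref{open_metric}): $x' \in B(x, \delta)$ means exactly $d_X(x, x') < \delta$, and $f(x') \in B(f(x), \eps)$ means exactly $d_Y(f(x), f(x')) < \eps$. Substituting these, the inclusion becomes the assertion that $d_X(x, x') < \delta$ implies $d_Y(f(x), f(x')) < \eps$ for every $x' \in X$, which is precisely the condition appearing in the original definition of continuity at $x$. Quantifying over $\eps$ and $\delta$ identically on both sides then gives the claimed equivalence.

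Since every step is a direct rewriting of a definition, I expect no genuine obstacle here; the only point deserving a little care is keeping the quantifier structure intact, so that the universal quantifier over $x'$ stays inside the scope of the chosen $\delta$. If one is careless about the order of quantifiers, the rephrasing could accidentally be matched against a strictly weaker statement, so I would make the nesting explicit at the moment I pass from the set inclusion to the pointwise implication.
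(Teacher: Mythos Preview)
Your argument is correct and is exactly the routine unwinding of definitions one would expect. The paper does not supply a proof for this observation at all---it is recorded as a bare remark---so there is nothing to compare against beyond noting that your write-up makes explicit what the paper leaves to the reader.
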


\begin{defi}
  Given two metric spaces $(X, d_X)$, $(Y, d_Y)$ and a function $f:X \to Y$, we say that $f$ is a \emph{(metric) isometry}\index{metric!isometry} if $d_X(x,x') = d_Y(f(x), f(x'))$ for every $x, x' \in X$.
\end{defi}

\begin{obs}
  Isometries are continuous and injective functions.
\end{obs}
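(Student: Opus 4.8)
The plan is to verify the two properties separately, each following directly from the defining equation $d_X(x,x') = d_Y(f(x), f(x'))$ of a metric isometry together with the metric axioms. Both verifications are short, so the work is mainly a matter of unwinding the relevant definitions carefully.

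First I would prove continuity. Fixing an arbitrary $x \in X$, I appeal to the $\eps$--$\delta$ definition of continuity at a point. The key observation is that the isometry equation makes the choice of $\delta$ completely canonical: given $\eps > 0$, I would simply take $\delta = \eps$. Then for any $x' \in X$ with $d_X(x, x') < \delta$, the isometry property gives $d_Y(f(x), f(x')) = d_X(x, x') < \eps$, which is exactly what continuity at $x$ demands. Since $x$ was arbitrary, $f$ is continuous at every point of its domain, hence continuous.

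Next I would prove injectivity. Suppose $f(x) = f(x')$. Then $d_Y(f(x), f(x')) = 0$ by the first metric axiom applied in $Y$. Using the isometry equation, this forces $d_X(x, x') = d_Y(f(x), f(x')) = 0$, and applying the first metric axiom in $X$ (which states $d_X(x, x') = 0$ \tiff $x = x'$) yields $x = x'$. Thus $f$ is injective.

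There is no real obstacle here: the only subtlety worth flagging is notational bookkeeping, namely keeping track of which metric ($d_X$ or $d_Y$) each distance refers to when invoking the metric axioms, since the same axiom ``$d(x,y)=0 \Leftrightarrow x=y$'' is used once in $Y$ and once in $X$. The entire content of the statement is that the isometry equation lets one transfer vanishing-distance information between the two spaces and lets one pick $\delta = \eps$ for free.
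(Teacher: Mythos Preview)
Your proof is correct. The paper states this as an observation without proof, so there is nothing to compare against; your argument is the standard one and exactly what the author would have in mind.
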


\section{Topological spaces}

\begin{defi}
  Given a set $X$, a \emph{topology on $X$}\index{topology} is a set $\mc{T}$ whose elements are subsets of $X$ and such that the following axioms are true:
  \begin{enumerate}

  \item $\emptyset, X \in \mc{T}$.

  \item $U \cap V \in \mc{T}$ for every $U, V \in \mc{T}$.

  \item $\bigcup_{U \in \, \mc{U}} U \in \mc{T}$ for every $\mc{U} \subset \mc{T}$.

  \end{enumerate}
  The elements of $\mc{T}$ are called \emph{open sets}\index{set!open}, the complement of the open sets are called \emph{closed sets}\index{set!closed} and the pair $(X, \mc{T})$ is called \emph{topological space}\index{space!topological}.
\end{defi}

\begin{obs}
  When the topology is not important or it is clear which topology we are talking about, we will denote a topological space by $X$ instead of $(X, \mc{T})$.
\end{obs}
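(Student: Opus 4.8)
The plan is to note at the outset that this statement is a \emph{notational convention} rather than a mathematical assertion, so there is strictly nothing to prove. A topological space has just been \emph{defined} to be the pair $(X, \mc{T})$, and the observation merely records the author's decision to write $X$ in place of $(X, \mc{T})$ whenever $\mc{T}$ is understood from context. Such a shorthand does not claim any property of topological spaces; it only fixes how later passages are to be read. Accordingly, the appropriate ``proof'' is simply to adopt the convention and to let the surrounding text supply the suppressed topology at each occurrence.

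If one insists on a justification, the only thing that admits checking is that the abbreviation is \emph{unambiguous} in the situations where it is deployed. First I would make precise the clause ``the topology is not important'': this should mean that the construction or statement in which $X$ appears depends only on data invariant under the choice of $\mc{T}$, so that passing from $(X, \mc{T})$ to $X$ discards nothing used downstream. Second, I would treat the clause ``it is clear which topology we are talking about'': here $\mc{T}$ has been pinned down earlier—by a standing hypothesis or a specific example—so that, \emph{within that local context}, the symbol $X$ recovers a well-defined pair.

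The main obstacle is expository rather than mathematical: there is no theorem asserting that ``context determines $\mc{T}$,'' since the same underlying set typically carries many distinct topologies (contrast the discrete metric of Example~\ref{discrete_metric} with the Euclidean and maximum metrics of Example~\ref{metrics_rn}, all living on the same $\br^d$). Hence the convention cannot be proved globally unambiguous; it can only be honored locally, by the reader tracking which topology was last specified. So the proposal reduces to acknowledging the convention as a convention, verifying informally that each use falls under one of the two clauses above, and otherwise carrying on—no formal argument is required or possible.
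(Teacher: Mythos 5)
You correctly identify that this observation is a notational convention with no mathematical content, which is exactly how the paper treats it — no proof is given or needed. Your remarks on local disambiguation are sensible but optional; the convention stands as stated.
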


\begin{obs}
  The second axiom can be extended to the intersection of finite sets by induction.
  Indeed, if $U_1, \ldots, U_n$ are open sets and we know the result is true for $n-1$, then it is true for $n$ because $U_1 \cap \ldots \cap U_n = (U_1 \cap \ldots \cap U_{n-1}) \cap U_n$.
\end{obs}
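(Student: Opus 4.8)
The plan is to prove by induction on $n \geq 1$ the claim that whenever $U_1, \ldots, U_n \in \mc{T}$ are open sets, their intersection $\bigcap_{i=1}^n U_i$ is again open. The second axiom of a topology only asserts this for two sets, so the entire content of the observation is to bootstrap from the pairwise case to arbitrary finite collections, and induction is the natural mechanism for this.

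First I would dispatch the base case. For $n = 1$ the statement is immediate, since $\bigcap_{i=1}^1 U_i = U_1$ is open by hypothesis; alternatively one can take $n = 2$ as the base case, which is exactly the second axiom. Then, for the inductive step, I would assume as inductive hypothesis that $\bigcap_{i=1}^{n-1} U_i \in \mc{T}$ and use the associativity of intersection to write
\begin{equation*}
  \bigcap_{i=1}^n U_i = \left( \bigcap_{i=1}^{n-1} U_i \right) \cap U_n.
\end{equation*}
The right-hand side is the intersection of two open sets — the first open by the inductive hypothesis and the second open by assumption — hence open by the second axiom. This closes the induction.

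Honestly I do not expect any real obstacle here: the argument is a one-line induction whose only substantive ingredient is that a finite intersection can be regrouped as a nested pair, so that the pairwise axiom applies repeatedly. The one point that deserves a moment's care is the boundary of the induction — choosing the base case consistently so that the step $n-1 \to n$ never appeals to an empty intersection. If one wished to include $n = 0$, the empty intersection would be $X$ itself, which is open by the \emph{first} axiom rather than the second; but since the observation concerns intersecting finitely many \emph{given} open sets, starting the induction at $n = 1$ (or $n = 2$) sidesteps this subtlety entirely.
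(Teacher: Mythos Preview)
Your argument is correct and mirrors the paper's exactly: both proceed by induction on $n$, regrouping $U_1 \cap \ldots \cap U_n$ as $(U_1 \cap \ldots \cap U_{n-1}) \cap U_n$ and invoking the second axiom on the resulting pair. You are simply a bit more explicit about the base case and the role of associativity, but there is no substantive difference.
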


\begin{obs}
  $\emptyset$ and $X$ are both closed and open sets.
\end{obs}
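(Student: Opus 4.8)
The plan is to derive both claims directly from the first topology axiom together with the definition of a closed set, so essentially no machinery is needed beyond unwinding definitions. Recall that in the definition of a topology $(X, \mc{T})$ the first axiom asserts $\emptyset, X \in \mc{T}$, and that a closed set is by definition the complement (in $X$) of an open set.

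First I would establish openness, which is immediate: the first axiom states $\emptyset \in \mc{T}$ and $X \in \mc{T}$, so both sets are open by definition. No further argument is required here.

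Next I would establish that both are closed by exhibiting their complements as open sets. The key observation is the pair of set-theoretic identities $X \ssm \emptyset = X$ and $X \ssm X = \emptyset$. Using the first of these, the complement of $\emptyset$ is $X$, which lies in $\mc{T}$ by the first axiom; hence $\emptyset$ is closed. Using the second, the complement of $X$ is $\emptyset$, which again lies in $\mc{T}$ by the first axiom; hence $X$ is closed. Combining this with the previous paragraph yields the statement.

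There is no genuine obstacle in this proof; the only point worth being explicit about is that ``closed'' is a derived notion defined through complementation, so the argument is really just the remark that the complement operation swaps $\emptyset$ and $X$, both of which the first axiom already places in the topology. I would therefore keep the write-up to a couple of sentences rather than belaboring it.
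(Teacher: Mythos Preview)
Your proposal is correct and is exactly the standard argument. The paper itself treats this as a bare observation with no proof supplied, so there is nothing further to compare; your write-up already contains more detail than the paper does.
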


\begin{exm} \label{trivial_topologies}
  Any set $X$ has at least two topologies: $\qty{\emptyset, X}$ and $\wp(X)$, being $\wp(X)$ the set of all subsets of $X$, also called \emph{power set of $X$}\index{set!power}\index{power set}.
  The first topology is called \emph{trivial}\index{topology!trivial} and the second is called \emph{discrete}\index{topology!discrete}.
\end{exm}

\begin{exm} \label{quotient_topology}
  Given a topological space $(X, \mc{T}_X)$ and an equivalence relation $\sim$, we can define a topology $\mc{T}_{X/\sim}$ on the quotient $X/\sim$, called \emph{quotient topology}\index{quotient topology}\index{topology!quotient}, as follows: $U \in \mc{T}_{X/\sim}$ \tiff $\pi^{-1}(U) \in \mc{T}_X$, being $\pi:X \to X/\sim$ the canonical projection.
\end{exm}

\begin{exm}
  Given a topological space $(X, \mc{T}_X)$ and a subset $Y \subset X$, we can define the following topology on $Y$: $\mc{T}_Y \ceq \qty{U \cap Y : U \in \mc{T}_X}$.
  This topology is called \emph{subspace topology}\index{topology!subspace}.
\end{exm}

\begin{obs}
  A set can be open or closed with respect to a subspace without being open or closed with respect to the whole space.
  An example of this is: let $X = \br$, $Y = [0, 2]$, $U = [-1, 1)$, $F = [1, 3)$.
  Note that $U \cap Y = [0, 1) = (-1, 1) \cap Y$, so, the set $U$ is open in $Y$, but it is not open in $X$.
  The same is true for $F$, that is, $F \cap Y = [1, 2] = [1, 3] \cap Y$, which means it is closed in $Y$, but not in $X$.
\end{obs}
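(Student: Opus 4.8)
The statement is purely existential: it asserts that openness (or closedness) relative to a subspace need not persist in the ambient space, so to establish it, it suffices to exhibit explicit counterexamples. The plan is to work inside $X = \br$ with its standard topology and to choose a subspace $Y$ together with sets that witness each failure. I would first recall the characterization of the subspace topology already available: a set is open in $Y$ exactly when it has the form $U \cap Y$ with $U$ open in $X$, and dually a set is closed in $Y$ exactly when it has the form $C \cap Y$ with $C$ closed in $X$.

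First I would treat the open case. Taking $Y = [0,2]$ with the subspace topology, I would consider the set $[0,1)$ and verify it is open in $Y$ by exhibiting an open set of $\br$ whose intersection with $Y$ equals it, namely $(-1,1) \cap Y = [0,1)$. To see that $[0,1)$ is not open in $\br$, I would note that the point $0$ admits no open ball $(-r,r)$ contained in $[0,1)$, since every such ball contains negative numbers. This already establishes the open half of the claim.

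For the closed case the situation is subtler, and this is where the main obstacle lies: if $Y$ is itself closed in $X$, then every set closed in $Y$ is the intersection of $Y$ with a closed set of $X$ and hence closed in $X$, so no counterexample can exist. Thus the subspace $[0,2]$, being closed in $\br$, is inadequate for this direction, and one must instead pick a subspace that fails to be closed. I would therefore switch to $Y = [0,2)$ and consider $[1,2)$, checking that it is closed in $Y$ by writing it as $[1,3] \cap Y$ with $[1,3]$ closed in $\br$, while $[1,2)$ is \emph{not} closed in $\br$ because the limit point $2$ does not belong to it.

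Finally I would collect the two constructions together with the dual remark that certifies them as genuine witnesses: relative openness lifts to ambient openness precisely when $Y$ is open in $X$, and relative closedness lifts precisely when $Y$ is closed in $X$. Choosing a single subspace that is \textbf{neither} open nor closed, such as $[0,2)$, lets both phenomena be displayed at once and makes the observation self-contained. The only delicate point in the whole argument is this matching of the direction of the claim with the corresponding property of $Y$; the verifications themselves are immediate from the definition of the subspace topology.
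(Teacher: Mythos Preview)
Your treatment of the open case matches the paper's exactly: both use $Y=[0,2]$ and the set $[0,1)=(-1,1)\cap Y$, open in $Y$ but not in $\br$.

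For the closed case you deliberately depart from the paper, and rightly so. The paper keeps $Y=[0,2]$ and asserts that $F\cap Y=[1,2]$ is ``closed in $Y$, but not in $X$''; however $[1,2]$ \emph{is} closed in $\br$, so the paper's example does not actually witness the phenomenon. Your diagnosis is exactly the reason this fails: since $[0,2]$ is itself closed in $\br$, every subset closed in $[0,2]$ is automatically closed in $\br$, and no counterexample can live there. Switching to the non-closed subspace $Y=[0,2)$ and taking $[1,2)=[1,3]\cap Y$ gives a genuine witness, closed in $Y$ but not in $\br$. So your version not only differs from the paper's but repairs an error in it; the added remark that a single subspace which is neither open nor closed (such as $[0,2)$) accommodates both counterexamples is a nice unification the paper does not make.
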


\begin{prop} \label{induced_topology}
  Given a metric space $(X, d)$, the set
  \begin{equation*}
    \mc{T}_d \ceq \qty{U \subset X : \text{for every $x \in U$ there exists $r > 0$ such that $B(x, r) \subset U$}}
  \end{equation*}
  is a topology on $X$ called \emph{topology induced by the metric $d$}\index{topology!induced}.
\end{prop}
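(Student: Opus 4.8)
The plan is to verify directly that $\mc{T}_d$ satisfies the three axioms in the definition of a topology. The entire argument rests on a single elementary observation about open balls: if $0 < r \leq r'$, then $B(x, r) \subset B(x, r')$, since $d(x, y) < r \leq r'$ for every $y \in B(x, r)$. With this monotonicity in hand, each of the three axioms reduces to a short check, so the proof is essentially bookkeeping rather than anything conceptually difficult.

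For the first axiom, I would observe that $\emptyset \in \mc{T}_d$ holds vacuously, because the defining condition ``for every $x \in \emptyset$ there exists $r > 0$ \ldots'' imposes no constraint when there are no points to quantify over. To see $X \in \mc{T}_d$, I would take any $x \in X$ and any $r > 0$; the ball $B(x, r)$ is by its very definition a subset of $X$, so the required condition is immediately satisfied. For the second axiom, given $U, V \in \mc{T}_d$ and a point $x \in U \cap V$, the memberships $U, V \in \mc{T}_d$ furnish radii $r_1, r_2 > 0$ with $B(x, r_1) \subset U$ and $B(x, r_2) \subset V$. Setting $r \ceq \min\qty{r_1, r_2}$ and invoking the monotonicity observation twice, I would conclude $B(x, r) \subset U \cap V$, which is exactly what membership in $\mc{T}_d$ demands.

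For the third axiom, given an arbitrary subfamily $\mc{U} \subset \mc{T}_d$ and a point $x \in \bigcup_{U \in \mc{U}} U$, I would select some $U_0 \in \mc{U}$ with $x \in U_0$, extract a radius $r > 0$ with $B(x, r) \subset U_0$ from the fact that $U_0 \in \mc{T}_d$, and then observe that $U_0 \subset \bigcup_{U \in \mc{U}} U$ forces $B(x, r) \subset \bigcup_{U \in \mc{U}} U$. Notice that no assumption on the cardinality of $\mc{U}$ is used here, which is why arbitrary unions are allowed.

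There is no genuine obstacle in this proof. The only points that repay a moment's attention are the vacuous verification for $\emptyset$ and the role of the minimum in the second axiom: the finite-intersection case works precisely because the minimum of two positive radii is again positive, and it is exactly this step that would fail for arbitrary intersections, explaining why the topology axioms permit only finite intersections but arbitrary unions.
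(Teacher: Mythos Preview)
Your proof is correct and complete; it is the standard direct verification of the three topology axioms. The paper itself omits the proof of this proposition entirely, deferring to the references \cite{munkres2014, rudin1976, sutherland2009}, so there is no in-paper argument to compare against, but your approach is exactly the one found in those texts.
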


\begin{obs}
  The open sets in this case are exactly the same as we defined in Definition \ref{open_metric}.
\end{obs}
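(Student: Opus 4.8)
The plan is to observe that this observation is settled purely by unwinding the two definitions, since they are stated with identical membership conditions. First I would recall that, by Proposition \ref{induced_topology}, a subset $U \subset X$ belongs to the induced topology $\mc{T}_d$ precisely when it satisfies the predicate
\begin{equation*}
  P(U): \quad \text{for every $x \in U$ there exists $r > 0$ such that $B(x, r) \subset U$.}
\end{equation*}
Then I would recall that Definition \ref{open_metric} declares a set $U$ to be \emph{open} exactly when the same predicate $P(U)$ holds. Thus the defining property for membership in $\mc{T}_d$ and the defining property of openness in the metric sense are the very same condition.

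From here the argument is a double inclusion that is really a tautology. If $U \in \mc{T}_d$, then $P(U)$ holds by definition of $\mc{T}_d$, so $U$ is open in the sense of Definition \ref{open_metric}; conversely, if $U$ is open in that sense, then $P(U)$ holds, so $U$ is one of the sets collected into $\mc{T}_d$. Hence the two families of sets coincide, which is exactly the claim.

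The one point worth making explicit, and the only thing that could conceivably require care, is that the open balls $B(x, r)$ appearing in the two statements denote the same objects: both refer to the balls $B(x, r) = \qty{y \in X : d(x, y) < r}$ of the fixed metric space $(X, d)$, as introduced earlier. Since the ambient space, the metric, and the notion of open ball are unchanged between the proposition and the definition, there is no genuine obstacle here; the observation is a restatement of the fact that $\mc{T}_d$ was \emph{built} from the metric-open sets, and no further computation is needed.
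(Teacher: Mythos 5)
Your proposal is correct and matches the paper exactly: the paper offers no proof precisely because, as you note, Proposition \ref{induced_topology} defines $\mc{T}_d$ by the very predicate that Definition \ref{open_metric} uses to define openness, so the identification is definitional. Your double-inclusion unwinding and the remark that the balls $B(x,r)$ refer to the same fixed metric are exactly the (trivial) content the observation intends.
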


\begin{exm}
  The discrete metric defined in \ref{discrete_metric} induces the discrete topology defined in \ref{trivial_topologies}.
\end{exm}

\begin{defi}
  Given a set $X$, a \emph{basis}\index{topology!basis} for a topology in $X$ is a collection $\mc{B}$ of subsets of $X$ (called \emph{basis elements}) such that the following is true for every $x \in X$ and $B_1, B_2 \in \mc{B}$:
  \begin{enumerate}

  \item There exists at least one $B \in \mc{B}$ such that $x \in B$.

  \item If $x \in B_1 \cap B_2$, then there exists $B_3 \in \mc{B}$ such that $x \in B_3 \subset B_1 \cap B_2$.

  \end{enumerate}
  If $\mc{B}$ satisfies these two axioms, then we can define the \emph{topology $\mc{T}$ generated by $\mc{B}$} as follows: a subset $U \subset X$ is an open set, \ie, $U \in \mc{T}$, \tiff for every $x \in U$ there exists $B \in \mc{B}$ such that $x \in B \subset U$.
\end{defi}

\begin{obs}
  The basis elements are open sets in the topology generated by the basis.
\end{obs}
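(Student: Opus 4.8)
The plan is to fix an arbitrary basis element $B_0 \in \mc{B}$ and verify directly that it satisfies the defining condition for membership in the topology $\mc{T}$ generated by $\mc{B}$. Recall that, by the definition just given, a subset $U \subset X$ belongs to $\mc{T}$ precisely when for every $x \in U$ there exists some $B \in \mc{B}$ with $x \in B \subset U$. So the entire task reduces to producing, for each point $x \in B_0$, a witnessing basis element that contains $x$ and is contained in $B_0$.

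First I would observe that the witness can be taken to be $B_0$ itself. Indeed, given any $x \in B_0$, the choice $B = B_0$ satisfies $x \in B_0$ by assumption and $B_0 \subset B_0$ trivially, so the chain $x \in B \subset B_0$ holds. Since $x$ was arbitrary, every point of $B_0$ admits a basis element sandwiched between it and $B_0$, which is exactly the criterion for $B_0 \in \mc{T}$. As $B_0$ was an arbitrary element of $\mc{B}$, this establishes that every basis element is open in the generated topology.

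There is essentially no obstacle here, and I would keep the argument to a single short paragraph: the claim is immediate because the defining membership criterion for $\mc{T}$ is phrased so as to admit each basis element as its own certificate of openness. The only point I would make explicit is that the two basis axioms play no role in this particular statement — they are needed to guarantee that $\mc{T}$ is genuinely a topology (that is the content of the prerequisite verification), whereas the openness of the basis elements themselves follows formally from the membership criterion alone.
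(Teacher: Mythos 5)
Your argument is correct and is the canonical one: each basis element certifies its own openness under the membership criterion for the generated topology, since for $x \in B_0$ the choice $B = B_0$ gives $x \in B \subset B_0$. The paper states this observation without proof, and your one-paragraph verification (including the accurate remark that the two basis axioms are irrelevant here, being needed only to show $\mc{T}$ is a topology at all) is exactly the intended justification.
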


\begin{defi} \label{product_topology}
  Let $(X_1, \mc{T}_{X_1}), \ldots, (X_n, \mc{T}_{X_n})$ be topological spaces.
  The \emph{product topology (on $X_1 \x \ldots \x X_n$)}\index{topology!product} is the topology generated by the basis $\mc{B} \ceq \qty{U_1 \x \ldots \x U_n : U_i \in \mc{T}_{X_i}}$.
\end{defi}

\begin{defi}
  A topological space $X$ is said to be \emph{second-countable}\index{space!second-countable}\index{second-countable} if it has a countable\footnote{By that we mean that the cardinality of the basis is countable.} basis for its topology.
\end{defi}

\begin{exm}
  $\bq^d$, $\br^d$ and $\bc^d$ are second-countable spaces and
  \begin{equation}
    \mc{B} \ceq \qty{B((x_1, \ldots, x_d), r) : \re(x_i), \im(x_i) \in \bq, \ i=1,\ldots,d, \ r \in \bq_{>0}}
  \end{equation}
  is a countable basis for all three spaces.
  In the cases $\bq^d$ and $\br^d$ we have $\im(x_i) = 0$.
\end{exm}

\begin{exm}
  Subspaces of second-countable spaces are second-countable.
\end{exm}

\begin{defi}
  Let $(X, \mc{T}_X)$, $(Y, \mc{T}_Y)$ be topological spaces and $f:X \to Y$ be a function.
  We say that \emph{$f$ is continuous}\index{function!continuous (topological)} if $f^{-1}(U) \in \mc{T}_X$ for every $U \in \mc{T}_Y$, being $f^{-1}(U) \ceq \qty{x \in X : f(x) \in U}$.
  This set is called \emph{preimage}\index{preimage} or \emph{inverse image}\index{inverse image} of $U$ and this definition can also be stated as follows: $f$ is continuous if the preimage of open sets are open.
\end{defi}

The reader may ask if the definition of continuous function in the metric sense and in the topological sense are the same, and they are.

\begin{defi}
  Given two topological spaces $(X, \mc{T}_X)$ and $(Y, \mc{T}_Y)$, a point $x \in X$ and a function $f:X \to Y$, we say that \emph{$f$ is continuous at $x$} if for every $V \in \mc{T}_Y$ such that $f(x) \in V$ there exists $U \in \mc{T}_X$ such that $x \in U$ and $f(U) \subset V$.
\end{defi}

\begin{prop}
  A function between topological spaces $f:X \to Y$ is continuous \tiff it is continuous at every point.
\end{prop}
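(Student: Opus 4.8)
The plan is to prove the two implications separately, using only the definitions and the topology axioms; throughout, write $f:X \to Y$ with $X, Y$ topological spaces.

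First I would handle the implication that global continuity implies continuity at every point. Assume $f^{-1}(V)$ is open for every open $V \subseteq Y$. Fix $x \in X$ and an open set $V$ containing $f(x)$. The natural candidate for the neighborhood required by pointwise continuity is $U \ceq f^{-1}(V)$ itself: it is open by hypothesis, it contains $x$ because $f(x) \in V$, and the set-theoretic inclusion $f(f^{-1}(V)) \subseteq V$ gives $f(U) \subseteq V$. This settles continuity at $x$, and since $x$ was arbitrary, $f$ is continuous at every point.

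For the converse, assume $f$ is continuous at every point and let $V \subseteq Y$ be open; the goal is to show $f^{-1}(V)$ is open. I would argue pointwise: for each $x \in f^{-1}(V)$ we have $f(x) \in V$, so pointwise continuity produces an open set $U_x$ with $x \in U_x$ and $f(U_x) \subseteq V$, the latter being equivalent to $U_x \subseteq f^{-1}(V)$. Taking the union over all such $x$ yields $f^{-1}(V) = \bigcup_{x \in f^{-1}(V)} U_x$, which is open by the third topology axiom (arbitrary unions of open sets are open).

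There is no genuine obstacle here: the argument is a routine unwinding of definitions. The only points that demand a little care are the elementary set-theoretic facts $f(f^{-1}(V)) \subseteq V$ in the forward direction and the equivalence between $f(U_x) \subseteq V$ and $U_x \subseteq f^{-1}(V)$ in the converse, together with the correct invocation of the union axiom to conclude openness. Once these are in place, both implications close immediately.
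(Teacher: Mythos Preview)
Your proof is correct and is the standard textbook argument. The paper itself omits this proof, deferring to the cited references (Munkres, Rudin, Sutherland), and what you have written is precisely the argument found there.
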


\begin{prop}
  If $f:X \to Y$ is a function between metric spaces, then the metric definition of continuity at a point is equivalent to the topological definition (the topology we are considering is the induced by the metric).
\end{prop}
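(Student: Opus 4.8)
The plan is to prove the two implications separately, in each case translating between the $\eps$-$\delta$ formulation and the open-set formulation by using two facts already established for the induced topology: that open balls are open sets (Example \ref{open_ball_is_open}), and that a set is open precisely when it contains a ball around each of its points (Definition \ref{open_metric}, which coincides with the induced topology by Proposition \ref{induced_topology}). It will also be convenient to use the reformulation of metric continuity from Observation \ref{obs_metric_continuous}, namely that $f$ is continuous at $x$ in the metric sense \tiff for every $\eps > 0$ there exists $\delta > 0$ with $f(B(x, \delta)) \subset B(f(x), \eps)$.

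First I would assume metric continuity at $x$ and verify the topological condition. Given an open set $V \subset Y$ with $f(x) \in V$, the definition of open set provides $\eps > 0$ such that $B(f(x), \eps) \subset V$. Metric continuity then yields $\delta > 0$ with $f(B(x, \delta)) \subset B(f(x), \eps) \subset V$. Taking $U = B(x, \delta)$, which is open by Example \ref{open_ball_is_open} and contains $x$, gives exactly an open neighbourhood of $x$ with $f(U) \subset V$, as the topological definition requires.

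For the converse I would assume topological continuity at $x$ and fix an arbitrary $\eps > 0$. Since $B(f(x), \eps)$ is open (Example \ref{open_ball_is_open}) and contains $f(x)$, the hypothesis provides an open $U$ with $x \in U$ and $f(U) \subset B(f(x), \eps)$. Because $U$ is open and $x \in U$, the definition of open set gives $\delta > 0$ with $B(x, \delta) \subset U$, whence $f(B(x, \delta)) \subset f(U) \subset B(f(x), \eps)$. This is precisely the $\eps$-$\delta$ statement of metric continuity at $x$, closing the equivalence.

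The argument is essentially a bookkeeping exercise, so there is no deep obstacle; the only point requiring care is to invoke the two directions of the open-set characterisation in the induced topology in the right places — using ``open balls are open'' to manufacture the neighbourhoods and ``open sets contain a ball around each point'' to extract the radii — and to keep straight which metric, $d_X$ or $d_Y$, governs each ball.
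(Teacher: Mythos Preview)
Your argument is correct and is the standard $\eps$-$\delta$ versus open-neighbourhood translation. The paper itself omits the proof of this proposition (as it does for most results in this appendix, deferring to the cited references), so there is nothing to compare against.
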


\begin{exm} \label{vector_space_topology}
  Let $V$ be a $d$-dimensional vector space over $\br$.
  The topology $\mc{T}_V$ we will consider on $V$ is the one that makes every linear transformation $T:\br^d \to V$ a continuous function.
  Explicitly, $U \in \mc{T}_V$ \tiff $T^{-1}(U) \in \mc{T}_{\br^d}$ for every linear transformation $T$.
\end{exm}

\begin{defi}
  Given three topological spaces $X, Y, Z$ and two continuous functions $f:X \to Z$ and $g:Y \to X$, a \emph{lifting of $f$ to $Y$}\index{lift} is a function $h:Y \to Z$ such that $h = f \circ g$.
  This can be represented diagrammatically as:
  \begin{center}
    \begin{tikzcd}
      Y \arrow{d}[left]{g} \arrow[dashed]{dr}[above]{h} & \\
      X \arrow{r}[below]{f} & Z
    \end{tikzcd}
  \end{center}
\end{defi}

\begin{obs}
  The previous definition is not the standard definition of lifting used in the literature, but we could not find a better name to this particular case.
\end{obs}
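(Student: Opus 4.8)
The final statement is an Observation recording a terminological choice rather than a mathematical proposition, so strictly speaking there is nothing to \emph{prove}: the clause ``we could not find a better name'' is an editorial admission, not a claim with a truth value. What \emph{can} be substantiated is the mathematical half of the remark, namely that the notion defined just above diverges from the established meaning of \emph{lifting} in the literature. The plan is to justify this by placing the two definitions side by side and exhibiting a structural mismatch.

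First I would recall the standard definition as it appears in topology and algebraic topology (e.g.\ \cite{munkres2014, lee2012}): given continuous maps $f\colon Y \to Z$ and $p\colon X \to Z$, a \emph{lift of $f$ through $p$} is a continuous map $\tilde{f}\colon Y \to X$ satisfying $p \circ \tilde{f} = f$. The essential features are that the unknown map $\tilde{f}$ points \emph{into} the space $X$ sitting over $Z$, and that it is the solution of a factorization problem $p \circ (?) = f$ — a problem that may admit no solution, a unique one, or many, which is exactly why lifts are interesting (lifting criteria, path lifting, the homotopy lifting property, and so on).

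Next I would match the paper's definition against this template. Here one is given $f\colon X \to Z$ and $g\colon Y \to X$, and $h\colon Y \to Z$ is simply declared to be $h \ceq f \circ g$. Two observations settle the matter: (1) $h$ maps \emph{out of} $Y$ toward $Z$, not into any space lying over $Z$; and (2) $h$ is not the solution of a factorization problem at all, but is \emph{explicitly} the composite $f \circ g$, hence always exists, is unique, and is subject to no hypothesis. The two notions therefore differ both in the direction of the arrow being solved for and in their logical status — a composite versus a factorization — and this double discrepancy is precisely what makes the chosen terminology non-standard.

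The only obstacle here is expository rather than mathematical: because ``lifting'' already carries a precise and widely used meaning, the clash must be flagged clearly enough that the reader is not misled, which is the very role the Observation plays. Accordingly, no genuine proof is available or appropriate; the correct response is the comparison sketched above together with the caveat already present in the text.
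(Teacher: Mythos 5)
Your reading is correct: the Observation is a terminological remark with no proof in the paper, and your side-by-side comparison (the standard lift solves the factorization problem $p \circ \tilde{f} = f$ into the space over $Z$, whereas the paper's $h \ceq f \circ g$ is an explicitly given composite that always exists and is unique) accurately substantiates why the usage is non-standard. This matches the paper's intent — the definition exists to name the composite $f \circ \pi$ used for quotient manifolds, as in Theorem \ref{smooth_quotient} — so nothing further is needed.
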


\begin{defi}
  A function $f:X \to Y$ between topological spaces is said to be a \emph{homeomorphism}\index{homeomorphism} if $f$ is bijective, continuous and its inverse $f^{-1}$ is also continuous.
\end{defi}

\begin{defi}
  A function $f:X \to Y$ between topological spaces is said to be \emph{open} (usually referred as an \emph{open map}\index{open map}) if $f(U)$ is open in $Y$ for every $U$ open in $X$.
\end{defi}

\begin{defi}
  Given a topological space $(X, \mc{T})$ and a subset $S \subset X$, we define the \emph{closure of $S$}\index{closure} to be the intersection of all closed sets in $X$ that contains $S$.
  In other words, if we define $\mc{F}_S \ceq \qty{F \subset X : \text{$F$ is closed and $S \subset F$}}$, then the closure of $S$ is the set $\ol{S} \ceq \bigcap_{F \in \mc{F}_S} F$.
\end{defi}

\begin{defi}
  Given a topological space $(X, \mc{T})$ and a subset $S \subset X$, we say that $S$ is a \emph{dense subset of $X$}\index{dense} if $\ol{S} = X$.
\end{defi}

\begin{defi} \label{completion}
  Given two metric spaces $(X, d)$ and $(\what{X}, \what{d})$, we say that $\what{X}$ is the \emph{completion of $X$}\index{completion}\index{space!completion} if $\what{X}$ is a complete metric space and there exists an isometry $f:X \to \what{X}$ in the metric sense such that $\ol{\im{f}} = \what{X}$.
\end{defi}

\begin{obs}
  If $X$ is already a subspace of $\what{X}$, then we can replace the isometry condition by the hypothesis $\ol{X} = \what{X}$, which is equivalent to considering the isometry to be the inclusion $i:X \to \what{X}$ defined by $i(x) = x$.
\end{obs}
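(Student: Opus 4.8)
The plan is to unpack the two conditions appearing in Definition \ref{completion} under the extra hypothesis that $X$ is a metric subspace of $\what{X}$, and to check that they collapse to the claimed simpler condition. First I would recall what ``subspace'' means here: by the definition of subspace given in Section \ref{metric_spaces}, the statement that $X$ is a subspace of $\what{X}$ means $\what{d}|_{X \x X} = d$, that is, the distance of $\what{X}$ restricted to pairs of points of $X$ coincides with the distance $d$ of $X$. This is the only piece of structure we get to use, and everything follows from it.

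Next I would verify that the inclusion $i:X \to \what{X}$ defined by $i(x) \ceq x$ is a metric isometry. This is immediate, since for every $x, x' \in X$ we have
\begin{equation}
  \what{d}(i(x), i(x')) = \what{d}(x, x') = d(x, x'),
\end{equation}
where the second equality is exactly the subspace hypothesis $\what{d}|_{X \x X} = d$. Hence $i$ meets the isometry requirement of the definition, so it is a legitimate choice for the map $f$ and the completion of a subspace can always be tested using $i$.

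Then I would observe that the image of the inclusion is the set $X$ itself, because $\im{i} = i(X) = \qty{i(x) : x \in X} = X$. Consequently the density condition $\ol{\im{i}} = \what{X}$ appearing in Definition \ref{completion} becomes $\ol{X} = \what{X}$, which is precisely the claimed replacement. This establishes the equivalence asserted in the observation.

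The only subtlety worth flagging is that ``subspace'' must be read in the metric sense (equality of the restricted distance $\what{d}|_{X \x X} = d$) rather than as a bare set inclusion $X \subset \what{X}$; once this is granted, the inclusion is automatically an isometry whose image is $X$, and no further work is needed. So there is no genuine obstacle here: the observation is essentially a reformulation, and the argument reduces to the one-line verification above that $i$ is an isometry with $\im{i} = X$.
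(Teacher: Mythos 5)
Your proposal is correct and is exactly the argument the paper leaves implicit: the observation is stated without proof precisely because it reduces to unpacking Definition \ref{completion}, noting that the subspace hypothesis $\what{d}|_{X \x X} = d$ makes the inclusion $i$ an isometry, and that $\im{i} = X$ turns the density condition $\ol{\im{i}} = \what{X}$ into $\ol{X} = \what{X}$. Your flag that ``subspace'' must be read in the metric sense, not as bare set inclusion, is the right point of care and is consistent with the paper's definition of metric subspace.
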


\begin{exm}
  $\br$ is the completion of $\bq$ with the standard metric defined in Example \ref{std_metric}.
\end{exm}

\begin{thm}
  Every metric space has a completion.
\end{thm}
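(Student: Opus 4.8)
The plan is to construct the completion explicitly as a space of equivalence classes of Cauchy sequences, which is the standard approach. Given a metric space $(X, d)$, I would first let $\mathcal{C}$ denote the set of all Cauchy sequences in $X$, and define a relation on $\mathcal{C}$ by declaring $\{x_n\} \sim \{y_n\}$ \tiff $d(x_n, y_n) \to 0$ as $n \to \infty$. A routine check using the triangle inequality shows this is an equivalence relation. I would then set $\what{X} \ceq \mathcal{C}/\sim$ and denote the class of $\{x_n\}$ by $[\{x_n\}]$.

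The next step is to equip $\what{X}$ with a metric. The natural candidate is
\begin{equation}
  \what{d}([\{x_n\}], [\{y_n\}]) \ceq \lim_{n \to \infty} d(x_n, y_n).
\end{equation}
Before anything else I must show this limit exists. The idea is that the real sequence $a_n \ceq d(x_n, y_n)$ is itself Cauchy in $\br$: using the triangle inequality one obtains $\abs{a_n - a_m} \leq d(x_n, x_m) + d(y_n, y_m)$, and both terms on the right are small for large $n, m$ because $\{x_n\}$ and $\{y_n\}$ are Cauchy. Since $\br$ is complete (Definition \ref{complete_space}), the limit exists. I would then verify that $\what{d}$ does not depend on the chosen representatives (again via the triangle inequality) and that it satisfies the three metric axioms, all of which follow by passing the corresponding inequalities for $d$ to the limit.

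To realize $X$ inside $\what{X}$, I would define $f: X \to \what{X}$ by sending $x$ to the class of the constant sequence $[\{x, x, x, \ldots\}]$. This $f$ is an isometry in the metric sense since $\what{d}(f(x), f(x')) = \lim_n d(x, x') = d(x, x')$. Density of $\im{f}$ is the claim $\ol{\im{f}} = \what{X}$: given any $[\{x_n\}] \in \what{X}$ and $\eps > 0$, the Cauchy property provides $n_0$ with $d(x_n, x_{n_0}) < \eps$ for $n \geq n_0$, whence $\what{d}(f(x_{n_0}), [\{x_n\}]) \leq \eps$, so every point of $\what{X}$ is a limit of points in $\im{f}$.

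The hard part will be proving that $\what{X}$ is complete, since this is where a genuine argument rather than a limit-of-inequalities computation is needed. Given a Cauchy sequence $\{\xi_k\}$ in $\what{X}$, the strategy is to use the density just established to pick, for each $k$, a point $z_k \in X$ with $\what{d}(f(z_k), \xi_k) < 1/k$; one shows $\{z_k\}$ is Cauchy in $X$ by comparing through the $\xi_k$'s, so it defines a class $\xi \ceq [\{z_k\}] \in \what{X}$. The remaining step is to verify $\xi_k \to \xi$ via the estimate $\what{d}(\xi_k, \xi) \leq \what{d}(\xi_k, f(z_k)) + \what{d}(f(z_k), \xi)$, both terms being controlled by the construction. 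This bookkeeping with nested limits is the main obstacle, but it is entirely mechanical once the density of $\im{f}$ is in hand.
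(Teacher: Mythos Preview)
Your construction is the standard Cauchy-sequence completion and the outline is correct in every step; the estimates you sketch (existence of $\lim_n d(x_n,y_n)$, well-definedness of $\what{d}$, density of $\im f$, and the diagonal-type argument for completeness) all go through exactly as you describe. Note, however, that the paper does not actually prove this theorem: it is stated without proof in the Topology appendix, with the remark that omitted proofs can be found in the cited references \cite{munkres2014, rudin1976, sutherland2009}. Your argument is precisely the one given in \cite[Theorem~3.12 and Exercise~3.24]{rudin1976}, so there is nothing to compare against and nothing to correct.
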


\begin{defi}
  Given $d \in \bn$ and a topological space $X$, we say that $X$ is \emph{locally Euclidean of dimension $d$}\index{space!locally euclidean}\index{locally euclidean} if for every $x \in X$ there exists an open set $U \subset X$ such that $x \in U$ and $U$ is homeomorphic to an open set $V \subset \br^d$.
  (We are considering the subspace topology in $U$ and $V$.)
\end{defi}

\begin{exm}
  The Euclidean space itself is locally Euclidean.
\end{exm}

\begin{defi}
  Given a topological space $(X, \mc{T})$, a subset $S \subset X$ and $\mc{U} \subset \mc{T}$, we say $\mc{U}$ is a \emph{cover of $S$}\index{cover} if $S \subset \bigcup_{U \in \mc{U}} U$.
\end{defi}

\begin{obs}
  Some authors call the above \emph{open cover} instead of cover because they ask $\mc{U}$ to be a subset of $\wp(X)$ and not necessarily of $\mc{T}$ as we did.
  However, in practice we will just use open covers and for the definition below it is open covers that matters, so, every cover is an open cover in the present work.
\end{obs}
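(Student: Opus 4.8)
The plan is to recognize that this final statement is not a theorem awaiting a proof but a terminological clarification, whose only assertion carrying mathematical content is the last clause: that every cover, in the sense just defined, is automatically an open cover. I would establish this directly by unwinding the immediately preceding definition. Recall that a cover of $S \subset X$ was defined to be a subcollection $\mc{U} \subset \mc{T}$ with $S \subset \bigcup_{U \in \mc{U}} U$, where $(X, \mc{T})$ is the ambient topological space.

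The key step is simply to note that, by the very definition of a topology, the elements of $\mc{T}$ are precisely the open sets of $X$. Hence from $\mc{U} \subset \mc{T}$ it follows immediately that every member of $\mc{U}$ is an open set, so $\mc{U}$ is a collection of open sets whose union contains $S$ — which is exactly what one means by an \emph{open cover}. Therefore, in the convention adopted here, the two notions coincide, and the word ``open'' carries no additional restriction.

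To contextualize the remark about other authors, I would point out that the alternative convention only weakens the requirement on $\mc{U}$: those authors ask merely $\mc{U} \subset \wp(X)$ rather than $\mc{U} \subset \mc{T}$, so their covers may contain non-open sets, and the qualifier ``open'' then becomes a genuine restriction worth stating. The present definition builds openness in from the outset, which is precisely why the distinction collapses and the observation holds.

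The hard part will be essentially nonexistent: the claim is immediate once the definition of cover and the definition of topology are placed side by side. The only point demanding care is conceptual rather than technical — one must avoid silently importing the broader convention $\mc{U} \subset \wp(X)$, since it is exactly the choice $\mc{U} \subset \mc{T}$ in our definition that forces every cover to be open. Consequently I would present this not as a multi-step argument but as a one-line appeal to the two definitions, exactly as the author frames it.
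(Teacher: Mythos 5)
Your proposal is correct and matches the paper's own (implicit) reasoning: the observation is a terminological remark whose sole mathematical content follows immediately from the definition of a cover requiring $\mc{U} \subset \mc{T}$, so every member of a cover is open by definition of a topology. There is nothing more to prove, and your one-line appeal to the two definitions is exactly how the paper frames it.
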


\begin{defi}
  Given a topological space $X$ and $K \subset X$, we say that $K$ is \emph{compact}\index{space!compact} if every cover of $K$ has a finite subcover.
  In symbols, for every cover $\mc{U}$ of $K$ there exists $\qty{U_1, \ldots, U_n} \subset \mc{U}$ such that $K \subset \bigcup_{i=1}^n U_i$.
\end{defi}

\begin{exm}
  Finite sets are compact.
\end{exm}

\begin{prop} \label{product_compact}
  Let $X_1, \ldots, X_n$ be compact topological spaces.
  Then $X_1 \x \ldots \x X_n$ with the product topology is also compact.
\end{prop}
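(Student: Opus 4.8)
The plan is to reduce to the two-factor case by induction and then prove that the product of two compact spaces is compact, the crux being a slicing argument of the kind usually called the Tube Lemma. First I would observe that it suffices to treat $n = 2$: the product topology (Definition \ref{product_topology}) makes $X_1 \x \ldots \x X_n$ homeomorphic to $(X_1 \x \ldots \x X_{n-1}) \x X_n$, so once the two-factor statement is known, a straightforward induction on $n$ yields the general case, with the case $n = 1$ being trivial.

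So let $X$ and $Y$ be compact and let $\mc{U}$ be an arbitrary open cover of $X \x Y$. For a fixed $x \in X$, the slice $\qty{x} \x Y$ is homeomorphic to $Y$ and therefore compact, so finitely many members of $\mc{U}$ cover it; I would let $N_x$ denote their union, an open set containing $\qty{x} \x Y$. The key step is to produce an open neighborhood $W_x$ of $x$ in $X$ such that the whole tube $W_x \x Y$ is contained in $N_x$. To obtain it, for each $y \in Y$ choose a basic open set $U_y \x V_y \subset N_x$ containing $(x, y)$; the sets $\qty{V_y : y \in Y}$ form an open cover of the compact space $Y$, so finitely many $V_{y_1}, \ldots, V_{y_k}$ already cover $Y$, and then $W_x \ceq \bigcap_{i=1}^k U_{y_i}$ works, since $W_x \x Y \subset \bigcup_{i=1}^k (U_{y_i} \x V_{y_i}) \subset N_x$.

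To finish, the family $\qty{W_x : x \in X}$ is an open cover of the compact space $X$, so finitely many $W_{x_1}, \ldots, W_{x_m}$ cover $X$. Then the finitely many members of $\mc{U}$ used to build $N_{x_1}, \ldots, N_{x_m}$ together cover $X \x Y$, because every point $(a, b)$ has $a \in W_{x_j}$ for some $j$, whence $(a, b) \in W_{x_j} \x Y \subset N_{x_j}$. This extracts a finite subcover and completes the proof. The main obstacle is the tube step producing $W_x$: this is exactly where the compactness of a factor is used in an essential way (compactness of a single slice alone does not hand one a full tube), and keeping the basic-open-set bookkeeping straight is the one place where the argument needs genuine care.
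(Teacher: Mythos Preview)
Your proof is correct and is the standard Tube Lemma argument (essentially the proof in Munkres, one of the references the paper cites). The paper itself omits the proof of this proposition entirely, stating at the start of the appendix that proofs are deferred to the references \cite{munkres2014, rudin1976, sutherland2009}; so there is nothing to compare against beyond noting that your argument is precisely the one a reader would find in those sources.
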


\begin{defi}
  A function $f:X \to Y$ between topological spaces is said to be a \emph{proper function}\index{function!proper} or \emph{proper map} if the preimage $f^{-1}(K) \subset X$ is compact for every $K \subset Y$ compact.
  In other words, the preimage of compact sets are compact.
\end{defi}

\begin{prop} \label{compactness_relative}
  Let $(X, \mc{T}_X)$ be a topological space and suppose $K \subset Y \subset X$.
  Then $K$ is compact with respect to covers of open sets of $X$ \tiff $K$ is compact with respect to covers of open sets of $Y$ with the subspace topology.
  In other words, $K$ is compact in $X$ \tiff $K$ is compact in $Y$.
\end{prop}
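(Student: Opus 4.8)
The plan is to prove the biconditional by establishing each implication separately, with the whole argument resting on a single structural fact recalled from the definition of the subspace topology: the open sets of $Y$ are exactly the sets of the form $U \cap Y$ with $U \in \mc{T}_X$. Throughout I would keep in mind the hypothesis $K \subset Y$, which is what lets me pass between the two notions of covering without losing any points of $K$.

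For the forward direction, I would assume $K$ is compact in $X$ and take an arbitrary cover $\qty{V_i}_{i \in I}$ of $K$ by open sets of $Y$. Writing each $V_i = U_i \cap Y$ with $U_i \in \mc{T}_X$, the family $\qty{U_i}_{i \in I}$ is a cover of $K$ by open sets of $X$, since $K \subset \bigcup_i V_i \subset \bigcup_i U_i$. Compactness in $X$ yields a finite subcover $U_{i_1}, \ldots, U_{i_n}$, and then intersecting back with $Y$ gives $K = K \cap Y \subset \bigcup_{j=1}^n \qty(U_{i_j} \cap Y) = \bigcup_{j=1}^n V_{i_j}$, a finite subcover drawn from the original family. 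For the reverse direction the same bookkeeping runs backward: starting from a cover $\qty{U_i}_{i \in I}$ of $K$ by open sets of $X$, the sets $U_i \cap Y$ are open in $Y$ and cover $K$, so compactness in $Y$ produces finitely many indices whose corresponding $U_i$ already cover $K$.

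I do not expect a genuine obstacle here; the only point requiring a moment of care is the step $K = K \cap Y$, which uses $K \subset Y$ to guarantee that discarding everything outside $Y$ does not remove any of $K$. Everything else is a routine translation through the subspace-topology description $\mc{T}_Y = \qty{U \cap Y : U \in \mc{T}_X}$, so I would present the two implications compactly and flag that intersection identity as the key observation.
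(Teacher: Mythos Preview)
Your argument is correct and is exactly the standard proof of this fact. Note, however, that the paper does not actually supply a proof of this proposition: the appendix explicitly omits proofs and refers the reader to Munkres, Rudin, and Sutherland, so there is no in-paper argument to compare against. What you have written is precisely the proof one finds in those references, hinging on the description $\mc{T}_Y = \qty{U \cap Y : U \in \mc{T}_X}$ and the identity $K = K \cap Y$.
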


\begin{defi}
  A topological space $X$ is said to be \emph{Hausdorff}\index{space!Hausdorff} if for every distinct $x, y \in X$ there exists disjoint open sets $U$ and $V$ such that $x \in U$ and $y \in V$.
\end{defi}

\begin{prop} \label{metric_hausdorff}
  Metric spaces are Hausdorff.
\end{prop}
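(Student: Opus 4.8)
The plan is to separate any two distinct points by a pair of disjoint open balls, taking the common radius to be half the distance between the points. First I would fix distinct $x, y \in X$ and invoke the Observation following the definition of metric space, which guarantees $d(x, y) > 0$. Setting $r \ceq d(x, y)/2 > 0$, I would take $U \ceq B(x, r)$ and $V \ceq B(y, r)$; these are open sets by Example \ref{open_ball_is_open}, and clearly $x \in U$ and $y \in V$ since $d(x,x) = d(y,y) = 0 < r$.

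The only thing left to verify is that $U \cap V = \emptyset$, and I would argue this by contradiction. Suppose some $z \in U \cap V$. Then $d(x, z) < r$ and $d(z, y) < r$, so the triangle inequality (the third metric axiom) gives
\begin{equation*}
  d(x, y) \leq d(x, z) + d(z, y) < 2r = d(x, y),
\end{equation*}
which is impossible. Hence no such $z$ exists, the balls are disjoint, and the Hausdorff property follows.

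Since this is a standard elementary fact, there is no genuine obstacle; the only point that calls for a little care is the choice of radius. Any $r \leq d(x,y)/2$ would separate the points, but taking $r = d(x,y)/2$ makes the triangle-inequality contradiction come out cleanly. The argument rests on exactly two ingredients already available in the text: the strict positivity of the distance between distinct points, and the fact that open balls are open sets.
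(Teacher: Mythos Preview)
Your proof is correct and is exactly the standard argument. The paper itself omits the proof of this proposition (as announced at the start of Appendix~\ref{top}, proofs are left to the references \cite{munkres2014, rudin1976, sutherland2009}), so there is nothing to compare against; your write-up would serve perfectly well in its place.
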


\begin{prop}
  Given a Hausdorff space $X$ and $x \in X$, the set $\qty{x}$ is closed in $X$.
\end{prop}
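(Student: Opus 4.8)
The plan is to prove that $\qty{x}$ is closed by showing directly that its complement $X \ssm \qty{x}$ is an open set, which is exactly the definition of closedness. First I would fix an arbitrary point $y \in X \ssm \qty{x}$; by construction $y \neq x$, so the Hausdorff hypothesis applies to the distinct pair $x, y$ and yields disjoint open sets $U$ and $V$ with $x \in U$ and $y \in V$.

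The key observation is then that this neighborhood $V$ of $y$ is entirely contained in the complement. Indeed, since $U \cap V = \emptyset$ and $x \in U$, we must have $x \notin V$, and therefore $V \subset X \ssm \qty{x}$. Thus every point $y$ of $X \ssm \qty{x}$ admits an open set $V_y$ satisfying $y \in V_y \subset X \ssm \qty{x}$.

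To conclude, I would express the complement as the union $X \ssm \qty{x} = \bigcup_{y \neq x} V_y$ of these open neighborhoods and invoke the third topology axiom (an arbitrary union of open sets is open) to deduce that $X \ssm \qty{x}$ is open. By definition of closed set, this means $\qty{x}$ is closed.

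Since the argument is short and relies only on the defining axioms, there is no serious obstacle; the one point requiring care is the step that produces, for each $y \neq x$, an open neighborhood $V_y$ avoiding $x$ altogether, which is precisely where the Hausdorff separation property is indispensable. Without it (for instance, in the trivial topology of Example \ref{trivial_topologies}) singletons need not be closed, so the hypothesis genuinely cannot be dropped.
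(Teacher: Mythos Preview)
Your argument is correct and is the standard proof. The paper itself omits the proof of this proposition (as it does for most results in the topology appendix), so there is nothing to compare against; your write-up would serve perfectly well as the missing justification.
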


\begin{prop} \label{compact_is_closed}
  If $X$ is a Hausdorff space and $K \subset X$ is a compact set, then $K$ is closed.
\end{prop}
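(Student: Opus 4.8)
The plan is to show that the complement $X \ssm K$ is open, which by definition of a closed set immediately gives that $K$ is closed. First I would fix an arbitrary point $x \in X \ssm K$ and aim to produce an open set $U$ with $x \in U \subset X \ssm K$. Once this is done for every such $x$, the complement is a union of open sets (equivalently, every point is interior), hence open.

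The core of the argument is a pointwise use of the Hausdorff property followed by compactness. For each $y \in K$ we have $x \neq y$, so the Hausdorff axiom provides disjoint open sets $U_y \ni x$ and $V_y \ni y$. The family $\qty{V_y : y \in K}$ is then a cover of $K$ by open sets, and since $K$ is compact there is a finite subcover $V_{y_1}, \ldots, V_{y_n}$, so that
\begin{equation*}
  K \subset \bigcup_{i=1}^n V_{y_i}.
\end{equation*}
I would then set $U \ceq \bigcap_{i=1}^n U_{y_i}$. This $U$ is open because it is a finite intersection of open sets (the second topology axiom extended to finite intersections, as noted after the definition of a topology), and it contains $x$ since each $U_{y_i}$ does.

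The key step is verifying that $U$ is disjoint from $K$: because $U_{y_i} \cap V_{y_i} = \emptyset$ for each $i$, we have $U \cap V_{y_i} = \emptyset$, and therefore $U$ misses the whole finite union $\bigcup_{i=1}^n V_{y_i}$, which contains $K$. Hence $U \cap K = \emptyset$, i.e.\ $U \subset X \ssm K$, completing the construction. The only genuinely delicate point — and the reason compactness is indispensable — is that the intersection of the $U_y$ must be taken over a \emph{finite} index set to remain open: an arbitrary intersection of the $U_y$ over all $y \in K$ need not be open, so passing to a finite subcover is exactly what makes the separating neighborhood of $x$ survive as an open set.
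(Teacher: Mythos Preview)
Your argument is correct and is exactly the standard textbook proof. The paper itself does not prove this proposition --- it is one of the results stated without proof in Appendix~\ref{top}, with the reader referred to \cite{munkres2014, rudin1976, sutherland2009} --- so there is nothing to compare against; your write-up would serve perfectly well as the omitted proof.
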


\begin{prop} \label{compact_is_bounded}
  Given a metric space $X$ and $K \subset X$ compact, then $K$ is bounded.
\end{prop}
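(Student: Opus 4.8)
The plan is to exploit compactness directly against a cleverly chosen open cover. First I would dispose of the trivial case: if $K = \emptyset$, then $K$ is contained in every ball and is bounded by definition, so I may assume $K$ is nonempty and fix an arbitrary point $x_0 \in X$.

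The key idea is to cover $X$ — and hence $K$ — by the nested family of open balls $\qty{B(x_0, n) : n \in \bn}$. Since the metric takes finite real values, every $x \in X$ satisfies $d(x_0, x) < n$ for some $n \in \bn$, so this family is indeed a cover of $K$; and each $B(x_0, n)$ is open by Example \ref{open_ball_is_open}. Applying the definition of compactness, I extract a finite subcover $B(x_0, n_1), \ldots, B(x_0, n_k)$ of $K$.

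To finish, I would observe that these balls are nested: if $m \leq n$, then $B(x_0, m) \subset B(x_0, n)$, because $d(x_0, x) < m \leq n$. Setting $N \ceq \max\qty{n_1, \ldots, n_k}$, every ball in the finite subcover is contained in $B(x_0, N)$, and therefore $K \subset B(x_0, N)$. This exhibits $K$ as a subset of a single ball, which is precisely the definition of a bounded set.

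The argument is essentially routine, so there is no serious obstacle; the only point requiring a little care is verifying that the chosen family actually covers $K$, which rests on the fact that $d(x_0, x)$ is a finite real number for each $x$ and can thus be exceeded by some natural number. An alternative route — covering $K$ by unit balls $B(x, 1)$ centered at its own points and then combining the finitely many centers via the triangle inequality — also works, but the nested-ball cover makes the conclusion immediate and avoids the extra bookkeeping, so that is the version I would write up.
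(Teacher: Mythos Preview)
Your proof is correct and entirely standard. The paper, however, does not actually prove this proposition: at the start of Appendix~\ref{top} it states that proofs are omitted and refers the reader to \cite{munkres2014, rudin1976, sutherland2009}, and Proposition~\ref{compact_is_bounded} is one of the results left unproved. There is therefore no paper proof to compare against; your nested-ball cover argument is exactly the kind of proof one finds in those references.
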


\begin{prop} \label{closed_in_compact}
  Given a compact topological space $X$ and $F \subset X$ closed, then $F$ is compact.
\end{prop}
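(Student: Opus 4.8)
The plan is to reduce the compactness of $F$ to the compactness of the ambient space $X$ by exploiting that the complement of a closed set is open. First I would take an arbitrary cover $\mc{U}$ of $F$ by open sets of $X$; by Proposition \ref{compactness_relative} it suffices to produce a finite subcover from covers of this type, since compactness of $F$ as a subset of $X$ is equivalent to compactness of $F$ with respect to its subspace topology.

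Next, since $F$ is closed in $X$, its complement $X \ssm F$ is open by the very definition of a closed set. I would therefore enlarge the cover by adjoining this open set, forming $\mc{U}' \ceq \mc{U} \cup \qty{X \ssm F}$. Every point of $X$ lies either in $F$, hence in some member of $\mc{U}$, or else in $X \ssm F$, so $\mc{U}'$ is genuinely an open cover of the whole space $X$.

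Invoking the compactness of $X$, I would extract a finite subcover $\qty{U_1, \ldots, U_n} \subset \mc{U}'$ of $X$. If the set $X \ssm F$ happens to appear among these, I would simply discard it; the remaining sets still cover $F$ because $X \ssm F$ contributes no points of $F$. What is left is a finite subfamily of the original $\mc{U}$ that covers $F$, which is exactly the finite subcover required to conclude that $F$ is compact.

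There is essentially no deep obstacle in this argument; the only point deserving care is the reduction carried out at the outset, namely ensuring that covers by open sets of $X$ are the appropriate objects to manipulate rather than covers by relatively open subsets of $F$. This is precisely the content guaranteed by Proposition \ref{compactness_relative}, and once that equivalence is in hand the rest of the proof is a routine manipulation of covers.
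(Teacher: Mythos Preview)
Your argument is correct and is the standard textbook proof: adjoin the open complement $X \ssm F$ to an arbitrary open cover of $F$, extract a finite subcover of $X$ by compactness, then discard $X \ssm F$. The paper itself does not supply a proof of this proposition (the appendix states that proofs are omitted and refers to \cite{munkres2014, rudin1976, sutherland2009}), so there is nothing to compare against; your proof would be an appropriate one to fill the gap.
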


\begin{prop} \label{continuous_function_compact}
  If $f:X \to Y$ is a continuous function between topological spaces and $X$ is compact, then $f(X)$ is also compact.
\end{prop}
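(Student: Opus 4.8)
The plan is to run the classical ``pull back the cover, use compactness, push it forward'' argument, working directly from the definition of compactness given in this appendix (every cover by open sets has a finite subcover). First I would fix an arbitrary cover $\mc{U}$ of $f(X)$ by open sets of $Y$, so that $f(X) \subset \bigcup_{U \in \mc{U}} U$. The goal is then to extract from $\mc{U}$ a finite subfamily that still covers $f(X)$.

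The key move is to transport the problem from $Y$ back to the compact space $X$ using the continuity of $f$. For each $U \in \mc{U}$ the preimage $f^{-1}(U)$ is open in $X$, precisely because $f$ is continuous. I would then verify that the family $\qty{f^{-1}(U) : U \in \mc{U}}$ is a cover of $X$: given any $x \in X$, its image $f(x)$ lies in $f(X)$, hence in some $U \in \mc{U}$, so $x \in f^{-1}(U)$. Applying the compactness of $X$ to this cover yields finitely many sets $f^{-1}(U_1), \ldots, f^{-1}(U_n)$ whose union is all of $X$.

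Finally I would push this finite collection forward and argue that $U_1, \ldots, U_n$ already cover $f(X)$. Indeed, any $y \in f(X)$ can be written $y = f(x)$ for some $x \in X$; since $x$ belongs to some $f^{-1}(U_i)$, we get $y = f(x) \in U_i$. Thus $\qty{U_1, \ldots, U_n}$ is a finite subcover of $f(X)$ drawn from $\mc{U}$, which is exactly what compactness of $f(X)$ requires.

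Since the two directions of the argument are genuinely dual, the only point deserving attention is keeping straight which space each cover lives in: the pulled-back family must cover all of $X$, which works only because $f$ maps into $f(X)$, and the pushed-forward family must cover $f(X)$, which works only because each element of $f(X)$ is an image. Neither step is a real obstacle so much as a bookkeeping check; there is no topological difficulty here, and in particular no Hausdorff or metric hypothesis is needed.
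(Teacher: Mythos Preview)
Your argument is correct and is the standard textbook proof. The paper itself does not prove this proposition: the appendix explicitly states that proofs are omitted and refers to \cite{munkres2014, rudin1976, sutherland2009}, so there is no in-paper proof to compare against.
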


\begin{thm}[Heine--Borel] \label{heine_borel}
  A subset $S \subset \br^d$ is compact if, and only if, it is closed and bounded.
\end{thm}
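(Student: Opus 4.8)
The plan is to prove the two implications separately, leaning on the compactness propositions already established in this appendix for the easy direction and on a bisection argument for the hard one.

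For the forward implication (compact $\Rightarrow$ closed and bounded), I would observe that $\br^d$ with the Euclidean metric is a metric space, hence Hausdorff by Proposition \ref{metric_hausdorff}. Then closedness of a compact $S$ follows immediately from Proposition \ref{compact_is_closed}, and boundedness from Proposition \ref{compact_is_bounded}. This direction requires essentially no new work.

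The substantive direction is the converse. First I would reduce the problem: since $S$ is bounded, it is contained in some open ball $B(c, r)$, and this ball lies inside a closed box $Q \ceq [-R, R]^d$ for $R$ large enough. If I can show $Q$ is compact, then, because $S$ is closed in $\br^d$ and $S \subset Q$, it is closed in $Q$ with the subspace topology, so Proposition \ref{closed_in_compact} gives compactness of $S$ inside $Q$, and Proposition \ref{compactness_relative} transfers this back to compactness in $\br^d$. Thus the whole statement reduces to the compactness of $Q$.

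So the crux is showing $Q$ is compact. Since $Q = [-R, R]^d$ is a finite product of the closed interval $[-R, R]$, the product theorem for compact spaces (Proposition \ref{product_compact}) reduces this to proving that a single closed interval $[a, b] \subset \br$ is compact. This is the main obstacle, and it is where the completeness of $\br$ must enter. I would argue by contradiction using bisection: assuming an open cover $\mc{U}$ of $[a,b]$ with no finite subcover, I would repeatedly bisect, at each stage keeping a closed half that still admits no finite subcover. This yields a nested sequence of closed intervals of shrinking length whose common point $x$ lies in some $U \in \mc{U}$; but then a sufficiently small subinterval around $x$ is swallowed by $U$, contradicting the no-finite-subcover property carried along the construction. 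Hence $[a,b]$ is compact, and the chain of reductions above completes the proof.
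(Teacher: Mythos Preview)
The paper does not actually prove this theorem: the appendix explicitly states that proofs are omitted and can be found in the cited references \cite{munkres2014, rudin1976, sutherland2009}, and the \texttt{proof} environment immediately following Heine--Borel belongs to Theorem~\ref{compact_maxmin}, not to Heine--Borel itself. So there is nothing in the paper to compare your argument against.

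That said, your proposal is a correct and standard proof. The forward direction is immediate from the propositions you cite (all of which are indeed stated in the appendix). The reverse direction is organized exactly as in most textbook treatments: reduce to compactness of a closed box, invoke Proposition~\ref{product_compact} to reduce to compactness of a closed interval, and then run the bisection argument. The only point worth flagging is that the bisection step relies on the Nested Interval Property (equivalently, completeness of $\br$), which the appendix does not state explicitly; you allude to this but would need to make the dependence precise in a fully written-out version. Otherwise the argument is sound and fills in what the paper chose to omit.
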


\begin{thm} \label{compact_maxmin}
  If $f:X \to \br$ is a continuous function defined in a nonempty compact space $X$, then $f$ function has maximum and minimum, that is, there exists $x_m, x_M \in X$ such that $f(x_m) \leq f(x) \leq f(x_M)$ for every $x \in X$.
\end{thm}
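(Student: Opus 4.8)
The plan is to transport the whole problem from the abstract space $X$ into $\br$, where order completeness does the work. First I would apply Proposition \ref{continuous_function_compact} to the continuous map $f:X \to \br$: since $X$ is compact, the image $f(X) \subset \br$ is compact, and since $X$ is nonempty, so is $f(X)$. From this single observation everything else follows, so the real content of the proof is entirely about subsets of $\br$.

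Next I would extract the two structural facts I need about $f(X)$. By the Heine--Borel Theorem \ref{heine_borel} (equivalently, by Propositions \ref{compact_is_closed} and \ref{compact_is_bounded}, since $\br$ is a metric space and hence Hausdorff by Proposition \ref{metric_hausdorff}), the compact set $f(X)$ is both closed and bounded. Boundedness together with the order-completeness of $\br$ guarantees that the real numbers
\begin{equation}
  M \ceq \sup f(X) \qquad \text{and} \qquad m \ceq \inf f(X)
\end{equation}
exist. It then remains to show that these two extreme values are actually attained, \ie that $M \in f(X)$ and $m \in f(X)$.

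This attainment step is the crux of the argument, and I expect it to be the main obstacle, since boundedness alone only produces $\sup$ and $\inf$ as real numbers and it is closedness that must force them back into the set. I would prove $M \in f(X)$ by contradiction. Suppose $M \notin f(X)$; then $M$ lies in the open complement $\br \ssm f(X)$, so there is some $\eps > 0$ with $(M - \eps, M + \eps) \cap f(X) = \emptyset$. But then $M - \eps$ would be an upper bound for $f(X)$ strictly smaller than $M$, contradicting $M = \sup f(X)$. Hence $M \in f(X)$, and the symmetric argument (using the lower bound $m + \eps$) gives $m \in f(X)$. Finally, $M \in f(X)$ and $m \in f(X)$ mean $M = f(x_M)$ and $m = f(x_m)$ for some $x_M, x_m \in X$, and by definition of supremum and infimum we have $m \leq f(x) \leq M$ for every $x \in X$, which is exactly the claimed chain $f(x_m) \leq f(x) \leq f(x_M)$.
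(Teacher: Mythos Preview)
Your proof is correct and follows essentially the same approach as the paper: push the problem into $\br$ via Proposition \ref{continuous_function_compact}, invoke Heine--Borel to get closed and bounded, use boundedness to produce $\sup$ and $\inf$, and then use closedness in a contradiction argument to show these extrema are attained. The only cosmetic difference is that the paper spells out the contradiction a bit more verbosely and treats only the supremum explicitly, declaring the infimum analogous, exactly as you do.
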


\begin{proof}
  By Proposition \ref{continuous_function_compact}, $f(X)$ is a compact set, and, by Heine--Borel (Theorem \ref{heine_borel}), $f(X)$ is bounded and closed.
  So, since it is bounded and nonempty, the Least Upper Bound property tells us that there exists the supremum and the infimum of the set $f(X)$.
  If we prove that the supremum and infimum are actually in $f(X)$, the result follows.
  Let $s \ceq \sup{f(X)}$ and suppose $s \notin f(X)$.
  Then, since $f(X)$ is closed, $\br \ssm f(X)$ is open and there exists $r > 0$ such that $B(s, r) \subset \br \ssm f(X)$.
  But now note that, if $r$ exists, then $s-r < s$ and $y < s-r$ for every $y \in f(X)$.
  The first claim is true because we are subtracting a positive number from $s$, so it will be less than $s$ itself.
  And the second claim is true because if there exists $y \in f(X)$ such that $y \geq s-r$, then $y \in B(s, r)$, which is a contradiction with the fact that $B(s, r) \subset \br \ssm f(X)$ (remember $y \leq s$ by definition of $s$ being the supremum of $f(X)$).
  So, we conclude that, if $r$ exists, we have a contradiction, since $s-r$ would be a number less than $s$ that is an upper bound of $f(X)$, which cannot exist by definition of supremum being the least upper bound.
  In other words, $r$ cannot exist and, therefore, $s$ should be in $f(X)$, because what led to the existence of $r$ was supposing $s \notin f(X)$.
  The result follows, since $s$ being in $f(X)$ means, by definition, the existence of $x_M \in X$ such that $f(x_M) = s$ and, since $s$ is the supremum of the image, we know $f(x) \leq s = f(x_M)$ for every $x \in X$.
  The proof for the infimum is analogous.
\end{proof}

The result above shows that Hartree--Fock is well-defined (see Theorem \ref{hf_welldefined}).


\chapter{Linear Algebra} \label{linalg}

\epigraph{We [he and Halmos] share a philosophy about linear algebra: we think basis-free, we write basis-free, but when the chips are down we close the office door and compute with matrices like fury.}{Irving Kaplansky}

The goal of this appendix is to set the language, define and construct some particular vector spaces that we use in this work such as Hilbert spaces and the tensor, exterior and symmetric powers of vector spaces.
Again, we will not prove most of the results stated in here and the main reference for this appendix is \cite{kostrikin1997}.

\section{Basic definitions}

\begin{defi} \label{group}
  A nonempty set $G$ is said to be a \emph{group}\index{group} if it is possible to define three functions, which we are going to call $m:G \x G \to G$, $(\cdot)^{-1}:G \to G$ and $e:\qty{\bullet} \to G$, such that $G$ with these functions satisfies the following axioms for every $x, y, z \in G$:
  \begin{enumerate}

  \item $m(x, m(y, z)) = m(m(x, y), z)$.

  \item $m(e(\bullet), x) = m(x, e(\bullet)) = x$.

  \item $m((\cdot)^{-1}(x), x) = m(x, (\cdot)^{-1}(x)) = e(\bullet)$.

  \end{enumerate}
  We usually write $m(x, y)$ as $x \cdot y$ or just $xy$; $(\cdot)^{-1}(x)$ as $x^{-1}$; and $e(\bullet)$ as $e$ or sometimes as $1$.
  Changing the notation, the axioms are:
  \begin{enumerate}

  \item $x(yz) = (xy)z$.

  \item $xe = ex = x$.

  \item $xx^{-1} = x^{-1}x = e$.

  \end{enumerate}
  The function $m$ is called \emph{group operation} or just \emph{group product}, the first axiom is called \emph{associativity}, the element $e$ is called \emph{identity} and $x^{-1}$ is called \emph{inverse of $x$}.
\end{defi}

\begin{exm}
  Given a set $X$, the set $S_X \ceq \qty{\sigma:X \to X : \text{$\sigma$ is a bijection}}$ with the composition of functions as the operation is a group.
  This group is usually called the \emph{symmetry group of $X$}\index{group!symmetric} and its elements $\sigma$ are also called \emph{permutations}\index{permutation}.
  When the set is finite, we will usually denote $S_X$ by $S_n$, being $n$ the cardinality of $X$.
\end{exm}

\begin{defi}
  A group $G$ is said to be \emph{abelian}\index{group!abelian} or \emph{commutative}\index{group!commutative} if the axiom $xy = yx$ is satisfied for every $x, y \in G$.
\end{defi}

\begin{obs} \label{zero_commutative}
  When a group is commutative, we usually denote its operation by $+$ and the identity by $0$.
\end{obs}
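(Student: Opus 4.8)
The plan is to observe first that Observation \ref{zero_commutative} is a \emph{notational convention} rather than a mathematical assertion, so strictly speaking there is nothing to prove: the statement merely records a choice of symbols that the authors adopt for commutative groups. Consequently, the most honest ``proof'' is a verification that this convention is internally consistent with the group axioms introduced in Definition \ref{group}, that is, that renaming the operation $m$ as $+$ and the identity $e$ as $0$ does not clash with any of the three axioms.

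First I would rewrite the three axioms of Definition \ref{group} under the substitution $m(x,y) \mapsto x + y$, $e \mapsto 0$, and (as is customary in the additive setting) $x^{-1} \mapsto -x$. Associativity becomes $x + (y + z) = (x + y) + z$, the identity axiom becomes $x + 0 = 0 + x = x$, and the inverse axiom becomes $x + (-x) = (-x) + x = 0$. Each of these reads as a sensible additive identity and matches exactly the relabelled axioms, so the convention introduces no ambiguity and loses no information relative to the multiplicative notation.

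Second, I would explain \emph{why} the convention is reserved for the commutative case: the symbol $+$ almost universally connotes a commutative operation (as with addition in $\br$, in $\bc$, or in any vector space), so writing a noncommutative product additively would invite the false expectation that $x + y = y + x$. Since the hypothesis of the observation is precisely that the group is abelian, the additional axiom $x + y = y + x$ holds, and hence the additive notation carries no misleading suggestion; the convention is therefore well chosen exactly under the stated assumption.

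The main (and indeed only) obstacle here is conceptual rather than technical: recognizing that the statement is definitional and so admits no substantive argument. Once that is acknowledged, the consistency check above is entirely routine, and no further work is required.
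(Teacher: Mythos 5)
You correctly identify that Observation \ref{zero_commutative} is a notational convention with no mathematical content to prove, which is exactly how the paper treats it: the observation is stated without proof. Your consistency check of the relabelled axioms and your remark on why $+$ is reserved for the abelian case are sound, if strictly speaking unnecessary.
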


\begin{exm}
  The integers $\bz$ with the operation of addition is a commutative group.
\end{exm}

\begin{exm}
  A non-example, actually, is the group $S_X$ if $X$ has $3$ elements or more.
  To see this, consider $X = \qty{a, b, c}$ and the following functions:
  \begin{align*}
    \sigma:X & \to X & \tau:X & \to X \\
    a & \mapsto b, & a & \mapsto a, \\
    b & \mapsto c, & b & \mapsto c, \\
    c & \mapsto a, & c & \mapsto b.
  \end{align*}
  Note that $\sigma(\tau(a)) = \sigma(a) = b$, but $\tau(\sigma(a)) = \tau(b) = c$, which means $\sigma \circ \tau \neq \tau \circ \sigma$.
\end{exm}

\begin{defi}
  Given a set $X$ with at least two elements, we call $\sigma \in S_X$ a \emph{transposition}\index{transposition} if $\sigma$ moves only two elements of the set, \ie, there exists $X' = \qty{x, y} \subset X$ such that $\sigma(x) = y$, $\sigma(y) = x$ and $\sigma(z) = z$ for every $z \in X \ssm X'$.
\end{defi}

\begin{defi}
  Given a finite set $X$ with at least two elements and a permutation $\sigma \in S_X$, we define the \emph{sign of $\sigma$}\index{sign of permutation} to be $1$ if $\sigma$ can be written as a composition of an even number of transpositions and $-1$ otherwise.
  We will denote the sign of $\sigma$ by $\sign(\sigma)$.
\end{defi}

\begin{obs}
  The sign is well-defined, \ie, a permutation cannot be even and odd at the same time.
\end{obs}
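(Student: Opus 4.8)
The plan is to attach to each permutation an intrinsic sign that manifestly depends only on the permutation itself, and then to show that this intrinsic sign equals $(-1)^k$ whenever the permutation is written as a product of $k$ transpositions. Since the left-hand side is decomposition-independent, the parity of $k$ cannot depend on the decomposition either, which is exactly the claim. Since any finite set of size $n$ is in bijection with $\{1, \ldots, n\}$, no generality is lost in taking $X = \{1, \ldots, n\}$. I would introduce the Vandermonde-type polynomial
\[
  \Delta(x_1, \ldots, x_n) \ceq \prod_{1 \leq i < j \leq n}(x_i - x_j)
\]
in $n$ indeterminates and let $\sigma \in S_n$ act by permuting the variable indices, that is, $(\sigma \cdot \Delta)(x_1, \ldots, x_n) \ceq \Delta(x_{\sigma(1)}, \ldots, x_{\sigma(n)}) = \prod_{i<j}(x_{\sigma(i)} - x_{\sigma(j)})$.

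The first key step is to show that $\sigma \cdot \Delta = \pm \Delta$ for every $\sigma$. The point is that $\sigma$ permutes the collection of unordered pairs $\{i,j\}$, so each factor $(x_{\sigma(i)} - x_{\sigma(j)})$ of $\sigma \cdot \Delta$ agrees, up to a factor of $-1$, with exactly one factor $(x_a - x_b)$ (with $a<b$) of $\Delta$, and this correspondence is a bijection between the two sets of factors. Taking the product of all these local signs yields a global sign $\epsilon(\sigma) \in \{+1, -1\}$ with $\sigma \cdot \Delta = \epsilon(\sigma)\Delta$, and, crucially, $\epsilon(\sigma)$ is defined purely in terms of $\sigma$.

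Next I would establish two facts. First, the action is compatible with composition: since $\sigma \cdot (\tau \cdot \Delta) = (\sigma\tau) \cdot \Delta$ (substituting $x_i \mapsto x_{\sigma(i)}$ into the already-permuted polynomial reproduces the factors of $(\sigma\tau)\cdot\Delta$), the function $\epsilon$ is multiplicative, $\epsilon(\sigma\tau) = \epsilon(\sigma)\epsilon(\tau)$. Second, for a transposition $\tau$ swapping $p < q$, a direct inspection shows $\tau \cdot \Delta = -\Delta$, i.e. $\epsilon(\tau) = -1$: the factor $(x_p - x_q)$ flips sign, each factor involving exactly one of $p,q$ is either paired with another such factor unchanged or flips together with its partner, so the total number of sign flips is $1 + 2(q - p - 1)$, which is odd.

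Combining these, if $\sigma = \tau_1 \cdots \tau_k$ with each $\tau_\ell$ a transposition, multiplicativity gives $\epsilon(\sigma) = \epsilon(\tau_1)\cdots\epsilon(\tau_k) = (-1)^k$. Since the left-hand side depends only on $\sigma$, any two decompositions of $\sigma$ into transpositions have the same parity, so $\sign(\sigma)$ is well-defined and $\sigma$ cannot be simultaneously even and odd. I expect the main obstacle to be the bookkeeping in the transposition computation—carefully tracking which factors $(x_i - x_j)$ flip sign and verifying their number is odd—together with checking the compatibility of the action; both are elementary but demand a clean counting argument rather than hand-waving.
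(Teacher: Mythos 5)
Your proof is correct. Note, however, that the paper states this observation without proof: the Linear Algebra appendix explicitly defers almost all of its proofs to the reference (Kostrikin--Manin), so there is no in-paper argument to compare yours against. What you give is the standard Vandermonde-polynomial argument: attach to $\sigma$ the intrinsic sign $\epsilon(\sigma)$ defined by $\sigma\cdot\Delta = \epsilon(\sigma)\Delta$, check multiplicativity and that transpositions act by $-1$, and conclude $\epsilon(\sigma) = (-1)^k$ for any decomposition into $k$ transpositions. The two key computations are carried out correctly: the pairing of factors of $\sigma\cdot\Delta$ with factors of $\Delta$ is a genuine bijection because $\sigma$ permutes unordered pairs, and your count of $1 + 2(q-p-1)$ sign flips for the transposition $(p\ q)$ is right. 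The only point worth flagging is the composition convention: with the substitution rule $(\sigma\cdot\Delta)(x_1,\ldots,x_n) = \Delta(x_{\sigma(1)},\ldots,x_{\sigma(n)})$, one must verify whether iterating yields $(\sigma\tau)\cdot\Delta$ or $(\tau\sigma)\cdot\Delta$; but since $\epsilon$ takes values in the abelian group $\{\pm 1\}$, either way you get $\epsilon(\sigma\tau) = \epsilon(\sigma)\epsilon(\tau)$, so the conclusion is unaffected. Reducing from an arbitrary finite $X$ to $\{1,\ldots,n\}$ via a bijection is also fine, since the induced isomorphism $S_X \cong S_n$ carries transpositions to transpositions.
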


\begin{prop} \label{sign_composition}
  Given $\sigma, \tau \in S_X$, we have $\sign(\sigma)\sign(\tau) = \sign(\sigma \circ \tau)$.
\end{prop}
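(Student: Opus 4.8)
The plan is to reduce everything to the defining property of the sign, namely that $\sign(\sigma) = (-1)^k$ whenever $\sigma$ is written as a composition of $k$ transpositions — a quantity that is well-defined by the observation immediately preceding this proposition. First I would invoke the fact, already implicit in the very definition of the sign, that every permutation decomposes as a finite composition of transpositions, and write $\sigma = \tau_1 \circ \cdots \circ \tau_k$ and $\tau = \rho_1 \circ \cdots \circ \rho_l$ for suitable transpositions $\tau_i, \rho_j \in S_X$.

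The key step is then purely formal: concatenating these two decompositions yields
\[
  \sigma \circ \tau = \tau_1 \circ \cdots \circ \tau_k \circ \rho_1 \circ \cdots \circ \rho_l,
\]
which exhibits $\sigma \circ \tau$ as a composition of exactly $k + l$ transpositions. Applying the definition of the sign three times, I obtain $\sign(\sigma) = (-1)^k$, $\sign(\tau) = (-1)^l$ and $\sign(\sigma \circ \tau) = (-1)^{k+l}$, so the conclusion follows from the elementary identity $(-1)^{k+l} = (-1)^k (-1)^l$.

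The only genuine subtlety is that this argument rests entirely on the sign being well-defined: without the guarantee that the parity of the number of transpositions is independent of the chosen decomposition, the symbols $(-1)^k$, $(-1)^l$ and $(-1)^{k+l}$ would not be meaningful, and one could not even assert that $\sign(\sigma)$ equals $(-1)^k$ for an \emph{arbitrary} decomposition into $k$ transpositions. Since that well-definedness is precisely the content of the preceding observation, I would treat it as given; the main obstacle has therefore already been dispatched, and what remains is only the one-line bookkeeping displayed above.
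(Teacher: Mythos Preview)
Your argument is correct and is the standard one: decompose each permutation into transpositions, concatenate, and use additivity of the exponent in $(-1)^{k+l}$, relying on the well-definedness of parity recorded in the preceding observation. The paper itself states this proposition without proof, so there is nothing to compare against; your write-up would serve perfectly well as the missing justification.
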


\begin{defi}
  Given a nonempty set $k$, we say that $k$ is a \emph{field}\index{field} if it is possible to define two functions on $k$, that we shall call $+$ and $\cdot$, such that $k$ with these functions satisfies the following axioms:
  \begin{enumerate}
    
  \item The set $k$ with the function $+$ is a commutative group.

  \item The set $k \ssm \0$ with the function $\cdot$ is also a commutative group.

  \item $x \cdot (y + z) = x \cdot y + x \cdot z$ for every $x, y, z \in k$.

  \end{enumerate}
  The last axiom is called \emph{distributivity}.
\end{defi}

\begin{obs}
  Being pedantic, there are actually six functions involved in the definition above:
  \begin{align}
    + & : k \x k \to k,        & \cdot & : (k \ssm \0) \x (k \ssm \0) \to (k \ssm 0), \\
    - & : k \to k,             & (\cdot)^{-1} & : (k \ssm \0) \to (k \ssm \0), \\
    0 & : \qty{\bullet} \to k, & 1 & : \qty{\bullet} \to k.
  \end{align}
\end{obs}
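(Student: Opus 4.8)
The plan is to reduce this counting to Definition \ref{group}, which is deliberately phrased so that a group is a set equipped with \emph{three} structure maps rather than a set carrying a binary operation together with bare existence axioms. First I would recall that in \ref{group} a group $G$ comes packaged with the triple $m \colon G \x G \to G$, $(\cdot)^{-1} \colon G \to G$, and $e \colon \qty{\bullet} \to G$; the singleton-indexed map $e$ is precisely the device that turns the sentence ``there exists an identity'' into an actual function, and likewise $(\cdot)^{-1}$ turns ``every element has an inverse'' into a function. So the content of the observation is entirely bookkeeping: each group axiom that asserts an existence is encoded by one of these maps, and there are three such maps per group.

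Next I would apply this twice, once to each group appearing in the field definition. The additive group $(k, +)$ contributes the triple $+ \colon k \x k \to k$, $- \colon k \to k$, and $0 \colon \qty{\bullet} \to k$, where I write $-$ and $0$ for what \ref{group} calls $(\cdot)^{-1}$ and $e$, following the additive convention of Observation \ref{zero_commutative}. The multiplicative group $(k \ssm \0, \cdot)$ contributes $\cdot \colon (k \ssm \0) \x (k \ssm \0) \to (k \ssm \0)$, $(\cdot)^{-1} \colon (k \ssm \0) \to (k \ssm \0)$, and $1 \colon \qty{\bullet} \to k$. That is $3 + 3 = 6$ maps, and the third field axiom, distributivity, is a universally quantified equation in these maps that introduces no new function. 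Summing gives exactly the six listed.

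The only genuinely delicate point, and where I would spend the most care, is the handling of the domains and codomains, since the second group lives on $k \ssm \0$ rather than on all of $k$. I would note that multiplication and multiplicative inversion are honestly maps on $k \ssm \0$ (this is why closure of $k \ssm \0$ under $\cdot$ is part of the assertion that $(k \ssm \0, \cdot)$ is a group), whereas the unit $1$ is most naturally recorded with codomain $k$ to match the displayed table. I would also flag that \ref{group} requires the underlying set to be nonempty, so that $k \ssm \0$ being a group forces $k$ to contain at least two elements; this is exactly what makes all six maps simultaneously well-defined. With these remarks the enumeration is complete, and no computation beyond re-reading the two definitions is needed.
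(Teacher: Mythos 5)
Your proposal is correct and matches the paper's (implicit) justification exactly: Definition \ref{group} packages each group as a set with three structure maps, so the two groups $(k, +)$ and $(k \ssm \0, \cdot)$ contribute $3+3=6$ functions while distributivity adds none. Your side remarks on the codomain of $1$ and on $k \ssm \0 \neq \emptyset$ forcing $\abs{k} \geq 2$ are accurate and consistent with the displayed table.
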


\begin{exm}
  $\bq$, $\br$ and $\bc$ with the usual operations of addition and multiplication are fields.
\end{exm}

\begin{defi}
  Let $V$ be a nonempty set and $k$ be a field.
  We say that $V$ is a \emph{vector space over $k$}\index{space!vector}\index{vector space} if it is possible to define two functions, $a:V \x V \to V$ and $b:k \x V \to V$, such that $V$ with these functions satisfies the following axioms for every $x, y \in V$ and $r, s \in k$:
  \begin{enumerate}
    
  \item $V$ with the function $a$ is a commutative group.

  \item $b(s, b(r, x)) = b(s \cdot r, x)$.

  \item $b(1, x) = x$, being $1$ the identity of $k$ with respect to the $\cdot$ operation.

  \item $b(r, a(x, y)) = a(b(r, x), b(r, y))$.

  \item $b(r + s, x) = a(b(r, x), b(s, x))$.

  \end{enumerate}
  The function $a$ is called \emph{vector addition} and $b$ is called \emph{scalar multiplication}.
  Usually the vector addition $a(x, y)$ is written as $x + y$ and the scalar multiplication $b(r, x)$ is written as $r \cdot x$ or $rx$.
  However, since we use the same symbols for the operations of $k$ as a field, the reader should keep track of when we are using $\cdot$ as a scalar multiplication and when it is the field operation.
  The same goes for the $+$ symbol, of course.
  Using this notation, the above axioms are:
  \begin{enumerate}
    
  \item $V$ with the function $+$ is a commutative group.

  \item $s(rx) = (s \cdot r)x$.

  \item $1x = x$, being $1$ the identity of $k$ with respect to the $\cdot$ operation.

  \item $r(x + y) = rx + ry$.

  \item $(r + s)x = rx + sx$.

  \end{enumerate}
\end{defi}

\begin{exm}
  $\bq^d$, $\br^d$ and $\bc^d$ with the usual componentwise addition and scalar multiplication, \ie, $(x_1, \ldots, x_d) + (y_1, \ldots, y_d) \ceq (x_1 + y_1, \ldots, x_d + y_d)$ and $r(x_1, \ldots, x_d) \ceq (rx_1, \ldots, rx_d)$, are vector spaces over $\bq$, $\br$ and $\bc$, respectively.
  The case $n = 0$ amounts to the trivial vector space, \ie, $\bq^0 = \br^0 = \bc^0 = \0$.
\end{exm}

\begin{obs}
  When the underlying field of the vector space is not important, we will not mention it.
\end{obs}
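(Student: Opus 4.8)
The final statement is a notational convention rather than a mathematical assertion, so strictly speaking there is nothing to prove: the author is merely declaring that, in contexts where the choice of scalar field plays no role, the qualifier ``over $k$'' will be dropped from the phrase ``vector space over $k$.'' The plan, then, is to observe that no verification is required, because the convention introduces no proposition that could be true or false — it only abbreviates language. I would therefore present the ``proof'' as the remark that a declaration of notation is self-justifying once its scope is specified.

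If one nevertheless wished to argue that the convention is harmless, the thing to check would be that every statement in which the field is suppressed remains unambiguous, i.e.\ that it holds verbatim for each of the fields $\bq$, $\br$ and $\bc$ actually used in the text. First I would note that all the structural definitions introduced above — commutative group, field, vector space, together with their axioms — are stated uniformly for an arbitrary field, so any property deduced from those axioms alone is automatically field-independent. Consequently, whenever the author invokes only the vector-space axioms, the omitted field can be reinstated as any of the three fields without affecting the validity of the assertion, and the abbreviation loses no information.

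The only conceivable obstacle is passages where a property genuinely depends on the field, such as the identification $\bc^d \cong \br^{2d}$ used earlier, or the appearance of complex conjugation in inner products. The proposal is simply that the convention is not applied there: the author reserves the abbreviation for field-agnostic statements and names the field explicitly whenever it matters, as the preceding examples already illustrate. Hence the convention is consistent with the usage throughout the work, and no further argument is needed; the statement stands as a definition of notation, which one adopts rather than proves.
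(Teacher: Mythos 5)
Your reading is exactly right: the statement is a notational convention, and the paper accordingly attaches no proof to it — there is nothing to verify beyond adopting the declaration. Your additional sanity check (that the convention is only invoked in field-agnostic contexts, with $\bq$, $\br$ or $\bc$ named explicitly whenever the field matters, e.g.\ for conjugation in inner products) is consistent with the paper's actual usage and is a reasonable, if optional, elaboration.
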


\begin{exm}
  Given a set $X$ and a vector space $V$, the set $\mc{F}(X, V) \ceq \qty{f:X \to V}$ is a vector space with the operations defined pointwise, \ie, the sum of two functions $f$ and $g$ is defined as $(f+g)(x) \ceq f(x) + g(x)$ and the scalar multiplication is defined as $(rf)(x) \ceq rf(x)$.
  Observe that in the left of the symbol $\ceq$ we are defining an operation in $\mc{F}(X, V)$ while in the right we are using the given operations of $V$ as a vector space.
\end{exm}

\begin{defi}
  Given a vector space $V$ over $k$ and a non-empty subset $W \subset V$, we say that $W$ is a \emph{subspace of $V$} if $rx+sy \in W$ for every $x,y \in W$ and $r,s \in k$.
\end{defi}

\begin{exm} \label{l2}
  $L^2(\br^n) \ceq \qty{\psi:\br^n \to \bc : \int_{\br^n} \abs{\psi(\vb{r})}^2 \dd\vb{r} < \infty}$ is a subspace of $\mc{F}(\br^n, \bc)$.
\end{exm}

\begin{defi}
  Let $V$ and $W$ be two vector spaces over $k$.
  We say that a function $T:V \to W$ is a \emph{linear transformation}\index{linear transformation} if $T(rx+sy) = rT(x) + sT(y)$ for every $r, s \in k$ and $x,y \in V$.
  If $V = W$, then we also say that $T$ is an \emph{operator}\index{operator}.
\end{defi}

\begin{obs}
  Notice that to consider the vector $rx+sy \in V$ we are using the operations of $V$ while $rT(x) + sT(y) \in W$ uses the operations of $W$.
\end{obs}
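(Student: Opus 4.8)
The final statement is the observation that in the defining identity $T(rx+sy) = rT(x) + sT(y)$ of a linear transformation the two sides live in different spaces: the combination $rx+sy$ is formed with the operations of $V$, whereas $rT(x)+sT(y)$ is formed with the operations of $W$. Since this is a claim about which operation each symbol denotes rather than a computational assertion, the plan is to track, symbol by symbol, which of the two sets of operations is being invoked and in which space each intermediate expression lives, appealing only to the definitions of vector space and of a function $T:V \to W$.

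First I would examine the left-hand side. Given $x,y \in V$ and scalars $r,s \in k$, the scalar multiplication $b:k \times V \to V$ of the space $V$ yields $rx \in V$ and $sy \in V$, and the vector addition $a:V \times V \to V$ of $V$ then yields $rx+sy \in V$. Hence the argument $rx+sy$ is a bona fide vector of $V$, so $T(rx+sy)$ is defined and, because $T$ takes values in $W$, belongs to $W$.

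Next I would examine the right-hand side. Since $T:V \to W$, we have $T(x),T(y) \in W$; applying the scalar multiplication of $W$ gives $rT(x),sT(y) \in W$, and the vector addition of $W$ gives $rT(x)+sT(y) \in W$. Thus both sides are elements of $W$, the asserted equality is an equation in $W$, and the point the observation makes is precisely that the identically written symbols (juxtaposition for scalar multiplication, $+$ for addition) stand for the operations of $V$ inside the argument on the left but for those of $W$ on the right.

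The hard part here is not technical but conceptual: there is no calculation to carry out, only the bookkeeping of separating two sets of operations that share the same notation, a point the paper flagged earlier when it warned that scalars and vectors reuse the $\cdot$ and $+$ symbols. I would therefore expect no real obstacle beyond being scrupulous about which space each subexpression inhabits, and I would present the justification as a direct unwinding of the definitions rather than as a genuine theorem with content.
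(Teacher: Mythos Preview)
Your unpacking is correct and matches exactly what the paper intends: the observation has no proof in the paper, it is simply a remark reminding the reader that the shared symbols $+$ and juxtaposition denote the $V$-operations on the left and the $W$-operations on the right. Your symbol-by-symbol tracking is a faithful (if more explicit) elaboration of that same point.
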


\begin{defi}
  A linear transformation $T:V \to W$ is called an \emph{isomorphism}\index{isomorphism} if $T$ is bijective as a function and the inverse is also a linear transformation.
  If there exists an isomorphism between $V$ and $W$, we say that they are \emph{isomorphic}\index{space!isomorphic}.
\end{defi}

\begin{defi}
  Given a linear transformation $T:V \to W$, we define the \emph{kernel of $T$}\index{kernel} to be the set $\ker{T} \ceq \qty{x \in V : T(x) = 0}$ and the \emph{image of $T$}\index{image} to be the set $\im{T} \ceq \qty{y \in W : \text{$y = T(x)$ for some $x \in X$}}$.
\end{defi}

\begin{prop}
  Given a linear transformation $T:V \to W$, $\ker{T}$ is a subspace of $V$ and $\im{T}$ is a subspace of $W$.
\end{prop}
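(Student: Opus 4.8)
The plan is to verify directly, for each of the two sets, the subspace criterion recorded just above the statement: a nonempty subset closed under the formation of linear combinations $rx+sy$. Both verifications reduce to a single routine application of the definition of a linear transformation, so the proof will be short; the only point that requires a moment of care is the nonemptiness clause built into the paper's definition of subspace.

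First I would establish the auxiliary fact that $T(0) = 0$, which follows from linearity: writing $T(0) = T(0 + 0) = T(0) + T(0)$ in the commutative group $W$ and cancelling yields $T(0) = 0$. This serves double duty, guaranteeing both that $0 \in \ker T$ (so the kernel is nonempty) and that $0 \in \im T$ (so the image is nonempty), which is exactly why I would open with it rather than treat it as an afterthought.

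For the kernel, I would take arbitrary $x, y \in \ker T$ and $r, s \in k$ and compute $T(rx + sy) = rT(x) + sT(y) = r \cdot 0 + s \cdot 0 = 0$, whence $rx + sy \in \ker T$; this is precisely the required closure condition. For the image, I would take $y_1, y_2 \in \im T$, choose preimages $x_1, x_2 \in V$ with $T(x_i) = y_i$, and observe that $ry_1 + sy_2 = rT(x_1) + sT(x_2) = T(rx_1 + sx_2)$ exhibits $ry_1 + sy_2$ as a value of $T$, so it lies in $\im T$ as well.

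There is no genuine obstacle here: the entire argument is a direct unwinding of the definitions of \emph{linear transformation} and \emph{subspace}. If anything, the subtlest step is simply remembering that the definition adopted in the excerpt demands the subset be nonempty, so that checking closure under linear combinations alone would be insufficient without the preliminary observation $T(0) = 0$.
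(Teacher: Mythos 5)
Your proof is correct: the preliminary observation $T(0)=0$ handles the nonemptiness clause in the paper's definition of subspace, and the closure computations for $\ker{T}$ and $\im{T}$ are exactly the right unwinding of the definition of a linear transformation. The paper itself states this proposition without proof (the Linear Algebra appendix explicitly defers its omitted proofs to the references), so there is nothing to compare against; your argument is the standard one and fills the gap correctly.
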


\begin{defi}
  Given an operator $T:V \to V$, an \emph{eigenvector of $T$}\index{eigenvector} is a non-zero vector $x \in V$ such that $Tx = \lambda{x}$ for some $\lambda \in k$.
  The number $\lambda$ is called an \emph{eigenvalue of $T$}\index{eigenvalue}.
\end{defi}

\begin{prop} \label{equivalence_subspace}
  Let (1) $L:U \to V$ be a linear transformation; (2) $W \ceq \ker{L}$; (3) $T:W \to W$ be an operator; (4) $\ol{T}:U \to U$ be an extension of $T$; and fix $b \in W$.
  Then the following is true: solving $Tx = b$ for $x \in W$ is equivalent to solving $(\ol{T} \oplus L)(x) \ceq (\ol{T}x, Lx) = (b, 0)$ for $x \in U$.
\end{prop}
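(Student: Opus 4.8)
The plan is to read the word \emph{equivalent} as the assertion that the two solution sets coincide, and then establish this by a pair of inclusions, each obtained by unwinding the definitions of $W \ceq \ker{L}$ and of $\ol{T}$ being an extension of $T$. Recall that by ``extension'' we mean $\ol{T}x = Tx$ for every $x \in W$; since $T$ maps $W$ into $W$ and $W \subset U$, this condition is meaningful for the operator $\ol{T}:U \to U$. Likewise the augmented operator $\ol{T} \oplus L:U \to U \times V$, $x \mapsto (\ol{T}x, Lx)$, takes values compared against $(b, 0)$ with $b \in W \subset U$ and $0 \in V$.

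First I would treat the forward inclusion: take any $x \in W$ with $Tx = b$ and show it solves the augmented system. Because $x \in W = \ker{L}$ we immediately have $Lx = 0$, and because $\ol{T}$ restricts to $T$ on $W$ we have $\ol{T}x = Tx = b$; hence $(\ol{T} \oplus L)(x) = (\ol{T}x, Lx) = (b, 0)$. The only point to flag here is that $x \in W \subset U$, so $x$ is a legitimate element of the domain $U$ of the augmented operator. For the reverse inclusion I would start from an arbitrary $x \in U$ satisfying $(\ol{T}x, Lx) = (b, 0)$, so that $\ol{T}x = b$ and $Lx = 0$. The second equation says exactly that $x \in \ker{L} = W$, which places $x$ back inside $W$; the extension hypothesis then yields $Tx = \ol{T}x = b$, and since both $x$ and $b$ lie in $W$ the equation $Tx = b$ is the original equation in $W$.

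Combining the two inclusions shows that the identity map is a bijection between $\qty{x \in W : Tx = b}$ and $\qty{x \in U : (\ol{T} \oplus L)(x) = (b, 0)}$, which is precisely the claimed equivalence. The argument is entirely formal, so there is no real obstacle; the only thing demanding care is bookkeeping of the ambient space. Solutions of the augmented system \emph{a priori} live in the larger space $U$, and it is exactly the constraint $Lx = 0$ that forces them down into $W$, after which the extension hypothesis may be invoked legitimately on vectors already known to lie in $W$.
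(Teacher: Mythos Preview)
Your proof is correct and follows essentially the same approach as the paper: both directions are established by unwinding the definitions of $W=\ker L$ and of $\ol{T}$ being an extension of $T$, yielding the same bijection of solution sets. The paper's argument is slightly terser but identical in content.
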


\begin{proof}
  Assuming $Tx = b$, with $x \in W$, we have $x \in \ker{L}$ by definition of $W$ and that means $Lx = 0$.
  We also know that $\ol{T}$ is an extension of $T$, which means $Tx = \ol{T}x$.
  Consequently, if $Tx = b$, then $(\ol{T}x, Lx) = (b, 0)$, as desired.
  On the other hand, if we assume $(\ol{T} \oplus L)(x) \ceq (\ol{T}x, Lx) = (b, 0)$, then $Lx = 0$ and, therefore, $x \in \ker{L} = W$.
  Now, since $\ol{T}x = Tx$ in this case, we have the equivalence.
\end{proof}

\begin{defi}
  Given a set $X$, an \emph{equivalence relation on $X$}\index{equivalence relation} is a binary relation $\sim$ that satisfies the following axioms for every $x,y,z \in X$:
  \begin{enumerate}

  \item $x \sim x$.

  \item $x \sim y$ \tiff $y \sim x$.

  \item If $x \sim y$ and $y \sim z$, then $x \sim z$.

  \end{enumerate}
  These axioms are called \emph{reflexivity}, \emph{symmetry} and \emph{transitivity}, respectively.
\end{defi}

\begin{obs}
  To be pedantic, the reader can think of a binary relation as a subset $R$ of $X \x X$ such that the following axioms holds for every $x,y,z \in X$: $(x,x) \in R$; $(x,y) \in R$ \tiff $(y,x) \in R$; $(x,y), (y,z) \in R$ implies $(x,z) \in R$.
\end{obs}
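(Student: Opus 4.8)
The plan is to make explicit the bijective dictionary between the relational formulation of an equivalence relation given in the previous definition and the set-theoretic formulation stated in this observation, and then to check that this dictionary carries each of the three relational axioms onto the corresponding membership axiom and back. Concretely, to a binary relation $\sim$ on $X$ I would associate the subset $R_{\sim} \ceq \qty{(x,y) \in X \x X : x \sim y}$, and to a subset $R \subset X \x X$ I would associate the relation $\sim_R$ defined by declaring $x \sim_R y$ to hold exactly when $(x,y) \in R$. The first step is to observe that these two assignments are mutually inverse, so that prescribing the relation $\sim$ is literally the same datum as prescribing the set $R$; every property of one side can then be transported to the other through the single pointwise equivalence $x \sim y \iff (x,y) \in R$.

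The core of the argument is a term-by-term translation of the axioms under this equivalence, carried out while keeping the quantifier ``for every $x,y,z \in X$'' fixed throughout. Reflexivity, namely $x \sim x$ for every $x$, becomes the assertion that $(x,x) \in R$ for every $x$, which is exactly the first membership axiom. Symmetry, the biconditional $x \sim y$ \tiff $y \sim x$, becomes ``$(x,y) \in R$ \tiff $(y,x) \in R$'', the second membership axiom. Transitivity, the implication that $x \sim y$ together with $y \sim z$ forces $x \sim z$, becomes ``$(x,y) \in R$ and $(y,z) \in R$ imply $(x,z) \in R$'', the third membership axiom. Since each of these translations is itself an equivalence and is applied uniformly over all triples, the relational axioms hold precisely when the membership axioms do, which is the content of the observation.

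I expect no genuine obstacle here, since the statement is a definitional unfolding rather than a theorem with mathematical depth; the only point deserving a word of care is that the equivalence $x \sim y \iff (x,y) \in R$ is pointwise in the ordered pair $(x,y)$, so that the outer quantifiers survive the passage across the dictionary unchanged. The mildest subtlety worth recording is the implicit convention that a binary relation is nothing more than a rule assigning a truth value to each ordered pair of elements of $X$; once that identification with a subset of $X \x X$ is granted, the two lists of axioms are merely syntactic variants of each other and the observation follows immediately.
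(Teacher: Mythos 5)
Your proposal is correct and matches the paper's intent exactly: the observation is a definitional restatement for which the paper offers no proof, and your dictionary $x \sim y \iff (x,y) \in R$ with the term-by-term translation of reflexivity, symmetry, and transitivity is precisely the unfolding the observation presupposes. Nothing further is needed.
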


\begin{defi}
  Given a set $X$, $x \in X$ and an equivalence relation $\sim$ on $X$, we define the \emph{equivalence class of $x$}\index{equivalence class} to be the set $[x] \ceq \qty{y \in X : x \sim y}$.
  Any element of $[x]$ is said to be a \emph{representative of the class $[x]$}\index{representative}.
\end{defi}

\begin{defi}
  Given a set $X$ and an equivalence relation $\sim$ on $X$, we define the \emph{quotient of $X$ by $\sim$} to be the set of equivalence classes of $X$, \ie, $X/\sim \ \ceq \qty{[x] : x \in X}$.
  Observe that the quotient comes with a natural function $\pi:X \to X/\sim$ defined by $\pi(x) = [x]$.
  We usually call this function the \emph{projection (of the quotient)}\index{projection} or \emph{canonical projection}\index{projection!canonical}.
\end{defi}

\begin{defi}
  Let $V$ be a vector space over $k$ and $W$ be a subspace of $V$.
  Define the following equivalence relation on $V$: given $x,y \in V$, we say that $x \sim y$ \tiff $x-y \in W$.
  The quotient of $V$ by $\sim$ is a vector space over $k$ called the \emph{quotient of $V$ by $W$}.
  We usually denote this quotient by $V/W$ instead of $V/\sim$ and the operations are defined as $[x] + [y] \ceq [x+y]$ and $r[x] \ceq [rx]$ for every $[x], [y] \in V/W$ and $r \in k$.
\end{defi}

\begin{prop}[Universal Property of the Quotient] \label{univ_quotient}
  Let $T:V \to W$ be a linear transformation and $U \subset V$ a subspace such that $U \subset \ker{T}$.
  Then there exists a unique linear transformation $\wtil{T}:V/U \to W$ such that $T = \wtil{T} \circ \pi$, being $\pi$ the canonical projection.
  Besides, the spaces $V/\ker{T}$ and $\im{T}$ are isomorphic.
\end{prop}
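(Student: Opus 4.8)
The plan is to construct $\wtil{T}$ explicitly and then verify the two claims separately. The natural candidate is the map defined on classes by $\wtil{T}([x]) \ceq T(x)$, and the whole proposition hinges on whether this is even a legitimate function. So the first step I would carry out, and the one I expect to be the genuine heart of the argument, is checking that $\wtil{T}$ is \emph{well-defined}: if $[x] = [y]$ in $V/U$, then by the definition of the quotient (recall $x \sim y$ \tiff $x - y \in U$) we have $x - y \in U \subset \ker{T}$, whence $T(x - y) = 0$ and therefore $T(x) = T(y)$. This is precisely where the hypothesis $U \subset \ker{T}$ is consumed, and it is the only subtle point in the proof — everything else is formal.

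Once $\wtil{T}$ is known to be a function, I would check linearity directly from the quotient operations $[x] + [y] \ceq [x + y]$ and $r[x] \ceq [rx]$, computing
\begin{equation}
  \wtil{T}(r[x] + s[y]) = \wtil{T}([rx + sy]) = T(rx + sy) = r\wtil{T}([x]) + s\wtil{T}([y]).
\end{equation}
The factorization $T = \wtil{T} \circ \pi$ then holds by construction, since $(\wtil{T} \circ \pi)(x) = \wtil{T}([x]) = T(x)$. For uniqueness, I would exploit that $\pi$ is surjective: if some linear $S:V/U \to W$ also satisfies $T = S \circ \pi$, then $S([x]) = S(\pi(x)) = T(x) = \wtil{T}([x])$ for every class $[x]$, and as every element of $V/U$ is of this form, $S = \wtil{T}$.

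For the final isomorphism claim I would specialize to $U = \ker{T}$, so that $\wtil{T}:V/\ker{T} \to W$ is available from the first part. Injectivity follows immediately from well-definedness run in reverse: if $\wtil{T}([x]) = 0$ then $T(x) = 0$, so $x \in \ker{T}$ and $[x] = [0]$. Restricting the codomain to $\im{T}$ makes $\wtil{T}$ surjective essentially by the definition of the image, so $\wtil{T}:V/\ker{T} \to \im{T}$ is a linear bijection; invoking the standard fact that the set-theoretic inverse of a bijective linear map is again linear, I conclude that $\wtil{T}$ is an isomorphism and hence $V/\ker{T} \cong \im{T}$. I do not anticipate any obstacle in this last stage — it is a routine reinterpretation of injectivity and surjectivity — so the real work is confined to the well-definedness step flagged above.
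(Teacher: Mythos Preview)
Your proof is correct and is the standard argument for this proposition. The paper itself does not supply a proof here; the Linear Algebra appendix explicitly omits most proofs and refers to \cite{kostrikin1997}, so there is nothing to compare against beyond noting that your construction of $\wtil{T}([x]) \ceq T(x)$, the well-definedness check via $U \subset \ker{T}$, and the specialization $U = \ker{T}$ for the isomorphism are exactly the expected moves.
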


\begin{defi}
  Given a vector space $V$ and a finite subset $S = \qty{x_1, \ldots, x_d} \subset V$, we say a vector $x \in V$ is a \emph{linear combination of $S$}\index{linear!combination} if there exists a subset $\qty{r_1, \ldots, r_d} \subset k$ such that $x = r_1x_1 + \ldots + r_dx_d$.
  The elements $r_1, \ldots, r_d$ are called the \emph{coefficients of the linear combination}.
\end{defi}

\begin{defi}
  Given a vector space $V$ over $k$ and a subset $S \subset V$, we say that the vectors of $S$ are \emph{linearly independent} if for any finite number of elements of $S$, say $x_1, \ldots, x_d \in S$, the only way to write $0 \in V$ as a linear combination $r_1x_1 + \ldots + r_dx_d$ is when $r_1 = \ldots = r_d = 0 \in k$.
\end{defi}

\begin{defi}
  Given a vector space $V$ over $k$ and a subset $S \subset V$, the \emph{space spanned by $S$}\index{space!spanned} is defined by
  \begin{equation}
    \text{span} \, S \ceq \qty{r_1x_1 + \ldots + r_dx_d : r_i \in k, \ x_i \in S, \ n \in \bn}.
  \end{equation}
  If $\text{span} \, S = V$, we say that \emph{$S$ spans $V$}.
  Observe that $\text{span} \, S$ is a subspace of $V$.
\end{defi}

\begin{defi}
  Given a vector space $V$ and a subset $B \subset V$, we say that $B$ is \emph{basis (for $V$)}\index{basis} if $B$ spans $V$ and if its vectors are linearly independent.
\end{defi}

\begin{obs}
  The definition above is also known as \emph{Hamel basis} because there exists another definition of basis called \emph{Schauder basis} in which we allow infinite linear combinations.
  However, considering infinite linear combinations is more difficult because the vector space structure is not enough for letting us sum an infinite amount of vectors, which means we have to consider series instead of just linear combinations.
  Dealing with series, on the other hand, requires a notion of convergence and, to define convergence, we need a topology in the vector space, which gives rise to the concept of \emph{topological vector space}.
  We will not deal with this in here because in practice we only need finite-dimensional vector spaces, as one can see in Section \ref{qm_practice}.
  However, the vector space of Example \ref{l2}, which is, arguably, the most important of Quantum Mechanics, is an infinite-dimensional vector space and much of the mathematical difficulty of this field comes from dealing with the subtleties that shows up in the infinite-dimensional framework.
  The field that studies these type of vector spaces is called \emph{Functional Analysis} and a good reference for it is \cite{rudin1991}.
  Speaking of dimension, let us define it.
\end{obs}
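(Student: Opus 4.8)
The observation is for the most part terminological and expository, so the only part that genuinely calls for an argument is the assertion that $L^2(\br^n)$ (Example \ref{l2}) is infinite-dimensional in the Hamel sense just defined; the remaining claims — the name \emph{Hamel basis}, the existence of the alternative notion of \emph{Schauder basis}, and the remark that infinite linear combinations force one to introduce convergence and hence a topology — are definitions and pointers to the literature rather than statements admitting a proof. The plan is therefore to exhibit an infinite linearly independent subset of $L^2(\br^n)$, since the existence of such a subset immediately rules out any finite spanning set and so shows the dimension cannot be finite.

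First I would construct the family. Choose pairwise disjoint open balls $B_k \ceq B(2k\,e_1, 1) \subset \br^n$ for $k \in \bn$, where $e_1$ is the first canonical vector (the spacing guarantees the balls are disjoint), and set $f_k \ceq \chi_{B_k}$, the characteristic function of $B_k$. Each $f_k$ is bounded and supported on a set of finite measure, so $\intl_{\br^n} \abs{f_k(\vb{r})}^2 \dd\vb{r} = \mathrm{vol}(B_k) < \infty$ and hence $f_k \in L^2(\br^n)$; moreover $f_k$ is not the zero element of $L^2(\br^n)$, because $B_k$ has positive measure.

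Next I would check linear independence. Suppose a finite linear combination $c_1 f_{k_1} + \ldots + c_m f_{k_m}$ equals $0$ in $L^2(\br^n)$, the indices $k_1, \ldots, k_m$ being distinct. Because the supports are pairwise disjoint, on the ball $B_{k_j}$ this combination agrees almost everywhere with the constant $c_j$; the hypothesis that the combination is the zero class then forces $c_j = 0$ almost everywhere on $B_{k_j}$, and since $B_{k_j}$ has positive measure this gives $c_j = 0$. As $j$ was arbitrary, every coefficient vanishes, so $\qty{f_k : k \in \bn}$ is linearly independent.

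With an infinite linearly independent set in hand, no finite subset of $L^2(\br^n)$ can span it, so $L^2(\br^n)$ is infinite-dimensional, which is the claim. The one real subtlety — and the point I would be most careful about — is that $L^2(\br^n)$ is a space of equivalence classes under almost-everywhere equality (as the text already notes when discussing the position operator), so both ``nonzero'' and ``equals $0$'' must be read in the almost-everywhere sense; it is precisely the positive measure of the balls $B_k$ that keeps the argument valid in the quotient. Everything else in the observation is definitional and needs no proof.
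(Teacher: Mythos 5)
Your reading of the statement is exactly right: this Observation is expository, and the paper itself offers no proof of any part of it --- the appendix explicitly omits proofs, and the infinite-dimensionality of $L^2(\br^n)$ is simply asserted in passing. So there is nothing in the paper to compare against; what you have done is supply the one argument the text leaves implicit. Your proof of that claim is correct and standard: the characteristic functions of the disjoint balls $B(2k\,e_1,1)$ lie in $L^2(\br^n)$, are nonzero as equivalence classes because the balls have positive measure, and are linearly independent because restricting a vanishing finite combination to $B_{k_j}$ forces $c_j = 0$ almost everywhere on a set of positive measure; an infinite linearly independent set is incompatible with a finite basis. You are also right to flag the almost-everywhere subtlety as the only genuine pitfall --- the paper itself makes the analogous point when discussing eigenfunctions of the position operator, so your care there is consistent with the text. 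No gaps.
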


\begin{defi}
  Given a vector space $V$ and a basis $B$ for $V$, the \emph{dimension of $V$}\index{dimension} is defined as the cardinality of the set $B$ and we will denote it by $\dim{V}$.
  So, if $B$ is a finite set, we say that $V$ is a \emph{finite-dimensional (vector) space}, otherwise we say it is an \emph{infinite-dimensional (vector) space}.
\end{defi}

\begin{thm}
  Every vector space has a basis and the cardinality is well-defined, \ie, if $B$ and $B'$ are two basis for $V$, then $|B| = |B'|$.
\end{thm}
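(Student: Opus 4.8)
The plan is to prove the two assertions separately: first the existence of a basis for an arbitrary vector space $V$, and then the invariance of cardinality under change of basis.

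For existence I would invoke Zorn's Lemma. Consider the collection $\mc{L}$ of all linearly independent subsets of $V$, partially ordered by inclusion. The key point to verify is that every chain $\mc{C} \subset \mc{L}$ admits an upper bound, and the natural candidate is the union $\bigcup_{S \in \mc{C}} S$. This union is again linearly independent precisely because linear independence is a \emph{finitary} condition: any hypothetical nontrivial vanishing linear combination involves only finitely many vectors, which must all lie in a single member of the chain (chains are totally ordered), contradicting that member's independence. Zorn's Lemma then yields a maximal element $B \in \mc{L}$, and I would finish by checking that $B$ spans $V$: if some $v \in V$ were not in $\text{span} \, B$, then $B \cup \{v\}$ would be linearly independent and strictly larger than $B$, contradicting maximality. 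Hence $B$ is a basis.

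For the cardinality statement I would split into the finite and infinite cases. In the finite case the workhorse is the Steinitz exchange (replacement) lemma, proved by a short induction: if $\{x_1, \ldots, x_m\}$ is linearly independent and $\{y_1, \ldots, y_n\}$ spans $V$, then $m \leq n$. Applying this to two finite bases $B$ and $B'$ with their roles interchanged gives $|B| \leq |B'|$ and $|B'| \leq |B|$, hence equality; the same lemma shows that if one basis is finite then no basis can be infinite, so the two cases genuinely do not overlap.

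The infinite case is where the main obstacle lies, since one can no longer count by induction and must argue with cardinal arithmetic. Assuming $B$ is infinite, each $y \in B'$ is a finite linear combination of elements of $B$, so it has a finite support $F_y \subset B$. I would first establish the covering equality $\bigcup_{y \in B'} F_y = B$: any element of $B$ outside this union would be expressible through $B'$, and thus through the strictly smaller set $\bigcup_{y} F_y$, contradicting the independence of $B$. From the covering one gets $|B| \leq |B'| \cdot \aleph_0 = |B'|$, using that $B'$ is then necessarily infinite together with the absorption law for infinite cardinals; the symmetric argument gives $|B'| \leq |B|$, and the Cantor--Schröder--Bernstein theorem yields $|B| = |B'|$. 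I expect the delicate steps to be justifying the covering equality and cleanly invoking the infinite-cardinal identities, which is why the infinite case is the hard part of the proof.
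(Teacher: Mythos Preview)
Your proof is correct and follows the standard route (Zorn's Lemma for existence, Steinitz exchange in the finite case, support sets plus infinite cardinal arithmetic in the infinite case). The paper, however, does not prove this theorem at all: it appears in Appendix~\ref{linalg}, whose introduction explicitly warns that most results are stated without proof and refers the reader to \cite{kostrikin1997}. So there is nothing to compare against beyond noting that your argument is the textbook one you would find in that reference.
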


\begin{defi} \label{matrix_repr}
  Given a linear transformation $T:V \to W$ between finite-dimensional spaces and basis $B_V = \qty{v_1, \ldots, v_n}$ and $B_W = \qty{w_1, \ldots, w_m}$ for $V$ and $W$, respectively, the \emph{matrix representation of $T$ with respect to the basis $B_V$ and $B_W$}\index{matrix representation} is given by
  \begin{equation}
    [T]_B =
    \begin{bmatrix}
      r_{11} & \ldots & r_{1n} \\
      \vdots & \ddots & \vdots \\
      r_{m1} & \ldots & r_{mn} \\
    \end{bmatrix},
  \end{equation}
  being $Tv_i = r_{1i}w_1 + \ldots + r_{mi}w_m$.
\end{defi}

\begin{obs}
  The matrix representation of a linear transformation depends on the choice of basis, \ie, if $\wtil{B}_V$ and $\wtil{B}_W$ are two different basis for $V$ and $W$, then $[T]_B \neq [T]_{\wtil{B}}$.
\end{obs}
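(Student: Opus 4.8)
The claim as literally stated—that $[T]_B \neq [T]_{\wtil{B}}$ for \emph{every} pair of distinct bases—is not quite correct, since, for instance, the identity operator on $\br^n$ has matrix $\Id_n$ in \emph{every} basis (using the same basis for domain and codomain). The content of the observation is therefore the weaker but genuinely informative statement that the matrix representation is \emph{not} a basis-independent invariant: there exist choices of bases producing distinct matrices, so one cannot speak of $[T]$ without reference to $B_V$ and $B_W$. The plan is to establish this in two complementary ways: first by exhibiting a minimal explicit example that witnesses $[T]_B \neq [T]_{\wtil{B}}$, and then by recording the general change-of-basis formula that governs how the representation transforms.

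For the example, the simplest witness is one-dimensional. I would take $V = W = \br$ and let $T \ceq \id_{\br}$. With $B_V = B_W = \qty{1}$ we have $T(1) = 1 = 1 \cdot 1$, so $[T]_B = [1]$ by Definition \ref{matrix_repr}. Now keep $\wtil{B}_W = \qty{1}$ but take $\wtil{B}_V = \qty{2}$; then $T(2) = 2 = 2 \cdot 1$, so the single column of $[T]_{\wtil{B}}$ is $[2]$, giving $[T]_{\wtil{B}} = [2] \neq [1] = [T]_B$. This already proves the observation, and the same phenomenon persists in every dimension, since rescaling one vector of $B_V$ rescales the corresponding column of the matrix.

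For the structural explanation I would derive the change-of-basis formula directly from Definition \ref{matrix_repr}. Write $P$ for the matrix whose $j$-th column lists the coordinates of the $j$-th vector of $\wtil{B}_V$ in the basis $B_V$, and $Q$ for the analogous matrix relating $\wtil{B}_W$ to $B_W$; both are invertible, as they send one basis to another. Expanding $T\wtil{v}_j$ first in terms of the $v_i$ and then re-expressing the $w_i$ through $Q^{-1}$ in the basis $\wtil{B}_W$ yields
\begin{equation}
  [T]_{\wtil{B}} = Q^{-1}[T]_B P.
\end{equation}
Since $P$ and $Q$ need not equal the identity, multiplication by them generally alters the matrix, which is exactly the asserted dependence. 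The main obstacle will be purely expository rather than mathematical: one must phrase the observation honestly as ``the representation depends on the basis'' (\ie, is generally non-invariant) rather than as the literally false universal inequality, after which the two-part argument above closes the gap.
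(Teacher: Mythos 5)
The paper states this observation without any proof (as announced at the start of Appendix \ref{linalg}, most results there are left unproven), so there is no argument of the paper's to compare yours against; your proposal supplies a justification where the paper gives none, and it is correct. Your preliminary caveat is also well taken: read as a universal inequality over all pairs of distinct bases, the claim is false — the identity operator on $\br^n$, represented with the same basis used for domain and codomain, has matrix $\Id_n$ no matter which basis is chosen — so the honest content is non-invariance, exactly as you phrase it. Your one-dimensional witness ($T = \id_{\br}$, $B_V = \qty{1}$ versus $\wtil{B}_V = \qty{2}$, codomain basis fixed) conforms to Definition \ref{matrix_repr} and settles the repaired statement, and your change-of-basis formula $[T]_{\wtil{B}} = Q^{-1}[T]_B P$ is the standard structural explanation: writing $\wtil{v}_j = \sum_i P_{ij}v_i$ and $\wtil{w}_j = \sum_k Q_{kj}w_k$, one gets
\begin{equation}
  T\wtil{v}_j = \sum_k \qty([T]_B P)_{kj} w_k
  = \sum_l \qty(Q^{-1}[T]_B P)_{lj} \wtil{w}_l,
\end{equation}
confirming your claim. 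The only thing you might add for completeness is the remark that $P$ and $Q$ are invertible precisely because each expresses one basis in terms of another, but this is routine; nothing in your argument has a gap.
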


\begin{obs} \label{transform_span}
  A very common procedure used in Linear Algebra is to define a linear transformation by telling how it acts in a set that spans the vector space.
  Example: let $T:V \to W$ be a linear transformation and $S_V$ be a set that spans $V$.
  Then, if we define or we know that $T(x_i) = y_i$ for $x_i \in S_V$, the linear transformation is automatically given by
  \begin{equation}
    T(r_1x_1 + \ldots r_dx_d) = r_1T(x_1) + \ldots + r_dT(x_d).
  \end{equation}
\end{obs}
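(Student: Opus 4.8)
The plan is to read the observation as a determination statement and unpack it directly from the defining property of a linear transformation, $T(rx + sy) = rT(x) + sT(y)$, extended by induction to finitely many summands. Since $S_V$ spans $V$, every $v \in V$ can be written as a finite linear combination $v = r_1 x_1 + \ldots + r_d x_d$ with each $x_i \in S_V$ and $r_i \in k$ — this is exactly what $\text{span} \, S_V = V$ asserts. The goal is then to show
\[
  T(v) = r_1 T(x_1) + \ldots + r_d T(x_d) = r_1 y_1 + \ldots + r_d y_d,
\]
which exhibits $T(v)$ as completely determined by the prescribed values $y_i = T(x_i)$, and hence that $T$ is fixed on all of $V$ once it is known on the spanning set.

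First I would record the auxiliary fact $T(0) = 0$, immediate from $T(0) = T(0 + 0) = T(0) + T(0)$, and the single-term rule $T(rx) = rT(x)$, obtained from the two-term axiom by taking the second coefficient to be $0$. Then I would prove the finite-linearity lemma by induction on $d$: the case $d = 2$ is the axiom itself, and for the inductive step I would split
\[
  r_1 x_1 + \ldots + r_d x_d = (r_1 x_1 + \ldots + r_{d-1} x_{d-1}) + r_d x_d,
\]
apply the two-term axiom with both coefficients equal to $1$, and then invoke the induction hypothesis on the first block together with the single-term rule on $r_d x_d$. Applying this lemma to any chosen representation of $v$ yields the displayed formula, completing the determination argument.

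The main point that deserves care is not the algebra but \emph{well-definedness}. Because $S_V$ is only assumed to span $V$ and need not be linearly independent, a given $v$ generally admits several representations as linear combinations of elements of $S_V$, so for the formula to define $T(v)$ unambiguously the quantity $r_1 y_1 + \ldots + r_d y_d$ must agree across all representations. In the situation actually stated — where $T$ is already a linear transformation — this is free, since every representation computes the same vector $T(v)$; the formula merely records how to evaluate $T$. The hard part arises only in the converse \textbf{define} reading, where one prescribes the $y_i$ freely and hopes to build a linear $T$: there the values must be compatible with every linear relation among the $x_i$, which is precisely why one normally passes to a basis, whose linear independence eliminates all such relations and permits arbitrary assignment of values. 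I would flag this distinction explicitly so the spanning hypothesis is not mistaken for a licence to prescribe values without constraint.
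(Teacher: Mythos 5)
Your proof is correct. The paper offers no proof of this observation at all --- the Linear Algebra appendix explicitly leaves its results unproven --- so there is no paper argument to compare against; your induction from the two-term axiom $T(rx+sy)=rT(x)+sT(y)$, together with the auxiliary facts $T(0)=0$ and $T(rx)=rT(x)$, is the standard and complete route. Your well-definedness discussion is the most valuable part: the paper's phrasing (``\emph{define} a linear transformation by telling how it acts in a set that spans'') is genuinely loose, since for a spanning set that is not linearly independent one cannot prescribe the values $y_i$ freely --- they must respect every linear relation among the $x_i$, and only passing to a basis removes that constraint. You correctly separate the two readings (evaluation of an already-given $T$ versus construction of a new one), which is exactly the distinction the paper's observation elides; note that in its one actual use (the free vector space construction in the tensor-product existence proof), the paper does define maps on the formal generators $\delta_{(x_1,\ldots,x_n)}$, which are independent, so the construction reading is legitimate there.
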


\section{Hilbert spaces}

From now on we will assume that $k \in \qty{\br, \bc}$.

\begin{defi}
  Let $V$ be a vector space over $k$.
  A function $\braket{\cdot}:V \x V \to k$ is called an \emph{inner product}\index{inner product} if it satisfies the following axioms for every $x, y, z \in V$ and $r, s \in k$:
  \begin{enumerate}

  \item $\braket{x}{ry + sz} = r\braket{x}{y} + s\braket{x}{z}$.

  \item $\braket{x}{y} = \ol{\braket{y}{x}}$, if $k = \bc$, and $\braket{x}{y} = \braket{y}{x}$, if $k = \br$.

  \item $\braket{x} > 0$, if $x \neq 0$.

  \end{enumerate}
\end{defi}

\begin{exm}
  The function $\braket{(x_1,\ldots,x_d)}{(y_1,\ldots,y_d)} \ceq x_1y_1 + \ldots + x_dy_d$ is an inner product on $\br^d$ and the function $\braket{(x_1,\ldots,x_d)}{(y_1,\ldots,y_d)} \ceq \ol{x_1}y_1 + \ldots + \ol{x_d}y_d$ is an inner product on $\bc^d$.
\end{exm}

\begin{exm}
  The function $\braket{\psi}{\phi} \ceq \int_{\br^n} \ol{\psi(\vb{r})}\phi(\vb{r})\dd{\vb{r}}$ is an inner product on $L^2(\br^n)$.
  Observe that $\braket{\psi} \ceq \int_{\br^n} \ol{\psi(\vb{r})}\psi(\vb{r})\dd{\vb{r}} = \int_{\br^n} \abs{\psi(\vb{r})}^2\dd{\vb{r}}$.
\end{exm}

\begin{defi}
  Given a vector space $V$ and a basis $B$, we say that $B$ is an \emph{orthonormal basis}\index{basis!orthonormal} if $\braket{b_i}{b_j} = \delta_{ij}$, being $\delta_{ij}$ the Kronecker delta, \ie, $\delta_{ij} = 1$ if $i = j$ and $\delta_{ij} = 0$ otherwise.
\end{defi}

\begin{defi}
  Given a vector space with inner product $(V, \braket{\cdot}_V)$ and a subspace $W$, we define the \emph{orthogonal complement of $W$}\index{orthogonal!complement} to be the following subspace of $V$:
  \begin{equation}
    W^{\perp} \ceq \qty{v \in V : \text{$\braket{v}{w}_V = 0$ for every $w \in W$}}.
  \end{equation}
\end{defi}

\begin{defi}
  Given a vector space with inner product $(V, \braket{\cdot}_V)$ and a subspace $W$, we define the \emph{orthogonal projection onto $W$}\index{orthogonal!projection} to be the unique linear transformation $\proj_W:V \to V$ that satisfies the following axioms:
  \begin{enumerate}

  \item $\im(\proj_W) = W$.

  \item $\proj_W \circ \proj_W = \proj_W$.

  \item $\braket{v - \proj_W(v)}{w}_V = 0$ for every $v \in V$ and $w \in W$.

  \end{enumerate}
\end{defi}

\begin{defi}
  Given a vector space with inner product $(V, \braket{\cdot}_V)$ and a basis $B = \qty{v_1, \ldots, v_d}$ for $V$, we define the \emph{Gram matrix of $B$}\index{Gram matrix} to be the following matrix: $G_B \ceq \qty(\braket{v_i}{v_j}_V)_{i,j}$.
\end{defi}

\begin{defi}
  Given a vector space $V$ over $k$, a \emph{norm on $V$}\index{norm} is a function $\norm{\cdot}:V \to \br$ that satisfies the following axioms for every $x, y \in V$ and $r \in k$:
  \begin{enumerate}
    
  \item $\norm{x + y} \leq \norm{x} + \norm{y}$.

  \item $\norm{rx} = \abs{r}\norm{x}$.

  \item If $\norm{x} = 0$, then $x = 0$.

  \end{enumerate}
  The first axiom is called \emph{triangle inequality} and vector spaces with norm are called \emph{normed vector space}.
\end{defi}

\begin{exm}
  $\br^d$ and $\bc^d$ are normed spaces and the most common norms used are:
  \begin{enumerate}
    
  \item $\norm{(x_1, \ldots, x_d)}_{1} \ceq \abs{x_1} + \ldots + \abs{x_d}$.

  \item $\norm{(x_1, \ldots, x_d)}_{2} \ceq \sqrt{x_1^2 + \ldots + x_d^2}$.

  \item $\norm{(x_1, \ldots, x_d)}_{p} \ceq \qty(\abs{x_1}^p + \ldots + \abs{x_d}^p)^{\frac{1}{p}}$, $p \in \br$, $p \geq 1$.

  \item $\norm{(x_1, \ldots, x_d)}_{\infty} \ceq \max\qty{\abs{x_i} : i = 1, \ldots, d}$.

  \end{enumerate}
\end{exm}

\begin{exm} \label{norms_equivalent}
  Consider the following definition: two norms $\norm{\cdot}_1$ and $\norm{\cdot}_2$ are equivalent if there exists constants $A, B > 0$ such that $A\norm{x}_1 \leq \norm{x}_2 \leq B\norm{x}_1$ for every $x \in V$.
  Generalizing the previous example, there is a very strong result that says that every two norms in a finite-dimensional vector space are equivalent.
  A proof of this result can be found in \cite{kconradnorm}.
\end{exm}

\begin{prop}
  Given a vector space $V$ with inner product $\braket{\cdot}$, the function $\norm{x} \ceq \sqrt{\braket{x}}$ is a norm on $V$.
  This norm is called \emph{norm induced by the inner product}.
\end{prop}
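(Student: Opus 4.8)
The plan is to verify, in turn, the three defining axioms of a norm, handling them in increasing order of difficulty. Throughout I would use only that the inner product is linear in its second argument and conjugate-symmetric, hence conjugate-linear in its first argument, together with positive-definiteness; no other property of $\braket{\cdot}$ is needed. Recall that $\braket{x}$ abbreviates $\braket{x}{x}$, which is real and nonnegative, so $\norm{x} = \sqrt{\braket{x}}$ denotes the nonnegative square root and is well-defined.

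The definiteness axiom is immediate: if $\norm{x} = \sqrt{\braket{x}} = 0$, then $\braket{x} = 0$, and the contrapositive of positive-definiteness ($\braket{x} > 0$ whenever $x \neq 0$) forces $x = 0$. For homogeneity, I would expand $\norm{rx}^2 = \braket{rx}{rx}$ using conjugate-linearity in the first slot and linearity in the second, obtaining $\ol{r}r\braket{x} = \abs{r}^2\braket{x}$; taking nonnegative square roots gives $\norm{rx} = \abs{r}\norm{x}$.

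The triangle inequality is the crux and it rests on the Cauchy--Schwarz inequality $\abs{\braket{x}{y}} \leq \norm{x}\norm{y}$, which the text has not established and which I would therefore prove first. The standard argument runs as follows: the claim is trivial if $y = 0$, so assume $y \neq 0$ and set $\lambda \ceq \braket{y}{x}/\braket{y}$. Expanding the inequality $0 \leq \braket{x - \lambda y}{x - \lambda y}$ and simplifying (using $\braket{y}{x} = \ol{\braket{x}{y}}$) collapses the cross terms and yields $\braket{x} \geq \abs{\braket{x}{y}}^2/\braket{y}$, which is precisely Cauchy--Schwarz after clearing denominators and taking square roots. Granting this, I would compute
\begin{equation}
  \norm{x + y}^2 = \braket{x} + \braket{x}{y} + \braket{y}{x} + \braket{y} = \norm{x}^2 + 2\re\braket{x}{y} + \norm{y}^2,
\end{equation}
then bound $\re\braket{x}{y} \leq \abs{\braket{x}{y}} \leq \norm{x}\norm{y}$ by Cauchy--Schwarz, so that $\norm{x+y}^2 \leq \norm{x}^2 + 2\norm{x}\norm{y} + \norm{y}^2 = (\norm{x} + \norm{y})^2$; taking square roots completes the proof.

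The only genuine obstacle is Cauchy--Schwarz itself, and within it the one subtlety is the conjugate bookkeeping under this paper's convention (linearity in the \emph{second} argument): I would be careful to track which slot carries the conjugate when expanding $\braket{x - \lambda y}{x - \lambda y}$ and when verifying $\braket{x}{y} + \braket{y}{x} = 2\re\braket{x}{y}$. The real case $k = \br$ is the identical computation with all conjugations suppressed, so it needs no separate treatment.
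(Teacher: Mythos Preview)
Your argument is correct and is exactly the standard route: verify definiteness and absolute homogeneity directly from the inner-product axioms, then establish Cauchy--Schwarz and deduce the triangle inequality from it. The conjugate bookkeeping under the paper's second-slot-linear convention is handled correctly.

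Note, however, that the paper does not actually supply its own proof of this proposition. It sits in the Linear Algebra appendix, whose preamble explicitly says that most results there are stated without proof and refers the reader to \cite{kostrikin1997}. So there is no ``paper's proof'' to compare against; your write-up simply fills in the omitted standard argument.
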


The reader can see the definition and some basic properties of metric spaces in Section \ref{metric_spaces}.

\begin{prop}
  Given a normed vector space $(V, \norm{\cdot})$, the function $d:V \x V \to \br$ defined by $d(x, y) \ceq \norm{x-y}$ is a metric on $V$.
  This metric is called \emph{metric induced by the norm}.
\end{prop}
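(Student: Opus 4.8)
The plan is to verify directly that $d(x,y) \ceq \norm{x-y}$ satisfies the three metric axioms from the definition of a metric space in Section \ref{metric_spaces}, deriving each one from the corresponding axiom of the norm. The only preliminary fact I would need is that $\norm{0} = 0$: this follows from the second norm axiom applied with the scalar $r = 0$, since $\norm{0} = \norm{0 \cdot x} = \abs{0}\norm{x} = 0$.

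First I would handle the identity axiom ($d(x,y) = 0$ \tiff $x = y$). If $x = y$, then $d(x,y) = \norm{x - y} = \norm{0} = 0$ by the preliminary fact. Conversely, if $d(x,y) = \norm{x-y} = 0$, then the third norm axiom gives $x - y = 0$, hence $x = y$. Next I would verify symmetry: using the second norm axiom with $r = -1$, we have $d(y,x) = \norm{y - x} = \norm{-(x-y)} = \abs{-1}\norm{x-y} = \norm{x-y} = d(x,y)$. Finally, the triangle inequality for the metric follows immediately from the triangle inequality for the norm (the first norm axiom): writing $x - z = (x - y) + (y - z)$, we obtain
\begin{equation}
  d(x,z) = \norm{x - z} = \norm{(x-y) + (y-z)} \leq \norm{x-y} + \norm{y-z} = d(x,y) + d(y,z).
\end{equation}

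Since all three computations are one-line consequences of the norm axioms, there is no real obstacle here; the proof is essentially a bookkeeping exercise matching each metric axiom to its normed-space counterpart. The only point worth stating carefully is the auxiliary identity $\norm{0} = 0$, as it is used implicitly in the identity axiom but is not listed among the norm axioms themselves.
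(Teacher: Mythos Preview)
Your proof is correct and complete; each metric axiom is derived cleanly from the corresponding norm axiom, and you were right to isolate the auxiliary fact $\norm{0}=0$. The paper itself does not prove this proposition (it is one of the appendix results stated without proof and delegated to the references), so there is nothing to compare against.
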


\begin{cor}
  Every vector space with inner product is a metric space.
\end{cor}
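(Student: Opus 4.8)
The plan is to obtain the metric by chaining the two immediately preceding propositions, since a metric structure on an inner product space arises by first passing to the induced norm and then to the metric induced by that norm. No new machinery is needed; the entire content of the corollary is bookkeeping of hypotheses.

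First I would take an arbitrary inner product space $(V, \braket{\cdot})$ and invoke the penultimate proposition to conclude that $\norm{x} \ceq \sqrt{\braket{x}}$ is a genuine norm on $V$, so that $(V, \norm{\cdot})$ is a normed vector space. Then I would feed this normed space into the previous proposition, which guarantees that $d(x, y) \ceq \norm{x - y}$ is a metric on $V$. Composing these two facts yields that $(V, d)$ is a metric space, which is exactly the claim. To make the construction fully explicit I would also record the closed form of the resulting distance directly in terms of the inner product, namely
\begin{equation*}
  d(x, y) = \norm{x - y} = \sqrt{\braket{x - y}},
\end{equation*}
so the reader can see the metric without having to unwind the two propositions.

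There is essentially no obstacle here: the corollary is a direct composition of results already established, and all of the axiom-checking (triangle inequality, symmetry, positivity) has already been discharged inside the two propositions it relies on. The only point worth stating carefully is that the hypotheses line up cleanly — the output of the norm proposition is precisely the normed-space input required by the metric proposition — so that no additional verification is required.
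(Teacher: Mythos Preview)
Your proposal is correct and matches the paper's own proof exactly: the paper simply says to take the norm induced by the inner product and apply the preceding proposition to obtain the induced metric. Your write-up is more detailed, but the argument is identical.
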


\begin{proof}
  Just consider the norm induced by the inner product and use the previous proposition.
\end{proof}

The definition of a complete metric space is at \ref{complete_space}.

\begin{defi} \label{hilbert_space}
  A \emph{Hilbert space}\index{space!Hilbert}\index{Hilbert space} is a vector space equipped with an inner product that is also a complete metric space with the metric induced by the inner product.
\end{defi}

\begin{defi}
  Given a linear transformation $T:(V, \braket{\cdot}_V) \to (W, \braket{\cdot}_W)$, we say that a function $T^*:W \to V$ is an \emph{adjoint of $T$}\index{adjoint} if $\braket{Tx}{y}_W = \braket{x}{T^*y}_V$ for every $x \in V$ and $y \in W$.
\end{defi}

\begin{prop}
  The adjoint of $T$ is actually a linear transformation.
\end{prop}

\begin{defi}
  We say that an operator $T:V \to V$ is \emph{self-adjoint}\index{self-adjoint} if $T = T^*$.
\end{defi}

\begin{prop}
  Every eigenvalue of a self-adjoint operator is real.
\end{prop}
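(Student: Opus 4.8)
The plan is to exploit the defining relation of a self-adjoint operator together with the conjugate-symmetry axiom of the inner product. First I would fix an eigenvalue $\lambda$ of a self-adjoint operator $T:V \to V$ and choose an associated eigenvector $x$, so that $Tx = \lambda{x}$ with $x \neq 0$ by the definition of eigenvector. The whole argument then revolves around computing a single scalar in two different ways.

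The key step is to evaluate $\braket{Tx}{x}$ twice. On one hand, since $T$ is self-adjoint ($T = T^*$), the defining property of the adjoint gives $\braket{Tx}{x} = \braket{x}{Tx}$. On the other hand, substituting $Tx = \lambda{x}$ into each side and using how the inner product behaves in each slot, the left-hand side becomes $\ol{\lambda}\braket{x}{x}$ and the right-hand side becomes $\lambda\braket{x}{x}$. The one subtlety worth emphasising is the convention fixed in this appendix: the inner product is linear in the \emph{second} argument and conjugate-symmetric, so extracting $\lambda$ out of the first slot produces its conjugate $\ol{\lambda}$, whereas extracting it from the second slot leaves it untouched. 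This is where a sign-or-conjugate bookkeeping error would creep in if one is not careful.

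Equating the two expressions yields $\ol{\lambda}\braket{x}{x} = \lambda\braket{x}{x}$. Because $x \neq 0$, the third inner-product axiom guarantees $\braket{x} > 0$, so I can divide through by this nonzero scalar to obtain $\ol{\lambda} = \lambda$, which says precisely that $\lambda \in \br$. There is no real obstacle here: the proof is a two-line computation, and the only thing demanding attention is keeping track of which slot the eigenvalue is pulled out of and whether it is conjugated. This is exactly the fact already invoked in the main text (see the discussion around Equation \ref{from_complex_to_real}) to justify restricting the search for the ground state to real-valued wave functions.
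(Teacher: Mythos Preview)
Your argument is correct and matches the paper's proof essentially line for line: both compute $\braket{Tx}{x} = \braket{x}{Tx}$ via self-adjointness, pull the eigenvalue out of each slot to get $\ol{\lambda}\braket{x}{x} = \lambda\braket{x}{x}$, and divide by the nonzero $\braket{x}{x}$. The paper just writes this as a single chain of equalities using $\norm{x}^2$ in place of $\braket{x}{x}$.
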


\begin{proof}
  Let $T:V \to V$ be a self-adjoint operator and $x$ be an eigenvector of $T$ with eigenvalue $\lambda$.
  Then notice that
  \begin{equation}
    \lambda\norm{x}^2
    = \braket{x}{\lambda x}
    = \braket{x}{Tx}
    = \braket{Tx}{x}
    = \braket{\lambda x}{x}
    = \ol{\lambda}\norm{x}^2
  \end{equation}
  and, consequently, $\lambda = \ol{\lambda}$, as desired.
\end{proof}

Now let us explain the terminology and notation usually used in Quantum Mechanics (QM).
First of all, we are adopting the convention that inner products are linear in the second argument because then we can use the notation $\bra{x}$ for the linear functional that sends $y \in V$ to $\braket{x}{y} \in k$.
This is quite useful and in the QM literature people usually call $\bra{x}$ as \emph{bra}\index{bra}.
Now, there is a subtle misconception with respect to the $\ket{x}$ notation.
This is called \emph{ket}\index{ket} in the literature and it is meant to represent a \emph{quantum state} (see beginning of Chapter \ref{qm} for the definition of state).
States are elements of $\bp{V}$, \ie, the projective space of $V$ (Example \ref{projective}), but it is very common to find people referring to $\ket{x}$ as an element of $V$ itself in the literature.
We are differentiating these two notations ($x$ and $\ket{x}$) in Chapter \ref{qm} and kets to us are elements of $\bp{V}$ instead of $V$.
Another notation we are adopting from QM is $\mel{x}{T}{y}$ when $T$ is a self-adjoint operator on $V$.
The definition is $\mel{x}{T}{y} \ceq \braket{Tx}{y} = \braket{x}{Ty}$ and this is usually called \emph{matrix element}\index{matrix element} when $x \neq y$ and \emph{expected value (of $T$ with respect to $x$)} when $x = y$.

\section{Multilinear Algebra} \label{multilinear_algebra}

We will keep assuming that $k \in \qty{\br, \bc}$.

\begin{defi}
  Given vector spaces $V_1, \ldots, V_n, V$ over the same field $k$, a function $M:V_1 \x \ldots \x V_n \to V$ is called \emph{$n$-multilinear} or just \emph{multilinear} if $M$ is linear in each entry, \ie,
  \begin{equation}
    M(x_1, \ldots, rx_i + sy_i, \ldots, x_n)
    = rM(x_1, \ldots, x_i, \ldots, x_n) + sM(x_1, \ldots, y_i, \ldots, x_n)
  \end{equation}
  for every $r, s \in k$, $x_j \in V_j$, $y_i \in V_i$ and $i,j = 1, \ldots, n$.
  When $n = 1$ we usually say the function is \emph{linear}, as we already defined, when $n = 2$ we say the function is \emph{bilinear}\index{bilinear}, and when $n = 3$ we say the function is \emph{trilinear}\index{trilinear}.
\end{defi}

\begin{defi}
  Given vector spaces $V_1, \ldots, V_n$ over the same field $k$, the \emph{tensor product (of $V_1, \ldots, V_n$)}\index{tensor!product} is a pair $(\tenw{V_1}{V_n}, \ T:V_1 \x \ldots \x V_n \to \tenw{V_1}{V_n})$, being $\tenw{V_1}{V_n}$ a vector space and $T$ a multilinear function, such that the following (universal) property holds: given any vector space $V$ and any multilinear function $M:V_1 \x \ldots \x V_n \to V$, there exists a unique \emph{linear} transformation $L:\tenw{V_1}{V_n} \to V$ such that $M = L \circ T$.
  This can be represented diagrammatically as
  \begin{center}
    \begin{tikzcd}
      & \tenw{V_1}{V_n} \arrow[dashed]{dr}[above]{L} & \\
      V_1 \x \ldots \x V_n \arrow{ur}[above]{T} \arrow{rr}[below]{M} & & V
    \end{tikzcd}
  \end{center}
  When $V_1 = \ldots = V_n$, we say that $\tenw{V_1}{V_1}$ is the \emph{$n$-th tensor power of $V_1$}\index{tensor!power}.
\end{defi}

\begin{prop}
  The tensor product exists.
\end{prop}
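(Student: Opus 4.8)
The plan is to construct the tensor product explicitly as a quotient of a free vector space and then deduce the universal property from the Universal Property of the Quotient (Proposition \ref{univ_quotient}). So the existence proof amounts to exhibiting one concrete pair $(\tenw{V_1}{V_n}, T)$ that works.

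First I would build the \emph{free vector space} $F$ on the set $V_1 \x \ldots \x V_n$, that is, the vector space over $k$ whose basis is indexed by the tuples $(x_1, \ldots, x_n)$ themselves (concretely, the finitely-supported functions $V_1 \x \ldots \x V_n \to k$). I would denote the basis element attached to $(x_1, \ldots, x_n)$ by the same symbol. At this stage no relations among distinct tuples hold, so multilinearity is certainly \emph{not} yet present. To impose it, let $R \subset F$ be the subspace spanned by all elements of the form
\begin{equation*}
(x_1, \ldots, rx_i + sy_i, \ldots, x_n) - r(x_1, \ldots, x_i, \ldots, x_n) - s(x_1, \ldots, y_i, \ldots, x_n),
\end{equation*}
ranging over all $i$, all $r, s \in k$, and all admissible vectors. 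I would then define $\tenw{V_1}{V_n} \ceq F/R$ and let $T$ send $(x_1, \ldots, x_n)$ to the class of its basis element, written $\tenw{x_1}{x_n}$. Since the generators of $R$ vanish in the quotient, $T$ is multilinear by construction; this is precisely the payoff of choosing $R$ as above.

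The remaining and most delicate step is verifying the universal property. Given a multilinear $M:V_1 \x \ldots \x V_n \to V$, defining a linear map on the basis of $F$ (Observation \ref{transform_span}) yields a unique linear $\wtil{M}:F \to V$ with $\wtil{M}(x_1, \ldots, x_n) = M(x_1, \ldots, x_n)$ on basis elements. Because $M$ is multilinear, $\wtil{M}$ annihilates every generator of $R$, hence $R \subset \ker{\wtil{M}}$, so Proposition \ref{univ_quotient} factors $\wtil{M}$ uniquely as $\wtil{M} = L \circ \pi$ for a linear $L:F/R \to V$, where $\pi$ is the canonical projection. Chasing definitions gives $L(\tenw{x_1}{x_n}) = M(x_1, \ldots, x_n)$, i.e. $M = L \circ T$, which settles existence of $L$.

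For the tensor-product version of \emph{uniqueness} I would note that it is a slightly weaker demand than the one Proposition \ref{univ_quotient} settles, so one extra observation is needed: the elementary tensors $\tenw{x_1}{x_n}$ span $F/R$ (their preimages span $F$ and $\pi$ is surjective). Any linear $L'$ with $M = L' \circ T$ must agree with $L$ on all elementary tensors, hence on a spanning set, hence everywhere. I expect the main obstacle to be the careful bookkeeping that $R \subset \ker{\wtil{M}}$ and that the elementary tensors span the quotient; both are routine but are exactly the places where multilinearity and the definition of $R$ must be used, and they are what make the construction genuinely the tensor product rather than an arbitrary quotient.
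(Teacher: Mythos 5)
Your construction is exactly the one the paper uses: the free vector space on $V_1 \x \ldots \x V_n$, quotiented by the subspace spanned by the multilinearity relations, with the universal property deduced by extending $M$ linearly to the free space and factoring through the quotient via Proposition \ref{univ_quotient}. Your closing remark that uniqueness of $L$ requires the elementary tensors to span the quotient is a small but worthwhile elaboration of a point the paper leaves implicit; otherwise the two arguments coincide.
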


\begin{proof}
  Let $F(V_1 \x \ldots \x V_n)$ be the free vector space on $V_1 \x \ldots \x V_n$: the elements of this space are spanned by the formal symbols $\delta_{(x_1, \ldots, x_n)}$.
  The sum and the scalar product are also defined formally, \ie, the sum of $\delta_{(x_1, \ldots, x_n)}$ with $\delta_{(y_1, \ldots, y_n)}$ is another formal symbol denoted by $\delta_{(x_1, \ldots, x_n)} + \delta_{(y_1, \ldots, y_n)}$ and the product of $\delta_{(x_1, \ldots, x_n)}$ by $r \in k$ is the formal symbol $r\delta_{(x_1, \ldots, x_n)}$.
  Now, let $D$ be the subspace spanned by the vectors
  \begin{equation}
    \begin{matrix}
      \delta_{(x_1, \ldots, rx_i + sy_i, \ldots, x_n)} - r\delta_{(x_1, \ldots, x_i, \ldots, x_n)} - s\delta_{(x_1, \ldots, y_i, \ldots, x_n)}
    \end{matrix}
  \end{equation}
  for every $r, s \in k$, $x_j \in V_j$, $y_i \in V_i$, $i, j = 1, \ldots, n$.
  Then, we define the tensor product to be the quotient $\tenw{V_1}{V_n} \ceq F(V_1 \x \ldots \x V_n) / D$ endowed with the multilinear function $T:V_1 \x \ldots \x V_n \to F(V_1 \x \ldots \x V_n) / D$ given by $T(x_1, \ldots, x_n) = [\delta_{(x_1, \ldots, x_n)}]$.

  Let us prove that this space satisfies the universal property.
  Given a vector space $V$ and a multilinear function $M:V_1 \x \ldots \x V_n \to V$, we need to prove that there exists a unique linear function $L:\tenw{V_1}{V_n} \to V$ such that $M = L \circ T$.
  First notice that we can extend $M$ to a linear function $F(M):F(V_1 \x \ldots \x V_n) \to V$ defined in the generators as $F(M)(\delta_{(x_1, \ldots, x_n)}) = M(x_1, \ldots, x_n)$.
  It is also clear that, by the definition of $F(M)$, we have $D \subset \ker{F(M)}$.
  Consequently, by the Universal Property of the Quotient (\ref{univ_quotient}), there exists a unique linear map $L:\tenw{V_1}{V_n} \to V$ such that $L(T(\delta_{(x_1, \ldots, x_n)})) = L([\delta_{(x_1, \ldots, x_n)}]) = F(M)(\delta_{(x_1, \ldots, x_n)}) = M(x_1, \ldots, x_n)$.
  That is it, the result follows.
\end{proof}

\begin{prop}
  The tensor product is unique up to isomorphism.
  In other words, if $(C_1, T_1)$ and $(C_2, T_2)$ satisfies the universal property of being the tensor product of $V_1, \ldots, V_n$, then $C_1$ and $C_2$ are isomorphic.
\end{prop}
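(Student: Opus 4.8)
The plan is to run the standard universal-property uniqueness argument, playing the two candidate tensor products off against each other. First I would feed the multilinear map $T_2$ into the universal property of $(C_1, T_1)$: since $T_2:V_1 \x \ldots \x V_n \to C_2$ is multilinear, there is a unique linear map $L_1:C_1 \to C_2$ with $T_2 = L_1 \circ T_1$. Symmetrically, feeding $T_1$ into the universal property of $(C_2, T_2)$ yields a unique linear map $L_2:C_2 \to C_1$ with $T_1 = L_2 \circ T_2$. These $L_1$ and $L_2$ are the candidate mutually inverse isomorphisms, and the rest of the proof is devoted to showing they really are inverse to one another.

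The crux is to show that $L_2 \circ L_1 = \id_{C_1}$ and $L_1 \circ L_2 = \id_{C_2}$. Substituting one relation into the other gives $T_1 = L_2 \circ T_2 = L_2 \circ L_1 \circ T_1$, so the linear endomorphism $L_2 \circ L_1$ of $C_1$ satisfies $(L_2 \circ L_1) \circ T_1 = T_1$. But the identity $\id_{C_1}$ trivially satisfies $\id_{C_1} \circ T_1 = T_1$ as well. At this point I would apply the uniqueness clause in the universal property of $(C_1, T_1)$ once more — this time to the multilinear map $T_1$ itself as target — which forces any linear map $g:C_1 \to C_1$ with $g \circ T_1 = T_1$ to coincide with the unique factoring map, hence $L_2 \circ L_1 = \id_{C_1}$. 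Swapping the roles of $C_1$ and $C_2$ gives $L_1 \circ L_2 = \id_{C_2}$ by the identical reasoning.

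With both composites equal to the respective identities, $L_1$ is a linear bijection whose inverse $L_2$ is also linear, i.e.\ an isomorphism, so $C_1 \cong C_2$, as claimed. The only point requiring care — and the one most easily glossed over — is precisely this self-referential use of uniqueness: one must remember that the universal property guarantees a \emph{unique} factoring map for \emph{every} multilinear target, including the tautological choice landing back in $C_1$ via $T_1$. There is no genuine obstacle and no computation here; the entire argument is a formal diagram chase, and the result is in fact a special case of the general categorical fact that objects satisfying a universal property are unique up to unique isomorphism.
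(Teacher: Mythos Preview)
Your argument is correct and is exactly the standard universal-property uniqueness proof. The paper actually omits a proof of this proposition (in line with its stated policy of leaving most results in this appendix unproved), so there is nothing to compare against; your write-up would serve perfectly well as the missing proof.
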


\begin{obs}
  We usually say that $\tenw{V_1}{V_n}$ is \emph{the} tensor product of $V_1, \ldots, V_n$, but any space isomorphic to the quotient we constructed above can represent the tensor product.
  A good practice is to not use any specific construction of the tensor product (there are many) and consider only the universal property instead.
  However, this universal property does not hold topologically when the space is infinite-dimensional, only algebraically \cite{garrett2010}.
\end{obs}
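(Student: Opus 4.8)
The plan is to run the standard uniqueness-up-to-isomorphism argument that applies to any object characterized by a universal property; existence having already been established, only the comparison of two candidates remains. Suppose $(C_1, T_1)$ and $(C_2, T_2)$ both satisfy the universal property, so that $T_1:V_1 \x \ldots \x V_n \to C_1$ and $T_2:V_1 \x \ldots \x V_n \to C_2$ are multilinear and each pair enjoys the required factorization property. First I would feed the multilinear map $T_2$ into the universal property of $(C_1, T_1)$: this produces a unique linear transformation $L_2:C_1 \to C_2$ with $T_2 = L_2 \circ T_1$. Symmetrically, feeding $T_1$ into the universal property of $(C_2, T_2)$ yields a unique linear $L_1:C_2 \to C_1$ with $T_1 = L_1 \circ T_2$.

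The key step is then to show that $L_1$ and $L_2$ are mutually inverse. Composing gives $(L_1 \circ L_2) \circ T_1 = L_1 \circ (L_2 \circ T_1) = L_1 \circ T_2 = T_1$, so the linear endomorphism $L_1 \circ L_2$ of $C_1$ factors $T_1$ through itself. But the identity $\id_{C_1}$ also satisfies $\id_{C_1} \circ T_1 = T_1$. Here is where the uniqueness clause of the universal property does the real work: applying the universal property of $(C_1, T_1)$ to the multilinear map $M = T_1$ with target $V = C_1$, there is exactly one linear map $C_1 \to C_1$ factoring $T_1$, whence $L_1 \circ L_2 = \id_{C_1}$. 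The same reasoning with the roles of $C_1$ and $C_2$ reversed gives $L_2 \circ L_1 = \id_{C_2}$.

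Finally I would conclude that $L_1$ (equivalently $L_2$) is a linear bijection whose inverse is itself linear, i.e.\ an isomorphism, so $C_1 \cong C_2$. The only point that requires care — and the step most likely to be mishandled — is the invocation of the \emph{uniqueness} part of the universal property: one must remember that the property asserts not merely the existence of a factorizing linear map but its uniqueness, and one must apply it to the ``self-referential'' multilinear map $T_1$ itself, observing that both $\id_{C_1}$ and $L_1 \circ L_2$ are legitimate competitors for the unique factorization. Everything else is formal manipulation of compositions and needs no computation.
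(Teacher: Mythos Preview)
Your argument is correct and is the standard Yoneda-style proof that any object defined by a universal property is unique up to unique isomorphism. Note, however, that the statement you were given is an \emph{observation}, i.e.\ a commentary rather than a result with a proof in the paper; the claim it rests on --- that the tensor product is unique up to isomorphism --- is stated as the proposition immediately preceding this observation, and the paper leaves that proposition without proof. So there is no ``paper's own proof'' to compare against: you have supplied a valid proof where the paper gave none, and your approach is exactly the one a reader would expect.
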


\begin{obs}
  We will also denote $\tenw{V_1}{V_n}$ by $\bigotimes_{i=1}^n V_i$.
\end{obs}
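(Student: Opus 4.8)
The plan is to observe that this statement requires no proof, since it is purely a notational convention rather than a mathematical assertion. The object denoted $\tenw{V_1}{V_n}$ was already constructed and characterized (up to isomorphism) by the universal property in the preceding definition and propositions. The content of this observation is simply that the compact product-notation $\bigotimes_{i=1}^n V_i$, borrowed from the analogous convention for finite sums $\sum_{i=1}^n$, will serve as an abbreviation for the same iterated tensor product. There is no claim here to be verified by argument.

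If one insisted on writing something, the only task would be to note that both symbols designate the identical vector space by fiat. That is, I would declare
\begin{equation}
  \bigotimes_{i=1}^n V_i \ceq \tenw{V_1}{V_n},
\end{equation}
and remark that this is consistent because the right-hand side is, by the earlier uniqueness-up-to-isomorphism result, well-defined as an object of the category of vector spaces. No further verification is needed: the equality is a definitional identification of two notations for one object, not an isomorphism to be exhibited nor a property to be checked.

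The main (and only) conceptual point worth flagging is that the iterated tensor product is itself unambiguous thanks to associativity, so that writing $\bigotimes_{i=1}^n V_i$ without parentheses is legitimate; but even this is a separate fact about the tensor product, not part of the present observation, which merely fixes shorthand. Consequently, there is no obstacle to overcome here — the statement is complete as stated, and the appropriate ``proof'' is the empty one.
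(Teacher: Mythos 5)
Your proposal is correct and matches the paper exactly: the observation is a pure notational convention, and the paper accordingly gives no proof. Your remark that the empty proof is the appropriate one is precisely right.
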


\begin{defi}
  The vectors $T(x_1, \ldots, x_n) = [\delta_{(x_1, \ldots, x_n)}]$ are called \emph{elementary tensors}\index{tensor!elementary} and we will denote them by $\tenw{x_1}{x_n}$.
\end{defi}

\begin{obs}
  There are more vectors in the tensor product than elementary tensors, but the elementary tensors span the tensor product.
  If we consider $\br^2 \otimes \br^2$, for example, then $e_1 \otimes e_1 + e_2 \otimes e_2$ cannot be written as an elementary tensor.
  This gives rise to the concept of \emph{entanglement} in Quantum Mechanics.
\end{obs}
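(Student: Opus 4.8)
The plan is to verify the two assertions separately: that elementary tensors span $\tenw{V_1}{V_n}$, and that, for the concrete space $\br^2 \otimes \br^2$, the vector $e_1 \otimes e_1 + e_2 \otimes e_2$ is not itself an elementary tensor. The first assertion is immediate from the construction just given. The tensor product was defined as the quotient $F(V_1 \x \ldots \x V_n)/D$, and the free vector space $F(V_1 \x \ldots \x V_n)$ is by definition spanned by the formal symbols $\delta_{(x_1, \ldots, x_n)}$. Since the canonical projection is linear and surjective, it carries a spanning set to a spanning set, so the images $[\delta_{(x_1, \ldots, x_n)}] = \tenw{x_1}{x_n}$ span the quotient. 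That settles the spanning claim with essentially no computation.

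For the second assertion I would first record that the four vectors $e_i \otimes e_j$ with $i, j \in \qty{1, 2}$ form a basis of $\br^2 \otimes \br^2$; the nontrivial half is linear independence. The clean way to obtain this is through the universal property of the tensor product rather than through the explicit quotient. For each fixed pair $(i,j)$ I would define the bilinear form $\beta_{ij}:\br^2 \x \br^2 \to \br$ by $\beta_{ij}(u, v) \ceq u_i v_j$, which satisfies $\beta_{ij}(e_k, e_l) = \delta_{ik}\delta_{jl}$. By the universal property there is then a unique linear functional $L_{ij}:\br^2 \otimes \br^2 \to \br$ with $L_{ij}(e_k \otimes e_l) = \delta_{ik}\delta_{jl}$. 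These functionals separate the four candidate basis vectors, which forces their linear independence; combined with the spanning statement this shows they form a basis, so $\dim(\br^2 \otimes \br^2) = 4$ and every tensor has unique coordinates against this basis.

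With the basis in hand the argument is a short coefficient comparison. Suppose, for contradiction, that $e_1 \otimes e_1 + e_2 \otimes e_2 = u \otimes v$ for some $u = a_1 e_1 + a_2 e_2$ and $v = b_1 e_1 + b_2 e_2$. Expanding by bilinearity gives $u \otimes v = \sum_{i,j=1}^2 a_i b_j (e_i \otimes e_j)$, and matching coordinates against the basis yields the system $a_1 b_1 = 1$, $a_2 b_2 = 1$, and $a_1 b_2 = a_2 b_1 = 0$. From $a_1 b_1 = 1$ both $a_1$ and $b_1$ are nonzero, so $a_1 b_2 = 0$ forces $b_2 = 0$; but then $a_2 b_2 = 0 \neq 1$, a contradiction. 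Hence no such $u, v$ exist, and the tensor product strictly contains more than the elementary tensors.

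The main obstacle I expect is not the coefficient bookkeeping but justifying that the $e_i \otimes e_j$ genuinely form a basis. The paper's construction presents the tensor product as a quotient of an enormous free space, and naively one would have to argue that no nontrivial relation among the $e_i \otimes e_j$ slipped into the subspace $D$. Invoking the universal property to build the dual functionals $L_{ij}$ (which ultimately rests on Proposition \ref{univ_quotient}) is precisely what sidesteps this bookkeeping cleanly, so the crux of the write-up is setting up those functionals correctly. Everything else is routine.
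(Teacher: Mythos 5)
Your proposal is correct, and there is nothing in the paper to compare it against line by line: this observation is stated without proof, consistent with the appendix's declared policy of omitting proofs, so your write-up fills a genuine gap rather than paralleling an existing argument. Both halves are sound — the spanning claim does follow immediately from the quotient construction, since the canonical projection is linear and surjective and carries the generating set $\qty{\delta_{(x_1, \ldots, x_n)}}$ of the free space onto the elementary tensors, and your coefficient comparison ($a_1b_1 = a_2b_2 = 1$ together with $a_1b_2 = a_2b_1 = 0$ is unsatisfiable) is airtight. One remark on economy: the paper's very next proposition asserts that tensor products of basis vectors form a basis of the tensor product, so you could have invoked it to get linear independence of the four vectors $e_i \otimes e_j$ for free; since that proposition is itself left unproved in the paper, your construction of the separating functionals $L_{ij}$ via the universal property is exactly the standard proof of (a special case of) it, and it buys you a self-contained argument at the cost of one extra paragraph. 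It is also worth noting that your system of equations is the rank-one criterion in disguise: under the basis you built, an elementary tensor $u \otimes v$ has coefficient matrix $uv^{\top}$ of rank at most $1$, while $e_1 \otimes e_1 + e_2 \otimes e_2$ has the identity as its coefficient matrix, of rank $2$ — a packaging that generalizes immediately to $\br^m \otimes \br^n$ and connects directly to the entanglement remark in the observation.
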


\begin{prop}
  If $\qty{v_{1,j}, \ldots, v_{d_j,j}}$ is a basis for $V_j$, then
  \begin{equation}
    \qty{\tenw{v_{i_1,1}}{v_{i_n,n}} : 1 \leq i_1 \leq d_1, \ldots, 1 \leq i_n \leq d_n}
  \end{equation}
  is a basis for $\tenw{V_1}{V_n}$.
  Consequently, $\dim{\tenw{V_1}{V_n}} = \dim{V_1} \cdot \ldots \cdot \dim{V_n}$.
\end{prop}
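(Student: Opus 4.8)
The plan is to verify the two defining properties of a basis---that the proposed set spans $\tenw{V_1}{V_n}$ and that its elements are linearly independent---and then read off the dimension by counting the indexing set.

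First I would establish spanning. Since the elementary tensors $\tenw{x_1}{x_n}$ already span $\tenw{V_1}{V_n}$ (as noted right after the definition of elementary tensors), it suffices to rewrite each elementary tensor in terms of the proposed set. Writing each argument in the given basis as $x_j = \sum_{i_j=1}^{d_j} c_{i_j}^{(j)} v_{i_j,j}$ and applying the multilinearity of the map $T$ successively in each slot, I would obtain
\[
  \tenw{x_1}{x_n} = \sum_{i_1, \ldots, i_n} c_{i_1}^{(1)} \cdots c_{i_n}^{(n)} \tenw{v_{i_1,1}}{v_{i_n,n}},
\]
so every elementary tensor---and hence every vector of $\tenw{V_1}{V_n}$---is a linear combination of the claimed basis elements.

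The harder part, and the main obstacle, is linear independence, because there is no direct way to manipulate a vanishing combination inside the abstractly constructed quotient. The key idea is to use the universal property to manufacture coordinate functionals. Suppose $\sum_{i_1,\ldots,i_n} a_{i_1\cdots i_n}\, \tenw{v_{i_1,1}}{v_{i_n,n}} = 0$ and fix a target multi-index $(j_1,\ldots,j_n)$. Let $\phi^{(k)}_{j_k}:V_k \to k$ be the dual-basis functional determined by $\phi^{(k)}_{j_k}(v_{i,k}) = \delta_{j_k i}$. Then the map $(x_1,\ldots,x_n) \mapsto \phi^{(1)}_{j_1}(x_1)\cdots\phi^{(n)}_{j_n}(x_n)$ is multilinear (a product of linear functionals), so by the universal property of the tensor product it factors uniquely through a linear map $L:\tenw{V_1}{V_n} \to k$ satisfying $L(\tenw{v_{i_1,1}}{v_{i_n,n}}) = \delta_{j_1 i_1}\cdots\delta_{j_n i_n}$. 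Applying $L$ to the vanishing combination collapses the sum to its single surviving term, giving
\[
  0 = L(0) = \sum_{i_1,\ldots,i_n} a_{i_1\cdots i_n}\, \delta_{j_1 i_1}\cdots\delta_{j_n i_n} = a_{j_1\cdots j_n}.
\]
Since $(j_1,\ldots,j_n)$ was arbitrary, all coefficients vanish, establishing linear independence.

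Finally, the dimension claim follows by counting the indexing set: there are exactly $d_1 \cdots d_n$ multi-indices $(i_1,\ldots,i_n)$ with $1 \leq i_k \leq d_k$, and since these index a basis we conclude $\dim{\tenw{V_1}{V_n}} = \dim{V_1} \cdot \ldots \cdot \dim{V_n}$. I expect the only delicate point to be the invocation of the universal property for independence; the spanning argument and the counting are routine once multilinearity is used carefully in every slot.
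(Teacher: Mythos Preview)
Your proof is correct and follows the standard approach: spanning via multilinearity of $T$ applied slotwise, linear independence via the universal property with products of dual-basis functionals, and a straightforward count for the dimension. The paper, however, states this proposition without proof; the appendix explicitly defers most proofs to \cite{kostrikin1997}, so there is no argument in the paper to compare against. Your write-up is exactly the kind of proof one would supply here, and the only place requiring any care---the construction of the coordinate functionals $L$ to isolate a single coefficient---you handle cleanly.
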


\begin{defi} \label{inner_product_tensor}
  Let $(V_1, \braket{\cdot}_1), \ldots, (V_n, \braket{\cdot}_n)$ be vector spaces endowed with inner products.
  Then the inner product we will consider in the tensor product $\tenw{V_1}{V_n}$ is given by
  \begin{equation}
    \braket{\tenw{x_1}{x_n}}{\tenw{y_1}{y_n}}
    \ceq \braket{x_1}{y_1}_1 \cdot \ldots \cdot \braket{x_n}{y_n}_n.
  \end{equation}
\end{defi}

\begin{obs}
  If $V_1, \ldots, V_n$ are finite-dimensional Hilbert spaces, then $\tenw{V_1}{V_n}$ also is.
  However, when the spaces are infinite-dimensional, we should consider the completion (see Definition \ref{completion}) of the tensor product because, although $\tenw{V_1}{V_n}$ is a metric space, it may not be complete.
  We will denote this completion by $\ten{V_1}{V_n}$.
\end{obs}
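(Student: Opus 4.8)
The plan is to read the observation as two claims and reduce each to results already in place. For the finite-dimensional assertion I would verify the three requirements of Definition \ref{hilbert_space}: that $\tenw{V_1}{V_n}$ carries a genuine inner product, that it has finite dimension, and that it is complete in the induced metric.

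First I would settle the inner product of Definition \ref{inner_product_tensor}, whose only subtlety is well-definedness, since the formula is prescribed on elementary tensors and these span $\tenw{V_1}{V_n}$ without being linearly independent. To sidestep the relations among elementary tensors I would fix an orthonormal basis $\qty{e_{1,j}, \ldots, e_{d_j,j}}$ of each $V_j$, invoke the proposition stating that the tensors $\qty{\tenw{e_{i_1,1}}{e_{i_n,n}}}$ form a basis of $\tenw{V_1}{V_n}$, and define a sesquilinear form by declaring this basis orthonormal; declaring a chosen basis orthonormal always produces a bona fide positive-definite inner product. I would then expand arbitrary elementary tensors in these bases and use multilinearity to check that the form so obtained agrees on elementary-tensor pairs with $\braket{\tenw{x_1}{x_n}}{\tenw{y_1}{y_n}} = \braket{x_1}{y_1}_1 \cdots \braket{x_n}{y_n}_n$. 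Since a sesquilinear form is determined by its values on a spanning set, this shows the prescribed formula is consistent and defines an honest inner product on the tensor product.

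Next, finite-dimensionality is immediate from the dimension formula $\dim{\tenw{V_1}{V_n}} = \dim{V_1} \cdot \ldots \cdot \dim{V_n}$, which is finite because each factor is. For completeness I would use the orthonormal basis above to build a linear isometry of $\tenw{V_1}{V_n}$ onto $\bc^{d}$ (or $\br^{d}$), with $d = d_1 \cdots d_n$, equipped with its standard inner product. Since $\br^d$ is complete (hence so is $\bc^d \cong \br^{2d}$), and a linear isometry carries Cauchy sequences to Cauchy sequences while preserving limits, completeness transfers back to $\tenw{V_1}{V_n}$; equivalently this is the content of all norms on a finite-dimensional space being equivalent, recorded in Example \ref{norms_equivalent}. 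Together these points show $\tenw{V_1}{V_n}$ is a finite-dimensional Hilbert space.

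For the infinite-dimensional clause I would observe that the inner product of Definition \ref{inner_product_tensor} still induces a norm, and the norm a metric, by the standard propositions of the Hilbert space section, so $\tenw{V_1}{V_n}$ is always at least a metric space. The completion $\ten{V_1}{V_n}$ then exists by the theorem that every metric space admits a completion, matching Definition \ref{completion}. The one genuinely nontrivial point is the claim that completeness can fail, for which I would exhibit a Cauchy sequence of finite sums of elementary tensors whose limit lies outside the algebraic span of elementary tensors. I expect the main obstacle of the whole argument to be exactly the consistency check of the inner product in the finite-dimensional step, with the completeness-failure example as the secondary difficulty.
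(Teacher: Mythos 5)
Your plan for the finite-dimensional half is correct, and since the paper states this observation without proof (consistent with the appendix's blanket disclaimer), there is no argument of record to compare against; measured against the standard proofs, your route is the usual one. Fixing orthonormal bases of the factors, declaring the induced product basis orthonormal, and checking agreement with the prescribed formula on pairs of elementary tensors is a valid way to get well-definedness of Definition \ref{inner_product_tensor}, since a sesquilinear form is indeed determined by its values on a spanning set; the dimension formula and norm-equivalence (Example \ref{norms_equivalent}) then give finite-dimensionality and completeness exactly as you say.

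Two genuine gaps remain, both in the infinite-dimensional clause. First, you assert that the inner product of Definition \ref{inner_product_tensor} ``still induces a norm,'' but well-definedness and positive-definiteness on the algebraic tensor product are precisely what need proof there, and your orthonormal-basis device does not transfer verbatim: an infinite-dimensional Hilbert space has no orthonormal Hamel basis (if $\{e_i\}_{i \in I}$ were one, then $x \ceq \sum_k 2^{-k} e_{i_k}$ converges by completeness, is a \emph{finite} combination of basis vectors by the Hamel property, yet satisfies $\braket{e_{i_k}}{x} = 2^{-k} \neq 0$ for all $k$, a contradiction), while the paper's basis proposition for tensor products is a Hamel-basis statement. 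The repair is to run your own finite-dimensional argument locally: every element of $\tenw{V_1}{V_n}$ lies in $\tenw{W_1}{W_n}$ for finite-dimensional subspaces $W_j \subset V_j$, and the inner products so obtained are mutually consistent because each satisfies the same product formula on elementary tensors. Second, the claim that completeness ``may fail'' is left as a promissory note, and your phrasing (``limit lies outside the algebraic span of elementary tensors'') is off, since that span \emph{is} the whole algebraic tensor product; the correct formulation is via the completion. A standard witness: take $V_1 = V_2 = \ell^2$ with orthonormal vectors $e_k$ and partial sums $T_m \ceq \sum_{k=1}^m k^{-1}\, e_k \otimes e_k$; these are Cauchy because the $e_k \otimes e_k$ are orthonormal, so $\norm{T_m - T_{m'}}^2 = \sum_{k=m'+1}^{m} k^{-2} \to 0$. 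Any $y \in \ell^2 \otimes \ell^2$ is a finite sum of elementary tensors, so its coefficient matrix $\qty(\braket{e_i \otimes e_j}{y})_{i,j}$ has finite rank; but if $\qty{T_m}$ had a limit $y$ in $\ell^2 \otimes \ell^2$, uniqueness of limits in the completion would force that matrix to be the diagonal matrix with entries $1/k$, which has infinite rank. With these two insertions your outline closes.
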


\begin{defi}
  Given linear transformations $T_i:V_i \to W_i$, we define the \emph{tensor product of linear transformations} as follows:
  \begin{align}
    \begin{split}
      \tenw{T_1}{T_n}:\tenw{V_1}{V_n} & \to \tenw{W_1}{W_n} \\
      \tenw{v_1}{v_n} & \mapsto \tenw{T_1(v_1)}{T_n(v_n)}.
    \end{split}
  \end{align}
\end{defi}

\begin{defi}
  A multilinear function $M:V \x \ldots \x V \to W$ is called \emph{symmetric}\index{symmetric} if
  \begin{equation}
    M(x_{\sigma(1)}, \ldots, x_{\sigma(n)}) = M(x_1, \ldots, x_n)
  \end{equation}
  and \emph{antisymmetric}\index{antisymmetric} if
  \begin{equation}
    M(x_{\sigma(1)}, \ldots, x_{\sigma(n)}) = \sign(\sigma)M(x_1, \ldots, x_n)
  \end{equation}
  for any $\sigma \in S_n$.
\end{defi}

\begin{defi}
  Given a vector space $V$, its \emph{$n$-th exterior power}\index{exterior power} is a pair $(\bigwedge^n V, \ A:V \x \ldots \x V \to \bigwedge^n V)$, being $\bigwedge^n V$ a vector space and $A$ an antisymmetric multilinear function, such that the following (universal) property holds: given any vector space $W$ and any antisymmetric multilinear function $M:V \x \ldots \x V \to W$, there exists a unique \emph{linear} function $L:\bigwedge^n V \to W$ such that $M = L \circ A$.
  This can be represented diagrammatically as
  \begin{center}
    \begin{tikzcd}
      & \bigwedge^n V \arrow[dashed]{dr}[above]{L} & \\
      V \x \ldots \x V \arrow{ur}[above]{A} \arrow{rr}[below]{M} & & W
    \end{tikzcd}
  \end{center}  
\end{defi}

\begin{prop}
  The exterior power exists.
\end{prop}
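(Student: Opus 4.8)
The final statement to prove is that the $n$-th exterior power exists. Since this immediately follows the analogous construction for the tensor product, my plan is to mirror that proof exactly, replacing the subspace $D$ of multilinearity relations with a larger subspace that additionally enforces antisymmetry.

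\textbf{The plan.} First I would take the free vector space $F(V \x \ldots \x V)$ spanned by formal symbols $\delta_{(x_1, \ldots, x_n)}$, exactly as in the tensor product proof. Then I would let $D$ be the subspace spanned by two families of vectors: the multilinearity relations
\begin{equation}
  \delta_{(x_1, \ldots, rx_i + sy_i, \ldots, x_n)} - r\delta_{(x_1, \ldots, x_i, \ldots, x_n)} - s\delta_{(x_1, \ldots, y_i, \ldots, x_n)}
\end{equation}
and the antisymmetry relations $\delta_{(x_1, \ldots, x_n)}$ whenever $x_i = x_j$ for some $i \neq j$ (the vanishing-on-repeated-arguments condition, which over $\br$ and $\bc$ is equivalent to the alternating property). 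I would then define $\bigwedge^n V \ceq F(V \x \ldots \x V)/D$ together with the map $A:V \x \ldots \x V \to \bigwedge^n V$ given by $A(x_1, \ldots, x_n) = [\delta_{(x_1, \ldots, x_n)}]$, and write the elementary wedge $\wed{x_1}{x_n}$ for this class.

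\textbf{Verifying the properties.} The first family of generators of $D$ guarantees that $A$ is multilinear, and the second guarantees that $A$ is alternating; from alternating plus multilinear one deduces the antisymmetry $A(x_{\sigma(1)}, \ldots, x_{\sigma(n)}) = \sign(\sigma)A(x_1, \ldots, x_n)$ by expanding $A(\ldots, x_i + x_j, \ldots, x_i + x_j, \ldots) = 0$ for transpositions and composing. For the universal property, given any antisymmetric multilinear $M:V \x \ldots \x V \to W$, I would extend it to a linear map $F(M):F(V \x \ldots \x V) \to W$ on generators by $F(M)(\delta_{(x_1, \ldots, x_n)}) = M(x_1, \ldots, x_n)$, then observe that $D \subset \ker{F(M)}$: the multilinearity generators lie in the kernel because $M$ is multilinear, and the antisymmetry generators lie in the kernel because $M$ vanishes on repeated arguments. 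Then the Universal Property of the Quotient (Proposition \ref{univ_quotient}) produces the unique linear $L:\bigwedge^n V \to W$ with $L \circ A = M$, which is exactly what is required.

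\textbf{Main obstacle.} The construction itself is routine and parallels the tensor product proof almost verbatim, so no single step is computationally hard. The one place demanding care is the equivalence between the \emph{alternating} condition (vanishing on repeated arguments, which is what I build into $D$) and the \emph{antisymmetric} condition (sign-twisted invariance under $S_n$, which is what the definition of exterior power literally asks $A$ to satisfy). These are equivalent over a field of characteristic $\neq 2$, and since $k \in \qty{\br, \bc}$ this holds; I would state this equivalence cleanly rather than conflate the two. Enforcing antisymmetry directly (rather than the alternating condition) in $D$ would still give multilinearity but would not, a priori, force vanishing on repeated arguments, so choosing the alternating relations as generators is the cleaner route and is the subtle point worth flagging.
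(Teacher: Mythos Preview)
Your construction is correct, but it differs structurally from the paper's. The paper does not rebuild from the free vector space; instead it \emph{uses} the tensor product already constructed: it sets $\bigwedge^n V \ceq \qty(\bigotimes^n V)/J_n$, where $J_n$ is spanned by the elementary tensors $\tenw{x_1}{x_n}$ with a repeated entry, and then factors an antisymmetric $M$ first through the tensor product (getting $\ol{L}$ via that universal property) and then through the quotient by $J_n$. Your route collapses both steps into a single quotient of $F(V \x \ldots \x V)$ by a subspace $D$ containing both the multilinearity and the alternating relations. Both are standard and both work; the paper's version is more modular (it reuses the tensor result and only has to check the alternating relations), while yours is more self-contained but re-derives multilinearity. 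Your explicit remark about alternating versus antisymmetric over $k \in \qty{\br,\bc}$ is a point the paper handles only implicitly, so you are if anything more careful there.
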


\begin{proof}
  Consider $\bigwedge^n V$ to be the quotient $\qty(\bigotimes^n V)/J_n$, being $J_n$ the subspace spanned by all $\tenw{x_1}{x_n} \in \bigotimes^n V$ such that $x_i = x_j$ for some $i \neq j$.
  Observe that, if $M:V \x \ldots \x V \to W$ is antisymmetric and $(x_1, \ldots, x_n) \in V \x \ldots \x V$ has two equal entries, say $x_i = x_j$ for $i \neq j$, then $M(x_1, \ldots, x_n) = 0$ because
  \begin{equation}
    M(x_1, \ldots, x_i, \ldots, x_j, \ldots, x_n)
    = -M(x_1, \ldots, x_j, \ldots, x_i, \ldots, x_n)
  \end{equation}
  and this implies $2M(x_1, \ldots, x_n) = 0$ because $x_i = x_j$.
  Consequently, $M(x_1, \ldots, x_n) = 0$.
  Now, using the Universal Property of the Tensor Product, we know that there exists a linear transformation $\ol{L}:\bigotimes^n V \to W$ such that $M = \ol{L} \circ T$.
  But when $M$ is antisymmetric we proved that $J_n \subset \ker\ol{L}$ because $\tenw{x_1}{x_n} = T(x_1, \ldots, x_n)$.
  To conclude, we use the Universal Property of the Quotient to obtain a unique linear transformation $L:\qty(\bigotimes^n V)/J_n \to W$ such that $L([\tenw{x_1}{x_n}]) = \ol{L}(\tenw{x_1}{x_n})$.
  The result follows, \ie, $M = L \circ A$, being $A = \pi \circ T$.
\end{proof}

\begin{defi} \label{wedge_product}
  Considering $\pi:\bigotimes^n V \to \qty(\bigotimes^n V)/J_n$ to be the canonical projection, we will call the vectors $\pi(\tenw{x_1}{x_n})$ \emph{elementary wedge products}\index{wedge!elementary} and we will denote them by $\wed{x_1}{x_n}$.
\end{defi}

\begin{obs}
  It is possible to see the exterior power as a subspace instead of a quotient of the tensor power by considering the map
  \begin{align}
    \begin{split}
      \alt:\qty(\bigotimes^n V)/J_n & \to \bigotimes^n V \\
      [\tenw{x_1}{x_n}]       & \mapsto \frac{1}{\sqrt{n!}} \sum_{\sigma \in S_n} \sign(\sigma)\tenw{x_{\sigma(1)}}{x_{\sigma(n)}}
    \end{split}
  \end{align}
  and defining $\bigwedge^n V \ceq \im(\alt)$.
  In this viewpoint
  \begin{equation}
    \wed{x_1}{x_n} \ceq \frac{1}{\sqrt{n!}} \sum_{\sigma \in S_n}
    \sign(\sigma)\tenw{x_{\sigma(1)}}{x_{\sigma(n)}}.
  \end{equation}
\end{obs}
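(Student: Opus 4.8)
The plan is to realize the claimed map in two stages: first build an alternation operator on the full tensor power $\bigotimes^n V$ using the universal property of the tensor product, then descend it to the quotient $(\bigotimes^n V)/J_n$ via the Universal Property of the Quotient (Proposition \ref{univ_quotient}), and finally show the descended map is injective so that its image is an isomorphic copy of $\bigwedge^n V$. Concretely, I would first note that the assignment $(x_1, \ldots, x_n) \mapsto \frac{1}{\sqrt{n!}} \sum_{\sigma \in S_n} \sign(\sigma) \tenw{x_{\sigma(1)}}{x_{\sigma(n)}}$ is multilinear, since each summand is multilinear in each slot and a linear combination of multilinear maps is multilinear. Hence it induces a unique linear operator $\wtil{\alt}:\bigotimes^n V \to \bigotimes^n V$ on elementary tensors, extended linearly.

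The core well-definedness step is to show $J_n \subseteq \ker \wtil{\alt}$, which lets $\wtil{\alt}$ factor through the quotient. Recall $J_n$ is spanned by elementary tensors $\tenw{x_1}{x_n}$ with $x_i = x_j$ for some $i \neq j$. For such a generator I would set $\tau = (i\,j)$ and partition $S_n$ into the two-element blocks $\{\sigma, \sigma\tau\}$: within each block $\sign(\sigma\tau) = -\sign(\sigma)$, while $\tenw{x_{\sigma(1)}}{x_{\sigma(n)}} = \tenw{x_{(\sigma\tau)(1)}}{x_{(\sigma\tau)(n)}}$ because transposing positions $i$ and $j$ only swaps the equal vectors $x_i$ and $x_j$. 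The two terms therefore cancel, so $\wtil{\alt}$ kills every generator of $J_n$. By Proposition \ref{univ_quotient} there is then a unique linear $\alt:(\bigotimes^n V)/J_n \to \bigotimes^n V$ with $\alt \circ \pi = \wtil{\alt}$, and this is precisely the map written in the statement, evaluated on classes $[\tenw{x_1}{x_n}]$.

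For the isomorphism-onto-image claim I would compose back with the projection $\pi:\bigotimes^n V \to \bigwedge^n V$ and compute, using the antisymmetry of the defining map $A$ (so that $\wed{x_{\sigma(1)}}{x_{\sigma(n)}} = \sign(\sigma)\,\wed{x_1}{x_n}$),
\[
\pi\bigl(\alt([\tenw{x_1}{x_n}])\bigr) = \frac{1}{\sqrt{n!}} \sum_{\sigma \in S_n} \sign(\sigma)\,\wed{x_{\sigma(1)}}{x_{\sigma(n)}} = \frac{1}{\sqrt{n!}} \Bigl(\sum_{\sigma \in S_n} \sign(\sigma)^2\Bigr) \wed{x_1}{x_n} = \sqrt{n!}\,\wed{x_1}{x_n},
\]
since $\sum_{\sigma}\sign(\sigma)^2 = |S_n| = n!$. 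As elementary wedge products span $\bigwedge^n V$, this shows $\pi \circ \alt = \sqrt{n!}\,\id_{\bigwedge^n V}$. Consequently $\alt$ is injective: if $\alt(\omega) = 0$ then $\sqrt{n!}\,\omega = \pi(\alt(\omega)) = 0$. Being injective, $\alt$ is an isomorphism onto $\im(\alt)$, so defining $\bigwedge^n V \ceq \im(\alt)$ genuinely realizes the exterior power as a subspace of $\bigotimes^n V$, and unwinding $\alt$ on $[\tenw{x_1}{x_n}]$ gives exactly the stated expression for $\wed{x_1}{x_n}$.

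The step I expect to demand the most care is the $J_n \subseteq \ker\wtil{\alt}$ argument, since it is where the combinatorics of signs and the repeated-vector relation interact; everything else is a clean bookkeeping of universal properties. I would close with two remarks that, while not strictly required, sharpen the picture: that $\wtil{\alt}$ is antisymmetric, from which $\wtil{\alt}^2 = \sqrt{n!}\,\wtil{\alt}$, so $P \ceq \tfrac{1}{\sqrt{n!}}\wtil{\alt}$ is an idempotent projecting $\bigotimes^n V$ onto the realization $\im(\alt)$; and that the normalization factor $\tfrac{1}{\sqrt{n!}}$ is chosen precisely so this embedding is compatible with the inner product of Definition \ref{inner_product_tensor}, matching the determinant formula for inner products of Slater determinants used in the proof of Theorem \ref{hf_welldefined}.
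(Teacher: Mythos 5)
The paper states this Observation without proof (the Linear Algebra appendix explicitly leaves most of its results unproven), so there is no official argument to compare against line by line. Your architecture — build the antisymmetrizer $\wtil{\alt}$ on $\bigotimes^n V$ from the universal property of the tensor product, descend it through $J_n$ via Proposition \ref{univ_quotient}, and deduce injectivity from the identity $\pi \circ \alt = \sqrt{n!}\,\id$ — is the right one, and it matches in spirit the paper's own existence proofs for the exterior and symmetric powers. The injectivity computation, the closing identity $\wtil{\alt}^2 = \sqrt{n!}\,\wtil{\alt}$, and the remark tying the $1/\sqrt{n!}$ normalization to the determinant formula for the inner product all check out.

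There is, however, one genuine local error, and it sits exactly at the step you yourself flagged as delicate: the cancellation pairing $\sigma \mapsto \sigma\tau$ with $\tau = (i\,j)$ does \emph{not} fix the elementary tensor. At position $k$ the $\sigma\tau$-term carries $x_{\sigma(\tau(k))}$, so relative to the $\sigma$-term the vectors $x_{\sigma(i)}$ and $x_{\sigma(j)}$ get interchanged — and these are in general distinct, because the repeated pair $x_i = x_j$ occupies positions $\sigma^{-1}(i)$ and $\sigma^{-1}(j)$ of the tensor, not positions $i$ and $j$. Concretely, take $n = 3$, $x_1 = x_2 = u$, $x_3 = w$, $\tau = (1\,2)$, $\sigma = (2\,3)$: the $\sigma$-term is $u \otimes w \otimes u$ while the $\sigma\tau$-term is $w \otimes u \otimes u$, so the two terms in your block fail to cancel. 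The repair is a one-character change: pair $\sigma$ with $\tau\sigma$ instead. Since $x_{\tau(m)} = x_m$ for every index $m$ (this is precisely where the hypothesis $x_i = x_j$ enters), one gets $x_{(\tau\sigma)(k)} = x_{\sigma(k)}$ for all $k$, the blocks $\qty{\sigma, \tau\sigma}$ (the right cosets of $\langle\tau\rangle$) still partition $S_n$, and $\sign(\tau\sigma) = -\sign(\sigma)$, so each block sums to zero and $J_n \subseteq \ker\wtil{\alt}$ as claimed. With that correction the proof is complete.
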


\begin{obs}
  Using the above identification we can see that the inner product in the exterior power is
  \begin{align}
    \begin{split}
      \braket{\wed{x_1}{x_n}}{\wed{y_1}{y_n}}
      & = \frac{1}{n!}\sum_{\sigma, \tau \in S_n} \sign(\sigma \circ \tau)
        \braket{\tenw{x_{\sigma(1)}}{x_{\sigma(n)}}}
        {\tenw{y_{\tau(1)}}{y_{\tau(n)}}} \\
      & = \frac{1}{n!}\sum_{\sigma, \tau \in S_n} \sign(\sigma \circ \tau)
        \braket{x_{\sigma(1)}}{y_{\tau(1)}} \ldots
        \braket{x_{\sigma(n)}}{y_{\tau(n)}} \\
      & = \frac{1}{n!}\sum_{\sigma, \tau \in S_n} \sign(\sigma^{-1} \circ \tau)
        \braket{x_1}{y_{\sigma^{-1}(\tau(1))}} \ldots
        \braket{x_n}{y_{\sigma^{-1}(\tau(n))}} \\
      & = \frac{1}{n!}\sum_{\sigma, \pi \in S_n} \sign(\pi)
        \braket{x_1}{y_{\pi(1)}} \ldots \braket{x_n}{y_{\pi(n)}} \\
      & = \sum_{\pi \in S_n} \sign(\pi)
        \braket{x_1}{y_{\pi(1)}} \ldots \braket{x_n}{y_{\pi(n)}} \\
      & = \det(\qty(\braket{x_i}{y_j})_{i,j}).
    \end{split}
  \end{align}
\end{obs}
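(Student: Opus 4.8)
The plan is to compute the inner product directly from the realization of the wedge product inside $\bigotimes^n V$ furnished by the $\alt$ map, and then to collapse the resulting double sum over permutations into the Leibniz expansion of a determinant. Throughout I would work with the identification $\wed{x_1}{x_n} = \frac{1}{\sqrt{n!}} \sum_{\sigma \in S_n} \sign(\sigma) \tenw{x_{\sigma(1)}}{x_{\sigma(n)}}$ and the analogous expression for the $y_i$.

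First I would substitute these two expressions into $\braket{\wed{x_1}{x_n}}{\wed{y_1}{y_n}}$ and expand the inner product over both sums; since the signs $\sign(\sigma), \sign(\tau) \in \qty{-1, 1}$ are real, conjugation in the first argument is immaterial and the expansion is effectively bilinear. The two normalizing factors $\frac{1}{\sqrt{n!}}$ combine into $\frac{1}{n!}$, the double sum over $\sigma, \tau \in S_n$ appears, and the product of signs $\sign(\sigma)\sign(\tau)$ is rewritten as $\sign(\sigma \circ \tau)$ using Proposition \ref{sign_composition}. This yields the first displayed line.

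Next I would invoke Definition \ref{inner_product_tensor}, by which the inner product of two elementary tensors equals the product of the componentwise inner products; this converts each term into $\braket{x_{\sigma(1)}}{y_{\tau(1)}} \ldots \braket{x_{\sigma(n)}}{y_{\tau(n)}}$, giving the second line. The decisive step is then a reindexing of this product: since the scalar factors commute, relabelling so that the first slots read $x_1, \ldots, x_n$ in order replaces $\prod_i \braket{x_{\sigma(i)}}{y_{\tau(i)}}$ by $\prod_i \braket{x_i}{y_{\rho(i)}}$, where $\rho$ is a composite of $\tau$ with $\sigma^{-1}$; because $\sign$ is a homomorphism and $\sign(\sigma) = \sign(\sigma^{-1})$, the sign becomes $\sign(\rho)$. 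For each fixed $\sigma$ the assignment $\tau \mapsto \rho$ is a bijection of $S_n$, so the summand no longer depends on $\sigma$; summing over the $n!$ choices of $\sigma$ cancels the prefactor $\frac{1}{n!}$ and leaves $\sum_{\rho \in S_n} \sign(\rho) \braket{x_1}{y_{\rho(1)}} \ldots \braket{x_n}{y_{\rho(n)}}$.

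Finally I would recognize this last sum as the Leibniz formula for $\det\qty(\braket{x_i}{y_j})_{i,j}$, which is the claimed identity. The only delicate point is the permutation bookkeeping in the reindexing: one must verify that the composite $\rho$ sweeps out all of $S_n$ as $\tau$ varies and that its sign is correctly inherited. Note that whether one writes $\rho = \tau \circ \sigma^{-1}$ or $\sigma^{-1} \circ \tau$ is immaterial for the final sum, since both orderings range over $S_n$ with the matching sign; everything else is a routine application of bilinearity and the definitions already in place.
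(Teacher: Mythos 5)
Your proposal is correct and follows essentially the same route as the paper's own computation: expand both wedges via the $\alt$ identification, use bilinearity (signs being real) and Definition \ref{inner_product_tensor} to factor into componentwise inner products, reindex so the $x$'s appear in order, collapse the $\sigma$-sum against the $\frac{1}{n!}$ prefactor, and recognize the Leibniz formula. Your closing remark on the bookkeeping is well taken — the relabelling $i \mapsto \sigma^{-1}(i)$ actually produces $\tau \circ \sigma^{-1}$ rather than the $\sigma^{-1} \circ \tau$ written in the paper's third line, but as you observe both have the same sign and sweep out $S_n$ bijectively for each fixed $\sigma$, so the final sum is unaffected.
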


\begin{prop}
  If $\qty{v_1, \ldots, v_d}$ is a basis for $V$, then
  \begin{equation}
    \qty{\wed{v_{i_1}}{v_{i_n}} : 1 \leq i_1 < \ldots < i_n \leq d}
  \end{equation}
  is a basis for $\bigwedge^n(V)$.
  Consequently, $\dim{\bigwedge^n(V)} = \binom{d}{n}$.
\end{prop}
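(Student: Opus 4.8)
The plan is to prove the two defining properties of a basis separately—that the proposed set spans $\bigwedge^n V$ and that it is linearly independent—and then to read off the dimension by counting the indexing set.

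For spanning, I would start from the basis of elementary tensors $\tenw{v_{i_1}}{v_{i_n}}$ (ranging over all multi-indices $1 \leq i_1, \ldots, i_n \leq d$) established in the previous proposition for $\bigotimes^n V$. Since $\bigwedge^n V = \qty(\bigotimes^n V)/J_n$ and the canonical projection $\pi$ is linear and surjective, the images $\wed{v_{i_1}}{v_{i_n}} = \pi\qty(\tenw{v_{i_1}}{v_{i_n}})$ already span $\bigwedge^n V$. Next I would use antisymmetry to discard and reorder: any such wedge with a repeated index is an elementary tensor lying in $J_n$ and hence maps to $0$, while any wedge whose indices are distinct but unsorted equals, up to the sign $\sign(\sigma)$ of the sorting permutation $\sigma$, the wedge with strictly increasing indices. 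Thus the strictly increasing wedges span.

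The delicate part is linear independence, and here I would exploit the universal property of the exterior power to manufacture coefficient-extracting functionals. Let $\qty{v_1^*, \ldots, v_d^*}$ be the dual basis. For each strictly increasing multi-index $J = (j_1 < \ldots < j_n)$, define $M_J:V \x \ldots \x V \to k$ by sending $(x_1, \ldots, x_n)$ to the determinant of the $n \x n$ matrix $\qty(v_{j_a}^*(x_b))_{a,b}$. Because the determinant is multilinear in its columns and changes sign under a transposition of columns, $M_J$ is an antisymmetric multilinear form, so the universal property yields a unique linear functional $L_J:\bigwedge^n V \to k$ with $L_J\qty(\wed{x_1}{x_n}) = M_J(x_1, \ldots, x_n)$. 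Evaluating on a strictly increasing wedge gives $L_J\qty(\wed{v_{i_1}}{v_{i_n}}) = \det\qty(\delta_{j_a i_b})_{a,b}$, which is $1$ when $I = J$ (the identity matrix) and $0$ otherwise, since any $j_a \in J \ssm I$ produces a zero row. Hence $L_J$ vanishes on every strictly increasing wedge except the $J$-th, on which it is $1$; applying each $L_J$ to a vanishing linear combination forces all coefficients to be $0$, establishing independence.

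Finally, the dimension drops out from counting: strictly increasing multi-indices $1 \leq i_1 < \ldots < i_n \leq d$ are in bijection with $n$-element subsets of $\qty{1, \ldots, d}$, of which there are $\binom{d}{n}$, so $\dim{\bigwedge^n V} = \binom{d}{n}$. I expect the independence step to be the main obstacle—specifically checking cleanly that the determinant construction yields a genuinely antisymmetric multilinear form (so that the universal property applies) and that $\det\qty(\delta_{j_a i_b})_{a,b} = \delta_{IJ}$—since the spanning and counting steps are routine consequences of the surjectivity of $\pi$ and of antisymmetry.
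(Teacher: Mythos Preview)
Your argument is correct and complete. The paper itself does not supply a proof of this proposition; as announced at the start of the appendix, most results there are stated without proof and deferred to \cite{kostrikin1997}. So there is no ``paper's own proof'' to compare against, only the bare statement.

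That said, your approach is the standard one and meshes cleanly with the machinery the paper does set up. The spanning argument uses exactly the ingredients available: the elementary-tensor basis for $\bigotimes^n V$ from the preceding proposition, surjectivity of $\pi$, and the antisymmetry inherited from the quotient by $J_n$ (or, equivalently, from the explicit $\alt$ description the paper gives immediately before). For linear independence, your coefficient-extracting functionals $L_J$ via the universal property are precisely the right tool in this context, and the verification $L_J\qty(\wed{v_{i_1}}{v_{i_n}}) = \det(\delta_{j_a i_b})_{a,b} = \delta_{IJ}$ is straightforward: if $I = J$ the matrix is the identity, and if some $j_a \notin I$ then row $a$ vanishes. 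The only small point worth making explicit is that $M_J$ is antisymmetric because swapping two arguments swaps two columns of the matrix whose determinant you take; this is immediate from the column-alternating property of $\det$, so it is not a real obstacle.
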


\begin{defi}
  Given a vector space $V$, its \emph{$n$-th symmetric power}\index{symmetric power} is a pair $(\sym^n(V), \ S:V \x \ldots \x V \to \sym^n(V))$, being $\sym^n(V)$ a vector space and $S$ a symmetric multilinear function, such that the following (universal) property holds: given any vector space $W$ and any symmetric multilinear function $M:V \x \ldots \x V \to W$, there exists a unique \emph{linear} function $L:\sym^n(V) \to W$ such that $M = L \circ S$.
  This can be represented diagrammatically as
  \begin{center}
    \begin{tikzcd}
      & \sym^n(V) \arrow[dashed]{dr}[above]{L} & \\
      V \x \ldots \x V \arrow{ur}[above]{S} \arrow{rr}[below]{M} & & W
    \end{tikzcd}
  \end{center}  
\end{defi}

\begin{prop}
  The symmetric power exists.
\end{prop}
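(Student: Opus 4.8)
The plan is to imitate the construction of the exterior power carried out above, replacing the subspace of repeated-entry tensors by the subspace generated by symmetrization differences. First I would set
\begin{equation*}
  \sym^n(V) \ceq \qty(\bigotimes^n V) / I_n,
\end{equation*}
where $I_n$ is the subspace of $\bigotimes^n V$ spanned by all vectors
\begin{equation*}
  \tenw{x_1}{x_n} - \tenw{x_{\sigma(1)}}{x_{\sigma(n)}}
\end{equation*}
as $\sigma$ ranges over $S_n$ and the $x_i$ over $V$. Taking $\pi:\bigotimes^n V \to \sym^n(V)$ to be the canonical projection and $T$ the universal multilinear map into the tensor power, I would define $S \ceq \pi \circ T$. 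This $S$ is multilinear, being the composite of a multilinear map with a linear one, and it is symmetric because $S(x_{\sigma(1)}, \ldots, x_{\sigma(n)}) - S(x_1, \ldots, x_n) = \pi(\tenw{x_{\sigma(1)}}{x_{\sigma(n)}} - \tenw{x_1}{x_n}) = 0$ by the very definition of $I_n$.

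Next I would verify the universal property. Given a vector space $W$ and a symmetric multilinear map $M:V \x \ldots \x V \to W$, the Universal Property of the Tensor Product yields a unique linear $\ol{L}:\bigotimes^n V \to W$ with $M = \ol{L} \circ T$. The crucial step is to show $I_n \subset \ker\ol{L}$, which holds because
\begin{equation*}
  \ol{L}\qty(\tenw{x_1}{x_n} - \tenw{x_{\sigma(1)}}{x_{\sigma(n)}}) = M(x_1, \ldots, x_n) - M(x_{\sigma(1)}, \ldots, x_{\sigma(n)}) = 0,
\end{equation*}
which is precisely the symmetry hypothesis on $M$. With this inclusion in hand, the Universal Property of the Quotient (Proposition \ref{univ_quotient}) produces a unique linear $L:\sym^n(V) \to W$ satisfying $L \circ \pi = \ol{L}$, whence $L \circ S = L \circ \pi \circ T = \ol{L} \circ T = M$, as required. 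Uniqueness of $L$ would follow from the uniqueness clauses in the two universal properties invoked.

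The reassuring point is that, in contrast with the exterior-power argument, no characteristic subtlety intervenes: there one needed $2M(\ldots) = 0$ to force $M(\ldots) = 0$, whereas here the defining relations of $I_n$ are literally the differences that a symmetric $M$ must annihilate. Consequently the only thing demanding genuine care is the bookkeeping of the two factorizations, and the main (mild) obstacle is simply confirming that $S$ is well-defined and symmetric before appealing to the universal properties — everything else is a direct transcription of the existence proof for the exterior power.
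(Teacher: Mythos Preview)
Your proof is correct and follows essentially the same approach as the paper: both construct $\sym^n(V)$ as the quotient of $\bigotimes^n V$ by the subspace spanned by the differences $\tenw{x_1}{x_n} - \tenw{x_{\sigma(1)}}{x_{\sigma(n)}}$, set $S = \pi \circ T$, and then factor an arbitrary symmetric multilinear $M$ through this quotient via the universal properties of the tensor product and of the quotient. Your write-up is slightly more explicit in verifying that $S$ is symmetric and in noting the uniqueness clause, but otherwise the two arguments coincide.
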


\begin{proof}
  Consider $\sym^n(V)$ to be the quotient $\qty(\bigotimes^n V)/J_n$, being $J_n$ the subspace spanned by all $\tenw{x_1}{x_n} - \tenw{x_{\sigma(1)}}{x_{\sigma(n)}} \in \bigotimes^n V$, $\sigma \in S_n$.
  Now, according to the Universal Property of the Tensor Product, we know that there exists a linear transformation $\ol{L}:\bigotimes^n V \to W$ such that $M = \ol{L} \circ T$.
  But when $M$ is symmetric we have $J_n \subset \ker\ol{L}$ because
  \begin{align}
    \begin{split}
      0 & = M(x_1, \ldots, x_n) - M(x_{\sigma(1)}, \ldots, x_{\sigma(n)}) \\
        & = \ol{L}(T(x_1, \ldots, x_n)) - \ol{L}(T(x_{\sigma(1)}, \ldots, x_{\sigma(n)})) \\
        & = \ol{L}(\tenw{x_1}{x_n}) - \ol{L}(\tenw{x_{\sigma(1)}}{x_{\sigma(n)}}) \\
        & = \ol{L}(\tenw{x_1}{x_n} - \tenw{x_{\sigma(1)}}{x_{\sigma(n)}}).
    \end{split}
  \end{align}
  To conclude, we use the Universal Property of the Quotient to obtain a unique linear transformation $L:\qty(\bigotimes^n V)/J_n \to W$ such that $L([\tenw{x_1}{x_n}]) = \ol{L}(\tenw{x_1}{x_n})$.
  The result follows, \ie, $M = L \circ S$, being $S = \pi \circ T$.
\end{proof}

\begin{obs}
  It is also possible to see the symmetric power as a subspace of the tensor power by considering the map
  \begin{align}
    \begin{split}
      \sym:\qty(\bigotimes^n V)/J_n & \to \bigotimes^n V \\
      [\tenw{x_1}{x_n}]             & \mapsto \frac{1}{\sqrt{n!}} \sum_{\sigma \in S_n} \tenw{x_{\sigma(1)}}{x_{\sigma(n)}}
    \end{split}
  \end{align}
  and defining $\sym^n(V) \ceq \im(\sym)$.
\end{obs}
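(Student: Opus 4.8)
The plan is to verify that $\sym$ is a well-defined, injective linear map, so that $(\bigotimes^n V)/J_n$ is isomorphic to its image $\im(\sym) \subseteq \bigotimes^n V$; this is exactly what ``seeing the symmetric power as a subspace of the tensor power'' amounts to. First I would produce a symmetrization operator on the full tensor power. Consider the assignment $(x_1,\ldots,x_n) \mapsto \frac{1}{\sqrt{n!}}\sum_{\sigma \in S_n}\tenw{x_{\sigma(1)}}{x_{\sigma(n)}}$; this is multilinear in each entry, so the Universal Property of the Tensor Product yields a unique linear map $\wtil{\sym}:\bigotimes^n V \to \bigotimes^n V$ with $\wtil{\sym}(\tenw{x_1}{x_n}) = \frac{1}{\sqrt{n!}}\sum_{\sigma \in S_n}\tenw{x_{\sigma(1)}}{x_{\sigma(n)}}$, exactly paralleling the construction of $\alt$ used for $\bigwedge^n V$.

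Next I would check that $\wtil{\sym}$ annihilates $J_n$, which is what makes $\sym$ descend to the quotient. It suffices to evaluate $\wtil{\sym}$ on a spanning element $\tenw{x_1}{x_n} - \tenw{x_{\rho(1)}}{x_{\rho(n)}}$ of $J_n$. Applying $\wtil{\sym}$ to the second term and reindexing the summation by $\tau \mapsto \tau\rho$ turns the sum over $S_n$ into the same sum appearing in the image of the first term, so the difference maps to $0$. Hence $J_n \subseteq \ker\wtil{\sym}$, and the Universal Property of the Quotient (Proposition \ref{univ_quotient}) produces the claimed linear map $\sym:(\bigotimes^n V)/J_n \to \bigotimes^n V$.

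The crux—and the step I expect to be the main obstacle—is injectivity, since only an injective $\sym$ realizes $\sym^n(V)$ as a genuine subspace rather than as an image with collapse. Here I would exhibit a one-sided inverse up to a nonzero scalar by composing back with the canonical projection $\pi:\bigotimes^n V \to (\bigotimes^n V)/J_n$. Computing on a representative, each permuted tensor satisfies $\pi(\tenw{x_{\sigma(1)}}{x_{\sigma(n)}}) = [\tenw{x_1}{x_n}]$ by the very definition of $J_n$, so $\pi \circ \sym([\tenw{x_1}{x_n}]) = \frac{1}{\sqrt{n!}} \cdot n! \cdot [\tenw{x_1}{x_n}] = \sqrt{n!}\,[\tenw{x_1}{x_n}]$, i.e. $\pi \circ \sym = \sqrt{n!}\,\id$ on the whole quotient. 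Since $k \in \qty{\br, \bc}$ makes $\sqrt{n!}$ invertible, any class $[\omega]$ with $\sym([\omega]) = 0$ satisfies $\sqrt{n!}\,[\omega] = \pi(\sym([\omega])) = 0$, forcing $[\omega] = 0$; thus $\sym$ is injective.

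Finally I would conclude that $\sym$ is a linear isomorphism onto $\im(\sym)$, so identifying $\sym^n(V)$ with $\im(\sym)$ is legitimate and exhibits the symmetric power as a subspace of $\bigotimes^n V$. As a closing remark I would note that $\im(\sym)$ is precisely the space of \emph{symmetric tensors}, those fixed by every coordinate permutation, mirroring the description of $\bigwedge^n V$ through $\alt$; verifying this characterization is routine and is not needed for the subspace claim itself.
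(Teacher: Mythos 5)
Your proposal is correct and is precisely the argument the paper leaves implicit: the descent of the symmetrization operator through $J_n$ via the two universal properties mirrors the paper's own construction of $\sym^n(V)$ (and its parallel observation for $\alt$), while your computation $\pi \circ \sym = \sqrt{n!}\,\id$ on classes of elementary tensors, extended by linearity, supplies the injectivity that the observation asserts without proof. The one point worth flagging is the reliance on $\sqrt{n!}$ being invertible, which holds here since $k \in \qty{\br, \bc}$ — a hypothesis you correctly invoke and without which the subspace identification would fail in positive characteristic.
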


\begin{obs}
  Using the above identification we can see that the inner product in the symmetric power is given by the permanent of the matrix $\qty(\braket{x_i}{y_j})$.
\end{obs}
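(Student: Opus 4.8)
The plan is to imitate, line for line, the computation already performed for the exterior power, the sole difference being that the absence of the sign $\sign(\sigma)$ converts the determinant into the permanent. I would begin from the identification of $\sym^n(V)$ with a subspace of $\bigotimes^n V$ via the map $\sym$ introduced in the observation just above, so that the inner product on $\sym^n(V)$ is nothing but the restriction of the tensor inner product of Definition \ref{inner_product_tensor}. Under this identification the symmetric elementary product of $x_1, \ldots, x_n$ is
\begin{equation}
  \frac{1}{\sqrt{n!}} \sum_{\sigma \in S_n} \tenw{x_{\sigma(1)}}{x_{\sigma(n)}},
\end{equation}
and analogously for the $y_i$.

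Next I would expand the inner product bilinearly over the two symmetrizing sums and apply Definition \ref{inner_product_tensor} to each pair of elementary tensors, obtaining
\begin{equation}
  \frac{1}{n!} \sum_{\sigma, \tau \in S_n}
  \braket{x_{\sigma(1)}}{y_{\tau(1)}} \ldots \braket{x_{\sigma(n)}}{y_{\tau(n)}}.
\end{equation}
Relabelling the factors of each product by $i = \sigma(k)$ (legitimate because scalar multiplication is commutative) rewrites the generic term as $\prod_{i=1}^n \braket{x_i}{y_{\tau(\sigma^{-1}(i))}}$. Setting $\pi \ceq \tau \circ \sigma^{-1}$ and noting that, for each fixed $\sigma$, the map $\tau \mapsto \pi$ is a bijection of $S_n$, the double sum collapses: the sum over $\sigma$ contributes a factor of $n!$ that cancels the prefactor $\frac{1}{n!}$, leaving $\sum_{\pi \in S_n} \braket{x_1}{y_{\pi(1)}} \ldots \braket{x_n}{y_{\pi(n)}}$. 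Recalling that the permanent of a matrix $A = (a_{ij})$ is $\sum_{\pi \in S_n} \prod_{i=1}^n a_{i\pi(i)}$ --- the determinant formula stripped of its signs --- this last expression is exactly the permanent of $\qty(\braket{x_i}{y_j})_{i,j}$, as claimed.

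The computation is entirely routine, and the only point deserving care is the reindexing step: it is precisely the commutativity of the product of scalars that allows the relabelling, and it is the uniform absence of $\sign(\sigma)$ (as opposed to its presence in the exterior case) that leaves no residual sign $\sign(\pi)$ in the final sum. A secondary point worth checking, exactly as in the exterior setting, is that the factor $\frac{1}{\sqrt{n!}}$ in the definition of $\sym$ is what makes this inner product well behaved; this is implicit once one accepts the subspace identification and requires no separate argument here.
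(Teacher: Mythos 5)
Your proof is correct and takes exactly the route the paper intends: the observation is stated without proof precisely because it mirrors, sign for sign, the displayed determinant computation for the exterior power, and you reproduce that computation faithfully (including the key reindexing $\pi = \tau \circ \sigma^{-1}$, whose bijectivity for fixed $\sigma$ collapses the double sum and cancels the $\frac{1}{n!}$ prefactor, with the uniform absence of $\sign$ turning the determinant into the permanent). Nothing is missing.
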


\begin{prop}
  If $\qty{v_1, \ldots, v_d}$ is a basis for $V$, then
  \begin{equation}
    \qty{\tenw{v_{i_1}}{v_{i_n}} : 1 \leq i_1 \leq \ldots \leq i_n \leq d}
  \end{equation}
  is a basis for $\sym^n(V)$.
  Consequently, $\dim{\sym^n(V)} = \binom{d+n-1}{n}$.
\end{prop}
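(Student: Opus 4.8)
The plan is to mirror the argument already used for the exterior power, exploiting the explicit construction $\sym^n(V) = \qty(\bigotimes^n V)/J_n$ from the existence proof together with the basis for the tensor power established earlier. First I would recall that, by the tensor-power basis proposition applied with $V_1 = \ldots = V_n = V$, the elementary tensors $\tenw{v_{i_1}}{v_{i_n}}$ ranging over \emph{all} index tuples $1 \leq i_1, \ldots, i_n \leq d$ form a basis of $\bigotimes^n V$. Since the canonical projection $\pi$ is surjective, the classes $[\tenw{v_{i_1}}{v_{i_n}}]$ span $\sym^n(V)$.

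To reduce to non-decreasing tuples, I would observe that modulo $J_n$ the factors may be permuted freely: by construction $J_n$ is spanned by the vectors $\tenw{x_1}{x_n} - \tenw{x_{\sigma(1)}}{x_{\sigma(n)}}$ for $\sigma \in S_n$, so in the quotient $[\tenw{v_{i_1}}{v_{i_n}}] = [\tenw{v_{i_{\sigma(1)}}}{v_{i_{\sigma(n)}}}]$. Choosing the permutation that sorts the indices into non-decreasing order shows that every spanning class equals one indexed by a tuple with $1 \leq i_1 \leq \ldots \leq i_n \leq d$. Hence the claimed set already spans.

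The main obstacle is linear independence, and here I would pass to the subspace realization via the symmetrization map $\sym$, identifying $[\tenw{v_{i_1}}{v_{i_n}}]$ with $\frac{1}{\sqrt{n!}} \sum_{\sigma \in S_n} \tenw{v_{i_{\sigma(1)}}}{v_{i_{\sigma(n)}}} \in \bigotimes^n V$. For a fixed non-decreasing tuple this is a nonzero sum of elementary tensors ranging over all orderings of the multiset $\qty{i_1, \ldots, i_n}$. Two distinct non-decreasing tuples determine distinct multisets, so the corresponding symmetrizers are supported on \emph{disjoint} subsets of the tensor-power basis; therefore a vanishing linear combination of them, expanded in that basis, forces every coefficient to vanish. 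This yields independence, and hence that the set is a basis.

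Finally the dimension follows by a counting argument: non-decreasing tuples $1 \leq i_1 \leq \ldots \leq i_n \leq d$ are in bijection with size-$n$ multisets drawn from $d$ symbols, of which there are $\binom{d+n-1}{n}$ by stars and bars, giving $\dim{\sym^n(V)} = \binom{d+n-1}{n}$. I expect the only genuinely delicate points to be verifying that each symmetrizer is nonzero and that distinct multisets give disjoint supports; everything else is bookkeeping parallel to the exterior-power case.
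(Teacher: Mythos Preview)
The paper does not prove this proposition: it sits in the Linear Algebra appendix, where the author explicitly says ``we will not prove most of the results stated in here'' and defers to \cite{kostrikin1997}. So there is nothing to compare against, and your proposal stands on its own.

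Your argument is correct. Spanning follows exactly as you say from surjectivity of $\pi$ together with the defining relations in $J_n$, and the dimension count via multisets is standard. For independence, the key observation you make---that the symmetrizers of distinct non-decreasing tuples are supported on disjoint subsets of the tensor-power basis $\qty{\tenw{v_{j_1}}{v_{j_n}}}$---is the right one. One small point worth making explicit in the write-up: you only need $\sym$ to be a well-defined \emph{linear map} out of the quotient (which follows because the generators $\tenw{x_1}{x_n} - \tenw{x_{\sigma(1)}}{x_{\sigma(n)}}$ of $J_n$ are sent to zero under symmetrization, by reindexing the sum over $S_n$). Independence of the images in $\bigotimes^n V$ then forces independence of the classes in $\sym^n(V)$, with no need to invoke injectivity of $\sym$ or the ``subspace realization'' beforehand.
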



\chapter{Lagrange Multipliers} \label{lagrange_multipliers}

\epigraph{The methods that I explain require neither geometrical, nor mechanical, constructions or reasoning, but only algebraic operations in accordance with regular and uniform procedure.}{Joseph Louis Lagrange}

The goal of this chapter is to implement the Newton--Raphson Method using Lagrange multipliers because it allows us to compare how the Riemannian and the Euclidean version of the same method perform in the same problem (Hartree--Fock).
The main reference used for Euclidean constrained optimization and Lagrange multipliers was \cite{nocedal2006}.
It is also worth mentioning that we will use the same notation and objects defined in Chapter \ref{optimization}.
So, maybe the reader should check Section \ref{algorithms} before moving forward.

One of the most used approaches to encode the geometry of a constrained optimization problem is the technique of Lagrange multipliers.
This technique is, essentially, the inverse operation of what we did in the beginning of Chapter \ref{optimization} to move from Equation \ref{min_problem} to \ref{riemannian_problem}, \ie, starting with a function $f:X \to \br$, the goal is to consider the equivalent problem of optimizing an extension of $f$ given by $\ol{f}:\br^d \to \br$ subject to the constraint $c:\br^d \to \br^k$ as long as $X = c^{-1}(0)$.\footnote{We will work with equality constraint, \ie, we want the solution of the optimization problem to satisfy $c(x) = 0$.}
It is usually not obvious how one can move from the manifold $X$ to the constraint described by $c$ and vice versa, but the example we will consider may shed some light on how to proceed for examples other than Hartree--Fock.
The theorem that allows us to implement algorithm(s) to minimize $\ol{f}:\br^d \to \br$ subject to $c:\br^d \to \br^k$ is the following:
\begin{thm}[Lagrange Multiplier Theorem]
  Let $\ol{f}:\br^d \to \br$ and $c:\br^d \to \br^k$ be as above and let $x^* \in \br^d$ be an optimal solution to the problem of minimizing $\ol{f}$ subject to $c$.
  Assume that the Jacobian $Jc(x^*)$ has full rank.
  Then there exists a unique $\eps^* \in \br^k$ such that $J\ol{f}(x^*) = (\eps^*)^{\top}Jc(x^*)$.
\end{thm}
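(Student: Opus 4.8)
The plan is to reduce the statement to the geometric fact that at a constrained minimum the differential of the objective must annihilate the tangent space of the feasible set, and then to translate this into the claimed linear dependence of Jacobians. Since $Jc(x^*)$ has full rank and full rank is an open condition, $Jc$ remains surjective on a neighborhood of $x^*$; restricting attention to this neighborhood, $0$ is a regular value of $c$ and the Preimage Theorem (\ref{preimage_theorem}) guarantees that $X = c^{-1}(0)$ is an embedded manifold of dimension $d-k$ through $x^*$, with $c$ a defining function for it. Consequently Proposition \ref{tangent_embedded} identifies the tangent space as $T_{x^*}X = \ker T_{x^*}c = \ker Jc(x^*)$.

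First I would record the first-order optimality condition. Because $x^* \in X$ minimizes $\ol{f}$ subject to the constraint, it is a local minimum of the restriction $f \ceq \ol{f}|_X$, which is smooth by Proposition \ref{smooth_restriction}. Given any $v \in \ker Jc(x^*)$, the concrete description of the tangent space (Proposition \ref{tangent_space_embedding}) provides a smooth curve $\gamma:(-\eps,\eps) \to X$ with $\gamma(0) = x^*$ and $\gamma'(0) = v$. The scalar function $t \mapsto \ol{f}(\gamma(t))$ then has a local minimum at $t=0$, so by the chain rule its derivative vanishes there, giving $J\ol{f}(x^*)\,v = 0$. As $v$ ranges over all of $\ker Jc(x^*)$, this shows $(J\ol{f}(x^*))^{\top} \in \left(\ker Jc(x^*)\right)^{\perp}$.

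It then remains to invoke the standard linear-algebra identity $\left(\ker A\right)^{\perp} = \im(A^{\top})$ applied to $A = Jc(x^*)$: the orthogonal complement of the kernel is exactly the row space, so $(J\ol{f}(x^*))^{\top}$ lies in the image of $(Jc(x^*))^{\top}$. Hence there exists $\eps^* \in \br^k$ with $(J\ol{f}(x^*))^{\top} = (Jc(x^*))^{\top}\eps^*$, which transposes to $J\ol{f}(x^*) = (\eps^*)^{\top}Jc(x^*)$. For uniqueness, the full-rank hypothesis means the $k$ rows of $Jc(x^*)$ are linearly independent, so $(Jc(x^*))^{\top}$ is injective and $\eps^*$ is determined uniquely.

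I expect the only genuinely delicate point to be the careful local bookkeeping: the Preimage Theorem needs a regular value on a whole level set, whereas the hypothesis only controls the rank at $x^*$, so the argument must be confined to a neighborhood where the rank condition persists and the local manifold structure and curves remain available. Everything after that — the vanishing of the directional derivative and the orthogonal-complement identity — is routine, and the same full-rank hypothesis that supplies the manifold structure also delivers the uniqueness of the multipliers.
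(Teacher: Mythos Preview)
Your argument is correct and self-contained, but note that the paper does not actually supply a proof of this theorem: it simply cites \cite[Chapter 12]{nocedal2006} and moves on. So there is no ``paper's own proof'' to compare against here.

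That said, your approach is a natural one given the machinery the paper has already built: you invoke the Preimage Theorem (\ref{preimage_theorem}) and Proposition \ref{tangent_embedded} to identify $T_{x^*}X = \ker Jc(x^*)$, then reduce the first-order condition to the orthogonal-complement identity $(\ker A)^{\perp} = \im(A^{\top})$. The localization step you flag --- passing to a neighborhood where $Jc$ retains full rank so that $0$ is a regular value of the restricted constraint map --- is exactly the right care to take, and the uniqueness follows cleanly from injectivity of $(Jc(x^*))^{\top}$. This is essentially the standard geometric proof one finds in \cite{nocedal2006}, phrased in the language of Chapter \ref{geometry}; nothing is missing.
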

A proof of this theorem can be found in \cite[Chapter 12]{nocedal2006}, but what this result asserts is that, if we define what is known as \emph{Lagrangian function}\index{Lagrangian}
\begin{equation} \label{lagrangian}
  \begin{aligned}
    \mc{L}:\br^d \x \br^k & \to \br \\
    (x, \eps) & \mapsto \ol{f}(x) - \eps^{\top}c(x),
  \end{aligned}
\end{equation}
then the set of stationary points of $\ol{f}$ subject to $c$ is a subset of the critical points of $\mc{L}$ (it is well known in the literature \cite{kalman2009}, though, that the critical points of $\mc{L}$ should actually be \emph{saddle points}).
So, that means we can switch from the original constrained optimization problem to the search for saddle points of $\mc{L}$.
Before we move forward, it should be warned that the saddle points of $\mc{L}$ are not necessarily stationary points of $\ol{f}$ subject to $c$, the theorem only work in the other direction.
Notice that we do not incur into this problem when using Riemannian Optimization, but the advantage of this approach is that we can now use regular Euclidean methods to find saddle points of the unconstrained function $\mc{L}$ and there is a vast literature on how to do this.
To be fair with the purpose of this work, we will implement only the Newton--Raphson Method because the other methods we implemented are minimization methods and they cannot be used to find saddle points (it is possible to adapt the Lagrangian in order to use minimization methods, but we did not pursue this because the adaptation makes the comparison with the Riemannian algorithms less intuitive).

Before we move to the details of the implementation, an important observation is that we will actually encode the Stiefel manifold in the Lagrangian instead of the Grassmannian because it is not clear to us how to model the Grassmannian\footnote{There are other parametrizations that would allow us to consider the Grassmannian directly, see the discussion in Chapter \ref{conclusion}.} or a quotient manifold in general using Lagrangians.
Despite this exchange of manifolds, the method still works because in practice we always work with representatives of an equivalence class and, therefore, at the end it does not matter which representative we obtain as long as it is in the Stiefel manifold.
This may raise the question of why using quotient manifolds at all and, to put it shortly, we do not lose much using the Stiefel manifold instead of the Grassmannian in the Lagrange multipliers approach because we will work solely with the Euclidean space.
So, that means we can ignore the Riemannian structure of the Stiefel used to compute the geodesic, gradient and tangent space, but in the Riemannian algorithms this is not the case, all of these objects are quite different in the Stiefel and in the Grassmannian and we exploit this fact in the Riemannian algorithms.
With that said, let us jump to the implementation.

First of all, the constraint we have to impose to be in the product of Stiefel manifolds is given by the function
\begin{equation}
  \begin{split}
    c:\mat{d_1}{N_1} \x \mat{d_2}{N_2} & \to \mat{N_1}{N_1} \x \mat{N_2}{N_2}  \\
    (C_1, C_2) & \mapsto (C_1^{\top}S_1C_1 - \Id_{N_1}, \
                 C_2^{\top}S_2C_2 - \Id_{N_2}).
  \end{split}
\end{equation}
Let us also define the function
\begin{align} \label{restriction}
  \begin{split}    
    c_i:\mat{d_i}{N_i} & \to \mat{N_i}{N_i}  \\
    C_i & \mapsto C_i^{\top}S_iC_i - \Id_{N_i}
  \end{split}
\end{align}
because it will be useful in a moment.
Now, there are two options to encode this in the Lagrangian: vectorizing the above expression or using the trace of a matrix.
Let us describe the two options because both are useful.
Starting with an arbitrary cost function $f:\gr{N_1}{d_1} \x \gr{N_2}{d_2} \to \br$, in the first case the Lagrangian is
\begin{align}
  \begin{split}
    \mc{L}: & \ \mat{d_1}{N_1} \x \mat{d_2}{N_2} \x \br^{N_1 \cdot N_1} \x \br^{N_2 \cdot N_2} \to \br \\
            & \ (C_1, C_2, \eps_1, \eps_2)
              \mapsto \oll{f}(C_1, C_2)
              - \eps_1^{\top}\vec(c_1(C_1))
              - \eps_2^{\top}\vec(c_2(C_2)),
  \end{split}
\end{align}
being vec the columnwise vectorization of a matrix (see Definition \ref{vectorization}), and in the second case we use the identity $\tr(M^{\top}N) = \vec(M)^{\top}\vec(N)$ to obtain the following Lagrangian:
\begin{equation} \label{lagrangian_stiefel}
  \begin{split}
    \mc{L}: & \ \mat{d_1}{N_1} \x \mat{d_2}{N_2} \x \mat{N_1}{N_1} \x \mat{N_2}{N_2} \to \br \\
            & \ (C_1, C_2, \eps_1, \eps_2)
              \mapsto \oll{f}(C_1, C_2)
              - \tr(\eps_1^{\top}c_1(C_1))
              - \tr(\eps_2^{\top}c_2(C_2)).
  \end{split}
\end{equation}
These Lagrangians are equivalent and the choice of using one over the other comes down to the storage of $\eps$ as a matrix or as a vector, which is left to the reader to decide which is best to their purpose.
We will use the second representation because in our case (Hartree--Fock) the matrices $\eps_i$ have a physical interpretation of being the orbitals' energy and because it is also used by other methods of Quantum Chemistry such as the Self-Consistent Field \cite[Chapter 3]{szabo1996}.
Now let us recall the Newton--Raphson Method and then implement it for the Lagrangian defined above.
\begin{algo}[Riemannian Newton--Raphson Method]
  Given a Riemannian manifold $X$, a function $f:X \to \br$ and a starting point $x_0 \in X$, do the following:
  \begin{enumerate}
    
  \item Solve $\hess{f}(x_k)(v_k) = -\grad{f}(x_k)$ for $v_k \in T_{x_k}X$.

  \item Update the point according to $x_{k+1} \ceq \exp_{x_k}(v_k)$.

  \end{enumerate}
\end{algo}
Observe that in our case $X = \mat{d_1}{N_1} \x \mat{d_2}{N_2} \x \mat{N_1}{N_1} \x \mat{N_2}{N_2}$, $f = \mc{L}$ and now the Hessian and the gradient are just the partial derivatives of $\mc{L}$ with respect to $C_i$ and $\eps_i$.
It is also worth recalling (Example \ref{exp_euclidean} and Proposition \ref{exp_product}) that
\begin{equation} \label{euclidean_exp}
  \exp_{(C_1, C_2, \eps_1, \eps_2)}(\eta_1, \eta_2, \mu_1, \mu_2)
  = \qty(C_1 + \eta_1, \ C_2 + \eta_2, \ \eps_1 + \mu_1, \ \eps_2 + \mu_2).
\end{equation}
So, to implement this method we have to compute the gradient and the Hessian of $\mc{L}$.
Let us start with the gradient.
Since we are working with the Euclidean space, the gradient of $\oll{f}$ is computed using the same \verb|euclidean_gradient| function we defined in the Riemannian Gradient Descent Method \ref{rgd}.
However, as already mentioned, we also have to compute the gradient of $\mc{L}$ with respect to $\eps_i$ and that means the final gradient has the following format:
\begin{align}
  \begin{split}
    \grad\mc{L}(C_1, C_2, \eps_1, \eps_2) =
    \begin{bmatrix}  
      \grad\oll{f}_1(C_1) - \unvec(Jc_1(C_1)^{\top}\vec(\eps_1), \ d_1 \x N_1) \\
      \grad\oll{f}_2(C_2) - \unvec(Jc_2(C_2)^{\top}\vec(\eps_2), \ d_2 \x N_2) \\
      -c_1 \\
      -c_2
    \end{bmatrix}
  \end{split}
\end{align}
Here $Jc_i$ is the Jacobian of $c_i$ and unvec is the operation defined in \ref{unvec}.
Now, since $\grad\oll{f}_i$ is computed using the \verb|euclidean_gradient| function, we just need to compute $Jc_i$.
This computation is straightforward, but long, and we opted for writing the result in here and the details in Appendix \ref{matrix_ids}.
So, the final Jacobian is given by
\begin{equation}
  Jc_i(C_1) = \Id_{N_i} \otimes C_i^{\top}S_i +
  \hstack(C_i^{\top}S_i \otimes e_1, \ldots, C_i^{\top}S_i \otimes e_{N_i}),
\end{equation}
being $e_j \in \br^{N_i}$, $\otimes$ the Kronecker product and \verb|hstack| the function defined in \ref{hstack}.
Now let us move to the Hessian.
If we define the auxiliary function
\begin{align}
  \begin{split}
    \wtil{c_i}:\mat{d_i}{N_i} & \to \br \\
    C_i & \mapsto \tr(\eps_i^{\top}(C_i^{\top}S_iC_i - \Id_{N_i})),
  \end{split}
\end{align}
the Hessian of $\mc{L}$ has the following format:
\begin{equation}
  \hess\mc{L}(C_1, C_2, \eps_1, \eps_2) =
  \begin{bsmallmatrix}
    \hess_{11}\oll{f} - \hess\wtil{c}_1 & \hess_{12}\oll{f}
    & -Jc_1^{\top} & 0_{N_1d_1 \x N_2N_2} \\
    \hess_{21}\oll{f} & \hess_{22}\oll{f} - \hess\wtil{c}_2
    & 0_{N_2d_2 \x N_1N_1} & -Jc_2^{\top} \\
    -Jc_1 & 0_{N_1N_1 \x N_2d_2} & 0_{N_1N_1 \x N_1N_1} & 0_{N_1N_1 \x N_2N_2} \\
    0_{N_2N_2 \x N_1d_1} & -Jc_2 & 0_{N_2N_2 \x N_1N_1} & 0_{N_2N_2 \x N_2N_2}
  \end{bsmallmatrix}.
\end{equation}
So, the only object left to compute is $\hess\wtil{c}_i$.
Again, we refer the reader to Appendix \ref{matrix_ids} for the details of this computation.
The final result is
\begin{equation}
  \hess\wtil{c}_i = (\eps_i + \eps_i^{\top}) \otimes S_i
\end{equation}
and we now have everything we need to build the Hessian of the Lagrangian if we use the function \verb|euclidean_hessian| defined in \ref{RNR} to compute $\hess_{ij}\oll{f}$.
That is it, the pseudocode for the Newton--Raphson Method is:
\begin{algbox}{Newton--Raphson with Lagrange Multipliers (NRLM)}{NRLM}
  input: $max\_iter, \ C_i^{(0)}, \ \eps_i^{(0)}, \ S_i, \ i=1,2$
  set: $k, \ tol\_grad, \ tol\_val, \ \mc{L}_{prev}, \ \mc{L}^{(0)}, \ \mc{L}_{diff}, \ \norm{\grad\mc{L}^{(0)}} \ = \ 0, \ 10^{-8}, \ 10^{-10}, \ \infty, \ 0, \ 1, \ 1$
  precompute: $\Id_{N_1}, \ \Id_{N_2}$
  
  def jacobian_restriction($C_1, \ C_2$):
      $\aux_1, \ \aux_2 \ = \ C_1^{\top}S_1, \ C_2^{\top}S_2$
      for $j \ = \ 1, \ldots, N_1$:
          $B_{1,j} \ = \ \aux_1 \otimes e_j$
      for $j \ = \ 1, \ldots, N_2$:
          $B_{2,j} \ = \ \aux_2 \otimes e_j$
      return $\Id_{N_1} \otimes \aux_1 + \hstack(B_{1,1}, \ldots, B_{1,N_1}), \ \Id_{N_2} \otimes \aux_2 + \hstack(B_{2,1}, \ldots, B_{2,N_2})$

  def gradient_lagrangian($C_1, \ C_2, \ \eps_1, \ \eps_2, \ Jc_1, \ Jc_2, \ \euc{G}_1, \ \euc{G}_2$):
      $\grad\wtil{c}_1, \ \grad\wtil{c}_2 \ = \ \unvec(Jc_1^{\top}\vec(\eps_1), \ d_1 \x N_1), \unvec(Jc_2^{\top}\vec(\eps_2), \ d_2 \x N_2)$
      return $\euc{G}_1 - \grad\wtil{c}_1, \ \euc{G}_2 - \grad\wtil{c}_2, \ C_1^{\top}S_1C_1 - \Id_{N_1}, \ C_2^{\top}S_2C_2 - \Id_{N_2}$
  
  def hessian_lagrangian($\hess{f}, \ Jc_1, \ Jc_2, \ \eps_1, \ \eps_2$):
      $\hess\wtil{c} \ = \
  \begin{bmatrix}
    (\eps_1 + \eps_1^{\top}) \otimes S_1 & 0_{N_1d_1 \x N_2d_2} \\
    0_{N_2d_2 \x N_1d_1} & (\eps_2 + \eps_2^{\top}) \otimes S_2
  \end{bmatrix}$
      $Jc \ = \
  \begin{bmatrix}
    Jc_1 & 0_{N_1N_1 \x N_2d_2} \\
    0_{N_2N_2 \x N_1d_1} & Jc_2
  \end{bmatrix}$
      return $
    \begin{bmatrix}
      \hess{f} - \hess\wtil{c} & -Jc^{\top} \\
      -Jc & 0_{N_1N_1 + N_2N_2}
    \end{bmatrix}$

  while $k \leq max\_iter$ and $tol\_grad < \norm{\grad\mc{L}^{(k)}}$ and $tol\_val < \mc{L}_{diff}$:
      $\mc{L}^{(k)} \ = \ \mc{L}(C_1^{(k)}, \ C_2^{(k)}, \ \eps_1^{(k)}, \ \eps_2^{(k)})$
      $\euc{G}_1^{(k)}, \ \euc{G}_2^{(k)}$ = euclidean_gradient($C_1^{(k)}, \ C_2^{(k)}$)
      $Jc_1^{(k)}, \ Jc_2^{(k)}$ = jacobian_restriction($C_1^{(k)}, \ C_2^{(k)}$)
      $\grad\mc{L}^{(k)}$ = gradient_lagrangian($C_1^{(k)}, \ C_2^{(k)}, \ \eps_1^{(k)}, \ \eps_2^{(k)}, \ Jc_1^{(k)}, \ Jc_2^{(k)}, \ \euc{G}_1^{(k)}, \ \euc{G}_2^{(k)}$)
      $\euc\hess{f}^{(k)}$ = euclidean_hessian($C_1^{(k)}, \ C_2^{(k)}$)
      $\hess\mc{L}^{(k)}$ = hessian_lagrangian($\euc\hess{f}^{(k)}, \ Jc_1^{(k)}, \ Jc_2^{(k)}, \ \eps_1^{(k)}, \ \eps_2^{(k)}$)
      solve: $\hess\mc{L}^{(k)} \cdot
  \begin{bsmallmatrix}
    \mu_1^{(k)} \\ \mu_2^{(k)} \\ \upsilon_1^{(k)} \\ \upsilon_2^{(k)}
  \end{bsmallmatrix}
  \ = \
  \vec(\grad\mc{L}^{(k)})
  $
      $\mu_1^{(k)}, \ \mu_2^{(k)}, \ \nu_1^{(k)}, \ \nu_2^{(k)} \ = \ \unvec(\mu_1^{(k)}, \ d_1 \x N_1), \ \unvec(\mu_2^{(k)}, \ d_2 \x N_2), \ \unvec(\nu_1^{(k)}, \ N_1 \x N_1), \ \unvec(\nu_2^{(k)}, \ N_2 \x N_2)$
      $C_1^{(k)}, \ C_2^{(k)}, \ \eps_1^{(k)}, \ \eps_2^{(k)} \ = \ C_1^{(k)} + \mu_1^{(k)}, \ C_2^{(k)} + \mu_2^{(k)}, \ \eps_1^{(k)} + \upsilon_1^{(k)}, \ \eps_2^{(k)} + \upsilon_2^{(k)}$
      $\mc{L}_{diff} \ = \ \mc{L}_{prev} - \mc{L}^{(k)}$; $\mc{L}_{prev} \ = \ \mc{L}^{(k)}$; $k \ = \ k + 1$
\end{algbox}
The reader can check the result we obtained using this algorithm in Section \ref{results} and the implementation of the \verb|euclidean_gradient| and of the \verb|euclidean_hessian| for the Hartree--Fock are in \ref{EG} and \ref{EH}.


\chapter{Matrix Identities} \label{matrix_ids}

\epigraph{Matrices act. They don't just sit there.}{Gilbert Strang}

The goal of this appendix is to state some matrix identities we use in this work and to compute the matrix representation of some objects we need to implement the algorithms presented.
The main reference for this appendix is \cite[Chapter 2]{magnus2007}.

\begin{defi} \label{canonical_matrix}
  We will denote the matrix with $1$ in the entry $(i, j)$ and $0$ in the rest by $E_{ij}$.
\end{defi}

\begin{obs}
  If $E_{ij} \in \mat{M}{N}$, then this matrix can also be written as $e_ie_j^{\top}$, being $e_i \in \br^M$ and $e_j \in \br^N$.
\end{obs}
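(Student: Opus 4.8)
The plan is to verify the claimed identity entrywise, since both sides are $M \times N$ matrices and two matrices are equal precisely when all of their entries agree. First I would recall that $e_i \in \br^M$ denotes the $i$-th vector of the canonical basis, so that its $p$-th coordinate is $(e_i)_p = \delta_{ip}$ (the Kronecker delta), and similarly $(e_j)_q = \delta_{jq}$ for $e_j \in \br^N$.

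Next I would compute the outer product $e_i e_j^{\top}$. Writing $e_i$ as a column vector in $\br^M$ and $e_j^{\top}$ as a row vector in $\br^N$, their product is the $M \times N$ matrix whose entry in position $(p, q)$ is the product of the $p$-th coordinate of $e_i$ with the $q$-th coordinate of $e_j$:
\begin{equation}
  (e_i e_j^{\top})_{pq} = (e_i)_p (e_j)_q = \delta_{ip}\delta_{jq}.
\end{equation}
This expression equals $1$ exactly when $p = i$ and $q = j$ simultaneously, and $0$ in every other case.

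Finally I would compare this with the definition of $E_{ij}$ (Definition \ref{canonical_matrix}), which is the matrix having $1$ in the entry $(i, j)$ and $0$ elsewhere; that is, $(E_{ij})_{pq} = \delta_{ip}\delta_{jq}$. Since the two matrices have the same dimensions and identical entries in every position, they coincide, yielding $E_{ij} = e_i e_j^{\top}$.

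There is no real obstacle here: the statement is a direct entrywise computation of an outer product. The only thing to be careful about is the bookkeeping---keeping the row index $p$ ranging over $1, \ldots, M$ and the column index $q$ over $1, \ldots, N$, so that the dimensions of $e_i$ and $e_j$ match those of $E_{ij}$ and the indices $(i,j)$ are read in the correct order.
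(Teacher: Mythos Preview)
Your proposal is correct: the entrywise computation via $(e_i e_j^{\top})_{pq} = \delta_{ip}\delta_{jq}$ is exactly the natural verification. The paper states this observation without proof, so there is nothing to compare; your argument is the standard one-line justification.
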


\begin{defi} \label{hstack}
  Given matrices $A_i \in \mat{M}{N_i}$, $i=1, \ldots, K$, the function \emph{hstack}\index{hstack} is defined by
  \begin{equation}
    \hstack(A_1, \ldots, A_K) \ceq \begin{bmatrix} A_1 & \ldots & A_K \end{bmatrix} \in \mat{M}{(N_1 + \ldots + N_K)}.
  \end{equation}
  In other words, we are stacking the matrices $A_i$ horizontally.
\end{defi}

\begin{defi} \label{vstack}
  Given matrices $A_i \in \mat{M_i}{N}$, $i=1, \ldots, K$, the function \emph{vstack}\index{vstack} is defined by
  \begin{equation}
    \vstack(A_1, \ldots, A_K) \ceq \begin{bmatrix} A_1 \\ \vdots \\ A_K \end{bmatrix} \in \mat{(M_1 + \ldots + M_K)}{N}.
  \end{equation}
  In other words, we are stacking the matrices $A_i$ vertically.
\end{defi}

\begin{defi} \label{kronecker_product}
  Given two matrices $A \in \mat{K}{L}$ and $B \in \mat{P}{Q}$, we define the \emph{Kronecker product}\index{Kronecker product} of these matrices as the following $KP \x LQ$ matrix:
  \begin{equation}
    A \otimes B \ceq
    \begin{bmatrix}
      A_{11}B & \ldots & A_{1L}B \\
      \vdots  & \ddots & \vdots  \\
      A_{K1}B & \ldots & A_{KL}B \\
    \end{bmatrix}.
  \end{equation}
\end{defi}

\begin{defi} \label{vectorization}
  The function
  \begin{align}
    \begin{split}
      \vec:\mat{M}{N} & \to \br^{M \cdot N} \\
      \begin{bmatrix}
        a_{11} & \ldots & a_{1N} \\
        \vdots & \ddots & \vdots \\
        a_{M1} & \ldots & a_{MN} \\
      \end{bmatrix}
      & \mapsto
        \begin{bmatrix}
          a_{11} \\ \vdots \\ a_{M1} \\ \vdots \\ a_{1N} \\ \vdots \\ a_{MN}
        \end{bmatrix}
    \end{split}
  \end{align}
  is called \emph{(columnwise) vectorization}\index{vectorization}.
\end{defi}

\begin{obs}
  Keep in mind that we adopted the convention of vectorizing matrices columnwise.
\end{obs}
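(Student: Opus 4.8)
The plan is to recognize first that this statement is a \emph{convention} rather than a mathematical assertion: reading a matrix column by column (rather than row by row) is a definitional choice, so strictly there is nothing to \emph{prove}. What can meaningfully be checked is that the explicit formula in Definition \ref{vectorization} really does realize this convention, that $\vec$ is a well-defined linear isomorphism, and --- the only point with genuine content --- that the columnwise ordering is precisely the one under which the Kronecker-product identities used later in this appendix (and in Chapter \ref{optimization}) hold. I would organize the verification around these three points.

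First I would confirm the indexing directly from the displayed vector. For $A \in \mat{M}{N}$ the entry $a_{ij}$ is sent to position $(j-1)M + i$ of $\vec(A) \in \br^{M \cdot N}$: the first block $a_{11}, \ldots, a_{M1}$ is exactly the first column, the next block is the second column, and so on, the final block $a_{1N}, \ldots, a_{MN}$ being the $N$-th column. This makes the ordering columnwise by inspection; it also shows at once that $\vec$ is linear (each output coordinate equals a single input coordinate) and bijective (it merely relabels the $MN$ entries), with inverse given by the $\unvec$ operation once the target dimensions $M \x N$ are supplied.

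The substantive content is the vec--Kronecker identity $\vec(AXB) = (B^{\top} \otimes A)\vec(X)$, which holds precisely under the columnwise convention (the rowwise convention would pair the Kronecker factors the other way). I would prove it on the basis $\qty{E_{ij}}$, writing $E_{ij} = e_i e_j^{\top}$ so that $A E_{ij} B = (Ae_i)(B^{\top}e_j)^{\top}$, and then tracking how $\vec(A E_{ij} B)$ distributes across the block structure of $B^{\top} \otimes A$. The only real obstacle is the index bookkeeping --- keeping the position map $(i,j) \mapsto (j-1)M + i$ aligned with the Kronecker blocks so that each column lands in its correct slot. Once this identity is established, the specific matrix identities recorded in this appendix and invoked throughout Chapter \ref{optimization} and the Lagrange-multipliers appendix follow by specializing $A$, $X$, and $B$, which is exactly what makes the columnwise choice the convenient convention to adopt.
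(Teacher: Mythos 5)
Your proposal is correct, and your opening diagnosis matches the paper exactly: the statement is a convention, and accordingly the paper attaches no proof to it at all --- the observation merely flags the columnwise choice so the reader tracks it through the subsequent computations. Where you diverge is in what you verify afterwards. The paper illustrates the convention operationally, by the three worked examples that follow (right-multiplication by $e_i$ extracting columns, multiplication by $\vec(E_{ij})$ running \emph{first} over $i$ and then over $j$, and $\vec(E_{ij})^{\top}A$ extracting rows), and it states the identity $\vec(ABC) = (C^{\top} \otimes A)\vec(B)$ without proof in its list of matrix identities. You instead propose to \emph{prove} that identity on the basis $\qty{E_{ij}}$ via $E_{ij} = e_ie_j^{\top}$ and the position map $(i,j) \mapsto (j-1)M + i$, and to note that the columnwise ordering is exactly what makes the Kronecker factors pair as $B^{\top} \otimes A$ rather than the other way. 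This buys something the paper does not supply: a justification of the one identity on which essentially all of the appendix's derivations (the Jacobian of the Stiefel constraint, the Hessian of $\wtil{c}$, the vectorized Hessian blocks in Section \ref{rnr}) depend, whereas the paper's route buys concreteness --- its examples are precisely the manipulations reused verbatim in building the matrix representations. Your index bookkeeping is right ($a_{ij}$ lands in slot $(j-1)M+i$, the blocks of $B^{\top} \otimes A$ align with columns of the reshaped argument), so the sketch would go through without obstruction.
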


\begin{defi} \label{unvec}
  We will represent the reverse operation of vectorization by
  \begin{align}
    \begin{split}
      \unvec(\cdot, M \x N): \br^{M \cdot N} & \to \mat{M}{N} \\
      \begin{bmatrix}
        a_{11} \\ \vdots \\ a_{M1} \\ \vdots \\ a_{1N} \\ \vdots \\ a_{MN}
      \end{bmatrix}
      & \mapsto
        \begin{bmatrix}
          a_{11} & \ldots & a_{1N} \\
          \vdots & \ddots & \vdots \\
          a_{M1} & \ldots & a_{MN} \\
        \end{bmatrix}.
    \end{split}
  \end{align}
  So, applying this function to a matrix $A$ will be denoted by $\unvec(A, M \x N)$.
\end{defi}

\begin{obs}
  There are many different ways to unvectorize a vector and this is why we are annotating the final dimension in the function.
  A $6$-dimensional vector can be unvectorized to a $2 \x 3$ or to a $3 \x 2$ matrix and this makes a huge difference.
\end{obs}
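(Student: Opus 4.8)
The plan is to read this observation as the precise claim that the partial map $\unvec(\cdot, M \x N)$ of Definition \ref{unvec} depends essentially on the shape argument $M \x N$ and not merely on the ambient dimension of its vector input, so that the shape annotation is genuinely needed rather than redundant. First I would fix the claim: I want a single vector $w \in \br^{6}$ together with two admissible shapes, namely $2 \x 3$ and $3 \x 2$, both satisfying $M \cdot N = 6$, for which the two unvectorizations differ. Since $\unvec(w, 2 \x 3) \in \mat{2}{3}$ while $\unvec(w, 3 \x 2) \in \mat{3}{2}$, the two outputs already inhabit distinct spaces and so cannot be equal; but to show the difference is genuine at the level of how the data are arranged (and not a mere transposition), I would track where each coordinate of $w$ is sent.

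Concretely, writing $w = (w_1, \ldots, w_6)^{\top}$ and applying the columnwise convention fixed in Definition \ref{vectorization}, I would compute
\begin{equation*}
  \unvec(w, 2 \x 3) =
  \begin{bmatrix}
    w_1 & w_3 & w_5 \\
    w_2 & w_4 & w_6
  \end{bmatrix},
  \qquad
  \unvec(w, 3 \x 2) =
  \begin{bmatrix}
    w_1 & w_4 \\
    w_2 & w_5 \\
    w_3 & w_6
  \end{bmatrix}.
\end{equation*}
Reading off the coordinate $w_3$, for instance, shows that it occupies entry $(1,2)$ in the first matrix but entry $(3,1)$ in the second, so the two arrangements are not even related by transposition; this substantiates the phrase ``this makes a huge difference.'' The structural reason, which I would state in one line, is that the admissible shapes for a vector in $\br^{n}$ correspond to the ordered factorizations $n = M \cdot N$, and the columnwise filling rule distributes the coordinates among positions in a manner that depends on the number of rows $M$.

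The only point requiring any care — and it is mild — is deciding what ``different'' should mean, since the two outputs trivially differ just by belonging to $\mat{2}{3}$ versus $\mat{3}{2}$. I expect the main (indeed essentially the only) obstacle to be formulating this notion so that the example is genuinely informative rather than vacuous; tracking the image of a single coordinate, as above, resolves this and exhibits the dependence on the shape parameter as essential. Everything else is a direct, routine verification from the definitions.
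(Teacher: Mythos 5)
Your proposal is correct and takes essentially the same route as the paper, which simply asserts the $6$-dimensional example with shapes $2 \x 3$ and $3 \x 2$ without carrying out the computation; your explicit matrices match the columnwise convention of Definition \ref{vectorization}, and tracking the coordinate $w_3$ to entry $(1,2)$ versus $(3,1)$ correctly sharpens "makes a huge difference" beyond the trivial type mismatch (and even rules out equality up to transposition, for $w$ with distinct coordinates).
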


Now let us state without proof some matrix identities we will use throughout the whole work, but specially in the rest of this appendix.

\begin{prop}
  Given matrices $A$, $B$ and $C$, the following identities are true (assume the dimension of these matrices are correct for each item, \ie, if we want to sum $A$ and $B$, they have the same dimension, while if we want to multiply $A$ and $B$, they just need to agree on the number of columns of $A$ with rows of $B$):
  \begin{enumerate}

  \item $\tr(A) = \tr(A^{\top})$.

  \item $\tr(A + B) = \tr(A) + \tr(B)$.

  \item $\tr(AB) = \vec(A^{\top})^{\top}\vec(B)$.

  \item $\vec(ABC) = (C^{\top} \otimes A)\vec(B)$.

  \item $\vec(A + B) = \vec(A) + \vec(B)$.

  \item $(A \otimes B)^{\top} = A^{\top} \otimes B^{\top}$.

  \item $A \otimes (B + C) = A \otimes B + A \otimes C$.

  \end{enumerate}
\end{prop}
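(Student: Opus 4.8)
The plan is to verify each of the seven identities by a direct entry-by-entry computation, organizing them from the purely routine to the one that carries the real content. First I would dispatch the linear and transpose statements. For (1), the diagonal entries of $A$ and $A^{\top}$ coincide, so $\tr(A) = \sum_i A_{ii} = \sum_i (A^{\top})_{ii} = \tr(A^{\top})$; identities (2) and (5) then follow at once from $(A+B)_{ij} = A_{ij} + B_{ij}$, since both the diagonal sum and the columnwise stacking of Definition \ref{vectorization} are additive in their argument. Identity (7) is just the distributivity of the Kronecker product: comparing the $(k,l)$ blocks on each side, Definition \ref{kronecker_product} gives $A_{kl}(B+C)$ on the left and $A_{kl}B + A_{kl}C$ on the right, which agree by the distributive law for matrix scaling. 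Identity (6) is handled in the same block-by-block spirit: the $(k,l)$ block of $(A \otimes B)^{\top}$ is the transpose of the $(l,k)$ block of $A \otimes B$, namely $(A_{lk}B)^{\top} = A_{lk}B^{\top}$, which is precisely the $(k,l)$ block of $A^{\top} \otimes B^{\top}$.

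Next I would prove (3), where the only subtlety is keeping track of the columnwise convention. Writing $\tr(AB) = \sum_{i} (AB)_{ii} = \sum_{i,j} A_{ij}B_{ji}$, I would note that the $i$-th column of $A^{\top}$ is the $i$-th row of $A$, so that the block of $\vec(A^{\top})$ at position $i$ has sub-entries $A_{ij}$, while the block of $\vec(B)$ at position $i$ has sub-entries $B_{ji}$. Taking the inner product block by block yields $\vec(A^{\top})^{\top}\vec(B) = \sum_{i,j} A_{ij}B_{ji}$, which matches $\tr(AB)$.

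The main obstacle is identity (4), $\vec(ABC) = (C^{\top} \otimes A)\vec(B)$, since it genuinely entangles the Kronecker product with vectorization. My approach would be to reduce to the rank-one case $B = E_{jk} = e_j e_k^{\top}$ of Definition \ref{canonical_matrix}, legitimately, because both sides are linear in $B$ by (5) and (7) and the $E_{jk}$ form a basis of $\mat{M}{N}$. For $B = E_{jk}$ one has $AE_{jk}C = (Ae_j)(C^{\top}e_k)^{\top}$, an outer product whose $l$-th column is $(C^{\top}e_k)_l\,(Ae_j) = C_{kl}(Ae_j)$; hence the $l$-th block of $\vec(AE_{jk}C)$ is $C_{kl}(Ae_j)$. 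On the other side, $\vec(E_{jk}) = e_k \otimes e_j$ (the single nonzero column sits in block $k$), and multiplying by $C^{\top} \otimes A$ block by block according to Definition \ref{kronecker_product} gives a vector whose $l$-th block is $(C^{\top})_{lk}\,A e_j = C_{kl}(Ae_j)$. The two coincide block for block, and extending by linearity over the basis $\{E_{jk}\}$ delivers the identity for arbitrary $B$. I expect the index bookkeeping in (3) and (4) under the columnwise vectorization to be the only place demanding care; the remaining identities are mechanical.
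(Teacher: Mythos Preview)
Your proof is correct: each of the seven identities is verified by the entry-by-entry or block-by-block computation you describe, and the reduction of identity (4) to the rank-one case $B = E_{jk}$ via linearity is clean and valid. The paper, however, does not prove this proposition at all; the sentence immediately preceding it reads ``let us state without proof some matrix identities we will use throughout the whole work,'' and no argument is given. So there is nothing to compare against --- you have supplied a complete proof where the paper deliberately omits one.
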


Before we move forward to compute some of the objects we need in this work, let us see some concrete examples of three operations that will be quite useful to us.
The first one is the fact that multiplying a matrix $A$ by $e_i$ on the right results in the $i$-th column of this matrix:
\begin{equation}
  \begin{bmatrix}
    a_{11} & a_{12} & a_{13} & a_{14} \\
    a_{21} & a_{22} & a_{23} & a_{24} \\
  \end{bmatrix}
  \begin{bmatrix}
    0 \\ 0 \\ 1 \\ 0
  \end{bmatrix}
  =
  \begin{bmatrix}
    a_{13} \\ a_{23}
  \end{bmatrix}.
\end{equation}
So, observe that if we compute this multiplication varying $i$ (from $1$ to $4$ in this case) and then concatenate the resulting vectors horizontally, we obtain the matrix $A$ again.
The other operation is a variation of this that is trickier to analyze.
We are going to multiply a matrix $A$ by the vectorization of $E_{ij}$ and then run over $i$ and $j$ to build a new matrix.
Let us consider $E_{ij} \in \mat{2}{2}$:
\begin{equation}
  \begin{bmatrix}
    a_{11} & a_{12} & a_{13} & a_{14} \\
    a_{21} & a_{22} & a_{23} & a_{24} \\
  \end{bmatrix}
  \vec(E_{11})
  =
  \begin{bmatrix}
    a_{11} & a_{12} & a_{13} & a_{14} \\
    a_{21} & a_{22} & a_{23} & a_{24} \\
  \end{bmatrix}
  \begin{bmatrix}
    1 \\ 0 \\ 0 \\ 0
  \end{bmatrix}
  =
  \begin{bmatrix}
    a_{11} \\ a_{21}
  \end{bmatrix},
\end{equation}
\begin{equation}
  \begin{bmatrix}
    a_{11} & a_{12} & a_{13} & a_{14} \\
    a_{21} & a_{22} & a_{23} & a_{24} \\
  \end{bmatrix}
  \vec(E_{21})
  =
  \begin{bmatrix}
    a_{11} & a_{12} & a_{13} & a_{14} \\
    a_{21} & a_{22} & a_{23} & a_{24} \\
  \end{bmatrix}
  \begin{bmatrix}
    0 \\ 1 \\ 0 \\ 0
  \end{bmatrix}
  =
  \begin{bmatrix}
    a_{12} \\ a_{22}
  \end{bmatrix},
\end{equation}
\begin{equation}
  \begin{bmatrix}
    a_{11} & a_{12} & a_{13} & a_{14} \\
    a_{21} & a_{22} & a_{23} & a_{24} \\
  \end{bmatrix}
  \vec(E_{12})
  =
  \begin{bmatrix}
    a_{11} & a_{12} & a_{13} & a_{14} \\
    a_{21} & a_{22} & a_{23} & a_{24} \\
  \end{bmatrix}
  \begin{bmatrix}
    0 \\ 0 \\ 1 \\ 0
  \end{bmatrix}
  =
  \begin{bmatrix}
    a_{13} \\ a_{23}
  \end{bmatrix},
\end{equation}
\begin{equation}
  \begin{bmatrix}
    a_{11} & a_{12} & a_{13} & a_{14} \\
    a_{21} & a_{22} & a_{23} & a_{24} \\
  \end{bmatrix}
  \vec(E_{22})
  =
  \begin{bmatrix}
    a_{11} & a_{12} & a_{13} & a_{14} \\
    a_{21} & a_{22} & a_{23} & a_{24} \\
  \end{bmatrix}
  \begin{bmatrix}
    0 \\ 0 \\ 0 \\ 1
  \end{bmatrix}
  =
  \begin{bmatrix}
    a_{14} \\ a_{24}
  \end{bmatrix}.
\end{equation}
If we run \emph{first} over $i$ and then over $j$, the final matrix (after we concatenate these vectors horizontally) is, again, $A$.
It is worth mentioning that the reason why we are running first over $i$ is because we are vectorizing matrices columnwise, \ie, we first run over the rows and then over the columns.
To conclude, we will now consider the operation $\vec(E_{ij})^{\top}A$, but let us assume that $A \in \mat{4}{3}$ this time:
\begin{equation}
  \vec(E_{11})^{\top}
  \begin{bmatrix}
    a_{11} & a_{12} & a_{13} \\
    a_{21} & a_{22} & a_{23} \\
    a_{31} & a_{32} & a_{33} \\
    a_{41} & a_{42} & a_{43} \\
  \end{bmatrix}
  =
  \begin{bmatrix}
    1 & 0 & 0 & 0
  \end{bmatrix}
  \begin{bmatrix}
    a_{11} & a_{12} & a_{13} \\
    a_{21} & a_{22} & a_{23} \\
    a_{31} & a_{32} & a_{33} \\
    a_{41} & a_{42} & a_{43} \\
  \end{bmatrix}
  =
  \begin{bmatrix}
    a_{11} & a_{12} & a_{13}
  \end{bmatrix},
\end{equation}
\begin{equation}
  \vec(E_{21})^{\top}
  \begin{bmatrix}
    a_{11} & a_{12} & a_{13} \\
    a_{21} & a_{22} & a_{23} \\
    a_{31} & a_{32} & a_{33} \\
    a_{41} & a_{42} & a_{43} \\
  \end{bmatrix}
  =
  \begin{bmatrix}
    0 & 1 & 0 & 0
  \end{bmatrix}
  \begin{bmatrix}
    a_{11} & a_{12} & a_{13} \\
    a_{21} & a_{22} & a_{23} \\
    a_{31} & a_{32} & a_{33} \\
    a_{41} & a_{42} & a_{43} \\
  \end{bmatrix}
  =
  \begin{bmatrix}
    a_{21} & a_{22} & a_{23}
  \end{bmatrix},
\end{equation}
\begin{equation}
  \vec(E_{12})^{\top}
  \begin{bmatrix}
    a_{11} & a_{12} & a_{13} \\
    a_{21} & a_{22} & a_{23} \\
    a_{31} & a_{32} & a_{33} \\
    a_{41} & a_{42} & a_{43} \\
  \end{bmatrix}
  =
  \begin{bmatrix}
    0 & 0 & 1 & 0
  \end{bmatrix}
  \begin{bmatrix}
    a_{11} & a_{12} & a_{13} \\
    a_{21} & a_{22} & a_{23} \\
    a_{31} & a_{32} & a_{33} \\
    a_{41} & a_{42} & a_{43} \\
  \end{bmatrix}
  =
  \begin{bmatrix}
    a_{31} & a_{32} & a_{33}
  \end{bmatrix},
\end{equation}
\begin{equation}
  \vec(E_{22})^{\top}
  \begin{bmatrix}
    a_{11} & a_{12} & a_{13} \\
    a_{21} & a_{22} & a_{23} \\
    a_{31} & a_{32} & a_{33} \\
    a_{41} & a_{42} & a_{43} \\
  \end{bmatrix}
  =
  \begin{bmatrix}
    0 & 0 & 0 & 1
  \end{bmatrix}
  \begin{bmatrix}
    a_{11} & a_{12} & a_{13} \\
    a_{21} & a_{22} & a_{23} \\
    a_{31} & a_{32} & a_{33} \\
    a_{41} & a_{42} & a_{43} \\
  \end{bmatrix}
  =
  \begin{bmatrix}
    a_{41} & a_{42} & a_{43}
  \end{bmatrix}.
\end{equation}
Again, if we vary \emph{first} over $i$ and then over $j$, the final matrix is $A$.
OK, now that we have explained these operations, we can build some matrices.

From now on assume that $C \in \mat{d}{N}$, $\eps \in \mat{N}{N}$ and $S \in \mat{d}{d}$ is a symmetric and invertible matrix.
In Chapter \ref{optimization} we have two linear transformations given by
\begin{align}
  \begin{split}
    \mat{d}{N} & \to \mat{d}{N} \\
    \eta & \mapsto A\eta,
  \end{split}
\end{align}
being $A \in \mat{M}{d}$.
However, we want a matrix representation for this transformation when $\eta$ is vectorized, \ie, when $\eta \in \br^{dN}$.
To obtain this, recall from Linear Algebra (see \ref{matrix_repr}) that, to compute this representation, we should apply this linear transformation in the basis vectors, then vectorize the result and use this vector as a column of the final matrix.
So, since a basis for $\mat{d}{N}$ is given by the matrices $E_{ij}$ and since we have to concatenate the vectors horizontally running first over $i$ and then over $j$ to respect the columnwise vectorization, we will use the second operation described above.
Let us build this matrix representation then.
First, we compute the linear transformation in $E_{ij}$:
\begin{equation}
  AE_{ij}.
\end{equation}
Then, we vectorize using the identity $\vec(ABC) = (C^{\top} \otimes A)\vec(B)$:
\begin{align}
  \begin{split}
    \vec\qty(AE_{ij})
    & = \vec\qty(AE_{ij}\Id_N) \\
    & = \qty(\Id_N \otimes A)\vec(E_{ij}).
  \end{split}
\end{align}
To conclude, running over $i$ and then $j$ and concatenating these vectors horizontally gives the final matrix representation of the linear transformation:
\begin{equation}
  \Id_N \otimes A.
\end{equation}
Observe that this matrix lives in $\mat{dN}{dN}$ and, therefore, it makes sense to multiply by $\eta \in \br^{dN}$ now.

The next computation is very similar.
Fixing a matrix $A \in \mat{d}{N}$, we have the linear transformation
\begin{align}
  \begin{split}
    \mat{d}{N} & \to \mat{d}{N} \\
    \eta & \mapsto \eta C^{\top}A
  \end{split}
\end{align}
and we want to obtain the matrix representation of this transformation when $\eta \in \br^{dN}$.
Computing this transformation when $\eta = E_{ij}$ and already vectorizing the result we have
\begin{align}
  \begin{split}
    \vec\qty(E_{ij}C^{\top}A)
    & = \vec(\Id_dE_{ij}C^{\top}A) \\
    & = \qty(A^{\top}C \otimes \Id_d)\vec(E_{ij}).
  \end{split}
\end{align}
Running over $i$ and $j$, the final matrix representation is
\begin{equation}
  A^{\top}C \otimes \Id_d.
\end{equation}

Now, all the computations below are used in Appendix \ref{lagrange_multipliers}. First, we need the Jacobian of the following function:
\begin{align}
  c:\mat{d}{N} & \to \br \\
  C & \mapsto C^{\top}SC - \Id_N.
\end{align}
The strategy to compute this is the following: we compute the partial derivatives of this function, vectorize the result and then concatenate all these vectors horizontally.
It is quite similar to what we did previously, but we need to compute some partial derivatives first.
First observe that we can ignore the identity matrix because it is a constant.
Now, given $V \in \mat{d}{N}$, the directional derivative $T_Cc(V)$ is, by definition:
{\small
  \begin{align}
    \begin{split}
      \lim_{t \to 0} \frac{(C + tV)^{\top}S(C + tV) - C^{\top}SC}{t}
      & = \lim_{t \to 0} \frac{(C^{\top} + tV^{\top})(SC + tSV) - C^{\top}SC}{t} \\
      & = \lim_{t \to 0} \frac{C^{\top}SC + tC^{\top}SV + tV^{\top}SC + t^2V^{\top}SV - C^{\top}SC}{t} \\
      & = \lim_{t \to 0} C^{\top}SV + V^{\top}SC + tV^{\top}SV \\
      & = C^{\top}SV + V^{\top}SC
    \end{split}
\end{align}
}%
Substituting $V$ by $E_{ij} \in \mat{d}{N}$ because we want partial derivatives, we have:
\begin{equation}
  \pdv{c}{X_{ij}}\qty(C) = C^{\top}SE_{ij} + E_{ij}^{\top}SC.
\end{equation}
Now we vectorize this expression.
Using the identities $E_{ij} = e_ie_j^{\top}$, $\vec(ABC) = (C^{\top} \otimes A)\vec(B)$, $\vec(A+B) = \vec(A) + \vec(B)$ and $\vec(e_i) = \vec(e_i^{\top}) = e_i$, we obtain:
\begin{align}
  \begin{split}
    \vec\qty(C^{\top}SE_{ij} + e_je_i^{\top}SC)
    & = \vec\qty(C^{\top}SE_{ij}\Id_N) + \vec\qty(e_je_i^{\top}SC) \\
    & = \qty(\Id_N \otimes C^{\top}S)\vec(E_{ij}) + \qty(C^{\top}S \otimes e_j)e_i.
  \end{split}
\end{align}
To conclude, observe that the formula above gives us a vector in $\br^{N \cdot N}$ and we have to run over $i$ and $j$ to build the matrix that represents the Jacobian.
For the first term we already saw that, after varying $i$ and $j$, we obtain $\Id_N \otimes C^{\top}S$.
For the second we could not find a closed formula, but we can eliminate the multiplication by $e_i$ and compute just $C^{\top}S \otimes e_j$ for every $j$ because multiplying a matrix $A$ by $e_i$ on the right gives the $i$-th column of $A$.
So, if we concatenate the matrices $C^{\top}S \otimes e_j$ horizontally, we have the final result:
\begin{equation}
  Jc(C) = \Id_N \otimes C^{\top}S
  + \hstack(C^{\top}S \otimes e_1, \ldots, C^{\top}S \otimes e_N).
\end{equation}

To conclude, let us compute the Hessian of
\begin{align}
  \begin{split}
    \wtil{c}:\mat{d}{N} & \to \br \\
    C & \mapsto \tr(\eps^{\top}\qty(C^{\top}SC - \Id_N)).
  \end{split}
\end{align}
The idea is to compute the directional derivative with respect to $V$, then compute the directional derivative of the directional derivative with respect to $W$, and, in the end, substitute $V$ by $E_{ij}$ and $W$ by $E_{kl}$ to obtain the second-order partial derivative we actually use in the Hessian.
So, starting with $V$ (and ignoring the identity because it is a constant), we have:
\begin{align}
  \begin{split}
    \lim_{t \to 0} \frac{\tr(\eps^{\top}(C^{\top} + tV^{\top})S(C + tV))
    - \tr(\eps^{\top}C^{\top}SC)}{t}
    & = \lim_{t \to 0} \tr(C^{\top}SV + V^{\top}SC + tV^{\top}SV) \\
    & = \tr(C^{\top}SV + V^{\top}SC).
  \end{split}
\end{align}
Moving to $W$:
\begin{multline}
  \lim_{t \to 0} \frac{\tr(\eps^{\top}(C+tW)^{\top}SV)
    + \tr(\eps^{\top}V^{\top}S(C+tW))
    - \tr(\eps^{\top}C^{\top}SV) - \tr(\eps^{\top}V^{\top}SC)}{t} \\
  = \lim_{t \to 0} \frac{t\tr(\eps^{\top}W^{\top}SV)
    + t\tr(\eps^{\top}V^{\top}SW)}{t} = \tr(\eps^{\top}W^{\top}SV)
  + \tr(\eps^{\top}V^{\top}SW).
\end{multline}
Finally, replacing $V$ by $E_{ij}$, $W$ by $E_{kl}$ and using several of the identities we stated in the beginning of the appendix, we have:
\begin{align}
  \begin{split}
    \pdv{\wtil{c}}{X_{kl}}{X_{ij}}\qty(C)
    & = \tr(\eps^{\top}E_{kl}^{\top}SE_{ij})
      + \tr(\eps^{\top}E_{ij}^{\top}SE_{kl}) \\
    & = \tr(\eps^{\top}E_{kl}^{\top}SE_{ij}) + \tr(E_{kl}^{\top}SE_{ij}\eps) \\
    & = \vec(SE_{kl}\eps)^{\top}\vec(E_{ij})
      + \vec(E_{kl})^{\top}\vec(SE_{ij}\eps) \\
    & = \qty((\eps^{\top} \otimes S)\vec(E_{kl}))^{\top}\vec(E_{ij})
      + \vec(E_{kl})^{\top}\qty((\eps^{\top} \otimes S)\vec(E_{ij})) \\
    & = \vec(E_{kl})^{\top}(\eps^{\top} \otimes S)^{\top}\vec(E_{ij})
      + \vec(E_{kl})^{\top}(\eps^{\top} \otimes S)\vec(E_{ij}) \\
    & = \vec(E_{kl})^{\top}\qty((\eps + \eps^{\top}) \otimes S)\vec(E_{ij}).
  \end{split}
\end{align}
Now, running over $i, j, k, l$, in that order, we obtain the final Hessian:
\begin{equation}
  (\eps + \eps^{\top}) \otimes S.
\end{equation}


\addcontentsline{toc}{chapter}{Bibliography}


\bibliographystyle{plain}
\bibliography{references}

\clearpage

\phantomsection

\addcontentsline{toc}{chapter}{Index}

\printindex

\end{document}